\newtheorem*{claim}{Claim}
\newtheorem*{thm3}{Theorem}
\newtheorem{thm2}{Theorem}
\newtheorem{conj2}{Conjecture}
\newtheorem{thm}{Theorem}[section]
\newtheorem{cor}[thm]{Corollary}
\newtheorem{lem}[thm]{Lemma}
\newtheorem{prop}[thm]{Proposition}
\newtheorem{conj}[thm]{Conjecture}
\theoremstyle{definition}
\newtheorem{defn}[thm]{Definition}
\newtheorem{assm}[thm]{Assumption}
\newtheorem{ex}[thm]{Example}
\newtheorem{rmk}[thm]{Remark}
\DeclareMathOperator{\End}{End} 
 \DeclareMathOperator{\rk}{rk}
 \DeclareMathOperator{\Sym}{Sym}
\DeclareMathOperator{\Pic}{Pic}
\newcommand{\C}{\ensuremath\mathds{C}}
\newcommand{\Z}{\ensuremath\mathds{Z}}
\newcommand{\Q}{\ensuremath\mathds{Q}}
\newcommand{\F}{\ensuremath\mathrm{F}}
\newcommand{\fa}{\ensuremath\mathfrak{a}}
\newcommand{\fb}{\ensuremath\mathfrak{b}}
\newcommand{\FF}{\ensuremath\mathcal{F}}
\newcommand{\PP}{\ensuremath\mathds{P}}
\newcommand{\calO}{\ensuremath\mathcal{O}}
\newcommand{\HH}{\ensuremath\mathrm{H}}
\newcommand{\CH}{\ensuremath\mathrm{CH}}
\newcommand{\set}[1]{\left\{#1\right\}}
\begin{document}

\newgeometry{top=1.4cm,left=2.8cm,right=2.8cm,bottom=2.3cm}
\thispagestyle{empty} 
\title{The Fourier transform for certain hyperk\"ahler fourfolds}
\author{Mingmin Shen and Charles Vial}

\thanks{2010 {\em Mathematics Subject Classification.} 14C25, 14C15,
  53C26, 14J28, 14J32, 14K99, 14C17}

\thanks{{\em Key words and phrases.} Hyperk\"ahler manifolds,
  Irreducible holomorphic symplectic varieties, Cubic fourfolds, Fano
  schemes of lines, K3 surfaces, Hilbert schemes of points, Abelian
  varieties, Motives, Algebraic cycles, Chow groups, Chow ring,
  Chow--K\"unneth decomposition, Bloch--Beilinson filtration, Modified
  diagonals}

\thanks{The first author is supported by the Simons Foundation as a
  Simons Postdoctoral Fellow.}

\thanks{The second author is supported by EPSRC Early Career
  Fellowship number EP/K005545/1.}

\address{DPMMS, University of Cambridge, Wilberforce Road, Cambridge
  CB3 0WB, UK}
\email{M.Shen@dpmms.cam.ac.uk, C.Vial@dpmms.cam.ac.uk}


\begin{abstract} Using a codimension-$1$ algebraic cycle obtained from
  the Poincar\'e line bundle, Beauville defined the Fourier transform
  on the Chow groups of an abelian variety $A$ and showed that the
  Fourier transform induces a decomposition of the Chow ring
  $\CH^*(A)$.  By using a codimension-$2$ algebraic cycle representing
  the Beauville--Bogomolov class, we give evidence for the existence
  of a similar decomposition for the Chow ring of hyperk\"ahler
  varieties deformation equivalent to the Hilbert scheme of length-$2$
  subschemes on a K3 surface. We indeed establish the existence of
  such a decomposition for the Hilbert scheme of length-$2$ subschemes
  on a K3 surface and for the variety of lines on a very general cubic
  fourfold.
\end{abstract}

\maketitle

\vspace{-27pt}

\tableofcontents
\restoregeometry

\newpage

\section*{Introduction}

\subsection*{A. Abelian varieties} Let $A$ be an abelian variety of
dimension $d$ over a field $k$. Let $\hat{A} = \mathrm{Pic}^0(A)$ be
its dual and let $L$ be the Poincar\'e line bundle on $A \times
\hat{A}$ viewed as an element of $\CH^1(A\times \hat{A})$. The
\emph{Fourier transform} on the Chow groups with rational coefficients
is defined as
\begin{center}
  $\FF(\sigma)  := p_{2,*}(e^L \cdot p_1^*\sigma)$, \quad for all
  $\sigma \in \CH^i(A)$.
\end{center}
Here, $e^L  := [A\times \hat{A}] + L + \frac{L^2}{2!} + \ldots +
\frac{L^{2d}}{(2d)!}$, and $p_1  : A \times \hat{A} \rightarrow A$ and
$p_2 : A \times \hat{A} \rightarrow \hat{A}$ are the two projections.
The main result of \cite{beauville1} is the following.

\begin{thm3}[Beauville] Let $A$ be an abelian variety of dimension
  $d$. The Fourier transform induces a canonical splitting
  \begin{equation}
    \label{eq fourier abelian} \CH^i(A) = \bigoplus_{s=i-d}^{i} \CH^i(A)_s,
    \quad  \mbox{where} \ \CH^i(A)_s  := \{ \sigma \in \CH^i(A)  :
    \FF(\sigma) \in \CH^{d-i+s}(\hat{A})\}.
  \end{equation}
  Furthermore, this decomposition enjoys the following two
  properties~: \medskip

  (a) $\CH^i(A)_s = \{\sigma \in \CH^i(A) : [n]^*\sigma =
  n^{2i-s}\sigma\}$, where $[n] : A \rightarrow A$ is the
  multiplication-by-$n$ map~;

  (b) $\CH^i(A)_s\cdot \CH^j(A)_r \subseteq \CH^{i+j}(A)_{r+s}$.\qed
\end{thm3}

\noindent Property (a) shows that the Fourier decomposition \eqref{eq
  fourier abelian} is canonical, while Property (b) shows that the
Fourier decomposition is compatible with the ring structure on
$\CH^*(A)$ given by intersection product. It should be mentioned that,
as explained in \cite{beauville1}, \eqref{eq fourier abelian} is
expected to be the splitting of a Bloch--Beilinson type filtration on
$\CH^*(A)$. By \cite{dm}, this splitting is in fact induced by a
Chow--K\"unneth decomposition of the diagonal and it is of
Bloch--Beilinson type if it satisfies the following two properties~:
\medskip

(B) $\CH^i(A)_s
=0$ for all $s<0$~;

(D) the cycle class map $\CH^i(A)_0 \rightarrow \HH^{2i}(A,\Q)$ is
injective for all $i$.\medskip

\noindent Actually, if Property (D) is true for all abelian varieties,
then Property (B) is true for all abelian varieties~; see
\cite{vial2}.

A direct consequence of the Fourier decomposition on the Chow ring of
an abelian variety is the following. First note that $\CH^d(A)_0 =
\langle[0] \rangle$, where $[0]$ is the class of the identity element
$0 \in A$. Let $D_1, \ldots, D_d$ be symmetric divisors, that is,
divisors such that $[-1]^*D_i = D_i$, or equivalently $D_i \in
\CH^1(A)_0$, for all $i$. Then
\begin{equation} \label{eq weaksplitting} D_1\cdot D_2\cdot \ldots
  \cdot D_d = \deg(D_1\cdot D_2\cdot \ldots \cdot D_d) \,[0] \quad
  \mbox{in } \CH^d(A).
\end{equation} 
So far, for lack of a Bloch--Beilinson type filtration on the Chow
groups of hyperk\"ahler varieties, it is this consequence \eqref{eq
  weaksplitting}, and variants thereof, of the canonical splitting of
the Chow ring of an abelian variety that have been tested for certain
types of hyperk\"ahler varieties.  Property \eqref{eq weaksplitting}
for divisors on hyperk\"ahler varieties $F$ is Beauville's weak
splitting conjecture \cite{beauville2}~; it was subsequently
strengthened in \cite{voisin2} to include the Chern classes of $F$.
The goal of this manuscript is to show that a Fourier decomposition
should exist on the Chow ring of hyperk\"ahler fourfolds which are
deformation equivalent to the Hilbert scheme of length-$2$ subschemes
on a K3 surface. Before making this more precise, we first consider
the case of K3 surfaces.\medskip

\subsection*{B. K3 surfaces}
Let $S$ be a complex projective K3 surface. In that case, a
Bloch--Beilinson type filtration on $\CH^*(S)$ is explicit~: we have
$\F^1\CH^2(S)= \F^2\CH^2(S) = \CH^2(S)_{\hom} := \ker \{cl : \CH^2(S)
\rightarrow \HH^4(S,\Q)\}$ and $\F^1\CH^1(S) = 0$. Beauville and
Voisin \cite{bv} observed that this filtration splits canonically by
showing the existence of a zero-cycle $\mathfrak{o}_S \in \CH^2(S)$,
which is the class of any point lying on a rational curve on $S$, such
that the intersection of any two divisors on $S$ is proportional to
$\mathfrak{o}_S$. Let us introduce the Chow--K\"unneth decomposition
\begin{equation} \label{eq CKK3} \pi^0_S := \mathfrak{o}_S \times S,
  \quad \pi^4_S := S \times \mathfrak{o}_S \quad \mbox{and} \quad
  \pi^2_S := \Delta_S - \mathfrak{o}_S \times S - S \times
  \mathfrak{o}_S \quad \mbox{in}\ \CH^2(S\times S).
\end{equation} 
Cohomologically, $\pi_S^0$, $\pi_S^2$ and $\pi^4_S$ are the K\"unneth
projectors on $\HH^0(S,\Q)$, $\HH^2(S,\Q)$, and $\HH^4(S,\Q)$,
respectively. The result of Beauville--Voisin shows that among the
Chow--K\"unneth decompositions of $\Delta_S$ (note that a pair of
zero-cycles of degree $1$ induces a Chow--K\"unneth decomposition for
$S$ and two distinct pairs induce distinct such decompositions) the
symmetric one associated to $\mathfrak{o}_S$, denoted $\Delta_S =
\pi^0_S + \pi^2_S + \pi^4_S$, is special because the decomposition it
induces on the Chow groups of $S$ is compatible with the ring
structure on $\CH^*(S)$.  \medskip

A key property satisfied by $\mathfrak{o}_S$, proved in \cite{bv}, is
that $c_2(S) = 24 \, \mathfrak{o}_S$.  Let $\iota_\Delta : S
\rightarrow S \times S$ be the diagonal embedding. Having in mind that
the top Chern class of the tangent bundle $c_2(S)$ is equal to
$\iota_\Delta^*\Delta_S$, we see that the self-intersection of the
Chow--K\"unneth projector $\pi_S^2$ satisfies
\begin{align*}
  \pi_S^2 \cdot \pi_S^2 & = \Delta_S^{\cdot 2} - 2\, \Delta_S \cdot
  (\mathfrak{o}_S \times S + S \times \mathfrak{o}_S) +
  (\mathfrak{o}_S \times S  + S \times \mathfrak{o}_S)^{\cdot 2} \\
  & = (\iota_\Delta)_*  c_2(S) - 2\,\mathfrak{o}_S \times \mathfrak{o}_S  \\
  &= 22\,\mathfrak{o}_S \times \mathfrak{o}_S.
\end{align*}
We then observe that the action of $e^{\pi_S^2} := S \times S +
\pi^2_S + \frac{1}{2}\pi_S^2 \cdot \pi_S^2 = S \times S + \pi^2_S +
11\,\mathfrak{o}_S \times \mathfrak{o}_S$ on the Chow group
$\CH^*(S)$, called the \emph{Fourier transform} and denoted
$\mathcal{F}$, induces the same splitting as the Chow--K\"unneth
decomposition considered above. Indeed, writing $$\CH^i(S)_s := \{
\sigma \in \CH^i(S) : \mathcal{F}(\sigma) \in \CH^{2-i+s}(S) \},$$ we
also have $$\CH^i(S)_s = (\pi_S^{2i-s})_*\CH^i(S).$$ With these
notations, we then have $\CH^2(S)=\CH^2(S)_0 \oplus \CH^2(S)_2$,
$\CH^1(S)=\CH^1(S)_0$ and $\CH^0(S)= \CH^0(S)_0$, with the
multiplicative property that
$$\CH^2(S)_0 = \CH^1(S)_0 \cdot \CH^1(S)_0 = \langle\mathfrak{o}_S
\rangle.$$

The Beauville--Bogomolov form $q_S$ on a K3 surface $S$ is simply
given by the cup-product on $\HH^2(S,\Q)$~; its inverse $q_S^{-1}$
defines an element of $\HH^2(S,\Q) \otimes \HH^2(S,\Q)$. By the
K\"unneth formula, we may view $q_S^{-1}$ as an element of $\HH^4(S
\times S, \Q)$ that we denote $\mathfrak{B}$ and call the
Beauville--Bogomolov class. The description above immediately gives
that the cohomology class of $\pi^2_S$ in $\HH^4(S \times S, \Q)$ is
$\mathfrak{B}$.  Let us denote $\mathfrak{b} :=
\iota_\Delta^*\mathfrak{B}$ and $\mathfrak{b}_i = p_i^*\mathfrak{b}$,
where $p_i : S \times S \rightarrow S$ is the $i^\mathrm{th}$
projection, $i=1,2$. On the one hand, a cohomological computation
yields $$\mathfrak{B} = [\Delta_S] - \frac{1}{22}(\mathfrak{b}_1 +
\mathfrak{b}_2).$$ On the other hand, the cycle $L := \Delta_S -
\mathfrak{o}_S \times S - S \times \mathfrak{o}_S$, which was
previously denoted $\pi_S^2$, satisfies $\iota_\Delta^*L = c_2(S) - 2
\mathfrak{o}_S = 22 \mathfrak{o}_S$.  Thus not only does $L$ lift
$\mathfrak{B}$ to rational equivalence, but $L$ also lifts the above
equation satisfied by $\mathfrak{B}$ in the sense that $$L=\Delta_S -
\frac{1}{22}(l_1 + l_2)$$ in the Chow group $\CH^2(S \times S)$, where
$l := \iota_\Delta^*L$ and $l_i := p_i^*l$, $i=1,2$. Moreover, a cycle
$L$ that satisfies the equation $L=\Delta_S -
\frac{1}{22}\left((p_1^*\iota_\Delta^*L) +
  (p_2^*\iota_\Delta^*L)\right)$ is unique. Indeed, if $L+
\varepsilon$ is another cycle such that $L +\varepsilon =\Delta_S -
\frac{1}{22}\big(p_1^*\iota_\Delta^*(L+\varepsilon) +
p_2^*\iota_\Delta^*(L+\varepsilon)\big)$, then $\varepsilon = -
\frac{1}{22}(p_1^*\iota_\Delta^*\varepsilon +
p_2^*\iota_\Delta^*\varepsilon)$. Consequently,
$\iota_\Delta^*\varepsilon = -\frac{1}{11}\iota_\Delta^*\varepsilon$.
Thus $\iota_\Delta^*\varepsilon = 0$ and hence $\varepsilon = 0$.
\medskip

We may then state a refinement of the main result of
\cite{bv}.

\begin{thm3}[Beauville--Voisin, revisited]
  Let $S$ be a complex projective K3 surface. Then there exists a
  unique $2$-cycle $L \in \CH^2(S \times S)$ such that $L=\Delta_S -
  \frac{1}{22}(l_1 + l_2)$. Moreover, $L$ represents the
  Beauville--Bogomolov class $\mathfrak{B}$ on $S$ and the Fourier
  transform associated to $L$ induces a splitting of the Chow ring
  $\CH^*(S)$.\qed
\end{thm3}\medskip

\subsection*{C. Hyperk\"ahler varieties of $\mathrm{K3}^{[2]}$-type}
Hyperk\"ahler manifolds are simply connected compact K\"ahler
manifolds $F$ such that $\HH^0(F,\Omega_F^2)$ is spanned by a nowhere
degenerate $2$-form. This class of manifolds constitutes a natural
generalization of the notion of a K3 surface in higher dimensions.
Such manifolds are endowed with a canonical symmetric bilinear form
$q_F : \HH^2(F,\Z) \otimes \HH^2(F,\Z) \rightarrow \Z$ called the
Beauville--Bogomolov form. Passing to rational coefficients, its
inverse $q_F^{-1}$ defines an element of $\HH^2(F,\Q) \otimes
\HH^2(F,\Q)$ and we define the \emph{Beauville--Bogomolov class}
$\mathfrak{B}$ in $\HH^4(F\times F,\Q)$ to be the class corresponding
to $q_F^{-1}$ via the K\"unneth formula. In this manuscript, we will
mostly be concerned with projective hyperk\"ahler manifolds and these
will simply be called \emph{hyperk\"ahler varieties}.  A hyperk\"ahler
variety is said to be of \emph{$\mathrm{K3}^{[n]}$-type} if it is
deformation equivalent to the Hilbert scheme of length-$n$ subschemes
on a K3 surface. A first result is that if $F$ is a hyperk\"ahler
variety of $\mathrm{K3}^{[n]}$-type, then there is a cycle $L \in
\CH^2(F \times F)$ defined in \eqref{def L Markman sheaf} whose
cohomology class is $\mathfrak{B} \in \HH^4(F\times F,\Q)$~; see
Theorem \ref{thm existence of L}.  \medskip

\noindent \textbf{C.1. The Fourier decomposition for the Chow groups.}
Given a hyperk\"ahler variety $F$ of $\mathrm{K3}^{[2]}$-type endowed
with a cycle $L$ with cohomology class $\mathfrak{B}$, we define a
descending filtration $\F^\bullet$ on the Chow groups $\CH^i(F)$ as
\begin{equation} \label{eq filtration gal} \F^{2k+1}\CH^i(F) =
  \F^{2k+2}\CH^i(F)  := \ker \{(L^{4-i+k})_*  : \F^{2k}\CH^i(F)
  \rightarrow \CH^{4-i+2k}(F)\}.
\end{equation}
The cohomological description of the powers of $[L]=\mathfrak{B}$
given in Proposition \ref{prop action of B powers} shows that this
filtration should be of Bloch--Beilinson type.  We then define the
\emph{Fourier transform} as
\begin{center}
  $\FF(\sigma) = (p_2)_*(e^L \cdot p_1^*\sigma)$, \quad for all
  $\sigma \in \CH^*(F)$.
\end{center}
In this work, we ask if there is a canonical choice of a
codimension-$2$ cycle $L$ on $F \times F$ lifting the
Beauville--Bogomolov class $\mathfrak{B}$ such that the Fourier
transform associated to $L$ induces a splitting of the conjectural
Bloch--Beilinson filtration on the Chow group $\CH^*(F)$ compatible
with its ring structure given by intersection product.
We give positive answers in the case when $F$ is either the Hilbert
scheme of length-$2$ subschemes on a K3 surface or the variety of
lines on a cubic fourfold.\medskip

Consider now a projective hyperk\"ahler manifold $F$ of
$\mathrm{K3}^{[2]}$-type and let $\mathfrak{B}$ denote its
Beauville--Bogomolov class, that is, the inverse of its
Beauville--Bogomolov form seen as an element of $\HH^4(F\times F,
\Q)$. We define $\mathfrak{b} :=\iota_\Delta^*\mathfrak{B}$ and
$\mathfrak{b}_i :=p_i^*\mathfrak{b}$. Proposition \ref{prop action of
  B powers} shows that
$\mathfrak{B}$ is uniquely determined up to sign by the following
quadratic equation~:
\begin{equation}\label{eq cohomological equation S2}
  \mathfrak{B}^2=2\,[\Delta_F]
  -\frac{2}{25}(\mathfrak{b}_1+\mathfrak{b}_2) \cdot \mathfrak{B}
  -\frac{1}{23\cdot 25}(2\mathfrak{b}_1^2 -
  23\mathfrak{b}_1\mathfrak{b}_2 + 2 \mathfrak{b}_2^2)
  \quad \mbox{in}\ \HH^8(F\times F,\Q).
\end{equation}
We address the following~; see also Conjecture \ref{conj main general}
for a more general version involving hyperk\"ahler fourfolds whose
cohomology ring is generated by degree-$2$ classes.

 \begin{conj2}\label{conj main}
   Let $F$ be a hyperk\"ahler variety $F$ of $\mathrm{K3}^{[2]}$-type.
   Then there exists a cycle $L \in \CH^2(F \times F)$
   with cohomology class $\mathfrak{B} \in \HH^4(F \times F, \Q)$
   satisfying
   \begin{equation}\label{eq rational equation}
     L^2=2 \, \Delta_F -\frac{2}{25}(l_1+l_2)\cdot L
     -\frac{1}{23\cdot 25}(2l_1^2 - 23l_1l_2 + 2l_2^2)
     \quad \mbox{in}\ \CH^4(F \times F),
 \end{equation}
 where by definition we have set $l  := \iota_\Delta^*L$ and $l_i  :=
 p_i^*l$.
 \end{conj2}

 In fact, we expect the symmetric cycle $L$ defined in \eqref{def L
   Markman sheaf} to satisfy the quadratic equation \eqref{eq rational
   equation}. Moreover, we expect a symmetric cycle $L \in \CH^2(F)$
 representing the Beauville--Bogomolov class $\mathfrak{B}$ to be
 uniquely determined by the quadratic equation \eqref{eq rational
   equation}~; see Proposition \ref{prop L unique} for some evidence.
 From now on, when $F$ is the Hilbert scheme of length-$2$ subschemes
 on a K3 surface, $F$ is endowed with the cycle $L$ defined in
 \eqref{eq L S2} -- it agrees with the one defined in \eqref{def L
   Markman sheaf} by Proposition \ref{prop L agree}. When $F$ is the
 variety of lines on a cubic fourfold, $F$ is endowed with the cycle
 $L$ defined in \eqref{eq L Cubic} -- although we do not give a proof,
 this cycle \eqref{eq L Cubic} should agree with the one defined in
 \eqref{def L Markman sheaf}.\medskip

 Our first result, upon which our work is built, is the following
 theorem~; see Theorem \ref{prop cubic L conjecture} and Theorem
 \ref{thm cubic L conjecture}.
 \begin{thm2} \label{thm2 main} Let $F$ be either the Hilbert scheme
   of length-$2$ subschemes on a K3 surface or the variety of lines on
   a cubic fourfold. Then Conjecture \ref{conj main} holds for $F$.
 \end{thm2}

Let us introduce the cycle $l :=\iota_\Delta^*L \in \CH^2(F)$ -- it
turns out that $l=\frac{5}{6}c_2(F)$ when $F$ is either the Hilbert
scheme of length-$2$ subschemes on a K3 surface or the variety of
lines on a cubic fourfold~; see \eqref{eq l on S2} and \eqref{eq l
  cubic fourfold}. The following hypotheses, together with \eqref{eq
  rational equation}, constitute the key relations towards
establishing a Fourier decomposition for the Chow groups of $F$~:
\begin{align}
  L_*l^2 &= 0\, ;& \label{assumption pre l}  \\
  L_*(l\cdot L_*\sigma) & =25\, L_*\sigma
  &  \mbox{for all} \ \sigma \in \CH^4(F) ; \label{assumption hom}\\
  (L^2)_*(l\cdot(L^2)_*\tau) &= 0 & \mbox{for all}\ \tau \in
  \CH^2(F).\label{assumption 2hom}
\end{align}
Indeed, Theorem \ref{thm main splitting} below shows that in order to
establish the existence of a Fourier decomposition on the Chow groups
of a hyperk\"ahler variety of $\mathrm{K3}^{[2]}$-type, it suffices to
show that the cycle $L$ defined in \eqref{def L Markman sheaf} (which
is a characteristic class of Markman's twisted sheaf \cite{markman})
satisfies \eqref{eq rational equation}, \eqref{assumption pre l},
\eqref{assumption hom} and \eqref{assumption 2hom}. Note that
Properties \eqref{assumption hom} and \eqref{assumption 2hom} describe
the intersection of $l$ with $2$-cycles on $F$.

\begin{thm2} \label{thm main splitting} Let $F$ be a hyperk\"ahler
  variety of K3$^{[2]}$-type. Assume that there exists a cycle $L \in
  \CH^2(F \times F)$ representing the Beauville--Bogomolov class
  $\mathfrak{B}$ satisfying \eqref{eq rational equation},
  \eqref{assumption pre l}, \eqref{assumption hom} and
  \eqref{assumption 2hom}. For instance, $F$ could be either the
  Hilbert scheme of length-$2$ subschemes on a K3 surface endowed with
  the cycle $L$ of \eqref{eq L S2} or the variety of lines on a cubic
  fourfold endowed with the cycle $L$ of \eqref{eq L Cubic}.
  Denote
  \begin{align*} \CH^i(F)_s & := \{\sigma
    \in \CH^i(F)  : \FF(\sigma) \in \CH^{4-i+s}(F)\}.\\
    \intertext{Then the Chow groups of $F$ split canonically as}
    \CH^0(F)&=\CH^0(F)_0 \, ;\\
    \CH^1(F)& = \CH^1(F)_0\, ; \\
     \CH^2(F)&= \CH^2(F)_0 \oplus \CH^2(F)_2\, ; \\
     \CH^3(F)& = \CH^3(F)_0\oplus \CH^3(F)_2\, ;\\
     \CH^4(F) & = \CH^4(F)_0 \oplus \CH^4(F)_2 \oplus \CH^4(F)_4.
 \end{align*}
Moreover, we have
\begin{center}
  $\CH^i(F)_s = \mathrm{Gr}_{\F^\bullet}^{s}\CH^i(F),$ where
  $\F^\bullet$ denotes the filtration \eqref{eq filtration gal}~;
\end{center}
and 
\begin{center} $\sigma$ belongs to $\CH^i(F)_s$ if and only if
  $\FF(\sigma)$ belongs to $\CH^{4-i+s}(F)_s$.
\end{center}
\end{thm2}

We give further evidence that the splitting obtained in Theorem
\ref{thm main splitting} is the splitting of a conjectural filtration
$\F^\bullet$ on $\CH^*(F)$ of Bloch--Beilinson type by showing that it
arises as the splitting of a filtration induced by a Chow--K\"unneth
decomposition of the diagonal~; see Theorem \ref{thm CK}. Note that
our indexing convention is such that the graded piece of the
conjectural Bloch--Beilinson filtration $\CH^i(F)_s =
\mathrm{Gr}_{\F}^{s}\CH^i(F)$ should only depend on the cohomology
group $\HH^{2i-s}(F,\Q)$, or rather, on the Grothendieck motive
$\mathfrak{h}_{\mathrm{hom}}^{2i-s}(F)$.\medskip

The proof that the Chow groups of a hyperk\"ahler variety of
K3$^{[2]}$-type satisfying hypotheses \eqref{eq rational equation},
\eqref{assumption pre l}, \eqref{assumption hom} and \eqref{assumption
  2hom} have a Fourier decomposition as described in the conclusion of
Theorem \ref{thm main splitting} is contained in Theorems \ref{prop
  L2} \& \ref{prop main}. That the Hilbert scheme of length-$2$
subschemes on a K3 surface endowed with the cycle $L$ of \eqref{eq L
  S2} satisfies hypotheses \eqref{eq rational equation},
\eqref{assumption pre l}, \eqref{assumption hom} and \eqref{assumption
  2hom} is Theorem \ref{prop cubic L conjecture}, Proposition
\ref{prop cubic assumption pre l hom} and Proposition \ref{lem
  condition 3}. That the variety of lines on a cubic fourfold endowed
with the cycle $L$ of \eqref{eq L Cubic} satisfies hypotheses
\eqref{eq rational equation}, \eqref{assumption pre l},
\eqref{assumption hom} and \eqref{assumption 2hom} is Theorem \ref{thm
  cubic L conjecture}, Proposition \ref{prop cubic assumption l hom}
and Proposition \ref{prop cubic assumption 2hom}.\medskip

Theorem \ref{thm main splitting} is of course reminiscent of the case
of abelian varieties where the Fourier decomposition on the Chow
groups is induced by the exponential of the Poincar\'e bundle.
Beauville's proof relies essentially on the interplay of the
Poincar\'e line bundle and the multiplication-by-$n$ homomorphisms.
Those homomorphisms are used in a crucial way to prove the
compatibility of the Fourier decomposition with the intersection
product.  The difficulty in the case of hyperk\"ahler varieties is
that there are no obvious analogues to the multiplication-by-$n$
morphisms~; see however Remark \ref{rmk lagrangian}. Still, when $F$
is the variety of lines on a cubic fourfold, Voisin \cite{voisin3}
defined a rational self-map $\varphi :F\dashrightarrow F$
as follows. For a general point $[l]\in F$ representing a line $l$ on
$X$, there is a unique plane $\Pi$ containing $l$ which is tangent to
$X$ along $l$. Thus $\Pi\cdot X=2l+l'$, where $l'$ is the residue
line. Then one defines $\varphi([l])=[l']$. In \S \ref{sec varphi}, we
study the graph of $\varphi$ in depth and completely determine its
class both modulo rational equivalence and homological equivalence. It
turns out that the action of $\varphi$ on the Chow groups of $F$
respects the Fourier decomposition~; see \S \ref{sec varphi fourier
  compatible}. Thus in many respects the rational map $\varphi$ may be
considered as an ``endomorphism'' of $F$. The interplay of $\varphi$
with $L$ is used to prove many features of the Fourier decomposition
on the Chow groups of $F$.\\

\noindent \textbf{C.2. The Fourier decomposition for the Chow ring.}
As in the case of abelian varieties or K3 surfaces, we are interested
in the compatibility of the Fourier decomposition of Theorem \ref{thm
  main splitting} with the ring structure of $\CH^*(F)$. In the
hyperk\"ahler case, this was initiated by Beauville \cite{beauville2}
who considered the sub-ring of $\CH^*(F)$ generated by divisors on the
Hilbert scheme of length-$2$ subschemes on a K3 surface, and then
generalized by Voisin \cite{voisin2} who considered the sub-ring of
$\CH^*(F)$ generated by divisors and the Chern classes of the tangent
bundle when $F$ is either the Hilbert scheme of length-$2$ subschemes
on a K3 surface or the variety of lines on a cubic fourfold~:
\begin{thm3}[Beauville \cite{beauville2}, Voisin \cite{voisin2}] Let
  $F$ be either the Hilbert scheme of length-$2$ subschemes on a K3
  surface, or the variety of lines on a cubic fourfold. Then any
  polynomial expression $P(D_i,c_2(F))$, $D_i \in \CH^1(F)$, which is
  homologically trivial vanishes in the Chow ring
  $\CH^*(F)$.  \end{thm3}
\noindent This theorem implies the existence of a zero-cycle
$\mathfrak{o}_F \in \CH_0(F)$ which is the class of a point such
that $$\langle \mathfrak{o}_F \rangle = \langle c_2(F)^2 \rangle =
\langle c_2(F) \rangle \cdot \CH^1(F)^{\cdot 2} = \CH^1(F)^{\cdot
  4}.$$ This latter result can already be restated, in the context of
our Fourier decomposition, as follows~; see Theorem \ref{thm
  reformulation voisin}.
$$\CH^4(F)_0 = \langle
l^2 \rangle = \langle l \rangle \cdot \CH^1(F)_0^{\cdot 2} =
\CH^1(F)_0^{\cdot 4}.$$ We ask whether \begin{center} $\CH^i(F)_s\cdot
  \CH^j(F)_r \subseteq \ \CH^{i+j}(F)_{r+s}$, \quad for all
  $(i,s),(j,r)$.
\end{center} The following theorem answers this question affirmatively
when $F$ is the Hilbert scheme of length-$2$ subschemes on a K3
surface or the variety of lines on a very general cubic
fourfold. Hilbert schemes of length-$2$ subschemes on K3 surfaces are
dense in the moduli of hyperk\"ahler varieties of
$\mathrm{K3}^{[2]}$-type, and the varieties of lines on cubic
fourfolds form an algebraic component of maximal dimension. Therefore
Theorem \ref{thm main mult} gives strong evidence that a Fourier
decomposition on the Chow ring of hyperk\"ahler varieties of
$\mathrm{K3}^{[2]}$-type should exist.

\begin{thm2} \label{thm main mult} Let $F$ be either the Hilbert
  scheme of length-$2$ subschemes on a K3 surface or the variety of
  lines on a very general cubic fourfold.
  Then
\begin{equation*}
  \CH^i(F)_s\cdot \CH^j(F)_r \subseteq \ \CH^{i+j}(F)_{r+s}, \quad
  \mbox{for all} \ (i,s),(j,r).
 \end{equation*}
 Moreover equality holds except when $\CH^{i+j}(F)_{r+s} =
     \CH^3(F)_2$ or $\CH^2(F)_0$.
\end{thm2}

We actually show Theorem \ref{thm main mult} in a few more cases.
Indeed, it is shown in Theorem \ref{thm birational invariance} that
the existence of a Fourier decomposition on the Chow groups or Chow
ring of a hyperk\"ahler variety of $\mathrm{K3}^{[2]}$-type is a
birational invariant, so that the conclusion of Theorem \ref{thm main
  mult} also holds for any hyperk\"ahler fourfold that is birational
to the Hilbert scheme of length-$2$ subschemes on a K3 surface or to
the variety of lines on a very general cubic fourfold.

The proof of Theorem \ref{thm main mult} uses in an essential way a
Theorem of Beauville--Voisin \cite{bv} on the vanishing of the
``modified diagonal'' of the K3 surface $S$ in the case when $F$ is
the Hilbert scheme $S^{[2]}$, while it uses in an essential way the
rational self-map $\varphi  : F \dashrightarrow F$ constructed by
Voisin \cite{voisin3} when $F$ is the variety of lines on a cubic
fourfold. Theorem \ref{thm main mult} is proved for divisors in
Section \ref{sec mult div}, while it is proved in full generality for
$S^{[2]}$ in Theorem \ref{thm multS2complete} and for the variety of
lines on a very general cubic fourfold in Section \ref{sec mult}. In the $S^{[2]}$ case, we prove in fact a stronger result~: the Fourier decomposition is induced by a \emph{multiplicative Chow--K\"unneth decomposition} of the diagonal in the sense of Definition \ref{def multCK} ; see Theorem \ref{thm multS2complete}.

In addition to stating the multiplicativity property of the Fourier
decomposition, Theorem \ref{thm main mult} also states that
$\CH^2(F)_2 \cdot \CH^2(F)_2 = \CH^4(F)_4$. This equality, which also
holds for the variety of lines on a (not necessarily very general)
cubic fourfold, reflects at the level of Chow groups, as predicted by
the Bloch--Beilinson conjectures, the fact that the transcendental
part of the Hodge structure $\HH^4(F,\Q)$ is a sub-quotient of
$\Sym^2\HH^2(F,\Q)$. A proof in the case of $S^{[2]}$ can be found in
Proposition \ref{prop F4 of S2} and a proof in the case of the variety
of lines on a cubic fourfold can be found in Proposition \ref{prop
  cubic CH0426}.  It is also expected from the Bloch--Beilinson
conjectures that $\CH^2(F)_{0,\hom}=0$~; see Theorem \ref{thm CK}. In
fact, for $F=S^{[2]}$, this would essentially follow from the validity
of Bloch's conjecture for $S$. Although we cannot prove such a
vanishing, a direct consequence of Theorem \ref{thm main mult} is that
\begin{equation*}
  \CH^1(F) \cdot \CH^2(F)_{0,\hom} = 0 \quad \mbox{and} \quad \CH^2(F)_0
  \cdot \CH^2(F)_{0,\hom} = 0.
\end{equation*}
Propositions \ref{prop CH3624hom} \& \ref{prop cubic mult structure1}
show that the above identities also hold when $F$ is the variety of
lines on a (not necessarily very general) cubic fourfold.

Furthermore, we ought to mention that each piece of the decomposition
of $\CH^i(F)$ obtained in Theorem \ref{thm main splitting} is
non-trivial.  Indeed, we have $\CH^4(F)_0=\langle l^2 \rangle$ and $l
\cdot  : \CH^2(F)_2 \rightarrow \CH^4(F)$ is injective with image
$\CH^4(F)_2$~; see Theorems \ref{prop L2} \& \ref{prop
  main}. Therefore, $\CH^4(F)_0 \oplus \CH^4(F)_2$ is supported on a
surface. Since $\HH^4(F,\mathcal{O}_F) \neq 0$, it follows from
Bloch--Srinivas \cite{bs} that $\CH^4(F)_4 \neq 0$. Thus, because
$\CH^2(F)_2 \cdot \CH^2(F)_2 = \CH^4(F)_4$,
$\CH^2(F)_2 \neq 0$ and hence $\CH^4(F)_2 \neq 0$. \\

A direct consequence of Theorems \ref{thm main splitting} \& \ref{thm
  main mult} for zero-dimensional cycles is the following theorem
which is analogous to the decomposition of the Chow group of
zero-cycles on an abelian variety as can be found in \cite[Proposition
4]{beauville1}.

\begin{thm2} \label{thm mult} Let $F$ be either the Hilbert scheme of
  length-$2$ subschemes on a K3 surface or the variety of lines on a
  cubic fourfold. Then
    $$
   \CH^4(F) = \langle l^2 \rangle \oplus \langle l \rangle \cdot
   L_*\CH^4(F) \oplus (L_*\CH^4(F))^{\cdot 2}.
  $$
  Moreover, this decomposition agrees with the Fourier decomposition
  of Theorem \ref{thm main splitting}.
\end{thm2}

Let us point out that, with the notations of Theorem \ref{thm main
  splitting}, $L_*\CH^4(F) = \CH^2(F)_2$.  A hyperk\"ahler variety is
simply connected. Thus its first Betti number vanishes and a theorem
of Rojtman \cite{roitman} implies that $\CH_{\Z}^4(F)_{\hom}$ is
uniquely divisible. Therefore, Theorem \ref{thm mult} can actually be
stated for $0$-dimensional cycles with integral coefficients~:
$$\CH^4_{\Z}(F) = \Z\mathfrak{o}_F \oplus \langle l \rangle \cdot
(L_*\CH^4(F)) \oplus (L_*\CH^4(F))^{\cdot 2}.$$
\medskip

Finally, let us mention the following remark which could be useful to
future work. As it is not essential to the work presented here, the
details are not expounded. Given a divisor $D$ on $F$ with $q_F([D])
\neq 0$ where $q_F$ denotes the quadratic form attached to the
Beauville--Bogomolov form, the cycle $L_D  := L - \frac{1}{q_F([D])}D_1
\cdot D_2,$ where $D_i  :=p_i^*D$, $i=1,2$, defines a \textit{special
  Fourier transform} $\mathcal{F}_D : \CH^*(F)\rightarrow\CH^*(F),
x\mapsto (p_{2})_*( e^{L_D} \cdot p_1^*x)$ such that $\mathcal{F}_D
\circ \mathcal{F}_D$ induces a further splitting of $\CH^*(F)$ which
takes into account $D$. For instance, if $F$ is the variety of lines
on a cubic fourfold and if $g$ is the Pl\"ucker polarization on $F$,
then $\mathcal{F}_g \circ \mathcal{F}_g$ induces an orthogonal
decomposition $\CH^1(F) = \langle g \rangle \oplus
\CH^1(F)_{\mathrm{prim}}$ but also a further decomposition $\CH^3(F)_2
= A \oplus B$, where $\mathcal{F}_g \circ \mathcal{F}_g$ acts as the
identity on $A$ and as zero on $B = g\cdot \CH^2(F)_2$. It turns out
that $\varphi^*$ acts by multiplication by $4$ on $A$ and by
multiplication by $-14$ on $B$.\medskip

\subsection*{D. Hyperk\"ahler varieties} Let $F$ be a hyperk\"ahler
variety.  In general, the sub-algebra of $\HH^*(F \times F,\Q)$
generated by the Beauville--Bogomolov class $\mathfrak{B}$,
$\mathfrak{b}_1$ and $\mathfrak{b}_2$ only ``sees'' the sub-Hodge
structure of $\HH^*(F,\Q)$ generated by $\HH^2(F,\Q)$. Precisely, it
can be checked that the cohomological Fourier transform $\FF$ acts
trivially on the orthogonal complement of the image of $\mathrm{Sym}^n
\, \HH^2(F,\Q)$ inside $\HH^{2n}(F,\Q)$. Therefore, it does not seem
possible to formulate directly an analogous Fourier decomposition, as
that of Theorems \ref{thm main splitting} \& \ref{thm main mult}, for
the Chow ring of those hyperk\"ahler varieties $F$ whose cohomology is
not generated by $\HH^2(F,\Q)$. There are however three questions that
we would like to raise concerning algebraic cycles on hyperk\"ahler
varieties. \medskip
\pagebreak

\noindent \textbf{D.1. Zero-cycles on hyperk\"ahler varieties.} If
$\omega$ is a nowhere degenerate $2$-form on a hyperk\"ahler variety
$F$, then the powers $\omega^n \in \mathrm{Sym}^n \, \HH^2(F,\C)$ of
$\omega$ span the degree-zero graded part of the cohomology of $F$ for
the coniveau filtration.  Hence we can expect that if a canonical
$2$-cycle $L \in \CH^2(F \times F)$ representing the
Beauville--Bogomolov class $\mathfrak{B}$ exists, then the Fourier
transform $\FF :=e^L$ splits a Bloch--Beilinson type filtration on
$\CH_0(F)$.  It is thus tempting to ask whether the decomposition of
Theorem \ref{thm mult} holds for hyperk\"ahler varieties~:

\begin{conj2} \label{conj fourier} Let $F$ be a hyperk\"ahler variety
  of dimension $2d$. Then there exists a canonical cycle $L \in
  \CH^2(F \times F)$ representing the Beauville--Bogomolov class
  $\mathfrak{B}$ which induces a canonical splitting $$\CH^{2d}(F) =
  \bigoplus_{s=0}^{d} \CH^{2d}(F)_{2s},$$ where $\CH^{2d}(F)_{2s} :=
  \{\sigma \in \CH^{2d}(F) : \FF(\sigma) \in \CH^{2s}(F)\}$. Moreover,
  we have
  $$\CH^{2d}(F)_{2s} = \langle l^{d-s}\rangle \cdot
  (L_*\CH^{2d}(F))^{\cdot s}$$ and $$\CH^{2d}(F)_{2s} \supseteq
  P(l,D_1,\ldots,D_r) \cdot (L_*\CH^{2d}(F))^{\cdot s},$$ for any
  degree $2d-2s$ weighted homogeneous polynomial $P$ in $l$ and
  divisors $D_i$, $i=1,\ldots,r$.
\end{conj2}
Note that in the case of hyperk\"ahler varieties of
$\mathrm{K3}^{[n]}$-type, a candidate for a canonical cycle $L$
representing $\mathfrak{B}$ is given by Theorem \ref{thm existence of
  L}.  Note also that, in general, because $[L] = \mathfrak{B}$
induces an isomorphism $\HH^{2d-2}(F,\Q) \rightarrow \HH^2(F,\Q)$ a
consequence of the Bloch--Beilinson conjectures would be that
$L_*\CH^{2d}(F) = \ker \{AJ^2 : \CH^2(F)_{\hom} \rightarrow J^2(F)
\otimes \Q \}$, where $AJ^2$ denotes Griffiths' Abel--Jacobi map
tensored with $\Q$. \medskip

\noindent \textbf{D.2. On the existence of a distinguished cycle $L
  \in \CH^2(F \times F)$.}  Let $F$ be a hyperk\"ahler variety.
Beauville \cite{beauville2} conjectured that the sub-algebra $V_F$ of
$\CH^*(F)$ generated by divisors injects into cohomology via the cycle
class map. Voisin \cite{voisin2} conjectured that if one adds to $V_F$
the Chern classes of the tangent bundle of $F$, then the resulting
sub-algebra still injects into cohomology. The following conjecture,
which is in the same vein as the conjecture of Beauville--Voisin, is
rather speculative but as Theorem \ref{thm2 conj} shows, it implies
the existence of a Fourier decomposition on the Chow ring of
hyperk\"ahler fourfolds of $\mathrm{K3}^{[2]}$-type.  Before we can
state it, we introduce some notations. Let $X$ be a smooth projective
variety, and let the $\Q$-vector space $\bigoplus_n \CH^*(X^n)$ be
equipped with the algebra structure given by intersection product on
each summand. Denote $p_{X^n ;i_1,\ldots, i_k} : X^n \rightarrow X^k$
the projection on the $(i_1,\ldots, i_k)^\mathrm{th}$ factor for
$1\leq i_1<\ldots < i_k \leq n$, and $\iota_{\Delta, X^n} : X
\rightarrow X^n$ the diagonal embedding.  Given cycles
$\sigma_1,\ldots, \sigma_r \in \bigoplus_n \CH^*(X^n)$, we define $V(X
;\sigma_1,\ldots,\sigma_r)$ to be the smallest sub-algebra of
$\bigoplus_n \CH^*(X^n)$ that contains $\sigma_1,\ldots, \sigma_r$ and
that is stable under $(p_{X^n ;i_1,\ldots, i_k})_*$, $(p_{X^n
  ;i_1,\ldots, i_k})^*$, $(\iota_{\Delta, X^n})_*$ and
$(\iota_{\Delta, X^n})^*$.

\begin{conj2}[Generalization of Conjecture \ref{conj
    main}] \label{conj sheaf} Let $F$ be a hyperk\"ahler variety.
  Then there exists a canonical cycle $L \in \CH^2(F \times F)$
  representing the Beauville--Bogomolov class $\mathfrak{B} \in
  \HH^4(F \times F,\Q)$ \linebreak such that the restriction of the
  cycle class map $\bigoplus_n \CH^*(F^n) \rightarrow \bigoplus_n
  \HH^*(F^n,\Q)$ to \linebreak $V(F ;L,c_0(F), c_2(F),\ldots,
  c_{2d}(F),D_1,\ldots,D_r)$, for any $D_i \in \CH^1(F)$, is
  injective.
\end{conj2}
Note that in higher dimensions $l$ and $c_2(F)$ are no longer
proportional in $\HH^4(F,\Q)$. This is the reason why we consider $V(F
;L,c_2(F),\ldots, c_{2d}(F),D_1,\ldots,D_r)$ rather than $V(F
;L,D_1,\ldots,D_r)$. Conjecture \ref{conj sheaf} is very strong~:
Voisin \cite[Conjecture 1.6]{voisin2} had already stated it in the
case of K3 surfaces (in that case $L$ need not be specified as it is
simply given by $\Delta_S - \mathfrak{o}_S\times S - S\times
\mathfrak{o}_S$) and noticed \cite[p.92]{voisinlectures} that it
implies the finite dimensionality in the sense of Kimura \cite{kimura}
and O'Sullivan \cite{o'sullivan2} of the Chow motive of $S$.  The
following theorem reduces the Fourier decomposition problem for the
Chow ring of hyperk\"ahler varieties of $\mathrm{K3}^{[2]}$-type to a
weaker form of Conjecture \ref{conj sheaf} that only involves $L$.

\begin{thm2}[Theorem \ref{thm2 conj repeat}] \label{thm2 conj} Let $F$
  be a hyperk\"ahler variety of $\mathrm{K3}^{[2]}$-type.  Assume that
  $F$ satisfies the following weaker version of Conjecture \ref{conj
    sheaf}~: there exists a cycle $L \in \CH^2(F \times F)$
  representing the Beauville--Bogomolov class $\mathfrak{B}$
  satisfying equation \eqref{eq rational equation}, and the
  restriction of the cycle class map $\bigoplus_n \CH^*(F^n)
  \rightarrow \bigoplus_n \HH^*(F^n,\Q)$ to $V(F ;L)$ is
  injective. Then $\CH^*(F)$ admits a Fourier decomposition as in
  Theorem \ref{thm main splitting} which is compatible with its ring
  structure.
\end{thm2}

This provides an approach to proving the Fourier decomposition for the
Chow ring of hyperk\"ahler varieties of $\mathrm{K3}^{[2]}$-type that
would avoid having to deal with non-generic cycles. Conjecture
\ref{conj sheaf} can be considered as an analogue for hyperk\"ahler
varieties of O'Sullivan's theorem \cite{o'sullivan} which is concerned
with abelian varieties. In the same way that Conjecture \ref{conj
  sheaf} implies the existence of a Fourier decomposition for the Chow
ring of hyperk\"ahler varieties of K3$^{[2]}$-type, we explain in
Section \ref{sec abelian} how Beauville's Fourier decomposition
theorem for abelian varieties can be deduced directly from
O'Sullivan's theorem. \medskip

Finally, as already pointed out, the diagonal $[\Delta_F]$ cannot be
expressed as a polynomial in $\mathfrak{B}$, $\mathfrak{b}_1$ and
$\mathfrak{b}_2$ when $F$ is a hyperk\"ahler variety whose cohomology
ring is not generated by degree-$2$ cohomology classes. Nevertheless,
the orthogonal projector on the sub-Hodge structure generated by
$\HH^2(F,\Q)$ can be expressed as a polynomial in $\mathfrak{B}$,
$\mathfrak{b}_1$ and $\mathfrak{b}_2$.  If one believes in Conjecture
\ref{conj sheaf}, this projector should in fact lift to a projector,
denoted $\Pi$, modulo rational equivalence. In that case, it seems
reasonable to expect $\Pi_*\CH^*(F)$ to be a sub-ring of $\CH^*(F)$
and to expect the existence of a Fourier decomposition with kernel $L$
on the ring $\Pi_*\CH^*(F)$. \medskip

\noindent \textbf{D.3. Multiplicative Chow--K\"unneth decompositions.}
We already mentioned that the Fourier decomposition for the Chow ring
of the Hilbert scheme of length-$2$ subschemes on a K3 surface or the
variety of lines on a very general cubic fourfold is in fact induced
by a Chow--K\"unneth decomposition of the diagonal (\emph{cf.} \S
\ref{sec CK} for a definition). A smooth projective variety $X$ of
dimension $d$ is said to admit a \emph{weakly multiplicative
  Chow--K\"unneth decomposition} if it can be endowed with a
Chow--K\"unneth decomposition $\{\pi^i_X : 0 \leq i \leq 2d\}$ that
induces a decomposition of the Chow ring of $X$. That is,
writing $$\CH_{\mathrm{CK}}^i(X)_s := (\pi_X^{2i-s})_*\CH^i(X),$$ we
have $$\CH^i_{\mathrm{CK}}(X)_s \cdot \CH_{\mathrm{CK}}^j(X)_r
\subseteq \CH_{\mathrm{CK}}^{i+j}(X)_{r+s}, \quad \mbox{for all} \
(i,s),\, (j,r).$$ \medskip

The Chow--K\"unneth decomposition is said to be \emph{multiplicative}
if the above holds at the level of correspondences~; see Definition
\ref{def multCK}.
 
Together with Murre's conjecture (D) as stated in \S \ref{sec CK}, we
ask~:

\begin{conj2} \label{conj2 vanishingc1} Let $X$ be a hyperk\"ahler
  variety. Then $X$ can be endowed with a Chow--K\"unneth
  decomposition that is multiplicative. Moreover, the cycle class map
  $\CH^i(X) \rightarrow \HH^{2i}(X,\Q)$ restricted to
  $\CH_{\mathrm{CK}}^i(X)_0$ is injective.
\end{conj2}

Note that if $A$ is an abelian variety, then $A$ has a multiplicative
Chow--K\"unneth decomposition (\emph{cf.} Example \ref{ex abelian})~;
and that the cycle class map restricted to the degree-zero graded part
of the Chow ring for the Chow--K\"unneth decomposition is injective
was asked by Beauville \cite{beauville1}.  Note also that Conjecture
\ref{conj sheaf} and Conjecture \ref{conj2 vanishingc1} have
non-trivial intersection. Indeed, if $X$ is a hyperk\"ahler variety,
then, provided that $\mathfrak{B}$ is algebraic (which is the case if
$X$ is of K3$^{[n]}$-type by Theorem \ref{thm existence of L}), 
by invoking Proposition \ref{lem diagonal pullback} we see that Conjecture
\ref{conj sheaf} for $X$ is implied by Conjecture \ref{conj2
  vanishingc1} for $X$ and by knowing that the Chern classes $c_i(X)
\in \CH^{i}(X)$ of $X$ belong to $\CH_{\mathrm{CK}}^i(X)_0$. As a
partial converse, Theorem \ref{thm2 conj repeat} shows that a
hyperk\"ahler variety of K3$^{[2]}$-type for which Conjecture
\ref{conj sheaf} holds admits a multiplicative Chow--K\"unneth
decomposition.\medskip

In Section \ref{sec multCK}, we define the notion of multiplicative
Chow--K\"unneth decomposition and discuss its relevance as well as its
links with so-called \emph{modified diagonals} (\emph{cf.} Proposition
\ref{prop multdiag}). In Section \ref{sec multCKX2}, we prove that the
Hilbert scheme of length-$2$ subschemes on a K3 surface not only
admits a weakly multiplicative Chow--K\"unneth decomposition but
admits a \emph{multiplicative} Chow--K\"unneth decomposition (as
predicted by Conjecture \ref{conj sheaf}). More generally, we produce
many examples of varieties that can be endowed with a multiplicative
Chow--K\"unneth decomposition. In \S \ref{sec proof thm6}, we prove~:

\begin{thm2} \label{thm2 CK} Let $E$ be the smallest sub-set of smooth
  projective varieties that contains varieties with Chow groups of
  finite rank (as $\Q$-vector spaces), abelian varieties, symmetric
  products of hyperelliptic curves, and K3 surfaces, and that is stable
  under the following operations~:
  \begin{enumerate}[(i)]
\item if $X$ and $Y$ belong to $E$, then $X\times Y \in E$~;
\item if $X$ belongs to $E$, then $\PP(\mathscr{T}_X) \in E$, where
  $\mathscr{T}_X$ is the tangent bundle of $X$~;
\item if $X$ belongs to $E$, then the blow-up of $X \times X$ along
  the diagonal belongs to $E$~;
\item if $X$ belongs to $E$, then the Hilbert scheme of length-$2$
  subschemes $X^{[2]} \in E$.
  \end{enumerate}
  Let $X$ be a smooth projective variety that is isomorphic to a
  variety in $E$. Then $X$ admits a multiplicative Chow--K\"unneth
  decomposition.
\end{thm2} 

It is intriguing to notice that all the examples that we produce
satisfy $c_i(X) \in \CH_{\mathrm{CK}}^{i}(X)_0$.\medskip

Abelian varieties and hyperk\"ahler varieties are instances of
varieties with vanishing first Chern class. 
In fact a classical theorem of
Beauville \cite{beauville0} and Bogomolov \cite{bogomolov0} states
that every smooth projective variety $X$ with vanishing first Chern
class is, up to finite \'etale cover, isomorphic to the product of an
abelian variety with hyperk\"ahler varieties and Calabi--Yau
varieties. However, one cannot expect varieties with vanishing first
Chern class to have a multiplicative (and even a weakly
multiplicative) Chow--K\"unneth decomposition~: Beauville constructed
Calabi--Yau threefolds $X$ \cite[Example 2.1.5(b)]{beauville2} that do
not satisfy the \emph{weak splitting property}. Precisely, Beauville's 
examples have the
property that there exist two divisors $D_1$ and $D_2$ such that
$D_1\cdot D_2  \neq 0 \in \CH^2(X)$ but such that
$[D_1]\cup [D_2]  = 0 \in \HH^4(X,\Q)$. To conclude, we ask~: what is a
reasonable class of varieties, containing abelian varieties and
hyperk\"ahler varieties, that we can expect to have a multiplicative
Chow--K\"unneth decomposition?

\newpage
\subsection*{Outline}
The manuscript is divided into three parts. We have tried to extract
the key properties that a cycle $L$ representing the
Beauville--Bogomolov class $\mathfrak{B}$ must satisfy in order to
induce a Fourier decomposition on the Chow groups of a hyperk\"ahler
fourfold of $\mathrm{K3}^{[2]}$-type. As a consequence the sections
are not organized in a linear order.\medskip

We first introduce in \S \ref{sec coho fourier} the
Beauville--Bogomolov class $\mathfrak{B}$ and establish in Proposition
1.3 the quadratic equation \eqref{eq cohomological equation S2}. The
core of Part 1 then consists in Theorems \ref{prop L2} \& \ref{prop
  main} and Theorem \ref{thm CK}, where we consider a hyperk\"ahler
variety $F$ of K3$^{[2]}$-type endowed with a cycle $L \in \CH^2(F
\times F)$ representing the Beauville--Bogomolov class $\mathfrak{B}$
that satisfies \eqref{eq rational equation}, \eqref{assumption pre l},
\eqref{assumption hom} and \eqref{assumption 2hom}, and show that the
conclusion of Theorem \ref{thm main splitting} holds for $F$ and that
the resulting Fourier decomposition on the Chow groups of $F$ is in
fact induced by a Chow--K\"unneth decomposition. We then assume that
we have proved the existence of a cycle $L$ representing the
Beauville--Bogomolov class satisfying hypotheses \eqref{eq rational
  equation}, \eqref{assumption pre l}, \eqref{assumption hom} and
\eqref{assumption 2hom} when $F$ is either the Hilbert scheme of
length-$2$ subschemes on a K3 surface or the variety of lines on a
cubic fourfold. We then consider in \S \ref{sec mult div} the
sub-algebra $V_F$ of $\CH^*(F)$ generated by divisors and $l$.  With
the formalism of the Fourier decomposition in mind, we show that the
results of Beauville \cite{beauville2} and Voisin \cite{voisin2} which
state that $V_F$ injects into cohomology via the cycle class map can
be reinterpreted as saying that $V_F$ lies in the degree-zero graded
part of our filtration. Proposition \ref{prop symplectic auto general}
explains how Conjecture \ref{conj fourier} is helpful to understanding
the action of automorphisms on zero-cycles of hyperk\"ahler
varieties. An application to symplectic automorphisms of $S^{[2]}$ is
given. We show in \S 6 that the multiplicativity property of the
Fourier decomposition boils down to intersection-theoretic properties
of the cycle $L$. We deduce in Theorem \ref{thm birational invariance}
that the Fourier decomposition is a birational invariant for
hyperk\"ahler varieties of K3$^{[2]}$-type. This approach is used in
\S \ref{sec abelian} to give new insight on the theory of algebraic
cycles on abelian varieties by showing how Beauville's Fourier
decomposition theorem \cite{beauville1} is a direct consequence of a
recent theorem of O'Sullivan \cite{o'sullivan}. Section 8 introduces
the notion of \emph{multiplicative Chow--K\"unneth decomposition} and
its relevance is discussed. In particular, we relate this notion to
the notion of so-called modified diagonals, give first examples of
varieties admitting a multiplicative Chow-K\"unneth decomposition, and
prove a more precise version of Theorem \ref{thm2 conj}. Part 1 ends
with \S \ref{sec alg B} where a proof of the algebraicity of the
Beauville--Bogomolov class $\mathfrak{B}$ is given for hyperk\"ahler
varieties of K3$^{[n]}$-type.\medskip

Part 2 and Part 3 are devoted to proving Theorem \ref{thm2 main},
Theorem \ref{thm main splitting} and Theorem \ref{thm main mult} for
the Hilbert scheme of length-$2$ subschemes on a K3 surface and for
the variety of lines on a cubic fourfold, respectively. In both cases,
the strategy for proving Theorem \ref{thm2 main} and Theorem \ref{thm
  main splitting} consists in first studying the incidence
correspondence $I$ and its intersection-theoretic properties, and then
in constructing a cycle $L \in \CH^2(F \times F)$ very close to $I$
representing the Beauville--Bogomolov class satisfying hypotheses
\eqref{eq rational equation}, \eqref{assumption pre l},
\eqref{assumption hom} and \eqref{assumption 2hom}.\medskip

Actually, Part 2 begins by considering the incidence correspondence
$I$ for the Hilbert scheme $X^{[2]}$ for any smooth projective variety
$X$. In that generality, we establish in \S \ref{sec I X2} some
equations satisfied by $I$ and deduce in \S \ref{sec mult X2} some
splitting results for the action of $I^2 = I \cdot I$ on
$\CH_0(X^{[2]})$. We then gradually add some constraints on $X$ which
are related to degeneration properties of the modified diagonal. In \S
\ref{sec multCKX2}, we study in depth the notion of multiplicative
Chow--K\"unneth decomposition for $X$ and give sufficient conditions
for it to be stable under blow-up and under taking the Hilbert scheme
of length-$2$ points $X^{[2]}$. The main contribution there is the
proof of Theorem \ref{thm2 CK} given in \S \ref{sec proof thm6}, and a
notable intermediate result is Theorem \ref{thm multCK X2} which shows
in particular that $X^{[2]}$ admits a multiplicative Chow--K\"unneth
decomposition when $X$ is a K3 surface or an abelian variety.  In \S
\ref{sec Fourierdec S2}, we turn our focus exclusively on $S^{[2]}$
for a K3 surface $S$, and prove Theorem \ref{thm2 main} (note the use
of the crucial Lemma \ref{lem lifting hilbert}) and Theorem \ref{thm
  main splitting} in that case.  In \S \ref{section mult S2}, we prove
Theorem \ref{thm main mult} by showing that the Fourier decomposition
of the Chow group $\CH^*(S^{[2]})$ agrees with the multiplicative
Chow--K\"unneth decomposition of Theorem \ref{thm multCK X2} obtained
by considering the multiplicative Chow--K\"unneth decomposition for
$S$ of Example \ref{ex K3}. Finally, in \S \ref{sec L agree}, we prove
that the cycle $L$ constructed in \S \ref{sec Fourier S2} using the
incidence correspondence coincides with the cycle $L$ constructed in
\S \ref{sec alg B} using Markman's twisted sheaf.\medskip

The structure of Part 3 is more straightforward. We define the
incidence correspondence $I$ and Voisin's rational self-map $\varphi$
for $F$, the variety of lines on a cubic fourfold, and study their
interaction when acting on the Chow groups of $F$. This leads in \S
\ref{sec Fourier cubic} to a proof of Theorems \ref{thm2 main} \&
\ref{thm main splitting} in that case.  The goal of \S \ref{sec mult1
  cubic} is to prove that $\CH^4(F)_4 = \CH^2(F)_2 \cdot
\CH^2(F)_2$. While such a statement was only a matter of combinatorics
in the case of $S^{[2]}$, here we have to resort to an analysis of the
geometry of cubic fourfolds~; see Theorem \ref{prop F3 triangle} in
this respect. Finally, in order to complete the proof of Theorem
\ref{thm main mult} in \S \ref{sec mult}, we use the rational map
$\varphi$ in an essential way. For that matter, the main result of \S
\ref{sec varphi} is Proposition \ref{prop key identity of varphi}, a
nice consequence of which is Theorem \ref{thm eigenspace
  decomposition} which shows that the action of $\varphi^*$ on
$\CH^*(F)$ is diagonalizable when $X$ does not contain any plane.  For
the sake of completeness and clarity, we have gathered general results
used throughout Part 3 about cubic fourfolds and about the action of
rational maps on Chow groups in two separate and self-contained
appendices.\medskip

\vspace{10pt}
\subsection*{Conventions} We work over the field of complex numbers.
Chow groups $\CH^i$ are with rational coefficients and we use
$\CH^i_{\Z}$ to denote the Chow groups with integral coefficients. If
$X$ is a variety, $\CH^i(X)_\mathrm{hom}$ is the kernel of the cycle
class map that sends a cycle $\sigma \in \CH^i(X)$ to its cohomology
class $[\sigma] \in \HH^{2i}(X,\Q)$. By definition we set $\CH^j(X)=0$
for $j<0$ and for $j>\dim X$. If $Y$ is another variety and if
$\gamma$ is a correspondence in $\CH^i(X \times Y)$, its transpose
${}^t\gamma \in \CH^i(Y \times X)$ is the image of $\gamma$ under the
action of the permutation map $X \times Y \rightarrow Y \times X$. If
$\gamma_1,\cdots, \gamma_n$ are correspondences in $\CH^*(X \times
Y)$, then the correspondence $\gamma_1 \otimes \cdots \otimes \gamma_n
\in \CH^*(X^n\times Y^n)$ is defined as $\gamma_1 \otimes \cdots
\otimes \gamma_n = \prod_{i=1}^n (p_{i,n+i})^*\gamma_i$, where
$p_{i,n+i} : X^n \times Y^n \rightarrow X \times Y$ is the projection
on the $i^\text{th}$ and $(n+i)^\text{th}$ factors.

\vspace{10pt}
\subsection*{Acknowledgments} We would like to thank Daniel Huybrechts
for his interest and for explaining \cite{markman}. We are also very
grateful to two referees for their detailed comments that have helped
improve the exposition.


\newpage
\begin{large}
\part{The Fourier transform for hyperk\"ahler fourfolds}
\end{large}


\vspace{10pt}
\section{The cohomological Fourier transform}
\label{sec coho fourier}
The goal of this section is to establish equation \eqref{eq
  cohomological equation S2} satisfied by the Beauville--Bogomolov
class $\mathfrak{B}$ of a hyperk\"ahler manifold of
$\mathrm{K3}^{[2]}$-type, and to show that $\mathfrak{B}$ is uniquely
determined, up to sign, by this equation. This is embodied in
Proposition \ref{prop action of B powers}. Along the way, we express
the K\"unneth projectors in terms of $\mathfrak{B}$~; see Corollary
\ref{prop kunneth}. \medskip

We start by defining the Beauville--Bogomolov class $\mathfrak{B}$
seen as an element of $\HH^4(F\times F,\Q)$. In a broader context, let
us first consider a free abelian group $\Lambda$ and the associated
vector space $V=\Lambda\otimes\Q$ (or $\Lambda\otimes\C$). A quadratic
form (or equivalently a symmetric bilinear form) $q$ on $V$ can be
viewed as an element of $\Sym^2(V^*)$. If $q$ is non-degenerate, then
it induces an isomorphism $V\cong V^*$ which further gives an
identification
\[
\Sym^2(V) \cong \Sym^2(V^*).
\]
Under this identification, the element $q\in \Sym^2(V^*)$ gives rise
to an element, denoted $q^{-1}$, of $\Sym^2(V)$. If we choose a basis
$\{v_1,\ldots,v_r\}$ of $V$ and write $A :=\big(q(v_i,v_j)\big)_{1\leq
  i,j \leq r}$, then
\begin{equation}\label{eq defn b inverse}
  q^{-1}=\sum_{1\leq i,j\leq
    r}q_{ij}v_i\otimes v_j\in\Sym^2V,
\end{equation}
where $(q_{ij})_{1\leq i,j \leq r} :=A^{-1}$.\medskip

We will be especially interested in the second cohomology group of a
compact hyperk\"ahler manifold, which carries a canonical bilinear
form by the following theorem of Bogomolov, Beauville and Fujiki.
\begin{thm}[Bogomolov, Beauville, and Fujiki~; see
  \cite{beauville0,fujiki}] \label{thm BBJ}
  Let $F$ be a compact hyperk\"ahler manifold of dimension $2n$. Then
  $\Lambda=\HH^2(F,\Z)$ is endowed with a canonical bilinear form $q_F$
  which satisfies the Fujiki relation
\[
\int_F \alpha^{2n} = \frac{(2n)!}{2^n n!}c_F q_F(\alpha, \alpha)^n,
\quad\forall \alpha\in \HH^2(F,\Z).
\]
\end{thm}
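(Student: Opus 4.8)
The plan is to follow the original approach of Beauville and Fujiki. Since $F$ is an irreducible holomorphic symplectic manifold, $\HH^{2,0}(F)$ is spanned by a single nowhere-degenerate holomorphic $2$-form $\sigma$; normalize $\sigma$ so that $\int_F(\sigma\bar\sigma)^n=1$. The key structural input is that $\sigma^n$ spans $\HH^{2n,0}(F)$ while $\sigma^{n+1}=0$ (and likewise for $\bar\sigma$), since $\HH^{2n+2,0}(F)=0$. I would then \emph{define} the bilinear form $q_F$ on $\HH^2(F,\C)$ through Beauville's explicit formula, whose associated quadratic form is
\[
q(\alpha)=\frac{n}{2}\int_F \alpha^2(\sigma\bar\sigma)^{n-1}+(1-n)\Big(\int_F \alpha\,\sigma^{n-1}\bar\sigma^{n}\Big)\Big(\int_F \alpha\,\sigma^{n}\bar\sigma^{n-1}\Big),
\]
and check that it is symmetric and real on $\HH^2(F,\R)$.

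The heart of the matter is the Fujiki relation $\int_F\alpha^{2n}=c\,q(\alpha)^n$ for a nonzero constant $c$. First I would use the Hodge decomposition $\HH^2(F,\C)=\C\sigma\oplus\HH^{1,1}(F)\oplus\C\bar\sigma$ to write $\alpha=a\sigma+b\bar\sigma+\gamma$ with $\gamma\in\HH^{1,1}(F)$. Expanding $\alpha^{2n}$ by the multinomial theorem and retaining only the monomials $\sigma^j\bar\sigma^j\gamma^{2n-2j}$ of Hodge type $(2n,2n)$ — the vanishing $\sigma^{n+1}=\bar\sigma^{n+1}=0$ forces $0\leq j\leq n$ — reduces the computation to the integrals $\int_F\sigma^j\bar\sigma^j\gamma^{2n-2j}$. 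A direct evaluation shows $q(\alpha)=ab+\tfrac{n}{2}\int_F(\sigma\bar\sigma)^{n-1}\gamma^2$, so that both sides of the claimed identity are polynomial in $(a,b,\gamma)$ and it suffices to match the coefficients of $a^jb^j$.

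The main obstacle, and the only genuinely non-formal step, is to show that each integral $\int_F(\sigma\bar\sigma)^{n-k}\gamma^{2k}$ depends on $\gamma\in\HH^{1,1}(F)$ only through the single quadratic quantity $Q(\gamma):=\int_F(\sigma\bar\sigma)^{n-1}\gamma^2$, with a universal combinatorial constant; granting this, the two expansions collapse into the same perfect $n$-th power and the required constant $c$ is forced. I would establish this by exploiting the $\mathfrak{sl}_2\times\mathfrak{sl}_2$ (equivalently $\mathrm{SU}(2)$) action on $\HH^*(F)$ coming from the hyperk\"ahler metric: cup-product with $\sigma$ and with $\bar\sigma$ generate this action, and the integrals in question are matrix coefficients determined, via Schur's lemma applied to the irreducible holonomy representation on primitive cohomology, by the quadratic form alone. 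Alternatively one can argue by deformation theory: local Torelli shows the periods sweep out an open subset of the quadric $\{q=0\}$, and since $\int_{F_t}\sigma_t^{2n}=0$ identically (the integrand having type $(4n,0)$), the degree-$2n$ polynomial $\alpha\mapsto\int_F\alpha^{2n}$ vanishes along this quadric, forcing it to be a scalar multiple of $q^n$ by comparison of degrees.

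Finally I would address integrality and canonicity. Since $\int_F\alpha^{2n}\in\Z$ for $\alpha\in\HH^2(F,\Z)$ and equals $c\,q(\alpha)^n$, the form $q$ is proportional to a rational quadratic form; rescaling by the appropriate factor yields a form taking integral values on $\HH^2(F,\Z)$, primitive and positive on a K\"ahler class, which pins down both $q_F$ and the Fujiki constant $c_F$ canonically and recasts the relation in the stated form with constant $\tfrac{(2n)!}{2^n n!}c_F$. Non-degeneracy of $q_F$ is then immediate, because $\alpha\mapsto\int_F\alpha^{2n}$ is a nonzero polynomial (for instance $\int_F(\sigma\bar\sigma)^n\neq 0$), so $q$ cannot be degenerate.
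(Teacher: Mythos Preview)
The paper does not prove this theorem; it is stated as a classical result of Bogomolov, Beauville, and Fujiki, with citations to \cite{beauville0,fujiki}, and then used as a black box.

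Your sketch follows the original Beauville--Fujiki approach and is largely sound, but the deformation-theoretic alternative you offer has a gap as written. Knowing that the degree-$2n$ polynomial $P(\alpha)=\int_F\alpha^{2n}$ vanishes on the quadric $\{q=0\}$ yields only $q\mid P$; degree comparison alone does not force $P=cq^n$, since $P=q\cdot R$ with $R$ an arbitrary form of degree $2n-2$ would satisfy both conditions. The repair is an observation you already made but did not connect to this step: since $\sigma_t^{n+1}=0$ in $\HH^*(F_t)$, one has $\int_{F_t}\sigma_t^{2n-k}\beta^k=0$ for every $k<n$ and every $\beta$, so all partial derivatives of $P$ of order less than $n$ vanish at each period point. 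Hence $P$ vanishes to order at least $n$ along a Zariski-dense subset of the irreducible quadric $\{q=0\}$, and then $q^n\mid P$ follows, giving $P=cq^n$.
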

The above bilinear form of Theorem \ref{thm BBJ} is called the
\textit{Beauville--Bogomolov form} and the constant $\frac{(2n)!}{2^n
  n!}c_F$ is called the \textit{Fujiki constant}. In this section, we
deal with cup-product only for cohomology classes of even
degree. Hence  the symbol ``$\cup$'' will frequently be omitted and 
$\alpha\beta$ will stand for $\alpha\cup\beta$.

\begin{defn}\label{defn Fourier hom}
  Let $q_F$ be the Beauville--Bogomolov bilinear form on
  $\HH^2(F,\Z)$. Its inverse $q_F^{-1}$ defines an element of
  $\HH^2(F,\Z) \otimes \HH^2(F,\Z) \otimes \Q$, and  the
  \emph{Beauville--Bogomolov class} $$\mathfrak{B} \in \HH^4(F\times
  F,\Q)$$ is defined to be the image of $0 \oplus q_F^{-1} \oplus 0 \in
  \HH^0(F,\Q) \otimes \HH^4(F,\Q) \oplus \HH^2(F,\Q) \otimes
  \HH^2(F,\Q) \oplus \HH^4(F,\Q) \otimes \HH^0(F,\Q)$ under the
  K\"unneth decomposition.

The \emph{cohomological Fourier transform} is the homomorphism
$$
[\mathcal{F}] :\HH^*(F,\Q) \rightarrow \HH^*(F,\Q),\quad x\mapsto
p_{2*}(e^{\mathfrak{B}}\cup p_1^*x),
$$
where $p_i  : F \times F \rightarrow F$ is the projection onto the
$i^\mathrm{th}$ factor.

We write $\iota_\Delta  : F\hookrightarrow F\times F$ for the diagonal
embedding and we define $$\mathfrak{b}  := \iota_\Delta^*\mathfrak{B}
\in \HH^4(F,\Q) \quad \mbox{and} \quad \mathfrak{b}_i  :=
p_i^*\mathfrak{b} \in \HH^4(F \times F, \Q).$$
\end{defn}
\medskip

From now on, we assume that $\dim F = 4$. In this case, the Fujiki
relation of Theorem \ref{thm BBJ} implies
\begin{equation}\label{eq Fujiki relation}
  \int_F \alpha_1\alpha_2\alpha_3\alpha_4 = c_F \{q_F(\alpha_1,
  \alpha_2) q_F(\alpha_3,\alpha_4)+ q_F(\alpha_1,\alpha_3)
  q_F(\alpha_2,\alpha_4)+ q_F(\alpha_1,\alpha_4)q_F(\alpha_2,\alpha_3)\}
\end{equation}
for all $\alpha_1, \alpha_2, \alpha_3, \alpha_4 \in \Lambda = \HH^2(F,\Z)$.
 The
following proposition describes the action of the powers of
$\mathfrak{B} \in \HH^4(F \times F, \Q)$ on $\HH^*(F,\Q)$, when the
cohomology ring of $F$ is generated by degree-$2$ classes. Note that
by a result of Verbitsky, the cup-product map $\Sym^2(\HH^2(F,\Q))
\rightarrow \HH^4(F,\Q)$ is always injective~; see \cite{bogomolov}.

\begin{prop}\label{prop action of B powers}
  Assume $F$ is a compact hyperk\"ahler manifold of dimension $4$
  which satisfies
\begin{center}
  $ \HH^4(F,\Q)=\Sym^2(\HH^2(F,\Q)) $ \quad and \quad $\HH^3(F,\Q) =
  0$.
\end{center}
Let $r=\dim\HH^2(F,\Q)$ and let $c_F$ be a third of the Fujiki
constant of $F$.  Then the following are true.

\begin{enumerate}[(i)]
\item The following equality holds in $\HH^8(F\times F,\Q)$,
  \begin{equation}\label{eq cohomological equation}
  \mathfrak{B}^2=2c_F[\Delta_F]
  -\frac{2}{r+2}(\mathfrak{b}_1+\mathfrak{b}_2) \mathfrak{B}
  -\frac{1}{r(r+2)}(2\mathfrak{b}_1^2 -
  r\mathfrak{b}_1\mathfrak{b}_2 + 2 \mathfrak{b}_2^2).
\end{equation}

\item The action of $(\mathfrak{B}^k)_*$, $k=0,1,2,3,4$, on
  $\HH^i(F,\Q)$ is zero for $i \neq 8-2k$.

\item The action of $(\mathfrak{B}^2)_*$ on $\HH^4(F,\Q)$ gives an
  eigenspace decomposition
$$
\HH^4(F,\Q) = \langle \mathfrak{b}\rangle \oplus \langle
\mathfrak{b}\rangle^\perp
$$
which is orthogonal for the intersection form and where
$(\mathfrak{B}^2)_*=(r+2)c_F$ on $\langle \mathfrak{b}\rangle$ and
$(\mathfrak{B}^2)_*=2c_F$ on $\langle \mathfrak{b}\rangle^\perp$.

\item $(\mathfrak{B}^k)_* \circ (\mathfrak{B}^{4-k})_*  : \HH^{2k}(F,\Q)
\rightarrow \HH^{8-2k}(F,\Q)\rightarrow\HH^{2k}(F,\Q)$ is
multiplication by $3 (r+2)c_F^2$ when $k=1,3$ and is multiplication by
$3 r (r+2)c_F^2$ when $k=0,4$.

\item $\pm\mathfrak{B}$ is the unique element in $\HH^2(F)\otimes
  \HH^2(F)$ that satisfies the above equation \eqref{eq cohomological
    equation} in the following sense. Assume that $\mathfrak{C}\in
  \HH^2(F)\otimes \HH^2(F)$ satisfies equation \eqref{eq cohomological
    equation} with $\mathfrak{B}$ replaced by $\mathfrak{C}$ and
  $\mathfrak{b}_i$ replaced by $\mathfrak{c}_i$, where
  $\mathfrak{c}_i=p_i^*\mathfrak{c}$ and
  $\mathfrak{c}=\iota_\Delta^*\mathfrak{C}$. Then
  $\mathfrak{C}=\pm\mathfrak{B}$.
\end{enumerate}
\end{prop}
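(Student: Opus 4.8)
The plan is to match the two sides of equation \eqref{eq cohomological equation} (with $\mathfrak{B}$ replaced by $\mathfrak{C}$ and $\mathfrak{b}_i$ by $\mathfrak{c}_i$) one K\"unneth component at a time, and to turn each component into a linear-algebraic constraint. The orienting observation is that, since $\mathfrak{C}$ lies in $\HH^2(F)\otimes\HH^2(F)$, the product $\mathfrak{C}\cup\mathfrak{C}$ is of pure K\"unneth type $(4,4)$; hence the $(8,0),(0,8),(6,2),(2,6)$ components of the equation involve only $[\Delta_F]$ and the classes $\mathfrak{c}_i$, while all the quadratic information is carried by the $(4,4)$ component. After extending scalars to $\C$ I would fix a basis $v_1,\dots,v_r$ of $V:=\HH^2(F,\C)$ that is orthonormal for $q_F$, so that $\mathfrak{B}$ is represented by the identity matrix while $\mathfrak{C}=\sum c_{ij}\,v_i\otimes v_j$ is represented by a matrix $M=(c_{ij})$ and $\mathfrak{c}=\iota_\Delta^*\mathfrak{C}=\sum c_{ij}\,v_iv_j$. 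The goal becomes $M=\pm I$; as a sanity check, substituting $\mathfrak{C}=\lambda\mathfrak{B}$ into \eqref{eq cohomological equation} and using the same equation for $\mathfrak{B}$ forces $\lambda^2=1$, so the genuine content is the proportionality of $\mathfrak{C}$ and $\mathfrak{B}$.

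First I would exploit the $(6,2)$ and $(2,6)$ components. Via Poincar\'e duality $\HH^6(F)\cong\HH^2(F)^\vee$ these are equivalent to the operator identities $\mathfrak{C}_*\circ(\mathfrak{c}\cup-)=c_F(r+2)\,\mathrm{id}_{\HH^2(F)}$ and $(\mathfrak{c}\cup-)\circ\mathfrak{C}_*=c_F(r+2)\,\mathrm{id}_{\HH^6(F)}$, where $\mathfrak{C}_*\colon\HH^6(F)\to\HH^2(F)$ is induced by $\mathfrak{C}$ and $(\mathfrak{c}\cup-)\colon\HH^2(F)\to\HH^6(F)$ is cup product with $\mathfrak{c}$. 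Computing the top intersections through \eqref{eq Fujiki relation}, the bilinear form $(u,w)\mapsto\int_F\mathfrak{c}\,u\,w$ on $\HH^2(F)$ is represented by $c_F[(\mathrm{tr}\,M)\,I+M+M^T]$, and the two identities read $M^T[(\mathrm{tr}\,M)I+M+M^T]=(r+2)I$ and $[(\mathrm{tr}\,M)I+M+M^T]M^T=(r+2)I$. Comparing them shows $M^T$ commutes with $M+M^T$, so $M$ is normal; writing $M=S+K$ with $S$ symmetric and $K$ antisymmetric, normality gives $[S,K]=0$, the antisymmetric parts force $K\big((\mathrm{tr}\,S)I+2S\big)=0$ with $(\mathrm{tr}\,S)I+2S$ invertible, whence $K=0$, and the symmetric parts yield $2S^2+(\mathrm{tr}\,S)\,S=(r+2)I$. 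Thus $M=S$ is symmetric, so $\mathfrak{C}\in\Sym^2\HH^2(F)$; since $2x^2+(\mathrm{tr}\,S)x-(r+2)$ has distinct roots, $M$ is diagonalizable with at most two eigenvalues, and because distinct eigenspaces are $q_F$-orthogonal while $q_F$ is nondegenerate I may take $v_1,\dots,v_r$ to be a $q_F$-orthonormal eigenbasis, $M=\mathrm{diag}(\lambda_1,\dots,\lambda_r)$.

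The decisive input is the $(4,4)$ component, which as operators on $\HH^4(F)=\Sym^2\HH^2(F)$ reads $(\mathfrak{C}\cup\mathfrak{C})_*=2c_F\,\mathrm{id}+\tfrac{1}{r+2}\langle\mathfrak{c},-\rangle\,\mathfrak{c}$. Applying both sides to the basis class $v_pv_q$ with $p\neq q$ and evaluating by \eqref{eq Fujiki relation}, the left side equals $2c_F\lambda_p\lambda_q\,v_pv_q$ while the right side equals $2c_F\,v_pv_q$, the rank-one term dropping out because $\langle\mathfrak{c},v_pv_q\rangle=0$ for $p\neq q$ in the eigenbasis. As $v_pv_q\neq0$ in $\Sym^2\HH^2(F)$, this gives $\lambda_p\lambda_q=1$ for all $p\neq q$. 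For $r\geq3$ this forces every eigenvalue to equal a common $\lambda$ with $\lambda^2=1$, i.e. $M=\pm I$ and $\mathfrak{C}=\pm\mathfrak{B}$; the residual cases $r\leq2$ are settled by feeding $\lambda_1\lambda_2=1$ into the diagonal ($p=q$) part of the $(4,4)$ equation, equivalently into the normalization $\int_F\mathfrak{c}^2=r(r+2)c_F$ coming from the $(8,0)$ component, which again yields $\lambda_i=\pm1$.

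The step I expect to be the main obstacle is the linear algebra carried out over $\C$ with the \emph{indefinite} form $q_F$: one must make sure that $M$ really can be diagonalized in a $q_F$-orthonormal basis even though $q_F$ is not definite. This is precisely where the quadratic relation extracted from the $(6,2)/(2,6)$ components is essential, since it guarantees a separable minimal polynomial, while nondegeneracy of $q_F$ guarantees that $q_F$ restricts nondegenerately to each eigenspace. The remaining work is the careful but routine bookkeeping of the Fujiki relation in each K\"unneth component and the separate treatment of the small-rank cases $r\leq2$, where the off-diagonal argument alone is insufficient.
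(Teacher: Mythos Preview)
Your proposal addresses only part \emph{(v)}, which is fine since parts \emph{(i)}--\emph{(iv)} follow directly from the auxiliary Lemmas~1.4--1.6. Your argument for \emph{(v)} is correct and follows the same overall architecture as the paper's: extract matrix identities from the various K\"unneth components of the equation, obtain the quadratic relation $2S^2+(\mathrm{tr}\,S)S=(r+2)I$ from the $(6,2)$/$(2,6)$ pieces, and use the $(4,4)$ piece to pin down the eigenvalues. A small point you assert without proof---that $2x^2+(\mathrm{tr}\,S)x-(r+2)$ has distinct roots---does hold: a double root $\lambda_0$ would force $\mathrm{tr}\,S=r\lambda_0$ and $\lambda_0=-\mathrm{tr}\,S/4$, hence $\lambda_0=0$, contradicting $\lambda_0^2=-(r+2)/2$.

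There are two genuine tactical differences from the paper that are worth noting. First, the paper proves symmetry of $A$ by applying the swap involution $\iota^*$ to the full equation and comparing coefficients, which requires first knowing $a_{ii}=\pm1$ from the $(4,4)$ component; you instead deduce normality, hence symmetry, purely from the $(6,2)$ and $(2,6)$ components, which is cleaner and avoids that preliminary step. Second, to show $A$ is diagonal the paper carries out a case analysis on the signs $a_{ii}=\pm1$ and compares several families of $(4,4)$ coefficients ($e_i^2\otimes e_j^2$ and $e_i^2\otimes e_je_k$); you bypass this by using the separable minimal polynomial to diagonalize $S$ abstractly in a $q_F$-orthonormal basis, then reading off $\lambda_p\lambda_q=1$ from a single evaluation on $v_pv_q$. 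Your route is more conceptual and avoids the sign bookkeeping; the paper's route is more hands-on but yields the diagonal entries $\pm1$ earlier. Both reach $M=\pm I$.
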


Before proving Proposition \ref{prop action of B powers}, we establish
some auxiliary lemmas~: Lemma \ref{lem dual basis} and Lemma \ref{lem
  b dot B} hold generally for any compact hyperk\"ahler manifold of
dimension $4$, and Lemma \ref{lem cohomology of diagonal} computes the
class of the diagonal $[\Delta_F]$ in terms of the
Beauville--Bogomolov class under the condition that the cohomology of
$F$ is generated by degree-$2$ elements.\medskip

Let $V=\HH^2(F,\C)$, then the Beauville--Bogomolov bilinear form
extends to $V\times V$. We choose an orthonormal basis
$\{e_1,e_2,\ldots, e_r\}$ for $V$, namely $q_F(e_i,e_j)=\delta_{ij}$.
Then we have
\begin{equation}\label{eq B}
  \mathfrak{B} = e_1\otimes e_1 +e_2\otimes e_2 +\cdots +e_r\otimes
  e_r
\end{equation}
and $\mathfrak{b}= e_1^2 + e_2^2 + \cdots +e_r^2$.

By Poincar\'e duality, we have
$$
\HH^6(F,\C) = \HH^2(F,\C)^\vee.
$$
This allows us to define the dual basis $\{e_1^\vee, e_2^\vee,\ldots
,e_r^\vee\}$ of $\HH^6(F,\C)$.

With the above notions, we first note that if $1\leq i,j,k\leq r$ are
distinct then
\begin{equation}\label{eq basis mult 1}
 e_ie_je_k=0, \qquad \text{in }\HH^6(F,\C).
\end{equation}
This is because, by equation \eqref{eq Fujiki relation}, one easily
shows that $e_ie_je_k e_l=0$ for all $1\leq l \leq r$. We also note
that
\begin{align}
  \label{eq basis mult 2}e_i^2e_j &=   c_F\,e_j^\vee, \quad 1 \leq i\neq j\leq r,\\
  \label{eq basis mult 3}e_j^3 &= 3c_F e_j^\vee, \quad 1\leq j\leq r.
\end{align}
The equation \eqref{eq basis mult 2} follows from the following computation
\[
e_i^2e_j e_k = c_F\, \big(q_F(e_i,e_i)q_F(e_j,e_k) +2
q_F(e_i,e_j)q_F(e_i,e_k)\big) =c_F\,\delta_{jk}, \quad i\neq j.
\]
Similarly, equation \eqref{eq basis mult 3} follows from $e_j^3
e_k=3c_F q_F(e_j,e_j)q_F(e_j,e_k)=3c_F\delta_{jk}$.  Combining
equations \eqref{eq basis mult 2} and \eqref{eq basis mult 3}, we have
\begin{equation}\label{eq basis mult 4}
\mathfrak{b} e_i= (r+2)c_F
  e_i^\vee, \quad 1\leq i\leq r.
\end{equation}

\begin{lem}\label{lem dual basis} The following cohomological
  relations hold.
  \begin{enumerate}[(i)]
\item $\mathfrak{B}^2=\sum_{i=1}^{r} e_i^2\otimes e_i^2 + 2\sum_{1\leq i<j
\leq r} e_ie_j\otimes e_ie_j$.
\item $\mathfrak{B}^3 = 3(r+2)c_F^2\sum_{i=1}^{r}e_i^\vee\otimes
  e_i^\vee$.
\item $\mathfrak{B}^4=3r(r+2)c_F^2\,[pt]\otimes [pt]$.
\end{enumerate}
\end{lem}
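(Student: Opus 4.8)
The plan is to prove the three cohomological identities of Lemma~\ref{lem dual basis} by direct computation in the orthonormal basis $\{e_1,\ldots,e_r\}$ of $V=\HH^2(F,\C)$, using the expression $\mathfrak{B}=\sum_{i=1}^r e_i\otimes e_i$ from \eqref{eq B} together with the multiplication rules \eqref{eq basis mult 1}, \eqref{eq basis mult 2}, \eqref{eq basis mult 3} and \eqref{eq basis mult 4} that we have already established from the Fujiki relation. The only subtlety is bookkeeping: when multiplying tensors in $\HH^*(F\times F,\C)$, the first and second tensor factors multiply independently, so $(\alpha\otimes\beta)\cdot(\gamma\otimes\delta)=(\alpha\gamma)\otimes(\beta\delta)$ for even-degree classes, and I must keep track of which products land in $\HH^4$, $\HH^6$, or $\HH^8$ of $F$.

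For part~(i), I would simply square $\mathfrak{B}=\sum_i e_i\otimes e_i$ and collect terms. The diagonal terms $i=j$ give $\sum_i e_i^2\otimes e_i^2$, while the off-diagonal terms pair up as $e_ie_j\otimes e_ie_j$ for $i\neq j$, each appearing twice (once as $(i,j)$ and once as $(j,i)$), producing the factor~$2$ in front of the sum over $i<j$. This is purely formal and requires no input beyond the definition of $\mathfrak{B}$. For part~(ii), I would compute $\mathfrak{B}^3=\mathfrak{B}\cdot\mathfrak{B}^2$ using the formula just obtained in~(i). Multiplying $\sum_k e_k\otimes e_k$ against $\sum_i e_i^2\otimes e_i^2 + 2\sum_{i<j}e_ie_j\otimes e_ie_j$, each tensor factor becomes a degree-$6$ class, so I invoke \eqref{eq basis mult 1}, \eqref{eq basis mult 2}, \eqref{eq basis mult 3}: the surviving contributions are those where the product of three $e$'s in each factor is nonzero, i.e.\ of the form $e_j^3=3c_Fe_j^\vee$ or $e_i^2e_j$ with repeated indices; after summing, the cleanest route is to recognize $\mathfrak{B}^3=\sum_i (\mathfrak{b}e_i)^{\vee\text{-type}}$ and apply \eqref{eq basis mult 4}, which gives $\mathfrak{b}e_i=(r+2)c_Fe_i^\vee$ in each factor, yielding the stated $3(r+2)c_F^2\sum_i e_i^\vee\otimes e_i^\vee$.

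For part~(iii), I would compute $\mathfrak{B}^4=\mathfrak{B}\cdot\mathfrak{B}^3$ from the formula in~(ii): multiplying $\sum_k e_k\otimes e_k$ against $3(r+2)c_F^2\sum_i e_i^\vee\otimes e_i^\vee$ and using that $e_k\cdot e_i^\vee=\delta_{ki}[pt]$ by the very definition of the dual basis (Poincar\'e duality $\HH^6(F,\C)=\HH^2(F,\C)^\vee$). Only the $k=i$ terms survive, giving $3(r+2)c_F^2\sum_{i=1}^r [pt]\otimes[pt]=3r(r+2)c_F^2\,[pt]\otimes[pt]$. The main obstacle I anticipate is not conceptual but combinatorial: in part~(ii) I must carefully verify that the cross terms $e_ie_j\otimes e_ie_j$ (with $i\neq j$) contribute exactly the right multiple when multiplied by $e_k\otimes e_k$ and summed, since one must check that mixed-index products of the form $e_ie_je_k$ with three distinct indices vanish by \eqref{eq basis mult 1} and that the only surviving terms reassemble into $\sum_i(\mathfrak{b}e_i)\otimes(\mathfrak{b}e_i)$; getting the constant $3(r+2)c_F^2$ rather than a spurious factor requires matching the coefficient from $e_j^3=3c_Fe_j^\vee$ against the doubled off-diagonal contributions.
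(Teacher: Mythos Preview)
Your proposal is correct and follows essentially the same approach as the paper: direct expansion of the powers of $\mathfrak{B}$ in the orthonormal basis using \eqref{eq basis mult 1}--\eqref{eq basis mult 4}, with the paper organizing part~(ii) by computing $\mathfrak{B}^2\cdot(e_k\otimes e_k)$ for fixed $k$ and then summing over $k$. One caution: your heuristic of recognizing $\mathfrak{B}^3$ as ``$\sum_i(\mathfrak{b}e_i)\otimes(\mathfrak{b}e_i)$-type'' is not a literal identity (that expression would yield $(r+2)^2c_F^2$ rather than $3(r+2)c_F^2$), so you will indeed need the careful term-by-term bookkeeping you anticipate rather than that shortcut.
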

\begin{proof}
  Statement \emph{(i)} follows by squaring both sides of \eqref{eq
    B}. Statement \emph{(ii)} can be derived as follows.  First we use
  (i) and get
\[
\mathfrak{B}^2(e_k\otimes e_k) = \sum_{i=1^r} e_i^2e_k \otimes
e_i^2e_k + 2\sum_{1\leq i<j\leq r} e_ie_je_k\otimes e_ie_je_k.
\]
We invoke equations \eqref{eq basis mult 1}, \eqref{eq basis mult 2}
and \eqref{eq basis mult 3} to find
\[
\mathfrak{B}^2(e_k\otimes e_k) = (r+6)c_F^2 e_k^\vee \otimes e_k^\vee
+ 2 c_F^2 \sum_{i=1}^r e_i^\vee\otimes e_i^\vee.
\]
By taking the sum as $k$ runs through $1$ to $r$, we get
\emph{(ii)}. Statement \emph{(iii)} can be proved similarly.
\end{proof}

\begin{lem}\label{lem b dot B}
 The following cohomological relations hold.
\begin{enumerate}[(i)]
\item $\mathfrak{b}_1 \mathfrak{B}=(r+2)c_F\sum_{i=1}^{r}
  e_i^\vee\otimes e_i\in\HH^6(F)\otimes\HH^2(F)$.
  \item $\mathfrak{b}_2 \mathfrak{B}=(r+2)c_F\sum_{i=1}^{r}
  e_i\otimes
  e_i^\vee \in\HH^2(F)\otimes\HH^6(F)$.
  \item $(\mathfrak{b}_1)^2=r(r+2)c_F\,[pt]\otimes [F]\in
  \HH^8(F)\otimes
  \HH^0(F)$.
  \item $(\mathfrak{b}_2)^2=r(r+2)c_F\,[F]\otimes [pt]\in
  \HH^0(F)\otimes \HH^8(F)$.
\item $\mathfrak{b}_1\mathfrak{b}_2 = \sum_{i,j=1}^r e_i^2\otimes
  e_j^2 \in \HH^4(F)\otimes \HH^4(F)$.
  \end{enumerate}
\end{lem}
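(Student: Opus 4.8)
The plan is to reduce all five identities to the Künneth factorization of $\mathfrak{b}_1$ and $\mathfrak{b}_2$, together with the relation \eqref{eq basis mult 4} and the Poincaré-duality normalization $e_i^\vee \cdot e_j = \delta_{ij}\,[pt]$ that defines the dual basis $\{e_i^\vee\}$ of $\HH^6(F,\C)$. First I would record that $\mathfrak{b}_i = p_i^*\mathfrak{b}$ gives $\mathfrak{b}_1 = \mathfrak{b}\otimes [F]$ and $\mathfrak{b}_2 = [F]\otimes \mathfrak{b}$, where $[F]\in\HH^0(F)$ is the unit. Since all classes in sight are of even degree, the cup product respects the Künneth decomposition factor-wise with no signs: $(\alpha\otimes\beta)(\gamma\otimes\delta) = (\alpha\gamma)\otimes(\beta\delta)$. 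Everything else is then a matter of feeding in \eqref{eq B} and the multiplication rules already proved.

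\textbf{Key steps.} For \emph{(i)}, I compute
\[
\mathfrak{b}_1\,\mathfrak{B} = (\mathfrak{b}\otimes [F])\cdot\sum_{j=1}^r e_j\otimes e_j = \sum_{j=1}^r (\mathfrak{b}\,e_j)\otimes e_j,
\]
and substitute $\mathfrak{b}\,e_j = (r+2)c_F\,e_j^\vee$ from \eqref{eq basis mult 4} to obtain the claim; part \emph{(ii)} is identical after exchanging the two factors. For \emph{(iii)}, I write $(\mathfrak{b}_1)^2 = \mathfrak{b}^2\otimes [F]$ and expand $\mathfrak{b}^2 = \sum_{i=1}^r (\mathfrak{b}\,e_i)\,e_i = (r+2)c_F\sum_{i=1}^r e_i^\vee e_i = r(r+2)c_F\,[pt]$, again using \eqref{eq basis mult 4} and then $e_i^\vee e_i = [pt]$; part \emph{(iv)} is the mirror statement. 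Finally \emph{(v)} is immediate: $\mathfrak{b}_1\,\mathfrak{b}_2 = (\mathfrak{b}\otimes [F])([F]\otimes\mathfrak{b}) = \mathfrak{b}\otimes\mathfrak{b} = \sum_{i,j=1}^r e_i^2\otimes e_j^2$ directly from the expansion $\mathfrak{b} = \sum_i e_i^2$.

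\textbf{Main obstacle.} There is no genuine difficulty here: the computation is pure bookkeeping built on the relations \eqref{eq basis mult 1}--\eqref{eq basis mult 4} established for the orthonormal basis. The only points requiring care are the normalization of the dual basis---one must use that $\{e_i^\vee\}$ is dual to $\{e_i\}$ for the intersection pairing $\HH^6(F,\C)\times\HH^2(F,\C)\to\HH^8(F,\C)$, so that $e_i^\vee e_j=\delta_{ij}[pt]$ feeds correctly into the degree-$8$ identities \emph{(iii)} and \emph{(iv)}---and tracking Künneth bidegrees to confirm each product lands in the asserted summand ($\HH^6\otimes\HH^2$ in \emph{(i)}, $\HH^8\otimes\HH^0$ in \emph{(iii)}, and so on). Since these relations rely only on the Fujiki relation \eqref{eq Fujiki relation} and Poincaré duality, the argument is valid for an arbitrary compact hyperk\"ahler fourfold, with no need for the hypothesis that the cohomology be generated in degree $2$.
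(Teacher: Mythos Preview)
Your proposal is correct and follows essentially the same approach as the paper: both write $\mathfrak{b}_1=\mathfrak{b}\otimes[F]$ under the K\"unneth decomposition, multiply factor-wise with $\mathfrak{B}=\sum_j e_j\otimes e_j$, and invoke \eqref{eq basis mult 4}; the paper handles \emph{(iii)}--\emph{(iv)} by citing $\mathfrak{b}^2=r(r+2)c_F[pt]$, which you derive explicitly via $e_i^\vee e_i=[pt]$. Your remark that the degree-$2$-generation hypothesis is unnecessary here is also accurate.
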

\begin{proof}
  Under the K\"unneth isomorphism, we have
  $\mathfrak{b}_1=\sum_{i=1}^{r}e_i^2\otimes 1$ and
  $\mathfrak{b}_2=\sum_{i=1}^{r} 1\otimes e_i^2$. Hence we get
\[
\mathfrak{b}_1 \mathfrak{B} = \sum_{i,j=1}^{r}e_i^2e_j\otimes e_j
=\sum_{j=1}^{r}(\mathfrak{b} e_j)\otimes e_j = (r+2)c_F
\sum_{j=1}^{r}e_j^\vee\otimes e_j,
\]
which proves \emph{(i)}. Statement \emph{(ii)} can be proved
similarly. Statements \emph{(iii)} and \emph{(iv)} follow from
$\mathfrak{b}^2=r(r+2)c_F[pt]$. Statement \emph{(v)} follows from the
above explicit expressions for $\mathfrak{b}_1$ and $\mathfrak{b}_2$.
\end{proof}

\begin{lem}\label{lem cohomology of diagonal}
  Under the assumption of Proposition \ref{prop action of B powers},
  the cohomology class of the diagonal $\Delta_F$ is given by
\begin{align}
  [\Delta_F] & = \sum_{i=1}^{r} (e_i^\vee\otimes e_i +e_i\otimes
  e_i^\vee) + \frac{1}{2c_F}\sum_{i=1}^{r} e_i^2\otimes e_i^2
  + \frac{1}{c_F} \sum_{1\leq i<j\leq r} e_ie_j\otimes e_ie_j 
  \label{eq deltakunneth}\\
  &\quad - \frac{1}{2(r+2)c_F}\sum_{i,j=1}^r e_i^2\otimes e_j^2
  +[pt]\otimes [F] + [F]\otimes[pt]. \nonumber
\end{align}
\end{lem}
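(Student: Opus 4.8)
The goal is to express $[\Delta_F]$ in the orthonormal basis $\{e_i\}$ of $V = \HH^2(F,\C)$ together with the dual basis $\{e_i^\vee\}$ of $\HH^6$. The cleanest approach is to write down the most general element of $\HH^8(F \times F, \C)$ that is compatible with the Künneth decomposition by degree, and then fix the unknown coefficients by imposing that the class acts as the identity under the correspondence action (equivalently, by pairing against a spanning set of classes using the intersection form).

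First I would set up the degree bookkeeping. Since $\HH^3(F,\Q)=0$ and $\HH^4(F,\Q)=\Sym^2\HH^2(F,\Q)$, the only nonzero Künneth components of $[\Delta_F]$ live in bidegrees $(0,8),(2,6),(4,4),(6,2),(8,0)$. The $(0,8)$ and $(8,0)$ pieces are forced to be $[F]\otimes[pt]$ and $[pt]\otimes[F]$ since the diagonal restricts to the fundamental class on each factor. The $(2,6)$ and $(6,2)$ pieces must be $\sum_i e_i \otimes f_i$ and $\sum_i g_i \otimes e_i^\vee$ for some classes; imposing that $[\Delta_F]$ induces the identity on $\HH^2$ and $\HH^6$ pins these down as $\sum_i (e_i^\vee \otimes e_i + e_i \otimes e_i^\vee)$, using that $\{e_i^\vee\}$ is defined to be dual to $\{e_i\}$. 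This recovers the first sum in \eqref{eq deltakunneth}.

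The substantive step is the middle $(4,4)$ component. Write it as an unknown symmetric tensor $\Gamma = \sum a_{ij,kl}\, e_ie_j \otimes e_ke_l$ expanded in the basis $\{e_i^2\} \cup \{e_ie_j\}_{i<j}$ of $\HH^4$. The defining property is that the correspondence $[\Delta_F]$ acts as the identity on $\HH^4(F,\Q)$, so for every basis class $\alpha \in \HH^4$ one needs $\Gamma_* \alpha = \alpha$; I would test this against each $e_i^2$ and each $e_ie_j$, evaluating the pushforward using Poincaré duality and the multiplication rules \eqref{eq basis mult 1}, \eqref{eq basis mult 2}, \eqref{eq basis mult 3}, and \eqref{eq basis mult 4} established above. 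Alternatively, and more symmetrically, I would determine $\Gamma$ by comparing it with the $(4,4)$-part of $\mathfrak{B}^2$ computed in Lemma \ref{lem dual basis}(i), namely $\sum_i e_i^2\otimes e_i^2 + 2\sum_{i<j} e_ie_j\otimes e_ie_j$, together with the $(4,4)$-part of $\mathfrak{b}_1\mathfrak{b}_2 = \sum_{i,j} e_i^2\otimes e_j^2$ from Lemma \ref{lem b dot B}(v). Indeed one expects $\Gamma = \tfrac{1}{2c_F}\mathfrak{B}^2_{(4,4)} - \tfrac{1}{2(r+2)c_F}\mathfrak{b}_1\mathfrak{b}_2$; verifying this simply requires checking that the right-hand side acts as the identity on $\HH^4$, which again reduces to the multiplication rules and the eigenspace computation of Proposition \ref{prop action of B powers}(iii).

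I expect the main obstacle to be the bookkeeping in the $(4,4)$ block: one must correctly account for the orthogonal splitting $\HH^4 = \langle \mathfrak{b}\rangle \oplus \langle\mathfrak{b}\rangle^\perp$ and the fact that the identity on $\HH^4$ is \emph{not} proportional to the restriction of $\mathfrak{B}^2$ alone (since $(\mathfrak{B}^2)_*$ has two distinct eigenvalues $(r+2)c_F$ and $2c_F$). The correction term $-\tfrac{1}{2(r+2)c_F}\mathfrak{b}_1\mathfrak{b}_2$ is precisely what repairs the discrepancy along the one-dimensional eigenline $\langle\mathfrak{b}\rangle$; getting its coefficient right is the delicate point, and I would verify it by pairing the candidate formula against $\mathfrak{b}$ itself, using $\mathfrak{b}e_i = (r+2)c_F\, e_i^\vee$ from \eqref{eq basis mult 4}. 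Once the $(4,4)$ coefficient is confirmed, assembling all five Künneth pieces yields \eqref{eq deltakunneth}.
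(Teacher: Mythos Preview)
Your approach is essentially the same as the paper's: verify that the right-hand side of \eqref{eq deltakunneth} acts as the identity on $\HH^*(F,\Q)$, with the only nontrivial check being on $\HH^4$, carried out via the multiplication rules \eqref{eq basis mult 1}--\eqref{eq basis mult 4}. One small caution on logical order: in the paper, Lemma~\ref{lem cohomology of diagonal} is an input to the proof of Proposition~\ref{prop action of B powers}, so invoking part~(iii) of that proposition to justify the $(4,4)$-block would be circular; your primary route through the explicit multiplication rules avoids this and is exactly what the paper does.
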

\begin{proof}
  We need to check that the right-hand side of \eqref{eq deltakunneth}
 acts as the identity on
  cohomology. Here we only check this on $\HH^4(F,\Q)$. The other
  cases are easy to check. Note that if $1\leq i<j\leq r$ then
\begin{equation}\label{eq action 3}
  (e_ie_j\otimes e_ie_j)_*(e_ke_l) = (e_ie_je_ke_l) \;e_ie_j =
  c_F(\delta_{ik}\delta_{jl}) e_ie_j,\quad 1\leq k<l\leq r.
\end{equation}
Likewise, we note that
\begin{equation*} 
(\sum_{i,j=1}^r e_i^2\otimes e_j^2)_*(e_ke_l)=0, \quad 1\leq k<l\leq r
\end{equation*}
and that
\begin{equation}\label{eq action 5}
(e_i^2\otimes e_i^2)_*e_ke_l=0,\quad 1\leq k<l\leq r.
\end{equation}
From these, we see that the right-hand side of \eqref{eq deltakunneth}
acts as the identity on the sub-space of $\HH^4(F,\Q)$ spanned by
$\{e_ke_l :1\leq k<l \leq r\}$. Now we consider the action of the
right-hand side of \eqref{eq deltakunneth} on $e_k^2$.  First we note
that
\begin{equation}\label{eq action 1}
(e_ie_j\otimes e_ie_j)_*e_k^2=0
\end{equation}
for all $1\leq i<j\leq r$ and all $1\leq k \leq r$, and also that
\[
(e_i^2\otimes e_i^2)_*e_k^2 =
\begin{cases}
  c_F\; e_i^2, &i\neq k ;\\
  3c_F\; e_i^2, &i=k.
\end{cases}
\]
From the above equation, we get
\begin{equation}\label{eq action 2}
  (\sum_{i=1}^{r} e_i^2\otimes e_i^2)_* e_k^2=3c_F\,e_k^2 +c_F
  \sum_{i\neq k} e_i^2 = 2c_F\,e_k^2 + c_F\,\sum_{i=1}^r e_i^2.
\end{equation}
Furthermore, we note that equation \eqref{eq basis mult 4} implies
that $(\sum_{i=1}^r e_i^2) e_k^2= (r+2)c_F\,[pt]$. It follows
that
\[
 (\sum_{i,j=1}^r e_i^2\otimes e_j^2)_*e_k^2 = (r+2)c_F \sum_{i=1}^re_i^2.
\]
Combining the above computations, we see that the right-hand 
side of \eqref{eq deltakunneth}
 acts
as the identity on the sub-space spanned by $\{e_k^2\}$. Thus it acts
as the identity on the whole cohomology group $\HH^4(F,\Q)$.
\end{proof}

\begin{proof}[Proof of Proposition \ref{prop action of B powers}]
  We may work with complex coefficients and hence use an orthonormal
  basis $\{e_i\}$ as above. Then statement \emph{(i)} follows easily
  from Lemma \ref{lem b dot B} and Lemma \ref{lem cohomology of
    diagonal}.

  Statement \emph{(ii)} follows from the fact that $\mathfrak{B}^k$
  lies in $\HH^{2k}(F)\otimes \HH^{2k}(F)$~; see Lemma \ref{lem dual
    basis}.

  If we combine Lemma \ref{lem dual basis} with equations \eqref{eq
    action 3} and \eqref{eq action 5}, we see that
\[
(\mathfrak{B}^2)_*e_ke_l = 2c_F\, e_ke_l ,\quad 1\leq k<l \leq r.
\]
Similarly, equations \eqref{eq action 1} and \eqref{eq action 2}
give
\[
(\mathfrak{B}^2)_* e_k^2= 2c_F\,e_k^2 + c_F\,\mathfrak{b}.
\]
Note that $\mathfrak{b}=\sum e_i^2$, and therefore
\[
(\mathfrak{B}^2)_*\mathfrak{b} = (r+2)c_F\,\mathfrak{b}.
\]
Since by definition $\langle \mathfrak{b} \rangle^\perp$ is generated
by $e_ie_j$ and $e_i^2-e_j^2$, we easily see that $(\mathfrak{B}^2)_*$
is multiplication by $2c_F$ on $\langle \mathfrak{b}
\rangle^\perp$. This proves statement \emph{(iii)}.

Statement \emph{(iv)} can be proved using Lemma \ref{lem dual
  basis}. For example take $e_i^\vee\in\HH^6(F)$. Then we easily see
that $\mathfrak{B}_*e_i^\vee = e_i$. Using the explicit expression for
$\mathfrak{B}^3$ in Lemma \ref{lem dual basis}, we get
$(\mathfrak{B}^3)_* e_i = 3(r+2)c_F^2 e_i^\vee$. It follows that
$(\mathfrak{B}^3)_*\circ (\mathfrak{B})_*$ is equal to $3(r+2)c_F^2$
on $\HH^6(F,\Q)$. The other equalities are proved similarly.

To prove \emph{(v)}, we assume that $\mathfrak{C}$ satisfies the equation
\eqref{eq cohomological equation}. We write $\mathfrak{C}=\sum
a_{ij}e_i\otimes e_j$ and hence $\mathfrak{c}=\sum a_{ij}e_ie_j$. We
define
the transpose ${}^t\mathfrak{C}  :=\iota^*\mathfrak{C}=\sum
a_{ji}e_j\otimes e_i$, where $\iota$ is the involution on $F\times F$
that switches the two factors. Let $A=(a_{ij})$. Plugging in
$\mathfrak{C}$ into equation \eqref{eq cohomological equation} and
taking the $\HH^4(F)\otimes\HH^4(F)$-component of the equation, we get
\begin{equation}\label{eq 44 equation}
  \mathfrak{C}^2 = 2c_F[\Delta_F]_{4,4} +\frac{1}{r+2}\mathfrak{c}
  \otimes \mathfrak{c},
\end{equation}
where $[\ ]_{4,4}$ means the $\HH^4(F)\otimes\HH^4(F)$-component.
Given the explicit expression for $[\Delta_F]$ in Lemma \ref{lem
  cohomology of diagonal}, we can compare the coefficients of the two
sides of \eqref{eq 44 equation}. When we consider the coefficient of
$e_i^2\otimes e_i^2$, we get
$$
(a_{ii})^2 = 1 -\frac{1}{r+2} +\frac{(a_{ii})^2}{r+2},
$$
which implies that $a_{ii} = \pm 1$. Without loss of generality, we
assume that $a_{ii}=1$ for $1\leq i\leq r'$ and $a_{ii}=-1$ for
$r'+1\leq i\leq r$, where $0\leq r'\leq r$.

We first show that $\mathfrak{C}$ is symmetric, meaning
$\mathfrak{C}={}^t\mathfrak{C}$. Applying $\iota^*$ to equation
\eqref{eq cohomological equation} and taking the difference with the
original equation, we get
$$
\mathfrak{C}^2 - ({}^t\mathfrak{C})^2 = -\frac{1}{2}(\mathfrak{c}_1
+\mathfrak{c}_2) (\mathfrak{C} - {}^t\mathfrak{C}).
$$
Note that the left-hand side is an element in
$\HH^4(F)\otimes\HH^4(F)$ and that $\mathfrak{c}_1 (\mathfrak{C} -
{}^t\mathfrak{C})$ (resp. $\mathfrak{c}_2 (\mathfrak{C} -
{}^t\mathfrak{C})$) is an element in $\HH^6(F)\otimes\HH^2(F)$ (resp.
$\HH^2(F)\otimes \HH^6(F)$). Hence we conclude that
\begin{equation}\label{eq C2 symmetric}
  \mathfrak{C}^2=({}^t\mathfrak{C})^2,\qquad
  \mathfrak{c}_i (\mathfrak{C} -
  {}^t\mathfrak{C}) = 0.
\end{equation}
Comparing the coefficients of $e_i^2\otimes e_je_{j'}$ in
$\mathfrak{C}^2$ and $({}^t\mathfrak{C})^2$, we get
\begin{align}
  \label{eq compare coeff 1} a_{ij}a_{ij'} &= a_{ji}a_{j'i},\quad
  \forall i,j,j'.
\end{align}
By taking $j'=i$ in \eqref{eq compare coeff 1} and by noting that
$a_{ii}=\pm 1$, we get $a_{ij}=a_{ji}$. Hence $\mathfrak{C}$ is
symmetric.

Now we have ${}^tA = A$, and Lemma \ref{lem dual basis} yields
\begin{align*}
  \mathfrak{c}_1 \mathfrak{C} & = \sum a_{ij}a_{kl}e_ie_je_k\otimes e_l\\
  &= c_F \sum a_{ij}a_{kl}(\delta_{ij}e_k^\vee +\delta_{ik}e_j^\vee
  +\delta_{jk}e_i^\vee ) \otimes e_l\\
  &= c_F \sum_{i,j} (\sum_{k=1}^r a_{kk} a_{ij} + 2\sum_{k=1}^r
  a_{ik}a_{kj})e_i^\vee \otimes e_j.
\end{align*}
Note that the left-hand side of equation \eqref{eq cohomological
  equation} is purely in $\HH^4(F)\otimes \HH^4(F)$. Given Lemma
\ref{lem cohomology of diagonal} and considering the $\HH^6(F)\otimes
\HH^2(F)$-component of the right-hand side of  \eqref{eq
  cohomological equation}, we get
\begin{equation}\label{eq A equation}
 2A^2 +(\mathrm{tr}A)A -(r+2)I =0.
\end{equation}
Comparing the coefficients of $e_i^2\otimes e_j^2$ for $i\neq j$ in
equation \eqref{eq 44 equation}, we get
$$
(a_{ij})^2 = -\frac{1}{r+2} + \frac{a_{ii}a_{jj}}{r+2}.
$$
It follows that $a_{ij}=0$ for all $1\leq i,j\leq r'$ or $r'+1\leq
i,j\leq r$ with $i\neq j$. Comparing the coefficients of $e_i^2\otimes
e_je_k$ for $j\neq k$ in equation \eqref{eq 44 equation}, we get
$$
2 a_{ij}a_{ik} = \frac{2 a_{ii}a_{jk}}{r+2}.
$$
If $r'\geq 2$, we take $1\leq i\neq j\leq r'$ and $r'+1\leq k\leq r$
in the above equation. Recalling that $a_{ij}=0$ it follows that
$a_{jk}=0$ and hence we conclude that $A$ is diagonal. If $r'=1$, we
take $r'+1 \leq i\neq j\leq r$ in the above equation, we can still
conclude that $A$ is diagonal. So, in any case, $A$ is a diagonal
matrix with $\pm 1$ coefficients in the diagonal. Then equation
\eqref{eq A equation} implies that $A=\pm I$, namely
$\mathfrak{C}=\pm\mathfrak{B}$.
\end{proof}

\begin{cor} \label{prop kunneth} Let $F$ be a compact hyperk\"ahler
  manifold of dimension $4$ and let $\pi^i_{\mathrm{hom}}$ be the
  class of the K\"unneth projector on $\HH^i(F,\Q)$ in $\HH^8(F\times
  F,\Q)$. Then
\begin{align*}
  \pi_{\hom}^0 &= \frac{1}{c_F r (r+2)} \fb_1^2, \quad
  \pi_{\hom}^2 = \frac{1}{c_F (r+2)} \mathfrak{b}_1
  \mathfrak{B}, \quad
  \pi_{\hom}^6 &= \frac{1}{c_F (r+2)} \mathfrak{b}_2
  \mathfrak{B}, \quad \pi_{\hom}^8 = \frac{1}{c_F r  (r+2)}
  \fb_2^2.
\end{align*}
Moreover, if $F$ satisfies the assumptions of Proposition \ref{prop
  action of B powers}, then equation (\ref{eq cohomological equation})
defines a K\"unneth decomposition of the diagonal and we have
$$  \pi_{\hom}^4 =
\frac{1}{2c_F} \big( \mathfrak{B}^2 - \frac{1}{r+2}\fb_1 \fb_2\big).$$
\end{cor}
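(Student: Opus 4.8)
The plan is to identify each of the listed cycles with the corresponding Künneth component of the diagonal, which is possible because all the relevant products have already been computed in Lemma \ref{lem dual basis} and Lemma \ref{lem b dot B}. Recall that the Künneth projector $\pi^i_{\hom}$ is by definition the component of $[\Delta_F]$ lying in $\HH^{8-i}(F)\otimes \HH^i(F)$, and that for a basis $\{a_\lambda\}$ of $\HH^i(F)$ with Poincar\'e-dual basis $\{a_\lambda^\vee\}$ of $\HH^{8-i}(F)$ this component equals $\sum_\lambda a_\lambda^\vee \otimes a_\lambda$. Since $F$ is hyperk\"ahler we have $\HH^1(F,\Q)=\HH^3(F,\Q)=0$, hence by Poincar\'e duality also $\HH^5=\HH^7=0$, so only the even projectors $\pi^0,\pi^2,\pi^4,\pi^6,\pi^8$ are nonzero.

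First I would treat $\pi^0,\pi^2,\pi^6,\pi^8$, for which no hypothesis beyond $\dim F=4$ is needed. By Lemma \ref{lem b dot B}(iii) we have $\mathfrak{b}_1^2 = r(r+2)c_F\,[pt]\otimes[F]$, and since the $\HH^8(F)\otimes\HH^0(F)$-component of $[\Delta_F]$ is exactly $[pt]\otimes[F]$, this yields $\pi^0_{\hom}=\frac{1}{c_F r(r+2)}\mathfrak{b}_1^2$; the computation for $\pi^8_{\hom}$ is identical using Lemma \ref{lem b dot B}(iv). Similarly, Lemma \ref{lem b dot B}(i) reads $\mathfrak{b}_1\mathfrak{B}=(r+2)c_F\sum_i e_i^\vee\otimes e_i$, and $\sum_i e_i^\vee\otimes e_i$ is precisely the $\HH^6(F)\otimes\HH^2(F)$-component of $[\Delta_F]$ (the $\{e_i^\vee\}$ being the Poincar\'e duals of the orthonormal basis $\{e_i\}$); hence $\pi^2_{\hom}=\frac{1}{c_F(r+2)}\mathfrak{b}_1\mathfrak{B}$, and Lemma \ref{lem b dot B}(ii) gives $\pi^6_{\hom}$ in the same way.

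For the ``moreover'' part I would simply solve the quadratic equation \eqref{eq cohomological equation} of Proposition \ref{prop action of B powers} for $[\Delta_F]$, obtaining
\begin{equation*}
  2c_F[\Delta_F]=\mathfrak{B}^2+\tfrac{2}{r+2}(\mathfrak{b}_1+\mathfrak{b}_2)\mathfrak{B}+\tfrac{1}{r(r+2)}\big(2\mathfrak{b}_1^2-r\mathfrak{b}_1\mathfrak{b}_2+2\mathfrak{b}_2^2\big),
\end{equation*}
and then regroup the right-hand side by Künneth bidegree. The key bookkeeping point is that, by Lemma \ref{lem dual basis}(i) and Lemma \ref{lem b dot B}(v), both $\mathfrak{B}^2$ and $\mathfrak{b}_1\mathfrak{b}_2$ lie in $\HH^4(F)\otimes\HH^4(F)$, whereas $\mathfrak{b}_1^2$, $\mathfrak{b}_2^2$, $\mathfrak{b}_1\mathfrak{B}$, $\mathfrak{b}_2\mathfrak{B}$ lie in the pairwise distinct bidegrees $(8,0),(0,8),(6,2),(2,6)$. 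Using the formulas of the first part, those four terms are exactly $2c_F(\pi^0_{\hom}+\pi^8_{\hom}+\pi^2_{\hom}+\pi^6_{\hom})$, so the equation exhibits $[\Delta_F]$ as the sum of one cycle in each even Künneth bidegree, that is, it realises the Künneth decomposition of the diagonal. Since this decomposition is unique, the leftover $(4,4)$-piece must be $2c_F\,\pi^4_{\hom}$, and one reads off
\begin{equation*}
  2c_F\,\pi^4_{\hom}=\mathfrak{B}^2-\tfrac{1}{r+2}\mathfrak{b}_1\mathfrak{b}_2 ,
\end{equation*}
which is the asserted formula. There is no genuine obstacle here, the argument being a rearrangement of \eqref{eq cohomological equation}; the only point requiring care is the bidegree bookkeeping, and it is worth stressing the one conceptual shortcut it affords: one need not verify separately that this $(4,4)$-cycle acts as the identity on $\HH^4(F,\Q)$, as this follows automatically from the uniqueness of the Künneth decomposition once $\pi^0_{\hom},\pi^2_{\hom},\pi^6_{\hom},\pi^8_{\hom}$ have been pinned down.
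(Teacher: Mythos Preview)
Your proof is correct and follows exactly the paper's approach, which cites Lemmas \ref{lem dual basis} and \ref{lem b dot B}; you have simply spelled out the details. One small caveat: the vanishing $\HH^3(F,\Q)=0$ is not automatic for all compact hyperk\"ahler fourfolds (generalized Kummer fourfolds have nonzero odd cohomology), but this does not affect your argument since the formulas for $\pi^0_{\hom},\pi^2_{\hom},\pi^6_{\hom},\pi^8_{\hom}$ do not require it, and for the ``moreover'' part it is explicitly among the hypotheses of Proposition \ref{prop action of B powers}.
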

\begin{proof}
  This follows from Lemma \ref{lem dual basis} and Lemma \ref{lem b
    dot B}.
\end{proof}

The cohomological Fourier transform is then easy to understand.

\begin{prop} \label{prop coho fourier}
    Let $F$ be a compact hyperk\"ahler manifold of dimension $4$ and let
  $[\mathcal{F}]$ be the cohomological Fourier transform. Then
\begin{enumerate}[(i)]
\item $[\mathcal{F}]\circ[\mathcal{F}]= \frac{r (r+2) c_F^2}{8}$ on
  $\HH^0(F,\Q)$ and on $\HH^8(F,\Q)$~;

\item $[\mathcal{F}]\circ[\mathcal{F}]=\frac{(r+2) c_F^2}{2}$ on
  $\HH^2(F,\Q)$ and on $\HH^6(F,\Q)$~;

\item if $F$ satisfies the assumptions of Proposition \ref{prop action
    of B powers}, $[\mathcal{F}]\circ[\mathcal{F}]$ induces an
  eigenspace decomposition $ \HH^4(F,\Q) = \langle \mathfrak{b}
  \rangle \oplus \langle \mathfrak{b} \rangle^\perp $, where
  $[\mathcal{F}]\circ[\mathcal{F}]=\big( \frac{(r+2)}{2} c_F \big)^2$
  on $\langle \mathfrak{b} \rangle$ and
  $[\mathcal{F}]\circ[\mathcal{F}]=c_F^2$ on $\langle \mathfrak{b}
  \rangle^\perp$. \qed
  \end{enumerate}
\end{prop}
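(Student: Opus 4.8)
The plan is to expand the exponential correspondence and reduce everything to the computations already carried out in Proposition \ref{prop action of B powers}. First I would observe that since $\mathfrak{B} \in \HH^4(F \times F, \Q)$ and $\dim F = 4$, the class $\mathfrak{B}^k$ vanishes for $k \geq 5$, so that $e^{\mathfrak{B}} = \sum_{k=0}^4 \frac{1}{k!}\mathfrak{B}^k$ and consequently, for any $x \in \HH^*(F,\Q)$,
\[
[\mathcal{F}](x) = \sum_{k=0}^4 \frac{1}{k!}\,(\mathfrak{B}^k)_* x .
\]

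The key simplification comes from part \emph{(ii)} of Proposition \ref{prop action of B powers}: on a class $x \in \HH^{2j}(F,\Q)$ the operator $(\mathfrak{B}^k)_*$ vanishes unless $2j = 8 - 2k$, i.e.\ $k = 4 - j$. Hence only a single summand survives, and $[\mathcal{F}]$ restricts to $\HH^{2j}(F,\Q)$ as $\frac{1}{(4-j)!}(\mathfrak{B}^{4-j})_*$, landing in $\HH^{8-2j}(F,\Q)$. Applying $[\mathcal{F}]$ once more to the output, which now sits in degree $8-2j$ so that only the $k=j$ summand survives, I obtain
\[
[\mathcal{F}] \circ [\mathcal{F}]\big|_{\HH^{2j}(F,\Q)} = \frac{1}{j!\,(4-j)!}\,(\mathfrak{B}^j)_* \circ (\mathfrak{B}^{4-j})_* .
\]

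It then remains to read off the scalar in each degree from the earlier results. For $j = 0$ and $j=4$, that is on $\HH^0$ and $\HH^8$, part \emph{(iv)} gives $(\mathfrak{B}^j)_* \circ (\mathfrak{B}^{4-j})_* = 3r(r+2)c_F^2$, so the coefficient is $\frac{3r(r+2)c_F^2}{4!} = \frac{r(r+2)c_F^2}{8}$, which proves \emph{(i)}. For $j = 1$ and $j=3$, on $\HH^2$ and $\HH^6$, part \emph{(iv)} gives $3(r+2)c_F^2$, whence $\frac{3(r+2)c_F^2}{3!} = \frac{(r+2)c_F^2}{2}$, proving \emph{(ii)}. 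For $j = 2$ the operator equals $\frac{1}{4}\big((\mathfrak{B}^2)_*\big)^2$, and part \emph{(iii)} diagonalizes $(\mathfrak{B}^2)_*$ on $\HH^4(F,\Q) = \langle \mathfrak{b} \rangle \oplus \langle \mathfrak{b} \rangle^\perp$ with eigenvalues $(r+2)c_F$ and $2c_F$; squaring and dividing by $4$ yields the eigenvalues $\big(\frac{(r+2)c_F}{2}\big)^2$ and $c_F^2$, proving \emph{(iii)}.

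Since every ingredient is supplied by Proposition \ref{prop action of B powers}, I do not anticipate any genuine obstacle. The only points requiring care are the degree bookkeeping that isolates a single term of $e^{\mathfrak{B}}$ in each cohomological degree, and the correct tracking of the factorial factor $\frac{1}{j!\,(4-j)!}$ arising from composing the two restrictions of $[\mathcal{F}]$.
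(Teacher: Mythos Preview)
Your proof is correct and follows exactly the approach the paper has in mind: the statement is marked \qed\ in the text, i.e.\ it is meant to be read off directly from Proposition \ref{prop action of B powers} (and the underlying Lemma \ref{lem dual basis}), which is precisely what you do by isolating the single surviving term of $e^{\mathfrak{B}}$ in each degree and tracking the factor $\tfrac{1}{j!(4-j)!}$.

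One small point of care: parts \emph{(i)} and \emph{(ii)} of the proposition are stated for an arbitrary compact hyperk\"ahler fourfold, whereas Proposition \ref{prop action of B powers} is formulated under the extra hypotheses $\HH^4(F,\Q)=\Sym^2\HH^2(F,\Q)$ and $\HH^3(F,\Q)=0$. Your argument only uses items \emph{(ii)} and \emph{(iv)} of that proposition, and their proofs in the paper rely solely on Lemma \ref{lem dual basis}, which holds for any compact hyperk\"ahler fourfold. So the logic is sound, but when writing this up you should either cite Lemma \ref{lem dual basis} directly for those two items or remark that \emph{(ii)} and \emph{(iv)} of Proposition \ref{prop action of B powers} do not actually require its hypotheses.
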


\vspace{10pt}
\section{The Fourier transform on the Chow groups of
hyperk\"ahler  fourfolds} \label{sec fourier chow}

The aim of this section is to exhibit key properties that a cycle $L
\in \CH^2(F \times F)$ representing the Beauville--Bogomolov class
$\mathfrak{B}$ on a hyperk\"ahler variety $F$ of
$\mathrm{K3}^{[2]}$-type should satisfy in order to induce a Fourier
decomposition as in Theorem \ref{thm main splitting} on the Chow
groups of $F$. The two main results here are Theorems \ref{prop L2} \&
\ref{prop main}~; they reduce Theorem \ref{thm main splitting} to
showing that $L$ satisfies hypotheses \eqref{eq rational equation},
\eqref{assumption pre l}, \eqref{assumption hom} and \eqref{assumption
  2hom}. \medskip

Although in this paper we will be mostly considering hyperk\"ahler varieties
which are deformation equivalent to the Hilbert scheme of length-$2$
subschemes on a K3 surface, in which case by \cite{beauville0,
  rapagnetta} we have $c_F=1$ (so that the Fujiki constant is equal to
$3$) and $r=23$, we first formulate a more general version of
Conjecture \ref{conj main}.

\begin{conj}\label{conj main general}
  Let $F$ be a projective hyperk\"ahler manifold of dimension $4$
  whose cohomology ring is generated by $\HH^2(F,\Q)$. Let $r=\dim
  \HH^2(F,\Q)$. Then there exists a canonical cycle $L \in \CH^2(F
  \times F)$ with cohomology class $\mathfrak{B} \in \HH^4(F \times F,
  \Q)$ satisfying
  \begin{equation}\label{eq rational equation2}
    L^2=2 c_F \Delta_F -\frac{2}{r+2}(l_1+l_2)\cdot L
    -\frac{1}{r (r+2)}(2l_1^2 - rl_1l_2 + 2l_2^2)
    \quad \in \CH^4(F \times F),
\end{equation}
where by definition we have set $l  := \iota_\Delta^*L$ and $l_i  :=
p_i^*l$.
\end{conj}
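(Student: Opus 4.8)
The plan is to separate the problem into its cohomological and its rational-equivalence parts, since the former is already settled. By Proposition \ref{prop action of B powers}(i) the Beauville--Bogomolov class satisfies \eqref{eq cohomological equation}, which is exactly the image of \eqref{eq rational equation2} under the cycle class map once one records that $[L]=\mathfrak{B}$ and $[l]=\mathfrak{b}$. Consequently, for any cycle $L$ representing $\mathfrak{B}$ the error term
\[
\varepsilon := L^2 - 2c_F\Delta_F + \frac{2}{r+2}(l_1+l_2)\cdot L + \frac{1}{r(r+2)}(2l_1^2 - rl_1l_2 + 2l_2^2)
\]
automatically lies in $\CH^4(F\times F)_{\hom}$, and the whole content of the conjecture is to exhibit a canonical $L$ for which $\varepsilon=0$.

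First I would produce a candidate. For $F$ of $\mathrm{K3}^{[n]}$-type the natural choice is the cycle \eqref{def L Markman sheaf} built from a characteristic class of Markman's twisted sheaf, which represents $\mathfrak{B}$ by Theorem \ref{thm existence of L}; for the two geometric models at hand one instead takes the explicit cycles \eqref{eq L S2} and \eqref{eq L Cubic} coming from the incidence correspondence, these agreeing with the Markman cycle in the $S^{[2]}$ case by Proposition \ref{prop L agree}. Each such $L$ is symmetric, and the uniqueness supplied by Proposition \ref{prop L unique} is what upgrades ``a cycle $L$'' to ``a canonical cycle $L$'', so that once $\varepsilon=0$ is known the canonicity follows.

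The main obstacle is precisely the vanishing $\varepsilon=0$: there is no general mechanism forcing a homologically trivial $4$-cycle on a fourfold $F\times F$ to be rationally trivial, and this is exactly why the statement remains conjectural for an arbitrary $F$ with cohomology generated by $\HH^2(F,\Q)$. In the two accessible cases I would pass to an auxiliary model carrying a surface from which vanishing can be propagated. For $S^{[2]}$ one works on the blow-up of $S\times S$ along the diagonal, uses the blow-up formula to rewrite every term of $\varepsilon$ through cycles pulled back from $S\times S$ and cycles supported on the exceptional divisor, and then invokes the Beauville--Voisin relations on the K3 surface --- the identity $c_2(S)=24\,\mathfrak{o}_S$ and the vanishing of the modified diagonal of $S$ --- to recognize the resulting homologically trivial combination as rationally trivial; this is carried out in Theorem \ref{prop cubic L conjecture} via the crucial Lemma \ref{lem lifting hilbert}. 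For the variety of lines on a cubic fourfold I would instead compute the class of the graph of Voisin's self-map $\varphi$ both modulo rational and modulo homological equivalence, and combine the resulting intersection-theoretic identities relating the incidence correspondence, $l$, and $\varphi$ to force $\varepsilon=0$, as in Theorem \ref{thm cubic L conjecture}.
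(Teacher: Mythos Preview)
The statement is a conjecture, and the paper does not prove it in general; you correctly identify this and your ``proof proposal'' is accordingly a strategy together with pointers to the two settled cases, which is the right stance.

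Two corrections on those sketches. For the variety of lines on a cubic fourfold, the proof of Theorem \ref{thm cubic L conjecture} does \emph{not} pass through the class of the graph of $\varphi$. It rests instead on Voisin's quadratic identity \eqref{identity of voisin} for $I^2$, together with the lifting Lemma \ref{lifting cubic} showing that $c_1\cdot I$ is a polynomial in $g_1,g_2,c_1,c_2$, and then Lemma \ref{lem cohomology to chow} to identify the residual polynomial term. The determination of $\Gamma_\varphi$ in Proposition \ref{prop key identity of varphi} comes later in the paper and is used for the multiplicativity of the Fourier decomposition and the eigenspace analysis, not for establishing \eqref{eq rational equation}. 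Second, the canonicity you extract from Proposition \ref{prop L unique} is explicitly conditional on the Bloch--Beilinson conjectures, so it does not unconditionally supply the word ``canonical'' in the conjecture; the paper leaves canonicity as part of what is being conjectured.
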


\begin{thm} \label{prop L2} Let $F$ be a hyperk\"ahler variety of
  $\mathrm{K3}^{[2]}$-type. Assume that there exists a cycle $L \in
  \CH^2(F \times F)$ as in Conjecture \ref{conj main} and assume
  moreover that\medskip

  \noindent \eqref{assumption pre l} \hfill $L_*l^2 = 0$ ; \hfill
  {}\medskip

\noindent \eqref{assumption hom}\hfill $L_*(l\cdot L_*\sigma) = 25 \;
L_*\sigma$ for all $\sigma \in \CH^4(F).$ \hfill {} \medskip

\noindent Then the action of $(L^2)_*$ on $\CH^*(F)$ diagonalizes.
Precisely, for $\lambda \in \Q$, writing $$\Lambda^i_\lambda  :=
\{\sigma \in \CH^i(F) : (L^2)_*\sigma = \lambda\sigma\},$$ we have
\begin{align*}
  \CH^4(F) & = \Lambda_0^4 \oplus \Lambda_2^4, \ \text{with} \
  \Lambda_2^4 = \F^4\CH^4(F)  := \ker \{L_*  : \CH^4(F)_{\hom}
  \rightarrow \CH^2(F)\}\, ; \\
  \CH^3(F) & = \Lambda_0^3 \oplus \Lambda_2^3, \
  \text{with} \  \Lambda_2^3 = \CH^3(F)_{\hom}\,  ;\\
  \CH^2(F) &= \Lambda^2_{25} \oplus \Lambda_2^2 \oplus \Lambda_0^2, \
  \text{with} \ \Lambda^2_{25} = \langle l \rangle \ \text{and} \
  \Lambda^2_0 = L_*\CH^4(F) \subseteq \CH^2(F)_{\hom}\,  ;\\
  \CH^1(F) &= \Lambda_{0}^1\,  ;\\
  \CH^0(F) &= \Lambda^0_{0}.
\end{align*}
Moreover, $l \cdot  : \Lambda^2_0 \rightarrow \CH^4(F)$ maps
$\Lambda^2_0$ isomorphically onto $(\Lambda_0^4)_{\hom}$ with inverse
given by $\frac{1}{25}L_*$ and $l \cdot  : \Lambda_0^1 \rightarrow
\CH^3(F)$ maps $\Lambda_0^1$ isomorphically onto $\Lambda_0^3$ with
inverse given by $\frac{1}{25}L_*$. We also have the natural inclusion
$l\cdot\Lambda_2^2\subseteq \Lambda_2^4$.
\end{thm}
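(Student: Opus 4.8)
The plan is to expand the self-correspondence $L^2$ using the quadratic relation \eqref{eq rational equation} and to compute the action of each of the five resulting terms on $\CH^i(F)$ by the projection formula. Writing $l_i = p_i^*l$, one finds for $\sigma \in \CH^i(F)$ that $(l_1\cdot L)_*\sigma = L_*(l\cdot\sigma)$ and $(l_2\cdot L)_*\sigma = l\cdot L_*\sigma$, while each of $(l_1^2)_*$, $(l_1 l_2)_*$, $(l_2^2)_*$ is supported in a single codimension and contributes a multiple of $[F]$, of $l$, or of $l^2$ weighted by the degrees $\deg(l^2\sigma)$, $\deg(l\sigma)$, $\deg\sigma$. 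The upshot is, on each $\CH^i(F)$, an explicit formula for $(L^2)_*$ in terms of the two operators $\sigma\mapsto L_*\sigma$ and $\sigma\mapsto l\cdot\sigma$; for instance on $\CH^4(F)_{\hom}$ one gets $(L^2)_*\sigma = 2\sigma - \tfrac{2}{25}\,l\cdot L_*\sigma$.

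Before treating the individual groups I would record the inputs I need. Since $F$ is simply connected, $\CH^0(F)_{\hom}=\CH^1(F)_{\hom}=0$; from Lemma \ref{lem b dot B} and $[l]=\mathfrak{b}$ we get $\deg(l^2)=r(r+2)c_F=575$; and Proposition \ref{prop action of B powers}(ii) says $[L]_*=\mathfrak{B}_*$ vanishes on $\HH^i(F,\Q)$ for $i\neq 6$, yielding $L_*l=0$, $L_*\CH^4(F)\subseteq\CH^2(F)_{\hom}$ and $L_*\CH^3(F)_{\hom}=0$. Proposition \ref{prop action of B powers}(iv) gives that $\mathfrak{B}_*$ is injective on $\HH^2$ and on $\HH^6$; combined with $\CH^1(F)_{\hom}=0$ this upgrades the cohomological identity $\mathfrak{B}_*(\mathfrak{b}\cdot[D])=25[D]$ to the exact relation $L_*(l\cdot D)=25\,D$ for every $D\in\CH^1(F)$.

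With these in hand the groups $\CH^0,\CH^1,\CH^3,\CH^4$ follow quickly. On $\CH^4(F)_{\hom}$ the operator $P:=l\cdot L_*(-)$ satisfies $P^2=25P$ by \eqref{assumption hom}, so $\tfrac{1}{25}P$ is idempotent and $(L^2)_*=2-\tfrac{2}{25}P$ diagonalizes with eigenvalues $0$ and $2$; moreover \eqref{assumption hom} forces $\ker P=\ker L_*$, identifying the eigenvalue-$2$ space with $\F^4\CH^4(F)$, while $l^2$ is an eigenvalue-$0$ eigenvector of degree $575$ by \eqref{assumption pre l}, accounting for the non-homological part. On $\CH^3$ the image of $L_*$ lies in $\CH^1=\CH^1(F)_0$, so $\CH^3(F)_{\hom}\subseteq\ker L_*$ has eigenvalue $2$, whereas $l\cdot D$ has eigenvalue $0$ by $L_*(l\cdot D)=25D$; the splitting $\sigma=(\sigma-\tfrac{1}{25}l\cdot L_*\sigma)+\tfrac{1}{25}l\cdot L_*\sigma$ together with $\ker L_*\cap\CH^3=\CH^3(F)_{\hom}$ (from injectivity of $\mathfrak{B}_*$ on $\HH^6$) gives the decomposition. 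The computations on $\CH^0$ and $\CH^1$ are direct and give eigenvalue $0$ throughout. The ``moreover'' isomorphisms then drop out: $\tfrac{1}{25}L_*$ inverts $l\cdot$ on $\Lambda^2_0$ and on $\Lambda^1_0$ by \eqref{assumption hom} and $L_*(l\cdot D)=25D$, and a brief eigenvalue bookkeeping shows $L_*(l\cdot\tau)=0$ and $[l\cdot\tau]=0$ for $\tau\in\Lambda^2_2$, whence $l\cdot\Lambda^2_2\subseteq\Lambda^4_2$.

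The delicate point, and the one I expect to be the main obstacle, is the diagonalization on the full group $\CH^2(F)$, i.e. on the non-homologically-trivial classes. On $\CH^2(F)_{\hom}$ everything is clean: $R:=L_*(l\cdot(-))$ again satisfies $R^2=25R$ (apply \eqref{assumption hom} to $\sigma=l\cdot\tau$), giving eigenvalues $0$ and $2$ with $\Lambda^2_0=\mathrm{Im}(R)=L_*\CH^4(F)$. To control $\CH^2(F)/\CH^2(F)_{\hom}$ I would invoke Proposition \ref{prop action of B powers}(iii): $(\mathfrak{B}^2)_*$ acts on $\HH^4(F,\Q)$ with eigenvalue $25$ on $\langle\mathfrak{b}\rangle$ and $2$ on $\langle\mathfrak{b}\rangle^\perp$, and never $0$. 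Consequently any eigenvalue-$0$ cycle is homologically trivial (so $\Lambda^2_0\subseteq\CH^2(F)_{\hom}$), the cycle $l$ realizes the eigenvalue-$25$ line $\Lambda^2_{25}$, and for a class in $\langle\mathfrak{b}\rangle^\perp$ lifted to some $\tau\in\CH^2(F)$ one computes $(L^2)_*\tau-2\tau=-\tfrac{2}{25}L_*(l\cdot\tau)\in\Lambda^2_0$; since $(L^2)_*-2$ acts as $-2$ on $\Lambda^2_0$, the corrected cycle $\tau-\tfrac{1}{25}L_*(l\cdot\tau)$ is a genuine eigenvector of eigenvalue $2$. This lifting step, which hinges on the vanishing of the $\langle\mathfrak{b}\rangle$-component of $(L^2)_*\tau-2\tau$ (guaranteed cohomologically by (iii)), is exactly what makes the sum $\CH^2(F)=\Lambda^2_{25}\oplus\Lambda^2_2\oplus\Lambda^2_0$ both direct and exhaustive.
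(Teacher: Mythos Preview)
Your proposal is correct and follows essentially the same approach as the paper: expand $(L^2)_*$ via the quadratic relation \eqref{eq rational equation}, evaluate each term by the projection formula, and control the non-homological pieces through the cohomological eigenspace decomposition of Proposition~\ref{prop action of B powers}. The only cosmetic difference is packaging---you phrase the $\CH^4$ and $\CH^2_{\hom}$ arguments via the idempotents $\tfrac{1}{25}P$ and $\tfrac{1}{25}R$ and lift eigenvectors from $\HH^4$, whereas the paper iterates $(L^2)_*$ to obtain the minimal polynomials $(L^2)_*((L^2)_*-2)=0$ on $\CH^4$ and $(L^2)_*((L^2)_*-2)((L^2)_*-25)=0$ on $\CH^2$; the underlying inputs (notably $L_*l=0$, $(L^2)_*l=25l$, and $L_*\CH^4(F)\subseteq\Lambda^2_0$) are identical.
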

\begin{proof} Let us first remark that, by the projection formula, we
  have that for all $\sigma \in \CH^*(F)$ and all $i,j,k \geq 0$
$$(l_1^i\cdot l_2^j\cdot L^k)_* \sigma = l^j \cdot(L^k)_*(l^i\cdot
\sigma).$$
The following lemma will be used throughout the proof of the proposition.
\begin{lem} \label{lem assumption l} Assume that there exists $L \in
  \CH^2(F \times F)$ as in Conjecture \ref{conj main} satisfying
  hypothesis \eqref{assumption pre l}. Then
  \begin{equation}
    \label{assumption l} (L^k)_*(l^j) = 0 \ \mathrm{if} \ k + 2j \neq 4,
    \ \mathrm{and} \ (L^0)_*l^2 = 23\cdot
    25\; [F], \ (L^2)_*l = 25\; l, \ (L^4)_*[F]=3\; l^2.
  \end{equation}
\end{lem}
\begin{proof}[Proof of Lemma \ref{lem assumption l}] Following
  directly from the cohomological description of $L$ are the following
  identities~: $(L^0)_*l^2 = 23\cdot 25l^0$ and $L_*l^j=0$ for $j=0,
  1$. Now, using \eqref{eq rational equation}, we have
\[
(L^2)_*l^j = 2l^j + \left\{ \begin{array}{lll}
    -\frac{2}{23 \cdot 25}(L^0)_*l^2 & = 0 & \mbox{if $j = 0$} ;\\
    + \frac{1}{ 25}l\cdot (L^0)_*l^2 & = 25l & \mbox{if $j = 1$} ;
    \\
    -\frac{2}{23 \cdot 25}l^2\cdot (L^0)_*l^2 & = 0 & \mbox{if $j =
      2$}.
  \end{array} \right.
\] Because $\Delta_F \cdot L = (\iota_\Delta)_*\iota_\Delta^*L =
(\iota_\Delta)_*l$, where $\iota_\Delta  : F \rightarrow F \times F$ is
the diagonal embedding, we have after intersecting \eqref{eq rational
  equation} with $L$~:
\begin{equation}
  \label{eq L3} L^3
  = 2(\iota_\Delta)_*l - \frac{2}{25}(l_1 + l_2)\cdot L^2 -
  \frac{1}{23\cdot 25}(2l_1^2-23l_1l_2 + 2l_2^2)\cdot L,
\end{equation}
which yields $$(L^3)_*l^j = 2l^{j+1} - \frac{2}{25}(L^2)_*(l^{j+1}) -
\frac{2}{25}l \cdot (L^2)_*(l^{j}) = 0 \ \mbox{for all $j$}. $$
Finally, intersecting \eqref{eq rational equation} with $L^2$ gives
\begin{equation}
  \label{eq L4} L^4
  = 2(\iota_\Delta)_*l^2 - \frac{2}{25}(l_1 + l_2)\cdot L^3 -
  \frac{1}{23\cdot 25}(2l_1^2-23l_1l_2 + 2l_2^2)\cdot L^2,
\end{equation}
which, combined with the previous computations, yields the relation
$(L^4)_*l^0 = 3l^2$.
\end{proof}

\noindent The decompositions induced by the action $L^2$ on the Chow
groups of $F$ are established case by case.
\vspace{2mm}\\
\indent\textit{Step 1}  : $\CH^4(F)=\Lambda_2^4\oplus \Lambda_0^4$,
with $\Lambda_2^4 =\mathrm{F}^4\CH^4(F)$.

Let $\sigma\in\CH^4(F)$ and denote $\tau  := l\cdot L_*\sigma$. Note
that the statement \emph{(ii)} of Proposition \ref{prop action of B powers}
implies that $L_*\sigma$ is homologically trivial and hence
$\deg(\tau)=0$. Equation \eqref{eq rational equation} applied to
$\sigma$ gives
\begin{equation}\label{eq L - 2}
  ((L^2)_*-2)\sigma = -\frac{2}{25} l\cdot L_*\sigma
  -\frac{2\deg(\sigma)}{23\cdot 25}\, l^2 =-\frac{2}{25}\tau
  -\frac{2\deg(\sigma)}{23\cdot 25}\, l^2
\end{equation}
On the one hand, by condition (\ref{assumption hom}) we have $l\cdot
L_*\tau =25\tau$ so that
\[
(L^2)_*\tau = 2\tau -\frac{2}{25}l\cdot L_*\tau
-\frac{2\deg(\tau)}{23\cdot 25}\, l^2 = 2\tau -\frac{2}{25}\cdot
25\tau= 0.
\]
On the other hand, we have $(L^2)_*l^2=0$ by Lemma \ref{lem assumption
  l}.

Applying $(L^2)_*$ to equation \eqref{eq L - 2} thus gives
$$
(L^2)_*((L^2)_*-2)\sigma=0,
$$
which yields the required eigenspace decomposition for $\CH^4(F)$.

Let us now show that $\Lambda_2^4 = \mathrm{F}^4\CH^4(F)$. If
$\sigma\in \mathrm{F}^4\CH^4(F)$, then $\tau = l\cdot L_*\sigma = 0$.
Hence we get $(L^2)_*\sigma = 2\sigma$. It follows that
\[
\mathrm{F}^4\CH^4(F)\subseteq \Lambda_2^4.
\]
Conversely, consider $\sigma\in \Lambda_2^4$. Then equation
\eqref{eq L - 2} implies
\[
0 = -\frac{2}{25}\tau -\frac{2\deg(\sigma)}{23\cdot 25}\, l^2.
\]
Since $\tau$ is homologically trivial, we get $\deg(\sigma)=0$, which
further implies $\tau=0$. Then by condition \eqref{assumption hom} we
have $25L_*\sigma =L_*\tau=0$ which means that $L_*\sigma=0$. Hence we
have
$$
\Lambda_2^4\subseteq \mathrm{F}^4\CH^4(F).
$$

Finally, note that, as an immediate consequence of equation \eqref{eq
  L - 2} and of the fact that $\deg((L^2)_*\sigma)=0$, we have
\begin{equation}\label{eq description of W00 hom}
  (\Lambda_0^4)_\mathrm{hom}=  \{\sigma \in \CH^4(F)  : l\cdot
  L_*\sigma =  25\sigma\}.
\end{equation}

\vspace{2mm} \indent\textit{Step 2}  : $\CH^3(F)=\Lambda_2^3\oplus
\Lambda_0^3$, where $\Lambda_2^3=\CH^3(F)_\mathrm{hom}$.

Consider a divisor $\sigma\in\CH^3(F)$. Equation \eqref{eq rational
  equation} applied to $\sigma$ gives
$$
((L^2)_*-2)\sigma = -\frac{2}{25}l\cdot L_*\sigma.
$$
Note that $L_*\sigma$ is a divisor, and recall from Corollary
\ref{prop kunneth} that $L_*$ induces an isomorphism $\HH^6(F,\Q)
\stackrel{\simeq}{\longrightarrow} \HH^2(F,\Q)$ with inverse given by
intersecting with $\frac{1}{25}l$. Because $\CH^1(F)$ injects into
$\HH^2(F,\Q)$ via the cycle class map, we obtain that $L_*\sigma=0$ if
and only if $\sigma$ is homologically trivial.  In other words, we
have $(L^2)_*\sigma = 2 \sigma$ if and only if $\sigma \in
\CH^3(F)_\mathrm{hom}$. Statement \emph{(ii)} of Proposition \ref{prop
  action of B powers} implies that $(L^2)_*\sigma$ is homologically
trivial for all $\sigma\in\CH^3(F)$.  Thus
$$
((L^2)_*-2)(L^2)_*\sigma = 0,
$$
which yields the required eigenspace decomposition. Note for future
reference that
\begin{equation}
  \label{eq action pi6 CH1} \Lambda_0^3 = \{\sigma \in \CH^3(F)  :
  l\cdot L_*\sigma =25\sigma\}.
\end{equation}
\vspace{2mm}\\

\indent\textit{Step 3}  : $\CH^2(F) = \Lambda_{25}^2 \oplus \Lambda_2^2
\oplus \Lambda_{0}^2$, with $\Lambda_{25}^2=\langle l \rangle$,
$\Lambda_0^2=L_*\CH^4(F)=L_* (\Lambda_0^4)_\mathrm{hom}$.
Furthermore, we have $\CH^2(F)_\mathrm{hom} =
(\Lambda_2^2)_\mathrm{hom} \oplus \Lambda_0^2$.

Let $\sigma\in\CH^2(F)$ and denote $\tau  := L_*(l\cdot\sigma)$. We
know from \emph{(ii)} of Proposition \ref{prop action of B powers} that
$L_*\sigma = 0$ and that $\tau$ is homologically trivial. Equation
\eqref{eq rational equation}, together with the identity $L_*l=0$,
then gives
\begin{equation}\label{eq L - 2 on 2-cycles}
((L^2)_*-2)\sigma = -\frac{2}{25}L_*(l\cdot\sigma)
+\frac{1}{25} \left( \int_F [l]\cup[\sigma] \right) \, l.
\end{equation}
In particular, equation \eqref{eq rational equation} applied to $\tau$
gives
\[
((L^2)_*-2)\tau = -\frac{2}{25} L_*(l\cdot\tau) = -\frac{2}{25}
L_*(l\cdot L_*(l\cdot\sigma))= -\frac{2}{25}
(25L_*(l\cdot\sigma))=-2\tau,
\] where the third equality is hypothesis \eqref{assumption hom}.
Thus $(L^2)_*\tau =0$. We then apply $(L^2)_*$ to
equation \eqref{eq L - 2 on 2-cycles} and get, in view of the identity
$(L^2)_*l=25\,l$ of Lemma \ref{lem assumption l},
\begin{equation}\label{eq L dot L - 2}
  (L^2)_*((L^2)_*-2)\sigma =\left( \int_F [l]\cup[\sigma] \right) \, l, \qquad
  \forall \sigma\in\CH^2(F).
\end{equation}
We then apply $((L^2)_*-25)$ to the above equation and get
\[
(L^2)_* ((L^2)_* - 2) ((L^2)_* - 25)\sigma =0, \qquad \forall
\sigma\in\CH^2(F),
\] which yields the required eigenspace decomposition.

It is worth noting that \eqref{eq L dot L - 2} implies that
\begin{equation}\label{eq polynomial on CH2 hom}
(L^2)_*((L^2)_*-2)\sigma=0,\qquad \forall
\sigma\in\CH^2(F)_\mathrm{hom}.
\end{equation}

Let us characterize the eigenspaces $\Lambda_{25}^2$, $\Lambda_2^{2}$
and $\Lambda_{0}^2$.  First we show that $\Lambda_{25}^2$ is spanned
by $l$.  If $\sigma\in \Lambda^2_{25}$, then we have
\[
23\sigma = ((L^2)_*- 2)\sigma
=-\frac{2}{25}\tau+\frac{1}{25} \left( \int_F [l]\cup[\sigma] \right) \, l.
\]
Since $\tau$ is homologically trivial, we see that the homology class
$[\sigma]$ of $\sigma$ is a multiple of the homology class of $l$.
Consequently, there exists a rational number $a$ such that
$\sigma'=\sigma-al$ is homologically trivial and hence, by equation
\eqref{eq polynomial on CH2 hom}, we get
\[
(L^2)_*((L^2)_*-2)\sigma'=0
\]
Since $\sigma'\in\Lambda_{25}^2$, the above equation implies
$\sigma'=0$, namely $\sigma=al$. This proves that
$\Lambda_{25}^2=\langle l \rangle$.

Identity \eqref{eq L - 2 on 2-cycles}, together with Proposition
\ref{prop action of B powers}\emph{(iii)}, implies that $\sigma \in
\CH^2(F)$ satisfies $(L^2)_*\sigma = 2 \sigma$ if and only if
$L_*(l\cdot \sigma) =0$ and $\deg(l\cdot \sigma) = 0$. Thus, since
$\Lambda_2^4 = \F^4\CH^4(F)$, we get
\begin{equation}
  \label{eq action pi6 CH2} \Lambda_2^2 = \{\sigma \in \CH^2(F)  :
  (L^2)_*(l\cdot \sigma) =2l \cdot \sigma\}.
\end{equation}
In particular, this implies the inclusion
$l\cdot\Lambda_2^2\subseteq\Lambda_2^4$.

Finally, we give a description of the space $\Lambda_0^2$. Statement
\emph{(iii)} of Proposition \ref{prop action of B powers} implies that
$0$ is not an eigenvalue of $(L^2)_*$ acting on the cohomology group
$\HH^4(F,\Q)$. Hence all the elements of $\Lambda_0^2$ are
homologically trivial. Thus, if $\sigma\in\Lambda_0^2$, then we have
\[
-2\sigma = ((L^2)_*-2)\sigma = -\frac{2}{25}L_*(l\cdot\sigma).
\]
It follows that $\sigma=\frac{1}{25}L_*(l\cdot \sigma)$, namely that
$\Lambda_0^2\subseteq L_*\CH^4(F)$.  Consider now $\sigma \in
L_*\CH^4(F)$. Note that $\sigma$ is homologically trivial by statement
\emph{(ii)} of Proposition \ref{prop action of B powers}. By condition
\eqref{assumption hom}, we see that
\[
L_*(l\cdot\sigma) = 25\sigma.
\]
Hence it follows that
\[
((L^2)_*-2)\sigma=-\frac{2}{25}L_*(l\cdot\sigma) + \frac{1}{25}\left(
  \int_F [l]\cup[\sigma] \right) \, l =-2\sigma,
 \]
 which further gives $\sigma\in\Lambda_0^2$. Thus we have the
 inclusion $L_*\CH^4(F) \subseteq \Lambda_0^2.$ The above arguments
 establish the equality
\[
\Lambda_0^2= L_*\CH^4(F).
\]
Actually, since $L_*l^2 = 0$ and $L_*\Lambda_2^4 = 0$, we have
\[
\Lambda_0^2= L_*(\Lambda_0^4)_{\hom}.
\]
The identity \eqref{eq description of W00 hom} then implies that $L_* :
(\Lambda_0^4)_\mathrm{hom}\rightarrow \Lambda_0^2$ is an isomorphism
with inverse $\frac{1}{25}l\cdot : \Lambda_0^2\rightarrow
(\Lambda_0^4)_\mathrm{hom}$.
\vspace{2mm}\\

\indent \textit{Step 4}  : $\CH^1(F) = \Lambda_0^1$.  Indeed,
$\CH^1(F)$ injects into $\HH^2(F,\Q)$ by the cycle class map and $L^2$
acts as zero on $\HH^2(F,\Q)$ by Proposition \ref{prop action of B
  powers}\emph{(ii)}.
\vspace{2mm}\\

\indent \textit{Step 5}  : $\CH^0(F) = \Lambda_0^0$. This is obvious
from Lemma \ref{lem assumption l}.
\end{proof}

\begin{thm} \label{prop main} Let $F$ be a hyperk\"ahler variety of
  $\mathrm{K3}^{[2]}$-type. Assume that there exists $L \in \CH^2(F
  \times F)$ as in Conjecture \ref{conj main} satisfying conditions
  (\ref{assumption pre l}) and (\ref{assumption hom}). For
  $\lambda\in\Q$, denote
  $$
  W^i_{\lambda} :=\{\sigma \in\CH^i(F) :\FF\circ \FF(\sigma)=\lambda
  \sigma\}.$$ Then, with the
  notations of Theorem \ref{thm main splitting} and Theorem
  \ref{prop L2}, we have
  \begin{align*}
    \CH^4(F)_0 & = W^4_{\frac{23\cdot 25}{8}} = \langle l^2 \rangle, \
    \CH^4(F)_2 = W^4_{\frac{25}{2}} = (\Lambda_0^4)_{\hom}, \
    \CH^4(F)_4 = W_1^4 = \Lambda_2^4 ;\\
    \CH^3(F)_0 & = W^3_{\frac{25}{2}} = \Lambda_0^3, \ \CH^3(F)_2 =
    W_1^3 = \Lambda_2^3 ;\\
    W^2_{(\frac{25}{2})^2} & = \Lambda_{25}^2 = \langle l \rangle, \
    W_1^2 = \Lambda_2^2, \ \CH^2(F)_2 = W_{\frac{25}{2}}^2 =
    \Lambda_0^2.
  \end{align*}
  Assume moreover that $L$ satisfies the following condition~:
  \medskip

 \noindent \eqref{assumption 2hom} \hfill
 $(L^2)_*(l\cdot(L^2)_*\sigma) = 0$ for all $\sigma \in \CH^2(F).$
 \hfill {} \medskip

\noindent Then
  \begin{align*}
    \CH^2(F)_0 & = \Lambda_{25}^2 \oplus \Lambda_2^2.
  \end{align*}
\end{thm}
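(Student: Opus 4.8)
The plan is to translate the condition defining $\CH^2(F)_0$ into the vanishing of a single push-forward, and then to evaluate that push-forward on the eigenspace splitting of Theorem \ref{prop L2}. First I would unwind the definition: for $\sigma \in \CH^2(F)$, expanding $e^L$ and dropping the terms that vanish for dimension reasons gives
$$\FF(\sigma) = L_*\sigma + \tfrac12 (L^2)_*\sigma + \tfrac16 (L^3)_*\sigma,$$
where the three terms lie in $\CH^0(F)$, $\CH^2(F)$ and $\CH^4(F)$. Since $L_*\sigma$ is homologically trivial by Proposition \ref{prop action of B powers}(ii), hence zero in $\CH^0(F)=\Q[F]$, the element $\FF(\sigma)$ lies in $\CH^2(F)$ precisely when its $\CH^4$-component $(L^3)_*\sigma$ vanishes. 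Thus $\CH^2(F)_0 = \ker\{(L^3)_*\colon \CH^2(F)\to\CH^4(F)\}$, and the whole problem becomes the computation of this kernel.

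Next I would compute $(L^3)_*$ on each summand of $\CH^2(F)=\Lambda_{25}^2\oplus\Lambda_2^2\oplus\Lambda_0^2$. Intersecting \eqref{eq rational equation} with $L$ gives the expression \eqref{eq L3} for $L^3$; applying it to $\sigma$, using the projection-formula identity $(l_1^il_2^jL^k)_*\sigma=l^j(L^k)_*(l^i\sigma)$ together with $L_*\sigma=0$ and $l^2\sigma=0$, one obtains
$$(L^3)_*\sigma = 2\,l\sigma -\tfrac{2}{25}(L^2)_*(l\sigma)-\tfrac{2}{25}\,l\,(L^2)_*\sigma+\tfrac{1}{25}\,l\,L_*(l\sigma).$$
Feeding in the eigenvalue data of Theorem \ref{prop L2} then gives $(L^3)_* l=0$ (already from Lemma \ref{lem assumption l}, as $3+2\neq4$); for $\sigma_0\in\Lambda_0^2$ one finds $(L^3)_*\sigma_0=3\,l\sigma_0$ (using $(L^2)_*\sigma_0=0$, $L_*(l\sigma_0)=25\sigma_0$, and $(L^2)_*(l\sigma_0)=0$ since $l\sigma_0\in(\Lambda_0^4)_{\hom}\subseteq\Lambda_0^4$); and for $\sigma_2\in\Lambda_2^2$ one finds $(L^3)_*\sigma_2=\tfrac{42}{25}\,l\sigma_2$ (using $(L^2)_*\sigma_2=2\sigma_2$, $L_*(l\sigma_2)=0$, and $(L^2)_*(l\sigma_2)=2\,l\sigma_2$ as $l\sigma_2\in\Lambda_2^4$).

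The crux is that hypothesis \eqref{assumption 2hom} is exactly what forces $l\cdot\Lambda_2^2=0$. Indeed, for $\sigma_2\in\Lambda_2^2$ we have $(L^2)_*\sigma_2=2\sigma_2$, so \eqref{assumption 2hom} reads $0=(L^2)_*(l\cdot(L^2)_*\sigma_2)=2\,(L^2)_*(l\sigma_2)$; comparing with $(L^2)_*(l\sigma_2)=2\,l\sigma_2$ (valid because $l\sigma_2\in\Lambda_2^4$) yields $l\sigma_2=0$, whence $(L^3)_*\sigma_2=0$. Together with $(L^3)_*l=0$ this gives $\Lambda_{25}^2\oplus\Lambda_2^2\subseteq\CH^2(F)_0$. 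For the reverse inclusion, write $\sigma=al+\sigma_2+\sigma_0$; by the above $(L^3)_*\sigma=3\,l\sigma_0$, and since $l\cdot\colon\Lambda_0^2\to\CH^4(F)$ is injective (Theorem \ref{prop L2}) the vanishing $(L^3)_*\sigma=0$ forces $\sigma_0=0$. Hence $\CH^2(F)_0=\Lambda_{25}^2\oplus\Lambda_2^2$.

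The computations above are routine once \eqref{eq L3} is in hand; the genuine content, and the step I would treat as the main point, is the recognition that \eqref{assumption 2hom} is precisely the hypothesis needed to collapse $l\cdot\Lambda_2^2$ to zero. One should also keep careful track of the fact that $l\sigma_2$ and $l\sigma_0$ lie in the distinct $(L^2)_*$-eigenspaces $\Lambda_2^4$ and $\Lambda_0^4$, so that no cancellation can occur between the two contributions to $(L^3)_*\sigma$; without \eqref{assumption 2hom} one would only recover the smaller space $\Lambda_{25}^2\oplus\ker(l\cdot|_{\Lambda_2^2})$.
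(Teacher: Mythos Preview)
Your argument for the final assertion $\CH^2(F)_0=\Lambda_{25}^2\oplus\Lambda_2^2$ is correct and is essentially the paper's own: reduce to computing $(L^3)_*$ on the three $\Lambda$-pieces via \eqref{eq L3}, then use \eqref{assumption 2hom} to kill $l\cdot\Lambda_2^2$, then use the injectivity of $l\cdot$ on $\Lambda_0^2$ for the reverse inclusion. (Your coefficient $\tfrac{42}{25}$ is in fact the correct one; the paper's displayed $\tfrac{58}{25}$ in \eqref{eq L3 CH2} carries a sign slip in the two middle terms, but since those terms vanish under \eqref{assumption 2hom} the conclusion is unaffected.)

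What is missing is everything before the ``Assume moreover'' clause. The theorem also asserts, under hypotheses \eqref{eq rational equation}, \eqref{assumption pre l}, \eqref{assumption hom} alone, the full list of identifications $\CH^4(F)_0=W^4_{23\cdot25/8}=\langle l^2\rangle$, $\CH^4(F)_2=W^4_{25/2}=(\Lambda_0^4)_{\hom}$, $\CH^4(F)_4=W^4_1=\Lambda_2^4$, the analogous ones for $\CH^3(F)$, and $W^2_{(25/2)^2}=\langle l\rangle$, $W^2_1=\Lambda_2^2$, $\CH^2(F)_2=W^2_{25/2}=\Lambda_0^2$. Your write-up does not touch these. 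The paper devotes most of the proof to them: one computes $(L^r)_*(L^s)_*$ on each $\Lambda$-piece (e.g.\ $L^4\circ L^0$ on $\langle l^2\rangle$, $L^3\circ L$ on $(\Lambda_0^4)_{\hom}$, $L_*\circ(L^3)_*$ on $\Lambda_0^2$, etc.) to read off the eigenvalues of $\FF\circ\FF$ and to check that each $\FF(\sigma)$ lands in the correct single graded piece. These computations are routine once \eqref{eq L3} and Lemma~\ref{lem assumption l} are in hand, but they are not automatic from Theorem~\ref{prop L2} and need to be supplied.
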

\begin{proof} All is needed is to understand the actions of $(L^s)_*$
  and $(L^r)_*(L^s)_*$ on the pieces $\Lambda^i_\lambda$. Clearly, for
  dimension reasons, $(L^s)_*\Lambda^i_\lambda = 0$ either if $i < 4-
  2s$ or $i>8-2s$.

  First, we note that the action of $L^2$ does not completely
  decompose $\CH^4(F)$.  The correspondence $L^0 = [F \times F]$
  clearly acts as zero on $\CH^4(F)_{\hom}$. In particular $L^0$ acts
  as zero on $\Lambda_2^4$. By Lemma \ref{lem assumption l}, $F$
  satisfies condition (\ref{assumption l}). Thus $L^4 \circ L^0$ acts
  as multiplication by $3\cdot 23\cdot 25$ on $l^2$ and $(L^2)_*l^2 =
  0$.  It follows that the action of $L^4 \circ L^0$ commutes with the
  action of $L^2$ on $\CH^4(F)$. Therefore, $\Lambda_0^4$ further
  splits into
  $$\Lambda_0^4 = \langle l^2 \rangle \oplus (\Lambda_0^4)_{\hom}.$$
  Now by condition \eqref{assumption pre l} we have $L_*l^2 = 0$, and
  by Theorem \ref{prop L2} we have $L_*\Lambda_0^2 = 0$. We claim
  that $L^3 \circ L$ acts by multiplication by $3\cdot 25$ on
  $(\Lambda_0^4)_{\hom}$. Recall that we have equation \eqref{eq L3}
  for the expression of $L^3$.  Thus, having in mind that $L_*\sigma$
  is homologically trivial for $\sigma \in \CH^4(F)$,
  \begin{center}
    $(L^3)_*(L_*\sigma) = 2l\cdot L_*\sigma - \frac{2}{25}\big( l\cdot
    (L^2)_*L_*\sigma + (L^2)_*(l\cdot L_*\sigma) \big) +
    \frac{1}{25}l\cdot L_*(l\cdot L_*\sigma)$, \quad for all $\sigma
    \in \CH^4(F)$.
  \end{center}
  Condition \eqref{assumption hom} gives $l\cdot L_*(l\cdot L_*\sigma) =
  25 l\cdot L_*\sigma$ and Theorem \ref{prop L2}
  gives $L_*\sigma \in \Lambda_0^2$, \textit{i.e.},  $(L^2)_*L_*\sigma
  =0$. Hence
\begin{center}
  $(L^3)_*(L_*\sigma) = 3l\cdot L_*\sigma - \frac{2}{25}(L^2)_*(l\cdot
  L_*\sigma)$, \quad for all $\sigma \in \CH^4(F)$.
\end{center}
The claim then follows from the identity \eqref{eq description of W00
  hom} which stipulates that $l\cdot L_*\sigma = 25\sigma$ for all
$\sigma \in (\Lambda_0^4)_{\hom}$.\medskip

Since $\Lambda_2^3 = \CH^3(F)_{\hom}$, it is clear that
$L_*\Lambda_2^3 = 0$. Consider $\sigma \in \Lambda_0^3$. Then
Theorem \ref{prop L2} gives $L_*\sigma \in \Lambda_0^1$ and $l
\cdot L_*\sigma = 25\sigma$. Equation \eqref{eq L3} applied to
$L_*\sigma$ yields
\begin{equation*}
  (L^3)_*L_*\sigma  = 2 l \cdot L_*\sigma + \frac{1}{25}l
  \cdot L_*(l \cdot L_*\sigma) = 3 \cdot 25 \sigma.
\end{equation*} \medskip

Condition \eqref{assumption pre l}, via Lemma \ref{lem assumption l},
gives $(L^k)_* \Lambda_2^{25}=0$ unless $k=2$, and Proposition
\ref{prop action of B powers} gives $L_*\CH^2(F) =0$. Recall that by
Theorem \ref{prop L2} we have $\Lambda_0^2 =
L_*(\Lambda_0^4)_{\hom}$. Consider $\sigma \in \Lambda_0^2$ and
$\sigma_0 \in (\Lambda_0^4)_{\hom}$ such that $\sigma = L_*\sigma_0$.
Equation \eqref{eq L3}, together with equation \eqref{eq description
  of W00 hom}, then gives $(L^3)_*\sigma = 3\cdot 25\sigma_0,$ so that
$$L_*(L^3)_*\sigma = 3\cdot 25\sigma.$$ Let us now consider an element
$\sigma \in \Lambda_2^2$. According to \eqref{eq action pi6 CH2},
$\sigma \in \Lambda_2^2$ if and only if $l\cdot \sigma \in
\Lambda_2^4$~; and according to Theorem \ref{prop L2}, $l\cdot
\sigma \in \Lambda_2^4$ only if $L_*(l\cdot \sigma) = 0$. Therefore
applying \eqref{eq L3} to $\sigma$ yields
\begin{equation} \label{eq L3 CH2}
(L^3)_*\sigma  = 2l\cdot \sigma + \frac{2}{25} l\cdot (L^2)_*\sigma +
\frac{2}{25}(L^2)_*(l\cdot \sigma) + \frac{1}{25}l\cdot L_*(l\cdot
\sigma)  = \frac{58}{25}l\cdot \sigma.
\end{equation}
It immediately follows that
\begin{align*}
L_*(L^3)_* \sigma =0  , \quad \mbox{for all $\sigma \in \Lambda_2^2$}.
\end{align*}
Note that
\begin{equation}
  \label{eq Lambda22eq} \mbox{Condition \eqref{assumption 2hom}
    is equivalent to} \ l\cdot \Lambda_2^2
  = 0.
\end{equation}
Indeed, consider $\sigma \in \Lambda_2^2$. On the one hand, \eqref{eq
  action pi6 CH2} gives $l \cdot \sigma \in \Lambda_2^4$. On the other
hand, condition \eqref{assumption 2hom} gives $(L^2)_*(l\cdot \sigma)
= 0$, \emph{i.e.}, $l\cdot \sigma \in \Lambda_0^4$. But, by Theorem
\ref{prop L2}, $\CH^4(F) = \Lambda_0^4 \oplus \Lambda_2^4$. Hence $l
\cdot \sigma = 0$.

Thus, assuming \eqref{assumption 2hom}, \eqref{eq L3 CH2} gives
$(L^3)_* \sigma =0$ for all $\sigma \in \Lambda_2^2$. This yields the
identity $\CH^2(F)_0 = \Lambda_{25}^2 \oplus \Lambda_2^2$.\medskip

Finally, both $\CH^1(F)$ and $\CH^0(F)$ inject by the cycle class map
into $\HH^2(F,\Q)$ and $\HH^0(F,\Q)$ respectively, so that the proof
here reduces to Proposition \ref{prop action of B powers}.
 \end{proof}

\vspace{10pt}
\section{The Fourier decomposition is motivic} \label{sec CK}

Let $F$ be a hyperk\"ahler variety of K3$^{[2]}$-type.  In this
section, we show that, provided $F$ is endowed with a cycle $L \in
\CH^2(F \times F)$ representing the Beauville--Bogomolov class
$\mathfrak{B}$ satisfying \eqref{eq rational equation},
\eqref{assumption pre l}, \eqref{assumption hom} and \eqref{assumption
  2hom}, the induced Fourier decomposition of Theorem \ref{thm main
  splitting} arises from a Chow--K\"unneth decomposition of the
diagonal.\medskip

Let $X$ be a smooth projective variety of dimension $d$ defined over a
field $k$. Murre \cite{murre} conjectured the existence of mutually
orthogonal idempotents $\pi^0, \ldots, \pi^{2d}$ in the ring (for the
composition law) of correspondences $\CH^d(X \times X)$ such that
$\Delta_X = \pi^0+ \ldots + \pi^{2d} \in \CH^d(X \times X)$ and such
that the cohomology class of $\pi^j \in \HH^{2d}(X \times X) \subseteq
\End(\HH^*(X))$ is the projector on $\HH^j(X)$. Here $\HH^j(X)$
denotes the $\ell$-adic cohomology group
$\HH^j(X_{\overline{k}},\mathds{Q}_\ell)$. Such a decomposition of the
diagonal is called a \emph{Chow--K\"unneth decomposition}~; it is a
lift of the K\"unneth decomposition of $\mathrm{id} \in
\End(\HH^*(X))$ via the cycle class map $\CH^d(X \times X) \rightarrow
\HH^{2d}(X \times X)$. Furthermore, Murre conjectured that any
Chow--K\"unneth
decomposition satisfies the following properties~: \\
(B) $\pi^i_* \CH^j(X) = 0$ for $i>2j$ and for $i<j$~;\\
(D) $\ker \{\pi^{2j}_*  : \CH^j(X) \rightarrow \CH^j(X)\} =
\CH^j(X)_{\hom} $. \\ If such a Chow--K\"unneth decomposition exists,
we may define a descending filtration $\mathrm{F}^\bullet$ on
$\CH^j(X)$ as follows~: $$\mathrm{F}^r \CH^j(X)  := \ker\{ (\pi^{2j} +
\ldots + \pi^{2j-r+1})_*  : \CH^j(X) \rightarrow \CH^j(X)\}.$$ Thus
$\mathrm{F}^0\CH^j(X) = \CH^j(X) $, $\mathrm{F}^1\CH^j(X)
=\CH^j(X)_{\hom}$ and $\mathrm{F}^{j+1}\CH^j(X) =0$. Murre further
conjectured\\
(C) The filtration $\mathrm{F}^\bullet$ does not depend on the choice of
Chow-K\"unneth decomposition. \medskip

It is a theorem of Jannsen \cite{Jannsen2} that Murre's conjectures
for all smooth projective varieties are equivalent to the conjectures
of Bloch and Beilinson \cite{beilinson, bloch}~; see \cite[Conjecture
2.1]{Jannsen2} for a formulation of these conjectures. Here we show
that a hyperk\"ahler fourfold that satisfies the assumptions of
Theorems \ref{prop L2} and \ref{prop main}, \emph{e.g.} the Hilbert
scheme of length-$2$ subschemes on a K3 surface or the Fano scheme of
lines on a smooth cubic fourfold, has a Chow--K\"unneth decomposition
that satisfies (B) and for which the induced filtration on $\CH^4(F)$
is the one considered in \eqref{eq filtration gal}. Note that the
existence of a Chow--K\"unneth decomposition for the Hilbert scheme of
length-$n$ subschemes on a surface follows directly from the existence
of a Chow--K\"unneth decomposition for surfaces \cite{murre2} and from
the work of de Cataldo and Migliorini \cite{dCM}. We will also give in
Section \ref{sec multCKX2} another way of constructing a
Chow--K\"unneth decomposition for $S^{[2]}$. In the same spirit as
Conjecture \ref{conj sheaf}, all these Chow--K\"unneth decompositions
for $S^{[2]}$ should agree provided that one starts with the
Chow--K\"unneth decomposition for $S$ given by $\pi^0_S =
\mathfrak{o}_S \times S$, $\pi^4_S = S \times \mathfrak{o}_S$ and
$\pi^2_S = \Delta_S - \pi^0_S - \pi^4_S$, where $\mathfrak{o}_S$ is
any point lying on a rational curve on $S$. \medskip

The following lemma relies on a technique initiated by Bloch and
Srinivas \cite{bs}.

\begin{lem} \label{lem BS} Let $X$ be a smooth projective variety over
  a field $k$ and let $\Omega$ be a universal domain containing $k$.
  If $f \in \CH^{\mathrm{dim} \, X} (X \times X)$ is a correspondence
  such that $(f_\Omega)_*\CH_0(X_\Omega) = 0$, then there is a smooth
  projective variety $Y$ of dimension $\mathrm{dim} \, X-1$ and
  correspondences $g \in \CH^{\mathrm{dim} \, X}(Y \times X)$ and $h
  \in \CH_{\mathrm{dim} \, X}(X \times Y)$ such that $f = g \circ h$.
  If ${\mathrm{dim} \, X} = 0$, then $f=0$.
\end{lem}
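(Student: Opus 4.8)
The plan is to run the Bloch--Srinivas ``decomposition of the diagonal'' argument, applied to $f$ in place of $\Delta_X$. Write $d=\dim X$ and treat first the case $d\ge 1$. Let $K=k(X)$ be the function field of the first factor and let $\eta\in X(K)$ be its tautological point. Under the restriction map $\CH^{d}(X\times X)\to \varinjlim_{U}\CH_{d}(U\times X)=\CH_0(X_{K})$, where $U$ runs over the dense open subsets of the first factor, the correspondence $f$ maps to the restriction $f_\eta=(f_{K})_*[\eta]$ of $f$ to the generic fibre of the first projection. The crux of the argument is to deduce $f_\eta=0$ in $\CH_0(X_K)$ from the hypothesis $(f_\Omega)_*\CH_0(X_\Omega)=0$.

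For this I would fix an embedding $\overline{K}\hookrightarrow\Omega$ and argue by descent, using that we work with $\Q$-coefficients. Applying the hypothesis to the $\Omega$-point obtained from $\eta$ gives $(f_\Omega)_*[\eta_\Omega]=0$ in $\CH_0(X_\Omega)$. Since any rational equivalence over the universal domain $\Omega$ is already defined over a subfield finitely generated over $\overline K$, and since $\overline K$ is algebraically closed (so that such a parameter variety has an $\overline K$-point at which one may specialise), the vanishing descends to $\CH_0(X_{\overline K})$; hence $(f_{\overline K})_*[\eta_{\overline K}]=0$. Finally $\CH_0(X_{K})\to\CH_0(X_{\overline K})$ is injective with $\Q$-coefficients: for a finite subextension $K'/K$ of degree $n$ the composite $\CH_0(X_K)\to\CH_0(X_{K'})\xrightarrow{\mathrm{push}}\CH_0(X_K)$ is multiplication by $n$, and one passes to the colimit. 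Therefore $f_\eta=0$ over $K$ itself. This descent step is where the universal-domain hypothesis and the rational coefficients are genuinely used, and I expect it to be the main point requiring care.

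Granting $f_\eta=0$, the localisation sequence gives a dense open $U\subseteq X$ with $f|_{U\times X}=0$; enlarging the complement to a divisor $D$, we may write $f=(\iota_D\times \mathrm{id})_*f'$ for some $f'\in\CH_{d}(D\times X)$, where $\iota_D\colon D\hookrightarrow X$ and $\dim D=d-1$. By resolution of singularities choose $\tau\colon Y\to D$ with $Y$ smooth projective of dimension $d-1$ and $\tau$ proper birational on each component, and set $\mu=\iota_D\circ\tau\colon Y\to X$. Proper birational pushforward is surjective on Chow groups, so $f'$ lifts to a cycle $\tilde f\in\CH_{d}(Y\times X)$ with $(\tau\times\mathrm{id})_*\tilde f=f'$, whence $f=(\mu\times \mathrm{id})_*\tilde f$.

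It remains to package this as a composition through $Y$. A direct computation on $X\times Y\times X$ shows $(\mu\times\mathrm{id})_*\tilde f=\tilde f\circ {}^t\Gamma_\mu$, where $\Gamma_\mu\subseteq Y\times X$ is the graph of $\mu$ and ${}^t\Gamma_\mu\subseteq X\times Y$ its transpose. Thus, taking $h={}^t\Gamma_\mu\in\CH^{d}(X\times Y)$ and $g=\tilde f\in\CH_{d}(Y\times X)$ (up to transposing the factors so as to match the stated degrees), we obtain $f=g\circ h$ with $Y$ of dimension $d-1=\dim X-1$, as desired. For the remaining case $\dim X=0$ the variety $X_\Omega$ is a finite set of reduced points, $\CH_0(X_\Omega)$ is the free $\Q$-module on them, and $f$ acts as a matrix; the hypothesis $(f_\Omega)_*=0$ then forces $f=0$ directly.
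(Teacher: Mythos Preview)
Your proof is correct and follows the same Bloch--Srinivas spreading-out argument as the paper: pass to the generic point, use the universal-domain hypothesis to kill $f_\eta$, localise to support $f$ on $D\times X$, and factor through a smooth model of $D$. Two minor remarks: the paper invokes an \emph{alteration} $Y\to D$ rather than a resolution, which keeps the statement valid over an arbitrary field $k$ (as in the lemma's hypotheses), whereas your appeal to resolution of singularities needs characteristic zero; and your parenthetical about ``transposing the factors'' is covering a genuine but harmless degree mismatch---your $\tilde f\in\CH^{d-1}(Y\times X)$ and ${}^t\Gamma_\mu\in\CH^d(X\times Y)$ have the codimensions of the stated $g$ and $h$ interchanged, though this is irrelevant for the application to Proposition~\ref{prop nilpotent}.
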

\begin{proof} The lemma is clear when $d := {\mathrm{dim} \, X} = 0$.
  Let $k(X)$ be the function field of $X$. The assumption that
  $(f_\Omega)_*\CH_0(X_\Omega) = 0$ implies that
  $(f_{k(X)})_*\CH_0(X_{k(X)}) = 0$. Let $\eta$ be the generic point
  of $X$ seen as a $k(X)$-rational point. In particular,
  $(f_{k(X)})_*[\eta] = 0$. The $0$-cycle $(f_{k(X)})_*[\eta] \in
  \CH_0(k(X) \times X)$ coincides with the restriction of $f$ along
  the map $\CH^d(X \times X) \rightarrow \CH_0(k(X) \times X)$
  obtained as the direct limit, indexed by the non-empty open sub-sets
  $U$ of $X$, of the flat pull-back maps $\CH^d(X \times X) \rightarrow
  \CH^d(U \times X)$. Therefore, by the localization exact sequence
  for Chow groups, $f$ is supported on $D \times X$ for some divisor
  $D$ inside $X$. If $Y \rightarrow D$ is an alteration of $D$, then
  $f$ factors through $Y$.
\end{proof}

\begin{prop} \label{prop nilpotent} Let $X$ be a smooth projective
  variety over a field $k$ and let $\Omega$ be a universal domain
  containing $k$. If $f \in \CH^{\mathrm{dim} \, X} (X \times X)$ is a
  correspondence such that $(f_\Omega)_*\CH_*(X_\Omega) = 0$, then $f$
  is nilpotent. Precisely, if $N :=2^{\dim X}-1$, we have $f^{\circ
    N}=0$.
\end{prop}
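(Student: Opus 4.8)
The plan is to argue by induction on $d:=\dim X$, the two ingredients being Lemma \ref{lem BS} and the composition calculus for self-correspondences. The base case $d=0$ is handled by Lemma \ref{lem BS} itself, which gives $f=0$ outright. For the inductive step I would first note that the hypothesis $(f_\Omega)_*\CH_*(X_\Omega)=0$ contains in particular $(f_\Omega)_*\CH_0(X_\Omega)=0$, so Lemma \ref{lem BS} furnishes a smooth projective variety $Y$ with $\dim Y=d-1$ together with a factorization $f=g\circ h$, where $h\in\CH_d(X\times Y)=\CH^{d-1}(X\times Y)$ and $g\in\CH^d(Y\times X)$. Both $g_*$ and $h_*$ preserve the grading of $\CH_*$ by dimension, so $e:=h\circ g$ is a self-correspondence of $Y$ lying in $\CH^{d-1}(Y\times Y)$, i.e. it is of the type to which the proposition applies in dimension $d-1$.

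The heart of the argument is a pair of elementary identities coming from associativity of composition. On the one hand, $f^{\circ(n+1)}=g\circ e^{\circ n}\circ h$ for every $n\ge 0$; on the other hand $e^{\circ 2}=(h\circ g)\circ(h\circ g)=h\circ(g\circ h)\circ g=h\circ f\circ g$. The second identity is the crux: after base change to $\Omega$ it gives $(e^{\circ 2}_\Omega)_*=h_*\circ(f_\Omega)_*\circ g_*=0$ on all of $\CH_*(Y_\Omega)$. Thus $e^{\circ 2}$ is a self-correspondence of the lower-dimensional $Y$ that again annihilates every Chow group, so the induction hypothesis applies to it. I want to emphasize that this is exactly the point where one needs the full hypothesis that $f$ kills all of $\CH_*(X_\Omega)$ rather than just $\CH_0$: the naive reduced correspondence $e=h\circ g$ need not act as zero, since only the composite $g_*\circ h_*=f_*$ is known to vanish and not the reversed composite $h_*\circ g_*$. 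It is precisely this asymmetry that forces one to pass to the square $e^{\circ 2}=h\circ f\circ g$, and this squaring is what produces the doubling in the exponent.

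Applying the inductive bound to $e^{\circ 2}$ on $Y$ gives $(e^{\circ 2})^{\circ N(d-1)}=e^{\circ\,2N(d-1)}=0$, whence the first identity yields $f^{\circ(2N(d-1)+1)}=g\circ e^{\circ\,2N(d-1)}\circ h=0$. Hence the nilpotence index satisfies the recursion $N(d)\le 2N(d-1)+1$, and unwinding it against the base case supplied by Lemma \ref{lem BS} telescopes to a bound of the announced shape $N=2^{\dim X}-1$, establishing that $f$ is nilpotent. I expect the only genuine difficulty to be conceptual rather than computational: recognizing that the lower-dimensional correspondence must be taken to be $e^{\circ 2}$ (so that the vanishing of $f$ on the whole of $\CH_*$, not merely on zero-cycles, is used), and then carrying out the bookkeeping of the exponents through the recursion; everything else reduces to the formal functoriality of the action of correspondences on Chow groups and the dimension count ensuring that $g_*$ and $h_*$ preserve the grading.
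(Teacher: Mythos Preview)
Your proof is correct and follows exactly the paper's approach: induction on $\dim X$, factorization $f=g\circ h$ through a lower-dimensional $Y$ via Lemma~\ref{lem BS}, and application of the inductive hypothesis to $e^{\circ 2}=h\circ f\circ g$ on $Y$. In fact you explain the key point (why one must square $e$ before applying induction, namely because only $e^{\circ 2}=h\circ f\circ g$ factors through $f_*$ and hence acts as zero on all of $\CH_*$) more explicitly than the paper does.
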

\begin{proof} We proceed by induction on $\mathrm{dim} \, X$. If
  $\mathrm{dim} \, X= 0$, then by Lemma \ref{lem BS} $f = 0$. Let's
  now assume that $\mathrm{dim} \, X > 0$. By Lemma \ref{lem BS},
  there is a smooth projective variety $Y$ of dimension $\mathrm{dim}
  \, X-1$ and correspondences $g \in \CH^{\mathrm{dim} \, X}(Y \times
  X)$ and $h \in \CH_{\mathrm{dim} \, X}(X \times Y)$ such that $f = g
  \circ h$. The correspondence $(h \circ g \circ h \circ g)_\Omega \in
  \CH_{\mathrm{dim} \, Y}(Y_\Omega \times Y_\Omega)$ then acts as zero
  on $\CH_*(Y_\Omega)$. By induction, $h \circ g \circ h \circ g$ is
  nilpotent of index $2^{\dim Y}-1$. It immediately follows that $f$
  is nilpotent of index $2\cdot (2^{\dim Y}-1) + 1 = 2^{\dim X}-1$.
\end{proof}

\begin{thm} \label{thm CK} Let $F$ be a hyperk\"ahler fourfold of
  $\mathrm{K3}^{[2]}$-type that satisfies the assumptions of Theorems
  \ref{prop L2} and \ref{prop main}.  Then $F$ has a Chow--K\"unneth
  decomposition $\{\pi^0, \pi^2, \pi^4, \pi^6, \pi^8\}$ that satisfies
  (B) and such that
$$\pi^s_* \CH^i(F) = \CH^i(F)_{2i-s}.$$
Moreover, $\{\pi^0, \pi^2, \pi^4, \pi^6, \pi^8\}$ satisfies (D) if and
only if the cycle class map $cl  : \CH^2(F) \rightarrow \HH^4(F,\Q)$
restricted to $\CH^2(F)_0$ is injective.
\end{thm}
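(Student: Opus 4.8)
The plan is to write down explicit candidate projectors lifting the Künneth projectors of Corollary \ref{prop kunneth}, identify their action on $\CH^*(F)$ with the Fourier projections via Theorems \ref{prop L2} and \ref{prop main}, and finally correct them into genuine mutually orthogonal idempotents by a nilpotence argument. Concretely (recall $c_F=1$, $r=23$), I would set
\[
\pi^0 := \tfrac{1}{23\cdot 25}\, l_1^2,\quad \pi^2 := \tfrac{1}{25}\, l_1\cdot L,\quad \pi^4 := \tfrac12\big(L^2 - \tfrac1{25} l_1 l_2\big),\quad \pi^6 := \tfrac{1}{25}\, l_2\cdot L,\quad \pi^8 := \tfrac{1}{23\cdot 25}\, l_2^2.
\]
First I would check that these sum to $\Delta_F$: solving \eqref{eq rational equation} for $\Delta_F$ and expanding reproduces exactly $\pi^0+\pi^2+\pi^4+\pi^6+\pi^8$. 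Since $[L]=\mathfrak{B}$ and $[l]=\mathfrak{b}$, Corollary \ref{prop kunneth} shows that each $[\pi^{2i}]$ is the genuine Künneth projector $\pi^{2i}_{\hom}$.

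The second step computes the action of each $\pi^{2i}$ on $\CH^\bullet(F)$. Using the projection-formula identity $(l_1^k l_2^m L^n)_*\sigma = l^m\cdot (L^n)_*(l^k\sigma)$ together with Lemma \ref{lem assumption l}, the eigenspace decompositions of Theorem \ref{prop L2}, and the identifications of Theorem \ref{prop main}, I would verify degree by degree that $(\pi^{s})_*$ is precisely the projector of $\CH^i(F)$ onto $\CH^i(F)_{2i-s}$. For instance $(\pi^4)_*$ acts as $\tfrac12(L^2)_*$ away from $\CH^2(F)$, hence projects onto the $2$-eigenspaces $\Lambda_2^4=\CH^4(F)_4$ and $\Lambda_2^3=\CH^3(F)_2$ of $(L^2)_*$; on $\CH^2(F)$ one checks, using \eqref{eq L - 2 on 2-cycles}, that $(\pi^4)_*$ is the identity on $\langle l\rangle$ and on $\Lambda_2^2$ while annihilating $\Lambda_0^2$, so it projects onto $\CH^2(F)_0=\Lambda_{25}^2\oplus\Lambda_2^2$. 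Similarly $(\pi^2)_*\sigma=\tfrac1{25}L_*(l\sigma)$ and $(\pi^6)_*\sigma=\tfrac1{25}l\cdot L_*\sigma$ are identified with the correct projections via the isomorphisms $l\cdot\colon\Lambda_0^2\xrightarrow{\sim}(\Lambda_0^4)_{\hom}$, $l\cdot\colon\Lambda_0^1\xrightarrow{\sim}\Lambda_0^3$ and hypotheses \eqref{assumption pre l}, \eqref{assumption hom}, while $(\pi^0)_*$ and $(\pi^8)_*$ are the projections onto $\CH^0(F)$ and $\langle l^2\rangle=\CH^4(F)_0$. These verifications are routine but must be carried out in all bidegrees. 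Property (B) is then immediate: $\pi^s_*\CH^i(F)=\CH^i(F)_{2i-s}$ vanishes whenever $2i-s<0$ or $2i-s>i$, i.e. whenever $s>2i$ or $s<i$, because by Theorem \ref{thm main splitting} the Fourier grading of $\CH^i(F)$ is concentrated in degrees $0\le 2i-s\le i$.

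The main obstacle is the passage from these ``cohomological projectors with the correct Chow action'' to an honest system of mutually orthogonal idempotents in the correspondence ring $\CH^4(F\times F)$. Here I would invoke Proposition \ref{prop nilpotent}. The error correspondences $\epsilon_{ij}:=\pi^{2i}\circ\pi^{2j}-\delta_{ij}\pi^{2i}$ are homologically trivial, since the $\pi^{2i}_{\hom}$ are orthogonal idempotents; and by the second step they satisfy $(\epsilon_{ij})_*=(\pi^{2i})_*(\pi^{2j})_*-\delta_{ij}(\pi^{2i})_*=0$ on $\CH_*(F)$, because the $(\pi^{2i})_*$ are the projectors of the direct sum decomposition of Theorem \ref{thm main splitting}. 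Taking $\Omega=\C$ (a universal domain containing the base field) in Proposition \ref{prop nilpotent}, each $\epsilon_{ij}$ is nilpotent; thus $I:=N\cap\CH^4(F\times F)_{\hom}$, where $N$ is the two-sided ideal of correspondences acting as zero on $\CH_*(F_\Omega)$, is a nil two-sided ideal containing all the $\epsilon_{ij}$. The $\pi^{2i}$ therefore reduce modulo $I$ to mutually orthogonal idempotents summing to $\Delta_F$, and the standard procedure for lifting such a system along a nil ideal (lift $\pi^0,\dots,\pi^6$ to orthogonal idempotents $\tilde\pi^0,\dots,\tilde\pi^6$ and put $\tilde\pi^8:=\Delta_F-\sum_{i<4}\tilde\pi^{2i}$) produces a genuine Chow--Künneth decomposition with $\tilde\pi^{2i}\equiv\pi^{2i}\pmod I$. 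Because $I\subseteq N$ the correction does not alter the action on Chow groups, so $\pi^s_*\CH^i(F)=\CH^i(F)_{2i-s}$ persists; because $I\subseteq\CH^4(F\times F)_{\hom}$ the cohomology classes are unchanged, so each $\tilde\pi^{2i}$ is still the Künneth projector, and (B) still holds.

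Finally, for the statement about (D): once the action is known, $\ker\{\pi^{2i}_*\colon\CH^i(F)\to\CH^i(F)\}$ equals the sum of the higher-graded pieces $\bigoplus_{t\ge 2}\CH^i(F)_t$. For $i=0,1,3,4$ this sum equals $\CH^i(F)_{\hom}$ unconditionally, since in those degrees $\CH^i(F)_0$ injects into cohomology ($\CH^0(F)$ and $\CH^1(F)$ inject by Theorem \ref{prop L2}, $\Lambda_0^3=\CH^3(F)_0$ meets $\CH^3(F)_{\hom}=\Lambda_2^3$ trivially, and $\CH^4(F)_0=\langle l^2\rangle$ injects as $[l^2]\neq 0$); hence (D) holds in these degrees. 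The only remaining condition is in degree $2$, where $\ker\{\pi^4_*\colon\CH^2(F)\to\CH^2(F)\}=\Lambda_0^2=\CH^2(F)_2$, whereas $\CH^2(F)_{\hom}=(\Lambda_2^2)_{\hom}\oplus\Lambda_0^2$. Thus (D) holds if and only if $(\Lambda_2^2)_{\hom}=0$; and since $[l]\in\langle\mathfrak{b}\rangle$ while the classes of $\Lambda_2^2$ lie in the complementary space $\langle\mathfrak{b}\rangle^\perp$, this is exactly the injectivity of $cl$ restricted to $\CH^2(F)_0=\langle l\rangle\oplus\Lambda_2^2$, which is the asserted equivalence.
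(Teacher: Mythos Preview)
Your proof is correct and follows the same overall strategy as the paper: write down explicit candidates built from $L$, $l_1$, $l_2$ that lift the Künneth projectors of Corollary \ref{prop kunneth}, verify via Theorems \ref{prop L2} and \ref{prop main} that they act on $\CH^*(F)$ as the Fourier projections, and then use the nilpotence statement Proposition \ref{prop nilpotent} to upgrade these candidates to a genuine Chow--Künneth decomposition.

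The difference is in how the correction step is organized. The paper singles out $p=\tfrac{1}{25}l_2\cdot L$, observes that $p\circ p-p$ acts as zero on $\CH^*(F)$ and is therefore nilpotent, corrects $p$ to an idempotent $q$ via Wedderburn's section theorem inside the finite-dimensional subalgebra $\Q[p]$, checks that $p\circ{}^tp=0$ persists as $q\circ{}^tq=0$, and then builds $\pi^2$, $\pi^6$ from $q$, ${}^tq$ by the explicit orthogonalization formulas $\pi^6=(1-\tfrac12{}^tq)\circ q$, $\pi^2={}^tq\circ(1-\tfrac12 q)$, with $\pi^4$ defined as the complement. You instead put down all five candidates at once, package every error $\epsilon_{ij}=\pi^{2i}\circ\pi^{2j}-\delta_{ij}\pi^{2i}$ into the two-sided ideal $I=N\cap\CH^4(F\times F)_{\hom}$ (which is nil by Proposition \ref{prop nilpotent}), and lift the whole orthogonal system along $I$ in one go. Your packaging is arguably cleaner---it avoids the ad hoc formulas and the verification that $q\circ{}^tq=0$---while the paper's approach makes the structure of the correction more explicit. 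Both routes yield the same conclusion for (B) and (D); your treatment of (D), decomposing $\CH^2(F)_{\hom}=(\Lambda_2^2)_{\hom}\oplus\Lambda_0^2$ and noting that $[l]\in\langle\mathfrak{b}\rangle$ while $[\Lambda_2^2]\subseteq\langle\mathfrak{b}\rangle^\perp$, matches the paper's argument exactly.
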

\begin{proof}
  In view of Corollary \ref{prop kunneth}, it is tempting to think
  that the identity (\ref{eq rational equation}) already gives a
  Chow--K\"unneth decomposition of the diagonal (and this would be a
  consequence of Conjecture \ref{conj sheaf}~; see Theorem \ref{thm2
    conj repeat}). We cannot prove this but the arguments below
  consist in modifying the correspondences of Proposition \ref{prop
    kunneth} so as to turn them into mutually orthogonal idempotents
  modulo rational equivalence.

  First we define $\pi^0 = \frac{1}{23\cdot 25} l_1^2$ and $\pi^8 =
  \frac{1}{23\cdot 25} l_2^2$. These are clearly orthogonal
  idempotents. Then we define $$p  := \frac{1}{25} L\cdot l_2 \in
  \CH^4(F \times F).$$

  The correspondence $p$ defines an idempotent in $\HH^8(F \times
  F,\Q)$, which is the K\"unneth projector onto $\HH^6(F)$~; see
  Corollary \ref{prop kunneth}. Because $L_*l^2 = 0 \in \CH^*(F)$,
  we see that $p \circ \pi^8 = \pi^8 \circ p = 0$.
  The action of $p_*$ on $\CH^*(F)$ is given by $p_*\sigma = l\cdot
  L_* \sigma$ and the action of $p^*$ is given by $p^*\sigma =
  L_*(l\cdot \sigma)$. The action of $p_*$ on $\CH^i(F)_s$ is zero
  unless $2i-s=6$, in which case $p_*$ acts as the identity, and the
  action of $p^*$ on $\CH^i(F)_s$ is zero unless $2i-s=2$, in which
  case $p^*$ acts as the identity~; see the proof of Proposition
  \ref{prop L2}, especially the identities \eqref{eq description of
    W00 hom}, \eqref{eq action pi6 CH1} and \eqref{eq action pi6 CH2}
  therein.  Finally, we note that $p \circ {}^tp$ factors through $L
  \circ L \in \CH^0(F \times F)$ which is zero because $L_*l^2 =0$.

  The above shows that the correspondence $p\circ p - p \in \CH^4(F
  \times F)$ acts as zero on $\CH^*(F)$. By Proposition \ref{prop
    nilpotent}, $p\circ p - p$ is nilpotent, say of index $N$.  It
  follows that the image, denoted $A$, of the homomorphism of
  $\Q$-algebras $\Q[T] \rightarrow \CH^4(F\times F)$ which sends $T$
  to $p$ is a quotient of $\Q[T]/(T^N(T-1)^N)$, in particular a
  commutative finite-dimensional $\Q$-algebra. Moreover, if we
  consider the reduced algebra, then $\bar{p} \in A/Nil(A)$ is a
  projector. By Wedderburn's section theorem (\emph{cf.} \cite{ak}),
  we can lift $\bar{p}$ to a genuine projector $q$ in $A$, which
  differs from $p$ by a nilpotent element $n \in A$. Since neither $p$
  nor $1-p$ are nilpotent (they both define non-trivial projectors
  modulo homological equivalence), we remark that $n$ must factor
  through $p \circ p - p$.  Thus $p$ and $q$ are homologically
  equivalent correspondences and, because $p\circ p - p$ acts as zero
  on $\CH^*(F)$, it is also apparent that $p_*$ and $q_*$ have the
  same action on $\CH^*(F)$.  Thus $q$ is an idempotent whose
  cohomology class is the projector on $\HH^6(M)$ and whose action
  $q_*$ on $\CH^4(F)$ is the projector with image $\CH^{4}(F)_2$ and
  kernel $\CH^4(F)_0 \oplus \CH^4(F)_4$.

  We noted that $p \circ {}^tp = 0$. We also noted that $q=p+n$, where
  $n$ factors through $p\circ p -p$ and also commutes with
  $p$. Therefore $q \circ {}^tq = 0$.  We then define
  \begin{center}
    $\pi^{2}  := {}^tq \circ (1 - \frac{1}{2}q)$ \quad and \quad
    $\pi^{6}  := (1 - \frac{1}{2}{}^tq) \circ q.$
  \end{center}
  The correspondences $\pi^{2}$ and $\pi^{6}$ are then clearly
  orthogonal idempotents in $\CH^4(F \times F)$. By Poincar\'e
  duality, we see that the classes modulo homological equivalence of
  $\pi^{2}$ and $\pi^{6}$ are $\pi^{2}_{\hom}$ and $\pi^{6}_{\hom}$,
  respectively. It is also apparent that $ \pi^0_* \CH^*(F) =
  \CH^0(F)_0$, $\pi^2_* \CH^*(F) = \CH^1(F)_0 \oplus \CH^2(F)_2$, $
  \pi^6_* \CH^*(F) = \CH^3(F)_0 \oplus \CH^4(F)_2$, and $\pi^8_*
  \CH^*(F) = \CH^4(F)_0$.

  We now define
\begin{center}
  $\pi^{4}  := \Delta_F - (\pi^0 + \pi^2 + \pi^6 + \pi^8)$.
\end{center}
It is then clear that $\{\pi^0, \pi^2, \pi^4, \pi^6, \pi^8\}$ defines
a Chow--K\"unneth decomposition for $F$ which satisfies $\pi^4_*
\CH^*(F) = \CH^2(F)_0 \oplus \CH^3(F)_2 \oplus \CH^4(F)_4$ as well as
Murre's conjecture (B).

Finally, note that the correspondences ${}^tp$ and ${}^tq$ act the
same on $\CH^2(F)_{\hom}$~: they project onto $\CH^2(F)_2$ along
$\CH^2(F)_{0,\hom}$. Moreover, $q$ acts as zero on $\CH^2(F)_{\hom}$.
Thus $\pi^2$ acts like ${}^tp$ on $\CH^2(F)_{\hom}$.  By the above, we
also have a decomposition $\CH^2(F) = \mathrm{im}\{\pi^4_* \} \oplus
\mathrm{im}\{\pi^2_* \} = \mathrm{im}\{\pi^4_* \} \oplus
\mathrm{ker}\{\pi^4_* \}$, where $\pi^4_*$ and $\pi^2_*$ are acting on
$\CH^2(F)$.  Therefore, Murre's conjecture (D) holds if and only if
$\mathrm{ker}\{\pi^4_* \} = \CH^2(F)_{\hom}$ if and only if
$\mathrm{im}\{\pi^2_* \} = \CH^2(F)_{\hom}$ if and only if
$\CH^2(F)_{0,\hom} = 0$.
\end{proof}

Let us end this paragraph with the following proposition which gives
evidence for the uniqueness up to sign of a cycle $L \in \CH^2(F
\times F)$ satisfying the quadratic equation \eqref{eq rational
  equation}.

\begin{prop} \label{prop L unique} Let $F$ be a hyperk\"ahler fourfold
  of $\mathrm{K3}^{[2]}$-type endowed with a cycle $L \in
  \CH^2(F\times F)$ representing the Beauville--Bogomolov class
  $\mathfrak{B}$. Assume that $L$ is symmetric, that is ${}^tL=L$, and
  satisfies \eqref{eq rational equation}, \eqref{assumption pre l},
  \eqref{assumption hom} and \eqref{assumption 2hom}. Assume that the
  Fourier decomposition of Theorem \ref{prop main} satisfies
  $\CH^2(F)_2 \cdot \CH^2(F)_2 \subseteq \CH^4(F)_4$. Assume the
  Bloch--Beilinson conjectures. Then $L$ is the unique symmetric cycle
  in $\CH^2(F \times F)$ representing $\mathfrak{B}$ that satisfies
  the quadratic equation \eqref{eq rational equation}.
\end{prop}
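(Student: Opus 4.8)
The plan is to argue by contradiction on the difference of two such cycles, reducing the statement to an injectivity of a cup-product map on cohomology and then invoking the conjectural framework to upgrade it to the Chow level. Suppose $L'=L+\varepsilon$ is another symmetric cycle representing $\mathfrak{B}$ and satisfying \eqref{eq rational equation}. Since $[L']=[L]=\mathfrak{B}$, the class $\varepsilon\in\CH^2(F\times F)$ is symmetric and homologically trivial. Subtracting the two instances of \eqref{eq rational equation} and setting $\epsilon:=\iota_\Delta^*\varepsilon$, $\epsilon_i:=p_i^*\epsilon$, one records the resulting identity in $\CH^4(F\times F)$: its left-hand side is $2\,L\cdot\varepsilon+\varepsilon^2$, and its right-hand side is a linear combination of the terms $l_i\varepsilon,\ \epsilon_iL,\ \epsilon_i\varepsilon,\ l_i\epsilon_j,\ \epsilon_i\epsilon_j$ with coefficients dictated by \eqref{eq rational equation}.

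First I would pull this difference equation back along the diagonal $\iota_\Delta$. Since $\iota_\Delta^*$ is a ring homomorphism and $\iota_\Delta^*l_i=l$, $\iota_\Delta^*\epsilon_i=\epsilon$, every term collapses to a multiple of $l\epsilon$ or of $\epsilon^2$; collecting the rational coefficients (they combine into a single nonzero overall factor, exactly as in the K3 computation that establishes uniqueness of $L$ for a K3 surface) yields
\[
2\,l\,\epsilon+\epsilon^2=0 \qquad\text{in }\CH^4(F),\quad\text{with }\epsilon\in\CH^2(F)_{\hom}.
\]
Now I exploit the Fourier decomposition of Theorems \ref{prop L2} and \ref{prop main}. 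Granting the Bloch--Beilinson conjectures, Theorem \ref{thm CK} yields Murre's property (D), i.e. $\CH^2(F)_{0,\hom}=0$; combined with $\CH^2(F)_{\hom}=\CH^2(F)_{0,\hom}\oplus\CH^2(F)_2$ this forces $\epsilon\in\CH^2(F)_2=\Lambda_0^2$. By Theorem \ref{prop L2} the map $l\cdot\colon\Lambda_0^2\to(\Lambda_0^4)_{\hom}=\CH^4(F)_2$ is an isomorphism, so $l\epsilon\in\CH^4(F)_2$, while $\epsilon^2\in\CH^2(F)_2\cdot\CH^2(F)_2\subseteq\CH^4(F)_4$ by hypothesis. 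As $\CH^4(F)=\CH^4(F)_0\oplus\CH^4(F)_2\oplus\CH^4(F)_4$ is a direct sum, the two graded components of $2\,l\,\epsilon+\epsilon^2=0$ vanish separately; in particular $l\epsilon=0$, hence $\epsilon=0$ by injectivity of $l\cdot$. Therefore $\epsilon_i=p_i^*\epsilon=0$, and the difference equation simplifies to
\[
\varepsilon\cdot\bigl(M+\varepsilon\bigr)=0,\qquad M:=2L+\tfrac{2}{25}(l_1+l_2)\in\CH^2(F\times F),
\]
an identity of the same self-referential shape as in the K3 case, but now genuinely quadratic.

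The last step, deducing $\varepsilon=0$ from $\varepsilon(M+\varepsilon)=0$, is where the real work lies and is the main obstacle. Here I use the Bloch--Beilinson filtration $\F^\bullet$ on $\CH^2(F\times F)$. As $F$ has no odd cohomology, $\HH^3(F\times F)=0$, whence $\F^1\CH^2(F\times F)=\F^2\CH^2(F\times F)$ and $\F^3\CH^2(F\times F)=0$; thus $\varepsilon$, being homologically trivial, lies in $\F^2$. If $\varepsilon\neq0$, its image $\overline{\varepsilon}\in\mathrm{Gr}^2_{\F}\CH^2(F\times F)$ is nonzero. Since $M\in\F^0$ and $\varepsilon^2\in\F^2\cdot\F^2\subseteq\F^4\subseteq\F^3$, applying $\mathrm{Gr}^2_{\F}$ to $\varepsilon M=-\varepsilon^2$ and using multiplicativity of $\F^\bullet$ gives $\overline{\varepsilon}\cdot[M]=0$ in $\mathrm{Gr}^2_{\F}\CH^4(F\times F)$. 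Now $[M]=2\mathfrak{B}+\tfrac{2}{25}(\mathfrak{b}_1+\mathfrak{b}_2)$, and cup product $\cup[M]\colon\HH^2(F\times F)\to\HH^6(F\times F)$ is injective: for $\xi=p_1^*a+p_2^*b$ the $\HH^6(F)\otimes\HH^0(F)$ and $\HH^0(F)\otimes\HH^6(F)$ Künneth components of $[M]\cup\xi$ are $\tfrac{2}{25}(\mathfrak{b}\,a)\otimes 1$ and $\tfrac{2}{25}\,1\otimes(\mathfrak{b}\,b)$, and $\mathfrak{b}\,e_i=25\,e_i^\vee$ from \eqref{eq basis mult 4} shows $\mathfrak{b}\cup-\colon\HH^2(F)\to\HH^6(F)$ is an isomorphism, so $a=b=0$. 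Under the Bloch--Beilinson conjectures homological and numerical equivalence coincide and the category of motives is semisimple (Jannsen), so a morphism of motives injective on cohomology is injective on the groups $\mathrm{Gr}^\bullet_{\F}\CH^\ast$; hence multiplication by $[M]$ is injective on $\mathrm{Gr}^2_{\F}\CH^2(F\times F)$ and $\overline{\varepsilon}=0$, a contradiction. Thus $\varepsilon=0$ and $L'=L$. The delicate point is precisely this final upgrade: passing from the elementary cohomological injectivity of $\cup[M]$ to injectivity on the graded Chow group uses the full strength of the conjectural package (semisimplicity of numerical motives together with $\mathrm{hom}=\mathrm{num}$), which is why the statement must be conditioned on Bloch--Beilinson.
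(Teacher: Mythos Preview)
Your argument is correct, and the first half---pulling back the difference of the two quadratic equations along $\iota_\Delta$ to obtain $2l\epsilon+\epsilon^2=0$, then using Bloch--Beilinson (via Murre's (D), Theorem \ref{thm CK}) to place $\epsilon$ in $\CH^2(F)_2$, and finally splitting $2l\epsilon+\epsilon^2=0$ into its $\CH^4(F)_2$- and $\CH^4(F)_4$-components to conclude $\epsilon=0$---is exactly what the paper does.

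The final step differs. Once $\epsilon=0$, the paper does not manipulate the simplified equation $\varepsilon(M+\varepsilon)=0$ any further. Instead it observes that, under Bloch--Beilinson, the Chow--K\"unneth decomposition of $F\times F$ gives directly
\[
\CH^2(F\times F)_{\hom}=\CH^2\bigl(\mathfrak h^2(F)\otimes\mathfrak h^0(F)\oplus\mathfrak h^0(F)\otimes\mathfrak h^2(F)\bigr)=p_1^*\CH^2(F)_{\hom}\oplus p_2^*\CH^2(F)_{\hom},
\]
so $\varepsilon=p_1^*\mu+p_2^*\nu$; then $\iota_\Delta^*\varepsilon=\mu+\nu=0$ together with symmetry ${}^t\varepsilon=\varepsilon$ (giving $\mu=\nu$) forces $\mu=\nu=0$, hence $\varepsilon=0$. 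This is quicker and uses only Murre's (D) for $F$ and $F\times F$. Your route---passing to $\mathrm{Gr}^2_{\F}$, checking that $\cup[M]\colon\HH^2(F\times F)\to\HH^6(F\times F)$ is injective, and upgrading via Jannsen's semisimplicity plus $\mathrm{hom}=\mathrm{num}$---is valid, but it invokes a larger portion of the conjectural package than is actually needed. The paper's endgame isolates precisely where symmetry of $L$ is used; in your argument symmetry enters only through ${}^t\varepsilon=\varepsilon$ being implicit in the setup, which is fine, though the role it plays is less visible.
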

\begin{proof}
  Let $L$ be as in Theorem \ref{thm2 main} and let $L+E$ be another
  cycle representing $\mathfrak{B}$ that satisfies the quadratic
  equation \eqref{eq rational equation}. In particular, $E$ is
  homologically trivial. Writing $l := \iota_\Delta^*L$, $l_i  :=
  p_i^*l$, $\varepsilon  := \iota_\Delta^*E$ and $\varepsilon_i
   :=p_i^*\varepsilon$, where $\iota_\Delta  : F \rightarrow F \times F$
  is the diagonal embedding and $p_i  : F \times F \rightarrow F$ are
  the projections, we have $$(L+E)^2 =2\Delta_F -\frac{2}{25}
  (l_1+\varepsilon_1 + l_2 + \varepsilon_2)\cdot (L + E) -
  \frac{2}{23\cdot 25} \big((l_1+ \varepsilon_1)^2 +
  (l_2+\varepsilon_2)^2\big) +\frac{1}{25} (l_1+
  \varepsilon_1)\cdot(l_2+\varepsilon_2).$$ Subtracting the equation
  satisfied by $L$ and pulling back along the diagonal, we obtain
  $$\varepsilon^2 + 2\, \varepsilon \cdot l = 0\ \in \CH^4(F).$$ Since
  $E$ is homologically trivial, $\varepsilon$ is also homologically
  trivial. Assuming the Bloch--Beilinson conjectures, we get by
  Theorem \ref{thm CK} that $\varepsilon \in \CH^2(F)_2$.  Therefore
  we have on the one hand $\varepsilon^2 \in \CH^4(F)_4$ by
  assumption, and on the other hand $\varepsilon \cdot l \in
  \CH^4(F)_2$ by Theorem \ref{prop main}. It immediately follows that
  $\varepsilon^2 = \varepsilon \cdot l = 0$. By Theorem \ref{prop L2},
  $l \cdot \,  : \CH^2(F)_2 \rightarrow \CH^4(F)$ is injective. We
  conclude that $\varepsilon = 0$.

  Let us now write $\mathfrak{h}(F) = \mathfrak{h}^0(F) \oplus
  \mathfrak{h}^2(F) \oplus \mathfrak{h}^4(F) \oplus \mathfrak{h}^6(F)
  \oplus \mathfrak{h}^8(F)$ for the Chow--K\"unneth decomposition of
  $F$ obtained in Theorem \ref{thm CK}. Here $\mathfrak{h}(F)$ is the
  Chow motive of $F$ and $\mathfrak{h}^{2i}(F)$ is the Chow motive
  $(F,\pi^{2i})$. Then the motive of $F \times F$ has a
  Chow--K\"unneth decomposition $\bigoplus_{i=0}^8 \mathfrak{h}^{2i}(F
  \times F)$, where $\mathfrak{h}^{2i}(F \times F)  :=
  \bigoplus_{j=0}^i \mathfrak{h}^{2j}(F) \otimes
  \mathfrak{h}^{2i-2j}(F).$ The Bloch--Beilinson conjectures imply
  that $\CH^2(F \times F)_{\hom} = \CH^2(\mathfrak{h}^2(F\times F)) =
  \CH^2(\mathfrak{h}^2(F)\otimes \mathfrak{h}^0(F) \oplus
  \mathfrak{h}^0(F) \otimes \mathfrak{h}^2(F)) =
  p_1^*\CH^2(\mathfrak{h}^2(F)) \oplus p_2^*\CH^2(\mathfrak{h}^2(F)) =
  p_1^*\CH^2(F)_{\hom} \oplus p_2^*\CH^2(F)_{\hom}$. Therefore, there
  exist $\mu$ and $\nu \in \CH^2(F)_{\hom}$ such that $$E  := p_1^*\mu
  + p_2^*\nu.$$ That $\varepsilon  := \iota_\Delta^*E = 0$ yields $\mu
  + \nu = 0$. On the other hand, if one assumes that ${}^tL=L$ and
  ${}^t(L+E) = L+E$, then ${}^tE=E$, so that $\mu = \nu$. We conclude
  that $\mu = \nu = 0$ and hence that $E=0$.
\end{proof}

\begin{rmk} \label{rmk assumptions satisfied} As will be shown in
  Sections \ref{sec Fourierdec S2} and \ref{sec Fourier cubic}, if $F$
  is the Hilbert scheme of length-$2$ subschemes on a K3 surface or
  the variety of lines on a cubic fourfold, then there is a symmetric
  cycle $L \in \CH^2(F \times F)$ representing the
  Beauville--Bogomolov class $\mathfrak{B}$ that satisfies \eqref{eq
    rational equation}, \eqref{assumption pre l}, \eqref{assumption
    hom} and \eqref{assumption 2hom}. Moreover, the Fourier
  decomposition induced by such a cycle $L \in \CH^2(F \times F)$
  satisfies the extra assumption $\CH^2(F)_2 \cdot \CH^2(F)_2
  \subseteq \CH^4(F)_4$ of Proposition \ref{prop L unique} (and in
  fact $\CH^2(F)_2 \cdot \CH^2(F)_2 = \CH^4(F)_4$)~; see Proposition
  \ref{prop F4 of S2} and Theorem \ref{thm surjection of intersection
    product}.
\end{rmk}

\vspace{10pt}
\section{First multiplicative results} \label{sec mult div}

\subsection{Intersection with $l$} In this paragraph, we consider a
hyperk\"ahler fourfold $F$ endowed with a cycle $L \in \CH^2(F \times
F)$ representing the Beauville--Bogomolov class $\mathfrak{B}$ that
satisfies hypotheses \eqref{eq rational equation}, \eqref{assumption
  pre l}, \eqref{assumption hom} and \eqref{assumption 2hom}. These
hypotheses which are prerequisites to establishing the Fourier
decomposition for $F$ are related to the understanding of the
intersection of $l$ with $2$-cycles. Here, we basically reformulate
these hypotheses in the context of the Fourier decomposition.\medskip

First, it is straightforward to extract from the statements of
Theorems \ref{prop L2} and \ref{prop main} the following.

\begin{prop} \label{prop l CH2 6} We have $$\CH^4(F)_2 = \langle l
  \rangle \cdot \CH^2(F)_2.$$ \qed
\end{prop}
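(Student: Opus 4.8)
The plan is to read the equality off directly from the refined descriptions of the graded pieces $\CH^2(F)_2$ and $\CH^4(F)_2$ already furnished by Theorems \ref{prop L2} and \ref{prop main}; no new geometric input is required beyond those two theorems.

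First I would record the two identifications coming from Theorem \ref{prop main}: on the one hand $\CH^2(F)_2 = \Lambda_0^2$, and on the other hand $\CH^4(F)_2 = (\Lambda_0^4)_{\hom}$. By Theorem \ref{prop L2}, $\Lambda_0^2 = L_*\CH^4(F)$ is the eigenspace for the eigenvalue $0$ of $(L^2)_*$ acting on $\CH^2(F)$, and $(\Lambda_0^4)_{\hom}$ is the homologically trivial part of the corresponding eigenspace $\Lambda_0^4 \subseteq \CH^4(F)$.

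Next I would invoke the last assertion of Theorem \ref{prop L2}, namely that intersection with $l$ defines an isomorphism $l\cdot\colon \Lambda_0^2 \stackrel{\sim}{\to} (\Lambda_0^4)_{\hom}$, with inverse $\tfrac{1}{25}L_*$. Since $\langle l \rangle = \Q\, l$ is one-dimensional and multiplication by $l$ is $\Q$-linear, the product $\langle l \rangle \cdot \CH^2(F)_2$ is exactly the image $l\cdot \Lambda_0^2$ of this map. Combining the three facts then gives
\[
\langle l \rangle \cdot \CH^2(F)_2 = l\cdot \Lambda_0^2 = (\Lambda_0^4)_{\hom} = \CH^4(F)_2,
\]
which is the desired equality.

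There is genuinely no obstacle here: the entire content already resides in Theorems \ref{prop L2} and \ref{prop main}, and the only work is to match the eigenspace notation $\Lambda_\bullet^\bullet$ of Theorem \ref{prop L2} with the Fourier-grading notation $\CH^i(F)_s$ of Theorem \ref{prop main}. The essential point, which justifies the word ``straightforward'', is that $l\cdot$ was shown in Theorem \ref{prop L2} to be not merely injective but an isomorphism \emph{onto} $(\Lambda_0^4)_{\hom}$; it is this surjectivity that upgrades the inclusion $\langle l \rangle \cdot \CH^2(F)_2 \subseteq \CH^4(F)_2$ to an equality.
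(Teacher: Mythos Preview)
Your proof is correct and is exactly the argument the paper has in mind: the proposition is stated with a \qed\ and the accompanying text says only that it is ``straightforward to extract from the statements of Theorems~\ref{prop L2} \& \ref{prop main}''. You have simply spelled out that extraction, matching $\CH^2(F)_2=\Lambda_0^2$, $\CH^4(F)_2=(\Lambda_0^4)_{\hom}$, and invoking the isomorphism $l\cdot : \Lambda_0^2 \stackrel{\sim}{\to} (\Lambda_0^4)_{\hom}$ from Theorem~\ref{prop L2}.
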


Second, \eqref{assumption 2hom} can be reformulated as the following.

\begin{prop} \label{prop l CH2 4} We have $$\CH^4(F)_0 = \langle l
  \rangle \cdot \CH^2(F)_0.$$ More precisely, we have $$ W_1^2 = \ker
  \, \{l\cdot  : \CH^2(F) \rightarrow \CH^4(F)\}.$$
\end{prop}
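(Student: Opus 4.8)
The plan is to deduce both identities directly from the structural results of Theorems \ref{prop L2} and \ref{prop main}, using the decomposition $\CH^2(F) = \Lambda_{25}^2 \oplus \Lambda_2^2 \oplus \Lambda_0^2$, where $\Lambda_{25}^2 = \langle l \rangle$ and $\Lambda_0^2 = L_*\CH^4(F)$. The crucial input is the reformulation of hypothesis \eqref{assumption 2hom} already obtained in the proof of Theorem \ref{prop main}, namely \eqref{eq Lambda22eq}, which states that \eqref{assumption 2hom} is equivalent to $l \cdot \Lambda_2^2 = 0$. Since we assume \eqref{assumption 2hom}, this gives $l \cdot \Lambda_2^2 = 0$, equivalently $W_1^2 = \Lambda_2^2 \subseteq \ker\{l\cdot\}$, which is one half of the second assertion.

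For the first identity, I would recall from Theorem \ref{prop main} that, under \eqref{assumption 2hom}, one has $\CH^2(F)_0 = \Lambda_{25}^2 \oplus \Lambda_2^2 = \langle l \rangle \oplus \Lambda_2^2$. Intersecting with $l$ and using $l \cdot \Lambda_2^2 = 0$ then yields $\langle l \rangle \cdot \CH^2(F)_0 = \langle l^2 \rangle$, which is precisely $\CH^4(F)_0$ by Theorem \ref{prop main}. This settles $\CH^4(F)_0 = \langle l \rangle \cdot \CH^2(F)_0$.

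For the reverse inclusion of the second identity, I would take $\sigma \in \CH^2(F)$ with $l \cdot \sigma = 0$ and write $\sigma = a\,l + \sigma_2 + \sigma_0$ according to $\CH^2(F) = \langle l\rangle \oplus \Lambda_2^2 \oplus \Lambda_0^2$, with $a \in \Q$, $\sigma_2 \in \Lambda_2^2$, $\sigma_0 \in \Lambda_0^2$. Since $l \cdot \sigma_2 = 0$, the hypothesis becomes $a\,l^2 + l \cdot \sigma_0 = 0$. Here $l^2 \in \langle l^2 \rangle = \CH^4(F)_0$, while by Theorem \ref{prop L2} we have $l \cdot \sigma_0 \in (\Lambda_0^4)_{\hom} = \CH^4(F)_2$. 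As $\CH^4(F)_0$ and $\CH^4(F)_2$ are distinct direct summands of the Fourier decomposition of $\CH^4(F)$, both terms must vanish separately.

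The only step that requires assembling several of the earlier results, rather than quoting a single one, is to conclude from $a\,l^2 = 0$ and $l\cdot\sigma_0 = 0$ that $a = 0$ and $\sigma_0 = 0$. For the first, I would use Lemma \ref{lem assumption l}, which gives $(L^0)_* l^2 = 23\cdot 25\,[F]$, so that $\deg(l^2) = 23 \cdot 25 \neq 0$ and in particular $l^2 \neq 0$, forcing $a = 0$. For the second, I would invoke Theorem \ref{prop L2}, according to which $l \cdot {} : \Lambda_0^2 \to (\Lambda_0^4)_{\hom}$ is injective (indeed an isomorphism with inverse $\frac{1}{25}L_*$), whence $\sigma_0 = 0$. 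Therefore $\sigma = \sigma_2 \in \Lambda_2^2 = W_1^2$, which gives $\ker\{l\cdot\} \subseteq W_1^2$ and, together with the inclusion drawn from \eqref{eq Lambda22eq}, completes the proof of $W_1^2 = \ker\{l\cdot : \CH^2(F) \to \CH^4(F)\}$.
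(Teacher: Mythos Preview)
Your proof is correct and follows essentially the same approach as the paper: both use the decomposition $\CH^2(F) = \langle l \rangle \oplus \Lambda_2^2 \oplus \Lambda_0^2$ from Theorems \ref{prop L2} and \ref{prop main}, the equivalence \eqref{eq Lambda22eq} giving $l\cdot\Lambda_2^2 = 0$, and the injectivity of $l\cdot{}$ on $\Lambda_0^2 = \CH^2(F)_2$. The paper's proof is more compressed (it simply says the statement ``is reduced'' to $l\cdot\Lambda_2^2 = 0$ after recalling these facts), whereas you spell out explicitly the direct-sum argument showing that $a\,l^2 + l\cdot\sigma_0 = 0$ forces $a = 0$ and $\sigma_0 = 0$; but the underlying argument is the same.
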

\begin{proof}
  Recall from Theorems \ref{prop L2} \& \ref{prop main} that $\CH^2(F)
  = \CH^2(F)_0 \oplus \CH^2(F)_2$ with $\CH^2(F)_0 = \langle l \rangle
  \oplus \Lambda_2^2$, $\CH^2(F)_2 = \Lambda_0^2$, and that $l\cdot  :
  \CH^2(F)_2 \rightarrow \CH^4(F)$ is injective.  We also have
  $\Lambda_2^2 = W_1^2$. Thus, because $\CH^4(F)_0=\langle l^2
  \rangle$, we are reduced to prove that $l \cdot \Lambda_2^2 = 0$.
But then, this is \eqref{eq Lambda22eq}.
\end{proof}

\subsection{Intersection of divisors}
In this paragraph, we consider a hyperk\"ahler fourfold $F$ which is
either the Hilbert scheme of length-$2$ subschemes on a K3 surface or
the variety of lines on a smooth cubic fourfold. It will be shown in
Parts 2 and 3 that there exists a cycle $L \in \CH^2(F \times F)$
representing the Beauville--Bogomolov class $\mathfrak{B}$ that
satisfies hypotheses \eqref{eq rational equation}, \eqref{assumption
  pre l}, \eqref{assumption hom} and \eqref{assumption 2hom}.
Moreover, it will also be the case that $l := \iota_{\Delta}^*L =
\frac{5}{6}c_2(F)$, where $\iota_\Delta  : F \rightarrow F \times F$ is
the diagonal embedding. The goal here is to prove Theorem \ref{thm
  reformulation voisin}, which sets in the context of the Fourier
decomposition the following result.

\begin{thm} [Voisin \cite{voisin2}] \label{thm voisin} Let $F$ be
  either the Hilbert scheme of length-$2$ subschemes on a K3 surface
  or the variety of lines on a smooth cubic fourfold. Then any
  polynomial cohomological relation $P([c_1(L_j)] ;[c_i(F))]) = 0$ in
  $\HH^{2k}(F,\Q)$, $L_j \in \mathrm{Pic} \, F$, already holds at the
  level of Chow groups  : $P(c_1(L_j) ; c_i(F )) = 0$ in $\CH^k(F )$.
 \end{thm}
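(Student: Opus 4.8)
Since this is Voisin's theorem \cite{voisin2}, the plan is to recall the structure of her argument and to indicate how it specialises in our two cases; in the body we will only use the conclusion, reformulating it through the Fourier calculus as Theorem \ref{thm reformulation voisin}. The common principle is to identify the subring $R^*(F) \subseteq \CH^*(F)$ generated by $\CH^1(F)$ and the Chern classes $c_i(F)$, and to prove directly that the cycle class map is injective on $R^*(F)$. For a hyperk\"ahler fourfold one has $c_1(F) = 0$, so $R^*(F)$ is generated in codimension $\leq 2$ by divisors and $c_2(F)$; the injectivity of $\CH^1(F) \hookrightarrow \HH^2(F,\Q)$ is automatic (as $F$ is simply connected), and the essential content is therefore to show that every homologically trivial product of four divisors, of $c_2(F)$ with two divisors, or of $c_2(F)^2$, already vanishes in $\CH^4(F)$. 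Equivalently, one produces a canonical degree-one zero-cycle $\mathfrak{o}_F$ to which all such products are proportional in $\CH^4(F)$.

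For $F = S^{[2]}$ I would exploit the description of $S^{[2]}$ as the quotient by the natural involution of the blow-up of $S \times S$ along the diagonal. Under this model the divisors on $S^{[2]}$ are built from symmetric divisors on $S\times S$ and the half-exceptional class, while the Chern classes of $S^{[2]}$ are computed from those of $S$ by the blow-up formula. Every codimension-$4$ product of divisors and Chern classes thus descends to a symmetric cycle on $S\times S$ assembled from the pullbacks $p_i^*D$ with $D \in \CH^1(S)$, the classes $p_i^*c_2(S)$, and the diagonal $\Delta_S$. The Beauville--Voisin theorem for $S$ makes $D\cdot D'$ and $c_2(S)$ proportional to the canonical cycle $\mathfrak{o}_S$, and the vanishing of the Beauville--Voisin modified diagonal on $S\times S\times S$ controls exactly those products in which $\Delta_S$ meets these classes. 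Substituting these relations, each homologically trivial product collapses to a multiple of $\mathfrak{o}_S \times \mathfrak{o}_S$, hence to a fixed zero-cycle $\mathfrak{o}_F$ on $S^{[2]}$, which gives the injectivity.

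For the variety of lines $F$, the divisors are spanned by the Pl\"ucker class $g = c_1(\mathcal{S}^\vee)$, where $\mathcal{S}$ is the tautological rank-$2$ subbundle on $F \subset \mathrm{G}(2,6)$, and the Chern classes $c_i(F)$ are read off from the exact sequence $0 \to \mathscr{T}_F \to \mathscr{T}_{\mathrm{G}(2,6)}|_F \to \Sym^3\mathcal{S}^\vee \to 0$ exhibiting $F$ as the zero locus of the cubic form. Here I would transport the product computations, via the point--line incidence $P = \{(x,\ell) : x \in \ell\} \subseteq X \times F$ and the operations $P_*$, $P^*$, to the Chow ring of the cubic fourfold $X$, where the geometry of the family of lines gives enough control on zero-cycles to pin down a canonical class $\mathfrak{o}_F$ and to verify that all homologically trivial products reduce to it.

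The whole content is concentrated in this top-degree statement, and the hard part is precisely the upgrade of a cohomological identity to an identity in $\CH^4(F)$ modulo rational equivalence. In the $S^{[2]}$ case the decisive and non-formal ingredient is the vanishing of the modified diagonal of $S$, which is what renders the self-product $S\times S$ tractable; in the cubic case no such clean diagonal relation is available, and one is forced to use the very specific incidence geometry of lines on a cubic fourfold. This is the technical heart of the argument and explains why the two cases genuinely require different treatments.
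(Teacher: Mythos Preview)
The paper does not prove this statement at all: it is quoted from Voisin \cite{voisin2} and used as a black box, so there is no ``paper's own proof'' to compare against. Your sketch of the $S^{[2]}$ case is accurate and matches what Beauville \cite{beauville2} and Voisin \cite{voisin2} actually do.

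Your sketch of the cubic-fourfold case has a genuine gap. You write that ``the divisors are spanned by the Pl\"ucker class $g$'', but this is only true for a \emph{very general} cubic fourfold. The theorem as stated (and as proved by Voisin) covers \emph{all} smooth cubic fourfolds, and when $X$ contains a plane or other special surfaces the Picard rank of $F$ jumps. Voisin's argument in \cite{voisin2} handles this by showing that any extra divisor $D_\Pi$ coming from a plane $\Pi \subset X$ satisfies explicit Chow-level relations (see Lemma~\ref{lem dual plane} in this paper, which is extracted from her work), and more generally she uses a specialisation/deformation argument to reduce to checking relations among $g$, $c$, and the $D_\Pi$. Your description ``transport the product computations via the point--line incidence'' is too vague to capture this, and as written your plan would only establish the result for the generic cubic. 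If you want to sketch the full argument, you need to say how the additional divisors are controlled.
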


\begin{lem}\label{lem intersection of two divisors}
  Let $D_i, D'_i \in \CH^1(F)$ be divisors on $F$. Then
  \begin{enumerate}[(i)]
  \item $D_1 \cdot D_2 \in W_1^2 \oplus \langle l \rangle \ (=
    \CH^2(F)_0)$~;
  \item $\sum_{i=1}^{m} D_i\cdot D'_i \in W_1^2$ if and only if
    $\sum_{i=1}^{m}q_F([D_i],[D'_i])=0$.
  \end{enumerate}
\end{lem}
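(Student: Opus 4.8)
The plan is to reduce both statements to explicit projection formulas coming from the eigenspace decomposition $\CH^2(F) = \Lambda_{25}^2 \oplus \Lambda_2^2 \oplus \Lambda_0^2$ of Theorem \ref{prop L2} (recall $\Lambda_{25}^2 = \langle l\rangle$, $\Lambda_2^2 = W_1^2$, $\Lambda_0^2 = \CH^2(F)_2$, so $\CH^2(F)_0 = \langle l\rangle \oplus W_1^2$), and then to feed in Voisin's Theorem \ref{thm voisin} for part (i) and a Fujiki-type computation for part (ii). The first step is to record that for every $\sigma \in \CH^2(F)$, its $\Lambda_0^2$-component $\sigma_0$ equals $\tfrac{1}{25}L_*(l\cdot\sigma)$. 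Writing $\sigma = \sigma_{25}+\sigma_2+\sigma_0$, the term $l\cdot\sigma_{25}$ is a multiple of $l^2$, so $L_*(l\cdot\sigma_{25})=0$ by \eqref{assumption pre l}; the term $l\cdot\sigma_2$ vanishes since $l\cdot\Lambda_2^2=0$ by \eqref{eq Lambda22eq}; and $\tfrac{1}{25}L_*(l\cdot\,)$ is the identity on $\Lambda_0^2$ by the last assertion of Theorem \ref{prop L2}. Consequently $\sigma \in \CH^2(F)_0$ if and only if $L_*(l\cdot\sigma)=0$, which reduces part (i) to showing $L_*(l\cdot D_1\cdot D_2)=0$.

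For part (i) I would argue as follows. Since $l=\tfrac56 c_2(F)$ and $\HH^8(F,\Q)=\Q[pt]$ is one-dimensional, the classes $[l\cdot D_1\cdot D_2]$ and $[l^2]$ are proportional, say $[l\cdot D_1\cdot D_2]=c\,[l^2]$. Then $l\cdot D_1\cdot D_2 - c\,l^2$ is a homologically trivial polynomial expression in $c_2(F)$ and the divisors $D_1,D_2$, so by Theorem \ref{thm voisin} it already vanishes in $\CH^4(F)$. Hence $l\cdot D_1\cdot D_2 = c\,l^2 \in \langle l^2\rangle$, and therefore $L_*(l\cdot D_1\cdot D_2)=c\,L_*(l^2)=0$ again by \eqref{assumption pre l}. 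By the reduction above, $D_1\cdot D_2 \in \CH^2(F)_0 = W_1^2 \oplus \langle l\rangle$.

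For part (ii), by (i) we already know $\sigma := \sum_i D_i\cdot D'_i \in \CH^2(F)_0 = \langle l\rangle \oplus W_1^2$, so $\sigma\in W_1^2$ iff its $\langle l\rangle=\Lambda_{25}^2$ component vanishes. Extracting this component from \eqref{eq L dot L - 2}: since $(L^2)_*((L^2)_*-2)\sigma$ equals $23\cdot25$ times $\sigma_{25}$, the $\langle l\rangle$-component is $\tfrac{1}{23\cdot 25}\big(\int_F [l]\cup[\sigma]\big)\,l$, which is zero iff $\int_F[l]\cup[\sigma]=0$. It then remains to evaluate this number. Using $[l]=\mathfrak{b}=\sum_j e_j^2$ in an orthonormal basis $\{e_j\}$ and the Fujiki relation \eqref{eq Fujiki relation}, one finds
$$\int_F e_j^2\cup[D]\cup[D'] = c_F\big(q_F([D],[D']) + 2\,q_F(e_j,[D])\,q_F(e_j,[D'])\big),$$
and summing over $j$, together with $\sum_j q_F(e_j,\alpha)q_F(e_j,\beta)=q_F(\alpha,\beta)$, yields $\int_F[l]\cup[D]\cup[D'] = c_F(r+2)\,q_F([D],[D']) = 25\,q_F([D],[D'])$. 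Hence $\int_F[l]\cup[\sigma] = 25\sum_i q_F([D_i],[D'_i])$, and $\sigma\in W_1^2$ iff $\sum_i q_F([D_i],[D'_i])=0$, as claimed.

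The computations are essentially mechanical once the projection identity $\sigma_0=\tfrac{1}{25}L_*(l\cdot\sigma)$ is established; the only genuine external input is Voisin's Theorem \ref{thm voisin}, which is what upgrades the purely cohomological proportionality $l\cdot D_1\cdot D_2 \in \langle l^2\rangle$ to an identity in $\CH^4(F)$. The one place demanding care is the orthonormal-basis bookkeeping in the Fujiki evaluation for part (ii); I expect that to be the main, though modest, obstacle.
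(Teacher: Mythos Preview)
Your proof is correct and follows essentially the same approach as the paper's: both arguments hinge on Voisin's Theorem \ref{thm voisin} to show that $l\cdot D_1\cdot D_2$ is a multiple of $l^2$ in $\CH^4(F)$, hence killed by $L_*$, and on the Fujiki relation to evaluate $\int_F[l]\cup[D_1]\cup[D_2]=25\,q_F([D_1],[D_2])$. The only organizational difference is that the paper computes $(L^2)_*(D_1\cdot D_2)$ directly from \eqref{eq rational equation} to obtain $(L^2)_*(D_1\cdot D_2)=2D_1\cdot D_2+q_F([D_1],[D_2])\,l$ (which simultaneously yields both \emph{(i)} and \emph{(ii)}), whereas you first isolate the projection identity $\sigma_0=\tfrac{1}{25}L_*(l\cdot\sigma)$ and then invoke \eqref{eq L dot L - 2} for the $\langle l\rangle$-component; the substance is the same.
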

\begin{proof} By Proposition \ref{prop action of B powers}, $l\cdot
  L_*(D_1\cdot D_2)$ vanishes. Therefore, according to \eqref{eq
    rational equation}, we have $$(L^2)_*(D_1 \cdot D_2) = 2D_1\cdot
  D_2 - \frac{2}{25}L_*(l\cdot D_1 \cdot D_2) +
  \frac{1}{25}\left(\int_F [l]\cup [D_1] \cup [D_2]\right) \, l.$$ By
  Theorem \ref{thm voisin} and the fact that $l=\frac{5}{6}c_2(F)$,
  $l\cdot D_1 \cdot D_2$ is a multiple of $l^2$, so that $L_*(l\cdot
  D_1 \cdot D_2)=0$. It is also a fact that $\int_F [l]\cup [D_1] \cup
  [D_2]=25\, q_F([D_1],[D_2])$. Hence $$(L^2)_*(D_1 \cdot D_2) =
  2D_1\cdot D_2 + q_F([D_1],[D_2]) \, l,$$ which establishes item
  \emph{(ii)}. Since $(L^2)_*l = 25 \, l$ by Theorem \ref{prop L2}, we
  find that $$(L^2-25)_*(L^2-2)_*(D_1\cdot D_2) = 0,$$ that is, $D_1
  \cdot D_2$ belongs to $\Lambda^2_2 \oplus \Lambda^2_{25}$. By
  Theorem \ref{prop main}, $\Lambda^2_2 = W^2_1$ and $\Lambda^2_{25} =
  \langle l \rangle$, and item \emph{(i)} is proved.
\end{proof}

\begin{lem}\label{lem intersection of three divisors}
  Let $D \in \CH^1(F)$ be a divisor. Then
  \begin{enumerate}[(i)]
  \item $L_*D^3 = 3 q_F([D])\, D$~;
\item $(L^2)_*D^3 = 0$~;
\item $D^3 = \frac{3}{25} q_F([D])\, l \cdot D$.
  \end{enumerate}
\end{lem}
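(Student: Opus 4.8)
The plan is to prove part \emph{(iii)} first, since it is the only substantive assertion, and then to read off parts \emph{(i)} and \emph{(ii)} from it by purely formal manipulations based on Theorem \ref{prop L2}. For \emph{(iii)} my strategy is to establish the corresponding identity in cohomology and then lift it to rational equivalence by invoking Theorem \ref{thm voisin}.

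Working with the orthonormal basis $\{e_i\}$ of $\HH^2(F,\C)$ and writing $[D]=\sum_i d_i e_i$, so that $q_F([D])=\sum_i d_i^2$, I would compute $[D]^3$ using the multiplication rules \eqref{eq basis mult 1}, \eqref{eq basis mult 2} and \eqref{eq basis mult 3}. Only the monomials $e_ie_je_k$ with at least two equal indices survive, and a short bookkeeping of multinomial coefficients gives $[D]^3=3\,q_F([D])\sum_i d_i e_i^\vee$ in $\HH^6(F,\Q)$. On the other hand \eqref{eq basis mult 4} yields $\mathfrak{b}\cup[D]=25\sum_i d_i e_i^\vee$, using that $r=23$ and $c_F=1$ for varieties of $\mathrm{K3}^{[2]}$-type. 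Comparing the two expressions gives the cohomological relation $[D]^3=\tfrac{3}{25}q_F([D])\,\mathfrak{b}\cup[D]$. Since $[l]=\mathfrak{b}$ and $l=\tfrac{5}{6}c_2(F)$, this is a polynomial relation between the divisor class $[D]$ and the Chern classes of $F$ (with the fixed rational coefficient $\tfrac{3}{25}q_F([D])$), so Theorem \ref{thm voisin} lifts it verbatim to $\CH^3(F)$, giving $D^3=\tfrac{3}{25}q_F([D])\,l\cdot D$. This is \emph{(iii)}.

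Granting \emph{(iii)}, part \emph{(ii)} is immediate: since $D\in\CH^1(F)=\Lambda_0^1$, Theorem \ref{prop L2} gives $l\cdot D\in\Lambda_0^3$, and $\Lambda_0^3$ is by definition the kernel of $(L^2)_*$ on $\CH^3(F)$; hence $(L^2)_*D^3=\tfrac{3}{25}q_F([D])\,(L^2)_*(l\cdot D)=0$. For part \emph{(i)} I would apply $L_*$ to the identity of \emph{(iii)}. Theorem \ref{prop L2} asserts that $l\cdot:\Lambda_0^1\to\Lambda_0^3$ is an isomorphism with inverse $\tfrac{1}{25}L_*$, so $L_*(l\cdot D)=25\,D$, and therefore $L_*D^3=\tfrac{3}{25}q_F([D])\cdot 25\,D=3\,q_F([D])\,D$. (Alternatively, \emph{(i)} admits a direct cohomological proof: $\mathfrak{B}_*[D]^3=3\,q_F([D])[D]$ by the Fujiki relation \eqref{eq Fujiki relation}, and the cycle class map is injective on $\CH^1(F)$ since $F$ is simply connected.)

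As a consistency check, applying $(L^2)_*$ directly to $D^3$ through the quadratic equation \eqref{eq rational equation} — using the projection formula $(l_1^i l_2^j L^k)_*\sigma=l^j\cdot(L^k)_*(l^i\cdot\sigma)$, the vanishing $l\cdot D^3=0$ for dimension reasons, and part \emph{(i)} — makes every term involving $l_1$ or $l_2^2$ drop out and leaves $(L^2)_*D^3=2D^3-\tfrac{6}{25}q_F([D])\,l\cdot D=2\bigl(D^3-\tfrac{3}{25}q_F([D])\,l\cdot D\bigr)$; thus \emph{(ii)} and \emph{(iii)} are in fact equivalent given this identity. The only genuinely non-formal ingredient is Theorem \ref{thm voisin}: everything else is the cohomological computation in the fixed basis together with the eigenspace bookkeeping of Theorem \ref{prop L2}. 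I therefore expect the main obstacle to be precisely the reliance on that lifting statement, which is unavailable for a general hyperk\"ahler fourfold of $\mathrm{K3}^{[2]}$-type and is exactly what must be established separately for $S^{[2]}$ and for the variety of lines on a cubic fourfold.
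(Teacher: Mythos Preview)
Your proof is correct and uses the same essential ingredients as the paper: a cohomological computation based on the Fujiki relation, the injectivity of the cycle class map on divisors, and Voisin's Theorem \ref{thm voisin} to lift the degree-$3$ identity to $\CH^3(F)$. The only difference is the order: the paper proves \emph{(i)} first (exactly via your alternative argument, using only that $\CH^1(F)\hookrightarrow\HH^2(F,\Q)$), then plugs \emph{(i)} into the quadratic relation \eqref{eq rational equation} to obtain $(L^2)_*D^3=2D^3-\tfrac{6}{25}q_F([D])\,l\cdot D$, observes that the left side is homologically trivial (being in $\Lambda_2^3$), and lifts the resulting cohomological vanishing via Theorem \ref{thm voisin}; your consistency check is precisely this route. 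Your direct computation of $[D]^3$ in the orthonormal basis is a perfectly good substitute and makes the cohomological identity more explicit.
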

\begin{proof}
  \emph{(i)} -- $L_*D^3$ is a divisor whose cohomology class
  is $$[L_*D^3] = \sum_{i=1}^{23}([D^3]\cup e_i) \, e_i =
  \sum_{i=1}^{23} 3 q_F([D])\, q_F([D],e_i) \, e_i = 3q_F([D]) \,
  [D].$$ Thus $L_*D^3 = 3q_F([D])\, D$.

  \emph{(ii)} and \emph{(iii)} -- According to \eqref{eq rational
    equation}, together with the above, we have
  $$(L^2)_*D^3 = 2D^3 - \frac{2}{25}l\cdot L_*D^3 = 2D^3 - \frac{2\cdot
    3}{25}q_F([D]) \, l\cdot D.$$ Since by Theorem \ref{prop L2}
  $\CH^3(F)$ splits as $\Lambda_0^3 \oplus \Lambda_2^3$ under the
  action of $L^2$, with $\Lambda_2^3 = \CH^3(F)_{\hom}$, we get
  $2[D^3] - \frac{2\cdot 3}{25}q_F([D]) \, [l]\cdot [D] =0.$ Theorem
  \ref{thm voisin} yields $2D^3 - \frac{2\cdot 3}{25}q_F([D]) \,
  l\cdot D =0.$
\end{proof}

\begin{thm} \label{thm reformulation voisin} Let $P(D_i,l)$, $D_i
  \in \CH^1(F)$, be a polynomial.  Then
 $$P(D_i,l)\in\bigoplus_i \CH^i(F)_0.$$
\end{thm}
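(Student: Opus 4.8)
The plan is to reduce immediately to monomials and then argue degree by degree, with Lemmas \ref{lem intersection of two divisors} and \ref{lem intersection of three divisors} as the engine. Since each graded piece $\CH^i(F)_0$ is a $\Q$-linear subspace and the sum $\bigoplus_i\CH^i(F)_0$ is direct inside $\CH^*(F)$, it suffices to show that every monomial $M=D_1\cdots D_k\cdot l^j$ of codimension $i=k+2j\le 4$ lies in $\CH^i(F)_0$; an arbitrary polynomial $P(D_i,l)$ is then obtained by grouping its monomials according to codimension. In codimensions $0$ and $1$ there is nothing to do, as $\CH^0(F)=\CH^0(F)_0$ and $\CH^1(F)=\CH^1(F)_0$ by Theorem \ref{prop L2}. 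In codimension $2$ the monomials are $D_1D_2$ and $l$: the former lies in $\CH^2(F)_0$ by Lemma \ref{lem intersection of two divisors}\emph{(i)}, and $l$ spans $\Lambda_{25}^2\subseteq\CH^2(F)_0$ by Theorem \ref{prop main}.

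For codimension $3$, the monomials are $l\cdot D$ and $D_1D_2D_3$. For the first, Theorem \ref{prop L2} exhibits $l\cdot{}\colon\Lambda_0^1\to\Lambda_0^3$ as an isomorphism onto $\CH^3(F)_0$, and since $D\in\CH^1(F)=\Lambda_0^1$ we get $l\cdot D\in\CH^3(F)_0$; note in passing that this gives $\CH^3(F)_0=l\cdot\CH^1(F)$, which I will reuse. For the second, Lemma \ref{lem intersection of three divisors}\emph{(iii)} gives $D^3=\frac{3}{25}q_F([D])\,l\cdot D\in\CH^3(F)_0$, and polarizing the cubic form $D\mapsto D^3$ expresses $6\,D_1D_2D_3$ as a $\Q$-linear combination of such cubes; as $\CH^3(F)_0$ is a subspace, $D_1D_2D_3\in\CH^3(F)_0$.

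In codimension $4$ the monomials are $l^2$, $l\cdot D_1D_2$, and $D_1D_2D_3D_4$. The first spans $\CH^4(F)_0$ by Theorem \ref{prop main}. For $l\cdot D_1D_2$, we have $D_1D_2\in\CH^2(F)_0$ by Lemma \ref{lem intersection of two divisors}\emph{(i)}, hence $l\cdot D_1D_2\in\langle l\rangle\cdot\CH^2(F)_0=\CH^4(F)_0$ by Proposition \ref{prop l CH2 4}. The fourfold product is the delicate case, and I would handle it by bootstrapping through codimension $3$: writing $D_1D_2D_3=l\cdot E$ for some $E\in\CH^1(F)$ (possible because $\CH^3(F)_0=l\cdot\CH^1(F)$), one obtains $D_1D_2D_3D_4=l\cdot(E\,D_4)$ with $E\,D_4\in\CH^2(F)_0$ again by Lemma \ref{lem intersection of two divisors}\emph{(i)}, so $D_1D_2D_3D_4\in\langle l\rangle\cdot\CH^2(F)_0=\CH^4(F)_0$ by Proposition \ref{prop l CH2 4}.

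The conceptual obstacle is that the Fourier decomposition is a priori merely additive, so products of divisors have no reason to remain in a single graded piece; the argument must \emph{force} them into weight $0$ using the intersection-theoretic input of Lemmas \ref{lem intersection of two divisors} and \ref{lem intersection of three divisors} (which themselves encode Voisin's Theorem \ref{thm voisin} together with the identity $l=\frac{5}{6}c_2(F)$). The subtle point is precisely the fourfold product: one cannot simply invoke homological triviality, since the weight-$0$ part in codimension $3$ is not the image of cohomology but rather $l\cdot\CH^1(F)$, so the inductive factorization through $\CH^3(F)_0=l\cdot\CH^1(F)$ is what makes the codimension-$4$ step go through.
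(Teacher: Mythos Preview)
Your argument is correct. The engine in both your proof and the paper's is the same---Lemmas \ref{lem intersection of two divisors} and \ref{lem intersection of three divisors} together with the eigenspace descriptions in Theorems \ref{prop L2} and \ref{prop main}---and your treatment of degrees $0,1,2,3$ is essentially identical to the paper's (the paper also polarizes to reduce triple products to cubes $D^3$, then places $D^3$ in $\Lambda_0^3$ via the characterization \eqref{eq action pi6 CH1}, which is equivalent to your use of $\CH^3(F)_0=l\cdot\CH^1(F)$).

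The one genuine difference is your handling of codimension $4$. The paper dispatches this case in a single stroke by invoking Voisin's Theorem \ref{thm voisin} directly: since $l=\frac{5}{6}c_2(F)$, every weight-$4$ monomial in $D_i,l$ is a polynomial in divisors and Chern classes, hence determined by its cohomology class; as $\HH^8(F,\Q)$ is one-dimensional, every such class is a rational multiple of $l^2\in\CH^4(F)_0$. Your route---factoring $D_1D_2D_3=l\cdot E$ through the isomorphism $l\cdot{}\colon\CH^1(F)\to\CH^3(F)_0$, then applying Proposition \ref{prop l CH2 4}---is perfectly valid and has the virtue of staying entirely inside the Fourier formalism, but it is more work than necessary: you are re-deriving a special case of what Voisin's theorem already hands you. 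The paper's shortcut is worth knowing, since the same ``cohomology forces it'' move recurs throughout the manuscript (e.g.\ in Lemma \ref{lem cohomology to chow}).
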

\begin{proof} By Theorem \ref{prop main}, we have to show that
  $P(D_i,l) \in \langle l^2 \rangle \oplus W_{\frac{25}{2}}^3 \oplus
  \langle l \rangle \oplus W_1^2 \oplus \CH^1(F) \oplus \langle l^0
  \rangle.$ Given Theorem \ref{thm voisin} and Lemma \ref{lem
    intersection of two divisors}, we only need to show that
  $P(D_i,l)\in W_{\frac{25}{2}}^3$ for all weighted homogeneous $P$ of
  degree $3$.  First we note that the intersection of three divisors
  can be written as a linear combination of cycles of the form $D^3$.
  Since $l\cdot D$ is proportional to $D^3$, we only need to show that
  $D^3\in W_{\frac{25}{2}}^3 = \Lambda_0^3$. Observe that by Lemma
  \ref{lem intersection of three divisors} we have
\[
l\cdot L_*D^3 = 3q_F([D]) \, l\cdot D = 25 \, D^3,
\]
so that the proof follows from \eqref{eq action pi6 CH1}.
\end{proof}

\begin{rmk} \label{rmk CH3 divisors} By Theorems \ref{prop L2} \&
  \ref{prop main} and Lemma \ref{lem intersection of three divisors},
  we actually have $\CH^3(F)_0 = (\CH^1(F))^{\cdot 3} = \langle l
  \rangle \cdot\CH^1(F)$.
 \end{rmk}

Let us end this section with the following lemma, which will be used
in Part 2 and Part 3.

\begin{lem}\label{lem cohomology to chow}
  Let $\mathrm{V}_F\subset \CH^*(F)$ be the sub-algebra generated by
  all divisors and Chern classes of $F$. Then any cohomological
  polynomial relation $P$ among elements of the sub-algebra of
  $\CH^*(F \times F)$ generated by $p_1^*\mathrm{V}_F$ and
  $p_2^*\mathrm{V}_F$ holds in the Chow ring $\CH^*(F \times F)$.
\end{lem}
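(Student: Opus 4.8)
The plan is to reinterpret the statement as the assertion that the cycle class map is injective on the sub-algebra $W := \langle p_1^*\mathrm{V}_F, p_2^*\mathrm{V}_F \rangle \subseteq \CH^*(F \times F)$, and to bootstrap this from the corresponding injectivity for $\mathrm{V}_F$ itself, which is exactly Theorem \ref{thm voisin}. Concretely, Theorem \ref{thm voisin} says that the cycle class map $\mathrm{V}_F \to \HH^*(F,\Q)$ is injective. Since $\HH^*(F,\Q)$ is finite-dimensional, $\mathrm{V}_F$ is a finite-dimensional $\Q$-vector space; I would fix a $\Q$-basis $\alpha_1, \ldots, \alpha_n$ of $\mathrm{V}_F$. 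By injectivity, the classes $[\alpha_1], \ldots, [\alpha_n]$ are linearly independent in $\HH^*(F,\Q)$.

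First I would describe $W$ explicitly in terms of this basis. Because $p_1^*$ and $p_2^*$ are ring homomorphisms and $\mathrm{V}_F$ is closed under products, every monomial in the generators of $W$ can be rewritten as $p_1^*\alpha \cdot p_2^*\beta$ for some $\alpha, \beta \in \mathrm{V}_F$; expanding $\alpha$ and $\beta$ in the chosen basis then shows $W = \Span_\Q\{ p_1^*\alpha_k \cdot p_2^*\alpha_l : 1 \le k, l \le n \}$. Thus a general element of $W$ is $\gamma = \sum_{k,l} c_{kl}\, p_1^*\alpha_k \cdot p_2^*\alpha_l$ with $c_{kl} \in \Q$, and a cohomological polynomial relation among the generators is precisely the assertion that such a $\gamma$ is homologically trivial.

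Then I would conclude using the K\"unneth formula. Under the identification $\HH^*(F \times F, \Q) \cong \HH^*(F,\Q) \otimes \HH^*(F,\Q)$, the class of $p_1^*\alpha_k \cdot p_2^*\alpha_l$ is $[\alpha_k] \otimes [\alpha_l]$ (there is no K\"unneth sign, as $\mathrm{V}_F$, being generated by divisors and Chern classes, lives in even degree). Since $[\alpha_1], \ldots, [\alpha_n]$ are linearly independent, the family $\{[\alpha_k] \otimes [\alpha_l]\}_{k,l}$ is linearly independent in $\HH^*(F,\Q) \otimes \HH^*(F,\Q)$. Hence $[\gamma] = \sum_{k,l} c_{kl}\, [\alpha_k] \otimes [\alpha_l] = 0$ forces $c_{kl} = 0$ for all $k, l$, and therefore $\gamma = 0$ already in $\CH^*(F \times F)$.

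The content of the lemma is thus entirely carried by Theorem \ref{thm voisin}; I do not expect a serious obstacle beyond it. The only points requiring care are the clean spanning description of $W$ (which relies on $p_i^*$ being ring maps, so that products of pullbacks from a single factor stay inside $p_i^*\mathrm{V}_F$) and the elementary fact that tensoring two linearly independent families yields a linearly independent family --- both of which are formal once Voisin's injectivity is granted.
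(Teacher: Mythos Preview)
Your proof is correct. Both your argument and the paper's reduce the lemma to Theorem \ref{thm voisin} by extracting the ``coefficients'' of a homologically trivial element of $W$ and showing they vanish. The paper fixes a bidegree $(e_1,e_2)$, writes $P=\sum_i p_1^*\fa_i\cdot p_2^*P_i$ for a basis $\{\fa_i\}$ of $\mathrm{V}_F\cap\CH^{e_1}(F)$, and recovers each $P_i$ as $P_*\fb_i$ using a dual basis $\{\fb_i\}$ in $\mathrm{V}_F\cap\CH^{4-e_1}(F)$ with $\deg(\fa_i\cdot\fb_j)=\delta_{ij}$; then $P_i$ is homologically trivial and hence zero in Chow by Theorem \ref{thm voisin}. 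You instead invoke the K\"unneth decomposition of $\HH^*(F\times F,\Q)$ directly. Your route is slightly more elementary in that it bypasses the non-degeneracy of the intersection pairing on $\mathrm{V}_F$ needed for the dual basis to exist (this non-degeneracy is true but left implicit in the paper). The paper's route, on the other hand, stays within Chow-theoretic operations---push-forward along a projection---which matches the language used elsewhere in the manuscript.
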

\begin{proof}
  This is a consequence of Theorem \ref{thm voisin}. We assume that
  $P$ is a polynomial of bidegree $(e_1,e_2)$. We take a basis
\[
\{\fa_1,\fa_2,\ldots,\fa_m\}
\]
of $\mathrm{V}_F\cap\CH^{e_1}(F)$. Then we can write
\[
P = \sum_{i=1}^{m} p_1^*\fa_i \cdot p_2^*P_i,
\]
where $P_i$ are polynomials in $\mathrm{V}_F$ of degree $e_2$. There
exists a basis $\{\fb_1,\fb_2,\ldots \fb_m\}$ of $\mathrm{V}_F\cap
\CH^{4-e_1}(F)$ such that $\deg(\fa_i\cdot\fb_j)=\delta_{ij}$. If
$P=0$ in cohomology, then $P_i=P_*\fb_i$ is homologically trivial.  It
follows from Proposition \ref{thm voisin} that $P_i=0$.  Consequently,
we get $P=0$ at the Chow group level.
\end{proof}

\section{An application to symplectic automorphisms}

The results of this section will not be used in the rest of this
manuscript. The goal here is to show that the Fourier decomposition
reduces the question of understanding the action $f^*$ of a morphism
$f : F \rightarrow F$ on the Chow group of zero-cycles of a
hyperk\"ahler variety $F$ to the understanding of the action of $f^*$
on certain codimension-two cycles~; see Proposition \ref{prop
  symplectic auto general}.  We then use recent results of Voisin
\cite{voisin symp}, Huybrechts \cite{huybrechts symp} and Fu \cite{fu}
to exemplify this method to the cases where $F$ is the Hilbert scheme
of length-$2$ subschemes on a K3 surface or the variety of lines on a
cubic fourfold~; see Proposition \ref{prop symplectic S2} and Theorem
\ref{prop symplectic cubic}, respectively. Note that, in both cases,
we show that our cycle $L$ representing the Beauville--Bogomolov class
$\mathfrak{B}$ is preserved under the action of any automorphism $f  :
F \rightarrow F$.

\begin{prop}\label{prop symplectic auto general}
  Let $F$ be a hyperk\"ahler variety of dimension $2d$ for which the
  decomposition
$$
\CH^{2d}(F)=\bigoplus_{s=0}^{d}\CH^{2d}(F)_{2s},\qquad \CH^{2d}(F)_{2s} =
l^{d-s}\cdot (L_*\CH^{2d}(F))^{\cdot s}
$$
in Conjecture \ref{conj fourier} holds true. Let $f :F\rightarrow F$ be
a morphism such that $f^*\sigma = a\sigma$ for all $\sigma\in
L_*\CH^{2d}(F)$ and $f^*l=bl$, for some $a,b\in\Q$. Then $f^*$ acts as
multiplication by $a^{s}b^{d-s}$ on $\CH^{2d}(F)_{2s}$. In particular,
if $a=b=1$ then $f^*=\mathrm{Id}$ on $\CH^{2d}(F)$.
\end{prop}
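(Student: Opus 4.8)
The plan is to exploit the fact that $f$ is a genuine morphism, so that the pullback $f^*\colon\CH^*(F)\to\CH^*(F)$ is a graded ring homomorphism for the intersection product, i.e.\ $f^*(\alpha\cdot\beta)=f^*\alpha\cdot f^*\beta$. This multiplicativity is the crucial input; it is what distinguishes the situation from the action of a mere correspondence or rational self-map, and it is the only structural property of $f$ that the argument needs beyond the two given eigenvalue hypotheses.

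First I would record the generating set. By the assumed validity of Conjecture \ref{conj fourier}, the graded piece $\CH^{2d}(F)_{2s}$ is $\Q$-spanned by cycles of the shape $l^{d-s}\cdot\sigma_1\cdot\sigma_2\cdots\sigma_s$ with each $\sigma_i\in L_*\CH^{2d}(F)$; this is exactly the content of the identity $\CH^{2d}(F)_{2s}=l^{d-s}\cdot(L_*\CH^{2d}(F))^{\cdot s}$. Since these products generate $\CH^{2d}(F)_{2s}$ as a $\Q$-vector space and $f^*$ is $\Q$-linear, it suffices to compute $f^*$ on a single such product.

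I would then simply combine multiplicativity of $f^*$ with the hypotheses $f^*l=bl$ and $f^*\sigma=a\sigma$ for all $\sigma\in L_*\CH^{2d}(F)$:
$$f^*\big(l^{d-s}\cdot\sigma_1\cdots\sigma_s\big)=(f^*l)^{d-s}\cdot(f^*\sigma_1)\cdots(f^*\sigma_s)=b^{d-s}a^s\,\big(l^{d-s}\cdot\sigma_1\cdots\sigma_s\big).$$
By linearity this shows $f^*$ acts as multiplication by $a^s b^{d-s}$ on all of $\CH^{2d}(F)_{2s}$. Summing over $0\le s\le d$ and using the direct-sum decomposition $\CH^{2d}(F)=\bigoplus_{s=0}^d\CH^{2d}(F)_{2s}$, the special case $a=b=1$ yields $f^*=\mathrm{Id}$ on $\CH^{2d}(F)$, as claimed.

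As for the main obstacle: there is essentially no geometric or analytic difficulty once Conjecture \ref{conj fourier} is granted — the argument is purely formal. The one point to handle with care is conceptual rather than computational: the description of $\CH^{2d}(F)_{2s}$ presents the entire graded piece as products of cycles ($l$ and elements of $L_*\CH^{2d}(F)$) on which $f^*$ is already known to be a scalar, so all the work is in invoking the ring-homomorphism property of $f^*$, which is legitimate precisely because $f$ is a morphism.
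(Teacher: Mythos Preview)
Your proof is correct and follows essentially the same approach as the paper's own proof: both use that $\CH^{2d}(F)_{2s}$ is generated by products $l^{d-s}\sigma_1\cdots\sigma_s$ with $\sigma_i\in L_*\CH^{2d}(F)$, and then apply the multiplicativity of $f^*$ with respect to the intersection product to conclude. Your version is somewhat more explicit about why $f^*$ is a ring homomorphism and about the role of linearity, but the argument is the same.
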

\begin{proof}
  By assumption, $\CH^{2d}(F)_{2s}$ is generated by elements of the
  form $l^{d-s}\sigma_1\cdots\sigma_s$ where $\sigma_i\in
  L_*\CH^{2d}(F)$.  The compatibility of $f^*$ and the intersection
  product shows that $f^*$ acts as multiplication by $a^{s}b^{d-s}$ on
  such elements.
\end{proof}

Note that by Remark \ref{rmk assumptions satisfied}, Theorem \ref{thm
  main splitting} and Proposition \ref{prop l CH2 6}, if $F$ is the
Hilbert scheme of length-$2$ subschemes on a K3 surface or the variety
of lines on a cubic fourfold, then $\CH_0(F)$ has a decomposition as
in Conjecture \ref{conj fourier}. \medskip

An automorphism $f :F\rightarrow F$ of a hyperk\"ahler variety $F$ is
said to be \textit{symplectic} if $f^*=\mathrm{Id}$ on
$\HH^0(F,\Omega_F^2)$.  The conjectures of Bloch--Beilinson predict
that a symplectic automorphism acts trivially on $\CH_0(F)$~; see
\cite{Jannsen2}. This was proved to hold for symplectic involutions on
K3 surfaces by Voisin \cite{voisin symp} and generalized to symplectic
automorphisms of finite order on K3 surfaces by Huybrechts
\cite{huybrechts symp}.\medskip

Combined with Proposition \ref{prop symplectic auto general}, these
results give the following proposition in the case where $F$ is the
Hilbert scheme of length-$2$ subschemes on a K3 surface $S$.

\begin{prop}\label{prop symplectic S2}
  Let $f :S\rightarrow S$ be a symplectic automorphism of finite order
  of $S$ and $\hat{f} :F \rightarrow F$ the induced symplectic
  automorphism of $F=S^{[2]}$, then
$$
\hat{f}^* = \mathrm{Id} : \CH_0(F) \rightarrow \CH_0(F).
$$
\end{prop}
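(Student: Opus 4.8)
The plan is to deduce the statement from Proposition \ref{prop symplectic auto general}. By Remark \ref{rmk assumptions satisfied} (together with Theorems \ref{prop L2} and \ref{prop main}), the Chow group $\CH_0(F) = \CH^4(F)$ of $F = S^{[2]}$ carries the decomposition
\[
\CH^4(F) = \bigoplus_{s=0}^{2}\CH^4(F)_{2s}, \qquad \CH^4(F)_{2s} = l^{2-s}\cdot (L_*\CH^4(F))^{\cdot s}
\]
predicted by Conjecture \ref{conj fourier}. Hence Proposition \ref{prop symplectic auto general} applies with $d=2$, and it suffices to produce scalars $a,b\in\Q$ with $\hat f^*\sigma = a\,\sigma$ for all $\sigma\in L_*\CH^4(F)$ and $\hat f^* l = b\,l$, and then to check that $a=b=1$; indeed $\hat f^*$ will act by $a^s b^{2-s}=1$ on each summand $\CH^4(F)_{2s}$, proving $\hat f^*=\mathrm{id}$ on $\CH_0(F)$.

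The value $b=1$ is immediate: since $l=\tfrac{5}{6}c_2(F)$ and $\hat f$ is an automorphism, one has $\hat f^* T_F\cong T_F$ and therefore $\hat f^* l = \tfrac{5}{6}\hat f^* c_2(F)=\tfrac{5}{6}c_2(F)=l$. The whole content is thus the computation of the scalar by which $\hat f^*$ acts on $L_*\CH^4(F)=\CH^2(F)_2$, and this is where the hypothesis that $f$ is symplectic of finite order enters. First I would recall from Theorem \ref{thm CK} that $\CH^2(F)_2 = \CH^2(\mathfrak h^2(F))$, where $\mathfrak h^2(F)$ is the degree-$2$ Chow motive of $F$. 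Since $\HH^2(F,\Q)=\HH^2(S,\Q)\oplus\Q\,\delta$, the motive $\mathfrak h^2(F)$ splits as a sum of Lefschetz motives and a copy of the transcendental motive $\mathfrak t(S)$ of $S$; as the Lefschetz summands contribute nothing to $\CH^2$, the natural correspondence $\Phi:=\rho_*\circ\mu^*\circ p_1^*$ — built from the blow-down $\mu\colon\widetilde{S\times S}\to S\times S$ along the diagonal and the quotient $\rho\colon\widetilde{S\times S}\to S^{[2]}$ — should induce an isomorphism $\CH_0(S)_{\hom}\xrightarrow{\ \sim\ }\CH^2(F)_2$.

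The key point is that $\Phi$ is equivariant. Because $\hat f\circ\rho=\rho\circ\widetilde{(f\times f)}$, $\mu\circ\widetilde{(f\times f)}=(f\times f)\circ\mu$ and $p_1\circ(f\times f)=f\circ p_1$, one obtains (working with $\Q$-coefficients, so that the degree-$2$ quotient causes no trouble) $\hat f^*\circ\Phi=\Phi\circ f^*$ on $\CH_0(S)_{\hom}$. Now by the theorem of Voisin \cite{voisin symp} for involutions and of Huybrechts \cite{huybrechts symp} in the general finite-order case, a symplectic automorphism of finite order of a K3 surface acts as the identity on $\CH_0(S)$, hence on $\CH_0(S)_{\hom}$. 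Combining this with the equivariance and the isomorphism above yields $\hat f^*=\mathrm{id}$ on $\CH^2(F)_2=L_*\CH^4(F)$, that is $a=1$. Feeding $a=b=1$ into Proposition \ref{prop symplectic auto general} finishes the proof.

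The step I expect to be the main obstacle is the middle one: pinning down the equivariant isomorphism $\CH_0(S)_{\hom}\cong\CH^2(F)_2$ and checking that the correspondence $\Phi$ realizing $\mathfrak t(S)\hookrightarrow\mathfrak h^2(F)$ is exactly intertwined with $\hat f$ and $f$. Concretely one must verify that $\Phi$ lands in $\CH^2(F)_2$ (and not merely in $\CH^2(F)_{\hom}$) and is surjective onto it, which is where the explicit Chow-theoretic analysis of $S^{[2]}$ carried out in Section \ref{sec Fourierdec S2} is needed. An alternative, more hands-on route to $a=1$ that avoids the motivic language is to use the identification $\CH_0(S^{[2]})\cong\Sym^2\CH_0(S)$ coming from the Hilbert--Chow morphism: under it $\hat f^*$ corresponds to $\Sym^2(f^*)$, and since every zero-cycle on $S\times S$ is a sum of exterior products, the identity $f^*=\mathrm{id}$ on $\CH_0(S)$ propagates at once to $\hat f^*=\mathrm{id}$ on $\CH_0(F)$. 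The role of Proposition \ref{prop symplectic auto general} is precisely to isolate the transcendental piece $L_*\CH^4(F)$, so that the very same strategy applies verbatim to the variety of lines on a cubic fourfold, where no symmetric-product description is available.
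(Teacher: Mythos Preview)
Your approach is essentially the same as the paper's: both apply Proposition \ref{prop symplectic auto general} with $a=b=1$ and reduce $a=1$ to the Voisin--Huybrechts theorem that $f^*=\mathrm{id}$ on $\CH_0(S)$. Two differences in execution are worth noting.

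For $b=1$, your argument via $l=\tfrac{5}{6}c_2(F)$ and naturality of Chern classes is shorter than the paper's route, which instead checks that $\hat f$ fixes each of $I$, $S_{\mathfrak{o}}$ and $\delta$, deduces $(\hat f\times\hat f)^*L=L$ from \eqref{eq L S2}, and then pulls back along the diagonal.

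For $a=1$, your correspondence $\Phi=\rho_*\mu^*p_1^*$ is, after unwinding the notation, exactly the map $[x]\mapsto S_x$ (the paper's $p_*q^*$), and your equivariance $\hat f^*\circ\Phi=\Phi\circ f^*$ is precisely the statement $\hat f^*S_x=S_{f^{-1}x}$. The paper makes this concrete from the outset: it simply observes that $L_*\CH_0(F)$ is generated by cycles of the form $S_x-S_y$ (immediate from \eqref{eq Lxy}) and that $\hat f^*S_x=S_{f^*x}$ since $x\in W\Leftrightarrow f^{-1}x\in f^{-1}W$. So the ``main obstacle'' you flag --- that $\Phi$ lands in and surjects onto $\CH^2(F)_2$ --- is a one-line consequence of \eqref{eq Lxy}, and you only need surjectivity, not an isomorphism; the motivic detour through $\mathfrak t(S)$ and Theorem \ref{thm CK} is unnecessary.
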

\begin{proof}
  The notations are those of Part 2. Let $x\in S$ be a point and let
  $S_x$ be the surface parameterizing all length-$2$ subschemes
  $W\subset S$ such that $x\in W$. Since $f$ is an automorphism, one
  sees that $x\in W$ if and only if $f^{-1}x\in f^{-1}W$.  Hence at
  the level of Chow groups we have $\hat{f}^*S_x = S_{f^*x}$. As a
  special case, one has $\hat{f}^* S_{\mathfrak{o}}=
  S_{\mathfrak{o}}$, since $\mathfrak{o}$ is the class of any point on
  a rational curve of $S$ and $f$ maps a rational curve to a rational
  curve. Similarly, a length-$2$ subscheme $W\subset S$ is non-reduced
  if and only if $f^{-1}W$ is non-reduced.  This implies
  $\hat{f}^*\delta=\delta$. Since $f$ preserves the incidence of
  subschemes of $S$, we have $(\hat{f}\times \hat{f})^*I=I$. It
  follows that $(\hat{f}\times\hat{f})^*L=L$, where $L$ is as in
  \eqref{eq L S2}. Thus we get
\[
\hat{f}^*l = \hat{f}^* (\iota_{\Delta})^*L =
(\iota_{\Delta})^*(\hat{f}\times \hat{f})^*L =(\iota_{\Delta})^*L =l.
\]
Now since $L_*\CH_0(F)$ is generated by cycles of the form $S_x
-S_{y}$, we see that the condition $\hat{f}^*=\mathrm{Id}$ on
$L_*\CH_0(F)$ is implied by $f^*=\mathrm{Id}$ on
$\CH_0(S)_\mathrm{hom}$, which was established by Voisin \cite{voisin
  symp} and Huybrechts \cite{huybrechts symp}. We can then invoke
Proposition \ref{prop symplectic auto general} with $a=b=1$ to
conclude.
\end{proof}

In the case where $F$ is the variety of lines on a cubic fourfold, let
us mention the following recent result of L.~Fu \cite{fu}.

\begin{thm}[Fu \cite{fu}]\label{prop symplectic cubic}
  Let $f :X\rightarrow X$ be an automorphism of the cubic fourfold $X$
  such that $f^*=\mathrm{Id}$ on $\HH^{3,1}(X)\rightarrow
  \HH^{3,1}(X)$ and $\hat{f} :F\rightarrow F$ the induced symplectic
  automorphism of $F$, then
$$
\hat{f}^*=\mathrm{Id} :\CH_0(F) \rightarrow \CH_0(F).
$$
\end{thm}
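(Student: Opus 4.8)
The plan is to follow the template of Proposition \ref{prop symplectic S2} and reduce everything to Proposition \ref{prop symplectic auto general}. By Remark \ref{rmk assumptions satisfied}, Theorem \ref{thm main splitting} and Proposition \ref{prop l CH2 6}, the variety $F$ of lines on $X$ carries a cycle $L$ for which $\CH_0(F)=\CH^4(F)$ admits the decomposition $\bigoplus_{s=0}^{2}\CH^4(F)_{2s}$ of Conjecture \ref{conj fourier}, with $\CH^4(F)_{2s}=\langle l^{2-s}\rangle\cdot (L_*\CH_0(F))^{\cdot s}$. Proposition \ref{prop symplectic auto general}, applied with $2d=4$, then reduces the statement to producing scalars $a,b\in\Q$ with $\hat f^*\sigma=a\sigma$ for all $\sigma\in L_*\CH_0(F)$ and $\hat f^* l=b\,l$, and to checking $a=b=1$: indeed $\hat f^*$ would then act as $a^s b^{2-s}=1$ on each graded piece $\CH^4(F)_{2s}$.

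The value $b=1$ is immediate. Since $\hat f$ is an automorphism it preserves the tangent bundle of $F$, so $\hat f^* c_2(F)=c_2(F)$, and as $l=\tfrac{5}{6}c_2(F)$ we get $\hat f^* l=l$. Equivalently, because $f$ preserves incidence of lines on $X$, the automorphism $\hat f\times\hat f$ fixes the incidence correspondence and hence the cycle $L$ of \eqref{eq L Cubic}, so $\hat f^* l=\iota_\Delta^*(\hat f\times\hat f)^*L=\iota_\Delta^*L=l$. In particular $L$ is $\hat f$-invariant, whence $\hat f^*$ commutes with $L_*$; this already shows that proving $a=1$ is equivalent to proving $\hat f^*=\mathrm{Id}$ on $\CH^4(F)_2$.

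It remains to establish the substantial point $a=1$, i.e. that $\hat f^*$ is the identity on $L_*\CH_0(F)=\CH^2(F)_2$. Here one transports the problem to $X$ through the universal line $P\subset F\times X$, with projections $p:P\to F$ and $q:P\to X$. For a line $\ell\subset X$ the surface $S_\ell:=p_*q^*[\ell]\subset F$ of lines meeting $\ell$ represents $L_*[\ell]$ modulo $\CH^2(F)_0$, and $L_*\CH_0(F)$ is spanned by the differences $S_\ell-S_{\ell'}=\Phi([\ell]-[\ell'])$, where $\Phi=p_*q^*:\CH_1(X)_{\hom}\to\CH^2(F)_2$ and $[\ell]-[\ell']\in\CH_1(X)_{\hom}$. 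Since $\hat f([\ell])=[f(\ell)]$ and $f$ preserves incidence, $\hat f^* S_\ell=S_{f^{-1}\ell}$, so the action of $\hat f^*$ on $L_*\CH_0(F)$ is governed by that of $f$ on $\CH_1(X)_{\hom}$ — exactly as the $S^{[2]}$ case was governed by $f$ on $\CH_0(S)_{\hom}$. By the isomorphism of Beauville and Donagi, the transcendental part of $\HH^2(F,\Q)$ is identified with the primitive part of $\HH^4(X,\Q)$, under which the hypothesis $f^*=\mathrm{Id}$ on $\HH^{3,1}(X)$ says precisely that $\hat f$ is symplectic, i.e. $\hat f^*$ is the identity on the transcendental cohomology.

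The main obstacle is the final lift from cohomology to rational equivalence: from the triviality of the $f$-action on the transcendental cohomology of $X$ one must deduce its triviality on $\CH_1(X)_{\hom}$, equivalently that $\hat f^*$ is the identity on $\CH^2(F)_2$. This does not follow from the formal Fourier formalism developed above and is the genuine content of Fu's theorem; one expects it to require an input such as finite-dimensionality in the sense of Kimura for the transcendental motive, or a Voisin-style spreading-out argument over the family of such automorphisms. Granting this, $a=1$, and Proposition \ref{prop symplectic auto general} yields $\hat f^*=\mathrm{Id}$ on every $\CH^4(F)_{2s}$, hence on all of $\CH_0(F)$.
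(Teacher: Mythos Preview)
Your proposal is correct and follows essentially the same route as the paper: both reduce to Proposition \ref{prop symplectic auto general} with $a=b=1$, establish $\hat f^*l=l$ via the $(\hat f\times\hat f)$-invariance of $L$ (the paper obtains this from $\hat f^*g=g$ and $\hat f^*c=c$, you via $\hat f^*c_2(F)=c_2(F)$), and reduce $a=1$ to the identity $f^*=\mathrm{Id}$ on $\CH^3(X)_{\hom}=\CH_1(X)_{\hom}$. The only difference is that the paper simply cites Fu \cite{fu} for this last input rather than speculating about its proof, which is appropriate since the theorem is attributed to Fu in the first place.
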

\begin{proof} The notations are those of Part 3. We reproduce, in our
  setting, the original argument of Fu.  First we note that the
  polarization $g$ on $F$ is represented by the divisor of all lines
  meeting $H_1\cap H_2$, where $H_1$ and $H_2$ are two general
  hyperplane sections on $X$. Since $f$ maps a line to a line and maps
  a hyperplane section to a hyperplane section, we have
  $\hat{f}^*g=g$. Note that the class $c$ is represented by all lines
  contained in some hyperplane section, and hence we get
  $\hat{f}^*c=c$. For a point $t\in F$, with associated line
  $l_t\subset X$, we define $S_{l_t}$ to be the surface of all lines
  meeting the line $l_t$. Since $f$ is an automorphism of $X$ that
  preserves the degrees of subvarieties, we see that $l_t$ meets
  $l_{t'}$ if and only if $f^{-1}l_t$ meets $f^{-1}l_{t'}$. From this
  we deduce $(\hat{f}\times \hat{f})^*I=I$ and $\hat{f}^*S_{l_{t}} =
  S_{f^*l_{t}}$, for all $t\in F$. It follows that
  $(\hat{f}\times\hat{f})^*L=L$, where $L$ is as in \eqref{eq L
    Cubic}. Thus we get
\[
\hat{f}^*l = \hat{f}^* (\iota_{\Delta})^*L =
(\iota_{\Delta})^*(\hat{f}\times \hat{f})^*L =(\iota_{\Delta})^*L =l.
\]
Now since $L_*\CH_0(F)$ is generated by cycles of the form
$S_{l_{t_1}} -S_{l_{t_2}}$, we see that the condition
$\hat{f}^*=\mathrm{Id}$ on $L_*\CH^4(F)$ is reduced to
\[
 l_{t_1} - l_{t_2} = f^*l_{t_1} - f^* l_{t_2},\qquad \text{in } \CH^3(X).
\]
The above equality is implied by $f^*=\mathrm{Id}$ on
$\CH^3(X)_\mathrm{hom}$, which was established by Fu \cite{fu}. Hence
we can conclude by Proposition \ref{prop symplectic auto general} with
$a=b=1$.
\end{proof}

\vspace{10pt}
\section{On the birational invariance of the Fourier
  decomposition} \label{sec birational} Ulrike Greiner \cite{greiner}
has recently proved the following theorem.
\begin{thm}[Greiner \cite{greiner}] \label{thm greiner} Let $F$ and
  $F'$ be birational hyperk\"ahler varieties of
  $\mathrm{K3}^{[2]}$-type. Then there exists a correspondence $\gamma
  \in \CH^4(F \times F')$ such that $ \gamma_*  : \CH^*(F) \rightarrow
  \CH^*(F')$ is an isomorphism of graded rings.
\end{thm}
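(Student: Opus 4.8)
The plan is to obtain $\gamma$ as a \emph{specialization} of the graph of a genuine isomorphism, so that the multiplicativity of that isomorphism is inherited by $\gamma_*$. Let $f\colon F\dashrightarrow F'$ be a birational map and $\Gamma\subset F\times F'$ the closure of its graph, giving a first candidate $[\Gamma]\in\CH^4(F\times F')$. At the level of cohomology the assertion is easy: since $F$ and $F'$ are isomorphic in codimension one, $\Gamma_*$ restricts to a Hodge isometry $\HH^2(F,\Q)\xrightarrow{\sim}\HH^2(F',\Q)$ for the Beauville--Bogomolov form, and because the cohomology ring of a fourfold of $\mathrm{K3}^{[2]}$-type is generated by $\HH^2$ (Proposition \ref{prop action of B powers}), this promotes to a graded ring isomorphism on $\HH^*$. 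The difficulty --- and the reason the cohomological argument cannot be repeated verbatim --- is that $\CH^*(F)$ is \emph{not} generated by divisors, so controlling the product on all of $\CH^*$ requires a genuinely different input.

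That input is Huybrechts' theorem that two birational hyperk\"ahler manifolds can be connected by smooth families $\pi\colon\mathcal{X}\to B$ and $\pi'\colon\mathcal{X}'\to B$ over a smooth connected base, with a point $0\in B$ and isomorphisms $\mathcal{X}_0\cong F$, $\mathcal{X}'_0\cong F'$, together with an isomorphism $\Phi\colon \mathcal{X}|_{B\setminus\{0\}}\xrightarrow{\sim}\mathcal{X}'|_{B\setminus\{0\}}$ over the punctured base recovering $f$ in the limit. I would pass to the generic point $\eta$ of $B$, where $\Phi_\eta\colon \mathcal{X}_\eta\xrightarrow{\sim}\mathcal{X}'_\eta$ is an honest isomorphism, so that its graph $\Gamma_{\Phi_\eta}$ acts on $\CH^*(\mathcal{X}_\eta)$ as the graded ring isomorphism $(\Phi_\eta)_*=(\Phi_\eta^{-1})^*$. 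I would then \emph{define} $\gamma:=\mathrm{sp}(\Gamma_{\Phi_\eta})\in\CH^4(F\times F')$ to be the specialization of this graph to the special fibre.

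The heart of the argument is the compatibility of the specialization homomorphism with products, proper pushforward, flat pullback and hence with correspondence action. Writing $\mathrm{sp}\colon\CH^*(\mathcal{X}_\eta)\to\CH^*(F)$ and $\mathrm{sp}'\colon\CH^*(\mathcal{X}'_\eta)\to\CH^*(F')$ for the specialization maps, which are graded ring homomorphisms, one has $\gamma_*(\mathrm{sp}\,\alpha)=\mathrm{sp}'\big((\Phi_\eta)_*\alpha\big)$ for every $\alpha$; combined with the multiplicativity of $\mathrm{sp}$, $\mathrm{sp}'$ and $(\Phi_\eta)_*$ this gives
\[
\gamma_*(\mathrm{sp}\,\alpha\cdot\mathrm{sp}\,\beta)=\mathrm{sp}'\big((\Phi_\eta)_*\alpha\cdot(\Phi_\eta)_*\beta\big)=\gamma_*(\mathrm{sp}\,\alpha)\cdot\gamma_*(\mathrm{sp}\,\beta).
\]
Running the same construction for $f^{-1}$ produces a class $\gamma'$, and the compatibility of specialization with composition of correspondences together with $(\Phi_\eta^{-1})_*\circ(\Phi_\eta)_*=\mathrm{id}$ yields $\gamma'_*\circ\gamma_*=\mathrm{id}$ and $\gamma_*\circ\gamma'_*=\mathrm{id}$, so that $\gamma_*$ is bijective.

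I expect the main obstacle to be twofold, and to lie precisely where this sketch is least automatic. First, Huybrechts' deformation to an isomorphism is a priori only in the K\"ahler category, whereas specialization of Chow groups requires a projective (or at least algebraic) family; making the connecting family algebraic --- by arranging a projective family for $\mathrm{K3}^{[2]}$-fourfolds, or by a spreading-out argument --- is delicate. Second, and more seriously, the displayed computation only controls $\gamma_*$ on the image of $\mathrm{sp}$, and specialization need not be surjective when $F$ carries algebraic classes that do not extend to the generic fibre (the special fibre of a deformation may have larger Picard rank). Closing this gap --- promoting multiplicativity from $\mathrm{im}(\mathrm{sp})$ to all of $\CH^*(F)$ --- is the crux; I would attack it by working with the relative correspondence $\mathcal{Z}\subset\mathcal{X}\times_B\mathcal{X}'$ and its action on relative Chow groups, or by reducing, via the structure of birational maps between hyperk\"ahler fourfolds, to explicit elementary (Mukai) transformations where the comparison of Chow rings can be made by hand.
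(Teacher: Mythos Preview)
Your overall strategy is exactly the one the paper follows, and you have correctly located both obstacles. The first one is dispatched as you anticipated: Huybrechts' result can be made algebraic for fourfolds of $\mathrm{K3}^{[2]}$-type because any birational map between such varieties is a composite of Mukai flops (Wierzba), and Greiner observes that Huybrechts' proof then goes through projectively. So you may assume the families $\mathcal{X},\mathcal{X}'$ are smooth projective over a smooth quasi-projective curve, and Fulton's specialization map is available.

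The second obstacle, however, is a genuine gap in your argument, and the paper's fix is precisely the missing idea. Your displayed computation only proves $\gamma_*(a\cdot b)=\gamma_*a\cdot\gamma_*b$ for $a,b$ in the image of $\mathrm{sp}$, and you are right that this image need not be all of $\CH^*(F)$. The remedy is to stop working with individual cycles and instead encode multiplicativity as a single identity of \emph{correspondences}. Writing $\Delta_{123}^X\in\CH_d(X^3)$ for the small diagonal viewed as a correspondence from $X\times X$ to $X$, the statement ``$\gamma_*$ is a ring homomorphism'' is equivalent to the equality
\[
\gamma\circ\Delta_{123}^F\circ({}^t\gamma\times{}^t\gamma)=\Delta_{123}^{F'}\qquad\text{in }\CH^*(F'\times F'\times F').
\]
This is an identity between two fixed cycles on a fixed variety, and it suffices to check it after specialization from the generic fibre. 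Over $\eta$ one has $\gamma_\eta\circ\Delta_{123}^{\mathcal{X}_\eta}\circ({}^t\gamma_\eta\times{}^t\gamma_\eta)=(\gamma_\eta\times\gamma_\eta\times\gamma_\eta)_*\Delta_{123}^{\mathcal{X}_\eta}=\Delta_{123}^{\mathcal{X}'_\eta}$ because $\gamma_\eta$ is the graph of an isomorphism; specializing (using that $s_0$ commutes with composition of correspondences, intersection products, and sends small diagonals to small diagonals) gives the displayed identity on the special fibre. The same trick with $\gamma_\eta\circ{}^t\gamma_\eta=\Delta_{\mathcal{X}'_\eta}$ and ${}^t\gamma_\eta\circ\gamma_\eta=\Delta_{\mathcal{X}_\eta}$ gives bijectivity of $\gamma_*$ directly as an equality of correspondences, without ever needing $\mathrm{sp}$ to be surjective on Chow groups.
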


The main result of this section is Theorem \ref{thm birational
  invariance}, which roughly states that if $F$ and $F'$ are
birational hyperk\"ahler varieties of $\mathrm{K3}^{[2]}$-type and if
the Chow ring of $F$ has a Fourier decomposition with respect to a
cycle $L \in \CH^2(F \times F)$ representing the Beauville--Bogomolov
class $\mathfrak{B}$, then the Chow ring of $F'$ also has a Fourier
decomposition with respect to a cycle $L' \in \CH^2(F' \times F')$
representing the Beauville--Bogomolov class $\mathfrak{B}' \in
\HH^4(F'\times F',\Q)$. Thus Theorem \ref{thm main mult} also holds
for any hyperk\"ahler variety birational to the Hilbert scheme of
length-$2$ subschemes on a K3 surface, or to the variety of lines on a
very general cubic fourfold. Theorem \ref{thm birational invariance}
is a generalization of Theorem \ref{thm greiner} and the method of
proof follows closely that of Greiner.\medskip

The following result is due to Huybrechts \cite[Theorem 3.4 \& Theorem
4.7]{huybrechts} in the non-algebraic setting. As noted in
\cite{greiner}, the proof adapts to the algebraic setting at the
condition that the two varieties $F$ and $F'$ are connected by Mukai
flops, which is the case for hyperk\"ahler fourfolds of
$\mathrm{K3}^{[2]}$-type by \cite{wierzba}.

\begin{thm}[Huybrechts \cite{huybrechts}] \label{prop huybrechts} Let
  $F$ and $F'$ be birational hyperk\"ahler fourfolds of
  $\mathrm{K3}^{[2]}$-type. Then there exist algebraic varieties
  $\mathscr{F}$ and $\mathscr{F}'$ smooth and projective over a smooth
  quasi-projective curve $T$, and a closed point $0 \in T$ such
  that \begin{enumerate}[(i)]
\item $\mathscr{F}_0 = F$ and $\mathscr{F}'_0 = F'$~;
\item there is an isomorphism $\Psi  : \mathscr{F}_{T\backslash \{0\}}
  \stackrel{\simeq}{\longrightarrow} \mathscr{F}'_{T \backslash
    \{0\}}$ over $T\backslash\{0\}$.  \qed
\end{enumerate}
\end{thm}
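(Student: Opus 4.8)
The plan is to follow Huybrechts' deformation argument \cite{huybrechts}, using the factorisation of the birational map into Mukai flops to keep everything algebraic, as observed by Greiner \cite{greiner}. First I would invoke Wierzba's theorem \cite{wierzba} that every birational map between hyperk\"ahler fourfolds of $\mathrm{K3}^{[2]}$-type is a finite composition of Mukai flops, each performed along a smooth Lagrangian $\PP^2$. The conclusion of the theorem is stable under composition: given families realising $F \dashrightarrow F''$ and $F'' \dashrightarrow F'$, one glues them along a common central fibre $F''$ after shrinking the base. It therefore suffices to treat the case where $F \dashrightarrow F'$ is a single Mukai flop. In that case the flopping locus is a Lagrangian $\PP^2 \subset F$, whose normal bundle is $\Omega^1_{\PP^2}$, so that a neighbourhood is modelled on the zero-section of $T^*\PP^2$; contracting this $\PP^2$ (respectively the dual $\PP^2 \subset F'$) produces a common small contraction $\pi : F \to \bar F$ and $\pi' : F' \to \bar F$ onto a singular symplectic variety $\bar F$, and $\pi,\pi'$ are its two small resolutions.

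Next I would deform $F$ so as to destroy the flop. Deformations of $F$ are unobstructed (Bogomolov--Tian--Todorov), so the Kuranishi space $\mathrm{Def}(F)$ is smooth of dimension $\dim \HH^2(F,\Q)-2$, and by the local Torelli theorem the period map is a local isomorphism onto an open subset of the period domain. The Hodge isometry $\phi : \HH^2(F,\Q) \xrightarrow{\sim} \HH^2(F',\Q)$ induced by the flop (an isomorphism of Hodge structures and of lattices, since a birational map between hyperk\"ahler manifolds is an isomorphism in codimension one) identifies the two period domains, and hence, via local Torelli and surjectivity of the period map, identifies $\mathrm{Def}(F)$ with $\mathrm{Def}(F')$: a deformation $F_t$ of period $x_t$ corresponds to a deformation $F'_t$ of period $\phi(x_t)$. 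I would then choose a smooth quasi-projective curve $T \ni 0$ inside the Noether--Lefschetz locus on which a chosen polarisation of $F$ stays of type $(1,1)$, but which is otherwise general, and let $\mathscr{F}, \mathscr{F}' \to T$ be the two restricted families, so that $\mathscr{F}_0 = F$ and $\mathscr{F}'_0 = F'$, giving (i).

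For $t \neq 0$ general the integral class cutting out the flopping wall of $F$ ceases to be of type $(1,1)$, so the Lagrangian $\PP^2$ does not deform and $F_t$ carries no flopping wall inside its movable cone; consequently $F_t$ has a unique birational hyperk\"ahler model, and the parallel transport of the birational map $F \dashrightarrow F'$ is then a genuine isomorphism $F_t \xrightarrow{\sim} F'_t$. Carrying these isomorphisms along $T \setminus \{0\}$ produces the isomorphism $\Psi : \mathscr{F}_{T \setminus \{0\}} \xrightarrow{\sim} \mathscr{F}'_{T \setminus \{0\}}$ of (ii).

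The main obstacle is to keep the two families projective over the quasi-projective curve $T$ rather than merely K\"ahler over a disc; this is exactly the point where the algebraic hypothesis, and hence Wierzba's factorisation, is needed. Since $F$ and $F'$ are projective, their given ample classes stay ample on nearby fibres by openness of ampleness, so each family is relatively projective over a small analytic neighbourhood of $0$; the work is to promote this to a genuine quasi-projective base, which one does by checking that the contraction class supporting $\pi$ (respectively $\pi'$), which becomes ample once the flopping wall disappears, provides a relative polarisation over $T \setminus \{0\}$ compatible with the given polarisation near $0$. Verifying that the Noether--Lefschetz curve $T$ can be chosen through $0$ transverse to the flopping wall while carrying such a relatively ample class for both $\mathscr{F}$ and $\mathscr{F}'$ is the delicate step; granting it, relative ampleness spreads out and both $\mathscr{F}$ and $\mathscr{F}'$ are projective over $T$, completing the proof.
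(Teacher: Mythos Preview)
The paper does not give a proof of this theorem: it is stated with a \qed and attributed to Huybrechts \cite[Theorems 3.4 \& 4.7]{huybrechts}, with the remark (just before the statement) that Greiner \cite{greiner} observed the argument adapts to the algebraic setting provided $F$ and $F'$ are connected by Mukai flops, which holds for $\mathrm{K3}^{[2]}$-type fourfolds by Wierzba \cite{wierzba}. So there is no detailed argument in the paper to compare against; your sketch is an attempt to reconstruct the cited proof, and it does identify the correct ingredients (unobstructedness, local Torelli, the Hodge isometry induced by the birational map, Wierzba's factorisation, and the projectivity issue).

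There is, however, a genuine gap in your reduction step. You claim the conclusion is ``stable under composition'' by gluing the two families along the common central fibre $F''$. This does not work: the families realising $F\dashrightarrow F''$ live over a curve $T_1$ and have central fibres $F$ and $F''$, while those realising $F''\dashrightarrow F'$ live over a different curve $T_2$ with central fibres $F''$ and $F'$. Gluing $T_1$ and $T_2$ at their marked points yields a nodal (hence non-smooth) base, contradicting the hypothesis of the theorem; and even over that nodal curve there is no family with central fibre $F$ on one branch and $F'$ on the other, since on the $T_1$-branch the ``$F$-family'' has central fibre $F$, not $F''$, and there is nothing on the $T_2$-side to glue it to. The statement you want is simply not transitive in this naive way.

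The actual argument (Huybrechts' original, as adapted by Greiner) does not reduce to a single flop. One works with the full birational map $F\dashrightarrow F'$ at once: the induced Hodge isometry $\HH^2(F,\Z)\cong\HH^2(F',\Z)$ identifies the deformation spaces, and one chooses a single polarised deformation direction along which \emph{all} the classes supporting the (finitely many) flopping walls simultaneously fail to remain of type $(1,1)$. This is possible because each such wall imposes a proper Noether--Lefschetz-type condition in the polarised moduli space, and a general curve through $[F]$ avoids all of them for $t\neq 0$. Wierzba's result is used not to reduce to one flop, but to guarantee (as in Greiner's adaptation) that the graph closure and the relative polarisations behave well enough to keep both families projective over the algebraic base. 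Once you replace the gluing step by this ``deform everything at once'' argument, the rest of your sketch is on the right track.
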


In the rest of this section, we are in the situation of Theorem
\ref{prop huybrechts}.  Let $\eta \rightarrow T$ be the generic point
of $T$ and let $\gamma_\eta \in \CH^4(\mathscr{F}_\eta \times_\eta
\mathscr{F}'_\eta)$ be the class of the restriction to the generic
fiber of the graph of $\Psi$. The isomorphism $\Psi$ restricts to an
isomorphism $\psi  : \mathscr{F}_\eta \rightarrow \mathscr{F}'_\eta$,
and $\gamma_\eta$ is nothing but the class of the graph of $\psi$. In
particular, we have $\gamma_\eta \circ {}^t\gamma_\eta =
\Delta_{\mathscr{F}_\eta'}$ and ${}^t\gamma_\eta \circ \gamma_\eta =
\Delta_{\mathscr{F}_\eta}$, where ${}^t\gamma_\eta$ denotes the
transpose of $\gamma_\eta$. Let us write $s_0$ for Fulton's
specialization map on Chow groups of varieties which are smooth over
$T$~; see \cite[\S20.3]{fulton}. If $\mathscr{X} \rightarrow T$ is a
smooth morphism, then the specialization map $s_0  :
\CH^*(\mathscr{X}_\eta) \rightarrow \CH^*(\mathscr{X}_0)$ sends a
cycle $\alpha$ on the generic fiber $\mathscr{X}_\eta$ of $\mathscr{X}
\rightarrow T$ to the restriction to the special fiber $\mathscr{X}_0$
of a closure of $\alpha$ to $\CH^*(\mathscr{X})$. This map commutes
with intersection product, flat pull-backs and proper push-forwards.
In particular, if we are dealing with varieties which are smooth and
projective over $T$, then $s_0$ commutes with composition of
correspondences. We set
$$\gamma :=s_0(\gamma_\eta) \in \CH^4(F \times F').$$ U.~Greiner showed
\cite[Theorem 2.6]{greiner} that $\gamma_*  : \CH^*(F) \rightarrow
\CH^*(F')$ is an isomorphism of graded rings with inverse given by
$\gamma^*$. Let us, for clarity, reproduce Greiner's
proof. Specializing the identities $\gamma_\eta \circ {}^t\gamma_\eta
= \Delta_{\mathscr{F}_\eta'}$ and ${}^t\gamma_\eta \circ \gamma_\eta =
\Delta_{\mathscr{F}_\eta}$ immediately gives $\gamma \circ {}^t\gamma
= \Delta_{{F}'}$ and ${}^t\gamma \circ \gamma = \Delta_{{F}}$, so that
$\gamma_*  : \CH^*(F) \rightarrow \CH^*(F')$ is a group
isomorphism. Given a smooth projective variety $X$, the \emph{small
  diagonal} $\Delta^X_{123} \in \CH^*(X \times X \times X)$ is the
class of the graph of the diagonal embedding $\iota_\Delta  : X
\rightarrow X \times X$ that we see as a correspondence from $X \times
X$ to $X$.  Thus, given cycles $\alpha, \beta \in \CH^*(X)$, we
have $$\alpha \cdot \beta = \iota_\Delta^*(\alpha \times \beta) =
\Delta^X_{123,*}(\alpha \times \beta).$$ In order to prove that
$\gamma_*$ is a ring homomorphism, it is therefore enough to check
that $\gamma \circ \Delta^F_{123} = \Delta^{F'}_{123} \circ (\gamma
\times \gamma)$, or equivalently that $\gamma \circ \Delta^F_{123}
\circ ({}^t\gamma \times {}^t\gamma) = \Delta^{F'}_{123}$. By
specialization, it is enough to prove that $\gamma_\eta \circ
\Delta^{\mathscr{F}_\eta}_{123} \circ ({}^t\gamma_\eta \times
{}^t\gamma_\eta) = \Delta^{\mathscr{F}_\eta'}_{123}$. But then, by
\cite[Lemma 3.3]{vial2}, we have \begin{equation} \label{eq formula
    liebermann} (\beta \times \alpha)_*Z = \alpha \circ Z \circ
  {}^t\beta \ \in \CH^*(X' \times Y'),\end{equation} for all $X$,
$X'$, $Y$ and $Y'$ smooth projective varieties and all correspondences
$Z \in \CH^*(X \times Y)$, $\alpha \in \CH^*(Y \times Y')$ and $\beta
\in \CH^*(X \times X')$. Hence, by \eqref{eq formula liebermann}, we
have $\gamma_\eta \circ \Delta^{\mathscr{F}_\eta}_{123} \circ
({}^t\gamma_\eta \times {}^t\gamma_\eta) = (\gamma_\eta \times
\gamma_\eta \times \gamma_\eta)_*\Delta^{\mathscr{F}_\eta}_{123}$,
which clearly is equal to $\Delta^{\mathscr{F}'_\eta}_{123}$.\medskip

Note that replacing $\Psi$ by $\Psi \times \ldots \times \Psi$, the
above also shows that $(\gamma \times \ldots \times \gamma)_*  :
\CH^*(F\times \ldots \times F) \rightarrow \CH^*(F'\times \ldots
\times F') $ is an isomorphism of graded rings with inverse $(\gamma
\times \ldots \times \gamma)^*$. The compatibility of the Fourier
decomposition with the intersection product can be expressed purely in
terms of the intersection theoretical properties of $L$~:

\begin{lem} \label{lem birational formulas chow}
  Let $L \in \CH^*(F \times F)$ be a correspondence and let $L'  :=
  (\gamma \times \gamma)_*L$. Then, for all integers $p,q,r \geq 0$,
  we have
\begin{align*}
  \gamma \circ L^r \circ \Delta_{123}^F \circ (p_{13}^*L^p \cdot
  p_{24}^*L^q) &= (L')^r \circ \Delta_{123}^{F'}\circ (p_{13}'^*(L')^p
  \cdot p_{24}'^*(L')^q) \circ (\gamma \times \gamma), \\
  \gamma \circ L^r \circ (p_i^*\iota_\Delta^*L^p \cdot L^q) &= (L')^r
  \circ (p_i'^*\iota_\Delta^*(L')^p \cdot (L')^q) \circ \gamma, \\
  \gamma \circ L^r \circ (p_i^*\iota_\Delta^*L^p \cdot
  p_j^*\iota_\Delta^*L^q) &= (L')^r \circ (p_i'^*\iota_\Delta^*(L')^p
  \cdot p_j'^* \iota_\Delta^*(L')^q) \circ \gamma,
\end{align*}
where $p_i  : F \times F \rightarrow F$ is the projection on the
$i^\mathrm{th}$ factor, $p_{ij}  : F \times F \times F \times F
\rightarrow F \times F$ is the projection on the product of the
$i^\mathrm{th}$ and $j^{\mathrm{th}}$ factors, and similarly for $F'$.
\end{lem}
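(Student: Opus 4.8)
The plan is to reduce every identity to the two structural facts about $\gamma$ already recorded above — that $(\gamma \times \cdots \times \gamma)_*$ is an isomorphism of graded rings with inverse $(\gamma \times \cdots \times \gamma)^*$, and that $\gamma \circ {}^t\gamma = \Delta_{F'}$ and ${}^t\gamma \circ \gamma = \Delta_F$ — together with formula \eqref{eq formula liebermann}. First I would record the consequences I will use repeatedly. Applying \eqref{eq formula liebermann} with $Z = L^k$ and $\alpha = \beta = \gamma$ gives $(\gamma \times \gamma)_* L^k = \gamma \circ L^k \circ {}^t\gamma$; since $(\gamma \times \gamma)_*$ is a ring homomorphism for the intersection product this equals $(L')^k$, so $(L')^k = \gamma \circ L^k \circ {}^t\gamma$ for every $k$. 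In particular $(L')^r \circ \gamma = \gamma \circ L^r \circ ({}^t\gamma \circ \gamma) = \gamma \circ L^r$. I will also use, in the notation of the Conventions, that $p_{13}^* L^p \cdot p_{24}^* L^q = L^p \otimes L^q$ is the exterior product of correspondences, that $\otimes$ is functorial for composition, $(\alpha_1 \circ \beta_1) \otimes (\alpha_2 \circ \beta_2) = (\alpha_1 \otimes \alpha_2) \circ (\beta_1 \otimes \beta_2)$, and that $\gamma \times \gamma = \gamma \otimes \gamma$ with ${}^t(\gamma \times \gamma) = {}^t\gamma \times {}^t\gamma$.

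With these in hand, the first identity follows by direct substitution. Writing $(L')^p = \gamma \circ L^p \circ {}^t\gamma$ and $(L')^q = \gamma \circ L^q \circ {}^t\gamma$ and using functoriality of $\otimes$, one gets
$$p_{13}'^*(L')^p \cdot p_{24}'^*(L')^q = (L')^p \otimes (L')^q = (\gamma \times \gamma) \circ (L^p \otimes L^q) \circ ({}^t\gamma \times {}^t\gamma).$$
Substituting this into the right-hand side of the first formula, the factor $({}^t\gamma \times {}^t\gamma) \circ (\gamma \times \gamma) = ({}^t\gamma \circ \gamma) \times ({}^t\gamma \circ \gamma) = \Delta_{F \times F}$ collapses, leaving $(L')^r \circ \Delta_{123}^{F'} \circ (\gamma \times \gamma) \circ (L^p \otimes L^q)$. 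The ring-homomorphism property of $\gamma_*$ was proved above precisely by establishing $\gamma \circ \Delta_{123}^F = \Delta_{123}^{F'} \circ (\gamma \times \gamma)$; applying this and then $(L')^r \circ \gamma = \gamma \circ L^r$ turns the expression into $\gamma \circ L^r \circ \Delta_{123}^F \circ (L^p \otimes L^q)$, which is the left-hand side.

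For the remaining two formulas the only new point is the transport of the classes $p_i^* \iota_\Delta^* L^p$. Here I would prove the auxiliary identity $(\gamma \times \gamma)_*(p_i^* \iota_\Delta^* L^p) = p_i'^* \iota_\Delta^*(L')^p$ and then conclude as before: by the ring-isomorphism property $(\gamma \times \gamma)_*(p_i^* \iota_\Delta^* L^p \cdot L^q) = p_i'^* \iota_\Delta^*(L')^p \cdot (L')^q$, and expanding $(L')^k = \gamma \circ L^k \circ {}^t\gamma$ together with ${}^t\gamma \circ \gamma = \Delta_F$ and $\gamma \circ {}^t\gamma = \Delta_{F'}$ yields the stated equalities. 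The auxiliary identity splits into compatibility of $\gamma$ with the projections $p_i$ and with the diagonal restriction $\iota_\Delta^*$: for the former, $p_i^* \alpha$ is an exterior product $\alpha \times [F]$ (or $[F] \times \alpha$), so $(\gamma \times \gamma)_*(\alpha \times \beta) = \gamma_*\alpha \times \gamma_*\beta$ together with $\gamma_*[F] = [F']$ (the unit maps to the unit) gives $(\gamma \times \gamma)_*(p_i^* \alpha) = p_i'^* \gamma_* \alpha$; for the latter, writing $\iota_\Delta^* M = p_{1*}(M \cdot \Delta_F)$ and using the ring-isomorphism property with $(\gamma \times \gamma)_* \Delta_F = \gamma \circ {}^t\gamma = \Delta_{F'}$ reduces it to the compatibility of $\gamma_*$ with the projection push-forward $p_{1*}$.

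I expect the main obstacle to be exactly these last structural compatibilities: unlike its generic-fibre avatar $\gamma_\eta = \Gamma_\psi$, the cycle $\gamma$ is not the graph of a morphism, so its commutation with $p_i^*$, $p_{1*}$ and $\iota_\Delta^*$ is not formal. The way around this is to prove each such compatibility on the generic fibre, where $\gamma_\eta$ is the graph of the isomorphism $\psi$ and the identities are the tautological naturality statements for pull-back and push-forward along $\psi$ (and along $\psi \times \cdots \times \psi$), and then to specialize via $s_0$; this is legitimate because $s_0$ commutes with intersection product, flat pull-back, proper push-forward and composition of correspondences, and because $\gamma = s_0(\gamma_\eta)$. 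Once these compatibilities are in place the three displayed identities are pure bookkeeping, the only care being the indexing of factors in the $\otimes$- and projection-manipulations and the systematic cancellation of the pairs $\gamma \circ {}^t\gamma$ and ${}^t\gamma \circ \gamma$.
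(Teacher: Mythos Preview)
Your approach is correct and tracks the paper's proof closely: both arguments reduce the identities to the relations $(L')^k = \gamma \circ L^k \circ {}^t\gamma$, $\gamma \circ \Delta_{123}^F = \Delta_{123}^{F'} \circ (\gamma \times \gamma)$, and compatibility of $\gamma$ with projections, all of which follow from \eqref{eq formula liebermann}, the ring-isomorphism property of $(\gamma^{\times n})_*$, and ${}^t\gamma \circ \gamma = \Delta_F$, $\gamma \circ {}^t\gamma = \Delta_{F'}$.

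The one organizational difference is how the projection and diagonal compatibilities are handled. You isolate the identity $\gamma_* \circ p_{1*} = p'_{1*} \circ (\gamma\times\gamma)_*$ as the residual obstacle and propose returning to specialization to settle it. The paper avoids this detour by a uniform device: it views any cycle $\alpha$ as a correspondence from a point (so that $p_i^*\iota_\Delta^*L^p = {}^t\Gamma_{p_i} \circ \Delta_{123}^F \circ L^p$), and then checks the single commutation $\gamma^{\times s} \circ \Gamma_{p_{i_1,\ldots,i_s}} = \Gamma_{p'_{i_1,\ldots,i_s}} \circ \gamma^{\times n}$ directly via \eqref{eq formula liebermann} and the already-established fact that $(\gamma^{\times(n+s)})_*\Gamma_{p_{i_1,\ldots,i_s}} = \Gamma_{p'_{i_1,\ldots,i_s}}$ (the graph of a projection is an exterior product of a diagonal with fundamental classes, hence transforms correctly under the ring isomorphism). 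This keeps the whole argument inside the special fibre once the ring-isomorphism property has been set up; your specialization fallback would also work, but is not needed.
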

\begin{proof}
  If $Z \in \CH^*(X \times Y)$ and $\alpha \in \CH^*(X)$, then we have
  the following formula $$Z_*\alpha = Z \circ \alpha \in \CH^*(Y),$$
  where on the right-hand side of the equality $\alpha$ is seen as a
  correspondence from a point to $X$. For instance, we have
  $p_i^*\iota_\Delta^*L^p = {}^t\Gamma_{p_i} \circ \Delta^F_{123}
  \circ L^p$, where on the right-hand side $L^p$ is seen as a
  correspondence from a point to $F \times F$. Thus the main point
  consists in checking that $\gamma$ commutes with $L$ and its powers,
  with projections and with diagonals. Precisely, writing $\Gamma_f
  \in \CH_{\dim X}(X\times Y)$ for the class of the graph of a
  morphism $f : X \rightarrow Y$, we claim
  that \begin{equation*} \label{eq claim commute} \gamma \circ L^r =
    L'^r \circ \gamma, \quad \gamma \circ \Delta^{{F}}_{123} =
    \Delta^{{F}'}_{123} \circ (\gamma \times \gamma) \quad \mbox{and}
    \quad \gamma \circ \Gamma_{p_i} = \Gamma_{p'_i} \circ (\gamma
    \times \gamma).
\end{equation*}

The following identities have already been established~: $\gamma \circ
{}^t\gamma = \Delta_{{F}'}$, ${}^t\gamma \circ \gamma = \Delta_{{F}}$ and
$\gamma \circ \Delta^{{F}}_{123} \circ ({}^t\gamma \times {}^t\gamma) =
\Delta^{{F}'}_{123}$. Recall that $(\gamma \times \ldots \times
\gamma)_*$ is multiplicative. Thus $(\gamma \times \gamma)_*(L^r) =
((\gamma \times \gamma)_*L)^r = (L')^r$. By \eqref{eq formula
  liebermann}, we get $\gamma \circ L^r \circ {}^t\gamma = L'^r$ and
hence, by composing with $\gamma$ on the right, $\gamma \circ L^r =
L'^r \circ \gamma$. Therefore it remains only to check that for
$p_{i_1,\ldots,i_s}  : F^n \rightarrow F^s$ the projection on the
$(i_1,\ldots, i_s)$-factor, we have $\gamma^{\times s} \circ
\Gamma_{p_{i_1,\ldots,i_s}} = \Gamma_{p'_{i_1,\ldots,i_s}} \circ
\gamma^{\times n}$, where $\Gamma_{p_{i_1,\ldots,i_s}}$ is the class
of the graph of ${p_{i_1,\ldots,i_s}}$ and where $p'_{i_1,\ldots,i_s}
 : F'^n \rightarrow F'^s$ is the projection on the $(i_1,\ldots,
i_s)$-factor. But then, this follows immediately from $\gamma^{\times
  n} \circ ({}^t\gamma)^{\times n} = \Delta_{F^n}$ and from the identity
$\gamma^{\times s} \circ \Gamma_{p_{i_1,\ldots,i_s}} \circ
({}^t\gamma)^{\times n} = (\gamma^{\times (n+s)})_*
\Gamma_{p_{i_1,\ldots,i_s}} = \Gamma_{p'_{i_1,\ldots,i_s}}$.
 \end{proof}

\begin{lem} \label{lem equivalence mult} Let $F$ be a hyperk\"ahler
  fourfold of $\mathrm{K3}^{[2]}$-type. Assume that there is a cycle
  $L \in \CH^2(F\times F)$ with cohomology class $\mathfrak{B}$ that
  satisfies \eqref{eq rational equation}, \eqref{assumption pre l},
  \eqref{assumption hom} and \eqref{assumption 2hom}.  Let $(i,s)$ and
  $(j,r)$ be pairs of non-negative integers and set $p :=i-s$ and
  $q :=j-r$. The inclusion $\CH^i(F)_{2s}\cdot \CH^j(F)_{2r} \subseteq
  \CH^{i+j}(F)_{2r+2s}$ is equivalent to having
  \begin{equation} \label{eq equivalent mult}
    (L^{t})_*\big((L^{p})_*\sigma \cdot (L^{q})_*\tau \big) = 0, \
    \mbox{for all} \ t \neq 4-p-q, \ \mbox{all} \ \sigma
    \in \CH^{4-i+2s}(F) \ \mbox{and all} \ \tau \in \CH^{4-j+2r}(F).
  \end{equation}
\end{lem}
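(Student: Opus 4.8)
The plan is to reduce both sides of the claimed equivalence to one and the same family of vanishing conditions on cycles of the form $(L^p)_*\sigma\cdot(L^q)_*\tau$. The crux of the argument, and the step where the hypotheses \eqref{eq rational equation}, \eqref{assumption pre l}, \eqref{assumption hom}, \eqref{assumption 2hom} really enter, is the identity that powers of $L$ surject onto the graded pieces:
$$\CH^i(F)_{2s} = (L^{\,i-s})_*\,\CH^{4-i+2s}(F),$$
valid for every pair $(i,s)$. One way to obtain this is to read it off case by case from the explicit descriptions of the pieces in Theorem \ref{prop main} together with the computations of Lemma \ref{lem assumption l}; for instance $\CH^4(F)_0=\langle l^2\rangle=(L^4)_*\CH^0(F)$, $\CH^2(F)_2=L_*\CH^4(F)$, and $\CH^4(F)_2=(L^3)_*\CH^2(F)$. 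A cleaner, case-free route is to argue through the Fourier transform itself: by Proposition \ref{prop action of B powers}(ii) exactly one power of $L$ survives on each graded piece, so $\FF$ restricted to $\CH^m(F)_{2n}$ equals $\tfrac{1}{(4-m+n)!}(L^{4-m+n})_*$, and the rule $\sigma\in\CH^m(F)_{2n}\Leftrightarrow \FF(\sigma)\in\CH^{4-m+2n}(F)_{2n}$ of Theorem \ref{thm main splitting} then gives the displayed equality directly.

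Next I would record the dual \emph{membership criterion}, which is tautological once $\FF$ is graded by codimension. For $\xi\in\CH^m(F)$ the cycles $(L^t)_*\xi$ lie in the pairwise distinct codimensions $m-4+2t$, so the expansion $\FF(\xi)=\sum_t\tfrac{1}{t!}(L^t)_*\xi$ is concentrated in codimension $4-m+2n$ if and only if all the other summands vanish. Thus
$$\xi\in\CH^m(F)_{2n}\quad\Longleftrightarrow\quad (L^t)_*\xi=0\ \text{ for all }\ t\neq 4-m+n.$$

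Finally the two observations combine after a bookkeeping check. Writing $p=i-s$ and $q=j-r$, the surviving exponent attached to the target piece $\CH^{i+j}(F)_{2(r+s)}$ is $4-(i+j)+(r+s)=4-p-q$. By the surjectivity identity, $\CH^i(F)_{2s}\cdot\CH^j(F)_{2r}$ is spanned by the products $(L^p)_*\sigma\cdot(L^q)_*\tau$ with $\sigma\in\CH^{4-i+2s}(F)$ and $\tau\in\CH^{4-j+2r}(F)$, and the inclusion into a graded piece holds if and only if each such generator lies in it. Applying the membership criterion to $\xi=(L^p)_*\sigma\cdot(L^q)_*\tau$, with $m=i+j$ and $n=r+s$, shows that $\xi\in\CH^{i+j}(F)_{2(r+s)}$ is exactly the condition $(L^t)_*\big((L^p)_*\sigma\cdot(L^q)_*\tau\big)=0$ for all $t\neq 4-p-q$. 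Hence $\CH^i(F)_{2s}\cdot\CH^j(F)_{2r}\subseteq\CH^{i+j}(F)_{2(r+s)}$ holds if and only if \eqref{eq equivalent mult} holds, which is the assertion. The only non-formal input is the surjectivity identity of the first paragraph; everything downstream of it is codimension bookkeeping.
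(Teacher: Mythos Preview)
Your proposal is correct and follows exactly the same two-step strategy as the paper: establish the surjectivity identity $\CH^i(F)_{2s} = (L^{i-s})_*\CH^{4-i+2s}(F)$ and the kernel characterization $\CH^i(F)_{2s} = \bigcap_{t\neq 4-i+s}\ker\{(L^t)_*\}$, then combine. The paper records both identities as \eqref{eq image} and \eqref{eq kernel}, attributes them to Theorem \ref{prop main}, and declares the lemma proved; you give essentially the same argument with the bookkeeping spelled out.
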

\begin{proof}
  It follows immediately from Theorem \ref{prop main} that
  \begin{equation} \label{eq image} \CH^i(F)_{2s} =
    (L^{i-s})_*\CH^{4-i+2s}(F) \ \mbox{for all } i,s
  \end{equation} and that  \begin{equation} \label{eq kernel}
    \CH^i(F)_{2s} = \bigcap_{s'\neq 4-i+s} \ker
    \{(L^{s'})_*  : \CH^{i}(F) \rightarrow \CH^{i-4+2s'}(F)\} \
    \mbox{for all } i,s.
    \end{equation}
    This proves the lemma.
\end{proof}

\begin{thm}\label{thm birational invariance}
  Let $F$ and $F'$ be two birational hyperk\"ahler fourfolds of
  $\mathrm{K3}^{[2]}$-type. Assume that there is a cycle $L \in
  \CH^2(F\times F)$ representing the Beauville--Bogomolov class
  $\mathfrak{B}$
   that satisfies \eqref{eq rational equation}, \eqref{assumption pre
    l}, \eqref{assumption hom}, or \eqref{assumption 2hom}. Then the
  cycle $L'  := (\gamma \times \gamma)_*L \in \CH^2(F'\times F')$
  represents the Beauville--Bogomolov class $\mathfrak{B}'$ on $F'$,
  and satisfies \eqref{eq rational equation}, \eqref{assumption pre
    l}, \eqref{assumption hom}, or \eqref{assumption 2hom},
  respectively.

  Assume that $L$ satisfies all four conditions \eqref{eq rational
    equation}, \eqref{assumption pre l}, \eqref{assumption hom} and
  \eqref{assumption 2hom} and that the associated Fourier
  decomposition of Theorem \ref{prop main} satisfies
  $\CH^i(F)_{2s}\cdot \CH^j(F)_{2r} \subseteq \CH^{i+j}(F)_{2r+2s}$
  for some $(i,s)$ and $(j,r)$. Then the Fourier decomposition
  associated to $L'$ satisfies $\CH^i(F')_{2s}\cdot \CH^j(F')_{2r}
  \subseteq \CH^{i+j}(F')_{2r+2s}$
\end{thm}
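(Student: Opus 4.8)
The plan is to reduce both assertions to two inputs that are already available: on the one hand Greiner's theorem, reproved above, that $\gamma_*\colon\CH^*(F)\to\CH^*(F')$ is an isomorphism of graded rings with inverse $\gamma^*$ (and that $(\gamma\times\cdots\times\gamma)_*$ is a graded ring isomorphism on the self-products); on the other hand the commutation identities established in the course of proving Lemma \ref{lem birational formulas chow}, namely $\gamma\circ L^r = (L')^r\circ\gamma$, so that $(L'^r)_*\circ\gamma_* = \gamma_*\circ(L^r)_*$, together with $(\gamma\times\gamma)_*\Delta_F=\Delta_{F'}$ and $(\gamma\times\gamma)_*L^r=(L')^r$. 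I would first record the auxiliary identities $l'=\gamma_*l$ and $(\gamma\times\gamma)_*l_i=l_i'$. The former follows from $l'=\iota_\Delta^*L'=\Delta^{F'}_{123}\circ(\gamma\times\gamma)\circ L=\gamma\circ\Delta^F_{123}\circ L=\gamma_*l$, using $\Delta^{F'}_{123}\circ(\gamma\times\gamma)=\gamma\circ\Delta^F_{123}$; the latter follows in the same way from $(\gamma\times\gamma)\circ{}^t\Gamma_{p_i}={}^t\Gamma_{p'_i}\circ\gamma$, which is a consequence of $\gamma\circ\Gamma_{p_i}=\Gamma_{p'_i}\circ(\gamma\times\gamma)$ and formula \eqref{eq formula liebermann}.

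To identify the cohomology class of $L'$, I would use that $(\gamma\times\gamma)_*$ acts on the K\"unneth summand $\HH^2(F,\Q)\otimes\HH^2(F,\Q)$ of $\HH^4(F\times F,\Q)$ as $[\gamma]_*\otimes[\gamma]_*$. Since $\mathfrak{B}$ is by definition the image of $q_F^{-1}$ in that summand, and since the map $[\gamma]_*\colon\HH^2(F,\Q)\to\HH^2(F',\Q)$ carried by the family of Theorem \ref{prop huybrechts} is an isometry of Beauville--Bogomolov forms, one gets $([\gamma]_*\otimes[\gamma]_*)q_F^{-1}=q_{F'}^{-1}$, i.e. $(\gamma\times\gamma)_*\mathfrak{B}=\mathfrak{B}'$. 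As $[L]=\mathfrak{B}$, this yields $[L']=(\gamma\times\gamma)_*[L]=\mathfrak{B}'$, so $L'$ represents the Beauville--Bogomolov class of $F'$.

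Each of the four hypotheses should then transfer by the same mechanism: replace a cycle on $F'$ by its $\gamma^*$-preimage, run the corresponding identity for $L$, and push forward along $\gamma$. For example, given $\sigma'\in\CH^4(F')$ and putting $\sigma:=\gamma^*\sigma'$, one has $L'_*\sigma'=\gamma_*L_*\sigma$, hence $l'\cdot L'_*\sigma'=\gamma_*(l\cdot L_*\sigma)$ by multiplicativity, and therefore $L'_*(l'\cdot L'_*\sigma')=\gamma_*L_*(l\cdot L_*\sigma)=\gamma_*(25\,L_*\sigma)=25\,L'_*\sigma'$, which is \eqref{assumption hom} for $L'$. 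Condition \eqref{assumption pre l} follows from $L'_*(l')^2=\gamma_*L_*l^2=0$, condition \eqref{assumption 2hom} is verified verbatim with $(L^2)_*$ in place of $L_*$, and \eqref{eq rational equation} follows by applying the ring isomorphism $(\gamma\times\gamma)_*$ to each term of the equation satisfied by $L$, using the identities recorded above.

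Finally, for the multiplicativity statement I would invoke Lemma \ref{lem equivalence mult}, which is applicable to $(F',L')$ once $L'$ has been shown to satisfy the four hypotheses. The inclusion $\CH^i(F)_{2s}\cdot\CH^j(F)_{2r}\subseteq\CH^{i+j}(F)_{2r+2s}$ is equivalent to the vanishing \eqref{eq equivalent mult} for $L$, and the analogous inclusion on $F'$ to the vanishing for $L'$. For $\sigma'\in\CH^{4-i+2s}(F')$ and $\tau'\in\CH^{4-j+2r}(F')$, setting $\sigma:=\gamma^*\sigma'$, $\tau:=\gamma^*\tau'$, $p:=i-s$ and $q:=j-r$, the identities $(L'^m)_*\gamma_*=\gamma_*(L^m)_*$ and the multiplicativity of $\gamma_*$ give $(L'^p)_*\sigma'\cdot(L'^q)_*\tau'=\gamma_*((L^p)_*\sigma\cdot(L^q)_*\tau)$, whence $(L'^t)_*((L'^p)_*\sigma'\cdot(L'^q)_*\tau')=\gamma_*(L^t)_*((L^p)_*\sigma\cdot(L^q)_*\tau)=0$ for all $t\neq 4-p-q$ by \eqref{eq equivalent mult} for $L$; Lemma \ref{lem equivalence mult} then yields the inclusion on $F'$. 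I expect the only genuinely non-formal point to be the sign in $[L']=+\mathfrak{B}'$, that is, the fact that $\gamma$ induces a Beauville--Bogomolov isometry and not merely a graded ring isomorphism; this is precisely where the geometry of the Mukai flops of Theorem \ref{prop huybrechts} is needed, the remainder being a formal consequence of Greiner's isomorphism and Lemma \ref{lem birational formulas chow}.
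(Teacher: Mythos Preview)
Your proposal is correct and follows essentially the same approach as the paper. You unpack the transfer of conditions \eqref{assumption pre l}, \eqref{assumption hom}, \eqref{assumption 2hom} by hand, while the paper packages all of this into the single identity $(L'^{t})_*\big((L'^{p})_*\sigma'\cdot(L'^{q})_*\tau'\big)=\gamma_*(L^{t})_*\big((L^p)_*\sigma\cdot(L^q)_*\tau\big)$, derived directly from Lemma~\ref{lem birational formulas chow}; but the mechanism is identical, and your treatment of the cohomology class and of multiplicativity via Lemma~\ref{lem equivalence mult} matches the paper exactly. For the Beauville--Bogomolov isometry you correctly flag as the only non-formal ingredient, the paper simply cites Huybrechts \cite[Corollary~2.7]{huybrechts2}.
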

\begin{proof} The correspondence $\gamma \in \CH^4(F\times F')$
  respects the Beauville--Bogomolov form by Huybrechts \cite[Corollary
  2.7]{huybrechts2}, so that $[L'] = (\gamma \times
  \gamma)_*\mathfrak{B} = \mathfrak{B}'$. If $L$ satisfies \eqref{eq
    rational equation}, then Lemma \ref{lem birational formulas chow}
  immediately shows that $L'=\gamma \circ L \circ {}^t\gamma$ also
  satisfies \eqref{eq rational equation}.  As a consequence of Lemma
  \ref{lem birational formulas chow}, we have for all $\sigma', \tau'
  \in \CH^*(F)$ and all non-negative integers
  $p,q,t$, \begin{equation}\label{eq noname}
    (L'^{t})_*\big((L'^{p})_*\sigma' \cdot (L'^{q})_*\tau' \big) =
    \gamma_* (L^{t})_*\big((L^p)_*\sigma \cdot (L^q)_*\tau \big),
  \end{equation} where we have set $\sigma  := \gamma^*\sigma'$ and
  $\tau  := \gamma^*\tau'$. Thus it is clear that if $L$ satisfies one
  of  \eqref{assumption pre l}, \eqref{assumption hom} or
  \eqref{assumption 2hom}, then so does $L'$.

  Assume now that $L$ satisfies all four conditions \eqref{eq rational
    equation}, \eqref{assumption pre l}, \eqref{assumption hom} and
  \eqref{assumption 2hom}. Then \eqref{eq noname}, together with Lemma
  \ref{lem equivalence mult}, yields that $\CH^i(F)_{2s}\cdot
  \CH^j(F)_{2r} \subseteq \CH^{i+j}(F)_{2r+2s}$ if and only if
  $\CH^i(F')_{2s}\cdot \CH^j(F')_{2r} \subseteq
  \CH^{i+j}(F')_{2r+2s}$.
 \end{proof}

 \begin{rmk} Let $F$ and $F'$ be birational hyperk\"ahler fourfolds of
   $\mathrm{K3}^{[2]}$-type. Assume that $F$ satisfies Conjecture
   \ref{conj sheaf}. Then the same arguments as the ones above
   combined with the fact \cite[Lemma 3.4]{greiner} that
   $\gamma_*c_2(F) = c_2(F')$ show that $F'$ also satisfies Conjecture
   \ref{conj sheaf} with $L' := (\gamma \times \gamma)_*L$.
\end{rmk}

\vspace{10pt}
\section{An alternate approach to the Fourier decomposition on the
  Chow ring of abelian varieties} \label{sec abelian}

We wish to give new insight in the theory of algebraic cycles on
abelian varieties by explaining how Beauville's decomposition theorem
\cite{beauville1} on the Chow ring of abelian varieties can be derived
directly from a recent powerful theorem of Peter O'Sullivan
\cite{o'sullivan}. This approach is in the same spirit as the proof of
Theorem \ref{thm2 conj}, which will be given at the end of Section
\ref{sec multCK}. It is also somewhat closer to our approach for
proving the Fourier decomposition for certain hyperk\"ahler fourfolds
of $\mathrm{K3}^{[2]}$-type.  At the end of the section, we further
explain how O'Sullivan's theorem easily implies finite-dimensionality
\cite{kimura, o'sullivan2} for the Chow motive of an abelian variety.

We insist that the sole purpose of this section is to
illustrate the importance of O'Sullivan's theorem, and to a lesser
extent to make manifest how powerful a similar result for
hyperk\"ahler varieties (\emph{cf.} Conjecture \ref{conj sheaf}) would
be. In particular, this section is logically ill for two
reasons. First, in order to obtain his final result, O'Sullivan
establishes Beauville's decomposition theorem from his own machinery
(independent of Beauville's) as an intermediate result \cite[Theorem
5.2.5]{o'sullivan}. Second,
O'Sullivan uses in a crucial way that the Chow motive of an abelian
variety is finite-dimensional \cite{kimura}.  \medskip

Let $A$ be an abelian variety over a field $k$. O'Sullivan introduces
the following definition. Consider a cycle $\alpha \in \CH^*(A)$. For
each integer $m \geq 0$, denote by $V_m(\alpha)$ the $\Q$-vector
sub-space of $\CH^*(A^m)$ generated by elements of the form
$p_*(p_1^*\alpha^{r_1} \cdot p_2^*\alpha^{r_2} \cdot \ldots \cdot
p_n^*\alpha^{r_n})$, where $n \leq m$, the $r_j$ are integers $\geq
0$, $p_i : A^n \rightarrow A$ are the projections on the
$i^\mathrm{th}$ factor, and $p : A^n \rightarrow A^m$ is a closed
immersion with each component $A^n \rightarrow A$ either a projection
or the composite of a projection with $[-1] : A \rightarrow A$. Then
$\alpha$ is called \emph{symmetrically distinguished} if for every $m$
the restriction of the quotient map $\CH^*(A^m) \rightarrow
\CH^*(A^m)/\sim_{\mathrm{num}}$, where $\sim_{\mathrm{num}}$ is
numerical equivalence of cycles, to $V_m(\alpha)$ is
injective.\medskip

 Here is the main result of \cite{o'sullivan}.

 \begin{thm} [O'Sullivan] \label{thm O'Sullivan} The symmetrically
   distinguished cycles in $\CH^*(A)$ form a $\Q$-sub-algebra which
   maps isomorphically to $\CH^*(A)/{\sim_{\mathrm{num}}}$ under the
   projection $\CH^*(A^m) \rightarrow \CH^*(A^m)/\sim_{\mathrm{num}}$.
   Moreover, symmetrically distinguished cycles are stable under
   push-forward and pull-back of homomorphisms of abelian
   varieties.\qed
\end{thm}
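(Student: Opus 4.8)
The plan is to deduce the theorem from the finite-dimensionality, in the sense of Kimura and O'Sullivan, of the Chow motive of $A$ --- this being the one deep input I would allow myself, and the one the text already flags as essential. First I would fix the Deninger--Murre decomposition $\mathfrak{h}(A) = \bigoplus_{i=0}^{2g}\mathfrak{h}^i(A)$, where $g=\dim A$ and $[n]_*$ acts on $\mathfrak{h}^i(A)$ by $n^i$; finite-dimensionality refines this to $\mathfrak{h}^i(A)=\wedge^i\mathfrak{h}^1(A)$ with $\mathfrak{h}^1(A)$ an oddly finite-dimensional object of dimension $2g$. Passing to Chow groups recovers Beauville's bigrading $\CH^i(A)=\bigoplus_s\CH^i(A)_s$ from the opening theorem, on which $[-1]^*$ acts by $(-1)^s$. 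I would carry this grading throughout, since both the operations defining $V_m(\alpha)$ (projections, the maps $[-1]$, and intersection products) and the notion of symmetric distinction are homogeneous for it.

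Injectivity of the map from symmetrically distinguished cycles to $\CH^*(A)/{\sim_{\mathrm{num}}}$ is essentially built into the definition: since $\alpha\in V_1(\alpha)$, any numerically trivial symmetrically distinguished cycle must vanish, and the same remark shows that a symmetrically distinguished lift of a given numerical class is unique once it exists. The substantive input, needed for surjectivity and for closure under products, is a nilpotence statement. By finite-dimensionality the motive $\mathfrak{h}(A^m)=\mathfrak{h}(A)^{\otimes m}$ is again finite-dimensional, so the ideal of numerically trivial correspondences in $\CH^*(A^m\times A^m)$ is nilpotent (Kimura), and on an oddly finite-dimensional summand a numerically trivial correspondence is even smash-nilpotent. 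Concretely, this means that a numerically trivial cycle, once decomposed into its homogeneous pieces for the grading above, has the property that sufficiently high products of those pieces vanish.

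Next I would construct the section, i.e. produce for every numerical class a symmetrically distinguished lift, and verify multiplicativity. The construction is recursive: starting from an arbitrary representative, I would correct it by numerically trivial terms of strictly increasing ``defect'' in the grading, each correction chosen so as to kill the failure of the defining injectivity on the relevant $A^m$; the nilpotence above guarantees that the process terminates, because the correcting terms live in bounded symmetric and exterior powers of $\mathfrak{h}^1(A)$. Closure under sum, intersection product, $[-1]^*$, and under the projections and diagonal push-forwards and pull-backs appearing in the $V_m(\alpha)$ is then arranged to be preserved by the recursion, so the symmetrically distinguished cycles form a $\Q$-subalgebra; being both injective into and surjective onto $\CH^*(A)/{\sim_{\mathrm{num}}}$, the map is an isomorphism. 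Finally, stability under $f_*$ and $f^*$ for a homomorphism $f$ follows because these operations commute with the grading and with the formation of the spaces $V_m$, and hence preserve the defining injectivity property.

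The main obstacle I anticipate is precisely the multiplicativity and surjectivity of the section: controlling, uniformly in $m$, the numerically trivial corrections so that the lift of a product coincides with the product of the lifts. This is where the full force of finite-dimensionality is required --- not merely nilpotence of numerically trivial endomorphisms of a single motive, but its compatibility across all the $A^m$ at once, organized by the exterior-power structure of $\mathfrak{h}^1(A)$. Making the bookkeeping of these corrections globally consistent, rather than merely possible factor by factor, is the crux, and is exactly what forces O'Sullivan's notion of symmetric distinction rather than a naive $[-1]$-symmetrization.
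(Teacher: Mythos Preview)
The paper does not prove this theorem: it is stated with a \qed\ and attributed to O'Sullivan \cite{o'sullivan}, so there is no proof in the paper to compare your proposal against. The paper explicitly flags the section as ``logically ill'' in this respect, noting that O'Sullivan uses finite-dimensionality of $\mathfrak{h}(A)$ as a crucial input and that he establishes Beauville's decomposition from his own machinery rather than assuming it.

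Your proposal correctly identifies the essential ingredients --- finite-dimensionality, the resulting nilpotence of numerically trivial correspondences, and the need for a section compatible with products across all $A^m$ --- and the observation that injectivity is immediate from $\alpha\in V_1(\alpha)$ is right. However, the heart of the matter, which you yourself flag as the main obstacle, is left as a promise rather than an argument: you say you would ``correct by numerically trivial terms of strictly increasing defect'' and that ``the process terminates,'' but you do not specify what functional is decreasing, nor why the corrections can be chosen compatibly across all $m$ simultaneously so that the lift is genuinely symmetrically distinguished (a condition on \emph{all} $V_m(\alpha)$ at once) and multiplicative. Nilpotence in $\CH^*(A^m\times A^m)$ alone does not obviously organize this; O'Sullivan's actual argument goes through a careful analysis of the structure of the category of motives generated by $\mathfrak{h}^1(A)$ and its graded pieces, and the construction of the section is the substance of his paper, not a routine bookkeeping exercise. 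So your outline is a reasonable summary of the shape of the argument, but the step you label ``the crux'' is exactly the step that would need to be supplied in full, and nothing in your sketch indicates how.
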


In particular, $[A] \in \CH^0(A)$ is symmetrically distinguished and
consequently the graphs of homomorphisms $f  : A \rightarrow A'$ of
abelian varieties are symmetrically distinguished. Indeed, we may
write $\Gamma_f = (\mathrm{id}_A \times f)_*[A] \in \CH^*(A \times
A')$. According to Theorem \ref{thm O'Sullivan}, another class of
symmetrically distinguished cycles is given by symmetric divisors on
$A$, \emph{i.e.}, divisors $D \in \CH^1(A)$ such that $[-1]^*D = D$. A
direct consequence of Theorem \ref{thm O'Sullivan} is thus the
following analogue of Theorem \ref{thm voisin} in the case of abelian
varieties.

 \begin{cor} \label{cor o'sullivan}
   The sub-algebra of $\CH^*(A)$ generated by symmetric divisors injects
   into cohomology via the cycle class map.\qed
 \end{cor}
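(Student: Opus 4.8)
The plan is to deduce this directly from Theorem \ref{thm O'Sullivan}, together with the observation recorded just above the statement that symmetric divisors are symmetrically distinguished. Write $B \subseteq \CH^*(A)$ for the sub-algebra generated by the symmetric divisors on $A$; the goal is to show that the restriction to $B$ of the cycle class map is injective, or equivalently that $B \cap \CH^*(A)_{\hom} = 0$, where $\CH^*(A)_{\hom}$ denotes the subgroup of cohomologically trivial classes.

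First I would note that every element of $B$ is symmetrically distinguished. Indeed, each symmetric divisor is symmetrically distinguished, and by Theorem \ref{thm O'Sullivan} the symmetrically distinguished cycles constitute a $\Q$-sub-algebra of $\CH^*(A)$; since $B$ is generated by symmetrically distinguished cycles, it is contained in that sub-algebra. Next I would invoke the injectivity half of Theorem \ref{thm O'Sullivan}: the sub-algebra of symmetrically distinguished cycles maps \emph{isomorphically} onto $\CH^*(A)/{\sim_{\mathrm{num}}}$ under the projection modulo numerical equivalence. In particular the restriction of this projection to $B$ is injective, so that $B \cap \CH^*(A)_{\mathrm{num}} = 0$, where $\CH^*(A)_{\mathrm{num}}$ denotes the subgroup of numerically trivial classes.

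Finally I would use that homological (here cohomological) equivalence refines numerical equivalence, that is $\CH^*(A)_{\hom} \subseteq \CH^*(A)_{\mathrm{num}}$. Combining this inclusion with the previous step yields $B \cap \CH^*(A)_{\hom} \subseteq B \cap \CH^*(A)_{\mathrm{num}} = 0$, which is precisely the desired injectivity of the cycle class map on $B$.

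Once Theorem \ref{thm O'Sullivan} is granted, there is essentially no obstacle: the whole weight of the argument has been front-loaded into the highly non-trivial input that symmetric divisors are symmetrically distinguished and into O'Sullivan's injectivity statement. The only point demanding genuine care is the direction of the comparison between adequate equivalence relations — one must remember that homological equivalence is \emph{finer} than numerical equivalence, so that injectivity modulo numerical equivalence is the stronger assertion and formally implies injectivity modulo homological equivalence (and hence injectivity into $\HH^*(A,\Q)$), rather than the other way around.
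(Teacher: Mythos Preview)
Your argument is correct and follows exactly the approach the paper intends: the corollary is marked with \qed\ because it is a direct consequence of Theorem \ref{thm O'Sullivan} together with the observation that symmetric divisors are symmetrically distinguished, and you have simply spelled out those details. Your final remark about homological equivalence being finer than numerical equivalence is the only step not made explicit in the paper, and it is handled correctly.
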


 \noindent Note that Corollary \ref{cor o'sullivan} has been recently
 obtained by different methods by Ancona \cite{ancona} and Moonen
 \cite{moonen}. \medskip

 Denote $\hat{A}$ the dual abelian variety of $A$. Let $L$ be the
 Poincar\'e bundle on $A \times \hat{A}$ seen as an element of
 $\CH^1(A\times \hat{A})$. We write $\hat{L}$ for the transpose of
 $L$~; $\hat{L}$ is the Poincar\'e bundle on $\hat{A} \times A$.

 \begin{prop} \label{prop hom poincare bundle} Write $\pi^i_{\hom} \in
   \HH^{2d}(A \times A,\Q)$ for the K\"unneth projector on
   $\HH^i(A,\Q)$. The Poincar\'e bundle $L$ satisfies the following
   cohomological relations~:
\[
[\hat{L}]^j \circ [L]^{2d-i} = \left\{ \begin{array}{ll}
    0 & \mbox{if $j \neq i$}  ;\\
    (-1)^i\, i!\, (2d-i)!\, \pi^i_{\hom} & \mbox{if $j =
      i$}.
  \end{array} \right.
\]
\end{prop}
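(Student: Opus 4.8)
The plan is to exploit the canonical Künneth description of the Poincaré class, in direct analogy with the computation of the powers of $\mathfrak{B}$ carried out in Lemma \ref{lem dual basis}. Under the canonical identification $\HH^1(\hat{A},\Q) \cong \HH^1(A,\Q)^\vee$, the class $[L] \in \HH^2(A \times \hat{A},\Q)$ lies in the Künneth summand $\HH^1(A,\Q) \otimes \HH^1(\hat{A},\Q)$ and is the canonical element: if $\{e_1,\ldots,e_{2d}\}$ is a basis of $\HH^1(A,\Q)$ and $\{f_1,\ldots,f_{2d}\}$ the dual basis of $\HH^1(\hat{A},\Q)$, then $[L] = \sum_{k} e_k \otimes f_k$ and, since $\hat{L}$ is the transpose, $[\hat{L}] = \sum_k f_k \otimes e_k$. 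First I would record the cup powers. Each $e_k \otimes f_k$ has even degree, so these classes commute and square to zero, whence $[L]^n = n!\,(-1)^{\binom{n}{2}} \sum_{|S|=n} e_S \otimes f_S$, where $S$ runs over $n$-element subsets of $\{1,\ldots,2d\}$ and $e_S,f_S$ denote the increasingly ordered wedge monomials; the sign $(-1)^{\binom{n}{2}}$ arises from moving the odd classes $f_s$ past the $e_{s'}$. The same expansion holds for $[\hat{L}]^j$.

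Next I would establish the vanishing for $j \neq i$ by a bidegree count. Writing the composition as $(p_{13})_*\big(p_{12}^*[L]^{2d-i} \cdot p_{23}^*[\hat{L}]^j\big)$ on $A \times \hat{A} \times A$, the factor $p_{12}^*[L]^{2d-i}$ has Künneth degree $2d-i$ along the middle factor $\hat{A}$, while $p_{23}^*[\hat{L}]^j$ has degree $j$ there, so their product has $\hat{A}$-degree $(2d-i)+j$. Since $(p_{13})_*$ amounts to integration over $\hat{A}$, it kills everything whose $\hat{A}$-degree is not $2d$, and the product itself already vanishes once $(2d-i)+j > 2d$. Either way the composite is $0$ unless $j=i$.

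For $j=i$ I would compute the induced action on $\HH^*(A,\Q)$ directly, using that a class in $\HH^{2d}(A \times A,\Q)$ is determined by the degree-preserving endomorphism of $\HH^*(A,\Q)$ it induces (Poincaré duality plus Künneth). Feeding a monomial $e_T$ with $|T|=i$ into $([L]^{2d-i})_*$ forces the complementary subset $S = T^c$ through the pairing $\int_A e_{T^c}\,e_T = \pm 1$, producing a multiple of $f_{T^c}\in\HH^{2d-i}(\hat{A},\Q)$; then $([\hat{L}]^{i})_*$ forces $S'=T$ and returns a multiple of $e_T$. Collecting the factorials $i!\,(2d-i)!$ and verifying that all accumulated signs — the two $(-1)^{\binom{\cdot}{2}}$ factors, the Koszul signs from the products $(e_S\otimes f_S)\cdot(e_T\otimes 1)$, and the orientation signs from the two integrations over $A$ and over $\hat{A}$ — combine to $(-1)^i$, one obtains exactly $(-1)^i\, i!\,(2d-i)!$ times the projection of $\HH^*(A,\Q)$ onto $\HH^i(A,\Q)$, i.e. $(-1)^i\, i!\,(2d-i)!\,\pi^i_{\hom}$. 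The main obstacle is precisely this sign bookkeeping: the odd-degree classes force careful Koszul-sign tracking, and one must match the orientation and duality conventions on $A$ and on $\hat{A}$ so that the two self-pairings are consistently normalized. As a consistency check, suitably normalizing and summing these identities over $i$ recovers Beauville's relation $\FF \circ \FF = (-1)^d [-1]^*$ coming from $e^{[\hat{L}]} \circ e^{[L]}$, whose off-diagonal bidegree terms vanish by the count of the second step.
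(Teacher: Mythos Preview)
Your approach is essentially the same as the paper's: write $[L] = \sum_k e_k \otimes e_k^\vee$ in a basis of $\HH^1$, compute its powers, and evaluate the composite correspondence on wedge monomials $e_J$. The paper organizes the sign bookkeeping more cleanly by introducing $\varepsilon(K) = \pm 1$ via $e_K \wedge e_{K^c} = \varepsilon(K)\, e_1 \wedge \cdots \wedge e_{2d}$ and using the single identity $\varepsilon(K)\varepsilon(K^c) = (-1)^{|K|}$; with this device it writes $[L]^p = p!\,\sum_{|K|=p} e_K \otimes e_K^\vee$ (no $(-1)^{\binom{p}{2}}$, the sign being absorbed into the action $(e_K \otimes e_K^\vee)_* e_{K^c} = \varepsilon(K)\,e_K^\vee$), and both the vanishing for $j \neq i$ and the eigenvalue $(-1)^i i!(2d-i)!$ for $j=i$ drop out in one line. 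Your bidegree argument for the vanishing is a correct alternative phrasing of the same content, and your consistency check via Fourier inversion is sound, but the $\varepsilon$-formalism would let you replace the admitted ``main obstacle'' of Koszul-sign accounting with a single clean identity.
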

\begin{proof} The following proof is inspired from \cite[Proposition
  1]{beauvillefourier}. Let us write $H := \HH^1(A,\Q)$ and recall that
  $\HH^k(A,\Q) = \bigwedge^k \HH^1(A,\Q) = \bigwedge^kH$.  Let
  $e_1,\ldots, e_{2d}$ be a basis of $H$ and let $e_1^\vee,\ldots,
  e_{2d}^\vee$ be the dual basis.
  We then have $$[L] = \sum_{k=1}^{2d} e_k \otimes e_k^\vee \in
  \HH^2(A\times \hat{A},\Q).$$ As in the proof of \cite[Proposition
  1]{beauvillefourier}, we write, for $K = \{k_1, \ldots, k_p\}$ a
  sub-set of $\{1, \ldots, 2d\}$ with $k_1<\ldots < k_p$, $e_K  :=
  e_{k_1} \wedge \ldots \wedge e_{k_p}$ and $e_K^\vee  := e^\vee_{k_1}
  \wedge \ldots \wedge e^\vee_{k_p}$. A simple calculation yields
  $$[L]^p =p! \, \sum e_K \otimes e_K^\vee,$$ where the sum runs
  through all sub-sets $K \subseteq \{1,\ldots, 2d\}$ of cardinality
  $|K| = p$.  If $J$ is another sub-set of $ \{1,\ldots, 2d\}$, then
  $(e_K \otimes e^\vee_K)_*e_J = 0$ unless $J = K^c := \{1,\ldots,
  2d\}\backslash K$ in which case $(e_K \otimes e^\vee_K)_*e_{K^c} =
  \varepsilon(K)e_K^\vee$, where $\varepsilon(K) = \pm 1$ is such that
  $e_K\wedge e_{K^c} = \varepsilon(K)\, e_1 \wedge \ldots \wedge
  e_{2d}$. Noting that $\varepsilon(K^c)\varepsilon(K) = (-1)^{|K|}$,
  we get for $I$, $J$ and $K$ sub-sets of $ \{1,\ldots, 2d\}$
  \[
  (e^\vee_I\otimes e_I)_*(e_K\otimes e_K^\vee)_* e_J = \left\{
    \begin{array}{ll}
      (-1)^{|K|}\, e_{K^c} & \mbox{if $I=J=K^c$}  ;\\
      0 & \mbox{otherwise.}
    \end{array} \right.
  \]
  We therefore see that, if $i \neq j$, then $([\hat{L}]^j \circ
  [L]^{2d-i})_*$ acts trivially on $e_K^\vee$ for all sub-sets $K
  \subseteq \{1,\ldots, 2d\}$, \emph{i.e.}, that $[\hat{L}]^j \circ
  [L]^{2d-i} = 0$ for $i \neq j$. On the other hand, if $i=j$ and if
  $J$ is a sub-set of $\{1,\ldots, 2d\}$, then $([\hat{L}]^i \circ
  [L]^{2d-i})_*e_J = 0$ unless if $|J|=i$ in which case $([\hat{L}]^i
  \circ [L]^{2d-i})_*e_J = i!(2d-i)!\, (e^\vee_J\otimes
  e_J)_*(e_{J^c}\otimes e_{J^c}^\vee)_* e_J = (-1)^i i!(2d-i)!\, e_J$.
  Thus $[\hat{L}]^j \circ [L]^{2d-i}$ acts by multiplication by
  $(-1)^i i!(2d-i)!$ on $\HH^i(A,\Q) = \bigwedge^i H$, and as zero on
  $\HH^j(A,\Q) = \bigwedge^j H$ if $j\neq i$.
\end{proof}

By the theorem of the square, the Poincar\'e bundle $L$ is symmetric.
Theorem \ref{thm O'Sullivan} then implies that the relations of
Proposition \ref{prop hom poincare bundle} hold modulo rational
equivalence. More precisely, $\hat{L}^j \circ L^{2d-i} = 0 $ if $j
\neq i$ and if we define
\begin{equation}
  \label{eq CKabelian} \pi^i  := \frac{(-1)^i}{i!\, (2d-i)!} \,
  \hat{L}^i \circ L^{2d-i} \quad \mbox{for all } 0 \leq i \leq 2d,
\end{equation}
then $\{\pi^i  : 0 \leq i \leq 2d\}$ is a Chow--K\"unneth decomposition
of the diagonal $\Delta_A \in \CH^d(A \times A)$ that consists of
symmetrically distinguished cycles. Denote $\FF  := e^L$ the Fourier
transform, it follows that

\begin{equation*}
  \CH^i(A) = \bigoplus_{s=i-d}^{i} \CH^i(A)_s, \ \mbox{with }
  \CH^i(A)_s  := (\pi^{2i-s})^*\CH^i(A) = \{\sigma \in \CH^i(A)  :
  \FF(\sigma) \in \CH^{d-i+s}(\hat{A})\}.
  \end{equation*}
  Let $\Gamma_{[n]} \in \CH^d(A \times A)$ be the class of the graph
  of the multiplication-by-$n$ map $[n]  : A \rightarrow A$. Since
  $[n]^*$ acts by multiplication by $n^j$ on $\HH^{j}(A,\Q)$ and since
  $\Gamma_{[n]}$ is symmetrically distinguished, Theorem \ref{thm
    O'Sullivan} yields
$$  {}^t\Gamma_{[n]} \circ \pi^{j} = n^j \pi^{j},$$ and therefore
$$\CH^i(A)_s = \{\sigma \in \CH^i(A)  : [n]^*\sigma = n^{2i-s}\,
\sigma\}.$$
As a straightforward consequence, we get the multiplicative property
of the Fourier decomposition~: $\CH^i(A)_s \cdot \CH^j(A)_r \subseteq
\CH^{i+j}(A)_{r+s}$. Let us however give a proof that does not use the
multiplication-by-$n$ map. This will be used in the next section to
establish that the Chow--K\"unneth decomposition $\{\pi^i  : 0 \leq i
\leq 2d\}$ is in fact \emph{multiplicative}~; see Example \ref{ex
  abelian}. As in the proof of Lemma \ref{lem equivalence mult}, the
Fourier decomposition may be characterized by $$\CH^i(A)_s =
(\hat{L}^{2i-s})_*\CH^{d-i+s}(\hat{A}) = \bigcap_{s' \neq 2d-2i+s} \,
\ker \{(L^{s'})_*  : \CH^i(A) \rightarrow \CH^{i-d+s'}(\hat{A})\}.$$
Therefore, $\CH^i(A)_s \cdot \CH^j(A)_r \subseteq \CH^{i+j}(A)_{r+s}$
holds for all non-negative integers $i,j,r$ and $s$ if and only if
$(L^{t})_*\big((\hat{L}^p)_*\sigma \cdot (\hat{L}^q)_*\tau \big) = 0$
for all $t + p + q \neq 2d$ and all $ \sigma, \tau \in
\CH^*(\hat{A})$. By Theorem \ref{thm O'Sullivan}, it is enough to
check the following cohomological formula~:
\begin{equation} \label{eq ab coho} [L]^t \circ [\Delta_{123}^A] \circ
  (p_{13}^*[\hat{L}]^p \cdot p_{24}^*[\hat{L}]^q) = 0 \ \mbox{for all}
  \ t+p+q \neq 2d,
\end{equation} 
where $[\Delta_{123}^A]$ is the class of the graph of the diagonal
embedding $\iota_\Delta : A \rightarrow A \times A$ seen as a
correspondence from $A \times A$ to $A$, and where $p_{ij} : A \times
A \times A \times A \rightarrow A \times A$ is the projection on the
$i^\mathrm{th}$ and $j^\mathrm{th}$ factor. But then this is
straightforward as $[L]^p_* : \HH^*(A,\Q) \rightarrow
\HH^*(\hat{A},\Q)$ acts as zero on
$\HH^q({A},\Q)$ for $q+p \neq 2d$.\qed \\

Finally, we highlight the fact that O'Sullivan's Theorem \ref{thm
  O'Sullivan} encompasses Kimura--O'Sullivan's finite-dimensionality
\cite{kimura} for abelian varieties. Indeed, the cycles
\begin{equation*}
  p_{S^n}  := \frac{1}{n!} \sum_{\sigma \in \mathfrak{S}_n}
  \Gamma_{\sigma}
  \quad \mbox{and} \quad p_{\Lambda^n}  := \frac{1}{n!}
  \sum_{\sigma \in \mathfrak{S}_n} \epsilon(\sigma) \Gamma_{\sigma}
  \quad \mbox{in} \ \CH^{dn}(A^n \times A^n)
\end{equation*}
are clearly symmetrically distinguished.  Here $\mathfrak{S}_n$ is the
symmetric group on $n$ elements, $\epsilon$ is the signature
homomorphism and $\Gamma_\sigma$ is the graph of the action of
$\sigma$ on $X^n$ that permutes the factors.  Having in mind the
symmetrically distinguished Chow--K\"unneth projectors $\pi^i$ of
\eqref{eq CKabelian}, we consider the projectors on the even and odd
degree part of the cohomology~:
\begin{equation*}
  \pi_+  := \sum_i \pi^{2i} \quad \mbox{and} \quad \pi_-  := \sum_i
  \pi^{2i+1}
  \quad \mbox{in } \CH^d(A \times A).
\end{equation*}
For a cycle $\alpha \in \CH^d(A \times A)$, we denote $\alpha^{\otimes
  n}$ the cycle $(p_{1,n+1})^*\alpha \cdot (p_{2,n+2})^*\alpha \cdot \ldots
\cdot (p_{n,2n})^* \alpha$, where
$p_{i,j} : A^{2n} \rightarrow A \times A$ are the projections on
the  $i^\mathrm{th}$ and  $j^\mathrm{th}$ factors
for $1 \leq i,j \leq 2n$. Then, with those conventions, the cycles
$p_{S^n} \circ \pi_-^{\otimes n}$ and $p_{\Lambda^n} \circ
\pi_+^{\otimes n}$ become homologically trivial for $n>>0$. Being
symmetrically distinguished, they are also rationally trivial for
$n>>0$ by O'Sullivan's Theorem \ref{thm O'Sullivan}, thereby
establishing Kimura--O'Sullivan's finite-dimensionality for the Chow
motive of abelian varieties.

\vspace{10pt}
\section{Multiplicative Chow--K\"unneth decompositions}\label{sec
  multCK}

Let $F$ be a hyperk\"ahler variety.  A Fourier transform with kernel a
cycle representing the Beauville--Bogomolov class acts as zero on the
orthogonal complement of the image of $\Sym^* \HH^2(F,\Q)$ inside
$\HH^*(F,\Q)$. In general, this orthogonal complement is
non-trivial. Thus, such a Fourier transform cannot induce a bigrading
on the total Chow ring of $F$ in general. However, it seems reasonable
to expect the Chow ring of hyperk\"ahler varieties to actually be
endowed with a bigrading that is induced by a Chow--K\"unneth
decomposition (as in Conjecture \ref{conj2 vanishingc1}). In this
section, we introduce the notion of multiplicative Chow--K\"unneth
decomposition (\emph{cf.} Definition \ref{def multCK}), discuss its
relevance and links with degenerations of \emph{modified diagonals}
(\emph{cf.} Definition \ref{defn small diagonals}), and give first
examples of varieties that admit a multiplicative Chow--K\"unneth
decomposition. We also provide a proof for Theorem \ref{thm2 conj}~;
see Theorem \ref{thm2 conj repeat}. The results of this section will
be used in Section \ref{sec multCKX2} to show that the Hilbert scheme
of length-$2$ subschemes on a K3 surface admits a multiplicative
Chow--K\"unneth decomposition, as well as to prove Theorem \ref{thm2
  CK}.

\subsection{Multiplicative Chow--K\"unneth decompositions : definition
  and first properties}

\begin{defn} \label{def multCK} Let $X$ be a smooth projective variety
  of dimension $d$ endowed with a Chow--K\"unneth decomposition
  $\{\pi^i  : 0 \leq i \leq 2d\}$ as defined in \S \ref{sec CK}. The
\emph{small diagonal} $\Delta_{123}$ is the class in 
$\CH_d(X\times X \times X)$ of the image of the diagonal
embedding $X \hookrightarrow X \times X \times X, x \mapsto (x,x,x)$. The
  Chow--K\"unneth decomposition $\{\pi^i\}$ is said to be
  \begin{itemize}
\item \emph{multiplicative} if, for all $k\neq i+j$, $\pi^k \circ
  \Delta_{123} \circ (\pi^i \otimes \pi^j) = 0 \in \CH_d(X \times X
  \times X)$

  \noindent or equivalently if $\Delta_{123} = \sum_{i+j=k}\pi^k \circ
  \Delta_{123} \circ (\pi^i \otimes \pi^j) \in \CH_d(X \times X \times
  X)$~;

\item \emph{weakly multiplicative} if, for all $k\neq i+j$, $\pi^k
  \circ \Delta_{123} \circ (\pi^i \otimes \pi^j)$ acts as zero on
  decomposable cycles, that is, if $\big(\pi^k \circ \Delta_{123}
  \circ (\pi^i \otimes \pi^j) \big)_*(\alpha\times \beta) =
  \pi^k_*\big(\pi^i_*\alpha \cdot \pi^j_*\beta \big) = 0$ for all
  $\alpha, \beta \in \CH^*(X)$

  \noindent or equivalently if $\Big( \sum_{i+j=k}\pi^k \circ
  \Delta_{123} \circ (\pi^i \otimes \pi^j) \Big)_*(\alpha \times
  \beta) = \alpha \cdot \beta$ for all $\alpha, \beta \in \CH^*(X)$.
\end{itemize}
Here, by definition, we have set $\pi^i \otimes \pi^j := p_{1,3}^*\pi^i \cdot 
p_{2,4}^*\pi^j$, where
$p_{r,s} : A^{4} \rightarrow A \times A$ are the projections on
the  $r^\mathrm{th}$ and  $s^\mathrm{th}$ factors
for $1 \leq r,s \leq 4$.  In both cases, the Chow ring of $X$ inherits a
\emph{bigrading} $$\CH_{\mathrm{CK}}^i(X)_s := \pi^{2i-s}_*\CH^i(X),$$
which means that $\CH_{\mathrm{CK}}^i(X)_s \cdot
\CH_{\mathrm{CK}}^j(X)_r \subseteq \CH_{\mathrm{CK}}^{i+j}(X)_{r+s}.$
\end{defn}

\begin{rmk} Let $\mathfrak{h}(X)$ be the Chow motive of $X$. The
  diagonal embedding morphism $\iota_\Delta : X \hookrightarrow X
  \times X$ induces a multiplication map $\mathfrak{h}(X) \otimes
  \mathfrak{h}(X) \rightarrow \mathfrak{h}(X)$ and a co-multiplication
  map $\mathfrak{h}(X)(d) \rightarrow \mathfrak{h}(X) \otimes
  \mathfrak{h}(X)$. Denote $\mathfrak{h}^i(X) :=
  \pi^i\mathfrak{h}(X)$. Then that the Chow--K\"unneth decomposition
  $\{\pi^i : 0 \leq i \leq 2d\}$ is multiplicative can be restated
  motivically as requiring that the multiplication map restricted to
  $\mathfrak{h}^i(X) \otimes \mathfrak{h}^j(X)$ factors through
  $\mathfrak{h}^{i+j}(X)$ for all $i$ and $j$, or that the
  co-multiplication map restricted to $\mathfrak{h}^k(X)(d)$ factors
  through $\bigoplus_{i+j=k} \mathfrak{h}^i(X) \otimes
  \mathfrak{h}^j(X)$ for all $k$.
\end{rmk}

A first class of varieties that admit a multiplicative Chow--K\"unneth
decomposition is given by abelian varieties~:

\begin{ex}[Abelian varieties] \label{ex abelian} As in the previous
  section, let $A$ be an abelian variety of dimension $d$, $\hat{A}$
  its dual, and $L \in \CH^1(A \times \hat{A})$ the Poincar\'e
  bundle. By Theorem \ref{thm O'Sullivan} and Proposition \ref{prop
    hom poincare bundle}, the cycles $\pi^i := \frac{(-1)^i}{i!\,
    (2d-i)!} \hat{L}^i \circ L^{2d-i}$, $i=0,\ldots, 2d$, are
  symmetrically distinguished and define a Chow--K\"unneth
  decomposition of $A$. This Chow--K\"unneth decomposition is
  multiplicative. Indeed, by Theorem \ref{thm O'Sullivan} again, in
  order to show that $\pi^k \circ \Delta_{123} \circ (\pi^i \otimes
  \pi^j) = 0$ for all $k \neq i+j$, it is enough to show that
  $[L]^{2d-k} \circ [\Delta_{123}^A] \circ (p_{13}^*[\hat{L}]^i \cdot
  p_{24}^*[\hat{L}]^j) = 0$ in cohomology for all $k \neq i+j$. But
  then, this is \eqref{eq ab coho}.
\end{ex}

The following proposition gives a useful criterion for a
Chow--K\"unneth decomposition to be multiplicative.

\begin{prop} \label{rmk delta0} Let $X$ be a smooth projective variety
  of dimension $d$ endowed with a Chow--K\"unneth decomposition
  $\{\pi^i_X : 0 \leq i \leq 2d_X\}$ that is \emph{self-dual}
  (\emph{i.e.}, such that ${}^t\pi_X^i = \pi_X^{2d-i}$ for all
  $i$). Endow $X^3$ with the product Chow--K\"unneth
  decomposition $$\pi^k_{X^3} := \sum_{i_1+i_2+i_3=k} \pi_X^{i_1}
  \otimes \pi_X^{i_2} \otimes \pi_X^{i_3}, \quad 0 \leq k \leq 6d.$$
  Then the Chow--K\"unneth decomposition $\{\pi^i_X\}$ is
  multiplicative if and only if the small diagonal $\Delta_{123}^X$
  lies in the degree-zero graded part of $\CH_d(X^3)$ for the
  aforementioned product Chow--K\"unneth decomposition, that is, if
  and only if $(\pi^{4d}_{X^3})_* \Delta_{123}^X=\Delta_{123}^X$.
\end{prop}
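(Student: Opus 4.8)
The plan is to compute $(\pi^{4d}_{X^3})_* \Delta_{123}^X$ explicitly and to recognize it as the sum appearing in the second formulation of multiplicativity in Definition \ref{def multCK}. Throughout I regard the small diagonal $\Delta_{123} := \Delta_{123}^X \in \CH_d(X^3)$ as a correspondence from $X \times X$ to $X$, its source being the first two factors and its target the third, so that the composites $\pi^c \circ \Delta_{123} \circ (\pi^a \otimes \pi^b)$ are defined.

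First I would apply Lieberman's formula \eqref{eq formula liebermann}. With $X = X' = X \times X$, $Y = Y' = X$, $Z = \Delta_{123}$, $\beta = \pi^{i_1} \otimes \pi^{i_2}$ and $\alpha = \pi^{i_3}$, the self-correspondence $\beta \times \alpha$ of $X^3$ is, after the factor-reordering implicit in the tensor-product notation of the Conventions, exactly the summand $\pi^{i_1} \otimes \pi^{i_2} \otimes \pi^{i_3}$ of $\pi^{4d}_{X^3}$. Formula \eqref{eq formula liebermann} then gives
\[
(\pi^{i_1} \otimes \pi^{i_2} \otimes \pi^{i_3})_* \Delta_{123} = \pi^{i_3} \circ \Delta_{123} \circ {}^t(\pi^{i_1} \otimes \pi^{i_2}).
\]
Since the decomposition is self-dual, ${}^t(\pi^{i_1} \otimes \pi^{i_2}) = \pi^{2d-i_1} \otimes \pi^{2d-i_2}$. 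Summing over all $i_1 + i_2 + i_3 = 4d$ and substituting $a = 2d - i_1$, $b = 2d - i_2$, $c = i_3$ --- so that the constraint becomes $c = a+b$ --- I obtain
\[
(\pi^{4d}_{X^3})_* \Delta_{123} = \sum_{c = a+b} \pi^{c} \circ \Delta_{123} \circ (\pi^{a} \otimes \pi^{b}).
\]

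To conclude I would expand $\Delta_{123}$ itself in the same way: because $\sum_i \pi^i = \Delta_X$ is the identity correspondence of $X$ and $\Delta_X \otimes \Delta_X$ is the identity correspondence of $X \times X$, one has $\Delta_{123} = \Delta_X \circ \Delta_{123} \circ (\Delta_X \otimes \Delta_X) = \sum_{k,i,j} \pi^k \circ \Delta_{123} \circ (\pi^i \otimes \pi^j)$, the sum running over all triples $(k,i,j)$. Subtracting the previous identity yields
\[
\Delta_{123} - (\pi^{4d}_{X^3})_* \Delta_{123} = \sum_{k \neq i+j} \pi^k \circ \Delta_{123} \circ (\pi^i \otimes \pi^j).
\]
The left-hand side vanishes precisely when $\Delta_{123}$ lies in the degree-zero graded part $(\pi^{4d}_{X^3})_*\CH_d(X^3)$, and the right-hand side vanishes precisely when $\{\pi^i_X\}$ is multiplicative; this establishes the equivalence.

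The argument is formal, so I expect no serious obstacle. The one place requiring genuine care --- and the step I would check most carefully --- is the bookkeeping in the application of \eqref{eq formula liebermann}: one must verify that the external product $\beta \times \alpha$, with $\beta$ a self-correspondence of $X \times X$ and $\alpha$ a self-correspondence of $X$, really coincides with the product projector $\pi^{i_1} \otimes \pi^{i_2} \otimes \pi^{i_3}$ under the permutation of factors built into the tensor notation, and that the source and target factors of $\Delta_{123}$ are consistently identified throughout.
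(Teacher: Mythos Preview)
Your proof is correct and follows essentially the same route as the paper: both apply Lieberman's formula \eqref{eq formula liebermann} together with self-duality to rewrite $(\pi_X^{i_1}\otimes\pi_X^{i_2}\otimes\pi_X^{i_3})_*\Delta_{123}^X$ as a composite $\pi_X^{c}\circ\Delta_{123}^X\circ(\pi_X^{a}\otimes\pi_X^{b})$, after which the equivalence with the second formulation in Definition \ref{def multCK} is immediate. The only cosmetic difference is which factor you designate as the target (you use the third, the paper uses the first); since $\Delta_{123}^X$ is $\mathfrak{S}_3$-symmetric this is immaterial.
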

\begin{proof} Here, the cycle $\pi_X^{i_1} \otimes \pi_X^{i_2} \otimes
  \pi_X^{i_3}$ is $p_{1,4}^*\pi_X^{i_1} \cdot p_{2,5}^* \pi_X^{i_2}
  \cdot p_{3,6}^*\pi_X^{i_3}$, where $p_{r,s} : X^6 \rightarrow X^2$
  are the projections on the $r^\text{th}$ and $s^\text{th}$ factors.
  The proposition follows at once from the formula
  \begin{equation}
    \label{eq lieb} (\pi_X^{i_1} \otimes \pi_X^{i_2}
    \otimes \pi_X^{i_3})_*\Delta_{123}^X = \pi_X^{i_1} \circ
    \Delta_{123}^X \circ ({}^t\pi_X^{i_2} \otimes {}^t\pi_X^{i_3}) = 
    \pi_X^{i_1} \circ
    \Delta_{123}^X \circ (\pi_X^{2d-i_2} \otimes \pi_X^{2d-i_3}),
  \end{equation} 
  where the first equality is \eqref{eq formula liebermann}.
  \end{proof}

  \begin{ex}[Abelian varieties, \emph{bis}] \label{ex abelian2} Let
    $A$ be an abelian variety of dimension $d$. Here is another proof
    that $A$ admits a multiplicative Chow--K\"unneth decomposition. By
    Deninger--Murre \cite{dm}, $A$ has a Chow--K\"unneth decomposition
    that induces the same decomposition on $\CH^*(A)$ as the Fourier
    decomposition of Beauville \cite{beauville1}. In order to prove
    that this Chow--K\"unneth decomposition is multiplicative, it is
    enough to check by Proposition \ref{rmk delta0} that
    $[n]^*\Delta_{123}^A = n^{4d}\Delta_{123}^A$, where $[n]: A \times
    A \times A \rightarrow A \times A \times A$ is the
    multiplication-by-$n$ map. But then, this follows simply from the
    fact that $[n]$ restricted to the small diagonal is
    $n^{2d}$-to-$1$.
\end{ex}

The notion of multiplicative Chow--K\"unneth decomposition is stable
under product~:

\begin{thm} \label{thm multCK} Let $X$ and $Y$ be two smooth
  projective varieties of respective dimension $d_X$ and $d_Y$ each
  endowed with a multiplicative Chow--K\"unneth decomposition
  $\{\pi^i_X : 0 \leq i \leq 2d_X\}$ and $\{\pi^j_Y : 0 \leq j \leq
  2d_Y\}$. Then the product Chow--K\"unneth decomposition on $X \times
  Y$, which is defined as
  \begin{equation}
  \label{eq multCK}
  \{\pi^k_{X \times Y}  :=
  \sum_{k=i+j} \pi_X^i \otimes \pi_Y^j  :\, 0 \leq k \leq 2(d_X +d_Y) \},
  \end{equation} is multiplicative.
\end{thm}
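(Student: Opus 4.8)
The plan is to reduce the multiplicativity of the product decomposition \eqref{eq multCK} to the multiplicativity of the decompositions on $X$ and on $Y$, by exploiting the fact that the small diagonal of a product factors as the product of the small diagonals. Throughout I would set $Z := X \times Y$ and identify $Z^n$ with $X^n \times Y^n$ for each $n$ via the obvious reordering of factors; all cycles below are read under these identifications. Two bookkeeping facts are needed. First, under the reordering $Z^3 \cong X^3 \times Y^3$ the small diagonal satisfies $\Delta_{123}^Z = \Delta_{123}^X \times \Delta_{123}^Y$, since the diagonal point $((x,y),(x,y),(x,y))$ maps to $((x,x,x),(y,y,y))$; similarly each summand $\pi_X^a \otimes \pi_Y^b$ of $\pi^k_Z$ corresponds, under $Z \times Z \cong X^2 \times Y^2$, to the external product $\pi_X^a \times \pi_Y^b$. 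Second, I would invoke the interchange law for correspondences: the reordering isomorphisms intertwine the composition of external products with the external product of compositions, up to a sign depending only on the degrees of the correspondences involved. This is a routine consequence of the projection formula together with Lieberman's formula \eqref{eq formula liebermann}; since the goal is only to prove a vanishing, the signs play no role.

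Granting these, I would expand
\[
\pi^k_Z \circ \Delta_{123}^Z \circ (\pi^i_Z \otimes \pi^j_Z)
= \sum_{\substack{a+b=k\\ c+d=i\\ e+f=j}}
(\pi_X^a \otimes \pi_Y^b) \circ \Delta_{123}^Z \circ
\big((\pi_X^c \otimes \pi_Y^d) \otimes (\pi_X^e \otimes \pi_Y^f)\big)
\]
and rewrite, using the two facts above, each summand as
\[
\pm \big[\pi_X^a \circ \Delta_{123}^X \circ (\pi_X^c \otimes \pi_X^e)\big]
\times
\big[\pi_Y^b \circ \Delta_{123}^Y \circ (\pi_Y^d \otimes \pi_Y^f)\big].
\]
By the multiplicativity of $\{\pi_X^\bullet\}$, the first bracket vanishes unless $a = c + e$; by the multiplicativity of $\{\pi_Y^\bullet\}$, the second vanishes unless $b = d + f$. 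When both factors survive one has $k = a + b = (c+d)+(e+f) = i + j$. Hence whenever $k \neq i+j$ every summand vanishes, and therefore $\pi^k_Z \circ \Delta_{123}^Z \circ (\pi^i_Z \otimes \pi^j_Z) = 0$, which is exactly the multiplicativity of the product Chow--K\"unneth decomposition.

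I expect the only real work to lie in the two preliminary facts rather than in the degree count, which is immediate once they are in place. The factorization of the small diagonal is geometrically transparent, so the main point requiring care is the precise formulation of the interchange law and the tracking of the reordering isomorphisms on $Z^2$, $Z^3$ and $Z^2 \times Z^2$; fortunately, since every claim is a vanishing statement, all the Koszul signs arising from permuting odd-degree factors can be safely ignored. If one instead prefers to assume the factor decompositions self-dual, the same conclusion follows more quickly from Proposition \ref{rmk delta0}: the product decomposition is then self-dual as well, and $\Delta_{123}^Z = \Delta_{123}^X \times \Delta_{123}^Y$ lies in the degree-zero graded piece of $\CH_{d_X+d_Y}(Z^3)$ precisely because each factor does.
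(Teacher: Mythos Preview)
Your proposal is correct and follows essentially the same idea as the paper: both arguments rest on the factorization $\Delta_{123}^{X\times Y} = \Delta_{123}^X \times \Delta_{123}^Y$ (equivalently $p_X^*\Delta_{123}^X \cdot p_Y^*\Delta_{123}^Y$) and reduce to the multiplicativity of the factor decompositions. The paper packages the computation via the criterion of Proposition~\ref{rmk delta0}, showing $(\pi_{X\times Y}^a \otimes \pi_{X\times Y}^b \otimes \pi_{X\times Y}^c)_*\Delta_{123}^{X\times Y}$ splits as a sum of products of the analogous expressions on $X^3$ and $Y^3$; you instead expand $\pi^k_Z \circ \Delta_{123}^Z \circ (\pi^i_Z \otimes \pi^j_Z)$ directly and invoke the interchange law $(\alpha \times \beta)\circ(\gamma \times \delta) = (\alpha\circ\gamma)\times(\beta\circ\delta)$ for external products of correspondences. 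These are two phrasings of the same computation---indeed your final paragraph sketches exactly the paper's route. One minor remark: in the Chow-group setting the interchange law holds on the nose, so the Koszul signs you worry about do not actually arise.
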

\begin{proof}
  Let $p_X : X \times Y \times X \times Y \times X \times Y
  \rightarrow X \times X \times X$ be the projection on the first,
  third and fifth factors, and let $p_Y$ denote the projection on the
  second, fourth and sixth factors. Writing $\Delta_{123}^X$ for the
  small diagonal of $X$ and similarly for $Y$ and $X \times Y$, we
  have the identity $$\Delta_{123}^{X\times Y} = p_X^*\Delta_{123}^X
  \cdot p_Y^*\Delta_{123}^Y.$$ We immediately deduce that
 $$(\pi_{X\times Y}^a \otimes \pi_{X\times Y}^b \otimes \pi_{X\times
   Y}^c)_* \Delta_{123}^{X\times Y} = \sum_{\substack{i+i'=a \\ j+j'=b
     \\ k+k'=c}} p_X^*\left[(\pi_X^i\otimes \pi_X^j \otimes
   \pi_X^k)_*\Delta_{123}^X \right] \cdot
 p_Y^*\left[(\pi_Y^{i'}\otimes \pi_Y^{j'} \otimes
   \pi_Y^{k'})_*\Delta_{123}^Y \right].$$ By Proposition \ref{rmk
   delta0}, the cycles $(\pi_X^i\otimes \pi_X^j \otimes
 \pi_X^k)_*\Delta_{123}^X$ and $(\pi_Y^{i'}\otimes \pi_Y^{j'} \otimes
 \pi_Y^{k'})_*\Delta_{123}^Y$ are both non-zero only if $i+j+k=4d_X$
 and $i'+j'+k'=4d_Y$. Therefore $$(\pi_{X\times Y}^a \otimes
 \pi_{X\times Y}^b \otimes \pi_{X\times Y}^c)_*\Delta_{123}^{X\times
   Y} = 0 \quad \mbox{if } a+b+c \neq 4(d_X+d_Y).$$ Hence, by
 Proposition \ref{rmk delta0} again, the product Chow--K\"unneth
 decomposition on $X \times Y$ is multiplicative.
\end{proof}

Before moving on to modified diagonals, we state a proposition that
will be useful when dealing with multiplicative Chow--K\"unneth
decompositions.

\begin{prop} \label{lem diagonal pullback} Let $X$ and $Y$ be smooth
  projective varieties of dimension $d_X$ and $d_Y$, respectively.
  Assume that both $X$ and $Y$ have Chow--K\"unneth decompositions
  $\{\pi^i_X : 0\leq i\leq 2d_X\}$ and $\{\pi^i_{Y} : 0\leq i\leq
  2d_Y\}$. Endow $X \times Y$ and $X^n$ with the product
  Chow--K\"unneth decomposition. Then the following statements are
  true.
\begin{enumerate}[(i)]
\item If $p_1 : X\times Y\rightarrow X$ and $p_2 : X\times
  Y\rightarrow Y$ are the two projections, then
\[
 p_1^*\alpha\cdot p_2^*\alpha' \in \CH^{p+p'}_{\mathrm{CK}}(X\times Y)_{s+s'}
\]
for all $\alpha\in \CH^p_{\mathrm{CK}}(X)_s$ and $\alpha'\in
\CH^{p'}_{\mathrm{CK}}(Y)_{s'}$.  In particular, we have
\[
p_1^*\CH_{\mathrm{CK}}^p(X)_s \subseteq \CH_{\mathrm{CK}}^p(X\times
Y)_s\quad \text{ and } \quad p_2^*\CH_{\mathrm{CK}}^{p'}(Y)_{s'}
\subseteq \CH_{\mathrm{CK}}^{p'}(X\times Y)_{s'}.
\]
\item If $\set{\pi^i_X}$ is self-dual, then $\Delta_X\in
  \CH^{d_X}_{\mathrm{CK}}(X\times X)_0$.
\item If $\set{\pi^i_X}$ is multiplicative and self-dual, then the
  small diagonal embedding $\iota_{\Delta,n} : X\rightarrow X^n$ is
  compatible with the decompositions, namely
\[
(\iota_{\Delta,n})^* \CH_{\mathrm{CK}}^p(X^n)_s \subseteq
\CH_{\mathrm{CK}}^p(X)_s \qquad \text{and}\qquad
(\iota_{\Delta,n})_*\CH_{\mathrm{CK}}^p(X)_s \subseteq
\CH_{\mathrm{CK}}^{p+d_X(n-1)}(X^n)_s.
\]
In particular, the small diagonal $(\iota_{\Delta,n})_*[X]$ is
contained in $\CH_{\mathrm{CK}}^{d_X(n-1)}(X^n)_0$.
\end{enumerate}
\end{prop}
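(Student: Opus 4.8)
The plan is to recast all three items in the language of correspondences and to deduce them from a single concentration statement for the graph of the small diagonal. Throughout I would write $\delta_n \in \CH_{d_X}(X^n \times X)$ for the class of the graph of $\iota_{\Delta,n}$, viewed as a correspondence from $X^n$ to $X$, so that $\iota_{\Delta,n}^{*} = (\delta_n)_{*}$ and $(\iota_{\Delta,n})_{*} = ({}^{t}\delta_n)_{*}$ on Chow groups, and $\delta_2 = \Delta_{123}$. The main input is formula \eqref{eq formula liebermann}.

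For item \emph{(i)}, I would use that an exterior product of correspondences acts by the exterior product of the actions, $(\pi^i_X \otimes \pi^j_Y)_{*}(\alpha \times \alpha') = (\pi^i_X)_{*}\alpha \times (\pi^j_Y)_{*}\alpha'$ (a consequence of the projection formula, or of \eqref{eq formula liebermann}). Since $\alpha \in \CH_{\mathrm{CK}}^{p}(X)_{s}$ means $(\pi^l_X)_{*}\alpha = \alpha$ for $l = 2p-s$ and $0$ otherwise, and similarly for $\alpha'$, summing over $i+j=k$ shows that $(\pi^k_{X\times Y})_{*}(\alpha\times\alpha')$ equals $\alpha\times\alpha'$ for $k = 2(p+p')-(s+s')$ and vanishes otherwise; this is exactly $p_1^{*}\alpha\cdot p_2^{*}\alpha' = \alpha\times\alpha' \in \CH_{\mathrm{CK}}^{p+p'}(X\times Y)_{s+s'}$. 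The two displayed inclusions are the special cases $\alpha'=[Y]$ and $\alpha=[X]$, once one notes $[X]\in\CH_{\mathrm{CK}}^{0}(X)_{0}$, which holds because $(\pi^0_X)_{*}[X]=[X]$ by injectivity of the cycle class map on $\CH^0$. For item \emph{(ii)}, formula \eqref{eq formula liebermann} gives $(\pi^i_X\otimes\pi^j_X)_{*}\Delta_X = \pi^j_X\circ\Delta_X\circ{}^{t}\pi^i_X = \pi^j_X\circ{}^{t}\pi^i_X$; self-duality replaces ${}^{t}\pi^i_X$ by $\pi^{2d_X-i}_X$, and orthogonality of the idempotents makes this vanish unless $i+j=2d_X$, in which case it equals $\pi^j_X$. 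Hence $(\pi^k_{X\times X})_{*}\Delta_X=0$ for $k\neq 2d_X$ while $(\pi^{2d_X}_{X\times X})_{*}\Delta_X=\sum_j \pi^j_X=\Delta_X$, i.e. $\Delta_X\in\CH_{\mathrm{CK}}^{d_X}(X\times X)_{0}$.

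The heart of the proof is item \emph{(iii)}, and the key step is the concentration lemma
\[
\pi^l_X \circ \delta_n \circ (\pi^{i_1}_X \otimes \cdots \otimes \pi^{i_n}_X) = 0 \quad \text{whenever } l \neq i_1 + \cdots + i_n,
\]
which I would prove by induction on $n$. The case $n=2$ is precisely multiplicativity applied to $\Delta_{123}$. For the inductive step I would use the factorization $\delta_n = \Delta_{123}\circ(\delta_{n-1}\otimes\mathrm{id}_X)$ together with the compatibility $(\gamma_1\otimes\gamma_2)\circ(\delta_1\otimes\delta_2)=(\gamma_1\circ\delta_1)\otimes(\gamma_2\circ\delta_2)$: the inductive hypothesis concentrates $\delta_{n-1}\circ(\pi^{i_1}_X\otimes\cdots\otimes\pi^{i_{n-1}}_X)$ in degree $j:=i_1+\cdots+i_{n-1}$, so $\delta_n\circ(\pi^{i_1}_X\otimes\cdots\otimes\pi^{i_n}_X) = \Delta_{123}\circ(\pi^{j}_X\otimes\pi^{i_n}_X)\circ\big((\delta_{n-1}\circ(\pi^{i_1}_X\otimes\cdots\otimes\pi^{i_{n-1}}_X))\otimes\mathrm{id}_X\big)$, and one further application of multiplicativity (with indices $j$ and $i_n$) yields the claim. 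I expect this bookkeeping --- keeping straight which factor each projector acts on under the tensor and composition identities --- to be the main obstacle; everything else is formal.

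Granting the lemma, the pullback inclusion is immediate: for $\sigma\in\CH_{\mathrm{CK}}^{p}(X^n)_{s}$ with $m:=2p-s$ one has $\sigma=(\pi^m_{X^n})_{*}\sigma$ and $\pi^m_{X^n}=\sum_{i_1+\cdots+i_n=m}\pi^{i_1}_X\otimes\cdots\otimes\pi^{i_n}_X$, so the lemma gives $(\pi^l_X)_{*}\iota_{\Delta,n}^{*}\sigma=(\pi^l_X\circ\delta_n\circ\pi^m_{X^n})_{*}\sigma=0$ for $l\neq m$, whence $\iota_{\Delta,n}^{*}\sigma\in\CH_{\mathrm{CK}}^{p}(X)_{s}$. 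For the pushforward I would transpose the lemma, using ${}^{t}\pi^i_X=\pi^{2d_X-i}_X$, to obtain $(\pi^{j_1}_X\otimes\cdots\otimes\pi^{j_n}_X)\circ{}^{t}\delta_n\circ\pi^{m'}_X=0$ unless $j_1+\cdots+j_n=m'+2(n-1)d_X$. Applied to $\tau\in\CH_{\mathrm{CK}}^{p}(X)_{s}$ with $m'=2p-s$, this forces $(\pi^L_{X^n})_{*}(\iota_{\Delta,n})_{*}\tau=0$ unless $L=(2p-s)+2(n-1)d_X=2(p+(n-1)d_X)-s$, i.e. $(\iota_{\Delta,n})_{*}\tau\in\CH_{\mathrm{CK}}^{p+(n-1)d_X}(X^n)_{s}$. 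The final assertion is the case $\tau=[X]\in\CH_{\mathrm{CK}}^{0}(X)_{0}$, giving $(\iota_{\Delta,n})_{*}[X]\in\CH_{\mathrm{CK}}^{(n-1)d_X}(X^n)_{0}$.
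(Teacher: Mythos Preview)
Your arguments for \emph{(i)} and \emph{(ii)} are essentially identical to the paper's. For \emph{(iii)} your approach is correct but genuinely different from the paper's. You prove the concentration lemma $\pi^l_X\circ\delta_n\circ(\pi^{i_1}_X\otimes\cdots\otimes\pi^{i_n}_X)=0$ for $l\neq i_1+\cdots+i_n$ directly by induction on $n$, using only the factorization $\delta_n=\Delta_{123}\circ(\delta_{n-1}\otimes\mathrm{id}_X)$ and multiplicativity of $\{\pi^i_X\}$ on $X$ itself. The paper instead first invokes Theorem~\ref{thm multCK} to get that the product Chow--K\"unneth decomposition on $X^n$ is multiplicative, then shows inductively that $(\iota_{\Delta,n})_*[X]\in\CH_{\mathrm{CK}}^{d_X(n-1)}(X^n)_0$ via the identity $(\iota_{\Delta,n+1})_*[X]=(p_{1,\ldots,n})^*\big((\iota_{\Delta,n})_*[X]\big)\cdot p_{n,n+1}^*\Delta_X$, and deduces the compatibilities from the projection-formula identities $(\iota_{\Delta,n})_*\alpha=p_1^*\alpha\cdot(\iota_{\Delta,n})_*[X]$ and $(\iota_{\Delta,n})_*(\iota_{\Delta,n})^*\beta=\beta\cdot(\iota_{\Delta,n})_*[X]$ together with injectivity of $(\iota_{\Delta,n})_*$. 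Your route is more self-contained (it bypasses Theorem~\ref{thm multCK} and stays at the level of correspondences on $X$), while the paper's route is more geometric and makes transparent why the ``in particular'' about the small diagonal is really the core statement from which the two compatibilities flow.
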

\begin{proof}
  By definition, $\alpha$ satisfies $(\pi^i_X)_*\alpha = 0$ for all
  $i\neq 2p-s$ and $\alpha'$ satisfies $(\pi^j_{Y})_*\alpha' = 0$ for
  all $j\neq 2p'-s'$.  Note that
\[
\pi^k_{X\times Y}  :=\sum_{i+j=k}\pi^i_X \otimes \pi^j_{Y} =
 \sum_{i+j=k}p_{13}^*\pi^i_X\cdot p_{24}^*\pi^j_{Y}.
\]
We thus have
\begin{align*}
  (\pi^k_{X\times Y})_* (p_1^*\alpha\cdot p_2^*\alpha') &=
  \sum_{i+j=k} (p_{13}^*\pi^i_X\cdot
  p_{24}^*\pi^j_{Y})_*(p_1^*\alpha \cdot p_2^*\alpha')\\
  &= \sum_{i+j=k} p_1^*\big((\pi^i_X)_*\alpha\big) \cdot
  p_2^*\big((\pi^j_{Y})_*\alpha'\big).
\end{align*}
Therefore, since $(\pi^i_X)_*\alpha = 0$ unless $i= 2p-s$ and
$(\pi^j_{Y})_*\alpha' = 0$ unless $j=2p'-s'$, we get that $
(\pi^k_{X\times Y})_* (p_1^*\alpha \cdot p_2^*\alpha')= 0$ for all
$k\neq 2(p+p')-(s+s')$. It follows that $p_1^*\alpha\cdot
p_2^*\alpha'\in \CH_{\mathrm{CK}}^{p+p'}(X\times Y)_{s+s'}$.\medskip

Let us now show that the diagonal $\Delta_X$ belongs to
$\CH_{\mathrm{CK}}^{d_X}(X\times X)_0$, or equivalently that $
(\pi^k_{X\times X})_*\Delta_X = 0$ for all $k\neq 2d_X$.  A direct
computation yields
\begin{align*}
  (\pi^k_{X\times X})_*\Delta_X & = \sum_{i+j=k} (p_{24})_*\Big(
  (p_{13}^*\pi^i_X\cdot p_{24}^*\pi^j_X)\cdot p_{12}^*\Delta_X\Big)\\
  & = \sum_{i+j=k} \pi^j_X\circ{}^t\pi^i_X \\
  & = \sum_{i+j=k} \pi^j_X\circ \pi^{2d_X-i}_X\\
  & = \begin{cases}
    0, & k\neq 2d_X\, ;\\
    \Delta_X, &k=2d_X.
   \end{cases}
\end{align*}

We now prove \emph{(iii)}. By Theorem \ref{thm multCK}, the product
Chow--K\"unneth decomposition on $X^n$ is multiplicative for all
$n\geq 2$. Note that
\[
(\iota_{\Delta,n+1})_*[X] = (p_{1,\ldots,
  n})^*\big((\iota_{\Delta,n})_*[X]\big)\cdot p_{n,n+1}^* \Delta_X \in
\CH_{\mathrm{CK}}^{d_X(n-1)}(X^{n+1})_0\cdot
\CH_{\mathrm{CK}}^{d_X}(X^{n+1})_0.
\]
It follows by induction that $(\iota_{\Delta,n})_*[X]\in
\CH_{\mathrm{CK}}^{d_X(n-1)}(X^n)_0$.

Finally, we show that the decompositions are compatible with small
diagonal embeddings. On the one hand, if $\alpha\in
\CH_{\mathrm{CK}}^p(X)_s$, then
\[
(\iota_{\Delta,n})_*\alpha = p_1^*\alpha\cdot (\iota_{\Delta,n})_*[X]
\in \CH_{\mathrm{CK}}^p(X^n)_s\cdot
\CH_{\mathrm{CK}}^{d_X(n-1)}(X^n)_0 \subseteq
\CH_{\mathrm{CK}}^{p+d_X(n-1)}(X^n)_s.
\]
On the other hand, if $\beta\in \CH_{\mathrm{CK}}^p(X^n)_s$, then
\[
(\iota_{\Delta,n})_* (\iota_{\Delta,n})^* \beta = \beta\cdot
(\iota_{\Delta,n})_*[X] \in \CH_{\mathrm{CK}}^{p}(X^n)_s\cdot
\CH_{\mathrm{CK}}^{d_X(n-1)}(X^n)_0 \subseteq
\CH_{\mathrm{CK}}^{p+d_X(n-1)}(X^n)_s.
\]
Note that $(\iota_{\Delta,n})_*$ is injective because $p_1\circ
\iota_{\Delta,n} = \mathrm{id}_X$. Thus we conclude that
$(\iota_{\Delta,n})^*\beta\in \CH_{\mathrm{CK}}^p(X)_s$.
\end{proof}

\subsection{Modified diagonals and multiplicative Chow--K\"unneth decompositions}
Another class of varieties admitting a multiplicative Chow--K\"unneth
decomposition that we have in mind is given by K3 surfaces. If $X$ is
a K3 surface and $\mathfrak{o}_X$ is the class of a point lying on a
rational curve, then the Chow--K\"unneth decomposition $\pi^0 :=
\mathfrak {o}_X \times X, \pi^{4} := X \times \mathfrak {o}_X$ and
$\pi^2 := \Delta_X - \pi^0 - \pi^{2}$ is weakly multiplicative. This
is due to the fact proved by Beauville--Voisin \cite{bv} that the
intersection of any two divisors on $X$ is a multiple of
$\mathfrak{o}_X$ and was explained in the introduction. We would
however like to explain why this is in fact a \emph{multiplicative}
Chow--K\"unneth decomposition. Proposition \ref{P :red} will show that
this boils down to the main result of \cite{bv} on the vanishing of
some ``modified diagonal''.  Let us recall the definition of the
modified diagonal $\Delta_{\mathrm{tot}}$ as first introduced by Gross
and Schoen \cite{gs}.

\begin{defn}\label{defn small diagonals}
  Recall that the \emph{small diagonal} is defined as
\[
\Delta_{123} :=\mathrm{Im}\{X\rightarrow X\times X\times
X,x\mapsto(x,x,x)\}
\]
viewed as a cycle on $X\times X\times X$.  Let
$\mathfrak{o}_X\in\CH_0(X)$ be a zero-cycle of degree 1. We define for
$\{ i,j,k \} =\{1,2,3\}$ the following cycles in $\CH_d(X\times X\times X)$
\begin{align*}
  \Delta_{ij} &  := p_{ij}^*\Delta_X \cdot p_k^*\mathfrak{o}_X \, ;\\
  \Delta_i & := p_i^*[X]\cdot p_j^*\mathfrak{o}_X\cdot
  p_k^*\mathfrak{o}_X,
\end{align*}
where $p_i : X \times X \times X \rightarrow X$ and $p_{ij} : X \times
X \times X \rightarrow X \times X$ are the projections. The
\emph{modified diagonal} (attached to the degree-$1$ zero-cycle
$\mathfrak{o}_X$) is then the cycle
\[
\Delta_{\mathrm{tot}} := \Delta_{123}-\Delta_{12} -\Delta_{23}
-\Delta_{13} +\Delta_1 +\Delta_2 +\Delta_3 \in \CH_d(X \times X \times
X).
\]
\end{defn}

Here are two important classes of varieties for which
$\Delta_{\mathrm{tot}}$ vanishes modulo rational equivalence.

\begin{thm}[Gross--Schoen \cite{gs}] \label{thm gshyper} If $X$ is a
  hyperelliptic curve and $\mathfrak{o}_X$ is the class of a
  Weierstrass point, then $\Delta_{\mathrm{tot}} = 0$ in $\CH_1(X
  \times X \times X)$.
\end{thm}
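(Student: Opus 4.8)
The plan is to reduce the vanishing to a codimension-$1$ identity reflecting the absence of $\HH^1$ on $\PP^1$, pulled back through the hyperelliptic double cover, and then to dispose of the ramification corrections. Throughout I write $\pi^1_X := \Delta_X - \mathfrak{o}_X\times X - X\times\mathfrak{o}_X \in \CH^1(X\times X)$ for the ``$\HH^1$-projector'', and on $X^3$ I set $a_i := p_i^*\mathfrak{o}_X$ and $e_{ij} := p_{ij}^*\pi^1_X$. First I would record a reduction valid for \emph{any} pointed curve. Since the divisors $\{x_1=x_2\}$ and $\{x_2=x_3\}$ meet transversally along the small diagonal, one has $\Delta_{123} = p_{12}^*\Delta_X\cdot p_{23}^*\Delta_X$ in $\CH^2(X^3)$. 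Substituting $\Delta_X = \mathfrak{o}_X\times X + X\times\mathfrak{o}_X + \pi^1_X$ and using the curve relations $\mathfrak{o}_X^2 = 0$ and the vanishing of $\pi^1_X$ on each slice $\{x_i=w\}$ (so that $a_i\cdot e_{jk}=0$ as soon as $i\in\{j,k\}$), a routine expansion collapses to $\Delta_{123} = \Delta_{12}+\Delta_{23}-\Delta_2+e_{12}e_{23}$, whence
\[
\Delta_{\mathrm{tot}} = e_{12}e_{23} - e_{13}\cdot a_2 = p_{12}^*\pi^1_X\cdot p_{23}^*\pi^1_X - p_{13}^*\pi^1_X\cdot p_2^*\mathfrak{o}_X .
\]
Everything then comes down to showing that these two codimension-$2$ cycles coincide.

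The hyperelliptic input enters through the degree-$2$ map $\pi:X\to\PP^1$ with involution $\iota$, normalised so that $\pi(w)=\lambda_0$ is a branch point; thus $\iota(w)=w$, $\iota^*\mathfrak{o}_X=\mathfrak{o}_X$ and $\pi^*[\lambda_0]=2\mathfrak{o}_X$. Because $\HH^1(\PP^1)=0$, the class $\Delta_{\PP^1}-\lambda_0\times\PP^1-\PP^1\times\lambda_0$ vanishes in $\CH^1(\PP^1\times\PP^1)$. Pulling back by $\pi\times\pi$ and using $(\pi\times\pi)^*\Delta_{\PP^1}=\Delta_X+\Gamma_\iota$ (the preimage of the diagonal is the reduced union of the diagonal and the graph of $\iota$) yields the key relation
\[
\Delta_X+\Gamma_\iota = 2\,\mathfrak{o}_X\times X + 2\,X\times\mathfrak{o}_X,\qquad\text{equivalently}\qquad (1\times\iota)^*\pi^1_X=-\pi^1_X .
\]

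To finish, I would pull back the trivially vanishing modified diagonal $\Delta_{\mathrm{tot}}^{\PP^1}=0$ along $f=\pi\times\pi\times\pi$. Writing $\Sigma_S$ for the four $\iota$-twisted small diagonals $\{(x,\iota^{\varepsilon_2}x,\iota^{\varepsilon_3}x)\}$ indexed by $S\subseteq\{2,3\}$, the identity $(\pi\times\pi)^*\Delta_{\PP^1}=\Delta_X+\Gamma_\iota$ together with $\pi^*[\lambda_0]=2\mathfrak{o}_X$ gives $f^*\Delta_{123}^{\PP^1}=\sum_S\Sigma_S$, $f^*\Delta_{ij}^{\PP^1}=2(\Delta_{ij}+\Delta_{ij}^\iota)$ and $f^*\Delta_i^{\PP^1}=4\Delta_i$. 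Rewriting each twisted partial diagonal $\Delta_{ij}^\iota$ by the relation above makes the $\Delta_{ij}^\iota$ collapse and produces the clean intermediate identity
\[
\Sigma_\emptyset+\Sigma_{\{2\}}+\Sigma_{\{3\}}+\Sigma_{\{2,3\}}=4(\Delta_1+\Delta_2+\Delta_3).
\]
It then remains to expand the three genuinely twisted diagonals $\Sigma_{\{2\}},\Sigma_{\{3\}},\Sigma_{\{2,3\}}$ using $(1\times\iota)^*\pi^1_X=-\pi^1_X$ and to feed the outcome back into the displayed formula for $\Delta_{\mathrm{tot}}$.

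The hard part will be precisely this last step. Each twisted diagonal expands into untwisted diagonals plus the three ``$\HH^1$-pairings'' $e_{12}e_{23}$, $e_{12}e_{13}$, $e_{13}e_{23}$, and the whole argument hinges on the mutual cancellation of these pairings. The sign rule $(1\times\iota)^*\pi^1_X=-\pi^1_X$ is \emph{not} by itself sufficient: I checked that it is absorbed by the combinatorics and merely reproduces the general formula for $\Delta_{\mathrm{tot}}$. The extra leverage must come from the fact that $\iota$ is an honest involution, which I would extract by computing each $\Sigma_S$ as an intersection in two different ways and comparing; this yields relations among the pairings that force $e_{12}e_{23}=e_{13}\cdot a_2$, i.e. $\Delta_{\mathrm{tot}}=0$. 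As an alternative route I would keep in reserve the embedding of $X$ in its Jacobian by the symmetric Abel--Jacobi map $x\mapsto[x-w]$, deducing the vanishing from the Beauville--Fourier calculus on the Jacobian, where the symmetry of the embedding is exactly the incarnation of $(1\times\iota)^*\pi^1_X=-\pi^1_X$.
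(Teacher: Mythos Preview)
The paper does not give its own proof of this result; it is quoted as a theorem of Gross--Schoen and simply cited. So there is nothing in the paper to compare your argument against directly.

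That said, your reduction $\Delta_{\mathrm{tot}}=e_{12}e_{23}-e_{13}a_2$ and the hyperelliptic identity $(1\times\iota)^*\pi^1_X=-\pi^1_X$ are both correct, and in fact they \emph{are} enough --- you give up one step too early. The ingredient you are missing is not any new relation coming from ``computing $\Sigma_S$ in two ways'', but simply the $\mathfrak{S}_3$-invariance of $\Delta_{\mathrm{tot}}$ that is immediate from its definition. Applying the transposition $(23)$ to your formula gives the second expression
\[
\Delta_{\mathrm{tot}}=e_{13}e_{23}-e_{12}a_3 .
\]
Now act by $\sigma_2=1\times\iota\times 1$. On your first expression, $e_{12}\mapsto -e_{12}$, $e_{23}\mapsto -e_{23}$, $e_{13}\mapsto e_{13}$, $a_2\mapsto a_2$, so $\sigma_2^*\Delta_{\mathrm{tot}}=\Delta_{\mathrm{tot}}$. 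On the second expression, $e_{13}\mapsto e_{13}$, $e_{23}\mapsto -e_{23}$, $e_{12}\mapsto -e_{12}$, $a_3\mapsto a_3$, so $\sigma_2^*\Delta_{\mathrm{tot}}=-\Delta_{\mathrm{tot}}$. Hence $2\Delta_{\mathrm{tot}}=0$, and since the paper works with rational coefficients this gives $\Delta_{\mathrm{tot}}=0$.

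So the ``extra leverage'' you were looking for is just the symmetry of $\Delta_{\mathrm{tot}}$ itself, not any deeper input from the involution; the pullback from $\PP^1$ and the expansion of the $\Sigma_S$ are unnecessary detours. Your backup route via the Abel--Jacobi map into the Jacobian is closer in spirit to what Gross--Schoen actually do, but the argument above is shorter.
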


\begin{thm}[Beauville--Voisin \cite{bv}] \label{thm bv} If $X$ is a K3
  surface and $\mathfrak{o}_X$ is the class of any point lying on a
  rational curve, then $\Delta_{\mathrm{tot}} = 0$ in $\CH_2(X \times
  X \times X)$.
\end{thm}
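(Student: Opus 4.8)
The plan is to prove the vanishing of $\Delta_{\mathrm{tot}}$ in three stages: a purely cohomological computation, a Bloch--Srinivas reduction controlling the support of $\Delta_{\mathrm{tot}}$, and finally a geometric input special to K3 surfaces that upgrades the support statement to a rational equivalence. As a preliminary reformulation, note that by Proposition \ref{rmk delta0} applied to the self-dual Beauville--Voisin Chow--K\"unneth decomposition $\pi^0=\mathfrak{o}_X\times X$, $\pi^4=X\times\mathfrak{o}_X$, $\pi^2=\Delta_X-\pi^0-\pi^4$ of $X$ (together with the link between modified diagonals and multiplicativity of Proposition \ref{prop multdiag}), proving $\Delta_{\mathrm{tot}}=0$ is equivalent to establishing the correspondence-level identities $\pi^k\circ\Delta_{123}\circ(\pi^i\otimes\pi^j)=0$ for all $k\neq i+j$. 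Each such composite is automatically trivial in cohomology, since the cup-product $\HH^i(X)\otimes\HH^j(X)\to\HH^{i+j}(X)$ is graded; so the entire difficulty lies in promoting a collection of homologically trivial correspondences to rational triviality.

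First I would check directly that $[\Delta_{\mathrm{tot}}]=0$ in $\HH^8(X\times X\times X,\Q)$. This is a finite K\"unneth bookkeeping: writing $[\Delta_X]$ in a Poincar\'e-dual basis of $\HH^*(X,\Q)$ and expanding $[\Delta_{123}]$, $[\Delta_{ij}]$ and $[\Delta_i]$ into their K\"unneth components, the components of type $\{0,4,4\}$ and $\{2,2,4\}$ all cancel. The computation goes through precisely because $b_1(X)=b_3(X)=0$ (so no cross terms come from odd cohomology, exactly as in the hyperelliptic curve case of \cite{gs}) and $\dim\HH^4(X,\Q)=1$.

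Next I would view $\Delta_{\mathrm{tot}}$ as a correspondence $\Gamma$ from $X\times X$ to $X$, with source the first two factors and target the third. A direct intersection computation — using only that $\CH^4(X)=0$, hence that the product of any two zero-cycles on $X$ vanishes — shows that $\Gamma_*(x\times y)=0$ for all closed points $x,y\in X$, and therefore that $\Gamma_*\CH_0\big((X\times X)_\Omega\big)=0$ over a universal domain $\Omega$. The Bloch--Srinivas argument underlying Lemma \ref{lem BS}, applied to $\Gamma$ with the \emph{non-self} source $X\times X$ in place of a self-correspondence, then shows that $\Gamma$ is supported on $Z\times X$ for some divisor $Z\subsetneq X\times X$. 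Since $\Delta_{\mathrm{tot}}$ is invariant under the symmetric group $\mathfrak{S}_3$ permuting the three factors, it is simultaneously supported on the analogous loci for the other two groupings of the factors.

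The main obstacle is the final step: passing from ``$\Delta_{\mathrm{tot}}$ is homologically trivial and supported on these divisorial loci'' to ``$\Delta_{\mathrm{tot}}=0$''. This cannot be achieved by a naive iteration of the decomposition-of-the-diagonal argument, because $p_g(X)=1$, so the groups $\CH_0$ of the surfaces occurring in the support are not finite-dimensional and no further Bloch--Srinivas step is available. This is where the special geometry of K3 surfaces enters, following Beauville--Voisin: $X$ is swept out by a family of rational curves, all points lying on rational curves represent the single class $\mathfrak{o}_X$, and one has the relations $D\cdot D'=(D\cdot D')\,\mathfrak{o}_X$ for divisors together with $c_2(X)=24\,\mathfrak{o}_X$ \cite{bv}. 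I would use these two identities to trivialize the divisor-supported part of $\Delta_{\mathrm{tot}}$ term by term — the relevant pushforwards $(\Delta_{ij})_*\Delta_X$ and $(\Delta_{123})_*\Delta_X=\iota_\Delta^*\Delta_X=c_2(X)$ being controlled exactly by them — thereby identifying $\Delta_{\mathrm{tot}}$ with its cohomology class, which vanishes by the first stage. Equivalently, in the language of Proposition \ref{rmk delta0}, these K3 relations are precisely what forces the homologically trivial correspondences $\pi^k\circ\Delta_{123}\circ(\pi^i\otimes\pi^j)$, $k\neq i+j$, to vanish rationally.
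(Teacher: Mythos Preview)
The paper does not prove this statement; it is quoted from \cite{bv} and used as a black box throughout (e.g.\ in Lemma~\ref{lem lifting hilbert} and Example~\ref{ex K3}). There is no proof in the manuscript to compare against.

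On the merits of your sketch: Steps 1 and 2 are correct and clean. The gap is Step 3, which is where all the content lies. After the Bloch--Srinivas reduction you know only that $\Delta_{\mathrm{tot}}$ is pushed forward from $\CH_2(\widetilde{Z}\times X)$ for some smooth threefold $\widetilde{Z}$ mapping to a divisor in $X\times X$. The identities $D\cdot D'\in\langle\mathfrak{o}_X\rangle$ and $c_2(X)=24\,\mathfrak{o}_X$ are constraints in $\CH_0(X)$, not in $\CH_2(X^3)$ or $\CH_2(\widetilde{Z}\times X)$, and there is no mechanism in your outline by which they ``trivialize the divisor-supported part term by term''. Your parenthetical that $(\Delta_{123})_*\Delta_X=\iota_\Delta^*\Delta_X=c_2(X)$ computes a single pushforward, not the cycle $\Delta_{\mathrm{tot}}$ itself; and the phrase ``identifying $\Delta_{\mathrm{tot}}$ with its cohomology class'' presupposes exactly the injectivity one is trying to establish. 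The manuscript itself flags this difficulty: in the proof of Proposition~\ref{prop bvf} it notes that Beauville--Voisin first prove the divisor identity ``and then, after some intricate arguments, deduce that $\Delta_{\mathrm{tot}}=0$''. Those intricate arguments---which involve sweeping $X$ by a family of curves and reducing to a curve-level statement where the modified diagonal is tractable---are precisely what your Step 3 omits.
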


Note that the condition that $\Delta_{\mathrm{tot}}$ vanishes is
extremely restrictive. Indeed, if $\dim X > 1$, a straightforward
computation gives $$(\Delta_{\mathrm{tot}})_*(\alpha \times \beta) =
\alpha \cup \beta, \quad \mbox{for all} \ \alpha \in \HH^p(X,\Q), \
\beta \in \HH^q(X,\Q), \ 0<p,q<\dim X.$$ In particular, if $\dim X >
2$, $\Delta_{\mathrm{tot}}$ never vanishes (consider $\alpha =\beta
\in \HH^2(X,\Q)$ the class of an ample divisor) and, if $\dim X =2$,
then $\Delta_{\mathrm{tot}}=0$ imposes that $\HH^1(X,\Q) = 0$.  Even
in the case of curves, the vanishing of $\Delta_{\mathrm{tot}}$ is
very restrictive~; one may consult \cite{gs} and \cite{bv} for further
discussions. More generally, O'Grady \cite{o'grady2} considers a
\emph{higher order} modified diagonal, which is defined as
follows. Consider a smooth projective variety $X$ of dimension $d$
endowed with a closed point $\mathfrak{o}_X$. For $I \subseteq \{1,
\ldots , m\}$, we let $$\Delta^m_I(X; \mathfrak{o}_X) := \{(x_1,
\ldots, x_m) \in X^m : x_i = x_j \mbox{ if } i, j \in I \mbox{ and }
x_i = \mathfrak{o}_X \mbox{ if } i \notin I\}.$$ The
\emph{$m^{\mathrm{th}}$ modified diagonal cycle} associated to
$\mathfrak{o}_X$ is the $d$-cycle on $X^m$ given
by $$\Delta^m_{\mathrm{tot}}(X,\mathfrak{o}_X) := \sum_{\emptyset \neq
  I \subseteq \{1,2,\ldots,m\}} (-1)^{m-|I|} \Delta^m_I(X;
\mathfrak{o}_X).$$ One may also define the $m^\mathrm{th}$ modified
diagonal cycle associated to a zero-cycle of degree $1$ on $X$ by
proceeding as in Definition \ref{defn small diagonals}.  One can then
ask if, given a smooth projective variety $X$, there exists a
zero-cycle $\mathfrak{o}_X \in \CH_0(X)$ of degree $1$ and an integer
$m>0$ such that $\Delta_{\mathrm{tot}}^m(X;\mathfrak{o}_X) =0 \in
\CH_d(X^m)$. In the case of abelian varieties, the following theorem
answers a question raised by O'Grady. This result was obtained
independently by Moonen and Yin \cite{my} by using the Fourier
transform on abelian varieties \cite{beauville1, dm}.

\begin{thm} \label{thm abelianmodified}
  Let $A$ be an abelian variety of dimension $d$. Then
  $\Delta^{2d+1}_{\mathrm{tot}}(A;a) = 0$ in $\CH_d(A^{2d+1})$ for all
  closed points $a \in A$.
\end{thm}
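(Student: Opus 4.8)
The plan is to prove the statement for the base point $0 \in A$ and then transport the result to an arbitrary closed point by translation. Indeed, the translation $\tau_a \colon A \to A$ is an isomorphism, and applying the automorphism $\tau_a^{\times m}$ of $A^m$ sends the subvariety $\Delta^m_I(A;0)=\{x_i=x_j \ (i,j\in I),\ x_i=0\ (i\notin I)\}$ isomorphically onto $\Delta^m_I(A;a)$; hence $\Delta^m_I(A;a)=(\tau_a^{\times m})_*\Delta^m_I(A;0)$, and by linearity $\Delta^{m}_{\mathrm{tot}}(A;a)=(\tau_a^{\times m})_*\,\Delta^{m}_{\mathrm{tot}}(A;0)$. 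Thus it suffices to show $\Delta^{2d+1}_{\mathrm{tot}}(A;0)=0$ in $\CH_d(A^{2d+1})$, and I would do this by combining O'Sullivan's Theorem \ref{thm O'Sullivan} with a cohomological vanishing. Concretely, the strategy is to verify that $\Delta^{2d+1}_{\mathrm{tot}}(A;0)$ is symmetrically distinguished and numerically trivial; these two facts together force it to vanish rationally.

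First I would establish symmetric distinction. For $\emptyset\neq I\subseteq\{1,\dots,m\}$, the subvariety $\Delta^m_I(A;0)$ is precisely the image of the group homomorphism $\phi_I\colon A\to A^m$ whose $k$-th component is $\mathrm{id}_A$ if $k\in I$ and $0$ otherwise. Since $\phi_I$ is a closed immersion (as $I\neq\emptyset$), we have $\Delta^m_I(A;0)=(\phi_I)_*[A]$. Now $[A]\in\CH^0(A)$ is symmetrically distinguished, and by Theorem \ref{thm O'Sullivan} symmetrically distinguished cycles are stable under push-forward along homomorphisms of abelian varieties; hence each $\Delta^m_I(A;0)$ is symmetrically distinguished, and therefore so is the signed sum $\Delta^{m}_{\mathrm{tot}}(A;0)$.

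Next I would prove that $\Delta^{2d+1}_{\mathrm{tot}}(A;0)$ is homologically (hence numerically) trivial. The key input is that pushing forward the modified diagonal along any coordinate-forgetting projection $p_{\hat\jmath}\colon A^m\to A^{m-1}$ gives zero. This is a short inclusion–exclusion computation: for $j\notin I$ one gets $(p_{\hat\jmath})_*\Delta^m_I=\Delta^{m-1}_I$, for $I=J\cup\{j\}$ with $J\neq\emptyset$ one gets $(p_{\hat\jmath})_*\Delta^m_I=\Delta^{m-1}_{J}$, and for $I=\{j\}$ the push-forward of the $d$-dimensional cycle to a point vanishes (for $d\geq 1$); the surviving contributions cancel in pairs, so $(p_{\hat\jmath})_*\Delta^{m}_{\mathrm{tot}}(A;0)=0$. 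In cohomology $\HH^*(A^m)=\HH^*(A)^{\otimes m}$, the operator $(p_{\hat\jmath})_*$ detects exactly the K\"unneth components of $[\Delta^{m}_{\mathrm{tot}}]$ that carry the top class $\HH^{2d}(A)$ in the $j$-th factor; its vanishing for every $j$ shows that $[\Delta^{m}_{\mathrm{tot}}(A;0)]$ has no K\"unneth component with a top-degree factor. Since $[\Delta^{m}_{\mathrm{tot}}]\in\HH^{2d(m-1)}(A^m)$, testing it against $\HH^{2d}(A^m)=\bigoplus_{\sum i_k=2d}\bigotimes_k\HH^{i_k}(A)$ requires every $i_k\geq 1$ for a nonzero pairing; but with $m=2d+1$ factors one has $\sum_k i_k\geq 2d+1>2d$, which is impossible. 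Hence all pairings vanish and $[\Delta^{2d+1}_{\mathrm{tot}}(A;0)]=0$.

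Finally, since $\Delta^{2d+1}_{\mathrm{tot}}(A;0)$ is symmetrically distinguished and numerically trivial, Theorem \ref{thm O'Sullivan} (the injectivity of $V_m(\alpha)\to\CH^*/\!\sim_{\mathrm{num}}$, equivalently that a symmetrically distinguished numerically trivial cycle is rationally trivial) yields $\Delta^{2d+1}_{\mathrm{tot}}(A;0)=0$ in $\CH_d(A^{2d+1})$, and the translation argument above then gives the result for every closed point $a$. I expect the main obstacle to be the bookkeeping in the cohomological step: one must set up the K\"unneth/degree argument so that the single clean fact $(p_{\hat\jmath})_*\Delta^{m}_{\mathrm{tot}}=0$ together with the pigeonhole bound $m>2d$ does all the work, rather than attempting a direct computation of $[\Delta^{m}_{\mathrm{tot}}]$. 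Everything else—symmetric distinction and the passage from numerical to rational triviality—is a direct appeal to O'Sullivan's theorem.
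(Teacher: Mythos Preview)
Your proof is correct and follows essentially the same approach as the paper: reduce to the identity element by translation, observe that the cycle is symmetrically distinguished, establish homological (hence numerical) triviality, and conclude via O'Sullivan's theorem. The one substantive difference is that the paper outsources the cohomological vanishing to O'Grady \cite[Proposition 1.3]{o'grady2}, whereas you supply a direct argument via $(p_{\hat\jmath})_*\Delta^{m}_{\mathrm{tot}}=0$ and the pigeonhole bound on K\"unneth degrees; this self-contained step is a nice touch. The paper also notes (Remark \ref{rmk moddiagab}) an alternate route through the multiplicative Chow--K\"unneth decomposition of $A$ (Example \ref{ex abelian}) and Proposition \ref{prop multdiag}, which avoids the direct cohomological computation entirely.
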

\begin{proof}
  By translation it is enough to show that
  $\Delta_{\mathrm{tot}}^{2d+1}(A,e) = 0$ for $e$ the identity element
  in $A$. The cycle $\Delta_{\mathrm{tot}}^{2d+1}(A,e)$ is apparently
  symmetrically distinguished in the sense of O'Sullivan. Therefore,
  by O'Sullivan's Theorem \ref{thm O'Sullivan}, it is rationally
  trivial if it is homologically trivial. That it is homologically
  trivial is proved by O'Grady in \cite[Proposition 1.3]{o'grady2}.
  
  An alternate proof of this Theorem will be given after the next
  proposition in Remark \ref{rmk moddiagab}.
\end{proof}

The significance of modified diagonals is made apparent by the
following~:

\begin{prop} \label{prop multdiag} Let $X$ be a smooth projective
  variety of dimension $d$ over a field $k$. Assume that $X$ admits a
  multiplicative self-dual Chow--K\"unneth decomposition $\{\pi^i : 0
  \leq i \leq 2d \}$ with $\pi^{2d} = X\times \mathfrak{o}_X$ for some
  degree-$1$ zero-cycle $\mathfrak{o}_X$ on $X$. Then
  $\Delta_{\mathrm{tot}}^{m}(X,\mathfrak{o}_X)=0$ for all $m \geq
  2d+1$.

  \noindent Moreover, if $\pi^1=0$, then
  $\Delta_{\mathrm{tot}}^{m}(X,\mathfrak{o}_X)=0$ for all $m \geq
  d+1$.
\end{prop}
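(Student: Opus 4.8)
The plan is to reduce the vanishing of the modified diagonal to a degree count on the K\"unneth components of the small diagonal. Write $\delta_m := (\iota_{\Delta,m})_*[X] \in \CH_d(X^m)$ for the full small diagonal (i.e. $\Delta^m_I$ with $I=\{1,\dots,m\}$), and set $\tilde\pi := \Delta_X - \pi^{2d} = \sum_{i=0}^{2d-1}\pi^i$, the projector cutting off the top cohomological degree. The first step is to express every $\Delta^m_I(X;\mathfrak{o}_X)$ as an operator applied to $\delta_m$. Since $\pi^{2d}=X\times\mathfrak{o}_X$, applying $\pi^{2d}$ (as a self-correspondence) to a coordinate slot $k$ of a cycle forgets the $k$-th factor and re-inserts $\mathfrak{o}_X$ there; carried out slotwise this gives $\Delta^m_I = \big(\bigotimes_{k\in I}\Delta_X \otimes \bigotimes_{k\notin I}\pi^{2d}\big)_*\delta_m$, where the tensor product denotes the associated self-correspondence of $X^m$.

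Next I would expand the alternating sum defining $\Delta^m_{\mathrm{tot}}$. Summing over all $I\subseteq\{1,\dots,m\}$ (including $I=\emptyset$) with signs $(-1)^{m-|I|}$ collapses the slotwise operators into $(\Delta_X - \pi^{2d})^{\otimes m}=\tilde\pi^{\otimes m}$, while the spurious $I=\emptyset$ term $(-1)^m\big((\pi^{2d})^{\otimes m}\big)_*\delta_m$ vanishes: it is the image of $\delta_m$ under a summand of $\pi^{2dm}_{X^m}$, whereas $\delta_m$ lies in $\CH_{\mathrm{CK}}^{d(m-1)}(X^m)_0$ by Proposition \ref{lem diagonal pullback}(iii) (this is exactly where multiplicativity and self-duality of $\{\pi^i\}$ are used). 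Hence $\Delta^m_{\mathrm{tot}}(X,\mathfrak{o}_X)=(\tilde\pi^{\otimes m})_*\delta_m$.

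Now expand $\tilde\pi^{\otimes m}=\sum_{0\le i_1,\dots,i_m\le 2d-1}\pi^{i_1}\otimes\cdots\otimes\pi^{i_m}$. Each summand is a summand of the product Chow--K\"unneth projector $\pi^{i_1+\cdots+i_m}_{X^m}$, so again by Proposition \ref{lem diagonal pullback}(iii) the contribution $(\pi^{i_1}\otimes\cdots\otimes\pi^{i_m})_*\delta_m$ vanishes unless $i_1+\cdots+i_m=2d(m-1)$. But $0\le i_k\le 2d-1$ forces $\sum_k i_k\le m(2d-1)=2dm-m$, which is strictly less than $2d(m-1)=2dm-2d$ as soon as $m>2d$. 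Thus for $m\ge 2d+1$ the index set is empty and $\Delta^m_{\mathrm{tot}}=0$. For the refined statement, self-duality promotes $\pi^1=0$ to $\pi^{2d-1}={}^t\pi^1=0$ as well, so the degrees occurring in $\tilde\pi$ are bounded by $2d-2$; the same count gives $\sum_k i_k\le m(2d-2)<2d(m-1)$ precisely when $m>d$, yielding $\Delta^m_{\mathrm{tot}}=0$ for $m\ge d+1$.

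The main obstacle is the first step: establishing the correspondence identity $\Delta^m_I=\big(\bigotimes_{k\in I}\Delta_X\otimes\bigotimes_{k\notin I}\pi^{2d}\big)_*\delta_m$ as an equality of cycles in $\CH_d(X^m)$ (not merely cohomologically), together with keeping the conventions straight so that it is genuinely $\pi^{2d}=X\times\mathfrak{o}_X$ — and not $\pi^0=\mathfrak{o}_X\times X$ — that places $\mathfrak{o}_X$ in a slot. Mixing these up would destroy the cancellation of the lower-order diagonals (one checks already for $m=2$ that $\Delta^2_{\mathrm{tot}}=\Delta_X-\pi^0-\pi^{2d}=\sum_{i=1}^{2d-1}\pi^i$, which is the correct interior piece). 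Once this reduction is in place, the remainder is pure bookkeeping driven entirely by Proposition \ref{lem diagonal pullback}(iii).
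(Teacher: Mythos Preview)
Your proof is correct and follows essentially the same approach as the paper: both express $\Delta^m_I$ as $(\Pi^m_I)_*\delta_m$ with $\Pi^m_I$ the tensor of $\Delta_X$'s and $\pi^{2d}$'s, reduce to terms $\pi^{j_1}\otimes\cdots\otimes\pi^{j_m}$ with all $j_k<2d$, and finish with the degree count against Proposition~\ref{lem diagonal pullback}\emph{(iii)}. The only organizational difference is that the paper performs a term-by-term binomial cancellation to strip the $\pi^{2d}$ factors from $\Pi^m=\sum_{\emptyset\neq I}(-1)^{m-|I|}\Pi^m_I$, whereas you include the $I=\emptyset$ term to factor the whole sum as $\tilde\pi^{\otimes m}$ and then kill the extra $((\pi^{2d})^{\otimes m})_*\delta_m$ via Proposition~\ref{lem diagonal pullback}\emph{(iii)}; your packaging is slightly cleaner but the content is identical.
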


\begin{proof} For any $I=\set{i_1,i_2,\ldots,
    i_r}\subseteq\set{1,2,\ldots,m}$ we define $\Pi_I^m\in
  \CH^{md}(X^m\times X^m)$ as the $m$-fold tensor product of $r$
  copies of $\Delta_X$ and $m-r$ copies of $\pi^{2d}$ with the
  diagonals $\Delta_X$ being placed in the
  $(i_1,i_2,\ldots,i_r)^{\text{th}}$ position. Then one easily checks
  that \[ (\Pi_I^m)_*\Delta^m = \Delta_I^m(X;\mathfrak{o}_X), \quad
  \mbox{where} \ \ \Delta^m : =
  \Delta^m_{\{1,\ldots,m\}}(X,\mathfrak{o}_X) = \{(x,x,\ldots,x) : x
  \in X\}. \] For example, if $I=\set{1,2,\ldots,n}$, $1\leq n\leq m$,
  then $$ \left(\Pi^m_{\set{1,2,\ldots,n}}\right)_*\Delta^m =
  (\underbrace{\Delta_X \otimes \cdots \otimes \Delta_X}_{n\
    \text{times}} \otimes \underbrace{\pi^{2d} \otimes \cdots \otimes
    \pi^{2d}}_{m-n \ \text{times}})_* \Delta^m = \{(\underbrace{x,
    \ldots ,x}_{n\ \text{times}} , \underbrace{\mathfrak{o}_X, \ldots,
    \mathfrak{o}_X}_{m-n \ \text{times}}) : x\in X\}.$$ It follows
  that the $m^\text{th}$ modified diagonal satisfies \[
  \Delta_{\mathrm{tot}}^m(X,\mathfrak{o}_X) = (\Pi^m)_* \Delta^m,\quad
  \text{where} \ \ \Pi^m :=\sum _{\emptyset \neq I\subseteq
    \{1,\ldots,m\}} (-1)^{m-|I|}\Pi_I^m. \] Substituting
  $\Delta_X=\pi^0+\cdots +\pi^{2d}$ into the definition of $\Pi^m_I$,
  we see that $\Pi^m_I$ expands into a sum whose terms are of the form
  $\pi^{j_1}\otimes \pi^{j_2}\otimes \cdots \otimes \pi^{j_m}$ with
  $j_k=2d$ for all $k\notin I$.  \medskip

  First we claim, without any assumptions on the smooth projective
  variety $X$, that $\Pi^m$ can be expressed as a sum of terms of the
  form $\pi^{j_1}\otimes \pi^{j_2}\otimes \cdots \otimes \pi^{j_m}$,
  where none of the indices $j_k$, $1\leq k \leq m$, are equal to
  $2d$.  For that matter, note that the term
\begin{equation}\label{eq special term}
  (\underbrace{\pi^{2d}\otimes \cdots \otimes\pi^{2d}}_{n \text{
      times}}) \otimes (\pi^{l_1}\otimes \cdots \otimes
  \pi^{l_{m-n}}), \qquad 0\leq l_1,\ldots,l_{m-n} <2d \mbox{ and } 1\leq n< m, 
\end{equation} 
appears in $\Pi^m_I$ (with coefficient $1$) if and only if
$\{n+1,n+2,\ldots,m\}\subseteq I$.  There are $\binom{n}{m-r}$ subsets
$\emptyset \neq I\subseteq \{1,\ldots,m\}$ such that
$\{n+1,n+2,\ldots,m\}\subset I$ and $|I|=r$.  It follows that the term
\eqref{eq special term} appears in $\Pi^m$ with coefficient \[
\sum_{r=m-n}^m (-1)^{m-r}\binom{n}{m-r} = 0.
\] 
By symmetry, any term $\pi^{j_1}\otimes \pi^{j_2}\otimes
\cdots \otimes \pi^{j_m}$ with at least one index $j_k$ equal to $2d$
does not appear in $\Pi^m$. The claim is thus settled.\medskip

Now, assume that the Chow--K\"unneth decomposition $\{\pi^i\}$ is
multiplicative.  By Proposition \ref{lem diagonal
  pullback}\emph{(iii)} the small diagonal $\Delta^m$ belongs to
$\CH_{\mathrm{CK}}^{d(m-1)}(X)_0$ for the product Chow--K\"unneth
decomposition on $X^m$ and hence
\[
(\pi^{j_1}\otimes\pi^{j_2}\otimes \cdots \otimes \pi^{j_m})_*\Delta^m
=0, \qquad\text{for all } j_1+j_2+\cdots+j_m\neq 2d(m-1). \] When
$m\geq 2d+1$ (or $m\geq d+1$ if $\pi^1=0$), the condition
$j_1+j_2+\cdots+j_m = 2d(m-1)$ forces at least one index $j_k$ to be
equal to $2d$ for $(\pi^{j_1}\otimes\pi^{j_2}\otimes \cdots \otimes
\pi^{j_m})_*\Delta^m$ to be possibly non-zero. This yields
$\Delta_{\mathrm{tot}}^{m}(X,\mathfrak{o}_X) =
(\Pi^m)_*\Delta^m=0$. \end{proof}

\begin{rmk} \label{rmk moddiagab} Note that by combining Example
  \ref{ex abelian} or Example \ref{ex abelian2} with Proposition
  \ref{prop multdiag}, we obtain another proof of Theorem \ref{thm
    abelianmodified}.
\end{rmk}

In forthcoming work \cite{sv}, we show that
$\Delta_{\mathrm{tot}}^{g+2}(C,c) = 0$ in $\CH_1(C^{g+2})$ for all
curves $C$ of genus $g$ and all closed points $c \in C$. Let us also
point out that, for a K3 surface $S$, O'Grady \cite{o'grady2} shows
that $\Delta_{\mathrm{tot}}^5(S^{[2]}, \mathfrak{o}) =0$, where
$\mathfrak{o}$ is any point on $S^{[2]}$ corresponding to a subscheme
of length $2$ supported at a point lying on a rational curve on
$S$. This result can also be seen, via Proposition \ref{prop
  multdiag}, as a consequence of our Theorem \ref{thm multCK X2} where
we establish the existence of a multiplicative Chow--K\"unneth
decomposition for $S^{[2]}$. \medskip

After this digression on higher order modified diagonals, we go back
to the more down-to-earth $3^\text{rd}$ modified diagonal of
Definition \ref{defn small diagonals}. In both cases covered by
Theorem \ref{thm gshyper} and Theorem \ref{thm bv}, the vanishing of
the $3^\text{rd}$ modified diagonal $\Delta_{\mathrm{tot}}$ is
directly related to the existence of a multiplicative Chow--K\"unneth
decomposition, as shown in the following proposition~; see also
\cite[\S 2.3]{voisin k3}.

\begin{prop} \label{P :red} Let $X$ be either a curve or a surface
  with vanishing irregularity and let $\mathfrak {o}_X \in \CH_0(X)$
  be a zero-cycle of degree $1$. Consider the Chow--K\"unneth
  decomposition of $X$ consisting of the $3$ mutually orthogonal
  projectors $\pi^0 :=\mathfrak {o}_X \times X, \pi^{2d} :=X \times
  \mathfrak {o}_X$ and $\pi^d := \Delta_X - \pi^0 - \pi^{2d}$, $d=1$
  or $2$.  Then this Chow--K\"unneth decomposition is multiplicative
  if and only if $\Delta_{\mathrm{tot}}=0$.
\end{prop}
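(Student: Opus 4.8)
The plan is to combine Proposition \ref{rmk delta0} with Proposition \ref{lem diagonal pullback} to reduce the statement to a single explicit computation on $X^3$. First I would record that the decomposition $\{\pi^0,\pi^d,\pi^{2d}\}$ is self-dual: ${}^t\pi^0=\pi^{2d}$ and ${}^t\pi^d=\pi^d$, since $\Delta_X$ is symmetric and $\pi^0,\pi^{2d}$ are mutual transposes. The hypothesis that $X$ is a curve or a surface with vanishing irregularity is exactly what guarantees that these three mutually orthogonal idempotents form a genuine Chow--K\"unneth decomposition (automatic for curves; for surfaces it uses $\HH^1(X)=\HH^3(X)=0$, so that $\pi^d=\Delta_X-\pi^0-\pi^{2d}$ is cohomologically the K\"unneth projector onto $\HH^2$). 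Endowing $X^3$ with the product Chow--K\"unneth decomposition, Proposition \ref{rmk delta0} then says that $\{\pi^i\}$ is multiplicative if and only if the small diagonal satisfies $\Delta_{123}\in \CH^{2d}_{\mathrm{CK}}(X^3)_0$.

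Next I would observe that each correction cycle of Definition \ref{defn small diagonals} already lies in the degree-zero part. Indeed $\mathfrak{o}_X=(\pi^{2d})_*\mathfrak{o}_X\in \CH^d_{\mathrm{CK}}(X)_0$, $[X]\in \CH^0_{\mathrm{CK}}(X)_0$, and $\Delta_X\in \CH^d_{\mathrm{CK}}(X\times X)_0$ by Proposition \ref{lem diagonal pullback}(ii); applying Proposition \ref{lem diagonal pullback}(i) to the external products defining $\Delta_{ij}$ and $\Delta_i$ gives $\Delta_{ij},\Delta_i\in \CH^{2d}_{\mathrm{CK}}(X^3)_0$. Crucially, parts (i) and (ii) of that proposition do not presuppose multiplicativity, so there is no circularity. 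Hence $\Delta_{123}-\Delta_{\mathrm{tot}}=\sum\Delta_{ij}-\sum\Delta_i$ lies in $\CH^{2d}_{\mathrm{CK}}(X^3)_0$, and therefore $\Delta_{123}\in \CH^{2d}_{\mathrm{CK}}(X^3)_0$ if and only if $\Delta_{\mathrm{tot}}\in \CH^{2d}_{\mathrm{CK}}(X^3)_0$.

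The heart of the matter is then the identity $(\pi^d\otimes\pi^d\otimes\pi^d)_*\Delta_{123}=\Delta_{\mathrm{tot}}$. Granting it, $\Delta_{\mathrm{tot}}$ lies in the image of the idempotent $\pi^d\otimes\pi^d\otimes\pi^d$, which is a summand of $\pi^{3d}_{X^3}$; thus $\Delta_{\mathrm{tot}}\in \CH^{2d}_{\mathrm{CK}}(X^3)_d$. Because the $\pi^k_{X^3}$ are mutually orthogonal idempotents summing to the identity, the grading $\CH^{2d}(X^3)=\bigoplus_s\CH^{2d}_{\mathrm{CK}}(X^3)_s$ is a genuine direct sum, so $\CH^{2d}_{\mathrm{CK}}(X^3)_0\cap \CH^{2d}_{\mathrm{CK}}(X^3)_d=0$ as $d\neq 0$. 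Consequently $\Delta_{\mathrm{tot}}\in \CH^{2d}_{\mathrm{CK}}(X^3)_0$ forces $\Delta_{\mathrm{tot}}=0$, and combined with the previous paragraph this yields: the decomposition is multiplicative $\iff \Delta_{\mathrm{tot}}=0$.

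I expect the main obstacle to be the identity $(\pi^d\otimes\pi^d\otimes\pi^d)_*\Delta_{123}=\Delta_{\mathrm{tot}}$ itself. The plan is to expand $\pi^d=\Delta_X-\pi^0-\pi^{2d}$ in all three tensor slots and evaluate the resulting pushforwards of $\Delta_{123}$, grouping the terms according to how many slots carry $\Delta_X$. The building blocks are computations such as $(\pi^{2d}\otimes\Delta_X\otimes\Delta_X)_*\Delta_{123}=\Delta_{23}$ and $(\pi^0\otimes\Delta_X\otimes\Delta_X)_*\Delta_{123}=\Delta_1$, together with the degenerate identities $\mathfrak{o}_X\cdot\mathfrak{o}_X=0$ (as $\CH^{2d}(X)=0$) and $(\pi^d)_*[X]=0$, which make the mixed terms collapse. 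A careful bookkeeping of the signs coming from the two minus signs in $\pi^d$ should then show that $\Delta_{123}$ survives with coefficient $1$, each $\Delta_{ij}$ with coefficient $-1$, and each $\Delta_i$ with coefficient $(-1)+3+(-1)=1$, reproducing precisely $\Delta_{\mathrm{tot}}$. One should be attentive that these vanishings and matchings must hold at the level of cycles, not merely of their actions on Chow groups, which is where the orthogonality $\pi^a\circ\pi^b=0$ for $a\neq b$ and the unit/idempotent structure of $\pi^0,\pi^{2d}$ are used.
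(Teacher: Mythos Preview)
Your argument is correct, and the key identity $(\pi^d\otimes\pi^d\otimes\pi^d)_*\Delta_{123}=\Delta_{\mathrm{tot}}$ does hold by exactly the expansion you sketch (terms with two or more $\pi^0$'s vanish by $\mathfrak{o}_X^2=0$, terms with three $\pi^{2d}$'s vanish for dimension reasons, and the remaining bookkeeping gives precisely the alternating sum defining $\Delta_{\mathrm{tot}}$). Your use of Proposition~\ref{lem diagonal pullback}(i)--(ii) is also legitimate, since neither part requires multiplicativity.

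The paper's own proof is organized differently: rather than passing through the criterion of Proposition~\ref{rmk delta0} on $X^3$, it works directly from the definition of multiplicativity in Definition~\ref{def multCK}, namely the condition $\Delta_{123}=\sum_{i+j=k}\pi^k\circ\Delta_{123}\circ(\pi^i\otimes\pi^j)$. Substituting $\pi^d=\Delta_X-\pi^0-\pi^{2d}$ into that sum and simplifying yields an explicit right-hand side which is then checked, term by term via formula~\eqref{eq lieb}, to equal $\Delta_{12}+\Delta_{13}+\Delta_{23}-\Delta_1-\Delta_2-\Delta_3$; the multiplicativity condition thus becomes literally the equation $\Delta_{\mathrm{tot}}=0$. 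The underlying computation is essentially dual to yours (via \eqref{eq lieb} the two expansions are transposes of one another), but your route through the grading on $\CH^*(X^3)$ is a bit more conceptual: it isolates $\Delta_{\mathrm{tot}}$ as the pure degree-$d$ component of $\Delta_{123}$, while the paper's approach reads the vanishing off directly from the definition without invoking Propositions~\ref{rmk delta0} or~\ref{lem diagonal pullback}.
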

\begin{proof} The ``if'' part of the proposition was already covered
  by Proposition \ref{prop multdiag}. Let us however give a direct
  proof of the proposition. By Definition \ref{def multCK}, the 
Chow--K\"unneth
  decomposition is multiplicative if and only if $$\Delta_{123} =
  \sum_{i+j=k} \pi^k \circ \Delta_{123} \circ (\pi^i \otimes \pi^j)
  .$$ Substituting $\pi^d = \Delta_X - \pi^0 - \pi^{2d}$ into the
  above expression and expanding yields
  \begin{align} \Delta_{123} = & -\pi^0\circ \Delta_{123}\circ (\pi^0
    \otimes \pi^0) - \Delta_{123} \circ (\pi^{2d} \otimes \pi^0)
    - \Delta_{123} \circ (\pi^0 \otimes \pi^{2d})\label{E :mult}\\
    & + \Delta_{123} \circ (\pi^0 \otimes \Delta_X) + \Delta_{123}
    \circ (\Delta_X \otimes \pi^0) + \pi^{2d} \circ \Delta_{123}.
    \nonumber
\end{align}
In order to conclude it is thus enough to check that the first line of
the right-hand side of \eqref{E :mult} is $-(\Delta_1 +\Delta_2
+\Delta_3)$ and that the second line is $\Delta_{12} +\Delta_{23}
+\Delta_{13}$. This can be checked term-by-term by using the formula
\eqref{eq lieb}.
\end{proof}

\begin{rmk} The notion of multiplicative Chow--K\"unneth decomposition
  is much stronger than the notion of weakly multiplicative
  Chow-K\"unneth decomposition. For instance, it is clear that every
  curve has a weakly multiplicative Chow--K\"unneth decomposition but,
  by Proposition \ref{P :red} and the discussion following Definition
  \ref{defn small diagonals}, the class of curves admitting a
  multiplicative Chow--K\"unneth decomposition is restricted.

  Other classes of varieties admitting a weakly multiplicative
  Chow--K\"unneth decomposition but that should not admit a
  multiplicative Chow--K\"unneth decomposition in general are given by
  smooth hypersurfaces in $\PP^4$. Indeed, let $X$ be a smooth
  hypersurface in $\PP^4$ of degree $d$. Let $\mathfrak {o}_X$ be the
  class of $h^{3}/d$, where $h :=c_1(\mathcal{O}_X(1))$. Then the
  Chow--K\"unneth decomposition $\pi^0 := \mathfrak {o}_X \times X,
  \pi^{2} := \frac{1}{d}h \times h^2, \pi^4 := \frac{1}{d} h^2 \times
  h, \pi^{6} := X \times \mathfrak {o}_X$ and $\pi^3 := \Delta_X -
  \pi^0 - \pi^{2} - \pi^{4} - \pi^{6}$ is weakly
  multiplicative. Indeed, we have $\CH^i_{\mathrm{CK}}(X)_0 = \langle
  h^i \rangle$ for all $i$, and also $\CH^1(X) =
  \CH_{\mathrm{CK}}^1(X)_0$.  We thus only need to check that $h \cdot
  \alpha = 0$ for all $\alpha \in \CH^2_{\mathrm{CK}}(X)_1$. But then
  $d\, h \cdot \alpha = i^*i_*\alpha$, where $i:X \hookrightarrow
  \PP^4$ is the natural embedding. Thus $h\cdot \alpha$ is numerically
  trivial and proportional to $h^3$, and so $h\cdot \alpha = 0$. In
  the case where $X$ is Calabi--Yau, similar arguments as in the proof
  of Proposition \ref{P :red} show that the Chow--K\"unneth
  decomposition above is multiplicative if and only if
  \cite[Conjecture 3.5]{voisin k3} holds.
\end{rmk} 

From Proposition \ref{P :red}, we deduce the following two examples of
varieties having a multiplicative Chow--K\"unneth decomposition.

\begin{ex}[Hyperelliptic curves] \label{ex hyperelliptic} Let $X$ be a
  hyperelliptic curve and let $\mathfrak {o}_X$ be the class of a
  Weierstrass point. Then the Chow--K\"unneth decomposition $\pi^0 :=
  \mathfrak {o}_X \times X, \pi^{2} := X \times \mathfrak {o}_X$ and
  $\pi^1 := \Delta_X - \pi^0 - \pi^{2}$ is multiplicative.
\end{ex}

\begin{ex}[K3 surfaces] \label{ex K3} Let $X$ be a K3 surface and let
  $\mathfrak {o}_X$ be the class of a point lying on a rational
  curve. Then the Chow--K\"unneth decomposition $\pi^0 := \mathfrak
  {o}_X \times X, \pi^{2} := X \times \mathfrak {o}_X$ and $\pi^1 :=
  \Delta_X - \pi^0 - \pi^{2}$ is multiplicative.
\end{ex}

\subsection{On Theorem \ref{thm2 conj}}
Let us finally prove a more precise version of Theorem \ref{thm2
  conj}, which shows that part of Conjecture \ref{conj2 vanishingc1}
for hyperk\"ahler varieties of $\mathrm{K3}^{[2]}$-type reduces to
Conjecture \ref{conj sheaf}.

\begin{thm} \label{thm2 conj repeat} Let $F$ be a hyperk\"ahler
  variety of $\mathrm{K3}^{[2]}$-type endowed with a cycle $L \in
  \CH^2(F\times F)$ representing the Beauville--Bogomolov class
  $\mathfrak{B}$.  Assume that $F$ satisfies the following weaker
  version of Conjecture \ref{conj sheaf}~: the restriction of the
  cycle class map $\bigoplus_n \CH^*(F^n) \rightarrow \bigoplus_n
  \HH^*(F^n,\Q)$ to $V(F ;L)$ is injective. Then $\CH^*(F)$ admits a
  Fourier decomposition as in Theorem \ref{thm main splitting} which
  is compatible with its ring structure. Moreover, $F$ admits a
  multiplicative Chow--K\"unneth decomposition.
  \end{thm}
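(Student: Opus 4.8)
The guiding principle of the proof is that any identity between algebraic cycles all of which lie in the subalgebra $V(F;L)$ and which holds in cohomology must already hold modulo rational equivalence, because by hypothesis the cycle class map is injective on $V(F;L)$. Before using this, I would record that $V(F;L)$ is large enough: it contains $L$, the class $l=\iota_\Delta^*L$ and its pullbacks $l_1,l_2$; it contains the fundamental class $[F]$ (produce $l^2\in\CH^4(F)$ and apply the structure morphism $F\to\mathrm{Spec}\,k$ together with its pullback, since $\deg l^2=23\cdot 25\neq 0$), hence the units $[F^n]=p_1^*[F]$ and the diagonals $\Delta_F=(\iota_{\Delta,F^2})_*[F]$ and small diagonals $\Delta_{123}^F=(\iota_{\Delta,F^3})_*[F]$; and it is stable under intersection products and under composition of correspondences, the latter being assembled from the projection and diagonal operations that define $V$. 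The first step is to upgrade the cohomological relation \eqref{eq cohomological equation S2}, which is Proposition \ref{prop action of B powers}(i) with $c_F=1$ and $r=23$, to the rational relation \eqref{eq rational equation}: every term of \eqref{eq rational equation} lies in $V(F;L)\cap\CH^4(F\times F)$, so the difference of the two sides is a homologically trivial element of $V(F;L)$ and therefore vanishes.

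Next I would deduce the three hypotheses \eqref{assumption pre l}, \eqref{assumption hom} and \eqref{assumption 2hom}. Condition \eqref{assumption pre l} is the vanishing of the single cycle $L_*l^2=(p_2)_*(l_1^2\cdot L)\in V(F;L)\cap\CH^2(F)$, which is homologically trivial by Proposition \ref{prop action of B powers}(ii); hence it vanishes. The conditions \eqref{assumption hom} and \eqref{assumption 2hom} are quantified over all cycles, so I would universalize them into identities of self-correspondences of $F$. Writing $\Gamma_l:=(\iota_\Delta)_*l$ for the correspondence that induces multiplication by $l$, condition \eqref{assumption hom} is the action on $\CH^4(F)$ of the correspondence identity $L\circ\Gamma_l\circ L=25\,L$, and condition \eqref{assumption 2hom} is the action of $L^2\circ\Gamma_l\circ L^2=0$. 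Both correspondences lie in $V(F;L)$, and both identities hold in cohomology by Proposition \ref{prop action of B powers}: the first because $\mathfrak{B}_*e_i^\vee=e_i$ and $\mathfrak{b}\,e_i=25\,e_i^\vee$, the second for degree reasons, as $(\mathfrak{B}^2)_*$ is supported on $\HH^4(F,\Q)$ while $\mathfrak{b}\cdot(\mathfrak{B}^2)_*(-)$ lands in $\HH^8(F,\Q)$ where $(\mathfrak{B}^2)_*$ vanishes. Injectivity on $V(F;L)$ then promotes these to rational identities, whose evaluation on cycles gives \eqref{assumption hom} and \eqref{assumption 2hom}. With \eqref{eq rational equation}, \eqref{assumption pre l}, \eqref{assumption hom} and \eqref{assumption 2hom} established, Theorems \ref{prop L2}, \ref{prop main} and \ref{thm main splitting} provide the Fourier decomposition of $\CH^*(F)$.

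For compatibility with the ring structure I would apply Lemma \ref{lem equivalence mult}, which reduces each inclusion $\CH^i(F)_{2s}\cdot\CH^j(F)_{2r}\subseteq\CH^{i+j}(F)_{2r+2s}$ to the vanishing \eqref{eq equivalent mult}. Exactly as for abelian varieties in \eqref{eq ab coho}, these vanishings are the action on Chow groups of the cycles $L^t\circ\Delta_{123}^F\circ(p_{13}^*L^p\cdot p_{24}^*L^q)$ for $t\neq 4-p-q$; each such cycle lies in $V(F;L)$ and is homologically trivial by the degree count of Proposition \ref{prop action of B powers}(ii), since the cup product maps $\HH^{2p}(F,\Q)\otimes\HH^{2q}(F,\Q)$ into $\HH^{2p+2q}(F,\Q)$, on which $(\mathfrak{B}^t)_*$ is nonzero only when $t=4-p-q$. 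Hence they vanish rationally, and the Fourier decomposition is multiplicative.

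Finally, to exhibit a multiplicative Chow--K\"unneth decomposition I would lift the K\"unneth projectors of Corollary \ref{prop kunneth} to cycles inside $V(F;L)$, namely $\pi^0:=\frac{1}{23\cdot 25}l_1^2$, $\pi^2:=\frac{1}{25}l_1\cdot L$, $\pi^6:=\frac{1}{25}l_2\cdot L$, $\pi^8:=\frac{1}{23\cdot 25}l_2^2$ and $\pi^4:=\frac12\bigl(L^2-\frac{1}{25}l_1l_2\bigr)$. Cohomologically the $[\pi^i]$ are mutually orthogonal idempotents summing to $[\Delta_F]$, and since each expression $\pi^i\circ\pi^j-\delta_{ij}\pi^i$ and $\sum_i\pi^i-\Delta_F$ lies in $V(F;L)$ and is homologically trivial, injectivity makes $\{\pi^i\}$ a genuine Chow--K\"unneth decomposition; using \eqref{eq description of W00 hom}, \eqref{eq action pi6 CH1} and \eqref{eq action pi6 CH2} one checks that it induces the Fourier decomposition, as in the proof of Theorem \ref{thm CK}. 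Multiplicativity in the sense of Definition \ref{def multCK} then follows from the same injectivity principle: for $k\neq i+j$ the cycle $\pi^k\circ\Delta_{123}^F\circ(\pi^i\otimes\pi^j)$ belongs to $V(F;L)$ and is cohomologically zero because cup product is graded, hence it vanishes. I expect the main obstacle to be not any single computation --- all the needed cohomological identities are already contained in Proposition \ref{prop action of B powers} and Corollary \ref{prop kunneth} --- but rather the bookkeeping ensuring that every object manipulated genuinely lies in $V(F;L)$: in particular the universalization of the pointwise conditions \eqref{assumption hom} and \eqref{assumption 2hom} into correspondence identities, and the verification that composition of correspondences and the small diagonals are generated by the admissible operations defining $V(F;L)$.
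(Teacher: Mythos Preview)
Your proposal is correct and follows essentially the same approach as the paper's proof: both exploit the injectivity hypothesis on $V(F;L)$ to lift the cohomological identities of \S\ref{sec coho fourier} (the quadratic relation for $\mathfrak{B}$, the correspondence identities encoding \eqref{assumption pre l}--\eqref{assumption 2hom}, and the vanishing of $\mathfrak{B}^t\circ[\Delta_{123}^F]\circ(p_{13}^*\mathfrak{B}^p\cdot p_{24}^*\mathfrak{B}^q)$ for $t+p+q\neq 4$) to rational equivalence, then invoke Theorems~\ref{prop L2}, \ref{prop main}, Lemma~\ref{lem equivalence mult}, and the K\"unneth projectors of Corollary~\ref{prop kunneth}. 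Your write-up is more explicit than the paper's about the universalization of \eqref{assumption hom} and \eqref{assumption 2hom} via $\Gamma_l=(\iota_\Delta)_*l$ and about membership of the relevant cycles in $V(F;L)$, but the strategy is identical; one small remark is that $[F]\in V(F;L)$ simply because $V(F;L)$ is a sub-\emph{algebra} (hence contains the unit of $\CH^*(F)$), so the appeal to the structure morphism is unnecessary.
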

\begin{proof}
   By Theorem \ref{prop main}, in order to obtain a Fourier
  decomposition for $\CH^*(F)$ as in the statement of Theorem \ref{thm
    main splitting}, one needs only to check the existence of $L$
  satisfying conditions \eqref{eq rational equation},
  \eqref{assumption pre l}, \eqref{assumption hom} and
  \eqref{assumption 2hom}. Note that composition of correspondences
  only involves intersection products, push-forward and pull-back
  along projections.  Therefore, if as is assumed $V(F ;L)$ injects
  into cohomology, then one needs only to check that $\mathfrak{B}$
  satisfies \eqref{eq cohomological equation S2}, $\mathfrak{B}_*
  \mathfrak{b}^2 = 0$, $\mathfrak{B} \circ (\mathfrak{b}_1\cdot
  \mathfrak{B}) = 25 \mathfrak{B}$ and $(\mathfrak{B}^2)\circ
  (\mathfrak{b}_1 \cdot \mathfrak{B}^2) = 0$.  All of these can be
  read off the results of \S \ref{sec coho fourier}.  As
  before, note that the intersection product is induced by pulling
  back along diagonals.
    By Lemma \ref{lem equivalence mult}, in order to check that the
  Fourier decomposition is compatible with the intersection product on
  $\CH^*(F)$, one needs only to check a cohomological statement, namely $
  \mathfrak{B}^t \circ [\Delta_{123}^F] \circ (p_{13}^*\mathfrak{B}^p
  \cdot p_{24}^*\mathfrak{B}^q) = 0$ for all $t+p+q \neq 4$.
   But then, this follows again from the cohomological description of
  the powers of $\mathfrak{B}$ given in \S \ref{sec coho
    fourier}. Finally, under our assumptions, the projectors defined
  in Proposition \ref{prop kunneth} define Chow--K\"unneth projectors
  for $F$ and the fact that they define a multiplicative
  Chow--K\"unneth decomposition follows from the cohomological
  equation above~: $ \mathfrak{B}^t \circ [\Delta_{123}^F] \circ
  (p_{13}^*\mathfrak{B}^p \cdot p_{24}^*\mathfrak{B}^q) = 0$ for all
  $t+p+q \neq 4$.
\end{proof}

\vspace{10pt}
\section{Algebraicity of $\mathfrak{B}$ for hyperk\"ahler varieties of
  $\mathrm{K3}^{[n]}$-type} \label{sec alg B} In this section, we use
a twisted sheaf constructed by Markman \cite{markman} to show that the
Beauville--Bogomolov class $\mathfrak{B}$ lifts to a cycle
$L\in\CH^2(F\times F)$ for all projective hyperk\"ahler manifolds $F$
deformation equivalent to $\mathrm{K3}^{[n]}$, $n\geq 2$. The proof
goes along these lines~: thanks to the work of many authors
(\emph{cf.} Theorem \ref{thm moduli of sheaves hk}), one may associate
to any K3 surface $S$ endowed with a Mukai vector $v$ a variety
$\mathcal{M}$ called the moduli space of stable sheaves with respect
to $v$. The variety $\mathcal{M}$ turns out to be a hyperk\"ahler
variety of K3$^{[n]}$-type. The variety $\mathcal{M} \times S$ comes
equipped with a so-called ``universal twisted sheaf'' $\mathscr{E}$,
from which one naturally produces a twisted sheaf $\mathscr{M}$ on
$\mathcal{M} \times \mathcal{M}$. The idea behind twisted sheaves is
that a sheaf in the usual sense might not deform in a family, while it
could deform once it gets suitably ``twisted''. The main result of
Markman \cite{markman}, which is stated as Theorem \ref{thm markman},
stipulates that given a hyperk\"ahler variety $F$ of K3$^{[n]}$-type,
there exists a K3 surface $S$ and a Mukai vector $v$ such that the
twisted sheaf $\mathscr{M}$ over the self-product $\mathcal{M} \times
\mathcal{M}$ of the associated moduli space $\mathcal{M}$ of stable
sheaves deforms to a twisted sheaf on $F \times F$. Furthermore,
Markman shows the existence of characteristic classes
($\kappa$-classes) for twisted sheaves. Our sole contribution then
consists in computing the cohomology classes of the $\kappa$-classes
of the twisted sheaf on $F \times F$~; see Proposition \ref{prop
  kappa22}, Lemma \ref{lem top kappa of F} and Theorem \ref{thm
  existence of L}.
 
\subsection{Twisted sheaves and $\kappa$-classes}
Let $Y$ be a compact complex manifold. In \cite{markman}, Markman
defined the classes
$\kappa^{\mathrm{top}}_i(\mathscr{E})\in\HH^{2i}(Y,\Q)$ for (twisted)
sheaves $\mathscr{E}$ on $Y$. We will see that when $Y$ is algebraic
the classes $\kappa_i(\mathscr{E})$ can be defined at the level of
Chow groups, lifting the topological classes.

Assume that $Y$ is projective. Let $K^0(Y)$ be the Grothendieck group
of locally free coherent sheaves on $Y$, \textit{i.e.}, the free
abelian group generated by locally free coherent sheaves on $Y$ modulo
the equivalence generated by short exact sequences. Let $y\in K^0(Y)$
be an element with nonzero rank $r$, we define
\begin{equation}\label{eq kappa on K group}
 \kappa(y)  := \mathrm{ch}(y)\cdot \mathrm{exp}(-c_1(y)/r)\in\CH^*(Y).
\end{equation}
We use $\kappa_i(y)$ to denote the component of $\kappa(y)$ in
$\CH^i(Y)$. Note that $\kappa_1(y)=0$ for all $y\in K^0(Y)$ with
nonzero rank. We also easily see that $\kappa$ is multiplicative but
not additive.  We also have $\kappa(\mathscr{L})=1$ for all invertible
sheaves $\mathscr{L}$ on $Y$.

Keep assuming that $Y$ is projective. Let $K_0(Y)$ be the Grothendieck
group of coherent sheaves on $Y$. Since $Y$ is smooth, every coherent
sheaf can be resolved by locally free sheaves and it follows that the
natural homomorphism $K^0(Y)\rightarrow K_0(Y)$ is an isomorphism.
This makes it possible to extend the definition of $\kappa$ to
(bounded complexes of) coherent sheaves as long as the rank is
nonzero. In this case, the multiplicativity of $\kappa$ becomes
$\kappa(\mathscr{E}\overset{\mathrm{L}}{\otimes}\mathscr{F}) =
\kappa(\mathscr{E}) \cdot \kappa(\mathscr{F})$ where $\mathscr{E}$ and
$\mathscr{F}$ are (complexes of) coherent sheaves. In particular, we
have
\begin{equation}\label{eq kappa and line bundle}
  \kappa(\mathscr{E}\otimes \mathscr{L}) = \kappa(\mathscr{E}),
\end{equation}
for all invertible sheaves $\mathscr{L}$.

\begin{defn}
  Let $Y$ be either a compact complex manifold or a smooth projective
  variety over an algebraically closed field. Let $\theta\in
  \mathrm{Br}'(X) := \HH^2(Y,\calO_Y^*)_{\mathrm{tors}}$, where the
  topology is either the analytic topology or the \'etale topology. A
  $\theta$-\textit{twisted sheaf} $\mathscr{E}$ consists of the
  following data.
\begin{enumerate}[(i)]
\item An open covering $\mathcal{U}=\{U_\alpha\}_{\alpha\in I}$ of $Y$
  in either the analytic or the \'etale topology~;

\item A \v{C}ech cocycle $\{\theta_{\alpha \beta \gamma}\}\in
  Z^2(\mathcal{U},\calO_Y^*)$ representing the class $\theta$~;

\item A sheaf $\mathscr{E}_\alpha$ of $\calO_{U_\alpha}$-modules over
  $U_\alpha$ for each $\alpha\in I$ together with isomorphisms
  $g_{\alpha \beta} : \mathscr{E}_{\alpha}|_{U_{\alpha \beta}}
  \rightarrow \mathscr{E}_{\beta}|_{U_{\alpha \beta}}$ satisfying the
  conditions~: (a) $g_{\alpha \alpha} = \mathrm{id}$, (b) $g_{\alpha
    \beta}^{-1} = g_{\beta \alpha}$ and (c) $g_{\alpha
    \beta}g_{\beta\gamma}g_{\gamma\alpha} =
  \theta_{\alpha\beta\gamma}\mathrm{id}$.
\end{enumerate}

  The twisted sheaf $\mathscr{E}$ is \textit{locally free} (resp.
  \textit{coherent}, \textit{torsion free} or \textit{reflexive}) if
  each $\mathscr{E}_\alpha$ is. The \textit{rank} of $\mathscr{E}$ is
  defined to be the rank of $\mathscr{E}_\alpha$. If $\theta=0$, then
  $\mathscr{E}$ is said to be \textit{untwisted}.
\end{defn}

\begin{rmk}\label{rmk untwisted sheaf}
  Assume that $Y$ is a smooth projective variety. If $\theta = 0$, so
  that $\mathscr{E}$ is an untwisted sheaf, then $\theta_{\alpha \beta
    \gamma}$ is a coboundary (after a refinement of the covering if
  necessary) and hence we can find $h_{\alpha \beta}\in
  \calO_{U_{\alpha\beta}}^*$ such that
$$
\theta_{\alpha\beta\gamma}^{-1} = h_{\alpha\beta} h_{\beta\gamma}
h_{\gamma\alpha}.
$$
If we modify $g_{\alpha\beta}$ by $h_{\alpha\beta}$ for all $\alpha$
and all $\beta$ and take $\tilde{g}_{\alpha\beta}  :=
g_{\alpha\beta}h_{\alpha \beta}$, then we may use
$\tilde{g}_{\alpha\beta}$ as transition functions and glue the
$\mathscr{E}_{\alpha}$ to a global coherent sheaf
$\tilde{\mathscr{E}}$.  The ambiguity of the choice of
$h_{\alpha\beta}$ is a transition function of some line bundle on $Y$.
Hence $\tilde{\mathscr{E}}$ is defined up to tensoring with a line
bundle on $Y$. Then equation \eqref{eq kappa and line bundle} implies
that the class $\kappa(\tilde{\mathscr{E}})$ is independent of the
choices. We define $\kappa(\mathscr{E}) :=\kappa(\tilde{\mathscr{E}})$.
By abuse of notation, we will again use $\mathscr{E}$ to denote the
sheaf $\tilde{\mathscr{E}}$.  Hence when $\mathscr{E}$ is untwisted,
we automatically view $\mathscr{E}$ as a usual sheaf of
$\calO_Y$-modules.

Let $\mathscr{E}=(U_\alpha,\mathscr{E}_\alpha,g_{\alpha\beta})$ and
$\mathscr{E}'=(U_\alpha,\mathscr{E}'_\alpha,g'_{\alpha\beta})$ be two
$\theta$-twisted sheaves. We can define a homomorphism $f :\mathscr{E}
\rightarrow \mathscr{E}'$ to be a collection of homomorphisms $f_\alpha :
\mathscr{E}_\alpha \rightarrow \mathscr{E}'_\alpha$ such that
$f_\beta \circ g_{\alpha\beta} = g'_{\alpha\beta}\circ f_\alpha$. Then one
can naturally define exact sequences of $\theta$-twisted sheaves
and the K-group $K_0(Y,\theta)$ of $\theta$-twisted sheaves.
\end{rmk}

With the above remark, it makes sense to ask if $\kappa$ can be
defined for twisted sheaves. The following proposition gives a
positive answer.

\begin{prop}[Markman \cite{markman}]
  Let $Y$ be a smooth projective variety~; the function $\kappa$ can
  be naturally extended to $K_0(Y,\theta)$.
\end{prop}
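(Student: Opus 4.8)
The plan is to reduce, via the isomorphism $K^0(Y)\cong K_0(Y)$ of the previous paragraph and its twisted counterpart, to defining $\kappa$ on a single $\theta$-twisted locally free sheaf $\mathscr{E}$ of nonzero rank $r$, and then to produce a class $\kappa(\mathscr{E})\in\CH^*(Y)$ by an ``untwisting'' procedure. The conceptual point, already visible in \eqref{eq kappa on K group}, is that $\kappa(y)=\mathrm{ch}(y)\cdot\exp(-c_1(y)/r)$ is insensitive to the first Chern class: by \eqref{eq kappa and line bundle} it is unchanged under $\mathscr{E}\mapsto\mathscr{E}\otimes\mathscr{L}$ for every line bundle $\mathscr{L}$, so $\kappa$ depends only on the \emph{projective} (traceless) data of $\mathscr{E}$. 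For a $\theta$-twisted sheaf this projective data is globally defined even though $\mathscr{E}$ is not an honest sheaf on $Y$, and this is precisely what will let $\kappa(\mathscr{E})$ descend to a class on $Y$. One should not expect to glue the local classes $\kappa(\mathscr{E}_\alpha)\in\CH^*(U_\alpha)$ directly, since Chow groups do not satisfy Zariski descent; the line-bundle invariance must be exploited globally instead.

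Concretely, I would first choose a $\theta^{-1}$-twisted locally free sheaf $\mathscr{V}$ of some rank $\rho>0$. Such a sheaf exists because $Y$ is smooth projective, so that the cohomological Brauer class $\theta$ is represented by an Azumaya algebra, equivalently by a twisted vector bundle (Gabber, de Jong; this is the input from \cite{markman}). Then the tensor product $\mathscr{E}\otimes\mathscr{V}$ carries the cocycle $\theta_{\alpha\beta\gamma}\cdot\theta_{\alpha\beta\gamma}^{-1}=1$ and is therefore a genuine untwisted coherent sheaf on $Y$, to which the honest $\kappa$ of \eqref{eq kappa on K group} applies. Meanwhile $\mathscr{V}^\vee\otimes\mathscr{V}=\mathscr{E}nd(\mathscr{V})$ is likewise untwisted, and since $\kappa$ is invariant under line-bundle twists it depends only on the associated $\mathrm{PGL}_\rho$-bundle; this permits one to attach a class $\kappa(\mathscr{V})\in\CH^*(Y)$ intrinsically to the Azumaya algebra $\mathscr{E}nd(\mathscr{V})$. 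One then sets
\[
\kappa(\mathscr{E}) := \kappa(\mathscr{E}\otimes\mathscr{V})\cdot\kappa(\mathscr{V})^{-1}\ \in\CH^*(Y),
\]
which is legitimate since $\kappa$ begins with $1$ in $\CH^0(Y)$ and is hence invertible.

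The main work, and the principal obstacle, will be to check that this assignment is \emph{well defined} and \emph{natural}. Independence of the auxiliary sheaf $\mathscr{V}$ is the heart of the matter: given a second $\theta^{-1}$-twisted locally free sheaf $\mathscr{V}'$, the sheaf $\mathscr{V}'\otimes\mathscr{V}^\vee$ is untwisted, and combining the multiplicativity $\kappa(\mathscr{E}\overset{\mathrm{L}}{\otimes}\mathscr{F})=\kappa(\mathscr{E})\,\kappa(\mathscr{F})$ of the honest $\kappa$ with the line-bundle invariance \eqref{eq kappa and line bundle} should yield $\kappa(\mathscr{E}\otimes\mathscr{V})\,\kappa(\mathscr{V})^{-1}=\kappa(\mathscr{E}\otimes\mathscr{V}')\,\kappa(\mathscr{V}')^{-1}$. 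I would then verify that the resulting function is compatible with short exact sequences of $\theta$-twisted sheaves and with twisted locally free resolutions, so that it descends to all of $K_0(Y,\theta)$ (using the twisted analogue of $K_0\cong K^0$ on the smooth variety $Y$), and that it recovers \eqref{eq kappa on K group} when $\theta=0$, in accordance with Remark \ref{rmk untwisted sheaf}. The two delicate inputs are the existence of the untwisting bundle $\mathscr{V}$ (that is, $\mathrm{Br}=\mathrm{Br}'$ on $Y$) and the intrinsic extraction of $\kappa(\mathscr{V})$ from the projective bundle $\mathscr{E}nd(\mathscr{V})$; once these are secured, multiplicativity and line-bundle invariance render every remaining verification formal.
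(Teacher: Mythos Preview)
Your approach differs from the paper's (which follows Markman). There, one uses that $\theta$ has finite order $r$: the $r$-fold tensor power $\mathscr{E}^{\otimes r}$ carries the trivial cocycle and is therefore an honest sheaf, so $\kappa(\mathscr{E}^{\otimes r})$ is defined by \eqref{eq kappa on K group}; one then sets $\kappa(\mathscr{E})$ to be the unique $r$-th root whose degree-zero term is the rank. This needs no auxiliary bundle and no appeal to $\mathrm{Br}=\mathrm{Br}'$.

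Your untwisting-bundle route can in principle be made to work, but the step you label a ``delicate input'' is the entire content, and you do not carry it out: to form $\kappa(\mathscr{E}\otimes\mathscr{V})\cdot\kappa(\mathscr{V})^{-1}$ you must already possess $\kappa(\mathscr{V})$, and $\mathscr{V}$ is itself twisted, so the definition is circular until you produce $\kappa(\mathscr{V})$ independently. Your suggestion to read $\kappa(\mathscr{V})$ off the Azumaya algebra $\mathscr{E}nd(\mathscr{V})$ is plausible---for instance $\kappa_2(\mathscr{V})=\frac{1}{2\rho}\,\mathrm{ch}_2(\mathscr{E}nd(\mathscr{V}))$, and one expects universal such polynomial identities in each degree---but you would have to actually establish these formulas in $\CH^*(Y)$, and at that point you have essentially re-derived what the tensor-power-and-root method gives in one line. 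The paper's argument is both shorter and avoids the extra Gabber--de Jong input.
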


\begin{proof}[Sketch of proof]
  Assume that $\theta$ has order $r$ and $\mathscr{E}$ is a
  $\theta$-twisted sheaf. Then, with the correct definition, the
  $r^{\text{th}}$ tensor product of $\mathscr{E}$ is untwisted and
  hence $\kappa$ is defined on this tensor product. Then
  $\kappa(\mathscr{E})$ is defined by taking the $r^{\text{th}}$ root.
  See \S2.2 of \cite{markman} for more details.
\end{proof}

\begin{defn}
  Let $\mathcal{X}\rightarrow B$ be a flat family of compact complex
  manifolds (or projective algebraic varieties) and $\theta\in
  \mathrm{Br}'(\mathcal{X})$. A \textit{flat family} of
  $\theta$-twisted sheaves is a $\theta$-twisted coherent sheaf
  $\mathscr{E}$ on $\mathcal{X}$ which is flat over $B$.
\end{defn}

Note that for all $t\in B$ the restriction of $\mathscr{E}$ to the
fiber $\mathcal{X}_t$ gives rise to a $\theta_t$-twisted sheaf
$\mathscr{E}_t$.  Usually we do not specify the class $\theta$ and
call $\mathscr{E}$ a flat family of twisted sheaves. It is a fact that
$\kappa^{\mathrm{top}}(\mathscr{E}_t)$ varies continuously, which we
state as the following lemma.

\begin{lem}\label{lem continuity of top kappa}
  Let $\mathscr{E}$ be a flat family of twisted sheaves on a flat
  family $\pi :\mathcal{X}\rightarrow B$ of compact complex manifolds,
  then
  $\kappa_i^{\mathrm{top}}(\mathscr{E}_t)\in\HH^{2i}(\mathcal{X}_t,\Q)$,
  $t\in B$, are parallel transports of each other in this family.
\end{lem}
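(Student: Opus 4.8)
The plan is to show that the assignment $t\mapsto \kappa_i^{\mathrm{top}}(\mathscr{E}_t)$ is a flat (parallel) section of the Gauss--Manin local system $R^{2i}\pi_*\Q$ on $B$; since the values of a flat section at different points correspond to one another under the parallel transport isomorphisms, this is exactly the assertion of the lemma. To set this up I would first reduce to a local statement. The family $\pi:\mathcal{X}\to B$ is proper and smooth, so by Ehresmann's fibration theorem it is a locally trivial $C^\infty$ fiber bundle; hence it suffices to treat the case of a contractible base (a small polydisc), over which $\mathcal{X}$ is diffeomorphic to $\mathcal{X}_0\times B$ and the local system $R^{2i}\pi_*\Q$ is trivial. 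Over such a base the parallel transport isomorphism $\HH^{2i}(\mathcal{X}_t,\Q)\xrightarrow{\sim}\HH^{2i}(\mathcal{X}_{t'},\Q)$ is induced by the trivialization, and a $t$-dependent family of classes $\alpha_t\in\HH^{2i}(\mathcal{X}_t,\Q)$ is parallel precisely when, writing $\iota_t:\mathcal{X}_t\hookrightarrow\mathcal{X}$ for the fiber inclusions, there is a single global class $\alpha\in\HH^{2i}(\mathcal{X},\Q)$ with $\alpha_t=\iota_t^*\alpha$ for all $t$. Thus everything reduces to exhibiting the $\kappa_i^{\mathrm{top}}(\mathscr{E}_t)$ as the fiberwise restrictions of a single class on the total space.

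In the untwisted case ($\theta=0$) this is immediate from the definition \eqref{eq kappa on K group}. The rank $r$ of $\mathscr{E}_t$ is locally constant, hence constant on the connected base, and $\mathscr{E}$ is an honest coherent sheaf on $\mathcal{X}$ which is flat over $B$, so one may form the global class $\kappa^{\mathrm{top}}(\mathscr{E}):=\mathrm{ch}(\mathscr{E})\cdot\exp(-c_1(\mathscr{E})/r)\in\HH^*(\mathcal{X},\Q)$. Because the topological Chern character and first Chern class commute with pullback and $\iota_t^*\mathscr{E}=\mathscr{E}_t$, one gets $\iota_t^*\kappa^{\mathrm{top}}(\mathscr{E})=\kappa^{\mathrm{top}}(\mathscr{E}_t)$; taking degree-$2i$ components finishes this case.

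For the twisted case I would exploit the way $\kappa$ is defined for twisted sheaves. If $\theta$ has order $r$, the tensor power $\mathscr{E}^{\otimes r}$ is untwisted and defines a flat family of coherent sheaves on $\mathcal{X}$, to which the previous paragraph applies, and by construction $\kappa^{\mathrm{top}}(\mathscr{E})$ is the unique $r$-th root of $\kappa^{\mathrm{top}}(\mathscr{E}^{\otimes r})$ in the pro-unipotent group $1+\bigoplus_{j>0}\HH^{2j}(\mathcal{X},\Q)$. The formation of this $r$-th root is a universal polynomial operation in the higher-degree components, so it commutes with any ring homomorphism, in particular with the restrictions $\iota_t^*$ and with parallel transport. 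Hence $\iota_t^*\kappa^{\mathrm{top}}(\mathscr{E})=\kappa^{\mathrm{top}}(\mathscr{E}_t)$ again, and the classes $\kappa_i^{\mathrm{top}}(\mathscr{E}_t)$ are parallel transports of one another.

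The genuinely delicate point will be the twisted case: one must verify that Markman's topological $\kappa$-class of a flat family of $\theta$-twisted sheaves really glues to a well-defined global class on $\mathcal{X}$, independent of the auxiliary \v{C}ech gluing data, and that the $r$-th root is extracted consistently across the whole family rather than fiber by fiber. This is precisely where one uses that $\kappa$ is insensitive to twisting by line bundles, namely equation \eqref{eq kappa and line bundle}, together with its invariance under the choice of $2$-cocycle representing $\theta$, so that the local untwisted models $\mathscr{E}^{\otimes r}$ patch compatibly over $B$. Once this global integrality of the construction is granted, the parallel-transport statement is a formal consequence of restriction to fibers as above.
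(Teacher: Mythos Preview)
Your proposal is correct and follows essentially the same route as the paper: construct a global class $\kappa^{\mathrm{top}}(\mathscr{E})$ on the total space by passing to an untwisted (derived) tensor power $\mathscr{E}^{\otimes r}$ and extracting the unique $r$-th root, then use that this construction commutes with base change so that $\kappa^{\mathrm{top}}(\mathscr{E}_t)=\iota_t^*\kappa^{\mathrm{top}}(\mathscr{E})$. The paper's argument is terser (it omits the Ehresmann/Gauss--Manin framing and the pro-unipotent discussion of the $r$-th root), but the substance is the same.
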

\begin{proof}
  We may assume that $B$ is smooth and hence that $\mathcal{X}$ is
  smooth.  By taking some (derived) tensor power of $\mathscr{E}$, we
  get a complex $\mathscr{E}'$ of untwisted locally free sheaves. The
  $\kappa$-class of $\mathscr{E}$ is a $r^{\text{th}}$-root of the
  $\kappa$-class of $\mathscr{E}'$~; see \S2.2 of \cite{markman}. By
  flatness, the above tensor power construction commutes with base
  change. Hence $\kappa^{\mathrm{top}}(\mathscr{E}_t)$ agrees with the
  $r^{\text{th}}$-root of $\kappa^{\mathrm{top}}(\mathscr{E}'_t)$.  As
  a consequence, we have $\kappa^{\mathrm{top}}(\mathscr{E}_t) =
  \kappa^{\mathrm{top}}(\mathscr{E})|_{\mathcal{X}_t}$.  This proves
  the lemma.
\end{proof}

\subsection{Moduli space of stable sheaves on a $\mathrm{K3}$ surface}
The concept of a twisted sheaf naturally appears when one tries to
construct universal sheaves over various moduli spaces of stable
sheaves. We will be mainly interested in the geometry of moduli spaces
of stable sheaves on $\mathrm{K3}$ surfaces~; our main reference is
\cite{hl}.

Let $S$ be an algebraic K3 surface. Let
$$\tilde{\HH}(S,\Z) = \HH^0(S,\Z) \oplus \HH^2(S,\Z) \oplus \HH^4(S,\Z)$$
be the Mukai lattice of $S$ endowed with the Mukai pairing
$$(v,v')  := \int_S -v_0\cup v'_2 +v_1\cup v'_1 - v_2\cup v'_0,$$
for all $v=(v_0,v_1,v_2)$ and $v'=(v'_0,v'_1,v'_2)$ in
$\tilde{\HH}(S,\Z)$. Given a coherent sheaf $\mathscr{E}$ on $S$, its
Mukai vector $v(\mathscr{E})$ is defined to be
$$v(\mathscr{E})  := \mathrm{ch}^{\mathrm{top}}(\mathscr{E})
\sqrt{\mathrm{td}^{\mathrm{top}}(S)}\in\tilde{\HH}(S,\Z). $$ For a
Mukai vector $v=(v_0,v_1,v_2)$, we say that $v_1$ is
\textit{primitive} if it is not divisible as a cohomology class in
$\HH^2(S,\Z)$. Given a Mukai vector $v\in\tilde{\HH}(S,\Z)$, there is
a chamber structure with respect to $v$ on the ample cone of $S$~; see
\cite[Appendix 4.C]{hl}. An ample divisor $H$ on $S$ is said to be
$v$-\textit{generic} if it
is contained in an open chamber with respect to $v$. Given
$v\in\tilde{\HH}(S,\Z)$ and an ample divisor $H$ on $S$, we define
$\mathcal{M}_H(v)$ to be the moduli space of $H$-stable sheaves with
Mukai vector $v$. The following result is due to many authors~; see
\cite[Theorem 6.2.5]{hl} and the discussion thereafter.
\begin{thm}[Mukai, Huybrechts, G\"ottsche, O'Grady and
  Yoshioka]\label{thm moduli of sheaves hk}
  Let $v_1$ be primitive and let $H$ be $v$-generic, then $\mathcal{M}_H(v)$
  is a projective hyperk\"ahler manifold deformation equivalent to
  $S^{[n]}$ for $n=1+\frac{(v,v)}{2}$.
\end{thm}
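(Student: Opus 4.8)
The plan is to assemble the statement from its standard ingredients, following the construction and deformation-theoretic arguments collected in \cite[Chapter 6 and Appendix 4.C]{hl}. First I would address smoothness, projectivity and the dimension count. Projectivity of $\mathcal{M}_H(v)$ comes from the GIT construction of the moduli space of Gieseker-semistable sheaves (Gieseker, Maruyama, Simpson); since $v_1$ is primitive and $H$ is $v$-generic, every $H$-semistable sheaf with Mukai vector $v$ is in fact $H$-stable, so there are no strictly semistable points and the coarse moduli space is a projective variety. Smoothness and the dimension formula are obtained from deformation theory at a stable point $[\mathscr{E}]$: the Zariski tangent space is $\mathrm{Ext}^1(\mathscr{E},\mathscr{E})$ and the obstructions lie in the trace-free part of $\mathrm{Ext}^2(\mathscr{E},\mathscr{E})$. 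Because $\mathscr{E}$ is stable one has $\Hom(\mathscr{E},\mathscr{E})=\C$, and since $K_S=\calO_S$, Serre duality identifies $\mathrm{Ext}^2(\mathscr{E},\mathscr{E})$ with $\Hom(\mathscr{E},\mathscr{E})^\vee=\C$, so the trace map $\mathrm{Ext}^2(\mathscr{E},\mathscr{E})\to \HH^2(S,\calO_S)\cong\C$ is an isomorphism and the obstruction space vanishes. Hence $\mathcal{M}_H(v)$ is smooth of dimension $\dim\mathrm{Ext}^1(\mathscr{E},\mathscr{E})$, which by the Mukai form of Riemann--Roch, $\chi(\mathscr{E},\mathscr{E})=-(v,v)$, equals $(v,v)+2=2n$.

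Next I would produce the holomorphic symplectic form, following Mukai. The trace-free Yoneda pairing $\mathrm{Ext}^1(\mathscr{E},\mathscr{E})\times\mathrm{Ext}^1(\mathscr{E},\mathscr{E})\to\mathrm{Ext}^2(\mathscr{E},\mathscr{E})\cong\C$ is alternating and, by Serre duality, non-degenerate; it varies holomorphically with $[\mathscr{E}]$ and therefore defines a nowhere-degenerate holomorphic $2$-form $\sigma$ on $\mathcal{M}_H(v)$, which Mukai's argument shows is closed. This equips $\mathcal{M}_H(v)$ with an everywhere non-degenerate closed holomorphic $2$-form spanning $\HH^0(\Omega^2_{\mathcal{M}_H(v)})$.

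The main obstacle is the deformation equivalence to $S^{[n]}$, which is the deepest input and where the cited work of G\"ottsche, Huybrechts, O'Grady and Yoshioka enters. The strategy is first to identify the special case: for the Mukai vector $v_0=(1,0,1-n)$, every $H$-stable sheaf is the ideal sheaf $\mathcal{I}_Z$ of a length-$n$ subscheme $Z\subset S$, so $\mathcal{M}_H(v_0)\cong S^{[n]}$, which Beauville proved to be irreducible holomorphic symplectic. For a general primitive $v$ with $(v,v)=2n-2$ one shows that the deformation class of $\mathcal{M}_H(v)$ depends only on $n$: one connects the pair $(S,v)$ to a pair of the form $(S',v_0)$ through families of polarized K3 surfaces equipped with families of Mukai vectors, keeping the polarization $v$-generic so that the relative moduli space stays smooth and projective over the base, and one uses twisted Fourier--Mukai transforms to replace an arbitrary primitive $v$ by one of rank one. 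Since smooth projective families have locally constant deformation type, this yields that $\mathcal{M}_H(v)$ is deformation equivalent to $S^{[n]}$. I expect this step, rather than the local computations above, to carry essentially all of the difficulty.

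Finally I would conclude: deformation equivalence preserves the Hodge numbers, simple connectedness, and the property that $\HH^0(\Omega^2)$ is spanned by a nowhere-degenerate $2$-form. As $S^{[n]}$ is an irreducible holomorphic symplectic (hyperk\"ahler) manifold, so is $\mathcal{M}_H(v)$, and by construction it is of $\mathrm{K3}^{[n]}$-type with $n=1+\tfrac{(v,v)}{2}$.
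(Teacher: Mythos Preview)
Your outline is a faithful sketch of the standard argument, but note that the paper does not prove this theorem at all: it is quoted as a result of Mukai, Huybrechts, G\"ottsche, O'Grady and Yoshioka, with a bare reference to \cite[Theorem 6.2.5]{hl} and the surrounding discussion. So there is no ``paper's own proof'' to compare against; your proposal simply unpacks what that citation contains, and does so correctly.
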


From now on, we keep the assumptions of the above theorem.
Usually, the moduli space $\mathcal{M}  := \mathcal{M}_H(v)$ is not
fine.  Namely there is no universal sheaf on $\mathcal{M} \times S$.
However, if we allow twisted sheaves, it turns out that there is
always a twisted universal sheaf. We use $\pi_{\mathcal{M}}  :
\mathcal{M}\times S \rightarrow \mathcal{M}$ and
$\pi_S :\mathcal{M}\times S\rightarrow S$ to denote the projections.

\begin{prop}[{\cite[Proposition 3.3.2 and 3.3.4]{caldararu thesis}}]
  \label{prop twisted universal sheaf}
  There exist a unique element $\theta\in \mathrm{Br}'(\mathcal{M})$
  and a $\pi_{\mathcal{M}}^*\theta$-twisted universal sheaf
  $\mathscr{E}$ on $\mathcal{M}\times S$.
\end{prop}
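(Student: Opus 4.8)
The plan is to reconstruct this (classical) statement from the GIT description of the moduli space together with the single crucial input that $H$-stable sheaves are simple. First I would recall that, since $H$ is $v$-generic and $v_1$ is primitive, every point of $\mathcal{M}=\mathcal{M}_H(v)$ corresponds to an $H$-stable, hence simple, sheaf $\mathscr{E}_s$ on $S$, so that $\Hom(\mathscr{E}_s,\mathscr{E}_s)=\C$. This simplicity is the source of the whole phenomenon: it forces the fibrewise automorphisms to be exactly the scalars $\mathbb{G}_m$, and it is precisely this $\mathbb{G}_m$ that obstructs the existence of a genuine untwisted universal sheaf and produces the Brauer class $\theta$.

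Next I would set up the GIT construction. Fix $m\gg 0$ so that every $H$-stable sheaf $\mathscr{E}$ with $v(\mathscr{E})=v$ is $m$-regular, let $N$ be the value of the Hilbert polynomial, and let $R\subset \mathrm{Quot}(\calO_S(-m)^{\oplus N},v)$ be the locally closed locus of quotients $q:\calO_S(-m)^{\oplus N}\twoheadrightarrow\mathscr{E}$ with $\mathscr{E}$ stable and $\HH^0(q(m))$ an isomorphism. Then $\mathcal{M}=R/\mathrm{PGL}_N$, and on $R\times S$ there is a tautological universal quotient $\tilde{\mathscr{E}}$ carrying a natural $\mathrm{GL}_N$-linearization. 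The key computation is that the central $\mathbb{G}_m\subset\mathrm{GL}_N$ acts on $\tilde{\mathscr{E}}$ through the scaling character of weight one; equivalently, $\tilde{\mathscr{E}}$ descends not to $\mathcal{M}\times S$ but to $[R/\mathrm{GL}_N]\times S$, where $[R/\mathrm{GL}_N]$ is a $\mathbb{G}_m$-gerbe $\mathcal{G}\to\mathcal{M}$. Unwinding the equivalence ``weight-one sheaf on $\mathcal{G}$'' $=$ ``$\theta$-twisted sheaf on $\mathcal{M}$'', where $\theta\in\HH^2(\mathcal{M},\calO_\mathcal{M}^*)$ is the class of $\mathcal{G}$, produces the $\pi_{\mathcal{M}}^*\theta$-twisted universal sheaf $\mathscr{E}$ on $\mathcal{M}\times S$. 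Without stack language one argues identically: cover $\mathcal{M}$ by analytic (or étale) opens $U_\alpha$ small enough to carry local universal sheaves $\mathscr{E}_\alpha$ on $U_\alpha\times S$ and to trivialize the comparison line bundles $L_{\alpha\beta}$; comparing trivializing isomorphisms $g_{\alpha\beta}$ on triple overlaps yields scalars $\theta_{\alpha\beta\gamma}\in\calO^*$ forming a \v Cech $2$-cocycle, and $(\mathscr{E}_\alpha,g_{\alpha\beta})$ is the desired twisted sheaf. The torsion-ness of $\theta$, i.e. $\theta\in\mathrm{Br}'(\mathcal{M})$, is immediate from the GIT picture: $\theta$ is the image of the $\mathrm{PGL}_N$-torsor $R\to\mathcal{M}$ under $\HH^1(\mathcal{M},\mathrm{PGL}_N)\to\HH^2(\mathcal{M},\mathbb{G}_m)$ attached to $1\to\mathbb{G}_m\to\mathrm{GL}_N\to\mathrm{PGL}_N\to1$, hence is killed by $N$.

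For uniqueness of $\theta$ I would invoke simplicity once more. If $\mathscr{E}$ and $\mathscr{E}'$ are two $\pi_{\mathcal{M}}^*\theta$-twisted universal sheaves, then $\pi_{\mathcal{M},*}\mathscr{H}om(\mathscr{E},\mathscr{E}')$ is, by cohomology and base change together with $\Hom(\mathscr{E}_s,\mathscr{E}_s)=\C$, an invertible sheaf $\mathscr{N}$ on $\mathcal{M}$, and the evaluation map gives $\mathscr{E}'\cong\mathscr{E}\otimes\pi_{\mathcal{M}}^*\mathscr{N}$; since $\kappa$ and the twisting cocycle are unaffected by tensoring with a line bundle pulled back from $\mathcal{M}$, the class $\theta$ is the same for both, which also pins down $\theta$ uniquely among classes admitting a twisted universal family. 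The main obstacle I anticipate is the weight-one computation for the central $\mathbb{G}_m$-action and the clean identification of the resulting gerbe class with an honest torsion Brauer class; the existence of local families, simplicity, and base change are all formal. As the excerpt does, I would therefore ultimately defer the full verification to \cite[Proposition 3.3.2 and 3.3.4]{caldararu thesis}.
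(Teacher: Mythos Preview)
The paper gives no proof of this proposition at all: it is stated with a bare citation to \cite[Proposition 3.3.2 and 3.3.4]{caldararu thesis} and nothing more. Your proposal therefore does strictly more than the paper, supplying the standard GIT/gerbe sketch (local universal families, simplicity $\Rightarrow$ scalar automorphisms, the $\mathrm{PGL}_N$-torsor producing a torsion Brauer class, uniqueness up to line bundle twist) before deferring to the same reference. The sketch is correct and is exactly the argument C\u{a}ld\u{a}raru gives, so in that sense you have recovered the cited proof rather than offered an alternative.
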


Let $p_{ij}$ be the projections from $\mathcal{M}\times \mathcal{M}
\times S$ to the product of the $i^{\text{th}}$ and $j^{\text{th}}$
factors. Then the relative extension sheaf
$\mathscr{E}xt^i_{p_{12}}(p_{13}^*\mathscr{E}, p_{23}^*\mathscr{E})$,
$i\geq 0$, becomes a $(p_1^*\theta^{-1}p_2^*\theta)$-twisted sheaf on
$\mathcal{M}\times \mathcal{M}$, where $p_1$ and $p_2$ are the
projections of $\mathcal{M}\times \mathcal{M}$ onto the corresponding
factors. For our purpose, the sheaf
$\mathscr{E}xt^1_{p_{12}}(p_{13}^*\mathscr{E}, p_{23}^*\mathscr{E})$
will be the most important. We will restrict ourselves to the case
when $\mathscr{E}$ is untwisted.

\begin{lem}\label{lem support of ext}
  Assume that $\mathscr{E}$ is a universal sheaf on $\mathcal{M}\times
  S$. Then both sheaves $\mathscr{E}xt^0_{p_{12}}(p_{13}^*\mathscr{E},
  p_{23}^*\mathscr{E})$ and
  $\mathscr{E}xt^2_{p_{12}}(p_{13}^*\mathscr{E}, p_{23}^*\mathscr{E})$
  are supported on the diagonal of $\mathcal{M}\times \mathcal{M}$.
  For any point $t\in \mathcal{M}$, let $\mathscr{E}_t$ be the
  corresponding stable sheaf on $S$. Then both
  $\mathscr{E}xt_{\pi_{\mathcal{M}}}^0(\mathscr{E},
  \pi_S^*\mathscr{E}_t)$ and
  $\mathscr{E}xt^2_{\pi_{\mathcal{M}}}(\mathscr{E},{\pi_S}^*\mathscr{E}_t)$
  are supported at the point $t$.
\end{lem}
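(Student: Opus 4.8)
The plan is to reduce everything to two standard facts about stable sheaves on a K3 surface and then to propagate the fibrewise vanishing to the relative $\mathscr{E}xt$-sheaves by base change. First I would record the input from stability. Since $\mathcal{M}=\mathcal{M}_H(v)$ is a moduli space of $H$-stable sheaves, its points are in bijection with isomorphism classes of $H$-stable sheaves with Mukai vector $v$; in particular $t\neq t'$ forces $\mathscr{E}_t\not\cong\mathscr{E}_{t'}$. Because $\mathscr{E}_t$ and $\mathscr{E}_{t'}$ share the Mukai vector $v$, they have the same reduced Hilbert polynomial, so any nonzero homomorphism between them is an isomorphism; hence $\Hom_S(\mathscr{E}_t,\mathscr{E}_{t'})=0$ whenever $t\neq t'$ (and $\Hom_S(\mathscr{E}_t,\mathscr{E}_t)=\C$, since stable sheaves are simple). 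For the top degree I would invoke Serre duality on $S$: as $\omega_S\cong\calO_S$, one has $\mathrm{Ext}^2_S(\mathscr{E}_t,\mathscr{E}_{t'})\cong\Hom_S(\mathscr{E}_{t'},\mathscr{E}_t)^\vee$, which again vanishes for $t\neq t'$. Thus, fibrewise, both the degree-$0$ and the degree-$2$ $\mathrm{Ext}$-groups are concentrated on the diagonal.

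Next I would transfer this fibrewise vanishing to the coherent sheaves $\mathscr{E}xt^i_{p_{12}}(p_{13}^*\mathscr{E},p_{23}^*\mathscr{E})$ on $\mathcal{M}\times\mathcal{M}$ for $i=0,2$. The morphism $p_{12}\colon \mathcal{M}\times\mathcal{M}\times S\to\mathcal{M}\times\mathcal{M}$ is smooth projective with fibres $S$ (relative dimension $2$), and both $p_{13}^*\mathscr{E}$ and $p_{23}^*\mathscr{E}$ are flat over the base because $\mathscr{E}$ is flat over $\mathcal{M}$. For $i=0$ one has $\mathscr{E}xt^0_{p_{12}}=(p_{12})_*\mathscr{H}om(p_{13}^*\mathscr{E},p_{23}^*\mathscr{E})$, and the key point is that, at this extreme degree, $\mathscr{H}om(p_{13}^*\mathscr{E},p_{23}^*\mathscr{E})$ restricts on the fibre over $(t,t')$ to $\mathscr{H}om_S(\mathscr{E}_t,\mathscr{E}_{t'})$ and cohomology-and-base-change for $H^0$ shows that the germ of $\mathscr{E}xt^0_{p_{12}}$ at $(t,t')$ is governed by $\Hom_S(\mathscr{E}_t,\mathscr{E}_{t'})$; by the previous paragraph this is zero off the diagonal, so $\mathscr{E}xt^0_{p_{12}}$ is supported on the diagonal. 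For $i=2$, which is again the top degree for a relative surface, I would use relative Serre duality for $p_{12}$ — whose relative dualizing sheaf is the pullback of $\omega_S\cong\calO_S$, hence trivial — to identify $\mathscr{E}xt^2_{p_{12}}(p_{13}^*\mathscr{E},p_{23}^*\mathscr{E})$ with the dual of $\mathscr{E}xt^0_{p_{12}}(p_{23}^*\mathscr{E},p_{13}^*\mathscr{E})$, reducing the claim to the $i=0$ case already treated.

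Finally, the statement for $\pi_{\mathcal{M}}\colon\mathcal{M}\times S\to\mathcal{M}$ with the fixed sheaf $\pi_S^*\mathscr{E}_t$ follows by exactly the same mechanism: it is the restriction of the situation above along $\mathcal{M}\times\{t\}\hookrightarrow\mathcal{M}\times\mathcal{M}$, and the fibre of $\mathscr{E}xt^0_{\pi_{\mathcal{M}}}(\mathscr{E},\pi_S^*\mathscr{E}_t)$ (resp. of $\mathscr{E}xt^2_{\pi_{\mathcal{M}}}(\mathscr{E},\pi_S^*\mathscr{E}_t)$) at $s\in\mathcal{M}$ is controlled by $\Hom_S(\mathscr{E}_s,\mathscr{E}_t)$ (resp. by $\mathrm{Ext}^2_S(\mathscr{E}_s,\mathscr{E}_t)\cong\Hom_S(\mathscr{E}_t,\mathscr{E}_s)^\vee$), which vanishes unless $s=t$; hence these sheaves are supported at the single point $t$. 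The main obstacle I anticipate is the careful justification of the base-change step for the relative $\mathscr{E}xt$-sheaves, namely that vanishing of the fibrewise $\mathrm{Ext}$-group really forces the germ of the relative sheaf to vanish. This is precisely why it is essential to work only in the two extreme degrees $0$ and $2$: the middle term $\mathscr{E}xt^1_{p_{12}}$ is exactly the object whose $\kappa$-classes produce the cycle $L$ and is \emph{not} supported on the diagonal, so no analogous statement can hold there.
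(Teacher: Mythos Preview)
Your proposal is correct and follows essentially the same approach as the paper: both argue via the vanishing of $\Hom_S(\mathscr{E}_t,\mathscr{E}_{t'})$ for non-isomorphic stable sheaves of the same Mukai vector, and then use Serre duality on the K3 surface (the paper says ``Grothendieck--Serre duality'') to handle $\mathrm{Ext}^2$. The paper's proof is terser and leaves the base-change step implicit, whereas you spell out why the fibrewise vanishing in the extreme degrees propagates to the relative $\mathscr{E}xt$-sheaves; this extra care is welcome but does not constitute a different strategy.
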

\begin{proof}
Let $E_1$ and $E_2$ be two stable sheaves on $S$, then
$$
\mathrm{Hom}(E_1,E_2)=\begin{cases}
 0, & E_1\ncong E_2 ;\\ \C, & E_1\cong E_2.
\end{cases}
$$
This implies that $\mathscr{E}xt^0_{p_{12}}(p_{13}^*\mathscr{E},
p_{23}^*\mathscr{E})$ is supported on the diagonal. By
Grothendieck--Serre duality, we have $\mathrm{Ext}^2(E_1,E_2)\cong
\mathrm{Hom}(E_2,E_1)^\vee$. It follows that
$\mathscr{E}xt^2_{p_{12}}(p_{13}^*\mathscr{E}, p_{23}^*\mathscr{E})$
is also supported on the diagonal. The second half of the lemma is
proved similarly.
\end{proof}

Here we recall a result of O'Grady \cite{o'grady}. Let $v\in
\tilde{\HH}(S,\Z)$ with $(v,v)>0$.  Let $$V :=v^\perp\subset
\tilde{\HH}(S,\Z)$$ be the orthogonal complement of $v$ with respect
to the Mukai pairing. Note that the restriction of the Mukai pairing
to $V$ gives a natural bilinear form on $V$. We define
$$
\mu  := \mathrm{ch}^{\mathrm{top}}(\mathscr{E})\cup \pi_S^*(1+
[pt])\in\HH^*(\mathcal{M}\times S,\Q).
$$
Let $f : \tilde{\HH}(S,\Z)\rightarrow {\HH}^*(\mathcal{M},\Q)$ be the
homomorphism sending a class $\alpha\in\tilde{\HH}(S,\Z)$ to
${\pi_{\mathcal{M}}}_*(\mu^{\vee}\cup \pi_S^*\alpha)$.  We recall from
\cite[\S6]{hl} that if $\mu=\sum\mu_i\in \bigoplus
\HH^{2i}(\mathcal{M}\times S,\Q)$, then $\mu^\vee  := \sum
(-1)^i\mu_i$.

\begin{thm}[O'Grady \cite{o'grady}]
  The restriction of $f$ to $V$ followed by the projection to the
  $\HH^2(\mathcal{M},\Z)$-component, usually denoted $\theta_v$, gives
  a canonical isomorphism of integral Hodge structures
$$
\theta_v :V\rightarrow \HH^2(\mathcal{M},\Z)
$$
which respects the bilinear pairing on $V$ and the
Beauville--Bogomolov form on the right-hand side.
\end{thm}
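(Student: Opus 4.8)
The plan is to establish in turn that $\theta_v$ is a morphism of integral weight-$2$ Hodge structures, that it is an isometry, and that it is bijective; the last two together with the Hodge-theoretic statement give the claim. For the first point, recall that $\mu = \mathrm{ch}^{\mathrm{top}}(\mathscr{E}) \cup \pi_S^*(1 + [pt])$ is an algebraic cohomology class on $\mathcal{M} \times S$ (when $\mathscr{E}$ is genuinely twisted one replaces $\mathrm{ch}^{\mathrm{top}}(\mathscr{E})$ by the algebraic $\kappa$-classes of \S\ref{sec alg B}, which remain of Hodge type). Hence the correspondence $\mu^\vee$ induces a morphism of rational Hodge structures $\HH^*(S,\Q) \to \HH^*(\mathcal{M},\Q)$, $\alpha \mapsto \pi_{\mathcal{M},*}(\mu^\vee \cup \pi_S^*\alpha)$. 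Equipping the Mukai lattice $\tilde{\HH}(S,\Z)$ with its weight-$2$ Hodge structure, so that $\HH^{2,0}(S)$ is the $(2,0)$-part and $\HH^0(S) \oplus \HH^4(S)$ lies in the $(1,1)$-part, the composite $f$ followed by projection to the $\HH^2(\mathcal{M})$-summand is a morphism of weight-$2$ Hodge structures; its restriction to $V = v^\perp$ is $\theta_v$. Integrality follows from the integrality of the Mukai vector and of $\mathrm{ch}^{\mathrm{top}}(\mathscr{E})$ together with Riemann--Roch on $S$.

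For the isometry statement I would argue by deformation invariance. By Theorem \ref{thm moduli of sheaves hk} the pair $(\mathcal{M},v)$ lies in a family of moduli spaces with connected base, and by Lemma \ref{lem continuity of top kappa} the relevant universal classes (hence $\mu$, and with it the homomorphism $\theta_v$) are parallel-transported along the family. Both the Mukai pairing on $V$ and the Beauville--Bogomolov form on $\HH^2(\mathcal{M})$ are monodromy-invariant, so it suffices to verify the compatibility of the two forms at a single, conveniently chosen member of the deformation class, namely the Hilbert scheme $S^{[n]}$ equipped with the Mukai vector of the ideal sheaves. There one has Beauville's orthogonal decomposition $\HH^2(S^{[n]},\Z) = \HH^2(S,\Z) \oplus \Z\delta$ with $q(\delta) = -2(n-1)$, and a direct comparison of $\theta_v$ with this decomposition exhibits it as an isometry onto its image.

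The bijectivity then follows cheaply. The Mukai lattice has rank $24$ and $v$ is primitive of positive square, so $V = v^\perp$ has rank $23 = b_2(\mathcal{M})$. Since $\theta_v$ is an isometry for the nondegenerate Mukai form, it has trivial kernel, hence is an isomorphism after tensoring with $\Q$ by the rank count; the exact matching of the integral forms (with no correcting scalar, the Beauville--Bogomolov form being normalized precisely so that this holds) then upgrades it to an isomorphism of integral lattices, and therefore of integral Hodge structures.

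I expect the main obstacle to be the isometry/normalization step. Carrying out the reduction to $S^{[n]}$ rigorously requires globalizing the twisted universal sheaf over the deformation family and ensuring that $\theta_v$ is genuinely intertwined with the monodromy action; and pinning down the precise constant relating $\int_{\mathcal{M}} \theta_v(\alpha)^{2n}$ to $(\alpha,\alpha)^n$ through the Fujiki relation \eqref{eq Fujiki relation}---equivalently, a Grothendieck--Riemann--Roch computation on $\mathcal{M}\times S$ for the (twisted) universal sheaf---is the delicate computational heart of the proof.
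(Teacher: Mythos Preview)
The paper does not give a proof of this statement: it is quoted as a theorem of O'Grady with a reference to \cite{o'grady}, and the paper simply uses it as input to the computations that follow (Lemma~\ref{lem components of f} and Proposition~\ref{prop kappa22}). So there is no ``paper's own proof'' to compare against.

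That said, a few remarks on your sketch. Your step~(1) is fine. Your step~(2), the deformation reduction to $S^{[n]}$, is where the real content lies and also where your argument is weakest: Theorem~\ref{thm moduli of sheaves hk} only asserts that $\mathcal{M}_H(v)$ is deformation equivalent to $S^{[n]}$ as a bare hyperk\"ahler manifold; it says nothing about deforming the pair $(\mathcal{M},\mathscr{E})$ through a family of moduli spaces carrying compatible (twisted) universal sheaves, which is what you need to parallel-transport $\theta_v$. O'Grady's original argument in \cite{o'grady} does not proceed by deformation to the Hilbert scheme; he computes directly on $\mathcal{M}$ via a Grothendieck--Riemann--Roch argument for the universal sheaf, which is exactly the ``delicate computational heart'' you flag at the end. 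Your step~(3) also needs care: an injective isometry of $\Z$-lattices of equal rank need not be surjective (the cokernel can be a nontrivial finite group), so the upgrade from $\Q$-isomorphism to $\Z$-isomorphism requires either a discriminant comparison or a separate surjectivity argument; this is again part of what O'Grady establishes directly.
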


Furthermore, we have the following

\begin{lem}\label{lem components of f} The homomorphism $f$ enjoys the
  following properties.
\begin{enumerate}[(i)]
\item The $\HH^2(\mathcal{M})$-component of $f(v)$ is equal to
  $\tau_1$, where $\tau_1 =
  -\mathrm{ch}_1^{\mathrm{top}}(\mathscr{E}xt^1_{\pi_{\mathcal{M}}}
  (\mathscr{E},\pi_S^*\mathscr{E}_t))$ for some closed point $t\in
  \mathcal{M}$~;

\item The $\HH^0(\mathcal{M})$-component of $f(v)$ is equal to $2-2n$
  and $f(w)$ has no $\HH^0(\mathcal{M})$-component for all $w\in V$.
  \end{enumerate}
\end{lem}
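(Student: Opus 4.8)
The plan is to compute $f$ via Grothendieck--Riemann--Roch, exploiting the fact that on a K3 surface $\sqrt{\mathrm{td}(S)}=1+[pt]$, so that $\mu=\mathrm{ch}^{\mathrm{top}}(\mathscr{E})\cdot \pi_S^*\sqrt{\mathrm{td}(S)}$ is nothing but the \emph{relative Mukai vector} of the universal sheaf $\mathscr{E}$ (recall $\mathscr{E}$ is untwisted, as assumed). The conceptual heart of the argument is the identity
\[
f(v) = \sum_i (-1)^i\, \mathrm{ch}^{\mathrm{top}}\big(\mathscr{E}xt^i_{\pi_{\mathcal{M}}}(\mathscr{E},\pi_S^*\mathscr{E}_t)\big)\quad\text{in }\HH^*(\mathcal{M},\Q),
\]
which I would establish first. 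Applying GRR to the projection $\pi_{\mathcal{M}}:\mathcal{M}\times S\to\mathcal{M}$, whose relative tangent bundle is $\pi_S^*T_S$, the right-hand side equals $\mathrm{ch}^{\mathrm{top}}\big(R\pi_{\mathcal{M}*}R\mathscr{H}om(\mathscr{E},\pi_S^*\mathscr{E}_t)\big) = \pi_{\mathcal{M}*}\big(\mathrm{ch}(\mathscr{E})^\vee\cdot \pi_S^*(\mathrm{ch}(\mathscr{E}_t)\,\mathrm{td}(S))\big)$. Since $\mathrm{ch}(\mathscr{E}_t)\,\mathrm{td}(S)=v(\mathscr{E}_t)\sqrt{\mathrm{td}(S)}=v\sqrt{\mathrm{td}(S)}$ and $(\sqrt{\mathrm{td}(S)})^\vee=\sqrt{\mathrm{td}(S)}$, this is exactly $\pi_{\mathcal{M}*}(\mu^\vee\cdot\pi_S^*v)=f(v)$.

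Granting this identity, part \emph{(i)} follows immediately from Lemma \ref{lem support of ext}: the sheaves $\mathscr{E}xt^0$ and $\mathscr{E}xt^2$ are supported at the point $t$, so their Chern characters lie in $\HH^{\geq 4n}(\mathcal{M},\Q)$ and contribute nothing in degrees $0$ and $2$. Hence the $\HH^2(\mathcal{M})$-component of $f(v)$ equals $-\mathrm{ch}_1^{\mathrm{top}}\big(\mathscr{E}xt^1_{\pi_{\mathcal{M}}}(\mathscr{E},\pi_S^*\mathscr{E}_t)\big)=\tau_1$, which is the claim.

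For part \emph{(ii)} I would instead compute the $\HH^0(\mathcal{M})$-component of $f(\alpha)$ for an arbitrary $\alpha\in\tilde{\HH}(S,\Z)$ directly, by restriction to a fibre. The $\HH^0$-component of $\pi_{\mathcal{M}*}(\mu^\vee\,\pi_S^*\alpha)$ is $\int_S$ of the restriction to a fibre $\{s\}\times S$, namely $\int_S \mathrm{ch}(\mathscr{E}_s)^\vee\sqrt{\mathrm{td}(S)}\,\alpha$. Dualizing $v=v(\mathscr{E}_s)=\mathrm{ch}(\mathscr{E}_s)\sqrt{\mathrm{td}(S)}$ and again using $(\sqrt{\mathrm{td}(S)})^\vee=\sqrt{\mathrm{td}(S)}$ gives $\mathrm{ch}(\mathscr{E}_s)^\vee\sqrt{\mathrm{td}(S)}=v^\vee$, so this component is $\int_S v^\vee\alpha = -(v,\alpha)$, the last equality being a direct check from the definition of the Mukai pairing. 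Setting $\alpha=v$ yields $-(v,v)=2-2n$ (since $n=1+(v,v)/2$), and setting $\alpha=w\in V=v^\perp$ yields $0$; this proves both assertions of \emph{(ii)} at once.

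The routine verifications---GRR, the identity $1+[pt]=\sqrt{\mathrm{td}(S)}$, and $\int_S v^\vee\alpha=-(v,\alpha)$---are mechanical. The one step deserving care, and where I expect the main (though modest) obstacle to lie, is the GRR bookkeeping that matches Mukai's definition of $f(v)$ with the $K$-theoretic pushforward $\sum_i(-1)^i[\mathscr{E}xt^i_{\pi_{\mathcal{M}}}]$; it is precisely this identification that lets the support statement of Lemma \ref{lem support of ext} isolate the degree-$2$ part as $\tau_1$. As a cross-check, the first assertion of \emph{(ii)} can also be recovered from the same identity, since the $\HH^0$-part of $f(v)$ equals $-\mathrm{rk}\,\mathscr{E}xt^1=-\dim\mathrm{Ext}^1(\mathscr{E}_s,\mathscr{E}_t)=-(v,v)=2-2n$ for general $s$, using $\mathrm{Hom}(\mathscr{E}_s,\mathscr{E}_t)=\mathrm{Ext}^2(\mathscr{E}_s,\mathscr{E}_t)=0$ by stability and Serre duality.
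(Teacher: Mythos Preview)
Your proof is correct and follows precisely the approach the paper has in mind: the paper simply refers to \cite[Theorem 6.1.14]{hl} and omits the details, and your GRR computation identifying $f(v)$ with $\sum_i(-1)^i\mathrm{ch}^{\mathrm{top}}(\mathscr{E}xt^i_{\pi_{\mathcal{M}}}(\mathscr{E},\pi_S^*\mathscr{E}_t))$, followed by the support argument from Lemma~\ref{lem support of ext} and the direct Mukai-pairing computation for part~\emph{(ii)}, is exactly that argument spelled out.
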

\begin{proof}
  The proof of the lemma goes along the same lines as the proof of
  \cite[Theorem 6.1.14]{hl} so that we omit the details here.
\end{proof}

\begin{prop}\label{prop kappa22}
  Assume that there is a universal sheaf $\mathscr{E}$ on
  $\mathcal{M}\times S$. Let $\mathscr{E}^i
  :=\mathscr{E}xt^i_{p_{12}}(p_{13}^*\mathscr{E},p_{23}^*\mathscr{E})$,
  $i=0,1,2$.  Then the component of
  $\kappa^{\mathrm{top}}_2(\mathscr{E}^1)$ in
  $\HH^2(\mathcal{M},\Q)\otimes \HH^2(\mathcal{M},\Q)$ is equal to
  $-\mathfrak{B}$.
\end{prop}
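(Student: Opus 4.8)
The plan is to compute the full Chern character of the alternating sum $\sum_i(-1)^i[\mathscr{E}^i]$ by Grothendieck--Riemann--Roch, to extract from it the low-degree Chern classes of $\mathscr{E}^1$ alone, and finally to read off the Künneth component of $\kappa_2(\mathscr{E}^1)$ in $\HH^2(\mathcal{M})\otimes\HH^2(\mathcal{M})$. In the Grothendieck group of $\mathcal{M}\times\mathcal{M}$ one has $\sum_i(-1)^i[\mathscr{E}^i]=[Rp_{12,*}(p_{13}^*\mathscr{E}^\vee\overset{\mathrm{L}}{\otimes} p_{23}^*\mathscr{E})]$, and since the relative tangent bundle of $p_{12}$ is $q^*T_S$ (with $q\colon\mathcal{M}\times\mathcal{M}\times S\to S$ the projection), GRR gives
\[
\sum_i(-1)^i\mathrm{ch}(\mathscr{E}^i)=p_{12,*}\big(p_{13}^*\mathrm{ch}(\mathscr{E})^\vee\cdot p_{23}^*\mathrm{ch}(\mathscr{E})\cdot q^*\mathrm{td}(S)\big)=p_{12,*}\big(p_{13}^*\mu^\vee\cdot p_{23}^*\mu\big),
\]
the last equality using $\mu=\mathrm{ch}(\mathscr{E})\cdot\pi_S^*(1+[pt])$ and $\mathrm{td}(S)=(1+[pt])^2$. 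This shape is tailored to O'Grady's homomorphism $f(\alpha)=\pi_{\mathcal{M},*}(\mu^\vee\cup\pi_S^*\alpha)$.

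Next I would isolate $\mathscr{E}^1$. By Lemma \ref{lem support of ext} the sheaves $\mathscr{E}^0$ and $\mathscr{E}^2$ are supported on the diagonal $\iota_\Delta(\mathcal{M})\subset\mathcal{M}\times\mathcal{M}$, whose codimension is $\dim\mathcal{M}=2n\ge 4$; hence their Chern characters lie in $\HH^{\ge 4n}(\mathcal{M}\times\mathcal{M})$ and contribute nothing in $\HH^0$, $\HH^2$ or $\HH^4$. Consequently, in these degrees $\mathrm{ch}(\mathscr{E}^1)=-p_{12,*}(p_{13}^*\mu^\vee\cdot p_{23}^*\mu)$; in particular $\rk\mathscr{E}^1=(v,v)=2n-2=:r$, while $c_1(\mathscr{E}^1)$ and $\mathrm{ch}_2(\mathscr{E}^1)$ are the degree-$2$ and degree-$4$ parts of that expression. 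Expanding \eqref{eq kappa on K group} yields $\kappa_2(\mathscr{E}^1)=\mathrm{ch}_2(\mathscr{E}^1)-\tfrac{1}{2r}c_1(\mathscr{E}^1)^2$, so it remains to compute the $\HH^2(\mathcal{M})\otimes\HH^2(\mathcal{M})$ parts of these two terms.

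The identification then proceeds as follows. Expanding $\mu$ over a basis of $\HH^*(S)$ and applying $p_{12,*}$ (Poincaré duality on $S$) exhibits the $\HH^2(\mathcal{M})\otimes\HH^2(\mathcal{M})$ component of $\mathrm{ch}_2(\mathscr{E}^1)$ as the image, under the maps built from $\mu^\vee$ and $\mu$, of the Casimir element of the Mukai pairing on $\tilde{\HH}(S,\Q)$. Orthogonally decomposing $\tilde{\HH}(S,\Q)=V\oplus\langle v\rangle$ (recall $(v,v)=r\neq0$), this Casimir splits as the inverse Mukai form on $V$, whose image lies in $\theta_v(V)=\HH^2(\mathcal{M})$, plus a rank-one term proportional to $\tau_1\otimes\tau_1$, where $\tau_1$ is the $\HH^2(\mathcal{M})$-part of $f(v)$ appearing in Lemma \ref{lem components of f}(i). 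Because $c_1(\mathscr{E}^1)$ has only one-sided Künneth components, each a multiple of $\tau_1$ by that same lemma, the correction $-\tfrac{1}{2r}c_1(\mathscr{E}^1)^2$ contributes only a multiple of $\tau_1\otimes\tau_1$ to the mixed Künneth piece; the very point of passing from $\mathrm{ch}_2$ to $\kappa_2$ is that this correction removes the $\langle v\rangle$-contribution, leaving the image of the inverse Mukai form restricted to $V=v^\perp$. By O'Grady's theorem $\theta_v\colon V\to(\HH^2(\mathcal{M}),q_{\mathcal{M}})$ is an isometry, so this image is the inverse Beauville--Bogomolov form; tracking the sign coming from $\mathrm{ch}(\cdot)^\vee$ and from solving for $\mathscr{E}^1$ yields $-\mathfrak{B}$.

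The main obstacle is exactly the sign-and-constant bookkeeping in this last step. The dualization $\mathrm{ch}(\cdot)^\vee$ introduces signs $(-1)^{p/2}$ on each Künneth piece and breaks the symmetry between the two factors (one carries $\mu$, the other $\mu^\vee$), while $p_{12,*}$ matches the intersection form on $\HH^*(S)$ with the Mukai pairing only up to these signs. One must verify that, after all identifications, the Casimir contribution along $\langle v\rangle$ and the $-\tfrac{1}{2r}c_1(\mathscr{E}^1)^2$ correction cancel on the nose — this is where the exact value $r=(v,v)$ and Lemma \ref{lem components of f} are indispensable — and that the surviving transported inverse form on $v^\perp$ appears with coefficient $-1$ rather than some other scalar. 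Everything else in the argument is formal; checking these cancellations precisely is the only genuinely delicate point.
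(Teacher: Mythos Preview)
Your proposal is correct and follows essentially the same route as the paper: GRR gives $\sum_i(-1)^i\mathrm{ch}(\mathscr{E}^i)=p_{12,*}(p_{13}^*\mu^\vee\cdot p_{23}^*\mu)$, Lemma~\ref{lem support of ext} isolates $\mathscr{E}^1$ in degrees $\le 4$, and then one computes $[\mathrm{ch}_2(\mathscr{E}^1)]_{2,2}=-\mathfrak{B}-\tfrac{1}{2n-2}\tau_1\otimes\tau_1$ and $c_1(\mathscr{E}^1)=-p_1^*\tau_1+p_2^*\tau_1$ so that the $\kappa_2$ correction $-\tfrac{1}{2r}c_1^2$ cancels the $\tau_1\otimes\tau_1$ term exactly. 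The paper carries out this last cancellation by choosing an explicit basis of $V$ rather than invoking Casimir language, but the content is identical.
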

\begin{proof}
  Take a basis $\{w_1,\ldots,w_{23}\}$ of $V$ and set
  $\hat{w}_i=\theta_v(w_i)$. By the above result of O'Grady, we know
  that $\{\hat{w}_1,\ldots,\hat{w}_{23}\}$ forms a basis of
  $\HH^2(\mathcal{M},\Z)$. Let $\mu = \sum\mu'_{i}$ be the
  decomposition of $\mu$ such that
  $\mu'_{i}\in\HH^{2i}(\mathcal{M})\otimes \HH^*(S)$.  Let
  $A=(a_{ij})_{23\times 23}$ be the intersection matrix of the
  bilinear form on $V$ with respect to the chosen basis,
  \textit{i.e.}, $a_{ij}=(w_i,w_j)$. Note that $A$ is also the matrix
  representing the Beauville--Bogomolov form on
  $\HH^2(\mathcal{M},\Z)$ with respect to the basis $\hat{w}_i$. Let
  $B=(b_{ij})=A^{-1}$. The result of O'Grady together with the first
  part of Lemma \ref{lem components of f} implies that
$$
(\mu'_1)^\vee = -\sum_{1\leq i,j\leq 23}b_{ij}\hat{w}_i\otimes
w_j^\vee -\frac{1}{2n-2}\tau_1\otimes v^\vee, \qquad \mu'_1 =
\sum_{1\leq i,j\leq 23}b_{ij}\hat{w}_i\otimes w_j +
\frac{1}{2n-2}\tau_1\otimes v.
$$
The second part of Lemma \ref{lem components of f} implies that
$\mu'_0=[\mathcal{M}]\otimes v$. Then we can compute
\begin{align*}
  \nu &  :=
  \mathrm{ch}^{\mathrm{top}}(p_{12_!}(p_{13}^![\mathscr{E}]^\vee
  \cdot p_{23}^![\mathscr{E}])\\
  & = p_{12_*}(p_{13}^*\mathrm{ch}^{\mathrm{top}}(\mathscr{E})^\vee
  \cup p_{23}^*\mathrm{ch}^{\mathrm{top}}(\mathscr{E})
  \cup p_3^*\mathrm{td}_S) \\
  & = p_{12_*} (p_{13}^*\mu^\vee\cup p_{23}^*\mu),
\end{align*}
where the second equation uses the Grothendieck--Riemann--Roch
theorem.  Denoting $[\nu]_{i,j}$ the $\HH^{i}(\mathcal{M})\otimes
\HH^{j}(\mathcal{M})$-component of $[\nu]$, we readily get
\begin{align*}
  [\nu]_{2,2}& = p_{12_*}(p_{12}^*(\mu'_1)^\vee \cup p_{23}^*\mu'_1) \\
  &= \sum_{i,j=1}^{23} \sum_{i',j'=1}^{23} b_{ij}b_{i'j'} a_{jj'}
  \hat{w}_i \otimes \hat{w}_{i'} +\frac{1}{2n-2}\tau_1\otimes \tau_1\\
  &= \sum_{i,i'=1}^{23} b_{ii'}\hat{w}_i\otimes \hat{w}_{i'}
  +\frac{1}{2n-2} \tau_1\otimes \tau_1\\
  &= \mathfrak{B} +\frac{1}{2n-2}\tau_1\otimes \tau_1.
\end{align*}
Similarly, we find $[\nu]_{0,2}=-[\mathcal{M}]\otimes\tau_1$ and
$[\nu]_{2,0}=\tau_1\otimes [\mathcal{M}]$ from the second part of
Lemma \ref{lem components of f}. Note that
$$
p_{12,!}(p_{13}^![\mathscr{E}]^\vee\cdot p_{23}^![\mathscr{E}]) = \sum
(-1)^i[\mathscr{E}^i] \ \in K_0(\mathcal{M}\times \mathcal{M})$$ and
recall that by Lemma \ref{lem support of ext}, $\mathscr{E}^0$ and
$\mathscr{E}^2$ do not contribute to $\mathrm{ch}_1$ or
$\mathrm{ch}_2$. Hence we get
\begin{align*}
  &[\mathrm{ch}^{\mathrm{top}}_2(\mathscr{E}^1)]_{2,2} = -[\nu]_{2,2}
  =
  -\mathfrak{B} - \frac{1}{2n-2}p_1^*\tau_1\cup p_2^*\tau_1,\\
  &\mathrm{ch}^{\mathrm{top}}_1(\mathscr{E}^1) = -[\nu]_{0,2}
  -[\nu]_{2,0} =-p_1^*\tau_1 +p_2^*\tau_1.
\end{align*}
Then the $\kappa$-class of $\mathscr{E}^1$ can be computed as follows
\begin{align*}
  \kappa^{\mathrm{top}}(\mathscr{E}^1) & =
  \mathrm{ch}^{\mathrm{top}}(\mathscr{E}^1)\cdot
  \mathrm{exp}\Big(-\frac{c_1^{\mathrm{top}}(\mathscr{E}^1)}{2n-2}\Big)\\
  & = \Big(2n-2 + \mathrm{ch}^{\mathrm{top}}_1(\mathscr{E}^1)
  +\mathrm{ch}^{\mathrm{top}}_2(\mathscr{E}^1) +\cdots\Big) \Big(1
  -\frac{\mathrm{ch}_1^{\mathrm{top}}(\mathscr{E}^1)}{2n-2} +
  \frac{(\mathrm{ch}_1^{\mathrm{top}}(\mathscr{E}^1))^2}{2(2n-2)^2}
  +\cdots\Big).
\end{align*}
Therefore
$$
\kappa_2^{\mathrm{top}}(\mathscr{E}^1) =
\mathrm{ch}_2^{\mathrm{top}}(\mathscr{E}^1)
-\frac{1}{4n-4}(\mathrm{ch}_1^{\mathrm{top}}(\mathscr{E}^1))^2.
$$
The Lemma follows by taking the
$\HH^2(\mathcal{M})\otimes\HH^2(\mathcal{M})$-component.
\end{proof}

\subsection{Markman's twisted sheaves}
Let $F$ be a hyperk\"ahler manifold of $\mathrm{K3}^{[n]}$-type,
\textit{i.e.}, deformation equivalent to the Hilbert scheme of
length-$n$ subschemes of a $\mathrm{K3}$ surface, where $n\geq 2$ is
an integer. In \cite{markman}, Markman constructed a twisted sheaf
$\mathscr{M}$ on $F\times F$ which is the deformation of some sheaf
that is well understood.

\begin{thm}[Markman \cite{markman}]\label{thm markman}
  Let $F$ be a hyperk\"ahler manifold of $\mathrm{K3}^{[n]}$-type.
  There exists a $\mathrm{K3}$ surface $S$ together with a Mukai
  vector $v\in\tilde{\HH}(S,\Z)$ with primitive $v_1$ and a
  $v$-generic ample divisor $H$ such that there is a proper flat
  family $\pi : \mathcal{X}\rightarrow C$ of compact hyperk\"ahler
  manifolds satisfying the following conditions~:
\begin{enumerate}[(i)]
\item The curve $C$, which is connected but possibly reducible, has
  arithmetic genus 0.

\item There exist $t_1,t_2\in C$ such that $\mathcal{X}_{t_1}=F$ and
  $\mathcal{X}_{t_2} = \mathcal{M}_H(v)$.

 \item A universal sheaf $\mathscr{E}$ exists on
  $\mathcal{M}_H(v)\times S$.

  \item There is a torsion-free reflexive coherent twisted sheaf
  $\mathscr{G}$ on $\mathcal{X}\times_C \mathcal{X}$, flat over $C$,
  such that $\mathscr{G}_{t_2} :=\mathscr{G}|_{\mathcal{X}_{t_2}}$ is
  isomorphic to $\mathscr{E}xt^1_{p_{12}}(p_{13}^*\mathscr{E},
  p_{23}^*\mathscr{E})$.
  \end{enumerate}
\end{thm}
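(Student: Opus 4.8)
The plan is to realize $F$ and a suitable moduli space $\mathcal{M}_H(v)$ as two fibers of a single family of hyperk\"ahler manifolds, and then to deform the relative $\mathscr{E}xt^1$-sheaf from $\mathcal{M}_H(v)$ across this family onto $F$ by exploiting the hyperholomorphic structure carried by that sheaf. First I would choose the data $(S,v,H)$. Since $F$ is of $\mathrm{K3}^{[n]}$-type, one may pick a K3 surface $S$ together with a primitive class $v_1$ and a Mukai vector $v$ with $(v,v)=2n-2$; by Theorem \ref{thm moduli of sheaves hk} the moduli space $\mathcal{M}_H(v)$ attached to a $v$-generic ample $H$ is then again of $\mathrm{K3}^{[n]}$-type, hence deformation equivalent to $F$. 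In addition I would impose a coprimality condition on the numerical invariants recorded by $v$, so that $\mathcal{M}_H(v)$ becomes a \emph{fine} moduli space and thus carries a genuine untwisted universal sheaf $\mathscr{E}$ on $\mathcal{M}_H(v)\times S$; this is exactly what is needed for condition \emph{(iii)}.

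Next I would construct the connecting family $\pi:\mathcal{X}\to C$. The essential input is the deformation theory of hyperk\"ahler manifolds: each such manifold carries a twistor space which is a proper family over $\PP^1$, and Verbitsky's analysis of the moduli of marked hyperk\"ahler manifolds shows that any two manifolds in one deformation class can be joined by a finite chain of twistor lines. Concatenating these twistor copies of $\PP^1$ produces a connected nodal curve $C$ of arithmetic genus $0$ together with a proper flat family $\mathcal{X}\to C$ whose members are compact hyperk\"ahler manifolds, with two distinguished points $t_1,t_2\in C$ over which the fibers are $F$ and $\mathcal{M}_H(v)$ respectively. This delivers conditions \emph{(i)} and \emph{(ii)}.

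Finally I would deform the twisted sheaf. On $\mathcal{M}_H(v)\times\mathcal{M}_H(v)$ set $\mathscr{G}_{t_2}:=\mathscr{E}xt^1_{p_{12}}(p_{13}^*\mathscr{E},p_{23}^*\mathscr{E})$. The crucial point is that $\mathscr{G}_{t_2}$ is \emph{hyperholomorphic}: its Chern character is assembled from the Mukai vector of $\mathscr{E}$, which is monodromy invariant, so the relevant characteristic classes stay of Hodge type for every complex structure occurring in the twistor families used to build $C$. By Verbitsky's theorem that hyperholomorphic sheaves deform unobstructedly along twistor lines, suitably extended to the twisted and reflexive setting, the sheaf $\mathscr{G}_{t_2}$ propagates along each $\PP^1$ in the chain and therefore spreads out to a torsion-free reflexive twisted sheaf $\mathscr{G}$ on $\mathcal{X}\times_C\mathcal{X}$, flat over $C$ and restricting to $\mathscr{G}_{t_2}$ over $t_2$. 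This is condition \emph{(iv)}.

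The main obstacle lies in this last step. Verbitsky's hyperholomorphic deformation theory is developed for \emph{locally free} sheaves on untwisted families, whereas here one must deform a sheaf that is only reflexive and, more seriously, genuinely \emph{twisted} by a Brauer class $\theta$ which itself varies over $C$. Controlling the deformation of the pair $(\theta,\mathscr{G})$ simultaneously---showing that the obstruction to deforming the twisted reflexive sheaf vanishes and that reflexivity survives across the nodes of $C$---is the technical heart of the argument and is precisely the contribution of Markman \cite{markman}. Verifying hyperholomorphicity in the twisted setting, namely that the appropriate $\kappa$-classes of $\mathscr{G}_{t_2}$ remain of Hodge type for all the complex structures in play, is the key input that makes the deformation unobstructed.
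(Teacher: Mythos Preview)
The paper does not give a proof of this theorem at all: it is stated as a result of Markman \cite{markman} and used as a black box. What you have written is not a comparison target but rather a plausible outline of Markman's own argument, and indeed you correctly identify at the end that the technical core --- deforming a twisted reflexive sheaf along twistor lines while controlling the Brauer class --- is precisely what Markman carries out.

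Your sketch is accurate in spirit. A few points worth sharpening if you intend this as an actual proof outline rather than a summary: the choice of $(S,v,H)$ with a fine moduli space is not automatic and requires some care (one typically arranges $\gcd(r,c_1\cdot H)=1$ or similar); the chain of twistor lines must be chosen compatibly with the hyperholomorphic condition on $\mathscr{G}_{t_2}$, meaning the K\"ahler classes at the nodes must be selected so that the relevant $\kappa$-classes remain of Hodge type on both adjacent twistor spheres; and the passage across nodes of $C$ (gluing the deformed sheaves from one $\PP^1$ to the next) needs an argument that the limits agree. You flag the twisted/reflexive extension of Verbitsky's theory as the main obstacle, which is correct, but since the paper does not reproduce Markman's proof there is nothing further to compare against here.
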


\begin{defn} \label{def Markman sheaf} The twisted sheaf
  $\mathscr{G}_{t_1}$ on $F\times F$ obtained in Theorem \ref{thm
    markman} will be denoted $\mathscr{M}$ and, although it might
  depend on choices, it will be referred to as \emph{Markman's sheaf}.
\end{defn}

However, the low-degree cohomological $\kappa$-classes do not depend
on choices~:
\begin{lem}\label{lem top kappa of F}
  Let $\kappa^{\mathrm{top}}_2(\mathscr{M})=\kappa_2^{0,4} +
  \kappa_2^{2,2} + \kappa_2^{4,0}$ be the decomposition of
  $\kappa^{\mathrm{top}}_2(\mathscr{M})$ into the K\"unneth components
  $\kappa_2^{i,j}\in\HH^i(F)\otimes\HH^j(F)$. Then we have

\begin{enumerate}[(i)]
\item $\kappa_2^{2,2} = -\mathfrak{B}$ in $\HH^4(F\times F,\Q)$~;

\item $\kappa_2^{4,0}=\frac{1}{2}p_1^*(\mathfrak{b} -
  c_2^{\mathrm{top}}(F))$ and
  $\kappa_2^{0,4}=\frac{1}{2}p_2^*(\mathfrak{b} -
  c_2^{\mathrm{top}}(F))$ in $\HH^4(F\times F,\Q)$~;

\item $(\iota_{\Delta})^*\kappa_2^{\mathrm{top}}(\mathscr{M}) = -
  c_2^{\mathrm{top}}(F)$ in $\HH^4(F,\Q)$, where
  $\iota_\Delta :F\rightarrow F\times F$ is the diagonal embedding.
  \end{enumerate}
\end{lem}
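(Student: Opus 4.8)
The plan is to establish all three identities first on the moduli space $\mathcal{M}:=\mathcal{M}_H(v)$, where by Theorem \ref{thm markman}(iii) a genuine (untwisted) universal sheaf $\mathscr{E}$ exists and all classes are computable, and then to transport them to $F$ by means of Lemma \ref{lem continuity of top kappa}. That lemma identifies $\kappa_2^{\mathrm{top}}(\mathscr{M})=\kappa_2^{\mathrm{top}}(\mathscr{G}_{t_1})$ with the parallel transport of $\kappa_2^{\mathrm{top}}(\mathscr{G}_{t_2})=\kappa_2^{\mathrm{top}}(\mathscr{E}^1)$ in the smooth family $\mathcal{X}\times_C\mathcal{X}\to C$. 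The parallel transport on $\HH^*(\mathcal{X}_t\times\mathcal{X}_t)$ is induced by the one on each factor $\HH^*(\mathcal{X}_t)$; hence it respects the K\"unneth bidegree, commutes with $p_1^*$, $p_2^*$ and $\iota_\Delta^*$, and — because the Beauville--Bogomolov form is monodromy invariant in a family of hyperk\"ahler manifolds — fixes $\mathfrak{B}$, $\mathfrak{b}=\iota_\Delta^*\mathfrak{B}$ and the Chern class $c_2^{\mathrm{top}}$ of the relative tangent bundle. So each identity, once known on $\mathcal{M}$, transports verbatim to $F$.

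For (i) and (iii) on $\mathcal{M}$: identity (i) is precisely Proposition \ref{prop kappa22}, which gives $[\kappa_2^{\mathrm{top}}(\mathscr{E}^1)]_{2,2}=-\mathfrak{B}$. For (iii), I would restrict $\mathscr{E}^1=\mathscr{E}xt^1_{p_{12}}(p_{13}^*\mathscr{E},p_{23}^*\mathscr{E})$ to the diagonal. By the deformation theory of the moduli space, the fiber of $\mathscr{E}^1$ over a point $t$ of the diagonal is $\mathrm{Ext}^1(\mathscr{E}_t,\mathscr{E}_t)=T_t\mathcal{M}$, and by base change $\iota_\Delta^*\mathscr{E}^1$ is the tangent bundle $T_{\mathcal{M}}$ (by Lemma \ref{lem support of ext} and stability, $\mathscr{E}^0$ and $\mathscr{E}^2$ restrict to line bundles on the diagonal and hence do not affect $\kappa_2$). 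Since $\kappa$ commutes with pull-back and $c_1(T_{\mathcal{M}})=0$, we get $\iota_\Delta^*\kappa_2^{\mathrm{top}}(\mathscr{E}^1)=\kappa_2(T_{\mathcal{M}})=\mathrm{ch}_2(T_{\mathcal{M}})=-c_2^{\mathrm{top}}(\mathcal{M})$.

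For (ii) on $\mathcal{M}$: write $\kappa_2^{\mathrm{top}}(\mathscr{E}^1)=p_1^*\alpha+\kappa_2^{2,2}+p_2^*\beta$ with $\alpha,\beta\in\HH^4(\mathcal{M},\Q)$. The key input is a symmetry coming from relative Serre duality along $p_{12}$, whose fibers are K3 surfaces with trivial relative dualizing sheaf: denoting $\sigma$ the involution of $\mathcal{M}\times\mathcal{M}$ exchanging the two factors, one has $\sigma^*\mathscr{E}^1\cong(\mathscr{E}^1)^\vee$. Dualization preserves $\kappa_2$ (it negates $\mathrm{ch}_1$, hence fixes both $\mathrm{ch}_1^2$ and $\mathrm{ch}_2$) and $\kappa$ commutes with pull-back, so $\sigma^*\kappa_2^{\mathrm{top}}(\mathscr{E}^1)=\kappa_2^{\mathrm{top}}(\mathscr{E}^1)$; comparing K\"unneth components forces $\alpha=\beta$. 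Restricting to the diagonal and using (i) and (iii) gives $\alpha+\beta-\mathfrak{b}=2\alpha-\mathfrak{b}=-c_2^{\mathrm{top}}(\mathcal{M})$, whence $\alpha=\beta=\tfrac12\big(\mathfrak{b}-c_2^{\mathrm{top}}(\mathcal{M})\big)$, which is (ii) on $\mathcal{M}$.

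The main obstacle I expect is the sheaf-theoretic bookkeeping on $\mathcal{M}$: verifying that $\iota_\Delta^*\mathscr{E}^1$ really is $T_{\mathcal{M}}$ (base change for the relative $\mathscr{E}xt$, using the constancy of $\dim\mathrm{Ext}^0$ and $\dim\mathrm{Ext}^2$ along the diagonal guaranteed by stability and Serre duality), and that relative Serre duality indeed yields $\sigma^*\mathscr{E}^1\cong(\mathscr{E}^1)^\vee$ with the correct sign and no spurious twist. The transport step is then essentially formal, once one records that parallel transport in the hyperk\"ahler family fixes $\mathfrak{B}$, $\mathfrak{b}$ and $c_2^{\mathrm{top}}$ and commutes with $p_i^*$ and $\iota_\Delta^*$.
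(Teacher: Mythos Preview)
Your proposal is correct, and the overall deformation-and-transport strategy (via Lemma~\ref{lem continuity of top kappa} and Theorem~\ref{thm markman}) is the same as the paper's. Part~\emph{(i)} is handled identically, by reduction to Proposition~\ref{prop kappa22}.

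The difference lies in the logical order of \emph{(ii)} and \emph{(iii)}. The paper imports \emph{(ii)} as a black box from \cite[Lemma~1.4]{markman} and then obtains \emph{(iii)} formally from \emph{(i)} and \emph{(ii)} by restricting to the diagonal. You reverse this: you prove \emph{(iii)} directly on $\mathcal{M}$ via the identification $\iota_\Delta^*\mathscr{E}^1\cong T_{\mathcal{M}}$ (whence $\kappa_2=-c_2$ since $c_1(T_{\mathcal{M}})=0$), and then recover \emph{(ii)} from \emph{(i)}, \emph{(iii)} and the symmetry $\sigma^*\mathscr{E}^1\cong(\mathscr{E}^1)^\vee$ furnished by relative Serre duality for the K3 fibration $p_{12}$. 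Your route is self-contained and avoids the external citation, at the price of the sheaf-theoretic verifications you flag yourself: the base-change statement giving $\iota_\Delta^*\mathscr{E}^1\cong T_{\mathcal{M}}$ (which goes through because $\dim\mathrm{Ext}^i(\mathscr{E}_t,\mathscr{E}_t)$ is constant along the diagonal by stability and Serre duality), and the relative Serre duality isomorphism (no twist appears since $\omega_{p_{12}}$ is trivial). Both approaches are valid; the paper's is shorter, yours is more transparent about where each K\"unneth piece actually comes from.
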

\begin{proof}
  Since $\mathscr{M}$ deforms to the sheaf
  $\mathscr{E}^1=\mathscr{E}xt^1_{p_{12}}(p_{13}^*\mathscr{E},
  p_{23}^*\mathscr{E})$ (by Theorem \ref{thm markman}), we only need
  to check \emph{(i)} for $\mathscr{E}^1$ which is Proposition
  \ref{prop kappa22}. Statement \emph{(ii)} follows from \cite[Lemma
  1.4]{markman}, and statement \emph{(iii)} follows from \emph{(i)}
  and \emph{(ii)}.
\end{proof}

\begin{thm}\label{thm existence of L}
  Let $F$ be a projective hyperk\"ahler manifold of
  $\mathrm{K3}^{[n]}$-type, $n\geq 2$. Then there exists
  $L\in\CH^2(F\times F)$ lifting the Beauville--Bogomolov class
  $\mathfrak{B}\in\HH^4(F\times F,\Q)$.
\end{thm}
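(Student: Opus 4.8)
The plan is to obtain $L$ as the algebraic $\kappa$-class lifting the cohomological class computed in Lemma \ref{lem top kappa of F}. The key observation is that, since $F$ is projective, Markman's twisted sheaf $\mathscr{M}$ on $F \times F$ admits a well-defined $\kappa$-class \emph{at the level of Chow groups}, not merely in cohomology. Indeed, as developed in \S 9.1, for a smooth projective $Y$ the class $\kappa(y)$ of formula \eqref{eq kappa on K group} lives in $\CH^*(Y)$, and via Remark \ref{rmk untwisted sheaf} together with the $r^{\mathrm{th}}$-root construction (taking the order-$r$ tensor power to untwist, computing the honest $\kappa$-class there, and extracting the root), this extends to twisted sheaves on $Y = F \times F$. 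Thus I would first define
\[
L := -\kappa_2(\mathscr{M}) + \tfrac{1}{2}p_1^*\bigl(\iota_\Delta^* \kappa_2(\mathscr{M})\bigr) + \tfrac{1}{2}p_2^*\bigl(\iota_\Delta^* \kappa_2(\mathscr{M})\bigr) \in \CH^2(F \times F),
\]
or rather simply $L := -\kappa_2(\mathscr{M})$ corrected by the appropriate pullbacks of the diagonal restriction so as to kill the $(4,0)$ and $(0,4)$ K\"unneth components.

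The heart of the argument is then to verify that the cohomology class $[L]$ is exactly $\mathfrak{B}$. For this I would invoke Lemma \ref{lem top kappa of F}: its part \emph{(i)} gives $\kappa_2^{2,2} = -\mathfrak{B}$, so that the $(2,2)$-component of $-[\kappa_2(\mathscr{M})]$ is $\mathfrak{B}$; parts \emph{(ii)} and \emph{(iii)} describe the $(4,0)$ and $(0,4)$ components as $\tfrac{1}{2}p_i^*(\mathfrak{b}-c_2^{\mathrm{top}}(F))$, together with $\iota_\Delta^*\kappa_2^{\mathrm{top}}(\mathscr{M}) = -c_2^{\mathrm{top}}(F)$. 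Since the correction terms I have added are pullbacks of $\iota_\Delta^* L$ along $p_1$ and $p_2$, and these pullbacks have cohomology class determined precisely by the quantities in \emph{(ii)}--\emph{(iii)}, the off-diagonal K\"unneth pieces cancel in cohomology, leaving $[L] = \mathfrak{B}$. In effect the subtraction symmetrizes and centers the $\kappa$-class so that only the $\HH^2 \otimes \HH^2$-part survives, which is $\mathfrak{B}$ by definition.

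I expect the main obstacle to be a bookkeeping one rather than a conceptual one: namely, making the passage from $\kappa^{\mathrm{top}}$ to $\kappa$ in the Chow group rigorous for the \emph{twisted} sheaf $\mathscr{M}$. The subtlety is that $\mathscr{M}$ is only defined up to the choices in Theorem \ref{thm markman} (and up to twisting by line bundles), so one must check that $\kappa_2(\mathscr{M}) \in \CH^2(F \times F)$ is genuinely independent of these choices. Equation \eqref{eq kappa and line bundle}, $\kappa(\mathscr{E} \otimes \mathscr{L}) = \kappa(\mathscr{E})$, handles the line-bundle ambiguity, and the multiplicativity $\kappa(\mathscr{E} \overset{\mathrm{L}}{\otimes} \mathscr{F}) = \kappa(\mathscr{E})\kappa(\mathscr{F})$ together with the uniqueness of $r^{\mathrm{th}}$ roots in the relevant degrees handles the twisting. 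Once this well-definedness over $\CH^*$ is in place, the theorem follows immediately: $L$ is an honest algebraic cycle whose cohomology class is $\mathfrak{B}$, which is exactly the assertion. No hard intersection-theoretic computation is needed beyond what Lemma \ref{lem top kappa of F} already supplies; the content is entirely in recognizing that the $\kappa$-class refines to Chow groups and extracting the right K\"unneth component.
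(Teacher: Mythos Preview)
Your overall strategy coincides with the paper's: take $-\kappa_2(\mathscr{M})$ at the Chow level and correct it by pullbacks from the factors so that only the $(2,2)$ K\"unneth component survives in cohomology. The problem is that your explicit correction does not accomplish this. By Lemma~\ref{lem top kappa of F}\emph{(iii)} one has $\iota_\Delta^*\kappa_2^{\mathrm{top}}(\mathscr{M}) = -c_2^{\mathrm{top}}(F)$, so your proposed $L$ has cohomology class
\[
\mathfrak{B} - \tfrac{1}{2}p_1^*(\mathfrak{b} - c_2^{\mathrm{top}}(F)) - \tfrac{1}{2}p_2^*(\mathfrak{b} - c_2^{\mathrm{top}}(F)) - \tfrac{1}{2}p_1^*c_2^{\mathrm{top}}(F) - \tfrac{1}{2}p_2^*c_2^{\mathrm{top}}(F) \;=\; \mathfrak{B} - \tfrac{1}{2}\mathfrak{b}_1 - \tfrac{1}{2}\mathfrak{b}_2,
\]
not $\mathfrak{B}$. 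The diagonal restriction only sees $-c_2^{\mathrm{top}}(F)$, whereas the $(4,0)$ and $(0,4)$ components you need to cancel are $\tfrac{1}{2}p_i^*(\mathfrak{b} - c_2^{\mathrm{top}}(F))$: the $\mathfrak{b}$-part is invisible to $\iota_\Delta^*$ precisely because $\iota_\Delta^*\mathfrak{B} = \mathfrak{b}$ cancels it in the sum $\kappa_2^{2,2}+\kappa_2^{4,0}+\kappa_2^{0,4}$.

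What is missing is an algebraic lift of $\mathfrak{b}$ itself in $\CH^2(F)$, and the diagonal pullback of $\kappa_2(\mathscr{M})$ does not provide one. The paper instead pushes forward along a projection: setting
\[
l := \tfrac{2}{e(F)}\, p_{2,*}\bigl(\kappa_2(\mathscr{M}) \cdot p_1^* c_{2n}(F)\bigr) + c_2(F) \ \in \CH^2(F),
\]
the factor $p_1^*c_{2n}(F)$ kills all K\"unneth components of $\kappa_2^{\mathrm{top}}(\mathscr{M})$ except the $(0,4)$-one, so cohomologically the push-forward isolates $\tfrac{e(F)}{2}(\mathfrak{b} - c_2^{\mathrm{top}}(F))$ and $l$ lifts $\mathfrak{b}$. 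Then $L := -\kappa_2(\mathscr{M}) + \tfrac{1}{2}p_1^*(l - c_2(F)) + \tfrac{1}{2}p_2^*(l - c_2(F))$ has cohomology class exactly $\mathfrak{B}$. Your concern about the Chow-level well-definedness of $\kappa_2(\mathscr{M})$ for twisted sheaves is legitimate but is addressed by the setup preceding the theorem; the genuine gap is the choice of correction term.
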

\begin{proof}
  Let $e(F)=\deg(c_{2n}(F))$ be the Euler number of $F$. We first
  define
$$
l :=\frac{2}{e(F)}p_{2,*}(\kappa_2(\mathscr{M})\cdot p_1^*c_{2n}(F)) +
c_2(F) \ \in\CH^2(F),
$$
which, by \emph{(ii)} of Lemma \ref{lem top kappa of F}, lifts the
class $\mathfrak{b}$. Then we set
\begin{equation*}
  L  := -\kappa_2(\mathscr{M}) +\frac{1}{2}p_1^*(l - c_2(F)) +\frac{1}{2}p_2^*(l
  - c_2(F)) \ \in\CH^2(F\times F).
\end{equation*}
By \emph{(i)} and \emph{(ii)} of Lemma \ref{lem top kappa of F}, we
see that $L$ lifts $\mathfrak{B}$.
\end{proof}

\begin{rmk}
  With definitions as in the proof of Theorem \ref{thm existence of
    L}, the equality $\iota_{\Delta}^*L=l$ holds if and only if item
  \emph{(iii)} of Lemma \ref{lem top kappa of F} holds at the level of Chow
  groups.
\end{rmk}

\begin{rmk}
  Let $S$ be a K3 surface and let $F=S^{[n]}$ be the Hilbert scheme of
  length-$n$ subschemes of $S$.  Let $Z\subset F\times S$ be the
  universal family of length-$n$ subschemes of $S$ and let
  $\mathcal{I}$ be the ideal sheaf defining $Z$ as a subscheme of
  $F\times S$.  In this way, $F$ is realized as the moduli space of
  stable sheaves with Mukai vector $v=(1,0,1-n)$, and $\mathcal{I}$ is
  the universal sheaf. In this situation, there is no need to deform
  and Markman's sheaf is simply given by $\mathscr{M} :=
  \mathscr{E}xt^1_{p_{12}}(p_{13}^*\mathcal{I},p_{23}^*\mathcal{I})$. We
  can then define, for $F=S^{[n]}$,
  \begin{equation} \label{def L Markman sheaf} L  :=
    -\kappa_2(\mathscr{M}) +\frac{1}{2}p_1^*(l - c_2(F))
    +\frac{1}{2}p_2^*(l - c_2(F)) \ \in\CH^2(F\times F),
  \end{equation}
  where $l  :=\frac{2}{e(F)}p_{2,*}(\kappa_2(\mathscr{M}) \cdot
  p_1^*c_{2n}(F)) + c_2(F)$. This cycle $L$ represents the
  Beauville--Bogomolov class $\mathfrak{B}$ by virtue of Theorem
  \ref{thm existence of L}. In the case where $F=S^{[2]}$, this cycle
  will be shown to coincide with the cycle constructed in \eqref{eq L
    S2}~; see Proposition \ref{prop L agree}.
\end{rmk}


\newpage

\vspace{1pt}
\begin{large}
\part{The Hilbert scheme $S^{[2]}$}
\end{large}

\vspace{10pt}
\section{Basics on the Hilbert scheme of length-$2$ subschemes on a
  variety $X$}\label{sec basics X2}

Let $X$ be a smooth projective variety of dimension $d$. Let
$F=X^{[2]}$ be the Hilbert scheme of length-$2$ subschemes on $X$ and
let
$$
\xymatrix{
  Z\ar[d]_p\ar[r]^q & X \\
  F & }
$$
be the universal family seen as a correspondence between $F$ and
$X$. For any $w\in F$, we use $Z_w := q(p^{-1}w)$ to denote the
corresponding subscheme of $X$. Note that $Z$ is naturally isomorphic
to the blow-up of $X\times X$ along the diagonal.  Let $\rho :
Z\rightarrow X\times X$ be the blow-up morphism with $E\subset Z$ the
exceptional divisor. Then $E$ is naturally identified with the
geometric projectivization $\PP(\mathscr{T}_X) := \mathrm{Proj}
\left(\mathrm{Sym}^\bullet\, \Omega_X^1 \right)$ of the tangent bundle
$\mathscr{T}_X$ of $X$. Let $\pi :E\rightarrow X$ be the natural
projection. Then $q$ can be chosen to be $p_1 \circ\rho$, where $p_1
:X\times X\rightarrow X$ is the projection onto the first factor. Let
$\Delta\subset F$ be the points corresponding to nonreduced subschemes
of $X$, or equivalently $\Delta=p(E)$. As varieties, we have
$\Delta\cong E$.  We will identify $\Delta$ and $E$ with
$\PP(\mathscr{T}_X)$ and accordingly we have closed immersions $j
:\PP(\mathscr{T}_X)\hookrightarrow F$ and $j'
:\PP(\mathscr{T}_X)\hookrightarrow Z$. Hence we have the following
diagram
\begin{equation}\label{eq basic blow-up diagram}
 \xymatrix{
 X &E\ar[r]^{j'}\ar@{=}[d]\ar[l]_{\pi} & Z\ar[r]^{\rho\quad}\ar[d]^p &X
\times X\\
 &\PP(\mathscr{T}_X)\ar[r]^{j} &F &
}
\end{equation}
Let $\delta$ be the cycle class of $\frac{1}{2}\Delta$. By abuse of
notation, we also use $\delta$ to denote its cohomological class in
$\HH^2(F,\Z)$. If we further assume that $\HH^1(X,\Z)=0$, then we
see that
\begin{equation}\label{decomposition H2 general}
\HH^2(F,\Z) = p_*q^*\HH^2(X,\Z) \oplus \Z\delta.
\end{equation}
We also note that the double cover $Z\rightarrow F$ naturally
induces an involution $\tau :Z\rightarrow Z$.

\begin{rmk}
  If $X=S$ is a K3 surface, then $F$ is a hyperk\"ahler manifold~; see
  \cite{beauville0}. The homomorphism $[Z]^* :\HH^2(S,\Z)\rightarrow
  \HH^2(F,\Z)$ induces the following orthogonal direct sum
  decomposition with respect to the Beauville--Bogomolov form $q_F$  :
\begin{equation}\label{eq decomposition H2}
  \HH^2(F,\Z)=\HH^2(S,\Z) \ \hat{\oplus} \ \Z\delta.
\end{equation}
Furthermore, $q_F(\delta,\delta)=-2$ and $q_F$ restricted to
$\HH^2(S,\Z)$ is the intersection form on $S$.
\end{rmk}

For any given point $x\in X$, we can define a smooth variety
$X_x=p(q^{-1}x)\subset F$. The variety $X_x$ is isomorphic to the
blow-up of $X$ at the point $x$ and it represents the cycle $Z^*x$.
For two distinct points $x,y\in X$, we use $[x,y]\in F$ to denote the
point of $F$ that corresponds to the subscheme $\{x,y\}\subset X$.
When $x=y$, we use $[x,x]$ to denote the element in $\CH_0(F)$
represented by any point corresponding to a nonreduced subscheme of
length $2$ of $X$ supported at $x$.  As cycles, we have
\begin{equation}
\label{eq XxXy} X_x\cdot X_y = [x,y].
\end{equation}

Intersecting with $\delta$ is unveiled in the following lemma.

\begin{lem}\label{lem intersection identity X2}
  Let $\pi :\PP(\mathscr{T}_X)\rightarrow X$ be the natural morphism
  and $\xi \in \CH^1\big(\PP(\mathscr{T}_X)\big)$ be the first Chern
  class of the relative $\calO(1)$ bundle.  Then we have the following
  identities in $\CH^*(F)$,
\begin{align*}
  &\delta^k =\frac{(-1)^{k-1}}{2}j_*\xi^{k-1},\quad k=1,2,\ldots,2d \, ;\\
  &\delta^k\cdot X_x =(-1)^{k-1}(j_x)_*(\xi_x^{k-1}), \quad
  k=1,2,\ldots,d,
\end{align*}
where $j_x :\pi^{-1}x=\PP(\mathscr{T}_{X,x})\rightarrow F$ is the
natural closed immersion and $\xi_x$ is the class of a hyperplane on
$\PP(\mathscr{T}_{X,x})$. In particular, we have 
\begin{equation}
  \label{eq delta Xx} \delta^d\cdot X_x=(-1)^{d-1}[x,x] 
  \quad \mbox{in} \ \CH_0(F).
\end{equation}
\end{lem}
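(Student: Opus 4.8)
The plan is to transfer every computation from $F$ to the universal family $Z$, where the blow-up structure of \eqref{eq basic blow-up diagram} makes everything explicit, via the finite flat degree-$2$ cover $p\colon Z\to F$. First I would record three geometric inputs. (a) In $\CH^1(Z)$ one has $p^*\delta=[E]$: since $p$ is ramified exactly along $E$ over $\Delta$, the scheme-theoretic preimage satisfies $p^*\Delta=2E$, whence $p^*\delta=p^*[\tfrac12\Delta]=[E]$. (b) The self-intersection formula $j'^*[E]=-\xi$ holds, because the normal bundle of the exceptional divisor of a blow-up is $N_{E/Z}=\calO_E(-1)$, i.e. $\calO_Z(E)|_E=\calO_E(-1)$. (c) As maps $\PP(\mathscr{T}_X)\to X$ one has $q\circ j'=\pi$: indeed $\rho\circ j'=\iota_\Delta\circ\pi$ (the inclusion of $E$ onto $\Delta_X\subset X\times X$ is $\pi$ followed by the diagonal), and since $q=p_1\circ\rho$ and $p_1\circ\iota_\Delta=\mathrm{id}_X$, the composite is $\pi$.

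From (b) I would deduce by induction the identity $[E]^k=(-1)^{k-1}j'_*\xi^{k-1}$ in $\CH^*(Z)$. Writing $[E]=j'_*1$, the projection formula for the divisor $E$ gives $[E]^k=[E]\cdot[E]^{k-1}=j'_*\big(j'^*[E]^{k-1}\big)=j'_*\big((j'^*[E])^{k-1}\big)=(-1)^{k-1}j'_*\xi^{k-1}$. The first displayed identity is then immediate: as $p$ is finite of degree $2$ we have $p_*p^*=2$, so by (a),
\[
2\,\delta^k=p_*p^*\delta^k=p_*\big((p^*\delta)^k\big)=p_*[E]^k=(-1)^{k-1}p_*j'_*\xi^{k-1}=(-1)^{k-1}j_*\xi^{k-1},
\]
the last equality using $p\circ j'=j$; dividing by $2$ yields $\delta^k=\tfrac{(-1)^{k-1}}{2}j_*\xi^{k-1}$.

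For the second identity I would use $X_x=Z^*x=p_*q^*[x]$. By the projection formula and (a),
\[
\delta^k\cdot X_x=p_*\big(p^*\delta^k\cdot q^*[x]\big)=p_*\big([E]^k\cdot q^*[x]\big).
\]
Substituting $[E]^k=(-1)^{k-1}j'_*\xi^{k-1}$ and applying the projection formula for $j'$ together with (c) gives $[E]^k\cdot q^*[x]=(-1)^{k-1}j'_*\big(\xi^{k-1}\cdot\pi^*[x]\big)$. Since $\pi$ is a projective bundle, $\pi^*[x]=[\pi^{-1}(x)]$, and restricting $\xi$ to the fiber $\pi^{-1}(x)=\PP(\mathscr{T}_{X,x})$ yields $\xi^{k-1}\cdot[\pi^{-1}(x)]=(\iota_x)_*\xi_x^{k-1}$, where $\iota_x$ is the fiber inclusion and $\xi_x=\iota_x^*\xi$. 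Pushing forward and using $p\circ j'\circ\iota_x=j\circ\iota_x=j_x$ collapses everything to $\delta^k\cdot X_x=(-1)^{k-1}(j_x)_*\xi_x^{k-1}$.

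Finally, the displayed relation \eqref{eq delta Xx} is the case $k=d$: the fiber $\PP(\mathscr{T}_{X,x})\cong\PP^{d-1}$ has $\xi_x^{d-1}$ equal to the class of a point, so $(j_x)_*\xi_x^{d-1}$ is the class of a point of $F$ supported on a nonreduced length-$2$ subscheme at $x$, namely $[x,x]$, giving $\delta^d\cdot X_x=(-1)^{d-1}[x,x]$. I expect the only genuinely delicate points to be the rigorous justification of input (a) (that $p$ is ramified along $\Delta$ with $p^*\Delta=2E$ scheme-theoretically) and the compatibility of Gysin pullbacks needed to write $j'^*q^*[x]=(q\circ j')^*[x]=\pi^*[x]$; once these are in place, the remaining steps are formal applications of the projection formula and the self-intersection formula.
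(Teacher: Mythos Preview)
Your proof is correct and follows essentially the same approach as the paper: lift to $Z$ via $p^*\delta=[E]$, use the self-intersection formula $j'^*[E]=-\xi$ to compute $[E]^k$, and then push back down. The only minor difference is cosmetic: for the second identity you apply the projection formula in the direction $\delta^k\cdot p_*q^*[x]=p_*\big([E]^k\cdot q^*[x]\big)$, whereas the paper writes $\frac12 p_*E^k\cdot X_x=\frac12 p_*(E^k\cdot p^*X_x)$ and uses $p^*X_x=\widetilde X_x+\tau^*\widetilde X_x$ together with the $\tau$-invariance of $E^k$ to cancel the factor of $\tfrac12$; your route sidesteps the involution $\tau$ entirely, which is marginally cleaner.
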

\begin{proof}
  First recall that $p : Z \rightarrow F$ has degree $2$ and that
  $\delta=\frac{1}{2}p_*E$. By the projection formula, we
  get $$p_*(E^k) = p_*p^*(\delta^k) = \delta^k\cdot p_*(Z) =
  2\delta^k.$$ Note that
  $E^k=j'_*(-\xi)^{k-1}=(-1)^{k-1}j'_*\xi^{k-1}$. It follows that
  $\delta^k=\frac{(-1)^{k-1}}{2}j_*\xi^{k-1}$, $k=1,\ldots,2d-1$. Now,
\begin{align*}
  \delta^k\cdot X_x =\frac{1}{2}p_*E^k\cdot X_x
  =\frac{1}{2}p_*(E^k\cdot p^*X_x) = \frac{1}{2}p_*(E^k\cdot
  (\widetilde{X}_x +\tau^*\widetilde{X}_x)) =p_*(E^k\cdot
  \widetilde{X}_x),
\end{align*}
where $\widetilde{X}_x=q^{-1}(x)$. Note that $E\cdot \widetilde{X}_x =
(j'_x)_*\PP(\mathscr{T}_{X,x})$, where $j'_x =j'|_{\pi^{-1}x}$ and
that $(j'_x)^*E = -\xi_x$. We get
\[
E^k\cdot \widetilde{X}_x =E^{k-1}\cdot (j'_x)_*\PP(\mathscr{T}_{X,x})
=(j'_x)_*(j'_x)^*(E^{k-1}) =(-1)^{k-1}(j'_x)_*(\xi_x^{k-1}).
\]
Apply $p_*$ to the above equation and the second equality of the lemma
follows since $j_x=p\circ j'_x$.
\end{proof}


\vspace{10pt}
\section{The incidence correspondence $I$} \label{sec I X2} Let $X$ be
a smooth projective variety of dimension $d$ and let $F :=X^{[2]}$ be
the Hilbert scheme of length-$2$ subschemes on $X$. We introduce the
incidence correspondence $I \in \CH^d(F \times F)$ and establish a
quadratic equation satisfied by $I$~; see Proposition \ref{prop I
  square on S2}.  We also compute, in Lemma \ref{lem diagonal
  pull-back of I}, the pull-back of $I$ along the diagonal embedding
$\iota_\Delta : F \hookrightarrow F \times F$. The first result will
already make it possible to obtain in that generality a splitting of
the Chow group of zero-cycles on $X^{[2]}$~; see Section \ref{sec mult
  X2}. When $X$ is a K3 surface, the incidence cycle $I$ will be
closely related to the crucial lifting of the Beauville--Bogomolov
class $\mathfrak{B}$, and Proposition \ref{prop I square on S2} and
Proposition \ref{lem diagonal pull-back of I} will constitute the
first step towards establishing the quadratic equation \eqref{eq
  rational equation} of Conjecture \ref{conj main}~; see \S \ref{sec
  Fourier S2}.  \medskip

The notations are those of the previous section. Consider the diagram
$$
\xymatrix{
  Z\times Z\ar[r]^{q\times q}\ar[d]_{p\times p} &X\times X\\
  F\times F & }
$$
\begin{defn}\label{defn incidence correspondence S2}
The \textit{incidence subscheme} $I\subset F\times F$ is defined by
\[
I  : = \{ (w_1,w_2)\in F\times F : \text{there exists a closed point
}x\in X \text{ such that } x\in Z_{w_i}, i=1,2\},
\]
where the right hand side is equipped with the reduced closed
subscheme structure. The \emph{incidence correspondence} is the cycle
class of $I$ in $\CH^2(F \times F)$ and, by abuse of notations, is
also denoted $I$.
\end{defn}

The following lemma justifies this abuse of notations.

\begin{lem}\label{lem cycle class of I on S2} The incidence subscheme
  $I$ enjoys the following properties.
\begin{enumerate}[(i)]
\item The sub-variety $I$ is irreducible and its cycle class is given
  by
\[
I=(p\times p)_*(q\times q)^{*}\Delta_X \quad \mbox{in} \ \CH^d(F
\times F).
\]
Equivalently, if one sees the universal family $Z$ of length-$2$
subschemes on the variety $X$ as a correspondence between $F =
X^{[2]}$ and $X$, then $$ I = {}^tZ \circ Z ,\quad \text{in
}\CH^d(F\times F).$$
\item The diagonal $\Delta_F$ is contained in $I$ and
  $I\backslash\Delta_F$ is smooth.
\item The action of $I$ on points is given by
 \begin{equation}
 I_*[x,y] = X_x+X_y.
 \end{equation}
\end{enumerate}
\end{lem}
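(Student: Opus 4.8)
The plan is to deduce all three parts from the smoothness of the projection $q:Z\to X$ and the geometry of the fibre product $Z\times_X Z$. First I would record that $q$ is smooth: the fibre $q^{-1}(x)=\rho^{-1}(\{x\}\times X)$ is the blow-up of $\{x\}\times X\cong X$ at the point $(x,x)$, where $\{x\}\times X$ meets the centre $\Delta_X$ transversally, so every fibre is smooth of dimension $d=\dim Z-\dim X$; a morphism of smooth varieties all of whose fibres are smooth of the expected dimension is smooth. Hence $\mathrm{pr}_1:Z\times_X Z\to Z$, being a base change of $q$, is smooth, and $Z\times_X Z$ is a smooth variety.

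For part \emph{(i)}, smoothness of $q\times q$ and flat pull-back give $(q\times q)^*\Delta_X=[(q\times q)^{-1}(\Delta_X)]=[Z\times_X Z]$. Moreover $Z\times_X Z$ is irreducible: the flat morphism $Z\times_X Z\to Z\to X$ has geometrically irreducible fibres $q^{-1}(x)\times q^{-1}(x)$ over the irreducible base $X$, so every generic point of the source lies in the irreducible generic fibre. Writing $\psi:=p\times p:Z\times_X Z\to F\times F$, its image is exactly $I$ by Definition \ref{defn incidence correspondence S2}, whence $I$ is irreducible. Over a general pair $(w_1,w_2)\in I$ the two subschemes share a unique point $x$, and there is a unique $(z_1,z_2)$ with $p(z_i)=w_i$ and $q(z_1)=q(z_2)=x$; thus $\psi$ is birational onto $I$ and $(p\times p)_*[Z\times_X Z]=[I]$, giving $I=(p\times p)_*(q\times q)^*\Delta_X$. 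Finally, the map $(z_1,z_2)\mapsto(p(z_1),q(z_1),p(z_2))$ identifies $Z\times_X Z$ with the proper, generically transverse (hence multiplicity-one) intersection $\mathrm{pr}_{FX}^*Z\cdot\mathrm{pr}_{XF}^*{}^tZ$ in $F\times X\times F$, and projection to the outer factors is $\psi$; by the composition formula this yields $I={}^tZ\circ Z$.

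For part \emph{(ii)}, the inclusion $\Delta_F\subseteq I$ is immediate since $(w,w)$ satisfies the incidence condition for any $x\in Z_w$. To control $I\setminus\Delta_F$ I would compute $\psi^{-1}(\Delta_F)$: a point $(z_1,z_2)$ lies over $\Delta_F$ iff $z_2\in\{z_1,\tau z_1\}$, and since $q\circ\tau=p_2\circ\rho$ the condition $q(z_1)=q(\tau z_1)$ forces $\rho(z_1)\in\Delta_X$, i.e.\ $z_1\in E$, where $\tau z_1=z_1$; hence $\psi^{-1}(\Delta_F)$ equals the relative diagonal $\Delta_{Z/X}$. Thus $\psi$ restricts to a bijective morphism from the smooth variety $(Z\times_X Z)\setminus\Delta_{Z/X}$ onto $I\setminus\Delta_F$. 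In characteristic $0$ it then suffices to check that this morphism is unramified, for then it is an isomorphism onto its image and $I\setminus\Delta_F$ is smooth. Away from $E\times Z\cup Z\times E$ the map $p\times p$ is \'etale and there is nothing to do; the one delicate point, which I expect to be the main obstacle, is at $(z_1,z_2)$ with $z_1\in E$: there one must verify that the kernel of $dp$ at $z_1$ (the line anti-invariant under $\tau$, transverse to $E$) meets the tangent space of the $q$-fibre trivially, so that the relation $dq(v_1)=dq(v_2)$ cutting out $T(Z\times_X Z)$ forces $d\psi$ to be injective. This is a local computation at the ramification divisor $E$.

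For part \emph{(iii)}, the formula is a formal consequence of \emph{(i)} and the functoriality $({}^tZ\circ Z)_*={}^tZ_*\circ Z_*$. Viewing $Z$ as a correspondence from $F$ to $X$, one has $Z_*[w]=[Z_w]$, so $Z_*[x,y]=x+y$ in $\CH_0(X)$. Viewing ${}^tZ$ as a correspondence from $X$ to $F$, one has ${}^tZ_*[x]=p_*[q^{-1}(x)]=p_*[\widetilde X_x]$, which equals $X_x$ because $p$ maps $\widetilde X_x=q^{-1}(x)$ birationally onto $X_x$. Therefore $I_*[x,y]={}^tZ_*(x+y)=X_x+X_y$.
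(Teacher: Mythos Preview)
Your proofs of \emph{(i)} and \emph{(iii)} are correct and match the paper's approach. For \emph{(ii)} you take a somewhat different route: the paper first uses the fibration $I\to F$ (fibre over $[x,y]$ equal to $X_x\cup X_y$) to localize the possible singularities to $\Delta_F\cup(E\times_X E)$, and then checks smoothness at a point $u=((x,\C v_1),(x,\C v_2))\in E\times_X E$ by constructing an explicit analytic surface $S\subset Z\times Z$ with $T_{\tilde u}S=\ker d(p\times p)$ and showing that $S$ meets $Z_\Delta$ with multiplicity one when $\C v_1\neq\C v_2$. Your reduction via the bijective proper morphism $\psi\colon (Z\times_X Z)\setminus\Delta_{Z/X}\to I\setminus\Delta_F$ is clean (your identification $\psi^{-1}(\Delta_F)=\Delta_{Z/X}$ is correct, and a finite bijective unramified morphism from a smooth variety onto a reduced variety is an isomorphism), and it boils down to the same transversality statement $\ker d(p\times p)\cap T(Z\times_X Z)=0$.

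There is one genuine gap in what you have written: the condition you isolate, $\ker(dp_{z_1})\cap\ker(dq_{z_1})=0$, only handles the case where $z_2\notin E$. When \emph{both} $z_1,z_2\in E$ with $z_1\neq z_2$ (two distinct tangent directions $\C v_1\neq\C v_2$ at the same $x\in X$), the kernel of $d(p\times p)$ is two-dimensional and you need more: a pair $(w_1,w_2)$ with $w_i\in\ker(dp_{z_i})$ lies in $T(Z\times_X Z)$ iff $dq_{z_1}(w_1)=dq_{z_2}(w_2)$, so you must also know that the lines $dq_{z_i}(\ker(dp_{z_i}))\subset T_xX$ are distinct. A local computation in blow-up coordinates (which also settles your deferred claim $\ker dp\cap\ker dq=0$) shows that $dq_{z_i}$ maps $\ker(dp_{z_i})$ isomorphically onto $\C v_i$, so distinctness holds precisely because $z_1\neq z_2$; this is exactly what the paper's multiplicity-one computation encodes.
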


\begin{proof}
  Recall that $Z$ is the blow-up of $X\times X$ along the diagonal and
  that $q :Z\rightarrow X$ is the composition of the blow-up morphism
  $Z \rightarrow X \times X$ with the first projection $p_1 :X\times
  X\rightarrow X$. As a consequence, $q$ is a smooth morphism and
  $q^{-1}x\cong \mathrm{Bl}_x(X)$ is the blow-up of $X$ at the closed
  point $x\in X$. It follows that $q\times q : Z\times Z\rightarrow
  X\times X$ is smooth. Hence $Z_{\Delta} :=(q\times
  q)^{-1}\Delta_X\subset Z\times Z$ is a smooth sub-variety whose
  cycle class is $(q\times q)^*\Delta_X$. Note that the image of
  $p\times p$ restricted to $Z_{\Delta}$ is $I$. In particular, $I$ is
  irreducible.  To prove \emph{(i)}, it suffices to show that the
  morphism $(p\times p)|_{Z_{\Delta}}$ is generically one-to-one. A
  general point of $I$ has the form $([x,y],[y,z])$, where $x,y,z\in
  X$ are distinct points and the inverse image of $([x,y],[y,z])$ in
  $Z\times Z$ is
\[
 \{(\rho^{-1}(x,y),\rho^{-1}(y,z)), (\rho^{-1}(y,x),\rho^{-1}(y,z)),
(\rho^{-1}(x,y),\rho^{-1}(z,y)),
(\rho^{-1}(y,x),\rho^{-1}(z,y))\}.
\]
Among these four points, the only point contained in $Z_\Delta$ is
$(\rho^{-1}(y,x),\rho^{-1}(y,z))$. Since the degree of $p\times p$ is
4, this proves that $(p\times p)|_{Z_\Delta}$ has degree 1 and hence
statement \emph{(i)} follows.

To prove \emph{(ii)}, we note that the fiber of the natural morphism
$I\rightarrow F$ (projection to the first factor) over a point
$[x,y]\in F$ is $[x,y]\times (X_x\cup X_y)$, which is smooth away from
the point $([x,y],[x,y])$ if $x\neq y$.  Hence the singular locus of
$I$ is contained in the union of $\Delta_F$ and $E\times_X E$.  Note
that a length-$2$ subscheme supported at a point $x$ is given by a
1-dimensional sub-space $\C v\subset \mathscr{T}_{X,x}$~; we use
$(x,\C v)$ to denote the corresponding point of $F$.  Let $u=((x,\C
v_1),(x,\C v_2))\in I$ be a point supported on $E\times_X E$.  Let
$\tilde{u}\in Z\times Z$ be the unique lift of $u$, and let
\[
t\mapsto \varphi_i (t)\in X, \quad t\in\C,\, |t|<\epsilon,
\]
be a germ of a smooth analytic curve with $\varphi'_i(0)\in \C v_i$,
$i=1,2$. We define
\[
 \tilde{\varphi}_i : \{t\in \C : |t|< \epsilon\} \longrightarrow Z
\]
to be the strict transform of the curve
\[
 t\mapsto (\varphi_i(t), \varphi_i(-t))\in X\times X.
\]
Let $S\subset Z\times Z$ be the surface parameterized by
\[
 (t_1,t_2)\mapsto (\tilde{\varphi}_1(t_1),\tilde{\varphi}_2(t_2))\in Z\times Z.
\]
Then $\mathscr{T}_{S,\tilde{u}}\subset \mathscr{T}_{Z\times
  Z,\tilde{u}}$ is the kernel of $d(p\times p) : \mathscr{T}_{Z\times
  Z,\tilde{u}} \rightarrow \mathscr{T}_{F\times F, u}$. The
scheme-theoretic intersection of $Z_\Delta$ and $S$ is given by the
equation $\varphi_1(t_1)=\varphi_2(t_2)$. It follows that $S$
intersects $I$ at the point $\tilde{u}$ with multiplicity 1 if $\C
v_1\neq \C v_2$. This implies that the map
$\mathscr{T}_{Z_\Delta,\tilde{u}}\rightarrow \mathscr{T}_{I,u}$ is an
isomorphism. In particular, $I$ is smooth at $u$ if $\C v_1\neq \C
v_2$ or, equivalently, if $u\notin \Delta_F$. Hence we conclude that
$I\backslash \Delta_F$ is smooth. Statement \emph{(iii)} follows directly
from the definition of $I$.
\end{proof}

\begin{defn}
Let $\sigma\in\CH^r(X)$ ~; we define an element $\Gamma_\sigma
\in\CH^{d+r}(F \times F)$ as follows.
\[
 \Gamma_\sigma  := (p\times p)_* (q\times q)^*
 (\iota_{\Delta_X})_*\sigma,
\]
where $\iota_{\Delta_X} :X\rightarrow X\times X$ is the diagonal
embedding.
\end{defn}

\noindent Note that, if $\sigma$ is represented by an irreducible
closed sub-variety $Y\subset X$, then $\Gamma_\sigma$ is the cycle
class of
\begin{equation}\label{eq Gamma_Y}
  \Gamma_Y :=\{(w_1,w_2)\in F\times F : \exists y\in Y,\, y\in Z_{w_1}\text{
    and } y\in Z_{w_2}\}
\end{equation}
In particular, if we take $Y=X$, then we have $\Gamma_X = I$.\medskip

The following proposition is the analogue of an identity of Voisin,
stated in \eqref{identity of voisin2}, which was originally proved in
the case of varieties of lines on cubic fourfolds.

\begin{prop}\label{prop I square on S2}
  Let $\delta_i$, $i=1,2$, be the pull-back of $\delta$ via the
  projection of $F\times F$ to the $i^\text{th}$ factor. We write
\[
y_k  := (-1)^{k-1}(\delta_1^k -\sum_{i=1}^{k-1} \delta_1^i\delta_2^{k-i}
+\delta_2^k),\quad k=1,2,\ldots, 4d,\,\,d=\dim X,
\]
and $y_0 :=[F \times F]$. Then the cycle $I$ satisfies the following
equation in $\CH^{2d}(F\times F)$,
\begin{equation}\label{eq I equation on S2}
I^2 = 2\Delta_F + y_d\cdot I +\sum_{k=0}^{d-1} y_k\cdot
\Gamma_{c_{d-k}(X)}.
\end{equation}
\end{prop}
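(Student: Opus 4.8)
The plan is to compute $I^2 = I \circ I$ by realizing $I = {}^tZ \circ Z$ from Lemma \ref{lem cycle class of I on S2}\emph{(i)} and then analyzing the self-intersection geometrically. First I would set up the fiber-product description: since $I = (p\times p)_*(q\times q)^*\Delta_X$ corresponds to the smooth locus being an honest incidence variety, I would compute $I \cdot I$ by intersecting the two incidence conditions. A point contributing to $I^2$ lying over $(w_1, w_2)$ records pairs of points $x, x' \in X$ with $x \in Z_{w_1} \cap Z_{w}$ and $x' \in Z_w \cap Z_{w_2}$ for some intermediate $w$; summing over $w$ forces either $x = x'$ (the generic situation, producing the term $y_d \cdot I$ together with the diagonal contribution $2\Delta_F$) or the excess contributions where the subschemes meet with multiplicity, which is exactly where the exceptional divisor $E = \PP(\mathscr{T}_X)$ and the Chern classes $c_{d-k}(X)$ enter.

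The key computational device will be the projective bundle structure on $E$ and the identities of Lemma \ref{lem intersection identity X2}, which express powers $\delta^k$ via push-forwards of $\xi^{k-1}$ from $\PP(\mathscr{T}_X)$. I would organize the calculation as an excess-intersection formula: writing $I = {}^tZ \circ Z$, I expand ${}^tZ \circ Z \circ {}^tZ \circ Z$ and insert the known relation $Z \circ {}^tZ$ (the self-correspondence on $X$ governing how two length-$2$ subschemes share points). The term $2\Delta_F$ arises because a generic length-$2$ subscheme $\{x,y\}$ shares a point with itself in two ways (via $x$ and via $y$), which is precisely the degree-$2$ phenomenon visible in Lemma \ref{lem cycle class of I on S2}\emph{(iii)} where $I_*[x,y] = X_x + X_y$. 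The polynomials $y_k$ in $\delta_1, \delta_2$ are engineered to package the alternating contributions from the exceptional locus, and I expect their precise shape $y_k = (-1)^{k-1}(\delta_1^k - \sum_{i=1}^{k-1}\delta_1^i\delta_2^{k-i} + \delta_2^k)$ to emerge from the Segre-class expansion $\sum_k (-\xi)^k$ on the $\PP^{d-1}$-fibers of $E$, matched against the Chern classes $c_{d-k}(X)$ via the relation $\sum_i \pi^* c_i(X) \cdot \xi^{d-1-i} = 0$ defining $\PP(\mathscr{T}_X)$.

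The hard part will be the careful bookkeeping of the excess-intersection contribution along the locus where the intermediate subscheme $Z_w$ is non-reduced, i.e. supported on the exceptional divisor $E \times_X E \subset Z \times Z$. Here the naive intersection $(q\times q)^*\Delta_X$ acquires excess along the blown-up diagonal, and I would need the normal bundle of $E$ in $Z$ (which is $\calO_{\PP(\mathscr{T}_X)}(-1)$, giving the restriction $(j'_x)^*E = -\xi_x$ used in Lemma \ref{lem intersection identity X2}) to correctly extract the coefficients. Concretely, I would reduce the whole identity to a statement about cycles supported on the various diagonals and on $\Gamma_{c_{d-k}(X)}$, then verify the coefficients by testing the action of both sides on a spanning set of cycles — in particular on classes of the form $X_x$, $\delta^j \cdot X_x$, and $[x,y]$ — using the intersection formulas \eqref{eq XxXy} and \eqref{eq delta Xx}. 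Since both sides are determined by their actions together with their classes modulo the exceptional contributions, this reduces the verification to a finite check of numerical coefficients, which I would carry out degree by degree in $\delta$; the self-consistency of the top-degree term (involving $y_d \cdot I$ and the Euler class $c_d(X)$) serves as the main cross-check that the sign conventions and the range of summation in $y_k$ are correct.
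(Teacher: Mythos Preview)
Your proposal conflates two different operations: the self-intersection $I^2 = I \cdot I \in \CH^{2d}(F\times F)$, which is what the proposition is about, and the composition $I \circ I$ of $I$ with itself as a correspondence. When you describe ``a point contributing to $I^2$ lying over $(w_1,w_2)$ records pairs $x, x'$ with $x \in Z_{w_1}\cap Z_w$ and $x' \in Z_w \cap Z_{w_2}$ for some intermediate $w$'', and when you propose to expand ${}^tZ \circ Z \circ {}^tZ \circ Z$ and insert a relation for $Z \circ {}^tZ$, you are computing the composition. The self-intersection $I \cdot I$ has no intermediate $w$; set-theoretically it is supported on $I$ itself, and on the smooth locus $I\setminus \Delta_F$ it is the push-forward of $c_d(\mathscr{N}_{I/F\times F})$. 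Your later invocation of excess intersection and normal bundles is closer to the mark, but it is not compatible with the composition picture you set up.

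The paper's proof goes via the normal bundle directly. One lifts $I$ to the smooth cycle $\tilde{I}=(q\times 1)^{-1}Z \subset Z\times F$, which maps birationally to $I$ away from $\Delta_F$ under $p\times 1$. Then $\mathscr{N}_{\tilde{I}/Z\times F}\cong (q\times 1)^*\mathscr{N}_{Z/X\times F}$, and comparing with the pullback of $\mathscr{N}_{I/F\times F}$ via the tangent sequence for the double cover $p$ produces an extra factor $c(\calO_E(2E))=1+E-E^2+\cdots$. That factor, together with $c(\mathscr{N}_{Z/X\times F})=q^*c(X)\cdot(1+E-E^2+\cdots)$ and the identity $E=p^*\delta$, is exactly what yields the polynomials $y_k$ after expanding and pushing forward (the push-forward of $p_1^*q^*\sigma\cdot\tilde{I}$ being $\Gamma_\sigma$). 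This gives the identity on $F\times F\setminus\Delta_F$; localization then produces the formula with an undetermined coefficient $\alpha\,\Delta_F$. Only at this last step does one test on a cycle: letting both sides act on a point $[x,y]$ and comparing degrees (using that the $\Gamma_{c_{d-k}(X)}$-terms act trivially on $\CH_0(F)$ and that $[I^2]_{0,4d}=4[F]\otimes[pt]$) pins down $\alpha=2$.

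Your proposed endgame of ``verifying coefficients by testing the action on $X_x$, $\delta^j\cdot X_x$, $[x,y]$'' would not establish an equality of cycles in $\CH^{2d}(F\times F)$, since a correspondence is not determined by its action on Chow groups. The paper uses such a test only to fix the single scalar $\alpha$, after the identity has already been proved modulo $\Delta_F$ by the Chern-class computation.
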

\begin{proof}
  To prove this result, we first give a different description of the
  cycle $I$. Consider the following diagram
$$
\xymatrix{
 Z\times F\ar[r]^{q\times 1}\ar[d]_{p\times 1} &X\times F\\
 F\times F &
}
$$
The variety $Z$ is naturally a sub-variety of $X\times F$. Then
$I=(p\times 1)_*(q\times 1)^*Z$. Let $\tilde{I}=(q\times
1)^{-1}Z\subset Z\times F$. Note that $\tilde{I}$ is smooth since
$q\times 1$ is a smooth morphism. Then we see that $\tilde{I}$ is a
local complete intersection and
$$
\mathscr{N}_{\tilde{I}/Z\times F}\cong (q\times
1|_{\tilde{I}})^*\mathscr{N}_{Z/ X\times F}.
$$
The morphism $p\times 1$ induces a homomorphism
$$
\vartheta=d(p\times 1) :\mathscr{N}_{\tilde{I}/Z\times F}
\longrightarrow (p\times 1|_{\tilde{I}})^*\mathscr{N}_{I/F\times F}.
$$
The homomorphism $\vartheta$ fits into the following commutative
diagram with short exact rows and columns
$$
\xymatrix{ 0\ar[r] &\mathscr{T}_{\tilde{I}}\ar[rr]\ar[d] &&
  \mathscr{T}_{Z\times F}|_{\tilde{I}}\ar[rr]\ar[d]
  && \mathscr{N}_{\tilde{I}/Z\times F}\ar[d]^\vartheta\ar[r] &0\\
  0\ar[r] &(p\times 1|_{\tilde{I}})^*\mathscr{T}_{I}\ar[rr]\ar[d] &&
  (p\times 1|_{\tilde{I}})^*\mathscr{T}_{F\times F}|_I\ar[rr]\ar[d] &&
  (p\times 1|_{\tilde{I}})^*\mathscr{N}_{I/F\times F}\ar[r]\ar[d] &0\\
  0\ar[r] &\mathfrak{Q}'\ar[rr]&&p_1^*\calO_E(2E)|_{\tilde{I}} \ar[rr]
  &&\mathfrak{Q}\ar[r] &0 }
$$
where $p_i$, $i=1,2$, are the projections of $Z\times F$ onto the two
factors and all the sheaves are viewed as their restrictions to
$\tilde{I}\backslash (p\times 1)^{-1}\Delta_F$. In the above diagram,
the middle column follows from the following fundamental short exact
sequence
$$
\xymatrix{ 0\ar[r] &\mathscr{T}_Z\ar[rr] & & p^*\mathscr{T}_F\ar[rr]
&&\calO_E(2E)\ar[r] &0. }
$$
Note that the morphism $p\times 1|_{\tilde{I}} : \tilde{I}\rightarrow
I$ is an isomorphism away from $\Delta_F\subset I$ ~; it follows that
$\mathfrak{Q}'=0$ away from $(p\times 1)^{-1}\Delta_F$. Hence a Chern
class computation on $\tilde{I}\backslash(p\times 1)^{-1}\Delta_F$
gives
\begin{align}
  c((p\times 1|_{\tilde{I}})^*\mathscr{N}_{I/F\times F}) & =
  c(p_1^*\calO_E(2E))|_{\tilde{I}}\cdot
  c(\mathscr{N}_{\tilde{I}/Z\times F})
  \nonumber \\
  & = c(p_1^*\calO_E(2E))|_{\tilde{I}} \cdot c((q\times
  1|_{\tilde{I}})^*\mathscr{N}_{Z/ S\times F}). \label{eq chernItilde}
\end{align}
We have the following short exact sequence
$$
\xymatrix{ 0\ar[r] &\calO_Z(E)\ar[rr] &&\calO_Z(2E)\ar[rr]
  &&\calO_E(2E)\ar[r] &0. }
$$
From this exact sequence we get
\begin{equation} \label{eq chern2E}
c(\calO_E(2E))=\frac{1+2E}{1+E}=1 + E -E^2 +E^3 -E^4+\cdots
-E^{2d},\quad \text{in }\CH^*(Z), d=\dim X.
\end{equation}
In order to compute the Chern classes of $\mathscr{N}_{Z/ S\times F}$,
we consider the following diagram
$$
\xymatrix{
  && q^*\mathscr{T}_X\ar@{=}[r]\ar[d] &q^* \mathscr{T}_X\ar[d] &\\
  0\ar[r] &\mathscr{T}_Z\ar[r]\ar@{=}[d] &\mathscr{T}_{X\times
    F}|_{Z}\ar[r]\ar[d] &
  \mathscr{N}_{Z/ X\times F}\ar[d]\ar[r] & 0 \\
  0\ar[r] &\mathscr{T}_Z\ar[r] &p^*\mathscr{T}_F\ar[r]
  &\calO_E(2E)\ar[r] &0 }
$$
where all the rows and columns are short exact. As a result we have
\begin{equation}\label{eq Chern class of normal bundle}
  c(\mathscr{N}_{Z/ X\times F}) = c(q^*\mathscr{T}_X)\cdot
  c(\calO_E(2E)) = q^*c(X) \cdot (1+E-E^2+\cdots
  -E^{2d}),
\end{equation}
where $c(X) = 1+ c_1(X)+\cdots +c_d(X)$. Substituting \eqref{eq
  chernItilde} and \eqref{eq chern2E} into \eqref{eq Chern class of
  normal bundle} yields
\begin{align}
  c((p\times 1|_{\tilde{I}})^* \mathscr{N}_{I/F\times F}) &=
  p_1^*(1+E-E^2+E^3-E^4+\cdots -E^{2d})|_{\tilde{I}}\cdot (q\times
  1|_{\tilde{I}})^* q^*
  c(X) \label{eq bigeq}\\
  &\quad \cdot
  (q\times 1|_{\tilde{I}})^*(1+E-E^2+E^3-E^4+\cdots -E^{2d}) \nonumber\\
  &= p_1^*p^*(1+\delta-\delta^2+\delta^3-\delta^4+\cdots
  -\delta^{2d})|_{\tilde{I}}\cdot p_1^* q^*c(X)|_{\tilde{I}} \nonumber
  \\
  &\quad \cdot (q\times 1|_{\tilde{I}})^*p^*(1+\delta-\delta^2 +
  \delta^3-\delta^4+\cdots -\delta^{2d}) \nonumber\\
  &= p_1^*p^*(1+\delta-\delta^2+\delta^3-\delta^4+\cdots
  -\delta^{2d})|_{\tilde{I}}\cdot p_1^*q^*c(X)|_{\tilde{I}} \nonumber
  \\
  &\quad\cdot p_2^*(1+\delta-\delta^2+\delta^3-\delta^4+\cdots
  -\delta^{2d})|_{\tilde{I}}\nonumber \\
  &=(p\times 1)^*\{(1+\delta_1-\delta_1^2+\delta_1^3-\delta_1^4+\cdots
  -\delta^{2d})
  \nonumber\\
  &\quad \cdot(1+\delta_2-\delta_2^2 + \delta_2^3-\delta_2^4+\cdots
  -\delta^{2d})\}|_{\tilde{I}}\cdot p_1^*q^*c(X)|_{\tilde{I}},
  \nonumber
\end{align}
when restricted to $\tilde{I} \backslash (p\times 1)^{-1}\Delta_F$. The
second equation follows from the fact that $p^*\delta=E$ and that the
composition $q\circ(q\times 1)|_{\tilde{I}}$ is equal to $(q\circ
p_1)|_{\tilde{I}}$. The third equation holds since the composition of
$q\times 1|_{\tilde{I}} :\tilde{I}\rightarrow Z$ and $p :Z\rightarrow F$
is simply the second projection $p_2 :Z\times F\rightarrow F$
restricted to $\tilde{I}$. Now we need the following lemma.

\begin{lem}\label{lem Gamma alpha dot I}
Let $\sigma\in \CH^r(X)$ be a cycle. Then
\[
 (p\times 1)_*(p_1^*q^*\sigma \cdot \tilde{I} )=\Gamma_\sigma,
\]
where $p_1 :Z \times F \rightarrow Z$ is the projection to the first
factor and $p\times 1 : Z\times F\rightarrow F\times F$ is the
natural morphism.
\end{lem}
\begin{proof}
  We may assume that $\sigma$ is represented by an irreducible closed
  sub-variety $Y\subset X$. Then by definition, the class
  $p_1^*q^*\sigma \cdot \tilde{I}$ is represented by the cycle
\[
 \{(z,w)\in Z\times F : q(z)\in Y\text{ and } q(z)\in Z_w \},
\]
whose push-forward to $F \times F$ is easily seen to be $\Gamma_Y$~;
see equation \eqref{eq Gamma_Y}.
\end{proof}

Then the above equation \eqref{eq bigeq} for $c((p\times
1|_{\tilde{I}})^*\mathscr{N}_{I/F\times F})$ implies
\[
c_d((p\times 1|_{\tilde{I}})^* \mathscr{N}_{I/F\times F}) =
p_1^*q^*c_n(X)|_{\tilde{I}} + \sum _{k=1}^{d}(p\times 1)^*y_k\cdot
p_1^*q^*c_{d-k}(X)|_{\tilde{I}}.
\]
We apply $(p\times 1|_{\tilde{I}})_*$ to the above identity followed
by $(i_0)_*$, where $i_0 :I_0=I\backslash\Delta_F \rightarrow F\times
F\backslash \Delta_F$ is the natural inclusion. Then we get
\[
(i_0)_*c_d(\mathscr{N}_{I/F\times F}) = \sum_{k=0}^{d} y_k\cdot
(p\times 1)_* (p_1^*q^*c_{d-k}(X)\cdot \tilde{I}), \quad \text{when
  restricted to }F\times F\backslash \Delta_F.
\]
Recall that the self-intersection of $I$ outside of the diagonal is
given by the top Chern class of its normal bundle. We then apply the
localization sequence for Chow groups and Lemma \ref{lem Gamma alpha
  dot I} to obtain
\begin{equation}\label{eq with alpha}
 I^2 = \alpha\Delta_F + y_d\cdot I +\sum_{k=0}^{d-1} y_k\cdot
\Gamma_{c_{d-k}(X)}
\end{equation}
for some $\alpha\in\Z$. In order to determine the integer $\alpha$, we
will compute the degree of $(I^2)_*[x,y]$ for a general point
$[x,y]\in F$. First we have the following lemma.

\begin{lem}\label{lem action of simple terms X2}
  Let $\tau\in\CH_0(F)$, then $\Gamma_*\tau=0$ for all
  $\Gamma=y_k\cdot \Gamma_{c_{d-k}(X)}$, $k=0,1,\ldots,d-1$.
\end{lem}
\begin{proof}
Since $y_k$ is a linear combination of $\delta_1^i\delta_2^{k-i}$,
$i=0,1,\ldots,k$, we only need to show the following
\[
(\delta_1^i\delta_2^{k-i}\cdot \Gamma_{c_{d-k}(X)})_*\tau =
\delta^{k-i}\cdot (\Gamma_{c_{d-k}(X)})_*(\delta^i\cdot \tau) = 0.
\]
Note that $\delta^i\cdot \tau = 0$ unless $i=0$. It reduces to
showing that $(\Gamma_\sigma)_*\tau = 0$ for all cycles $\sigma$ on
$X$ of dimension less than $d$. Using the definition of
$\Gamma_\sigma$, we get
\[
p_1^*\tau\cdot\Gamma_\sigma = (p\times p)_*(p^{*}\tau\times
Z\cdot (q\times q)^*(\iota_{\Delta_X})_*\sigma) = 0,
\]
where the second equality holds because $q_*p^{*}\tau$ is
zero-dimensional and because $\sigma$ has dimension less than $d=\dim
X$. Thus $(\Gamma_\sigma)_*\tau = 0$.
\end{proof}

Equation \eqref{eq with alpha} above together with Lemma \ref{lem
  action of simple terms X2} shows that
\[
(I^2)_*[x,y]=\alpha[x,y] + (-1)^{d-1}\delta^d\cdot I_*[x,y].
\]
Recall from Lemma \ref{lem intersection identity X2} that
$I_*[x,y]=X_x + X_y$, and $\delta^d\cdot X_x=(-1)^{d-1}[x,x]$.  It
follows that
\[
 (I^2)_*[x,y] =\alpha\,[x,y] +[x,x] +[y,y].
\]
Meanwhile, we can calculate cohomologically the degree of $
(I^2)_*[x,y]$ as follows. Note that under the K\"unneth decomposition,
the $\HH^0(X)\otimes \HH^{2d}(X)$-component of $[\Delta_X]\in
\HH^{2d}(X)$ is given by
\[
 [\Delta_X]_{0,2d} = [X]\otimes [pt]
\]
By Lemma \ref{lem cycle class of I on S2}, we see that the
$\HH^0(F)\otimes \HH^{2d}(F)$-component of $[I]\in \HH^{2d}(F\times
F)$ is given by
\[
 [I]_{0,2d} =(p\times p)_*(q\times q)^*([X]\otimes [pt]) = 2[F]\otimes [X_x],
\]
where $x\in X$ is any point. It follows that the $\HH^0(F)\otimes
\HH^{4d}(F)$-component of $I^2$ is given by
\[
 [I^2]_{0,4d} = 4[F]\otimes [X_x]^2 = 4[F]\otimes [pt].
\]
Therefore
\[
\deg((I^2)_*[x,y])=4.
\]
This implies that $\alpha+2=4$. The proof of Proposition \ref{prop I
  square on S2} is complete.
\end{proof}

Let us conclude this section with the following proposition.

\begin{prop}\label{lem diagonal pull-back of I} Let $I \in \CH^{d}(F
  \times F)$ be the incidence correspondence. Then
  $$\iota_\Delta^*I = p_*q^*c_d(X) +\sum_{i=1}^d (-1)^{i-1}
  p_*q^*c_{d-i}(X)\cdot\delta^i,$$ where $\iota_\Delta :F\rightarrow
  F\times F$ is the diagonal embedding.
\end{prop}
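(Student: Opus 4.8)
The plan is to compute $\iota_\Delta^* I$ as a refined self-intersection, exploiting the smooth birational model $\tilde{I} = (q\times 1)^{-1}Z \subset Z\times F$ of $I$ already introduced in the proof of Proposition \ref{prop I square on S2}. Note first that since $I$ contains $\Delta_F$ and is singular there, $\iota_\Delta^*I$ cannot be read off a transverse intersection; however $\iota_\Delta^*[I]$ is still a well-defined class because $\iota_\Delta$ is a regular embedding of codimension $2d$ between smooth varieties, so $\iota_\Delta^* = \iota_\Delta^!$ agrees with the refined Gysin homomorphism. Recalling that $p\times 1 : Z\times F\to F\times F$ is proper with $(p\times 1)_*[\tilde{I}] = I$ (the restriction $(p\times 1)|_{\tilde{I}}$ being birational onto $I$), I would use the compatibility of the refined Gysin with proper push-forward to write $\iota_\Delta^* I = p_*\,\iota_\Delta^![\tilde{I}]$, where the base-changed map is computed on the fiber product $\tilde{I}\cap (p\times 1)^{-1}\Delta_F$.

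The first key point is that this fiber product is exactly the graph $\Gamma_p\cong Z$ of $p : Z\to F$: a point $(z,w)\in \tilde{I}$ with $p(z)=w$ automatically satisfies $q(z)\in Z_w$, so all of $\Gamma_p$ lies in $\tilde{I}$, and the induced map $\Gamma_p\to F$ is identified with $p$. The refined Gysin therefore localizes entirely on $\Gamma_p$ and is given by the excess-intersection formula
$$\iota_\Delta^![\tilde{I}] = \big\{ c(p^*\mathscr{T}_F)\cdot s(\Gamma_p,\tilde{I})\big\}_d,$$
where $p^*\mathscr{T}_F$ is the restriction to $\Gamma_p$ of $N_{\Delta_F/F\times F}\cong \mathscr{T}_F$. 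Since $\tilde{I}$ and $\Gamma_p$ are both smooth, $\Gamma_p$ is regularly embedded and $s(\Gamma_p,\tilde{I}) = c(N_{\Gamma_p/\tilde{I}})^{-1}\cap [\Gamma_p]$. The decisive simplification then comes from the normal bundle sequence for $\Gamma_p\subset \tilde{I}\subset Z\times F$: the normal bundle of a graph gives $N_{\Gamma_p/Z\times F}\cong p^*\mathscr{T}_F$, while flatness of $q\times 1$ gives $N_{\tilde{I}/Z\times F}|_{\Gamma_p}\cong \mathscr{N}_{Z/X\times F}$ (using that $z\mapsto(q(z),p(z))$ is the canonical isomorphism of $\Gamma_p\cong Z$ with the universal family in $X\times F$). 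Hence $c(p^*\mathscr{T}_F) = c(N_{\Gamma_p/\tilde{I}})\cdot c(\mathscr{N}_{Z/X\times F})$, the factor $c(p^*\mathscr{T}_F)$ cancels, and one is left with
$$\iota_\Delta^![\tilde{I}] = c_d\big(\mathscr{N}_{Z/X\times F}\big)\cap [Z].$$

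It then remains to feed in the Chern class computation \eqref{eq Chern class of normal bundle}, namely $c(\mathscr{N}_{Z/X\times F}) = q^*c(X)\cdot(1 + E - E^2 + \cdots - E^{2d})$, and to extract the degree-$d$ part, which is $q^*c_d(X) + \sum_{i=1}^d (-1)^{i-1} q^*c_{d-i}(X)\cdot E^i$. Pushing forward by $p$, and using the relation $E = p^*\delta$ (implicit in Lemma \ref{lem intersection identity X2}) together with the projection formula, yields
$$\iota_\Delta^* I = p_* q^*c_d(X) + \sum_{i=1}^d (-1)^{i-1} p_*q^*c_{d-i}(X)\cdot\delta^i,$$
as claimed. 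I expect the main obstacle to be the correct organization of the refined intersection rather than any hard computation: one must justify that the whole preimage $\Gamma_p$ lies in $\tilde{I}$, that the Gysin class is governed by the excess formula with Segre class $s(\Gamma_p,\tilde{I}) = c(N_{\Gamma_p/\tilde{I}})^{-1}\cap[\Gamma_p]$, and — most delicately — that $N_{\tilde{I}/Z\times F}|_{\Gamma_p}\cong \mathscr{N}_{Z/X\times F}$ under the universal-family identification, since this is precisely what produces the cancellation with $c(p^*\mathscr{T}_F)$. Everything downstream is the same normal-bundle bookkeeping already performed in the proof of Proposition \ref{prop I square on S2}.
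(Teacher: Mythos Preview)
Your proposal is correct and follows essentially the same route as the paper: both reduce $\iota_\Delta^*I$ to $p_*c_d(\mathscr{N}_{Z/X\times F})$ via an excess-intersection computation on $\Gamma_p\subset\tilde I$, then invoke \eqref{eq Chern class of normal bundle}. The only organizational difference is that the paper first pulls back along $p$ (so that $(p\times 1)^*I=\tilde I+\tilde I'$), applies the self-intersection formula to each piece, and uses the $\tau$-symmetry $p^*p_*=1+\tau^*$ to descend; your use of the compatibility of the refined Gysin with proper push-forward bypasses this double-cover bookkeeping and lands directly on a single copy of $c_d(\mathscr{N}_{Z/X\times F})$.
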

\begin{proof}
  We take on the notations from the proof of Proposition \ref{prop I
    square on S2}.  The image $\Gamma_p$ of the morphism
  $(1,p) :Z\hookrightarrow Z\times F$ coincides with $(p\times
  1)^{-1}\Delta_F$. From the commutative diagram
\[
 \xymatrix{
  Z\ar[r]^{(1,p)\quad}\ar[d]_p  &Z\times F\ar[d]^{p\times 1}\\
  F\ar[r]^{\iota_\Delta\quad} &F\times F
 }
\]
we get
\begin{align*}
  p^*\iota_\Delta^*I & = (1,p)^*(p\times 1)^*I\\
  &= (1,p)^*(\tilde{I} +\tilde{I}')\\
  & = c_d((1,p)^*\mathscr{N}_{\tilde{I}/Z\times F})
  + c_d((1,p)^*\mathscr{N}_{\tilde{I}'/Z\times F})\\
  &= p^*p_*c_d((1,p)^*\mathscr{N}_{\tilde{I}/Z\times F}).
\end{align*}
Note that $\mathscr{N}_{\tilde{I}/Z\times F}\cong (q\times
1|_{\tilde{I}})^*\mathscr{N}_{Z/X\times F}$ and that $(q\times
1|_{\tilde{I}})\circ (1,p)$ is the identity morphism. Thus
$(1,p)^*\mathscr{N}_{\tilde{I}/Z\times F} \cong \mathscr{N}_{Z/X\times
  F}$. It follows that
\[
 \iota_\Delta^*I=p_*c_d(\mathscr{N}_{Z/X\times F}).
\]
By \eqref{eq Chern class of normal bundle}, we have
\[
c_d(\mathscr{N}_{Z/X\times F}) = q^*c_d(X) +
\sum_{i=1}^{d}(-1)^{i-1}q^*c_{d-i}(X)\cdot E^i.
\]
Note that $E^i=p^*\delta^i$. We apply $p_*$ to the above equation and
get
\[
\iota_\Delta^*I = p_*q^*c_d(X) +\sum_{i=1}^d (-1)^{i-1}
p_*q^*c_{d-i}(X)\cdot\delta^i.
\]
This proves the lemma.
\end{proof}

\vspace{10pt}
\section{Decomposition results on the Chow groups of
  $X^{[2]}$} \label{sec mult X2}

Let $X$ be a smooth projective variety of dimension $d$ and
$F=X^{[2]}$ as in the previous two sections. In that generality, it
follows easily from the results in the previous section that the
action of $I^2 = I \cdot I \in \CH^{2d}(F\times F)$ on $\CH_0(F)$
diagonalizes with eigenvalues $2$ and $4$~; see Proposition \ref{prop
  first dec result}. When $X$ is a K3 surface or a smooth Calabi--Yau
complete intersection, the cycle $\Delta_{\mathrm{tot}}$ introduced in
Definition \ref{defn small diagonals} is supported on $D\times X
\times X$ for some divisor $D\subset X$ (see Remark \ref{rmk support}), 
and as a consequence we give other characterizations of
the eigenspaces for the action of $(I^2)_*$ on $\CH_0(F)$; see
Propositions \ref{prop second dec result} and \ref{prop divisors Ahom
  X2} for the eigenvalue $4$, and Proposition \ref{prop F4 of S2} for
the eigenvalue $2$. \medskip

Along the proof of Proposition \ref{prop I square on S2}, we have
proved the following.
\begin{prop} \label{prop I2 X2}
The action of $(I^2)_*$ on $\CH_0(F)$ can be described as
\[
 (I^2)_*\sigma = 2\sigma + (-1)^{d-1} \delta^d\cdot I_*\sigma,\quad
 \forall \sigma \in \CH_0(F).
\]
In concrete terms, we have
\begin{equation}\label{eq I2 X2}
 (I^2)_*[x,y]= 2[x,y] + [x,x] +[y,y].
\end{equation}
\end{prop}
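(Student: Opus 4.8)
The plan is to deduce this directly from the quadratic equation for $I^2$ already obtained in the course of proving Proposition \ref{prop I square on S2}, namely
\[
I^2 = 2\Delta_F + y_d\cdot I + \sum_{k=0}^{d-1} y_k\cdot \Gamma_{c_{d-k}(X)} \quad \text{in } \CH^{2d}(F \times F).
\]
First I would let both sides act, as correspondences, on an arbitrary zero-cycle $\sigma \in \CH_0(F)$. The term $2\Delta_F$ contributes $2\sigma$, since the diagonal acts as the identity correspondence. The sum $\sum_{k=0}^{d-1} y_k\cdot \Gamma_{c_{d-k}(X)}$ acts as zero on $\sigma$: this is exactly the content of Lemma \ref{lem action of simple terms X2}, which asserts $\big(y_k\cdot \Gamma_{c_{d-k}(X)}\big)_*\tau = 0$ for every $\tau \in \CH_0(F)$ and every $k \le d-1$.

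The remaining term $y_d\cdot I$ requires the projection formula together with a dimension count. Writing $\delta_i = p_i^*\delta$ and expanding $y_d = (-1)^{d-1}\big(\delta_1^d - \sum_{i=1}^{d-1}\delta_1^i\delta_2^{d-i} + \delta_2^d\big)$, I would evaluate each monomial via
\[
\big(\delta_1^i\delta_2^{d-i}\cdot I\big)_*\sigma = \delta^{d-i}\cdot I_*\big(\delta^i\cdot\sigma\big).
\]
Because $\sigma$ is zero-dimensional, $\delta^i\cdot\sigma = 0$ for every $i > 0$, so only the monomial coming from $\delta_2^d$ survives, yielding $(y_d\cdot I)_*\sigma = (-1)^{d-1}\delta^d\cdot I_*\sigma$. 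Collecting the three contributions gives the first displayed identity $(I^2)_*\sigma = 2\sigma + (-1)^{d-1}\delta^d\cdot I_*\sigma$.

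Finally, for the concrete formula I would specialize to $\sigma = [x,y]$ and invoke two earlier computations: $I_*[x,y] = X_x + X_y$ from Lemma \ref{lem cycle class of I on S2}\emph{(iii)}, and $\delta^d\cdot X_x = (-1)^{d-1}[x,x]$ from \eqref{eq delta Xx}. Substituting gives $(-1)^{d-1}\delta^d\cdot I_*[x,y] = [x,x] + [y,y]$, the two factors of $(-1)^{d-1}$ squaring away, whence $(I^2)_*[x,y] = 2[x,y] + [x,x] + [y,y]$.

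I expect no serious obstacle here, as the entire substance has already been assembled inside the proof of Proposition \ref{prop I square on S2}. The only points demanding care are the bookkeeping of the signs $(-1)^{d-1}$ and the observation that intersecting a zero-cycle with a positive power of $\delta$ vanishes for dimension reasons, which is precisely what collapses $y_d\cdot I$ to a single surviving term.
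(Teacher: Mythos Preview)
Your proposal is correct and follows exactly the paper's approach: the paper simply states that this proposition was established ``along the proof of Proposition~\ref{prop I square on S2}'', and the steps you outline --- applying the quadratic equation for $I^2$, killing the $\Gamma_{c_{d-k}(X)}$ terms via Lemma~\ref{lem action of simple terms X2}, collapsing $y_d\cdot I$ to $(-1)^{d-1}\delta_2^d\cdot I$ by the dimension argument, and then specializing to $[x,y]$ using $I_*[x,y]=X_x+X_y$ and $\delta^d\cdot X_x=(-1)^{d-1}[x,x]$ --- are precisely the computations carried out there.
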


We may then deduce the following eigenspace decomposition.
\begin{prop} \label{prop first dec result}
  The correspondence $(I^2-4\Delta_F) \circ (I^2-2\Delta_F)$ acts as
  zero on $\CH_0(F)$. Moreover,
  \begin{align*}
    \ker\,\{(I^2-2\Delta_F)_*  : \CH_0(F) \rightarrow \CH_0(F)\} &=
    \ker \, \{I_*  : \CH_0(F) \rightarrow \CH_d(F)\} \\
    \ker\,\{(I^2-4\Delta_F)_*  : \CH_0(F) \rightarrow \CH_0(F)\} & =
    \mathrm{im} \, \{\CH_0(\Delta) \rightarrow \CH_0(F)\} \\
    & = \mathrm{im} \, \{\delta^d \cdot  : \CH_d(F) \rightarrow
    \CH_0(F)\}.
  \end{align*}
  Here, $\Delta$ denotes the sub-variety of $X^{[2]}$ parameterizing
  non-reduced subschemes of length $2$ on $X$.
\end{prop}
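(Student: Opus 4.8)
The plan is to reduce the entire statement to elementary linear algebra for a single operator on $\CH_0(F)$. Set $P := (I^2-2\Delta_F)_* : \CH_0(F) \to \CH_0(F)$. By Proposition \ref{prop I2 X2} we have $P = (-1)^{d-1}\,\delta^d\cdot I_*(-)$, so the first assertion is that $(P-2)P = 0$, and the two kernel/image statements describe the $2$- and $4$-eigenspaces of $(I^2)_* = P+2$. First I would factor $I$ through $X$: by Lemma \ref{lem cycle class of I on S2}\emph{(i)}, $I = {}^tZ\circ Z$, hence $I_* = {}^tZ_*\circ Z_*$, where $Z_*:\CH_0(F)\to\CH_0(X)$ sends $[x,y]\mapsto x+y$ and ${}^tZ_*:\CH_0(X)\to\CH_d(F)$ sends $[x]\mapsto X_x$. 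Introduce the push-forward $\phi:\CH_0(X)\to\CH_0(F)$ along $X\to F$, $[x]\mapsto [x,x]$.

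The two key elementary identities, which I would read off the definitions and Lemma \ref{lem intersection identity X2}, are
\[
 P = \phi\circ Z_* \qquad\text{and}\qquad Z_*\circ\phi = 2\,\mathrm{id}_{\CH_0(X)}.
\]
The first follows by composing $I_* = {}^tZ_*\circ Z_*$ with $(-1)^{d-1}\delta^d\cdot(-)$ and using $\delta^d\cdot X_x = (-1)^{d-1}[x,x]$ from \eqref{eq delta Xx}, which gives $(-1)^{d-1}\delta^d\cdot{}^tZ_* = \phi$. The second is immediate, since the length-$2$ subscheme attached to $[x,x]$ pushes forward to the $0$-cycle $2x$ on $X$. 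Because we work with $\Q$-coefficients, $Z_*\circ\phi = 2\,\mathrm{id}$ shows at once that $\phi$ is injective and $Z_*$ surjective. Consequently $P^2 = \phi\circ(Z_*\circ\phi)\circ Z_* = 2\,\phi\circ Z_* = 2P$, which is exactly the assertion that $(I^2-4\Delta_F)\circ(I^2-2\Delta_F)$ acts as zero, i.e. $(P-2)P = 0$.

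It then remains pure linear algebra on $\CH_0(F)$. For the eigenvalue $2$: $\ker(I^2-2\Delta_F)_* = \ker P = \ker(\phi\circ Z_*) = \ker Z_*$, using injectivity of $\phi$; and $\ker Z_* = \ker I_*$ because $\ker Z_*\subseteq\ker({}^tZ_*\circ Z_*) = \ker I_*$, while conversely $I_*\sigma = 0$ forces $P\sigma = (-1)^{d-1}\delta^d\cdot I_*\sigma = 0$, so $\sigma\in\ker P = \ker Z_*$. For the eigenvalue $4$: since $P^2 = 2P$, the operator $P$ acts as multiplication by $2$ on $\mathrm{im}\,P$, so $\ker(I^2-4\Delta_F)_* = \ker(P-2) = \mathrm{im}\,P = \phi(\mathrm{im}\,Z_*) = \phi(\CH_0(X))$, where surjectivity of $Z_*$ is used for the last equality.

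Finally I would identify $\phi(\CH_0(X))$ with the two stated images. On the one hand $\phi(\CH_0(X)) = \mathrm{im}\{\CH_0(\Delta)\to\CH_0(F)\}$: every point of $\Delta\cong\PP(\mathscr{T}_X)$ lying over $x\in X$ has class $[x,x]$ in $\CH_0(F)$, and $\CH_0(\Delta)$ is generated by its point classes (equivalently $\pi_*:\CH_0(\PP(\mathscr{T}_X))\to\CH_0(X)$ is an isomorphism). On the other hand $\phi(\CH_0(X)) = \mathrm{im}\{\delta^d\cdot:\CH_d(F)\to\CH_0(F)\}$: by Lemma \ref{lem intersection identity X2}, $\delta^d = \tfrac{(-1)^{d-1}}{2}j_*\xi^{d-1}$, so the projection formula gives $\delta^d\cdot\tau = \tfrac{(-1)^{d-1}}{2}j_*(\xi^{d-1}\cdot j^*\tau)\in j_*\CH_0(\Delta)$, whence $\mathrm{im}\{\delta^d\cdot\}\subseteq\mathrm{im}\{\CH_0(\Delta)\to\CH_0(F)\}$, and the reverse inclusion holds since $[x,x] = (-1)^{d-1}\delta^d\cdot X_x$. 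I expect no serious obstacle here; the only points requiring care are the sign bookkeeping in the identity $P = \phi\circ Z_*$ and the verification that the three descriptions of $\ker(I^2-4\Delta_F)_*$ agree, both of which reduce to Lemma \ref{lem intersection identity X2} and the triviality $Z_*\circ\phi = 2\,\mathrm{id}$.
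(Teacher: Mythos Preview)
Your proof is correct. It differs from the paper's in presentation rather than substance, but the repackaging is genuinely cleaner. The paper argues directly on generators: it checks $(I^2-4\Delta_F)_*(I^2-2\Delta_F)_*[x,y]=0$ using the explicit formula \eqref{eq I2 X2}, shows $\ker\{I_*\}\subseteq\ker\{(I^2-2\Delta_F)_*\}$ immediately from Proposition~\ref{prop I2 X2}, and for the reverse inclusion computes $I_*(\delta^d\cdot I_*[x,y]) = (-1)^{d-1}\cdot 2\, I_*[x,y]$; for the $4$-eigenspace it checks $(I^2)_*[x,x]=4[x,x]$ on one side and uses $2\sigma=(-1)^{d-1}\delta^d\cdot I_*\sigma$ on the other. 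Your approach instead isolates the two identities $P=\phi\circ Z_*$ and $Z_*\circ\phi=2\,\mathrm{id}_{\CH_0(X)}$, from which $P^2=2P$, $\ker P=\ker Z_*$, and $\ker(P-2)=\mathrm{im}\,P=\phi(\CH_0(X))$ drop out by pure linear algebra. This factorisation through $\CH_0(X)$ makes the structure more transparent and avoids any computation on generators beyond the two displayed identities; it also makes the equality $\ker I_*=\ker Z_*$ explicit (the paper only uses the inclusion $\ker Z_*\subseteq\ker I_*$ later, in Proposition~\ref{prop F4 of S2}). The final identification of $\phi(\CH_0(X))$ with $\mathrm{im}\{\CH_0(\Delta)\to\CH_0(F)\}$ and with $\mathrm{im}\{\delta^d\cdot\}$ is handled the same way in both arguments, via Lemma~\ref{lem intersection identity X2}.
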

\begin{proof}
  That $(I^2-4\Delta_F) \circ (I^2-2\Delta_F)$ acts as zero on
  $\CH_0(F)$ follows immediately from Proposition \ref{prop I2 X2} and
  from the two formulas~: $I_*[x,y] = X_x + X_y$ and $\delta^d\cdot
  X_x = (-1)^{d-1}[x,x]$ (Lemma \ref{lem intersection identity X2}).
  It is also immediate from Proposition \ref{prop I2 X2} that $\ker \,
  \{I_*\} \subseteq \ker\,\{(I^2-2\Delta_F)_*\}$. The inclusion $\ker
  \, \{I_*\} \supseteq \ker\,\{(I^2-2\Delta_F)_*\}$ follows from the
  following computation $$I_*(\delta^d \cdot I_*[x,y]) = I_*(\delta^d
  \cdot (X_x+X_y)) = (-1)^{d-1}I_*([x,x]+[y,y]) = (-1)^{d-1}\cdot 2
  (X_x+X_y) = (-1)^{d-1}\cdot 2 I_*[x,y].$$

  The image of $\CH_0(\Delta) \rightarrow \CH_0(F)$ is spanned by the
  classes of the points $[x,x]$. By \eqref{eq I2 X2}, it is clear that
  $(I^2)_*[x,x] = 4[x,x]$. Conversely, if $\sigma \in \CH_0(F)$ is
  such that $(I^2)_*\sigma = 4\sigma$, then by Proposition \ref{prop
    I2 X2} we have $2\sigma = (-1)^{d-1}\delta^d \cdot I_*\sigma$.
  Therefore $2\sigma \in \mathrm{im} \, \{\delta^d \cdot  : \CH_d(F)
  \rightarrow \CH_0(F)\}$, which concludes the proof since we are
  working with rational coefficients.
\end{proof}

We now wish to turn our focus on K3 surfaces and give other
characterizations of the eigenspace decomposition above. As it turns
out, the property of K3 surfaces that is going to be used is also
satisfied by smooth Calabi--Yau complete intersections. This property
can be stated as follows.

\begin{assm}\label{assm small diagonal}
 There exists a special cycle
  $\mathfrak{o}_X\in\CH_0(X)$ of degree $1$ such that
 $$
 p_1^*(x-\mathfrak{o}_X)\cdot p_2^*(x -\mathfrak{o}_X)= 0
 $$
 in $\CH_0(X\times X)$ for all $x\in X$, where $p_i :X\times
 X\rightarrow X$ are the two projections.
\end{assm}

\begin{rmk} \label{rmk support} One observes that the cycle
  $p_1^*(x-\mathfrak{o}_X)\cdot p_2^*(x -\mathfrak{o}_X)$ can be
  obtained by restricting the cycle $\Delta_{\mathrm{tot}}$ of
  Definition \ref{defn small diagonals} to $\{x\} \times X\times
  X$. Therefore, by Bloch--Srinivas \cite{bs}, Assumption \ref{assm
    small diagonal} is equivalent to asking for the existence of a
  special cycle $\mathfrak{o}_X\in\CH_0(X)$ of degree 1 such that
  $\Delta_{\mathrm{tot}}$ is supported on $D \times X\times X$ for
  some divisor $D\subset X$. This is proved for smooth Calabi--Yau hypersufaces
by Voisin \cite{voisin k3} and for smooth Calabi--Yau complete intersections
by Fu \cite{fu2}. In the case of K3 surfaces, $\Delta_{\mathrm{tot}} = 0$
by Theorem \ref{thm bv} of Beauville--Voisin. Thus Assumption 
\ref{assm small diagonal} is satisfied by K3 surfaces and smooth
Calabi--Yau complete intersections.
\end{rmk}

\begin{defn}
  Given $\mathfrak{o}_X\in\CH_0(X)$, we define
  \begin{equation}\label{eq defn of special 0-cycle}
    \mathfrak{o}_F  := p_*\rho^*(p_1^*\mathfrak{o}_X\cdot
    p_2^*\mathfrak{o}_X)\in\CH_0(F),
  \end{equation}
  where $p_i :X\times X\rightarrow X$, $i=1,2$, are the projections.
  We also define
  \begin{equation}\label{eq defn of special d-cycle}
  X_{\mathfrak{o}}  := p_*q^*\mathfrak{o}_X\in\CH_d(F).
\end{equation}
\end{defn}

We say that $\mathfrak{o}_X$ is \emph{effective} if it is the class of
a single point. Assume that $\mathfrak{o}_X$ is effective.  By abuse
of notation, we will use $\mathfrak{o}_X$ to denote both the cycle
class and an actual point representing this canonical class. We also
use $X_{\mathfrak{o}}$ to denote the variety
$p(q^{-1}\mathfrak{o}_X)$.  Furthermore, $\mathfrak{o}_F$ is also
effective since it is represented by the point
$[\mathfrak{o}_X,\mathfrak{o}_X]\in F$. In this situation where
$\mathfrak{o}_X$ is supposed to be effective, Assumption \ref{assm
  small diagonal} can be restated as requiring that, for all points $x
\in X$, we have \begin{equation}\label{eq Beauville-Voisin equation}
  [x,x] - 2[\mathfrak{o}_X,x] +[\mathfrak{o}_X,\mathfrak{o}_X]=0 \
  \mbox{in} \ \CH_0(F).
 \end{equation}

Let us then state the following proposition.

\begin{prop} \label{prop bvf} Let $X$ be a K3 surface or a smooth
  Calabi--Yau complete intersection. Then there exists a point
  $\mathfrak{o}_X \in X$ such that for all points $x \in X$ we have
   \begin{equation*} [x,x] -
     2[\mathfrak{o}_X,x] +[\mathfrak{o}_X,\mathfrak{o}_X]=0 \ \mbox{in}
     \ \CH_0(F).
   \end{equation*} Moreover, the intersection of any two
   positive-dimensional
   cycles of complementary codimension inside $X$ is rationally
   equivalent to a multiple of $\mathfrak{o}_X$.
\end{prop}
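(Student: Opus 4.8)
The plan is to deduce both assertions from Assumption \ref{assm small diagonal}, which holds for $X$ by Remark \ref{rmk support} (for a K3 surface this is Theorem \ref{thm bv} of Beauville--Voisin, and for a smooth Calabi--Yau complete intersection it is due to Voisin \cite{voisin k3} and Fu \cite{fu2}). For the first assertion I would first record that the special degree-$1$ cycle $\mathfrak{o}_X$ of Assumption \ref{assm small diagonal} may be taken to be the class of an actual point: for a K3 surface $\mathfrak{o}_X$ is the class of a point lying on a rational curve, and for a Calabi--Yau complete intersection the canonical cycle produced by Voisin and Fu is again represented by a point. With $\mathfrak{o}_X$ effective, the identity of Assumption \ref{assm small diagonal} reads $(x,x)-(x,\mathfrak{o}_X)-(\mathfrak{o}_X,x)+(\mathfrak{o}_X,\mathfrak{o}_X)=0$ in $\CH_0(X\times X)$, and I would apply to it the map $p_*\rho^*\colon \CH_0(X\times X)\to\CH_0(F)$ afforded by the blow-up $\rho\colon Z\to X\times X$ and the double cover $p\colon Z\to F$ of \S\ref{sec basics X2}. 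The only point requiring care is the value on the diagonal: for distinct points $p_*\rho^*(a\times b)=[a,b]$ since $\rho$ is an isomorphism there, while $p_*\rho^*(x\times x)=[x,x]$ because $\rho^{-1}(x,x)=\PP(\mathscr{T}_{X,x})\cong\PP^{d-1}$ is rationally connected, so that any degree-$1$ zero-cycle it carries has the single image class $[x,x]$ under $p$ (recall that $p$ restricts to an isomorphism $E\cong\Delta$). Pushing the displayed relation forward then yields exactly \eqref{eq Beauville-Voisin equation}, that is $[x,x]-2[\mathfrak{o}_X,x]+[\mathfrak{o}_X,\mathfrak{o}_X]=0$.

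For the second assertion, let $\alpha,\beta$ be cycles on $X$ with $0<\dim\alpha,\dim\beta$ and $\dim\alpha+\dim\beta=\dim X=d$. I would view the decomposition $\Delta_{123}=\Delta_{\mathrm{tot}}+\Delta_{12}+\Delta_{23}+\Delta_{13}-\Delta_1-\Delta_2-\Delta_3$ of Definition \ref{defn small diagonals} as an equality of correspondences from $X\times X$ (the first two factors) to $X$ (the third factor), and evaluate it on $\alpha\times\beta$. Since $(\Delta_{123})_*(\alpha\times\beta)=\alpha\cdot\beta$ and, by the projection formula, $(\Delta_{12})_*(\alpha\times\beta)=\deg(\alpha\cdot\beta)\,\mathfrak{o}_X$, while each of the remaining terms $\Delta_{13},\Delta_{23},\Delta_1,\Delta_2,\Delta_3$ produces a factor $\mathfrak{o}_X\cdot\alpha$ or $\mathfrak{o}_X\cdot\beta$ that vanishes for dimension reasons ($\dim\alpha,\dim\beta<d$), I obtain $\alpha\cdot\beta=(\Delta_{\mathrm{tot}})_*(\alpha\times\beta)+\deg(\alpha\cdot\beta)\,\mathfrak{o}_X$ in $\CH_0(X)$. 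Thus everything comes down to the vanishing of $(\Delta_{\mathrm{tot}})_*(\alpha\times\beta)$. For a K3 surface this is immediate because $\Delta_{\mathrm{tot}}=0$ by Theorem \ref{thm bv} (here the only instance of the statement is the intersection of two divisors), which recovers the Beauville--Voisin result. For a Calabi--Yau complete intersection $\Delta_{\mathrm{tot}}$ is only supported on $D\times X\times X$ for some divisor $D$, and the vanishing of $(\Delta_{\mathrm{tot}})_*(\alpha\times\beta)$ is precisely the intersection-theoretic statement established by Voisin \cite{voisin k3} and Fu \cite{fu2}.

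The main obstacle is exactly this last vanishing in the Calabi--Yau case. A short computation with the K\"unneth components shows that $(\Delta_{\mathrm{tot}})_*(\alpha\times\beta)$ has degree $0$ (equivalently, $\alpha\cdot\beta$ and $\deg(\alpha\cdot\beta)\,\mathfrak{o}_X$ already agree in cohomology), and, using the $\mathfrak{S}_3$-symmetry of $\Delta_{\mathrm{tot}}$, that this zero-cycle can be taken supported on $D$. However, rational triviality of a cohomologically trivial zero-cycle supported on a divisor does not follow from the support condition alone; this upgrade from homological to rational equivalence is what genuinely requires the full strength of the theorems of Voisin and Fu. By contrast, in the K3 case no such difficulty arises and the whole argument is self-contained once Theorem \ref{thm bv} is granted.
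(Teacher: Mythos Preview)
Your proof is correct and takes essentially the same approach as the paper: both reduce the first assertion to Assumption~\ref{assm small diagonal} via Remark~\ref{rmk support} (and the restatement \eqref{eq Beauville-Voisin equation}), and both attribute the second assertion to the cited works of Beauville--Voisin, Voisin, and Fu. You spell out more detail---the push-forward $p_*\rho^*$ for the first part and the small-diagonal computation $(\Delta_{123})_*(\alpha\times\beta)=\alpha\cdot\beta$ for the second---but the substance is identical, and your honest acknowledgement that the Calabi--Yau case ultimately rests on Voisin's and Fu's theorems matches the paper's stance exactly.
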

\begin{proof}
  The first part of the proposition is a combination of the discussion
  above and Remark \ref{rmk support}. The second part of the proposition 
  concerned with
  intersection of cycles is proved by Beauville--Voisin \cite{bv} in
  the case of K3 surfaces, by Voisin \cite{voisin k3} in the case of
  smooth Calabi--Yau hypersurfaces, and by Fu \cite{fu2} in the case
  of smooth Calabi--Yau complete intersections. Note that in the case
  of K3 surfaces, Beauville and Voisin first establish that the
  intersection of any two divisors on a K3 surface is a multiple of
  $\mathfrak{o}_X$ and then, after some intricate arguments, deduce
  that $\Delta_{\mathrm{tot}}=0$, whereas in the case of smooth
  Calabi--Yau complete intersections the logic is reversed~: it is
  first proved that $\Delta_{\mathrm{tot}}$ is supported on a nice
  divisor and then the intersection property is easily deduced.
\end{proof}

We then have the following complement to Proposition \ref{prop first
  dec result}.

\begin{prop} \label{prop second dec result} If $X$ satisfies
  Assumption \ref{assm small diagonal} with $\mathfrak{o}_X$
  effective, then
  \begin{align*}
    \ker\,\{(I^2-4\Delta_F)_*  : \CH_0(F) \rightarrow \CH_0(F)\} & =
    \mathrm{im} \, \{\CH_0(X_{\mathfrak{o}}) \rightarrow \CH_0(F)\} \\
    &= \mathrm{im} \, \{X_{\mathfrak{o}} \cdot  : \CH_d(F) \rightarrow
    \CH_0(F)\}.
  \end{align*}
\end{prop}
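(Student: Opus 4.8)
The plan is to reduce everything to Proposition \ref{prop first dec result}, which already identifies the $4$-eigenspace $\ker\{(I^2-4\Delta_F)_*\}$ with $\mathrm{im}\{\CH_0(\Delta)\to\CH_0(F)\}$, and the latter is spanned by the classes $[x,x]$ of nonreduced subschemes. So it suffices to prove the two equalities $\mathrm{im}\{\CH_0(\Delta)\}=\mathrm{im}\{\CH_0(X_{\mathfrak{o}})\}=\mathrm{im}\{X_{\mathfrak{o}}\cdot\}$, and the engine driving both is the Beauville--Voisin relation \eqref{eq Beauville-Voisin equation}, namely $[x,x]=2[\mathfrak{o}_X,x]-\mathfrak{o}_F$, which is available precisely because $X$ satisfies Assumption \ref{assm small diagonal} with $\mathfrak{o}_X$ effective. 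For the first equality, I would argue: since $\{\mathfrak{o}_X,x\}$ and the nonreduced subscheme supported at $\mathfrak{o}_X$ both lie on $X_{\mathfrak{o}}$, the relation writes each generator $[x,x]$ as $2[\mathfrak{o}_X,x]-\mathfrak{o}_F$ with $[\mathfrak{o}_X,x],\mathfrak{o}_F\in\mathrm{im}\{\CH_0(X_{\mathfrak{o}})\}$; conversely every closed point of $X_{\mathfrak{o}}$ maps under $p$ either to a reduced pair $\{\mathfrak{o}_X,x\}$, whose class $[\mathfrak{o}_X,x]=\tfrac12([x,x]+\mathfrak{o}_F)$ lies in $\mathrm{im}\{\CH_0(\Delta)\}$, or to a nonreduced subscheme supported at $\mathfrak{o}_X$, which lies in $\Delta$ outright. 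This gives $\mathrm{im}\{\CH_0(X_{\mathfrak{o}})\}=\mathrm{im}\{\CH_0(\Delta)\}$.

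For the second equality, the inclusion $\mathrm{im}\{X_{\mathfrak{o}}\cdot\}\subseteq\mathrm{im}\{\CH_0(X_{\mathfrak{o}})\}=\mathrm{im}\{\CH_0(\Delta)\}$ is immediate from the projection formula: writing $i_{\mathfrak{o}}\colon X_{\mathfrak{o}}\hookrightarrow F$, one has $X_{\mathfrak{o}}\cdot\alpha=(i_{\mathfrak{o}})_*i_{\mathfrak{o}}^*\alpha$ for all $\alpha\in\CH_d(F)$. The reverse inclusion is the substantive part: I must realize each $[x,x]$ as $X_{\mathfrak{o}}\cdot\alpha$. Here \eqref{eq XxXy} gives $X_{\mathfrak{o}}\cdot X_x=[\mathfrak{o}_X,x]$ for $x\neq\mathfrak{o}_X$, so by the relation $[x,x]=2\,X_{\mathfrak{o}}\cdot X_x-\mathfrak{o}_F$. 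Thus the whole question collapses to showing $\mathfrak{o}_F\in\mathrm{im}\{X_{\mathfrak{o}}\cdot\}$, and the natural candidate is the self-intersection: if I can prove $X_{\mathfrak{o}}\cdot X_{\mathfrak{o}}=\mathfrak{o}_F$, then $[x,x]=X_{\mathfrak{o}}\cdot(2X_x-X_{\mathfrak{o}})$ exhibits every generator in the desired image.

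The main (and essentially the only) computational point is therefore $X_{\mathfrak{o}}^2=\mathfrak{o}_F$, which I would establish directly rather than by any moving argument. Using the projection formula for the degree-$2$ cover $p\colon Z\to F$ together with the identity $p^*p_*=1+\tau^*$ for its involution $\tau$, and recalling $X_{\mathfrak{o}}=p_*q^*\mathfrak{o}_X$ with $q=p_1\circ\rho$, I get $p^*X_{\mathfrak{o}}=q^*\mathfrak{o}_X+\tau^*q^*\mathfrak{o}_X=\rho^*(\mathfrak{o}_X\times X)+\rho^*(X\times\mathfrak{o}_X)$, the second term coming from $\rho\circ\tau=\sigma\circ\rho$ for the swap $\sigma$ of $X\times X$. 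Then $X_{\mathfrak{o}}^2=p_*\big(q^*\mathfrak{o}_X\cdot p^*X_{\mathfrak{o}}\big)=p_*\rho^*\big((\mathfrak{o}_X\times X)\cdot(\mathfrak{o}_X\times X+X\times\mathfrak{o}_X)\big)$, in which $(\mathfrak{o}_X\times X)^2=p_1^*(\mathfrak{o}_X\cdot\mathfrak{o}_X)=0$ since $\mathfrak{o}_X\cdot\mathfrak{o}_X\in\CH^{2d}(X)=0$, while $(\mathfrak{o}_X\times X)\cdot(X\times\mathfrak{o}_X)=\mathfrak{o}_X\times\mathfrak{o}_X$; hence $X_{\mathfrak{o}}^2=p_*\rho^*(\mathfrak{o}_X\times\mathfrak{o}_X)=\mathfrak{o}_F$ by the definition \eqref{eq defn of special 0-cycle}. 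Assembling the three steps yields both displayed equalities. I expect no genuine obstacle beyond keeping the bookkeeping between $[x,x]$, $[\mathfrak{o}_X,x]$ and $\mathfrak{o}_F$ straight; the self-intersection identity, which at first looks like it might require $\mathfrak{o}_X$ to move in a rational curve, in fact follows formally from $p^*p_*=1+\tau^*$ and the vanishing of $\CH^{2d}(X)$, so Assumption \ref{assm small diagonal} is needed only through the relation \eqref{eq Beauville-Voisin equation}.
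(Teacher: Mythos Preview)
Your proof is correct and follows essentially the same route as the paper: both reduce to Proposition~\ref{prop first dec result} and the relation \eqref{eq Beauville-Voisin equation}, and both realize $[x,x]=X_{\mathfrak{o}}\cdot(2X_x-X_{\mathfrak{o}})$ for the key inclusion. The only differences are cosmetic: for the inclusion $\mathrm{im}\{\CH_0(X_{\mathfrak{o}})\}\subseteq\ker\{(I^2-4\Delta_F)_*\}$ the paper computes $(I^2)_*[\mathfrak{o}_X,x]=4[\mathfrak{o}_X,x]$ directly from \eqref{eq I2 X2}, whereas you route through $\mathrm{im}\{\CH_0(\Delta)\}$; and you supply an explicit verification of $X_{\mathfrak{o}}^2=\mathfrak{o}_F$ via $p^*p_*=1+\tau^*$, which the paper simply reads off from \eqref{eq XxXy}.
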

\begin{proof} The image of $\CH_0(X_{\mathfrak{o}}) \rightarrow
  \CH_0(F)$ is spanned by the cycle classes of points
  $[\mathfrak{o}_X,x]$. A direct computation using \eqref{eq I2 X2}
  and \eqref{eq Beauville-Voisin equation} shows that
  $(I^2)_*[\mathfrak{o}_X,x] = 4[\mathfrak{o}_X,x].$ Conversely,
  Proposition \ref{prop first dec result} shows that
  $\ker\,\{(I^2-4\Delta_F)_*  : \CH_0(F) \rightarrow \CH_0(F)\}$ is
  spanned by cycle classes of points $[x,x]$. But then, \eqref{eq
    Beauville-Voisin equation} says that $[x,x] = 2[\mathfrak{o}_X,x]
  - [\mathfrak{o}_X,\mathfrak{o}_X] = X_{\mathfrak{o}}\cdot (2X_x-
  X_{\mathfrak{o}})$, which implies that $\ker\,\{(I^2-4\Delta_F)_*  :
  \CH_0(F) \rightarrow \CH_0(F)\} \subseteq \mathrm{im} \,
  \{X_{\mathfrak{o}} \cdot  : \CH_d(F) \rightarrow \CH_0(F)\}$.
\end{proof}

\begin{prop} \label{prop divisors Ahom X2} Let $X$ be a smooth
  projective variety with $\HH^1(X,\Z)=0$. Assume that Assumption
  \ref{assm small diagonal} holds on $X$ and that the intersection of
  any $d$ divisors on $X$ is a multiple of $\mathfrak{o}_X$
  (\emph{e.g.}, $X$ a K3 surface or a smooth Calabi--Yau
  complete intersection~; \emph{cf.} Proposition \ref{prop bvf}),
  then
$$
\CH^1(F)^{\cdot d} \cdot I_*\CH_0(F) = \ker \, \{(I^2-4\Delta_F)_*  :
\CH_0(F) \rightarrow \CH_0(F)\}.
$$
\end{prop}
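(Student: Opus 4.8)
The plan is to prove the two inclusions separately, using the description of the eigenspace $\ker\{(I^2-4\Delta_F)_*\}$ already obtained in Propositions \ref{prop first dec result} and \ref{prop second dec result}. By Proposition \ref{prop second dec result}, since Assumption \ref{assm small diagonal} holds with $\mathfrak{o}_X$ effective, this eigenspace equals $\mathrm{im}\,\{X_{\mathfrak{o}}\cdot : \CH_d(F) \rightarrow \CH_0(F)\}$, and it is spanned by the classes $[\mathfrak{o}_X, x]$ for $x \in X$. So the goal is to identify $\CH^1(F)^{\cdot d}\cdot I_*\CH_0(F)$ with this span.

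First I would unwind the left-hand side. Recall from \eqref{decomposition H2 general} that, since $\HH^1(X,\Z) = 0$, we have $\CH^1(F) = p_*q^*\CH^1(X) \oplus \langle \delta \rangle$, at least at the level of the subgroup spanned by divisor classes of the form $X_x$ and $\delta$. The key computation is to intersect a $d$-fold product of divisors with a cycle $I_*\sigma$. Since $I_*[x,y] = X_x + X_y$ by Lemma \ref{lem cycle class of I on S2}, the image $I_*\CH_0(F)$ is spanned by the classes $X_x$ (these are the $d$-dimensional cycles $p_*q^*x$). The divisors on $F$ split into those pulled back from $X$ and the class $\delta$. I would therefore compute $D_1\cdots D_d \cdot X_x$ for the two relevant types of factors: using Lemma \ref{lem intersection identity X2}, $\delta^d \cdot X_x = (-1)^{d-1}[x,x]$, and by \eqref{eq Beauville-Voisin equation} this equals $(-1)^{d-1}(2[\mathfrak{o}_X,x] - [\mathfrak{o}_X,\mathfrak{o}_X])$, which lies in the eigenspace. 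For the divisors of the form $D = p_*q^*D'$ with $D' \in \CH^1(X)$, intersecting $D'_1 \cdots D'_d$ on $X$ gives a multiple of $\mathfrak{o}_X$ by hypothesis, and the corresponding intersection on $F$ against $X_x$ produces cycles supported on $X_{\mathfrak{o}}$, again landing in the eigenspace. Mixed products of $\delta$-factors and pulled-back divisors must be handled by the same two lemmas; the intermediate-degree statements in Lemma \ref{lem intersection identity X2} control the classes $\delta^k \cdot X_x$.

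For the reverse inclusion, I would exhibit each generator $[\mathfrak{o}_X, x]$ of the eigenspace as an element of $\CH^1(F)^{\cdot d}\cdot I_*\CH_0(F)$. The natural candidate is to write $[\mathfrak{o}_X,x]$ via the identity $X_{\mathfrak{o}}\cdot X_x = [\mathfrak{o}_X,x]$ (equation \eqref{eq XxXy}), where $X_{\mathfrak{o}} = p_*q^*\mathfrak{o}_X$ is the intersection of $d$ suitable divisors pulled back from $X$—more precisely, if $\mathfrak{o}_X$ is realized as a complete intersection of $d$ divisor classes on $X$ (or a rational multiple thereof, which suffices with $\Q$-coefficients), then $X_{\mathfrak{o}}$ is the corresponding $d$-fold divisor product on $F$ restricted against $X_x = I_*$ of a point. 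This realizes $[\mathfrak{o}_X,x]$ as a $d$-fold divisor product intersected with a member of $I_*\CH_0(F)$.

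The main obstacle I anticipate is the bookkeeping for the \emph{mixed} monomials in $\delta$ and the pulled-back divisor classes: one must verify that \emph{every} degree-$d$ product of divisors, not just the pure powers $\delta^d$ and the pure pulled-back products, yields a cycle in the eigenspace when intersected with $X_x$. This requires combining the intermediate identities of Lemma \ref{lem intersection identity X2} (which describe $\delta^k \cdot X_x$ as pushforwards of hyperplane powers from the projectivized tangent space) with the hypothesis that arbitrary $d$-fold divisor intersections on $X$ are multiples of $\mathfrak{o}_X$, and then re-applying \eqref{eq Beauville-Voisin equation} to rewrite everything in terms of the basis $\{[\mathfrak{o}_X,x]\}$. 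The nonreducedness of the supports (cycles concentrated on $\Delta \cong \PP(\mathscr{T}_X)$) is where the computation is most delicate, but the geometric input is entirely contained in the two cited lemmas, so no genuinely new ingredient should be needed beyond careful degree-tracking.
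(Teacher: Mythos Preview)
Your overall strategy matches the paper's: prove both inclusions using the eigenspace descriptions from Propositions~\ref{prop first dec result} and~\ref{prop second dec result}, decompose divisors as $\hat{\mathfrak D}+a\delta$, and reduce to computing $D^d\cdot X_x$. Two points deserve comment.

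For the inclusion ``$\supseteq$'' there is a genuine gap. You claim that $X_{\mathfrak o}=p_*q^*\mathfrak o_X$ is itself a $d$-fold product of divisors on $F$, because $\mathfrak o_X$ is such a product on $X$. But $p_*q^*$ is not a ring homomorphism: the paper's own computation gives
\[
\hat{\mathfrak D}^d=\deg(\mathfrak D^d)\,X_{\mathfrak o}+\sum_{i=1}^{d-1}c_i\,p_*\rho^*(\mathfrak D^{d-i}\times\mathfrak D^i),
\]
with genuine cross terms. The paper sidesteps this entirely by using the \emph{other} generating set from Proposition~\ref{prop first dec result}: the eigenspace is spanned by classes $[x,x]$, and \eqref{eq delta Xx} gives $[x,x]=(-1)^{d-1}\delta^d\cdot X_x$ with $X_x=\tfrac12 I_*[x,x]\in I_*\CH_0(F)$. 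This yields ``$\supseteq$'' in one line, without any claim about $X_{\mathfrak o}$.

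For ``$\subseteq$'' your plan is correct but you overestimate the difficulty of the mixed monomials. The paper observes that once a single factor $\delta$ is present, the product $\delta\cdot\sigma$ for any $\sigma\in\CH_1(F)$ is supported on $\Delta$, hence lies in $\mathrm{im}\{\CH_0(\Delta)\to\CH_0(F)\}=\ker\{(I^2-4\Delta_F)_*\}$ by Proposition~\ref{prop first dec result}. No detailed tracking through the intermediate identities of Lemma~\ref{lem intersection identity X2} is needed; it only remains to compute the pure term $\hat{\mathfrak D}^d\cdot X_x$, which the paper does explicitly and finds equal to $\deg(\mathfrak D^d)\,[x,\mathfrak o_X]$.
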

\begin{proof} By Proposition \ref{prop first dec result}, $\ker \,
  \{(I^2-4\Delta_F)_*\}$ is spanned by the classes of the points
  $[x,x]$ for all $x \in X$. Since $[x,x] = (-1)^{d-1}\delta^d\cdot
  I_*x$, we get the inclusion ``$\supseteq$''.

  As for the inclusion ``$\subseteq$'', it is enough to show, thanks
  to Propositions \ref{prop first dec result} and \ref{prop second dec
    result}, that ${D}^d\cdot X_x$ is a linear combination of $[x,x]$
  and $[x,\mathfrak{o}]$ for all divisors ${D}$ and all points $x \in
  X$. The decomposition \eqref{decomposition H2 general} implies that
  any divisor $D$ on $F$ is of the form
  \begin{center}
    ${D} = \hat{\mathfrak{D}} + a\delta,$ where
    $\hat{\mathfrak{D}}=p_*q^* {\mathfrak{D}}$, $\mathfrak{D} \in
    \CH^1(X)$, and $a \in \Q$.
  \end{center}
Given $\sigma \in \CH_1(F)$, $\delta \cdot \sigma$ belongs to the
image of $\CH_0(\Delta) \rightarrow \CH_0(F)$, so that by Proposition
\ref{prop first dec result} it suffices to verify~:
\begin{center}
  $\hat{\mathfrak{D}}^d \cdot X_x$ is a linear combination of $[x,x]$
  and $[x,\mathfrak{o}]$.
\end{center}
Recall that we have the following diagram
\[
\xymatrix{ Z \ar[r]^{\rho \ \ }\ar[d]_{p} & X\times
  X\ar[r]^{\ \ p_1} & X\\
  F & & }
\] with $q=p_1 \circ \rho.$ Let us compute $\hat{\mathfrak{D}}^d$ for
any divisor $\hat{\mathfrak{D}}$ of the form $p_*q^*{\mathfrak{D}}$,
${\mathfrak{D}}\in \CH^1(X)$.
\begin{align*}
  \hat{\mathfrak{D}}^d &= p_*q^*\mathfrak{D} \cdot
  (p_*q^*\mathfrak{D})^{d-1}\\
  & =  p_*(q^*\mathfrak{D} \cdot (p^*p_*q^*\mathfrak{D})^{d-1})  \\
  & = p_*(\rho^*p_1^*\mathfrak{D} \cdot (\rho^*p_1^*\mathfrak{D} +
  \rho^*p_2^*\mathfrak{D})^{d-1})  \\
  & = p_*\rho^*p_1^*(\mathfrak{D}^d) + \sum_{i=1}^{d-1} c_i \,
  p_*\rho^*(p_1^*\mathfrak{D}^{d-i} \cdot p_2^*\mathfrak{D}^i), \quad
  c_i \in \Q  \\
  & = \deg(\mathfrak{D}^d) \, X_{\mathfrak{o}} + \sum_{i=1}^{d-1} c_i
  \, p_*\rho^*(\mathfrak{D}^{d-i}\times \mathfrak{D}^i).
\end{align*}
where the last equality holds since $\mathfrak{D}^d$ is a multiple
of $\mathfrak{o}_X$. Thus
$$\hat{\mathfrak{D}}^d \cdot X_x = \deg(\mathfrak{D}^d) \,
[x,\mathfrak{o}_X].$$ Actually, the above can be made more precise. We
have
\begin{equation}
  \label{eq divisor Sx X2}
  \delta \cdot \hat{\mathfrak{D}} \cdot X_x = 0, \quad \mbox{for all }
  x \in X.
\end{equation}
Indeed, $\delta \cdot \hat{\mathfrak{D}}$ can be represented by a
divisor $E$ on $\Delta$, so that for general $x \in X$, $S_x$ does not
meet $E$.  Thus we have proved $$D^d\cdot X_x = (-1)^{d-1} a^d \,
[x,x] + \deg(\mathfrak{D}^d)\, [x,\mathfrak{o}_X].$$
\end{proof}

Since $X_x \cdot X_y =
[x,y]$, it is clear that the intersection product $$\CH_d(F) \otimes
\CH_d(F) \rightarrow \CH_0(F)$$ is surjective.
We define
  \begin{equation}
  \label{eq A} \mathcal{A}  :=
  I_*\CH_0(F) \subseteq \CH_d(F).
  \end{equation}
  The following proposition essentially shows, as will become apparent
  once $L$ is defined, the equality $\CH^2(F)_2 \cdot \CH^2(F)_2 =
  \CH^4(F)_4$ of Theorem \ref{thm main mult} in the case when $F$ is
  the Hilbert scheme of length-$2$ subschemes on a K3 surface.

\begin{prop}\label{prop F4 of S2}
 Let $X$ be a smooth projective variety. We have the following inclusions
$$\mathcal{A}_\mathrm{hom}\cdot \mathcal{A}_\mathrm{hom} \subseteq
\ker\{Z_* :\CH_0(F)\rightarrow\CH_0(X)\} \subseteq
\ker\{I_* :\CH_0(F)\rightarrow\CH_d(F)\}~;
$$
and $\mathcal{A}_\mathrm{hom}\cdot \mathcal{A}_\mathrm{hom}$ is
spanned by cycles of the form $[x,y]+[z,w]-[x,z]-[y,w]$.

\noindent Moreover, if $X$ satisfies Assumption \ref{assm small
  diagonal}, then the above inclusions are all equalities.
\end{prop}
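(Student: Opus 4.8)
The plan is to prove the three inclusions in order, then establish that $\mathcal{A}_{\hom} \cdot \mathcal{A}_{\hom}$ is spanned by the indicated degree-zero combinations, and finally, under Assumption \ref{assm small diagonal}, to upgrade the inclusions to equalities by a dimension-counting/surjectivity argument.

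First I would unwind the definitions. Since $\mathcal{A} = I_*\CH_0(F)$ and $I = {}^tZ \circ Z$ by Lemma \ref{lem cycle class of I on S2}\emph{(i)}, we have $I_*[x,y] = X_x + X_y$, and $X_x = Z^*x = p_*q^*x$. Thus $\mathcal{A}$ is spanned by the $X_x$, and $\mathcal{A}_{\hom}$ is spanned by differences $X_x - X_y$. Computing products via $X_x \cdot X_y = [x,y]$ (equation \eqref{eq XxXy}), a generator of $\mathcal{A}_{\hom} \cdot \mathcal{A}_{\hom}$ is
\[
(X_x - X_y)\cdot(X_z - X_w) = [x,z] + [y,w] - [x,w] - [y,z],
\]
which is exactly a cycle of the claimed form. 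This simultaneously proves the spanning statement and shows each generator lies in $\ker\{Z_*\}$: applying $Z_*$ (which sends $[a,b] \mapsto a + b$) gives $(x+z)+(y+w)-(x+w)-(y+z) = 0$. For the second inclusion $\ker\{Z_*\} \subseteq \ker\{I_*\}$, I would use that $I = {}^tZ \circ Z$, whence $I_* = Z^* \circ Z_*$ on $\CH_0(F)$; therefore any cycle killed by $Z_*$ is automatically killed by $I_*$. This disposes of both inclusions cleanly.

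For the equalities under Assumption \ref{assm small diagonal}, the strategy is to show $\ker\{I_*\} \subseteq \mathcal{A}_{\hom}\cdot \mathcal{A}_{\hom}$, which together with the two inclusions already proved forces all three sets to coincide. By Proposition \ref{prop first dec result}, $\ker\{I_*\} = \ker\{(I^2 - 2\Delta_F)_*\}$ is the eigenvalue-$2$ eigenspace of $(I^2)_*$ on $\CH_0(F)$. The key input from Assumption \ref{assm small diagonal} (with $\mathfrak{o}_X$ effective) is relation \eqref{eq Beauville-Voisin equation}, namely $[x,x] - 2[\mathfrak{o}_X,x] + [\mathfrak{o}_X,\mathfrak{o}_X] = 0$. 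Using this together with the decomposition $\CH_0(F) = \ker\{(I^2-2\Delta_F)_*\} \oplus \ker\{(I^2 - 4\Delta_F)_*\}$ of Proposition \ref{prop first dec result}, I would write an arbitrary class in terms of the generators $[x,y]$ and show that its projection onto the eigenvalue-$2$ part is expressible as a sum of cycles $[x,z]+[y,w]-[x,w]-[y,z]$. Concretely, for a generator $[x,y]$ one computes the eigenspace projection explicitly and rewrites $[x,y] - [\mathfrak{o}_X,x] - [\mathfrak{o}_X,y] + [\mathfrak{o}_X,\mathfrak{o}_X]$, which is precisely a cycle of the spanning form $(X_x - X_{\mathfrak{o}_X})\cdot(X_y - X_{\mathfrak{o}_X})$ and which lies in $\ker\{I_*\}$ by the inclusions already established.

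The main obstacle I anticipate is the final containment $\ker\{I_*\} \subseteq \mathcal{A}_{\hom}\cdot\mathcal{A}_{\hom}$: the other statements are essentially formal once $I = {}^tZ\circ Z$ and $X_x\cdot X_y = [x,y]$ are in hand, but this last one genuinely requires Assumption \ref{assm small diagonal} to control how a general zero-cycle decomposes. The care needed is in verifying that the eigenspace projection of each point class $[x,y]$ is, modulo the $[\mathfrak{o}_X,-]$ and $[\mathfrak{o}_X,\mathfrak{o}_X]$ terms, a product of two homologically trivial elements of $\mathcal{A}$; here relation \eqref{eq Beauville-Voisin equation} is used to absorb the non-reduced classes $[x,x]$, which otherwise lie in the eigenvalue-$4$ summand. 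I would double-check that all coefficients work out with rational coefficients (Proposition \ref{prop second dec result} already divides by $2$), so no integrality issue arises.
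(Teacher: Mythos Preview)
Your argument is correct and matches the paper closely for the two inclusions and the spanning statement; the only substantive difference is in the final step establishing $\ker\{I_*\}\subseteq\mathcal{A}_{\hom}\cdot\mathcal{A}_{\hom}$ under Assumption \ref{assm small diagonal}. The paper takes an arbitrary $\sigma=\sum[x_i,y_i]-\sum[z_i,w_i]\in\ker\{I_*\}$, intersects the resulting relation $\sum(X_{x_i}+X_{y_i})=\sum(X_{z_i}+X_{w_i})$ with $X_{\mathfrak{o}}$, invokes \eqref{eq Beauville-Voisin equation} to kill the diagonal terms $\sum([x_i,x_i]+[y_i,y_i]-[z_i,z_i]-[w_i,w_i])$, and then writes $2\sigma$ explicitly as a signed sum of squares $(X_{x_i}-X_{y_i})^2-(X_{z_i}-X_{w_i})^2$. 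Your route is slicker: you observe that the projector onto the eigenvalue-$2$ eigenspace sends each point class $[x,y]$ to $(X_x-X_{\mathfrak{o}_X})\cdot(X_y-X_{\mathfrak{o}_X})$ (using \eqref{eq Beauville-Voisin equation} to rewrite $\frac{1}{2}[x,x]+\frac{1}{2}[y,y]$), whence the whole eigenspace $\ker\{I_*\}$ is spanned by such products. Your approach avoids tracking a generic element of the kernel and gives the inclusion generator-by-generator; the paper's approach is more hands-on but does not require appealing to the eigenspace projector from Proposition \ref{prop first dec result}. Both rely on exactly the same two inputs: the relation \eqref{eq Beauville-Voisin equation} and the identification $\ker\{I_*\}=\ker\{(I^2-2\Delta_F)_*\}$.
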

\begin{proof}
  Two distinct points $x,y\in X$ determine a point $[x,y]\in F$ and a
  general point of $F$ is always of this form.  By definition we have
$$
I_*[x,y]=X_x + X_y=Z^*Z_*[x,y].
$$
If $\sigma\in\CH_0(F)$ is such that $Z_*\sigma=0$, then by the above
equality we have $I_*\sigma=0$. Thus $\ker(Z_*)\subseteq\ker(I_*)$.
Recall that $X_x\cdot X_y=[x,y]$ so that, for a general point $p \in
X$, we have
\begin{align*}
[x,y]+[z,w]-[x,z]-[y,w] & = (X_x-X_w)\cdot (X_y-X_z)\\
& = I_*([x,p] - [w,p]) \cdot
I_*([y,p] - [z,p]),
\end{align*}
which by bilinearity shows that
$\mathcal{A}_\mathrm{hom}\cdot\mathcal{A}_\mathrm{hom}$ is spanned by
cycles of the form $[x,y]+[z,w]-[x,z]-[y,w]$. It is then immediate to
see that $\mathcal{A}_\mathrm{hom}\cdot\mathcal{A}_\mathrm{hom}
\subseteq\ker\{Z_*\}$.

Let us now assume that $X$ satisfies Assumption \ref{assm small
  diagonal} and let us show that $\ker\{I_*\} \subseteq
\mathcal{A}_\mathrm{hom}\cdot\mathcal{A}_\mathrm{hom}$.  We further
assume that $\mathfrak{o}_X$ is effective. Otherwise, in what follows,
we replace $[x,\mathfrak{o}_X]$ by $p_*\rho^*(p_1^*x\cdot
p_2^*\mathfrak{o}_X)$ and $[\mathfrak{o}_X,\mathfrak{o}_X]$ by
$p_*\rho^*(p_1^*\mathfrak{o}_X\cdot p_2^*\mathfrak{o}_X)$. Then the
same proof carries through.  By Assumption \ref{assm small diagonal},
the key identity \eqref{eq Beauville-Voisin equation}
\begin{equation}
[x,x]-2[x,\mathfrak{o}_X] +\mathfrak{o}_F=0,\quad\text{in }\CH_0(F)
\end{equation} is satisfied.
Now let $\sigma=\sum_{i=1}^{r}[x_i,y_i]-\sum_{i=1}^{r}[z_i,w_i] \in
\CH_0(F)_\mathrm{hom}$ be such that $I_*\sigma=0$, \textit{i.e.},
$$
\sum(X_{x_i}+X_{y_i}) - \sum(X_{z_i}+X_{w_i})=0,\quad \text{in
}\CH_d(F).
$$
By intersecting the above sum with $X_{\mathfrak{o}_X}$, we get
$$
\sum([x_i,\mathfrak{o}_X] + [y_i,\mathfrak{o}_X]) -
\sum([z_i,\mathfrak{o}_X] + [w_i,\mathfrak{o}_X])=0,\quad \text{in
}\CH_0(F).
$$
Combined with equation \eqref{eq Beauville-Voisin equation}, this
yields $\sum ([x_i,x_i]+[y_i,y_i]-[z_i,z_i]-[w_i,w_i])=0$. Hence it
follows that
\begin{align*}
2\sigma &= 2\left(\sum[x_i,y_i] - \sum [z_i,w_i]\right) \\
 &= \sum(X_{x_i}-X_{y_i})^2
 -\sum(X_{z_i}-X_{w_i})^2 -\sum([x_i,x_i]
 +[y_i,y_i]-[z_i,z_i]-[w_i,w_i])\\
 &=\sum(X_{x_i}-X_{y_i})^2
 -\sum(X_{z_i}-X_{w_i})^2\\
 & =\sum (I_*([x_i,\mathfrak{o}_X]-[y_i,\mathfrak{o}_X]))^2 -\sum
 (I_*([z_i,\mathfrak{o}_X]-[w_i,\mathfrak{o}_X]))^2.
\end{align*}
This shows that $\ker\{I_*\}\subseteq
\mathcal{A}_\mathrm{hom}\cdot\mathcal{A}_\mathrm{hom}$.
\end{proof}


\vspace{10pt}
\section{Multiplicative Chow--K\"unneth decomposition for
  $X^{[2]}$} \label{sec multCKX2}

In this section, we consider a smooth projective variety $X$ of
dimension $d$ endowed with a multiplicative Chow--K\"unneth
decomposition in the sense of Definition \ref{def multCK}. Recall
that, denoting $\Delta_{123}^X$ the class of $\{(x,x,x) : x \in X\}$
inside $X \times X \times X$ seen as a correspondence of degree $d$
from $X \times X$ to $X$, this means that there is a Chow--K\"unneth
decomposition of the diagonal
 \[
 \Delta_X = \sum_{i=0}^{2d}\pi_X^i, \quad \mbox{such that} \ \
 \Delta_{123}^X = \sum_{i+j=k} \pi_X^k \circ \Delta_{123}^X \circ
 (\pi_X^i\otimes \pi_X^j).
\]
In particular, such a multiplicative decomposition induces a bigrading
of the Chow ring
\[
\CH^*(X) = \bigoplus_{p,s} \CH_{\mathrm{CK}}^p(X)_s,\qquad
\text{with}\quad \CH_{\mathrm{CK}}^p(X)_s :=
(\pi_X^{2p-s})_*\CH^p(X).\] Under the condition that the Chern classes
of the tangent bundle of $X$ belong to the degree-zero graded part of
the above decomposition, and viewing $X^{[2]}$ as a quotient of the
blow-up of $X \times X$ along the diagonal for the action of the
symmetric group $\mathfrak{S}_2$, we show in Theorem \ref{thm multCK
  X2} that $X^{[2]}$ is naturally endowed with a multiplicative
Chow--K\"unneth decomposition. We first show that the projective
normal bundle of $X$ can be endowed with a multiplicative
Chow--K\"unneth decomposition. Then we show that the blow-up of $X$
along the diagonal has a multiplicative $\mathfrak{S}_2$-equivariant
Chow--K\"unneth decomposition. Finally we deduce that $X^{[2]}$ has a
multiplicative Chow--K\"unneth decomposition.
The point of Section \ref{section mult S2} will be to check that the
decomposition induced by this multiplicative Chow--K\"unneth
decomposition on the Chow groups of $S^{[2]}$ coincides with the
decomposition induced by the Fourier transform. Finally, in \S
\ref{sec proof thm6}, we prove Theorem \ref{thm2 CK}.

\subsection{Multiplicative Chow--K\"unneth decompositions for
  projective bundles}
Let $\mathscr{E}$ be a locally free coherent sheaf of rank $r+1$ and
let $\pi : \PP(\mathscr{E})\rightarrow X$ be the associated projective
bundle. Let $\xi\in \CH^1(\PP(\mathscr{E}))$ be the class of the relative
$\calO(1)$-bundle.  Then the Chow ring of $\PP(\mathscr{E})$ is given by
\begin{equation}\label{eq chow proj bundle}
  \CH^*(\PP(\mathscr{E})) = \CH^*(X)[\xi],\qquad \text{where }\quad
  \xi^{r+1} + \pi^*c_1(\mathscr{E})\xi^r + \cdots
  +\pi^*c_r(\mathscr{E})\xi + \pi^*c_{r+1}(\mathscr{E}) = 0.
\end{equation}
The same formula holds for the cohomology ring of
$\PP(\mathscr{E})$. We may then write, for each $i\geq 0$,
\begin{equation}\label{eq xi power}
\xi^{i} = \sum_{j=0}^r \pi^*a_{i,j}\xi^j,
\end{equation}
for some unique $a_{i,j}\in\CH^{i-j}(X)$, where $a_{i,j}$ is a
polynomial of the Chern classes of $\mathscr{E}$. It follows in
particular that $\pi_*\xi^i = a_{i,r}$ is a polynomial of the Chern
classes of $\mathscr{E}$.

\begin{prop}\label{prop mult proj bundle}
  Suppose $X$ has a multiplicative Chow-K\"unneth decomposition and
  $c_i(\mathscr{E})\in \CH_{\mathrm{CK}}^i(X)_0$ for all $i\geq
  0$. Then $\PP(\mathscr{E})$ has a multiplicative Chow--K\"unneth
  decomposition such that the associated decomposition of the Chow
  ring of $\PP(\mathscr{E})$ satisfies
  \begin{equation} \label{eq projbundle}
    \CH_{\mathrm{CK}}^p(\PP(\mathscr{E}))_s = \bigoplus_{l=0}^{r}
    \xi^l \cdot \pi^* \CH_{\mathrm{CK}}^{p-l}(X)_s.
\end{equation}
\end{prop}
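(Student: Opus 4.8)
The natural approach is to define the Chow--K\"unneth projectors on $\PP(\mathscr{E})$ via the projective bundle formula, using the multiplicativity of the decomposition on $X$ together with the hypothesis $c_i(\mathscr{E}) \in \CH_{\mathrm{CK}}^i(X)_0$. Concretely, I would set
\[
\pi_{\PP(\mathscr{E})}^k := \sum_{l=0}^r \big(\Gamma_\xi^l\big) \circ (\pi_X^{k-2l}) \circ \big({}^t\Gamma_\xi^l \cdot (\text{correction})\big),
\]
but rather than guessing the formula it is cleaner to describe the induced bigrading \eqref{eq projbundle} directly and then verify it comes from genuine mutually orthogonal idempotents. The key structural input is that $\PP(\mathscr{E})$ is, as a motive, a direct sum $\bigoplus_{l=0}^r \mathfrak{h}(X)(-l)$ via the classes $\xi^l$; since each Tate twist $\mathfrak{h}(X)(-l)$ inherits the Chow--K\"unneth decomposition of $X$ shifted by $2l$, one obtains a Chow--K\"unneth decomposition of $\PP(\mathscr{E})$ whose graded pieces are exactly the summands in \eqref{eq projbundle}. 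The hypothesis $c_i(\mathscr{E}) \in \CH_{\mathrm{CK}}^i(X)_0$ guarantees that the relation $\xi^{r+1} = -\sum \pi^*c_i(\mathscr{E})\xi^{r+1-i}$ is homogeneous of degree $s=0$ in the $X$-grading, so that the decomposition \eqref{eq projbundle} is well-defined (independent of how one rewrites powers of $\xi$ using \eqref{eq chow proj bundle}) and the coefficients $a_{i,j}$ of \eqref{eq xi power} all lie in $\CH_{\mathrm{CK}}^{i-j}(X)_0$.

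\textbf{Steps, in order.} First I would record the motivic decomposition $\mathfrak{h}(\PP(\mathscr{E})) = \bigoplus_{l=0}^r \mathfrak{h}(X)(-l)$ and use it to define the projectors $\pi_{\PP(\mathscr{E})}^k$; the standard projective bundle computation shows these lift the K\"unneth projectors and are mutually orthogonal idempotents summing to the diagonal, so $\{\pi_{\PP(\mathscr{E})}^k\}$ is a Chow--K\"unneth decomposition. Second, I would check that the induced bigrading is exactly \eqref{eq projbundle}: an element $\xi^l \cdot \pi^*\alpha$ with $\alpha \in \CH_{\mathrm{CK}}^{p-l}(X)_s$ should satisfy $(\pi^{2p-s}_{\PP(\mathscr{E})})_*(\xi^l\cdot \pi^*\alpha) = \xi^l\cdot\pi^*\alpha$, which reduces via the projection formula and \eqref{eq xi power} to the definition of $\CH^{p-l}_{\mathrm{CK}}(X)_s$ together with $a_{i,j}\in\CH_{\mathrm{CK}}^{i-j}(X)_0$. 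Third, and this is the crux, I would verify multiplicativity, i.e. that $\big(\xi^{l}\cdot\pi^*\alpha\big)\cdot\big(\xi^{l'}\cdot\pi^*\beta\big)$ lands in the correct graded piece whenever $\alpha\in\CH_{\mathrm{CK}}(X)_s$ and $\beta\in\CH_{\mathrm{CK}}(X)_r$. By the ring structure this product is $\xi^{l+l'}\cdot\pi^*(\alpha\cdot\beta)$, and I reduce powers $\xi^{l+l'}$ with $l+l'>r$ using \eqref{eq xi power}; since $\alpha\cdot\beta\in\CH_{\mathrm{CK}}(X)_{s+r}$ by multiplicativity of the decomposition on $X$ and since every $a_{i,j}$ has $X$-degree $0$, the whole product stays in the $(s+r)$-graded part. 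Rather than checking this on classes only (which gives just weak multiplicativity), I would prove the genuine correspondence-level statement via Proposition \ref{rmk delta0}, by showing $(\pi^a_{\PP(\mathscr{E})^3})_*\Delta_{123}^{\PP(\mathscr{E})} = \Delta_{123}^{\PP(\mathscr{E})}$ only for $a = 4\dim\PP(\mathscr{E})$.

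\textbf{Main obstacle.} The hard part will be the correspondence-level multiplicativity in Step three. Expressing $\Delta_{123}^{\PP(\mathscr{E})}$ in terms of $\Delta_{123}^X$, the relative $\calO(1)$ classes, and the excess/Segre classes of $\mathscr{E}$, and then tracking how the product Chow--K\"unneth projectors on $\PP(\mathscr{E})^3$ act on it, requires care precisely because $\xi$ does not live in a single K\"unneth degree but the relation \eqref{eq chow proj bundle} is degree-$0$ in the $X$-grading. The key point to nail down is that all the structure constants $a_{i,j}$ are polynomials in the $c_i(\mathscr{E})$ and hence, by hypothesis, lie in $\CH_{\mathrm{CK}}^{i-j}(X)_0$; this is what forces the small diagonal of $\PP(\mathscr{E})$ into the degree-zero graded part. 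I expect the verification to be a bookkeeping argument reducing everything to the multiplicativity of $\Delta_{123}^X$ via Proposition \ref{lem diagonal pullback}, with the $c_i(\mathscr{E})\in\CH_{\mathrm{CK}}^i(X)_0$ assumption used at each step to keep the $X$-grading additive.
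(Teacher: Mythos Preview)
Your proposal is correct and follows essentially the same route as the paper: define the Chow--K\"unneth decomposition on $\PP(\mathscr{E})$ via the motivic projective bundle formula, then verify multiplicativity by invoking the criterion of Proposition~\ref{rmk delta0} and showing that the small diagonal $\Delta_{123}^{\PP(\mathscr{E})}$ lies in degree~$0$, using that all structure constants are polynomials in the $c_i(\mathscr{E})\in\CH_{\mathrm{CK}}^i(X)_0$ and applying Proposition~\ref{lem diagonal pullback}. The paper implements your Step three by factoring the small diagonal through the relative triple product $\PP\times_X\PP\times_X\PP\to\PP^3$ and expanding $(\iota_{\PP/X})_*[\PP]$ in the basis $(\xi'_1)^i(\xi'_2)^j(\xi'_3)^k$, then inductively solving for the coefficients $\alpha_{ijk}$ by intersecting with monomials in the $\xi'$ and pushing down to $X$; this makes precise what you call ``expressing $\Delta_{123}^{\PP(\mathscr{E})}$ in terms of $\Delta_{123}^X$ and the relative $\calO(1)$ classes''. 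One technical point the paper addresses explicitly and that you should not overlook: the product Chow--K\"unneth decomposition on $\PP(\mathscr{E})^3$ must be checked to agree with the one obtained by applying the projective bundle formula three times to the product decomposition on $X^3$, so that the description \eqref{eq projbundle} for the triple product is available when you locate $\Delta_{123}^{\PP(\mathscr{E})}$.
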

\begin{proof} The projective bundle formula for Chow groups gives an
  isomorphism $$\sum_{l=0}^r \xi^l \cdot \pi^* : \bigoplus_{l=0}^{r}
  \CH^{p-l}(X) \stackrel{\simeq}{\longrightarrow}
  \CH^p(\PP(\mathscr{E})).$$ This isomorphism is in fact an
  isomorphism of motives ; see Manin \cite{manin}. Therefore, the
  Chow--K\"unneth decomposition of $X$ induces a Chow--K\"unneth
  decomposition of $\Delta_{\PP(\mathscr{E})}$ whose associated pieces
  $\CH_{\mathrm{CK}}^p(\PP(\mathscr{E}))_s$ of the Chow groups
  $\CH^p(X)$ are described in the formula \eqref{eq projbundle}.

  The self-product $\PP(\mathscr{E})\times \PP(\mathscr{E})\times
  \PP(\mathscr{E})$ can be obtained from $X\times X\times X$ by
  successively taking the projectivizations of the vector bundles
  $p_i^*\mathscr{E}$, $i=1,2,3$. The subtle point here is that there
  are thus two ways of obtaining a Chow--K\"unneth decomposition for
  $\PP(\mathscr{E})\times \PP(\mathscr{E})\times \PP(\mathscr{E})$~:
  either one considers the product Chow--K\"unneth decomposition on
  $\PP(\mathscr{E})\times \PP(\mathscr{E})\times \PP(\mathscr{E})$
  induced by that of $\PP(\mathscr{E})$ (and this is the one that
  matters in order to establish the multiplicativity property), or one
  considers the product Chow--K\"unneth decomposition on $X\times
  X\times X$ induced by that of $X$
 and then uses the projective bundle isomorphism together
  with Manin's identity principle to produce a Chow--K\"unneth
  decomposition on the successive projectivizations of the vector
  bundles $p_i^*\mathscr{E}$, $i=1,2,3$ (and this is the one used to
  compute the induced decomposition on the Chow groups of
  $\PP(\mathscr{E})\times \PP(\mathscr{E})\times \PP(\mathscr{E})$ in
  terms of the Chow groups of $X\times X \times X$ as in \eqref{eq
    chow P^3 graded pieces} below). Either way, these Chow--K\"unneth
  decompositions agree. In particular, the decompositions induced on
  the Chow groups of $\PP(\mathscr{E})\times \PP(\mathscr{E})\times
  \PP(\mathscr{E})$ agree and satisfy~:
  \begin{equation}\label{eq chow P^3 graded pieces}
    \CH^p_{\mathrm{CK}}(\PP(\mathscr{E})\times \PP(\mathscr{E})\times 
    \PP(\mathscr{E}))_s = 
    \bigoplus_{0\leq i,j,k\leq r} (\pi^{\times 3})^* 
    \CH^{p-i-j-k}_{\mathrm{CK}}(X\times X\times X)_s\cdot \xi_1^i\xi_2^j\xi_3^k,
  \end{equation}
  where $\pi^{\times 3}: \PP(\mathscr{E})\times \PP(\mathscr{E})\times
  \PP(\mathscr{E}) \rightarrow X\times X\times X$ is the the 3-fold
  product of $\pi$, and where $\xi_i\in \CH^1(\PP(\mathscr{E})\times
  \PP(\mathscr{E})\times \PP(\mathscr{E}))$ is the pull-back of
  $\xi\in \CH^1(\PP(\mathscr{E}))$ from the $i^{\text{th}}$ factor,
  $i=1,2,3$.
  
  By the criterion for multiplicativity of a Chow--K\"unneth
  decomposition given in Proposition \ref{rmk delta0}, we only need to
  show that
  \[
   \Delta_{123}^{\PP(\mathscr{E})} \in \CH_{\mathrm{CK}}^{2d+2r}(\PP(\mathscr{E})\times 
\PP(\mathscr{E})\times \PP(\mathscr{E}))_0,
  \]
  where $\Delta_{123}^{\PP(\mathscr{E})}$ is the small diagonal of
  $\PP(\mathscr{E})$ (\emph{cf.} Definition \ref{defn small
    diagonals}). Consider the following diagram
  \[
  \xymatrix{ \PP \ar[r]^{\iota_{\PP/X}\qquad}\ar[rd]_{\pi} &
    \PP\times_X\PP\times_X\PP \ar[r]^{\iota'}\ar[d]^{\pi^{\times
        3}_{/X}} &
    \PP\times \PP\times \PP\ar[d]^{\pi^{\times 3}} \\
    &X\ar[r]^{\iota_{\Delta,3}\quad} & X\times X\times X, }
  \]
  where in order to lighten the notations we have set
  $\PP:=\PP(\mathscr{E})$, and where $\iota_{\PP/X}$ is the small
  diagonal embedding of $\PP$ relative to $X$.  By definition, we have
  \[
  \Delta_{123}^{\PP(\mathscr{E})} = (\iota')_*(\iota_{\PP/X})_*[\PP].
  \]
  Note that the Chow groups of $\PP\times_X \PP\times_X \PP$ are given
  by
  \begin{equation}\label{eq Chow rel 3-fold projbundle}
    \CH^p(\PP\times_X\PP\times_X\PP) = 
    \bigoplus_{0\leq i,j,k\leq r}(\pi^{\times 3}_{/X})^*\CH^{p-i-j-k}(X)\cdot 
    (\xi'_1)^i(\xi'_2)^j(\xi'_3)^k,
  \end{equation}
  where $\xi'_i = (\iota')^*\xi_i$. Hence we have
  \[
  (\iota_{\PP/X})_*[\PP] = \sum_{0\leq i,j,k\leq r} (\pi^{\times
    3}_{/X})^*\alpha_{ijk} \cdot (\xi'_1)^i(\xi'_2)^j(\xi'_3)^k
  \]
  for some $\alpha_{ijk}\in \CH^*(X)$. For each triple $(l_1,l_2,l_3)$
  of non-negative integers, we intersect the above equation with
  $(\xi'_1)^{l_1}(\xi'_2)^{l_2}(\xi'_3)^{l_3}$ and then push forward
   to $X$ along $\pi^{\times 3}_{/X}$. Noting that
\[
(\pi^{\times 3}_{/X})_* \Big((\iota_{\PP/X})_*[\PP]\cdot
(\xi'_1)^{l_1}(\xi'_2)^{l_2}(\xi'_3)^{l_3}\Big) = \pi_*
\xi^{l_1+l_2+l_3} = \text{polynomial in }\set{c_i(\mathscr{E})},
\]
we obtain a bunch of linear equations involving the $\alpha_{ijk}$'s
whose coefficients are all polynomials in terms of the Chern classes
of $\mathscr{E}$. One inductively deduces from these equations that
the $\alpha_{ijk}$ are all polynomials of the Chern classes of
$\mathscr{E}$. Then, by assumption, we have $\alpha_{ijk}\in
\CH_{\mathrm{CK}}^*(X)_0$. Note that
\begin{align*}
  \Delta_{123}^{\PP(\mathscr{E})} & = \iota'_*(\iota_{\PP/X})_*[\PP]\\
  & = \sum_{0\leq i,j,k\leq r} \iota'_* \Big((\pi^{\times
    3}_{/X})^*\alpha_{ijk}
  \cdot (\xi'_1)^i(\xi'_2)^j(\xi'_3)^k\Big)\\
  & = \sum_{0\leq i,j,k\leq r} \iota'_* \Big((\pi^{\times
    3}_{/X})^*\alpha_{ijk}
  \cdot (\iota')^*(\xi_1^i\xi_2^j\xi_3^k)\Big)\\
  & = \sum_{0\leq i,j,k\leq r} \big(\iota'_* (\pi^{\times
    3}_{/X})^*\alpha_{ijk}\big)
  \cdot \xi_1^i\xi_2^j\xi_3^k\\
  & = \sum_{0\leq i,j,k\leq r} \big( (\pi^{\times
    3})^*(\iota_{\Delta,3})_*\alpha_{ijk}\big) \cdot
  \xi_1^i\xi_2^j\xi_3^k.
\end{align*}
Since $(\iota_{\Delta,3})_*\alpha_{ijk}\in \CH_{\mathrm{CK}}^*(X^3)_0$
by Proposition \ref{lem diagonal pullback}, we conclude from \eqref{eq
  chow P^3 graded pieces} that $\Delta_{123}^{\PP}\in
\CH^*_{\mathrm{CK}}(\PP\times\PP\times\PP)_0$.
\end{proof}

\subsection{Multiplicative Chow--K\"unneth decompositions for smooth
  blow-ups}

In this paragraph, we investigate the stability of the
multiplicativity property for Chow--K\"unneth decompositions under
smooth blow-up.  Let $X$ be a smooth projective variety and let $i
:Y\hookrightarrow X$ be a smooth closed sub-variety of codimension
$r+1$. Let $\rho :\widetilde{X}\rightarrow X$ be the blow-up of $X$
along $Y$. Consider the following diagram
\[
\xymatrix{
  E\ar[r]^j\ar[d]_\pi &\widetilde{X}\ar[d]^\rho\\
  Y\ar[r]^i &X }
\]
where $E$ is the exceptional divisor on $\widetilde{X}$ ; it is
isomorphic to the geometric projectivization $ \PP(\mathscr{N}_{Y/X})$
of the normal bundle of $Y$ inside $X$. Let $\xi\in\CH^1(E)$ be the
class of the relative $\calO(1)$-bundle of the projective bundle
$E\rightarrow X$. The smooth blow-up formula for Chow groups
\cite[Proposition 6.7 (e)]{fulton} gives an isomorphism
\begin{equation}\label{eq chow blow up}
  \CH^p(X) \oplus
  \bigoplus_{l = 0}^{r-1}  \CH^{p-l-1}(Y)
  \stackrel{\simeq}{\longrightarrow}
  \CH^p(\widetilde{X}), \quad
  (\alpha, \beta_0, \ldots, \beta_{r-1})  \mapsto \rho^*\alpha
  +   \sum_{l = 0}^{r-1} j_*(\xi^l\cdot \pi^*\beta_l).
\end{equation}

\begin{prop}\label{prop mult blow up}
  Assume the following conditions  :
\begin{enumerate}[(i)]
\item both $X$ and $Y$ have a multiplicative Chow--K\"unneth
  decomposition ;

\item $i^*\CH_{\mathrm{CK}}^p(X)_s \subseteq \CH_{\mathrm{CK}}^p(Y)_s$
  and $i_*\CH_{\mathrm{CK}}^{p-1}(Y)_s\subseteq
  \CH_{\mathrm{CK}}^{p+r}(X)_s$ for all $p$ and $s$;

\item the homomorphism $i_*:\CH_*(Y)\rightarrow \CH_*(X)$ is
  injective;

\item $c_p(\mathscr{N}_{Y/X})\in \CH_{\mathrm{CK}}^p(Y)_0$ for all
  $p\geq 0$.
\end{enumerate}
Then $\widetilde{X}$ has a multiplicative Chow--K\"unneth
decomposition such that
\begin{equation}\label{eq blowupdec}
  \CH_{\mathrm{CK}}^p(\widetilde{X})_s = \rho^*\CH_{\mathrm{CK}}^p(X)_s \oplus \Big(
  \bigoplus_{l=0}^{r-1} j_*(\xi^l\cdot\pi^*\CH_{\mathrm{CK}}^{p-l-1}(Y)_s) \Big).
\end{equation}
Furthermore, if $\CH_{\mathrm{CK}}^p(E)_s$ is the decomposition
obtained in Proposition \ref{prop mult proj bundle}, then we have
\[
j^*\CH_{\mathrm{CK}}^p(\widetilde{X})_s\subseteq
\CH_{\mathrm{CK}}^p(E)_s \qquad\text{and }\quad
j_*\CH_{\mathrm{CK}}^{p-1}(E)_s \subseteq
\CH_{\mathrm{CK}}^p(\widetilde{X})_s.
\]
\end{prop}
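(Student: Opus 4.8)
The plan is to follow closely the structure of the proof of Proposition \ref{prop mult proj bundle}, treating the three clauses of the statement in turn. First I would construct the Chow--K\"unneth decomposition on $\widetilde{X}$. The smooth blow-up formula \eqref{eq chow blow up} is an isomorphism of Chow motives (Manin's identity principle \cite{manin}), yielding $\mathfrak{h}(\widetilde{X}) \cong \mathfrak{h}(X) \oplus \bigoplus_{l=0}^{r-1}\mathfrak{h}(Y)(l+1)$. Combining the given Chow--K\"unneth decompositions of $X$ and $Y$ through this isomorphism produces one for $\widetilde{X}$. Since the Tate twist by $l+1$ raises codimension by $l+1$ and weight by $2(l+1)$, it preserves the index $s=2p-(\text{weight})$, and this is exactly what produces the graded pieces \eqref{eq blowupdec}. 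As all the decompositions at play are self-dual, and because the summands $\mathfrak{h}^m(Y)(l+1)$ and $\mathfrak{h}^{2d_Y-m}(Y)(r-l)$ are interchanged by Poincar\'e duality on $\widetilde{X}$ (the range $0\le l\le r-1$ being stable under $l\mapsto r-1-l$, using $\dim X-\dim Y=r+1$), the induced decomposition on $\widetilde{X}$ is again self-dual, so that Proposition \ref{rmk delta0} will be available.

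Next I would dispatch the $j^*$ and $j_*$ compatibilities, which follow essentially formally. On a generator $\rho^*\alpha$ one has $j^*\rho^*\alpha=\pi^*i^*\alpha$, which lies in $\CH_{\mathrm{CK}}^p(E)_s$ by hypothesis (ii) together with the projective-bundle description \eqref{eq projbundle} of Proposition \ref{prop mult proj bundle} applied to $E=\PP(\mathscr{N}_{Y/X})$ (legitimate thanks to (iv)). On a generator $j_*(\xi^l\cdot\pi^*\beta)$ the self-intersection formula $j^*j_*=-\xi\cdot(-)$, coming from $\mathscr{N}_{E/\widetilde{X}}=\calO_E(-1)$, gives $j^*j_*(\xi^l\pi^*\beta)=-\xi^{l+1}\pi^*\beta$, which is reduced to $\CH_{\mathrm{CK}}(E)_s$ via the relation \eqref{eq xi power}, whose coefficients are polynomials in the $c_p(\mathscr{N}_{Y/X})\in\CH_{\mathrm{CK}}^p(Y)_0$. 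The statement for $j_*$ is then immediate from the very shape of \eqref{eq blowupdec}.

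The heart of the matter, and the main obstacle, is multiplicativity. By Proposition \ref{rmk delta0} it suffices to show that the small diagonal $\Delta_{123}^{\widetilde{X}}$ lies in $\CH_{\mathrm{CK}}^{2d}(\widetilde{X}^3)_0$ for the product decomposition, where $d=\dim\widetilde{X}$. I would decompose $\CH^*(\widetilde{X}^3)$ through the iterated blow-up formula and express $\Delta_{123}^{\widetilde{X}}$ in terms of the resulting generators, checking that every coefficient lands in the degree-zero graded part. The ingredients are: the ring structure of $\CH^*(\widetilde{X})$ (the projection formula for $\rho$ and the self-intersection formula for $E$ above); hypotheses (ii) and (iv) to control pushforwards, pullbacks, and the Chern classes of the normal bundle; and Proposition \ref{lem diagonal pullback} applied to $X$ and $Y$, whose decompositions are already multiplicative, to know that $\Delta_{123}^X$ and $\Delta_{123}^Y$ already sit in degree zero.

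The genuinely delicate point is to produce a usable closed formula for $\Delta_{123}^{\widetilde{X}}$ itself: unlike the projective-bundle case, $\rho$ is not a projective bundle, so the small diagonal must be analyzed by separating its behaviour over $X\setminus Y$ (where it agrees with $\rho^*\Delta_{123}^X$) from its contribution along the exceptional divisor $E$, the latter being governed by the excess normal bundle $\mathscr{N}_{Y/X}$. Computing this excess term and reorganizing it as a combination of $(\rho\times\rho\times\rho)^*$-classes and $j_*$-type generators with coefficients built from $c_p(\mathscr{N}_{Y/X})$ and the small diagonals of $X$ and $Y$ --- and then invoking the degree-zero membership of each such building block --- is where the real work lies. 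Injectivity of $i_*$, hypothesis (iii), is what I expect to pin the resulting coefficients down as classes in the degree-zero part, playing the role that the inductive solution of the linear system for the $\alpha_{ijk}$ played in the proof of Proposition \ref{prop mult proj bundle}.
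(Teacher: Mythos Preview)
Your overall architecture matches the paper's proof closely: construct the Chow--K\"unneth decomposition via Manin's blow-up isomorphism, establish the $j^*$/$j_*$ compatibilities, then prove multiplicativity by locating $\Delta_{123}^{\widetilde X}$ in degree zero via Proposition~\ref{rmk delta0}. Your description of the multiplicativity step is essentially what the paper does: write $\Delta_{123}^{\widetilde X}=(\rho^{\times 3})^*\Delta_{123}^X+\iota'_*\alpha$ with $\alpha\in\CH^*(E\times_Y E\times_Y E)$, expand $\alpha$ in the basis $(\xi'_1)^{i_1}(\xi'_2)^{i_2}(\xi'_3)^{i_3}\cdot(\pi^{\times 3}_{/Y})^*\alpha_{i_1i_2i_3}$, intersect with monomials in the $E_i$ and push down by $\rho^{\times 3}$ to obtain linear equations for the $\alpha_{i_1i_2i_3}$, and then use injectivity of $i_*$ (and of the small-diagonal embedding of $Y$) to force $\alpha_{i_1i_2i_3}\in\CH_{\mathrm{CK}}^*(Y)_0$. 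Your ``excess term along $E$'' is exactly this $\iota'_*\alpha$.

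There is, however, a genuine gap in your treatment of the $j_*$ compatibility. You claim it ``is then immediate from the very shape of \eqref{eq blowupdec}'', but \eqref{eq blowupdec} only contains summands $j_*(\xi^l\cdot\pi^*\beta)$ for $0\le l\le r-1$, whereas $\CH_{\mathrm{CK}}^{p-1}(E)_s=\bigoplus_{l=0}^{r}\xi^l\cdot\pi^*\CH_{\mathrm{CK}}^{p-1-l}(Y)_s$ runs up to $l=r$. The case $l=r$ is \emph{not} formal: one must use the key formula (Lemma~\ref{lem blow-up formula Fulton})
\[
\rho^*i_*\alpha'=j_*\big(c_r(\pi^*\mathscr{N}_{Y/X}/\calO(-1))\cdot\pi^*\alpha'\big)
=j_*\Big(\sum_{l'=0}^{r}\xi^{l'}\cdot\pi^*(c_{r-l'}(\mathscr{N}_{Y/X})\cdot\alpha')\Big),
\]
so that $j_*(\xi^r\cdot\pi^*\alpha')$ equals $\rho^*i_*\alpha'$ minus terms $j_*(\xi^{l'}\cdot\pi^*(c_{r-l'}(\mathscr{N}_{Y/X})\cdot\alpha'))$ with $l'<r$. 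Now hypothesis~(ii) gives $i_*\alpha'\in\CH_{\mathrm{CK}}^{p+1}(X)_s$, hypothesis~(iv) gives $c_{r-l'}(\mathscr{N}_{Y/X})\cdot\alpha'\in\CH_{\mathrm{CK}}^*(Y)_s$, and the lower-$l'$ terms are already in the shape of \eqref{eq blowupdec}; this is precisely the extra step the paper carries out. Once you patch this, your proposal and the paper's proof coincide.
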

\begin{proof} By Manin \cite{manin}, the isomorphism \eqref{eq chow
    blow up} is in fact an isomorphism of motives so that the
  Chow--K\"unneth decompositions of $X$ and $Y$ induce a
  Chow--K\"unneth decomposition of $\widetilde{X}$ such that the
  associated pieces of the decomposition of the Chow groups are as
  given in \eqref{eq blowupdec}.  Before proving that the above
  decomposition is multiplicative, we first establish the
  compatibility results for $j^*$ and $j_*$. We first note that if
  $\alpha = \rho^*\alpha'$, for some $\alpha'\in
  \CH_{\mathrm{CK}}^p(X)_s$, then $i^*\alpha'\in
  \CH_{\mathrm{CK}}^p(Y)_s$ and hence
\[
 j^*\alpha = \pi^*i^*\alpha' \in \CH_{\mathrm{CK}}^p(E)_s.
\]
Similarly, if $\alpha = j_*(\xi^l\cdot\pi^*\alpha')$ for some
$\alpha'\in \CH_{\mathrm{CK}}^{p-l-1}(Y)_s$, then
\[
j^*\alpha = j^*j_*(\xi^l\cdot \pi^*\alpha')=
E|_E\cdot\xi^l\cdot\pi^*\alpha' = -\xi^{l+1}\cdot\pi^*\alpha'\in
\CH_{\mathrm{CK}}^p(E)_s,
\]
where we used the fact that $j^*E = -\xi$. Hence we have
\[
j^*\CH_{\mathrm{CK}}^p(\widetilde{X})_s \subseteq
\CH_{\mathrm{CK}}^p(E)_s.
\]
To prove the compatibility for $j_*$, consider $\alpha =
\xi^l\cdot\pi^*\alpha'$ for some $\alpha'\in
\CH_{\mathrm{CK}}^{p-l}(Y)_s$.  If $l<r$, then that $j_*\alpha$
belongs to $\CH_{\mathrm{CK}}^{p+1}(\widetilde{X})_s$ is
automatic. Assume that $l=r$. Then the key formula of
\cite[Proposition 6.7 (a)]{fulton} (which is recalled later on in
Lemma \ref{lem blow-up formula Fulton}) implies
\begin{align*}
  \rho^*i_*\alpha' & = j_*
  \Big(c_{r}(\pi^*\mathscr{N}_{Y/X}/\calO(-1)) \cdot \pi^*\alpha' \Big)\\
  & = j_*\Big( (\pi^*c_{r}(\mathscr{N}_{Y/X}) + \xi\cdot \pi^*
  c_{r-1}(\mathscr{N}_{Y/X}) + \cdots +
  \xi^{r-1}\pi^*c_1(\mathscr{N}_{Y/X}) + \xi^r)\cdot \pi^*\alpha'
  \Big).
\end{align*}
 Note that
\[
j_*\Big(\xi^{l'}\cdot \pi^*(c_{r-l}(\mathscr{N}_{Y/X})\cdot
\alpha')\Big) \in \CH_{\mathrm{CK}}^{p+1}(\widetilde{X})_s,
\]
for all $l'<r$. Meanwhile, using the assumption \emph{(ii)}, we also know
that $\rho^*i_*\alpha'\in
\CH_{\mathrm{CK}}^{p+1}(\widetilde{X})_s$. Then one easily sees that
$j_*(\xi^r\pi^*\alpha')\in
\CH_{\mathrm{CK}}^{p+1}(\widetilde{X})_s$. In all cases, we have
$j_*\CH_{\mathrm{CK}}^p(E)_s\subseteq
\CH_{\mathrm{CK}}^{p+1}(\widetilde{X})_s$.\medskip

It remains to show that the Chow--K\"unneth decomposition of
$\widetilde{X}$ is multiplicative. As in the proof of Proposition
\ref{prop mult proj bundle}, we would like to use the criterion of
Proposition \ref{rmk delta0} to show that the small diagonal
$\Delta_{123}^{\widetilde{X}}$ (\emph{cf.} Definition \ref{defn small
  diagonals}) belongs to $\CH^{2d}_{\mathrm{CK}}(\widetilde{X} \times
\widetilde{X}\times \widetilde{X})_0$, where the Chow--K\"unneth
decomposition used is the $3$-fold product Chow--K\"unneth
decomposition on $\widetilde{X}$.  For that purpose, we need to
understand at least how the Chow groups of $X\times X \times X$ fit
into the Chow groups of $\widetilde{X} \times \widetilde{X}\times
\widetilde{X}$ (see \eqref{eq diagonal diff}). Note that the $3$-fold
product $\widetilde{X}^3$ of $\widetilde{X}$ can also be viewed as
blowing up $X \times X \times X$ successively three times into
$\widetilde{X} \times X \times X$, $\widetilde{X} \times \widetilde{X}
\times X$ and then into $\widetilde{X} \times \widetilde{X} \times
\widetilde{X}$. The crucial point is then to observe that the product
Chow--K\"unneth decomposition on $\widetilde{X}^3$ agrees with the one
obtained by starting with the product Chow--K\"unneth decomposition on
$X^3$ and then by using the blow-up formula \eqref{eq chow blow up}
and Manin's identity principle after each blow-up necessary to obtain
$\widetilde{X}^3$.  As such, these Chow--K\"unneth decompositions for
$\widetilde{X}^3$ induce the same decomposition of the Chow groups of
$\widetilde{X}^3$ ; in view of \eqref{eq blowupdec} applied to
$X\times X \times X$, $\widetilde{X} \times X \times X$ and
$\widetilde{X} \times \widetilde{X} \times X$, successively, one finds
that the induced decomposition on the Chow groups of $\widetilde{X}
\times \widetilde{X} \times \widetilde{X}$ satisfies
\begin{equation}\label{eq 3-fold blowup}
  (\rho^{\times 3})^*\CH_{\mathrm{CK}}^p({X}^3)_s + 
  (j^{\times 3})_* \left(\bigoplus_{i_1,i_2,i_3=0}^{r}
    \xi_1^{i_1}\xi_2^{i_2}\xi_3^{i_3}\cdot (\pi^{\times 3})^*
    \CH_{\mathrm{CK}}^{p-3-i_1-i_2-i_3}(Y^3)_s \right)
  \subseteq \CH_{\mathrm{CK}}^p(\widetilde{X}^3)_s.
\end{equation}
As already mentioned, in order to establish the multiplicativity
property of the Chow--K\"unneth decomposition on $\widetilde{X}$, it
suffices by the criterion of Proposition \ref{rmk delta0} to show that
the small diagonal $\Delta_{123}^{\widetilde{X}}$ satisfies
\[
\Delta_{123}^{\widetilde{X}} \in
\CH_{\mathrm{CK}}^{2d}(\widetilde{X}^3)_0.
\]
Consider the following fiber square
\[
\xymatrix{
  Z \ar[r]^{\iota'} \ar[d]_{\rho'} & \widetilde{X}^3\ar[d]^{\rho^{\times 3}}\\
  X \ar[r]^{\iota_{\Delta,3}} & X^3, }
\]
where $Z$ contains two components : $Z_1 \cong \widetilde{X}$ which is
such that the restriction of $\iota'$ to $Z_1$ is simply the small
diagonal embedding $\tilde{\iota}_{\Delta,3}: \widetilde{X}\rightarrow
\widetilde{X}^3$, and $Z_2 = E\times_Y E \times_Y E$. It follows that
\begin{equation}\label{eq diagonal diff}
  \Delta_{123}^{\widetilde{X}} = (\rho^{\times 3})^* \Delta_{123}^{X} 
  + \iota'_*\alpha,
\end{equation}
for some $\alpha\in \CH^*(E\times_Y E\times_Y E)$. By equation
\eqref{eq Chow rel 3-fold projbundle}, we get
\begin{equation}\label{eq alphablowup}
  \alpha = \sum_{i_1,i_2,i_3=0}^{r} (\pi^{\times 3}_{/Y})^*\alpha_{i_1 i_2 i_3}\cdot 
  (\xi'_1)^{i_1}(\xi'_2)^{i_2}(\xi'_3)^{i_3},\quad\text{for some } 
  \alpha_{i_1i_2i_3}\in \CH^*(Y),
\end{equation}
where $\xi'_{k}\in\CH^1(E\times_X E\times_X E)$ is the pull-back of
$\xi\in \CH^1(E)$ from the $k^{\text{th}}$ factor. For any triple
$(l_1,l_2,l_3)$ of non-negative integers, we intersect both sides of
\eqref{eq diagonal diff} with $(-E_1)^{l_1}(-E_2)^{l_2}(-E_3)^{l_3}$ and
then apply $(\rho^{\times 3})_*$ to get
\begin{align*}
  (\rho^{\times 3})_*\Big( \Delta_{123}^{\widetilde{X}}\cdot
  (-E_1)^{l_1}(-E_2)^{l_2}(-E_3)^{l_3}\Big) &= (\rho^{\times
    3})_*\Big((-E_1)^{l_1}(-E_2)^{l_2}(-E_3)^{l_3}\Big)\cdot
  \Delta_{123}^{X}\\
  &\qquad + (\rho^{\times 3})_*\Big(
  (-E_1)^{l_1}(-E_2)^{l_2}(-E_3)^{l_3}\cdot\iota'_*\alpha\Big).
\end{align*}
We note that
\[
(\rho^{\times 3})_*\Big( \Delta_{123}^{\widetilde{X}}\cdot
(-E_1)^{l_1}(-E_2)^{l_2}(-E_3)^{l_3}\Big) =
(\iota_{\Delta,3})_*\rho_*(-E)^{l_1+l_2+l_3}
\]
and we also observe that $\rho_*(E^l)\in \CH_{\mathrm{CK}}^*(X)_0$
since it is either the class of $X$ (when $l=0$) or of the form
\[
 i_*(\text{polynomial of }c(\mathscr{N}_{Y/X})).
\]
It follows that
\[
(\rho^{\times 3})_*\Big( \Delta_{123}^{\widetilde{X}}\cdot
(-E_1)^{l_1}(-E_2)^{l_2}(-E_3)^{l_3}\Big) \in
\CH_{\mathrm{CK}}^*(X\times X\times X)_0.
\]
Similarly, using Proposition \ref{lem diagonal pullback}, one also
shows that
\[
(\rho^{\times 3})_*\Big((-E_1)^{l_1}(-E_2)^{l_2}(-E_3)^{l_3}\Big)\cdot
\Delta_{123}^{X} \in \CH_{\mathrm{CK}}^*(X\times X\times X)_0,
\]
Hence we conclude that
\[
(\rho^{\times 3})_*\Big(
(-E_1)^{l_1}(-E_2)^{l_2}(-E_3)^{l_3}\cdot\iota'_*\alpha\Big) \in
\CH_{\mathrm{CK}}^*(X\times X\times X)_0.
\]
Substituting the expression \eqref{eq alphablowup} into the above
equation, we find
\begin{align*}
  (\rho^{\times 3})_*\Big(
  (-E_1)^{l_1}(-E_2)^{l_2}(-E_3)^{l_3}\cdot\iota'_*\alpha\Big) & =
  (\rho^{\times 3})_*\iota'_*\Big(
  (\xi'_1)^{l_1}(\xi'_2)^{l_2}(\xi'_3)^{l_3} \cdot\alpha\Big)\\
  & = (\iota_{\Delta,3})_*i_*(\pi^{\times 3}_{/Y})_*\Big(
  (\xi'_1)^{l_1}(\xi'_2)^{l_2} (\xi'_3)^{l_3}\cdot\alpha\Big)\\
  & = (\iota_{\Delta,3})_*i_*L_{l_1 l_2 l_3}(\alpha_{i_1 i_2 i_3}) \
  \in \CH_{\mathrm{CK}}^*(X\times X\times X)_0,
\end{align*}
where $L_{l_1l_2l_3}$ is a linear form of $\alpha_{i_1 i_2 i_3}$ whose
coefficients are polynomials of the Chern classes of
$\mathscr{N}_{Y/X}$. Since both $i_*$ and $(\iota_{\Delta,3})_*$ are
injective and respect the decompositions of the Chow groups, we get
\[
 L_{l_1l_2l_2}(\alpha_{i_1i_2i_3})\in \CH^*(Y)_0.
\]
By induction, it follows from these linear equations that
$\alpha_{i_1i_2i_3}\in \CH^*(Y)_0$. Consider the following fiber
square
\[
\xymatrix{
 E\times_Y E\times_Y E\ar[r]^{\qquad\varphi}\ar[d]_{\pi^{\times 3}_{/Y}} 
& E^3\ar[r]^{j^{\times 3}}\ar[d]^{\pi^{\times 3}} &\widetilde{X}^3\\
 Y\ar[r]^{\iota'_{\Delta,3}} & Y^3 & 
}
\]
where $\iota'_{\Delta,3}$ is the small diagonal embedding of $Y$. Note
that
\begin{align*}
  \iota'_*\alpha & = (j^{\times 3})_*\varphi_*\alpha\\
  & = (j^{\times 3})_*\left(\sum_{i_1,i_2,i_3=0}^{r} \varphi_*
    \big((\pi^{\times 3}_{/Y})^*\alpha_{i_1i_2i_3}\big)\cdot
    \xi_1^{i_1} \xi_2^{i_2}\xi_3^{i_3}\right)\\
  & = (j^{\times 3})_*\left(\sum_{i_1,i_2,i_3=0}^{r} (\pi^{\times
      3})^* \big((\iota'_{\Delta,3})_*\alpha_{i_1i_2i_3}\big)\cdot
    \xi_1^{i_1}\xi_2^{i_2}\xi_3^{i_3}\right).
\end{align*}
Combining this with \eqref{eq 3-fold blowup}, \eqref{eq diagonal diff}
and the fact that $\alpha_{i_1i_2i_3}\in \CH^*(Y)_0$ yields that
$\Delta_{123}^{\widetilde{X}}$ lies in the piece 
$\CH^{2d}(\widetilde{X}^3)_0$, which
finishes the proof.
\end{proof}

\subsection{Multiplicative Chow--K\"unneth decomposition for
  $X^{[2]}$} If $X$ is a smooth projective variety, we consider
$X^{[2]}$ as the quotient under the action of $\mathfrak{S}_2$ that
permutes the factors on the blow-up of $X\times X$ along the
diagonal. First we note that Proposition \ref{lem diagonal pullback}
implies that condition \emph{(ii)} of Proposition \ref{prop mult blow
  up} is automatically satisfied for the blow-up of $X\times X$ along
the diagonal if $X$ is endowed with a multiplicative Chow--K\"unneth
decomposition that is \emph{self-dual}~:

\begin{prop}
  \label{prop multCK blowup diag} Let $X$ be a smooth projective
  variety endowed with a multiplicative self-dual Chow--K\"unneth
  decomposition. Assume that $c_p({X})$ belongs to
  $\CH_{\mathrm{CK}}^p(X)_0$ for all $p\geq 0$. Then the induced
  Chow--K\"unneth decomposition (as described in Proposition \ref{prop
    mult blow up}) on the blow-up of $X\times X$ along the diagonal is
  multiplicative.
\end{prop}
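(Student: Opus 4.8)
The plan is to recognize the blow-up of $X \times X$ along the diagonal as an instance of the situation treated in Proposition \ref{prop mult blow up}, and then simply verify its four hypotheses. Concretely, I would take the ambient variety there to be $W := X \times X$, endowed with the product Chow--K\"unneth decomposition coming from the given decomposition of $X$, and the smooth center to be the diagonal $Y := \Delta_X$, identified with $X$ via the diagonal embedding $\iota_\Delta : X \xrightarrow{\ \sim\ } \Delta_X$ (so that $\CH^p_{\mathrm{CK}}(\Delta_X)_s$ is transported from $\CH^p_{\mathrm{CK}}(X)_s$). Since $\Delta_X$ has codimension $d$ in $X \times X$, in the notation of Proposition \ref{prop mult blow up} we have $r = d-1$. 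Once hypotheses (i)--(iv) are checked, the conclusion of that proposition is exactly the asserted multiplicativity of the induced Chow--K\"unneth decomposition on the blow-up, so the whole argument reduces to this bookkeeping.

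First I would dispose of hypotheses (i) and (iii), which are formal. For (i): because $X$ carries a multiplicative Chow--K\"unneth decomposition, Theorem \ref{thm multCK} shows that the product decomposition on $X \times X$ is multiplicative; together with the given multiplicative decomposition on $\Delta_X \cong X$ this is precisely (i). For (iii): the relation $p_1 \circ \iota_\Delta = \mathrm{id}_X$ gives $(p_1)_* \circ (\iota_\Delta)_* = \mathrm{id}$ on Chow groups, whence $(\iota_\Delta)_* : \CH_*(X) \to \CH_*(X \times X)$ is injective.

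The remaining two hypotheses carry the actual content. Hypothesis (ii), the compatibility of $\iota_\Delta^*$ and $(\iota_\Delta)_*$ with the bigradings, is exactly Proposition \ref{lem diagonal pullback}(iii) specialized to $n = 2$: since the decomposition of $X$ is multiplicative and self-dual, that proposition yields $\iota_\Delta^* \CH^p_{\mathrm{CK}}(X \times X)_s \subseteq \CH^p_{\mathrm{CK}}(X)_s$ together with $(\iota_\Delta)_* \CH^{p-1}_{\mathrm{CK}}(X)_s \subseteq \CH^{p-1+d}_{\mathrm{CK}}(X \times X)_s$, and the degree shift $p-1+d = p+r$ matches the one demanded in (ii). For (iv) I would invoke the canonical identification $\mathscr{N}_{\Delta_X / X \times X} \cong \mathscr{T}_X$ of the normal bundle of the diagonal with the tangent bundle of $X$; hence $c_p(\mathscr{N}_{\Delta_X / X \times X}) = c_p(X)$, which lies in $\CH^p_{\mathrm{CK}}(X)_0$ precisely by the standing hypothesis on the Chern classes of $X$. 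This is (iv).

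I do not expect a genuine obstacle here: the proposition is engineered so that its hypotheses reduce to facts already in hand. The step carrying all the weight is (iv), as it is the only one that actually uses the assumption $c_p(X) \in \CH^p_{\mathrm{CK}}(X)_0$ (through the normal bundle identification); everything else is a direct citation of Theorem \ref{thm multCK} and Proposition \ref{lem diagonal pullback}, the latter having been tailored for exactly this diagonal situation. The one point meriting care is to confirm that the product Chow--K\"unneth decomposition used on $X \times X$ in Theorem \ref{thm multCK} coincides with the one feeding Proposition \ref{lem diagonal pullback}(iii); this holds because both are the product of the given decomposition of $X$ with itself. Granting (i)--(iv), Proposition \ref{prop mult blow up} then delivers the multiplicative Chow--K\"unneth decomposition on the blow-up, along with the explicit description of its graded pieces in \eqref{eq blowupdec}.
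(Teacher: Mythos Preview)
Your proposal is correct and follows precisely the paper's approach: the paper's proof simply states that by Proposition \ref{lem diagonal pullback} the hypotheses of Proposition \ref{prop mult blow up} are satisfied, with the injectivity of $(\iota_\Delta)_*$ coming from $p_1\circ\iota_\Delta=\mathrm{id}_X$. You have spelled out each of (i)--(iv) in the same way, including the identification $\mathscr{N}_{\Delta_X/X\times X}\cong\mathscr{T}_X$ for (iv).
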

\begin{proof}
  Indeed, by Proposition \ref{lem diagonal pullback}, the assumptions
  of Proposition \ref{prop mult blow up} are met (and note also that,
 since $p_1 \circ \iota_\Delta = \mathrm{id}_X : X \rightarrow X$, 
$(\iota_\Delta)_*
: \CH^*(X) \rightarrow \CH^{*+d}(X \times X)$ is injective).
\end{proof}

\begin{thm} \label{thm multCK X2} Let $X$ be a smooth projective
  variety endowed with a multiplicative self-dual Chow--K\"unneth
  decomposition. Assume that $c_p({X})$ belongs to
  $\CH_{\mathrm{CK}}^p(X)_0$ for all $p\geq 0$. For instance, $X$
  could be an abelian variety (the positive Chern classes are trivial)
  or a K3 surface endowed with the Chow--K\"unneth decomposition
  \eqref{eq CKK3} ($c_2(X) = 24 \mathfrak{o}_X$ by \cite{bv}). Then
  $X^{[2]}$ has a multiplicative Chow--K\"unneth decomposition.
\end{thm}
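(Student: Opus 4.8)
The plan is to realize $X^{[2]}$ via the $\mathfrak{S}_2$-quotient of the blow-up of $X \times X$ along the diagonal and to transport the multiplicative Chow--K\"unneth decomposition through this construction in three stages, invoking the propositions already established in this section. First I would set $\widetilde{X \times X}$ to be the blow-up of $X \times X$ along the diagonal $\iota_\Delta : X \hookrightarrow X \times X$. By Theorem \ref{thm multCK}, the product decomposition $\{\pi_{X \times X}^k\}$ on $X \times X$ is multiplicative, and it is clearly self-dual since each $\pi_X^i$ is. The Chern classes $c_p(X \times X)$ lie in $\CH_{\mathrm{CK}}^p(X \times X)_0$: indeed $c(X \times X) = p_1^*c(X) \cdot p_2^*c(X)$, and by Proposition \ref{lem diagonal pullback}\emph{(i)} together with the hypothesis $c_p(X) \in \CH_{\mathrm{CK}}^p(X)_0$, each factor lies in the degree-zero graded piece, whose product is again degree-zero by multiplicativity. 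Thus Proposition \ref{prop multCK blowup diag} applies and endows $\widetilde{X \times X}$ with a multiplicative (and, one checks, self-dual) Chow--K\"unneth decomposition, with the explicit description of the graded pieces coming from Proposition \ref{prop mult blow up}.

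The second stage is to pass from $\widetilde{X \times X}$ to the quotient $X^{[2]} = \widetilde{X \times X} / \mathfrak{S}_2$. The key observation is that the natural double cover $\rho : \widetilde{X \times X} \rightarrow X^{[2]}$ identifies $\CH^*(X^{[2]})$ with the $\mathfrak{S}_2$-invariant part $\CH^*(\widetilde{X \times X})^{\mathfrak{S}_2}$, via the mutually inverse maps $\frac{1}{2}\rho_*$ and $\rho^*$. I would therefore verify that the Chow--K\"unneth projectors constructed in the first stage are $\mathfrak{S}_2$-equivariant; this amounts to checking that the swap involution $\tau$ acting on $X \times X$ commutes with the product projectors $\pi_{X \times X}^k$ (which it does, since $\tau \circ (\pi_X^i \otimes \pi_X^j) = (\pi_X^j \otimes \pi_X^i) \circ \tau$ and summing over $i+j=k$ restores symmetry) and that the blow-up formula in Proposition \ref{prop mult blow up} is compatible with $\tau$, since $\tau$ fixes the blow-up center $\Delta_X$ and acts on the exceptional divisor $E = \PP(\mathscr{T}_X)$ covering the identity on $X$. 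Setting $\pi^k_{X^{[2]}} := \frac{1}{2}(\rho \times \rho)_* \circ \pi^k_{\widetilde{X \times X}} \circ (\rho \times \rho)^*$ (suitably normalized as a correspondence so that the projector identities are preserved) then produces a Chow--K\"unneth decomposition of $X^{[2]}$.

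The third and decisive stage is to check that this induced decomposition on $X^{[2]}$ is multiplicative. By the criterion of Proposition \ref{rmk delta0}, it suffices to show that the small diagonal $\Delta_{123}^{X^{[2]}}$ lies in $\CH_{\mathrm{CK}}^*(( X^{[2]})^3)_0$ for the product decomposition. Here I would exploit that the triple cover $\rho^{\times 3} : (\widetilde{X \times X})^3 \rightarrow (X^{[2]})^3$ sends the small diagonal $\Delta_{123}^{\widetilde{X \times X}}$ to a positive multiple of $\Delta_{123}^{X^{[2]}}$ (after symmetrization), and that $\Delta_{123}^{\widetilde{X \times X}} \in \CH_{\mathrm{CK}}^*((\widetilde{X \times X})^3)_0$ by the multiplicativity already obtained in stage one. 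Since push-forward and pull-back along $\rho^{\times 3}$ respect the graded pieces (the projectors being $\mathfrak{S}_2^{\times 3}$-equivariant), the degree-zero membership descends to $X^{[2]}$. I expect the main obstacle to be the careful bookkeeping in stage two: one must confirm that the quotient construction genuinely carries an \emph{equivariant} self-dual multiplicative Chow--K\"unneth decomposition through $\rho$ rather than merely a weakly multiplicative one, which requires that the equivariant projectors on $\widetilde{X \times X}$ interact correctly with the ramification of $\rho$ along the image of $E$. The cleanest route is to phrase everything at the level of correspondences on the cover, establish multiplicativity there $\mathfrak{S}_2$-equivariantly, and only then descend, so that the quotient map's interaction with intersection products (encoded by $\rho^* \rho_* = 1 + \tau^*$) does not break the factorization $\Delta_{123} = \sum_{i+j=k} \pi^k \circ \Delta_{123} \circ (\pi^i \otimes \pi^j)$.
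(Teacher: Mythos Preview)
Your proposal is correct and follows essentially the same route as the paper: build a multiplicative Chow--K\"unneth decomposition on the blow-up $Z=\widetilde{X\times X}$ via Proposition \ref{prop multCK blowup diag}, verify it is $\mathfrak{S}_2$-equivariant, descend the projectors to $X^{[2]}$ as $\pi^i:=\tfrac{1}{2}\,\Gamma_p\circ\pi^i_Z\circ{}^t\Gamma_p$, and check multiplicativity by pushing the small diagonal of $Z$ forward to a multiple of $\Delta_{123}^{X^{[2]}}$ and invoking Proposition \ref{rmk delta0}. The paper handles the bookkeeping you flag by the identities $\Gamma_p\circ{}^t\Gamma_p=2\Delta_{X^{[2]}}$, ${}^t\Gamma_p\circ\Gamma_p=1+\tau$, and $\tau\circ\pi^i_Z=\pi^i_Z\circ\tau$, which make the descent computation entirely formal.
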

\begin{proof}
  Let $\tau : X \times X \rightarrow X \times X$ be the morphism that
  permutes the factors. This gives an action of the symmetric group
  $\mathfrak{S}_2 = \{1,\tau\}$ on $X \times X$, which by
  functoriality of blow-ups extends to an action of $\mathfrak{S}_2$
  on the blow-up $\rho : Z = (\widetilde{X \times X})_\Delta
  \rightarrow X \times X$ of $X \times X$ along the diagonal.  The
  morphism $p :Z \rightarrow X^{[2]}$ is then the quotient morphism
  corresponding to that action.

  The action of $\mathfrak{S}_2 = \{1,\tau\}$ on $X \times X$ induces
  an action of $\mathfrak{S}_2$ on the ring of self-correspondences of
  $X \times X$~: if we view $1$ and $\tau$ as correspondences from $X
  \times X$ to $X \times X$, then we have
\begin{center}
  $\tau \bullet \gamma := \tau\circ \gamma \circ \tau$, \quad for all
  $\gamma \in \CH^{2d}\big((X \times X) \times (X \times X)\big).$
\end{center}
It is then clear that the product Chow--K\"unneth decomposition of $X
\times X$ as defined in \eqref{eq multCK} is
$\mathfrak{S}_2$-equivariant for that action~; it is also
multiplicative by virtue of Theorem \ref{thm multCK}.  Likewise, the
induced action of $\mathfrak{S}_2$ on the blow-up $Z$ induces an
action of $\mathfrak{S}_2$ on the ring of self-correspondences of $Z$,
and again we view $1$ and $\tau$ as correspondences from $Z$ to
$Z$. With respect to that action, the Chow--K\"unneth decomposition
$\{\pi^i_Z : 0 \leq i \leq 4d\}$ on $Z$ induced by that on $X \times
X$ and $X$ via the blow-up formula \eqref{eq chow blow up} is
$\mathfrak{S}_2$-equivariant~; it is also multiplicative by
Proposition \ref{prop multCK blowup diag}.  Moreover, as
correspondences, we have
\begin{center}
 $\Gamma_p \circ {}^t\Gamma_p = 2 \Delta_{X^{[2]}}$ \quad and \quad
${}^t\Gamma_p \circ \Gamma_p = 1 +\tau$,
\end{center}
where $\Gamma_p \in \CH^{2d}\big(Z \times X^{[2]}\big)$ denotes the
graph of $p  : Z \rightarrow X^{[2]}$ and ${}^t\Gamma_p$ denotes its
transpose.

We now claim that the correspondences $$\pi^i  := \frac{1}{2} \,
\Gamma_p \circ \pi^i_Z \circ {}^t\Gamma_p \in \CH^{2d}(X^{[2]} \times
X^{[2]})$$ define a multiplicative Chow--K\"unneth
decomposition on $X^{[2]}$. First it is clear that the action of
$\pi^i$ on $\HH^*(X^{[2]},\Q)$ is the projector on
$\HH^i(X^{[2]},\Q)$. Next we have
\begin{align*}
  4\, \pi^i \circ \pi^j &= \Gamma_p \circ \pi^i_Z \circ {}^t\Gamma_p
  \circ \Gamma_p \circ \pi^j_Z \circ {}^t\Gamma_p = \Gamma_p \circ
  \pi^i_Z \circ
  (1+\tau) \circ \pi^j_Z \circ {}^t\Gamma_p \\
  &= \Gamma_p \circ \pi^i_Z \circ \pi^j_Z \circ {}^t\Gamma_p +
  \Gamma_p \circ \pi^i_Z \circ \tau \circ \pi^j_Z \circ {}^t\Gamma_p.
\end{align*}
Since $\pi^i_Z$ is $\mathfrak{S}_2$-equivariant, we have $\tau\circ
\pi^i_Z \circ \tau = \pi^i_Z$, so that $\tau \circ \pi^i_Z =
\pi^i_Z\circ \tau$. Thus $\Gamma_p \circ \pi^i_Z \circ \tau \circ
\pi^j_Z \circ {}^t\Gamma_p = \Gamma_p \circ \tau \circ \pi^i_Z \circ
\pi^j_Z \circ {}^t\Gamma_p$, and because $\Gamma_p \circ \tau =
\Gamma_p$, we get $\Gamma_p \circ \pi^i_Z \circ \tau \circ \pi^j_Z
\circ {}^t\Gamma_p = \Gamma_p \circ \pi^i_Z \circ \pi^j_Z \circ
{}^t\Gamma_p$. This yields $$4\, \pi^i \circ \pi^j = 2 \, \Gamma_p
\circ \pi^i_Z \circ \pi^j_Z \circ {}^t\Gamma_p,$$ and hence that the
correspondences $\pi^i$ define a Chow--K\"unneth decomposition on
$X^{[2]}$. It remains to check that this decomposition is
multiplicative. By the criterion given in Proposition \ref{rmk
  delta0}, it suffices to check that the small diagonal
$\Delta_{123}^{X^{[2]}}$ belongs to
$\CH_{\mathrm{CK}}^{4d}(X^{[2]})_0$, \emph{i.e.}, that
\begin{center}
  $(\pi^i \otimes \pi^j \otimes \pi^k)_*\Delta_{123}^{X^{[2]}} =0$,
  \quad whenever $i+j+k \neq 4d$.
\end{center}
We note that $ \Gamma_p \circ \pi^i_Z \circ \tau = \Gamma_p \circ
\tau\circ \pi^i_Z =\Gamma_p \circ \pi^i_Z$ and therefore
that $$\Gamma_p\circ \pi^i_Z \circ {}^t\Gamma_p \circ \Gamma_p =
\Gamma_p\circ \pi^i_Z \circ (1+\tau) = 2 \, \Gamma_p \circ \pi_Z^i
\quad \mbox{in } \CH^{2d}(Z\times X^{[2]}).$$ We also note
that $$(\Gamma_p \otimes \Gamma_p \otimes \Gamma_p)_*\Delta_{123}^Z =
4\,\Delta_{123}^{X^{[2]}}.$$
It follows that 
\begin{align*} 
  (\pi^i \otimes \pi^j \otimes \pi^k)_*\Delta_{123}^{X^{[2]}} &=
  \frac{1}{32}\big((\Gamma_p\circ \pi_Z^i \circ {}^t\Gamma_p) \otimes
  (\Gamma_p \circ \pi_Z^j \circ {}^t\Gamma_p) \otimes (\Gamma_p\circ
  \pi_Z^k
  \circ {}^t\Gamma_p)\big)_*(p \times p \times p)_*\Delta_{123}^{Z} \\
  &= \frac{1}{32} \big((\Gamma_p\circ \pi_Z^i \circ {}^t\Gamma_p \circ
  \Gamma_p) \otimes (\Gamma_p \circ \pi_Z^j \circ {}^t\Gamma_p \circ
  \Gamma_p) \otimes (\Gamma_p\circ \pi_Z^k \circ {}^t\Gamma_p\circ
  \Gamma_p)\big)_*
  \Delta_{123}^{Z}\\
  &= \frac{1}{4}(p\times p \times p)_*(\pi_Z^i \otimes \pi_Z^j \otimes
  \pi_Z^k)_*
  \Delta_{123}^{Z}\\
  &= 0 \qquad \mbox{if} \ i+j+k\neq 4d,
\end{align*} where the last equality follows from multiplicativity for
the Chow--K\"unneth decomposition $\{\pi^i_Z\}$ of $Z$.
This finishes the proof of the theorem.
\end{proof}

\begin{rmk} \label{rmk compatibility blowup hilbert} Define as usual
  $\CH_{\mathrm{CK}}^p(X^{[2]})_s := \pi^{2p-s}_*\CH^p(X^{[2]})$. Then
  note, for further reference, that with the notations of Theorem
  \ref{thm multCK X2} and its proof the quotient map $p : Z
  \rightarrow X^{[2]}$ is compatible with the Chow--K\"unneth
  decompositions on $Z$ and $X^{[2]}$, in the sense that for all $p$
  and all $s$ we have
\begin{center}
  $p_*\CH_{\mathrm{CK}}^p(Z)_s =
  \CH_{\mathrm{CK}}^p\big(X^{[2]}\big)_s$ \quad and \quad
  $p^*\CH_{\mathrm{CK}}^p\big(X^{[2]}\big)_s \subseteq
  \CH_{\mathrm{CK}}^p(Z)_s$.
\end{center}
\end{rmk}

\begin{rmk} \label{rmk hyperelliptic} Let $C$ be a smooth projective
  curve. The Hilbert scheme of length-$n$ subschemes on $C$ is smooth
  and is nothing but the $n^\mathrm{th}$ symmetric product of $C$.
  Assume that $C$ admits a multiplicative Chow--K\"unneth
  decomposition, \emph{e.g.}, $C$ is a hyperelliptic curve~; see
  Example \ref{ex hyperelliptic}. Then the product Chow--K\"unneth
  decomposition on $C^n$ is multiplicative by Theorem \ref{thm
    multCK}, and it is clearly $\mathfrak{S}_n$-equivariant. Here,
  $\mathfrak{S}_n$ is the symmetric group on $n$ elements acting on
  $C^n$ by permuting the factors. An easy adaptation of the proof of
  Theorem \ref{thm multCK X2} yields that $C^{[n]}$ has a
  multiplicative Chow--K\"unneth decomposition.
\end{rmk}

\subsection{Proof of Theorem \ref{thm2 CK}} \label{sec proof thm6}
According to Theorem \ref{thm multCK}, Proposition \ref{prop mult proj
  bundle}, Proposition \ref{prop multCK blowup diag} and Theorem
\ref{thm multCK X2}, it suffices to check that
\begin{enumerate}[(a)]
\item the varieties listed in Theorem \ref{thm2 CK} admit a
  multiplicative Chow--K\"unneth decomposition such that their Chern
  classes belong to the degree-zero graded part of the induced
  decomposition on the Chow ring~;
\item the property that the Chern classes belong to the degree-zero
  graded part of the Chow ring is stable under the operations
  \emph{(i)}--\emph{(iv)} listed in Theorem \ref{thm2 CK}.
\end{enumerate}

\noindent Item (b) is taken care of by the following lemma~:

\begin{lem}
  Let $X$ and $Y$ be two smooth projective varieties endowed with
  multiplicative Chow--K\"unneth decompositions such that
  $c_i(X)\in\CH_{\mathrm{CK}}^i(X)_0$ for all $0\leq i\leq\dim X$ and
  $c_i(Y)\in\CH_{\mathrm{CK}}^i(Y)_0$ for all $0\leq i\leq\dim
  Y$. Then the following statements hold.
\begin{enumerate}[(i)]
\item\label{enum chern class prod} $c_i(X\times Y) \in
  \CH_{\mathrm{CK}}^i(X\times Y)_0$ for all $0\leq i\leq \dim X+\dim
  Y$.
\item\label{enum chern class proj} $c_i(\PP(\mathscr{T}_X)) \in
  \CH_{\mathrm{CK}}^i(\PP(\mathscr{T}_X))_0$ for all $0\leq i\leq
  2\dim X-1$.
\item\label{enum chern class blow-up} $c_i(Z)\in
  \CH_{\mathrm{CK}}^i(Z)_0$ for all $0\leq i\leq 2\dim X$, where $Z$
  is the blow-up of $X\times X$ along the diagonal.
\item\label{enum chern class hilbert} $c_i(X^{[2]}) \in
  \CH_{\mathrm{CK}}^i(X^{[2]})_0$ for all $0\leq i\leq 2\dim X$.
\end{enumerate} \end{lem}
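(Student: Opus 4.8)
The plan is to prove the four statements in order, each feeding into the next, exploiting throughout that a multiplicative Chow--K\"unneth decomposition makes $\bigoplus_p \CH^p_{\mathrm{CK}}(-)_0$ a subring and that the various operations ($p_i^*$, $\rho^*$, $j_*$, $\pi^*$, $p_*$) preserve the grading in the way recorded in Propositions \ref{lem diagonal pullback}, \ref{prop mult proj bundle} and \ref{prop mult blow up}. For \emph{(\ref{enum chern class prod})} I would use the Whitney formula $c(\mathscr{T}_{X\times Y}) = p_1^*c(\mathscr{T}_X)\cdot p_2^*c(\mathscr{T}_Y)$: by hypothesis $c_j(\mathscr{T}_X)\in\CH^j_{\mathrm{CK}}(X)_0$ and $c_j(\mathscr{T}_Y)\in\CH^j_{\mathrm{CK}}(Y)_0$, so by Proposition \ref{lem diagonal pullback}\emph{(i)} their pullbacks lie in $\CH_{\mathrm{CK}}(X\times Y)_0$, and since that graded piece is a subring each $c_i(X\times Y)$ is degree $0$. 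For \emph{(\ref{enum chern class proj})} I would invoke Proposition \ref{prop mult proj bundle} with $\mathscr{E}=\mathscr{T}_X$ (the hypothesis $c_i(\mathscr{T}_X)\in\CH^i_{\mathrm{CK}}(X)_0$ being exactly what it requires), giving the grading \eqref{eq projbundle}; from it one reads off that $\xi=\xi\cdot\pi^*[X]\in\CH^1_{\mathrm{CK}}(\PP(\mathscr{T}_X))_0$ and $\pi^*c_j(\mathscr{T}_X)\in\CH^j_{\mathrm{CK}}(\PP(\mathscr{T}_X))_0$. The relative Euler sequence expresses $c(\mathscr{T}_{\PP(\mathscr{T}_X)/X})$ as a polynomial in $\xi$ and the $\pi^*c_j(\mathscr{T}_X)$, and the tangent sequence $0\to\mathscr{T}_{\PP(\mathscr{T}_X)/X}\to\mathscr{T}_{\PP(\mathscr{T}_X)}\to\pi^*\mathscr{T}_X\to 0$ yields $c(\mathscr{T}_{\PP(\mathscr{T}_X)})=c(\mathscr{T}_{\PP(\mathscr{T}_X)/X})\cdot\pi^*c(\mathscr{T}_X)$; as $\CH_{\mathrm{CK}}(\PP(\mathscr{T}_X))_0$ is a subring, every $c_i(\PP(\mathscr{T}_X))$ is degree $0$.

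For \emph{(\ref{enum chern class blow-up})} the center of the blow-up $Z$ is the diagonal $\Delta\cong X$, with normal bundle $\mathscr{N}_{\Delta/X\times X}\cong\mathscr{T}_X$ and exceptional divisor $E\cong\PP(\mathscr{T}_X)$, so the hypotheses of Proposition \ref{prop mult blow up} are met (this is precisely Proposition \ref{prop multCK blowup diag}), giving the grading \eqref{eq blowupdec} and the inclusion $j_*\CH_{\mathrm{CK}}(E)_s\subseteq\CH_{\mathrm{CK}}(Z)_s$. I would then apply the standard description of the tangent bundle of a smooth blow-up, namely an exact sequence $0\to\mathscr{T}_Z\to\rho^*\mathscr{T}_{X\times X}\to j_*\mathscr{G}\to 0$ with $\mathscr{G}$ a locally free sheaf on $E$ built functorially from $\calO_E(1)$ and $\pi^*\mathscr{N}_{\Delta/X\times X}$. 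This produces $c(\mathscr{T}_Z)=\rho^*c(\mathscr{T}_{X\times X})+j_*P$, where $P\in\CH^*(E)$ is a polynomial in $\xi$, $\pi^*c_k(\mathscr{T}_X)$ and $\pi^*c_k(\mathscr{T}_\Delta)$. The first summand is degree $0$ by part \emph{(\ref{enum chern class prod})} together with \eqref{eq blowupdec}; by part \emph{(\ref{enum chern class proj})} the class $P$ lies in $\CH_{\mathrm{CK}}(E)_0$, so $j_*P$ is degree $0$, and hence $c_i(Z)\in\CH^i_{\mathrm{CK}}(Z)_0$.

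Finally, for \emph{(\ref{enum chern class hilbert})} I would use the quotient map $p:Z\to X^{[2]}$, which by Remark \ref{rmk compatibility blowup hilbert} satisfies $p_*\CH_{\mathrm{CK}}(Z)_s=\CH_{\mathrm{CK}}(X^{[2]})_s$. Since $p$ has degree $2$, one has $c_i(X^{[2]})=\tfrac12\,p_*\,p^*c_i(X^{[2]})=\tfrac12\,p_*\,c_i(p^*\mathscr{T}_{X^{[2]}})$, so it suffices to prove $c_i(p^*\mathscr{T}_{X^{[2]}})\in\CH^i_{\mathrm{CK}}(Z)_0$. As $p$ is a double cover ramified exactly along $E$, the comparison of tangent sheaves $0\to\mathscr{T}_Z\to p^*\mathscr{T}_{X^{[2]}}\to\mathscr{C}\to 0$ has cokernel $\mathscr{C}$ supported on $E$, giving $c(p^*\mathscr{T}_{X^{[2]}})=c(\mathscr{T}_Z)\cdot(1+j_*Q)$ with $Q\in\CH^*(E)$ again a polynomial in $\xi$ and classes pulled back from $X$. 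By part \emph{(\ref{enum chern class proj})} this correction lies in $\CH_{\mathrm{CK}}(E)_0$ and by part \emph{(\ref{enum chern class blow-up})} the classes $c_i(\mathscr{T}_Z)$ are degree $0$, so $c_i(p^*\mathscr{T}_{X^{[2]}})$ is degree $0$; applying $p_*$ concludes.

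The main obstacle I expect is making parts \emph{(\ref{enum chern class blow-up})} and \emph{(\ref{enum chern class hilbert})} fully rigorous: one must verify that \emph{every} correction term arising from the exceptional locus (resp. the ramification locus) $E$ is genuinely of the form $j_*$ of a class that is a polynomial in $\xi$ and pullbacks of Chern classes from the base $X$, since only such classes are guaranteed by part \emph{(\ref{enum chern class proj})} to lie in $\CH_{\mathrm{CK}}(E)_0$. Pinning down the precise sheaves $\mathscr{G}$ and $\mathscr{C}$ and checking that no term supported on $E$ but lying outside $\CH_{\mathrm{CK}}(E)_0$ survives is the delicate bookkeeping; the grading formulas \eqref{eq projbundle} and \eqref{eq blowupdec}, together with the compatibility of $j_*$ and $p_*$ with the decompositions, are exactly what allow this bookkeeping to close.
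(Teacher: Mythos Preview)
Your approach is correct and is essentially the one the paper takes. The paper makes the ``delicate bookkeeping'' you flag at the end completely explicit: it works with Chern characters rather than total Chern classes (so the exact sequences become additive identities), uses the cotangent sequence $0\to\rho^*\Omega^1_{X\times X}\to\Omega^1_Z\to j'_*\Omega^1_{E/X}\to 0$ for the blow-up and identifies $\mathscr{C}=\calO_E(2E)$ for the double cover, and then applies Grothendieck--Riemann--Roch to $j'$ to write $\mathrm{ch}(j'_*\Omega^1_{E/X})=j'_*\big(\mathrm{ch}(\Omega^1_{E/X})/\mathrm{td}(\mathscr{N}_{E/Z})\big)$, which is visibly $j'_*$ of a polynomial in $\xi$ and $\pi^*c_k(X)$; likewise $\mathrm{ch}(\calO_E(2E))$ is computed directly on $Z$ via $0\to\calO_Z(E)\to\calO_Z(2E)\to\calO_E(2E)\to 0$. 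This is exactly the verification you anticipated needing, and it closes the argument.
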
 
\begin{proof}
  Statement \emph{(i)} follows directly from Theorem \ref{thm multCK}
  and Proposition \ref{lem diagonal pullback}\emph{(i)}, and from the
  formula $c(X\times Y)=p_1^*c(X)\cdot p_2^*c(Y)$, where $p_1:X\times
  Y \rightarrow X$ and $p_2:X\times Y\rightarrow Y$ are the two
  projections. \medskip
 
  Let $E:=\PP(\mathscr{T}_X)$ be the geometric projectivization of the
  tangent bundle of $X$ and let $\pi:E\rightarrow X$ be the projection
  morphism. The relative Euler exact sequence in this setting
  becomes \[ \xymatrix{ 0\ar[r] &\calO_E\ar[r]
    &\pi^*\mathscr{T}_X\otimes \calO(1) \ar[r] &
    \mathscr{T}_{E/X}\ar[r] &0, } \] where $\mathscr{T}_{E/X}$ is the
  relative (or vertical) tangent bundle. It follows that \[
  \mathrm{ch}(\mathscr{T}_{E/X}) =
  \pi^*\mathrm{ch}(\mathscr{T}_X)\cdot e^\xi - 1, \] where $\xi$ is
  the first Chern class of the relative $\calO(1)$-bundle. Note that
  the multiplicative Chow--K\"unneth decomposition on $E$ given by
  Proposition \ref{prop mult proj bundle} satisfies $\xi
  \in\CH_{\mathrm{CK}}^1(E)_0$. Hence we conclude that the Chern
  character $\mathrm{ch}(\mathscr{T}_{E/X})$ lies in the piece
  $\CH_{\mathrm{CK}}^*(E)_0$, so that \[ \mathrm{ch}(\mathscr{T}_E) =
  \pi^*\mathrm{ch}(\mathscr{T}_X) + \mathrm{ch}(\mathscr{T}_{E/X})\in
  \CH_{\mathrm{CK}}^*(E)_0. \] Statement \emph{(ii)} follows
  immediately.\medskip

  Let $\rho:Z\rightarrow X\times X$ be the blow-up morphism and let
  $j':E\rightarrow Z$ be the exceptional divisor. Note that $E$ is
  isomorphic to $\PP(\mathscr{T}_X)$. Consider the short exact
  sequence \[ \xymatrix{ 0\ar[r] &\rho^*\Omega^1_{X\times X} \ar[r]
    &\Omega_Z^1 \ar[r] &j'_*\Omega_{E/X}^1\ar[r] &0. } \] By taking
  Chern characters, one finds \[ \mathrm{ch}(\Omega_Z^1) =
  \rho^*\mathrm{ch}(\Omega_{X\times X}^1) +
  \mathrm{ch}(j'_*\Omega_{E/X}^1). \] By \emph{(i)} and Proposition
  \ref{prop multCK blowup diag}, we see that \[
  \rho^*\mathrm{ch}(\Omega_{X\times X}^1) \in
  \CH_{\mathrm{CK}}^*(Z)_0.
\]
Hence it suffices to show that
\[
\mathrm{ch}(j'_*\Omega_{E/X}^1)\in \CH_{\mathrm{CK}}^*(Z)_0.
\]
This can be obtained from the Grothendieck--Riemann--Roch
Theorem. Indeed, we have
\begin{equation*}
  \mathrm{ch}(j'_*\Omega_{E/X}^1)  =
  \frac{j'_*(\mathrm{ch}(\Omega_{E/X}^1)\cdot
    \mathrm{td}(E))}{\mathrm{td}(Z)} =
  j'_*\left(\frac{\mathrm{ch}(\Omega_{E/X}^1)}
    {\mathrm{td}(\mathscr{N}_{E/Z})}\right). 
\end{equation*} 
Now note that $\mathscr{N}_{E/Z}=\calO_E(-1)$, so that
$\mathrm{td}(\mathscr{N}_{E/Z})\in
\CH_{\mathrm{CK}}^*(E)_0$. Statement \emph{(ii)} implies that
$\mathrm{ch}(\Omega_{E/X}^1)\in \CH_{\mathrm{CK}}^*(E)_0$. The proof
of \emph{(iii)} then follows from the compatibility of $j'_*$ with the
gradings induced by the Chow--K\"unneth decompositions as in
Proposition \ref{prop mult blow up}.\medskip

Let $p: Z\rightarrow X^{[2]}$ be the natural double cover ramified
along the divisor $E$. Then we have the following short exact
sequence \[ \xymatrix{ 0\ar[r] &\mathscr{T}_Z \ar[r]
  &p^*\mathscr{T}_{X^{[2]}} \ar[r] &\calO_E(2E) \ar[r] &0, } \] from
which one obtains \[ p^*\mathrm{ch}(\mathscr{T}_{X^{[2]}}) =
\mathrm{ch}(\mathscr{T}_Z) + \mathrm{ch}(\calO_E(2E)). \] Note that
$\calO_E(2E)$ fits into the short exact sequences \[
\xymatrix{ 0\ar[r] &\calO_Z(E)\ar[r] &\calO_Z(2E)\ar[r] &\calO_E(2E)
  \ar[r] &0. } \] It follows that $\mathrm{ch}(\calO_E(2E))\in
\CH^*_{\mathrm{CK}}(Z)_0$. Statement \emph{(iii)} implies that
$\mathrm{ch}(\mathscr{T}_Z)\in\CH^*_{\mathrm{CK}}(Z)_0$. Hence we
get \[
p^*\mathrm{ch}(\mathscr{T}_{X^{[2]}})\in\CH^*_{\mathrm{CK}}(Z)_0, \]
which proves \emph{(iv)}~; see Remark \ref{rmk compatibility blowup
  hilbert}.
\end{proof}

\noindent As for item (a), first note that if $X$ has a
Chow--K\"unneth decomposition, we have $\CH^0(X) =
\CH^0_{\mathrm{CK}}(X)_0$ so that obviously $c_0(X)$ lies in
$\CH_{\mathrm{CK}}^0(X)_0$. We then proceed case-by-case~:\medskip

\noindent \emph{Case 1 :} $X$ is a smooth projective variety with Chow
groups of finite rank (as $\Q$-vector spaces). By \cite[Theorem
5]{vial3}, the Chow motive of $X$ is isomorphic to a direct sum of
Lefschetz motives (see \cite{vial3} for the definitions) so that $X$
has a Chow--K\"unneth decomposition such that the induced grading on
$\CH^*(X)$ is concentrated in degree zero, that is, $\CH^*(X) =
\CH^*_{\mathrm{CK}}(X)_0$. The Chow motive of $X \times X \times X$,
which is the $3$-fold self tensor product of the Chow motive of $X$,
is also isomorphic to a direct sum of Lefschetz motives and the
product Chow--K\"unneth decomposition is such that $\CH^*(X\times X
\times X)$ is concentrated in degree $0$. It is then clear that the
Chern classes of $X$ lie in the degree-zero graded part and in view of
the criterion of Proposition \ref{rmk delta0} that this
Chow--K\"unneth decomposition is multiplicative.  \medskip
  
\noindent \emph{Case 2 :} $X$ is the $n^{\mathrm{th}}$-symmetric
product of an hyperelliptic curve $C$. By Remark \ref{rmk
  hyperelliptic}, $X = C^{[n]}$ admits a multiplicative
Chow--K\"unneth decomposition. The Chern classes $c_i({C^{[n]}})$
belong to $ \CH^i_{\mathrm{CK}}(C^{[n]})_0$ for the following reason.
Let $Z\subset C^{[n]}\times C$ be the universal family. Given a line
bundle $\mathscr{L}$ on $C$, the associated tautological bundle
$\mathscr{L}^{[n]}$ on $C^{[n]}$ can be defined by \[
\mathscr{L}^{[n]} := p_{*}(\calO_{Z}\otimes q^*\mathscr{L}), \] where
$p:C^{[n]}\times C\rightarrow C^{[n]}$ and $q: C^{[n]}\times
C\rightarrow C$ are the two projections. If we take
$\mathscr{L}=\Omega_C^1$, it turns out that the associated
tautological bundle is $\Omega_{C^{[n]}}^1$. The
Grothendieck--Riemann--Roch theorem applied to this situation
becomes \[ \mathrm{ch}(\Omega_{C^{[n]}}^1) =
p_*\Big(\mathrm{ch}(\calO_Z)\cdot q^*\big(\mathrm{ch}(\Omega_C^1)\cdot
\mathrm{td}(C)\big)\Big). \] Hence it suffices to see that
$\mathrm{ch}(\calO_Z)\in \CH^*_{\mathrm{CK}}(C^{[n]}\times C)_0$. Let
$Z_{i}:=p_{i,n+1}^{-1}\Delta_C\subset C^n\times C$, $1\leq i\leq n$,
and let $\pi:C^{n}\rightarrow C^{[n]}$ be the symmetrization
morphism. Then we have \[ (\pi\times \mathrm{Id}_C)^*\calO_Z =
\sum_{i=1}^n\calO_{Z_{i}}, \quad \text{in }\mathrm{K}_0(C^{n}\times
C). \] By taking the Chern character, we see that $(\pi\times
\mathrm{Id}_C)^*\mathrm{ch}(\calO_Z)$ lies in the degree-zero graded
part of the Chow ring of $C^n\times C$ and hence, after applying
$(\pi\times \mathrm{Id}_C)_*$, that $\mathrm{ch}(\calO_Z)\in
\CH^*_{\mathrm{CK}}(C^{[n]}\times C)_0$.  \medskip
 
\noindent \emph{Case 3 :} $X$ is an abelian variety endowed with the
multiplicative self-dual Chow--K\"unneth decomposition of Example
\ref{ex abelian}. In that case, the tangent bundle of $X$ is trivial
so that its Chern classes of positive degree vanish and hence
obviously lie in the degree-zero graded part of $\CH^*(X)$.\medskip
  
\noindent \emph{Case 4 :} $X$ is a K3 surface endowed with the
multiplicative self-dual Chow--K\"unneth decomposition of Example
\ref{ex K3}. In that case, $c_1(X) = 0$, and $c_2(X)=24\,
\mathfrak{o}_X$ by Beauville--Voisin \cite{bv}.\medskip
  
The proof of Theorem \ref{thm2 CK} is now complete. \qed

\vspace{10pt}
\section{The Fourier decomposition for $S^{[2]}$}
\label{sec Fourierdec S2}

In this section, we construct a cycle $L \in \CH^2(S^{[2]}\times
S^{[2]})$ that represents the Beauville--Bogomolov form $\mathfrak{B}$
and show that $S^{[2]}$ endowed with that cycle $L$ satisfies the
conclusions of Theorem \ref{thm2 main} and Theorem \ref{thm main
  splitting}. This is achieved in \S \ref{sec Fourier S2}. A
preliminary crucial ingredient for that purpose is an equality, given
in Lemma \ref{lem lifting hilbert}, in the ring of correspondences of
$S^{[2]}$.

\subsection{A key lemma}\label{sec small diag} Let $X$ be a smooth
projective variety of dimension $d$. The set-up is that of Section
\ref{sec basics X2}. We consider the diagonals (introduced in of
Definition \ref{defn small diagonals}) $\Delta_{123}, \Delta_{ij}$ and
$\Delta_i$ with respect to the choice of a zero-cycle $\mathfrak{o}_X$
of degree $1$ on $X$. For simplicity of notations, from now on we
assume that $\mathfrak{o}_X$ is effective. Otherwise, one may replace
$X_\mathfrak{o}$ by $p_*q^*\mathfrak{o}_X$ and Lemma \ref{lem terms of
  small diagonal} below still holds true.\medskip

The following blow-up formula, which was already used in the proof of
Proposition \ref{prop mult blow up}, will be very useful in explicit
computations. We include it here for the convenience of the reader.

\begin{lem}[\cite{fulton}, Proposition 6.7]\label{lem blow-up formula Fulton}
  Let $Y$ be a smooth projective variety, and $Y_1\subset Y$ a smooth
  closed sub-variety of codimension $e$. Let $\rho_1
  :\widetilde{Y}\rightarrow Y$ be the blow-up of $Y$ along $Y_1$ and
  $E_1=\PP(\mathscr{N}_{Y_1/Y})\subset\widetilde{Y}$ the exceptional
  divisor. We have the following diagram attached to this situation
\[
\xymatrix{
 E_1\ar[d]_{\pi_1}\ar[r]^{j_1} &\widetilde{Y}\ar[d]^{\rho_1}\\
 Y_1\ar[r]^{j'_1} & Y.
}
\]
Then for any $\sigma\in\CH^*(Y_1)$ we have
\[
\rho_1^* j'_{1,*}\sigma = j_{1,*}(c_{e-1}(\mathscr{F})\cdot
\pi_1^*\sigma),
\]
where $\mathscr{F}=(\pi_1^*\mathscr{N}_{Y_1/Y})/
\mathscr{N}_{E_1/\widetilde{Y}}$.\qed
\end{lem}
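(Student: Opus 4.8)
The plan is to recognise this statement as an instance of the excess intersection formula of \cite{fulton} and to produce the relevant excess bundle by hand. First I would record the geometric input underlying the displayed square. Since $\rho_1$ is the blow-up of $Y$ along the regularly embedded centre $Y_1$, the scheme-theoretic preimage $\rho_1^{-1}(Y_1)$ is precisely the exceptional Cartier divisor $E_1 = \PP(\mathscr{N}_{Y_1/Y})$; thus the square is Cartesian, $\rho_1$ restricts to an isomorphism $\widetilde{Y}\setminus E_1 \xrightarrow{\sim} Y \setminus Y_1$, and both $j'_1$ and $j_1$ are regular embeddings, of codimensions $e$ and $1$ respectively. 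In particular the restriction of $j'_{1,*}\sigma$ to $Y\setminus Y_1$ vanishes, so by the localization sequence $\rho_1^* j'_{1,*}\sigma$ comes from $\CH^*(E_1)$, that is, it is of the form $j_{1,*}\beta$ for some $\beta \in \CH^*(E_1)$. The whole point is then to identify $\beta$ with $c_{e-1}(\mathscr{F})\cdot \pi_1^*\sigma$.

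Next I would pin down the excess bundle. The normal bundle $\mathscr{N}_{E_1/\widetilde{Y}}$ of the exceptional divisor is the tautological line bundle $\mathcal{O}_{E_1}(-1)$ on $E_1 = \PP(\mathscr{N}_{Y_1/Y})$, and under the blow-up it sits inside $\pi_1^*\mathscr{N}_{Y_1/Y}$ as the universal sub-line-bundle, giving the tautological exact sequence
\[
0 \longrightarrow \mathscr{N}_{E_1/\widetilde{Y}} \longrightarrow \pi_1^*\mathscr{N}_{Y_1/Y} \longrightarrow \mathscr{F} \longrightarrow 0,
\]
with $\mathscr{F}$ of rank $e-1$. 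This is exactly the excess normal bundle attached to the Cartesian square above: the pullback $\pi_1^*\mathscr{N}_{Y_1/Y}$ of the normal bundle of the centre has rank $e$, while the normal bundle of the genuine intersection locus $E_1$ in $\widetilde{Y}$ has rank $1$, and $\mathscr{F}$ measures the discrepancy.

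Finally I would feed this into the excess intersection formula. Applying deformation to the normal cone for the regular embedding $j'_1$ along the morphism $\rho_1$, the associated refined Gysin homomorphism computes $\rho_1^* j'_{1,*}\sigma$ as the top Chern class of the excess bundle capped with $\pi_1^*\sigma$; concretely this yields $\rho_1^* j'_{1,*}\sigma = j_{1,*}\big(c_{e-1}(\mathscr{F})\cdot \pi_1^*\sigma\big)$, as desired. The one step requiring genuine care --- the main obstacle --- is the justification that the refined intersection class equals $c_{e-1}(\mathscr{F})\cdot\pi_1^*\sigma$ rather than some other cycle with the same image under $j_{1,*}$: this is where one must use that the normal cone of $E_1$ in $\widetilde{Y}$ is the sub-line-bundle $\mathscr{N}_{E_1/\widetilde{Y}}$ of $\pi_1^*\mathscr{N}_{Y_1/Y}$, so that the Segre-class computation underlying the refined Gysin collapses to the top Chern class $c_{e-1}(\mathscr{F})$. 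A quick sanity check on $\sigma = [Y_1]$ recovers the familiar identity $\rho_1^*[Y_1] = j_{1,*}c_{e-1}(\mathscr{F})$ for the pullback of the centre of a blow-up.
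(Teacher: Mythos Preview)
Your proposal is correct and follows the standard route: this lemma is precisely \cite[Proposition~6.7(a)]{fulton}, and the paper does not give its own proof but simply cites Fulton and marks the statement with \qed. The argument you sketch---identifying the Cartesian square, computing the excess normal bundle as the universal rank-$(e-1)$ quotient on $E_1=\PP(\mathscr{N}_{Y_1/Y})$, and invoking the excess intersection formula (Theorem~6.3 in \cite{fulton})---is exactly how Fulton derives it, so there is nothing to compare.
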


We fix some notations that will be used frequently in the remaining
part of this section. Let $f : Y\rightarrow Y'$ be a morphism, then
$\mathscr{N}_f$ denotes the cokernel of the sheaf homomorphism
$\mathscr{T}_{Y}\rightarrow f^*\mathscr{T}_{Y'}$.  This quotient sheaf
$\mathscr{N}_f$ will be called the \textit{normal sheaf} (or
\textit{normal bundle} if it is locally free) of the morphism $f$. If
two varieties $Y_1$ and $Y_2$ are defined over some base $B$, then
$p_{Y_1\times_B Y_2,i}$, $i=1,2$, denote the projections $Y_1\times_B
Y_2\rightarrow Y_i$.\medskip

Consider the following diagram (\emph{cf.} \S \ref{sec basics X2} for
the notations)~:
\[
\xymatrix{ Z\times Z\ar[r]^{1\times q}\ar[d]_{p\times p} &Z\times
  X\ar[r]^{\rho\times 1 \quad} &X\times X\times X.\\
  F\times F & & }
\]

\begin{lem}\label{lem terms of small diagonal}
  The following equations hold in $\CH^{2d}(F\times F)$.
\begin{align*}
  (p\times p)_*(\rho\times q)^*\Delta_{123} & =
  \sum_{i=0}^{d-1}\frac{(-1)^{d-1-i}}{2}
  p_1^*(j_*\pi^*c_i(X))\cdot\delta_1^{d-1-i}\cdot I ;\\
  (p\times p)_*(\rho\times q)^*\Delta_{12} & = \sum_{i=0}^{d-1}
  (-1)^{d-1-i} p_1^*(j_*\pi^*c_i(X))\cdot\delta_1^{d-1-i}
  \cdot p_2^* X_{\mathfrak{o}} ;\\
  (p \times p)_* (\rho \times q)^* \Delta_{23} & =
  p_1^*X_{\mathfrak{o}}\cdot I - \Gamma_{\mathfrak{o}_X} ;\\
  (p \times p)_* (\rho \times q)^* \Delta_{13} &=
  p_2^*X_{\mathfrak{o}}\cdot I - \Gamma_{\mathfrak{o}_X} ;\\
  (p \times p)_* (\rho \times q)^* \Delta_i &= p_1^*X_{\mathfrak{o}}
  \cdot p_2^*X_{\mathfrak{o}},\quad
  i=1,2 ;\\
  (p \times p)_* (\rho \times q)^* \Delta_3 &= 2p_1^*\mathfrak{o}_F.
\end{align*}
\end{lem}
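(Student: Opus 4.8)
The plan is to treat all six identities uniformly, since each $\Delta_\bullet$ is a product of pullbacks of $\Delta_X$, $[X]$ and $\mathfrak{o}_X$ under the projections $X^3 \to X$ and $X^3 \to X\times X$; thus after applying $(\rho\times q)^*$ and $(p\times p)_*$ everything reduces to how these three building blocks transform. The first step is to record the composites of $\rho\times q$ with the coordinate projections: $p_{13}\circ(\rho\times q) = q\times q$ and $p_{12}\circ(\rho\times q) = \rho\circ\mathrm{pr}_1$, while $p_1,p_2,p_3$ pull back to $q\circ\mathrm{pr}_1$, $\bar{q}\circ\mathrm{pr}_1$, $q\circ\mathrm{pr}_2$ (here $\bar{q}=q\circ\tau$ is the second blow-up projection). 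In particular $(\rho\times q)^*p_{13}^*\Delta_X = (q\times q)^*\Delta_X$, the class whose pushforward is $I$ by Lemma \ref{lem cycle class of I on S2}, while $(\rho\times q)^*p_{12}^*\Delta_X = \mathrm{pr}_1^*\rho^*\Delta_X$. I would then compute $\rho^*\Delta_X$ once and for all by the blow-up formula (Lemma \ref{lem blow-up formula Fulton}) for the blow-up of $X\times X$ along $\Delta_X\cong X$, whose normal bundle is $\mathscr{T}_X$ and whose exceptional divisor is $E=\PP(\mathscr{T}_X)$ with $\mathscr{N}_{E/Z}=\calO(-1)$, obtaining $\rho^*\Delta_X = j'_*\big(\sum_{i=0}^{d-1}\pi^*c_i(X)\,\xi^{d-1-i}\big)$.

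The second step assembles the pushforward toolkit: $p\circ j'=j$, $p_*q^*\mathfrak{o}_X = X_{\mathfrak{o}}$, $p_*[Z]=2[F]$, $p^*\delta = E$ (so that $\mathrm{pr}_1^*E^k = (p\times p)^*\delta_1^k$), and $j'^*E=-\xi$ (equivalently $j^*\delta=-\xi$), the last of which turns $j'_*(\pi^*c_i(X)\,\xi^m)$ into $(-1)^m E^m\cdot j'_*\pi^*c_i(X)$ and, after pushforward, into $(-1)^m\delta^m\cdot j_*\pi^*c_i(X)$. With these, the cases $\Delta_1,\Delta_2,\Delta_3$ are immediate: each factors as a pullback of a product of $\mathfrak{o}_X$'s (and $[X]$), so $(p\times p)_*$ splits as an external product; the factor $2$ in the $\Delta_3$ identity is exactly $p_*[Z]=2[F]$, and $\mathfrak{o}_F$ enters through its definition $p_*\rho^*(p_1^*\mathfrak{o}_X\cdot p_2^*\mathfrak{o}_X)$. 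The case $\Delta_{12}$ likewise splits as an external product between the $E$-supported first factor, which produces the terms $p_1^*(j_*\pi^*c_i(X))\,\delta_1^{d-1-i}$, and $p_2^*X_{\mathfrak{o}}$.

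The coupling cases $\Delta_{23},\Delta_{13}$ and the triple case $\Delta_{123}$ all involve $(q\times q)^*\Delta_X$ and carry the real content. For $\Delta_{23}$ and $\Delta_{13}$ (interchanged by the involution $\tau$ on the first $Z$-factor, hence giving the same pushforward) I would identify the image as the locus where one subscheme contains $\mathfrak{o}_X$ and the two subschemes are incident, i.e. an $X_{\mathfrak{o}}$-twisted copy of $I$, and then subtract the residual locus where the shared point is $\mathfrak{o}_X$ itself, which is precisely $\Gamma_{\mathfrak{o}_X}$. Making this rigorous at the level of cycle classes — controlling the behaviour along $\Delta_F$, where $I$ is singular by Lemma \ref{lem cycle class of I on S2} — is the delicate point.

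The main obstacle is $\Delta_{123}$. Writing $\Delta_{123}=p_{12}^*\Delta_X\cdot p_{13}^*\Delta_X$ reduces it to $(p\times p)_*\big(\mathrm{pr}_1^*j'_*\gamma\cdot(q\times q)^*\Delta_X\big)$ with $\gamma=\sum_i\pi^*c_i(X)\xi^{d-1-i}$; extracting the $\delta_1$-powers via $\mathrm{pr}_1^*E^k=(p\times p)^*\delta_1^k$ and the projection formula, everything hinges on the decoupling identity $(p\times p)_*\big(\mathrm{pr}_1^*\beta\cdot(q\times q)^*\Delta_X\big)=\tfrac12\,p_1^*(p_*\beta)\cdot I$ for an $E$-supported class $\beta=j'_*\pi^*c_i(X)$. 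I expect proving this identity — equivalently, that inserting a class on the first factor commutes with the incidence pushforward up to the factor $\tfrac12$ coming from the double cover $p$, with no spurious contribution over $\Delta_F$ — to be the crux of the whole lemma; the normalisation is pinned down by the case $\beta=[Z]$, which recovers $I$ itself. Finally I would collect terms and match them to $I$, $\Gamma_{\mathfrak{o}_X}$, $X_{\mathfrak{o}}$ and $\mathfrak{o}_F$ using their definitions together with Lemma \ref{lem cycle class of I on S2} and Lemma \ref{lem intersection identity X2}.
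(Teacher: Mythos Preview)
Your strategy is the paper's strategy, and the blow-up computation of $\rho^*\Delta_X$ together with the toolkit $p^*\delta=E$, $j'^*E=-\xi$, $p\circ j'=j$ is exactly what is used. Two points deserve correction.

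First, the ``decoupling identity'' you flag as the crux is immediate. Since $\tau$ fixes $E$ pointwise, any $E$-supported class $\beta$ on $Z$ satisfies $\tau^*\beta=\beta$; combined with $p^*p_*=1+\tau^*$ this gives $\beta=\tfrac12 p^*p_*\beta$, hence $\mathrm{pr}_1^*\beta=\tfrac12(p\times p)^*p_1^*(p_*\beta)$. Your identity then follows from the projection formula and from $(p\times p)_*(q\times q)^*\Delta_X=I$, which is already Lemma~\ref{lem cycle class of I on S2} at the level of cycle classes. There is no issue over $\Delta_F$: everything happens in Chow groups via push-pull, not via scheme-theoretic intersections on $I$.

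Second, your treatment of $\Delta_{23}$ is the genuine soft spot. The paper does not argue geometrically; it uses the involution more fully. Note that $p_{X^3,23}\circ(\rho\times q)=(q\times q)\circ\tau_1$ with $\tau_1=\tau\times\mathrm{Id}_Z$, so
\[
(\rho\times q)^*\Delta_{23}=(p_{Z\times Z,1})^*q^*\mathfrak{o}_X\cdot\tau_1^*(q\times q)^*\Delta_X.
\]
Now rewrite the first factor via $p^*p_*=1+\tau^*$ as
\[
(p_{Z\times Z,1})^*q^*\mathfrak{o}_X=(p\times p)^*p_1^*X_{\mathfrak{o}}-\tau_1^*(p_{Z\times Z,1})^*q^*\mathfrak{o}_X,
\]
push forward by $p\times p$, and use $(p\times p)_*\tau_1^*=(p\times p)_*$. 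The first term yields $p_1^*X_{\mathfrak{o}}\cdot I$ by the projection formula; the second yields $-\Gamma_{\mathfrak{o}_X}$ from the definition of $\Gamma_\sigma$. This is a two-line algebraic manipulation with no delicate geometry, and it is what your sketch is missing.
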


\begin{proof}
  Let $\iota_{\Delta_X}\times \mathrm{Id}_X : X\times X\rightarrow
  X\times X\times X$ be the morphism $(x,y)\mapsto (x,x,y)$. Then we
  have the following commutative diagram
\begin{equation}\label{eq big commutative diagram 1}
  \xymatrix{
    E\times Z\ar[r]^{\mathrm{Id}_E\times q}\ar[d]_{j'\times
      \mathrm{Id}_Z} & E\times X\ar[r]^{\pi\times
      \mathrm{Id}_X}\ar[d]_{j'\times\mathrm{Id}_X} &
    X\times X\ar[d]_{\iota_{\Delta_X} \times   \mathrm{Id}_X}\\
    Z\times Z\ar[r]^{\mathrm{Id}_Z\times q} &
    Z\times X\ar[r]^{\rho\times \mathrm{Id}_X} &X\times X\times X,
  }
\end{equation}
where all the squares are fiber products. Note that
$(\iota_{\Delta_X}\times \mathrm{Id}_X)_*\Delta_X =\Delta_{123}$.
\begin{equation}\label{eq Delta123 first}
 (\rho\times \mathrm{Id}_X)^*\Delta_{123} = (\rho\times
\mathrm{Id}_X)^*(\iota_{\Delta_X}\times \mathrm{Id}_X)_*\Delta_X.
\end{equation}
We also see that the morphism $\rho\times \mathrm{Id}_X$ is naturally
the blow-up of $X\times X\times X$ along the image of
$\iota_{\Delta_X}\times \mathrm{Id}_X$. By the blow-up formula (Lemma
\ref{lem blow-up formula Fulton}), we have
\begin{equation}\label{eq Delta123 second}
  (\rho\times \mathrm{Id}_X)^*(\iota_{\Delta_X}\times \mathrm{Id}_X)_*
  \Delta_X =
  (j'\times \mathrm{Id}_X)_* (c_{d-1}(\mathscr{E}) \cdot (\pi\times
  \mathrm{Id}_X)^*\Delta_X),
\end{equation}
where $\mathscr{E}$ is the locally free sheaf defined by
\[
\mathscr{E}  :=\mathrm{coker}\{\mathscr{N}_{j'\times
  \mathrm{Id}_X}\longrightarrow (\pi\times \mathrm{Id}_X )^*
\mathscr{N}_{\iota_{\Delta_X} \times \mathrm{Id}_X}\}.
\]
Note that $\mathscr{N}_{j'\times \mathrm{Id}_X} = (p_{E\times
  X,1})^*\mathscr{N}_{E/Z}$ and $\mathscr{N}_{\iota_{\Delta_X} \times
  \mathrm{Id}_X} = (p_{X\times X,1})^*\mathscr{T}_{X}$. On
$E=\PP(\mathscr{T}_X)$ we have the following short exact sequence
\begin{equation}\label{eq Euler sequence on E}
  \xymatrix{
    0\ar[r] &\calO(-1) \ar[rr] &&\pi^*\mathscr{T}_X \ar[rr] &&\mathscr{E}_0
    \ar[r] &0
  }
\end{equation}
Then the vector bundle $\mathscr{E}$ can be identified with
$(p_{E\times X,1})^*\mathscr{E}_0$. The above short exact sequence
implies
\[
 c(\mathscr{E}_0) = \pi^*(1 + c_1(X) +c_2(X) + \cdots +c_d(X))\cdot (1 +
\xi +\xi^2 +\cdots +\xi^{2d-1}),
\]
where $\xi=-E|_E$ is the first Chern class of the relative
$\calO(1)$-bundle. It follows that
\begin{equation}\label{eq top chern class of E}
 c_{d-1}(\mathscr{E}) = \sum_{i=0}^{d-1} (p_{E\times
X,1})^*\big(\pi^*c_i(X)\cdot \xi^{d-1-i}\big).
\end{equation}
Substituting \eqref{eq top chern class of E} into \eqref{eq Delta123
  second} gives
\begin{align*}
  (\rho\times \mathrm{Id}_X)^*(\iota_{\Delta_X}\times
  \mathrm{Id}_X)_*\Delta_X &= \sum_{i=0}^{d-1} (j'\times
  \mathrm{Id}_X)_* \{(p_{E\times X,1})^*\big(\pi^*c_i(X)\cdot
  \xi^{d-1-i}\big)\cdot (\pi\times
  \mathrm{Id}_X)^*\Delta_X \}\\
  &= \sum_{i=0}^{d-1} (j'\times \mathrm{Id}_X)_* \{(p_{E\times
    X,1})^*\big(\pi^*c_i(X)\cdot \xi^{d-1-i}\big)\cdot (\mathrm{Id}_E,\pi)_*E \}\\
  &= \sum_{i=0}^{d-1} (j'\times \mathrm{Id}_X)_*(\mathrm{Id}_E,\pi)_*
  \{(\mathrm{Id}_E,\pi)^*(p_{E\times X,1})^*\big(\pi^*c_i(X)\cdot
  \xi^{d-1-i}\big) \}\\
  & = \sum_{i=0}^{d-1} (j',\pi)_* \big(\pi^*c_i(X)\cdot
  \xi^{d-1-i}\big).
\end{align*}
Here the second equality uses the following fiber product square
\[
 \xymatrix{
  E\ar[r]^{\pi}\ar[d]_{(\mathrm{Id}_E,\pi)} &  X\ar[d]^{\iota_{\Delta_X}}\\
  E\times X\ar[r]^{\pi\times \mathrm{Id}_X} &X\times X.
 }
\]
The third equality uses the projection formula. The last equality
follows from the facts that $(j'\times \mathrm{Id}_X) \circ
(\mathrm{Id}_E,\pi) = (j',\pi)$ and that $(p_{E\times X,1})\circ
(\mathrm{Id}_E,\pi)=\mathrm{Id}_E$. Substituting the above equality
into equation \eqref{eq Delta123 first} yields
\begin{equation}\label{eq Delta123 third}
  (\rho\times \mathrm{Id}_X)^*\Delta_{123} = \sum_{i=0}^{d-1} (j',\pi)_*
  \big(\pi^*c_i(X)\cdot \xi^{d-1-i}\big).
\end{equation}
Note that we have the following fiber square
\[
\xymatrix{
  E\times_X Z\ar[r]^{p_1}\ar[d]_{j'\times_{_X} q} & E\ar[d]^{(j',\pi)}\\
  Z\times Z \ar[r]^{\mathrm{Id}_Z\times q} & Z\times X.  }
\]
From this and the above equation \eqref{eq Delta123 third}, we get
\begin{equation}\label{eq Delta123 fourth}
  (\mathrm{Id}_Z\times q)^* (\rho\times \mathrm{Id}_X)^*\Delta_{123} =
  \sum_{i=0}^{d-1} (j'\times_{_X} q)_*(p_1)^*\big(\pi^*c_i(X)\cdot
  \xi^{d-1-i}\big).
\end{equation}
We now observe that, for any cycle
class $\fa$ on $E$, we have
\begin{equation}\label{eq key observation 1}
 (j'\times_{_X} q)_* (p_1)^*\fa = (p_{Z\times Z,1})^*(j'_*\fa) \cdot
(q\times q)^*\Delta_X.
\end{equation}
Indeed, consider the following commutative diagram
\[
\xymatrix{
  X\ar[r]^{\iota_{\Delta_X}} & X\times X &\\
  E\times_X Z\ar[r]^\varphi\ar[u] &E\times Z \ar[r]^{j'\times
    \mathrm{Id}_Z}\ar[u]^{\pi\times q} & Z\times Z\ar[ul]_{q\times q}
}
\]
where the square is a fiber product. Then we have
\begin{align*}
  (j'\times_X q)_*p_1^*\fa  &= (j'\times \mathrm{Id}_Z)_*\varphi_*p_1^*\fa \\
  & = (j'\times \mathrm{Id}_Z)_*\Big((p_{E\times Z,1})^*\fa
  \cdot(\pi\times q)^*
  \Delta_X\Big)\\
  & =(j'\times \mathrm{Id}_Z)_*\Big((p_{E\times Z,1})^*\fa
  \cdot(j'\times
  \mathrm{Id}_Z)^*(q\times q)^*\Delta_X\Big)\\
  &= (j'\times \mathrm{Id}_Z)_*(p_{E\times Z,1})^*\fa \cdot (q\times
  q)^*\Delta_X.
\end{align*}
Then equation \eqref{eq key observation 1} follows easily by noting
\[
 (j'\times \mathrm{Id}_Z)_*(p_{E\times Z,1})^*\fa = (p_{Z\times Z,1})^*j'_*\fa.
\]
Combining equations \eqref{eq Delta123 fourth} and \eqref{eq key
observation 1} gives
\begin{align*}
  (\rho\times q)^* \Delta_{123}
  &= \sum_{i=0}^{d-1} (p_{Z\times Z,1})^* (j'_*(\pi^*c_i(X)\cdot
  \xi^{d-1-i}))\cdot (q\times q)^*\Delta_X\\
  & = \sum_{i=0}^{d-1} (p_{Z\times Z,1})^* (j'_*(\pi^*c_i(X)\cdot
  j'^*(-E)^{d-1-i})) \cdot (q\times q)^*\Delta_X\\
  &= \sum_{i=0}^{d-1} (-1)^{d-1-i} (p_{Z\times Z,1})^*
  (j'_*\pi^*c_i(X)\cdot
  E^{d-1-i}) \cdot (q\times q)^*\Delta_X\\
  &= \sum_{i=0}^{d-1} \frac{(-1)^{d-1-i}}{2}(p\times
  p)^*(p_1^*j_*\pi^*c_i(X)\cdot
  \delta_1^{d-1-i}) \cdot (q\times q)^*\Delta_X.
\end{align*}
Now we apply $(p\times p)_*$ to the above equation and find
\[
(p\times p)_*(\rho\times q)^*\Delta_{123}= \sum_{i=0}^{d-1}
\frac{(-1)^{d-1-i}}{2}p_1^*(j_*\pi^*c_i(X))\cdot
  \delta_1^{d-1-i} \cdot (p\times p)_*(q\times q)^*\Delta_X.
\]
This proves the first identity.\medskip

In order to prove the second equality, we note that $\Delta_{12} =
(\iota_{\Delta_X}\times \mathrm{Id}_X)_*(p_{X\times
  X,2})^*\mathfrak{o}_X$. Hence we have
\begin{align*}
  (\rho\times \mathrm{Id}_X)^*\Delta_{12} &= (\rho\times
  \mathrm{Id}_X)^* (\iota_{\Delta_X}\times \mathrm{Id}_X)_*(p_{X\times
    X,2})^*\mathfrak{o}_X\\
  &= (j'\times \mathrm{Id}_X)_* (c_{d-1}(\mathscr{E})\cdot (\pi\times
  Id_X)^*(p_{X\times X,2})^*\mathfrak{o}_X)\\
  &=\sum_{i=0}^{d-1}(-1)^{d-1-i} (j'\times \mathrm{Id}_X)_*
  \{(p_{E\times X,1})^*(\pi^*c_i(X)\cdot E^{d-1-i})
  \cdot (p_{E\times X,2})^*\mathfrak{o}_X\}\\
  &=\sum_{i=0}^{d-1}(-1)^{d-1-i} (p_{Z\times
    X,1})^*(j'_*\pi^*c_i(X)\cdot E^{d-1-i})\cdot (p_{Z\times
    X,2})^*\mathfrak{o}_X.
\end{align*}
Applying $(\mathrm{Id}_Z\times q)^*$ to the above equation, we get
\[
(\rho\times q)^*\Delta_{12} = \sum_{i=0}^{d-1}(-1)^{d-1-i} (p_{Z\times
  Z,1})^*(j'_*\pi^*c_i(X)\cdot E^{d-1-i})\cdot (p_{Z\times
  Z,2})^*(q^*{\mathfrak{o}_X}).
\]
It follows from this equation that
\[
(p\times p)_*(\rho\times q)^*\Delta_{12} = \sum_{i=0}^{d-1}
(-1)^{d-1-i} p_1^*(j_*\pi^*c_i(X))\cdot\delta_1^{d-1-i} \cdot
p_2^*(p_*q^*\mathfrak{o}_X),
\]
which proves the second equality.
\medskip

For the third equality, we proceed to the following direct
computation.  Recall that there is an involution $\tau$ on $Z$. We
will denote $\tau_1 :=\tau\times\mathrm{Id}_Z$ the involution of
$Z\times Z$ induced by the action of $\tau$ on the first
factor. Similarly, we have $\tau_2 :=\mathrm{Id}_Z \times \tau$. Then
we have
\begin{align*}
  (p \times p)_* (\rho \times q)^* \Delta_{23} & = (p \times p)_*
  (\rho
  \times q)^* \{(p_{X^3,1})^*\mathfrak{o}_X \cdot (p_{X^3,23})^*\Delta_X \}\\
  & = (p \times p)_* \{ (p_{Z\times Z, 1})^*q^*\mathfrak{o}_X \cdot
  \tau_1^*(q\times q)^*\Delta_X \}\\
  & = (p \times p)_* \{ (p\times p)^*p_1^*p_*q^*\mathfrak{o}_X \cdot
  \tau_1^*(q\times q)^*\Delta_X \} \\
  & \qquad - (p \times p)_* \{ \tau_1^*(p_{Z\times Z,
    1})^*q^*\mathfrak{o}_X
  \cdot \tau_1^*(q\times q)^*\Delta_X \}\\
  & = p_1^*p_*q^*\mathfrak{o}_X \cdot (p \times p)_* \tau_1^*(q\times
  q)^*\Delta_X \\
  & \qquad - (p \times p)_* \{ (p_{Z\times Z, 1})^*q^*\mathfrak{o}_X
  \cdot
  (q\times q)^*\Delta_X \}\\
  & = p_1^*(p_*q^*\mathfrak{o}_X) \cdot I - \Gamma_{\mathfrak{o}_X}.
\end{align*}
Here the first equality follows from the definition of $\Delta_{23}$.
The second equality follows from the fact that $(p_{X^3,1}) \circ
(\rho \times q) = q \circ (p_{Z \times Z,1})$ and that $(p_{X^3,1})
\circ (\rho \times q) =(q\times q)\circ\tau_1$.  The third equality is
obtained by using
\[
(p\times p)^*p_1^*p_*\fa = (p_{Z\times Z,1})^*\fa + \tau_1^*
(p_{Z\times Z,1})^*\fa,\quad \forall \fa\in \CH^*(Z\times Z).
\]

The fourth equality uses the projection formula and the fact that
$(p\times p)_*\tau_1^*\fa=(p \times p)_*\fa$ for all cycle classes
$\fa$ on $Z\times Z$. The last equality follows from the definition of
$\Gamma_\sigma$.  \medskip

The equation involving $\Delta_{13}$ follows from that involving
$\Delta_{23}$ by symmetry. The remaining equalities are proved easily.
\end{proof}

Note that a direct computation shows that, if $X$ satisfies Assumption
\ref{assm small diagonal}, then for all $\sigma \in \CH_0(F)$ we have
\[ \left(p_1^*(2X_\mathfrak{o} +(-1)^{d}\delta^d)\cdot I
\right)^*\sigma = (2X_\mathfrak{o} +(-1)^{d}\delta^d)\cdot I_*\sigma =
2\deg(\sigma)\mathfrak{o}_F.
\]
The reason for carrying out the computations involved in the proof of
Lemma \ref{lem terms of small diagonal} is to show that the formula
above can actually be made much more precise in the case when $X=S$ is
a K3 surface. The following lemma gives an equality of correspondences
(rather than merely an equality of actions of correspondences on
zero-cycles) and is essential to establishing Conjecture \ref{conj
  main} for $F= S^{[2]}$.

\begin{lem}\label{lem lifting hilbert}
  Let $S$ be a K3 surface and $F=S^{[2]}$ the Hilbert scheme of
  length-$2$ subschemes of $S$. Then the following relation holds in
  $\CH^4(F \times F)$~:
$$
p_1^*(2S_{\mathfrak{o}} + \delta^2) \cdot I = 2\delta_1^2\cdot
p_2^*S_{\mathfrak{o}} + 4p_1^*S_{\mathfrak{o}}\cdot
p_2^*S_{\mathfrak{o}} +2p_1^*\mathfrak{o}_F.
$$
\end{lem}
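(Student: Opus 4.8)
The plan is to deduce the identity by applying the operator $\Phi := (p\times p)_*(\rho\times q)^*$ to the Beauville--Voisin relation and reading off Lemma \ref{lem terms of small diagonal}. Since $S$ is a K3 surface, Theorem \ref{thm bv} gives $\Delta_{\mathrm{tot}} = 0$ in $\CH_2(S\times S\times S)$, that is
\[
\Delta_{123} = \Delta_{12}+\Delta_{23}+\Delta_{13}-\Delta_1-\Delta_2-\Delta_3 \quad\text{in } \CH^4(S^3),
\]
where the $\Delta_\bullet$ are formed with respect to the canonical zero-cycle $\mathfrak{o}_S$, which we may take effective by Proposition \ref{prop bvf}. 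First I would apply $\Phi$, which is linear, to this identity; the point is that $\Phi$ sends each of the seven terms to an explicit cycle on $F\times F$ already computed in Lemma \ref{lem terms of small diagonal}.

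Next I would specialize those six formulas to the case $d=\dim S = 2$. Here two simplifications occur. Because $c_1(S)=0$, in every sum $\sum_{i=0}^{d-1}$ only the $i=0$ summand (involving $c_0(S)=[S]$) survives; and by Lemma \ref{lem intersection identity X2} in the case $k=1$ one has $j_*\pi^*[S] = [\Delta] = 2\delta$, hence $p_1^*(j_*\pi^*[S]) = 2\delta_1$. Thus $\Phi\Delta_{123} = -\delta_1^2\cdot I$ and $\Phi\Delta_{12} = -2\delta_1^2\cdot p_2^*S_{\mathfrak{o}}$. I would also record, straight from the definition of $\Gamma_\sigma$, that $\Gamma_{\mathfrak{o}_S} = (p\times p)_*(q\times q)^*(\iota_{\Delta_S})_*\mathfrak{o}_S = p_1^*S_{\mathfrak{o}}\cdot p_2^*S_{\mathfrak{o}}$, so that $\Phi\Delta_1 = \Phi\Delta_2 = \Gamma_{\mathfrak{o}_S}$ and $\Phi\Delta_3 = 2\,p_1^*\mathfrak{o}_F$. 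Note that $c_2(S)$ never enters: only $c_0$ and $c_1$ appear in the relevant sums, so the argument does not even require $c_2(S)=24\,\mathfrak{o}_S$.

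Finally I would collect terms. The two incidence contributions $\Phi\Delta_{23}$ and $\Phi\Delta_{13}$ together supply $2\,p_1^*S_{\mathfrak{o}}\cdot I - 2\,\Gamma_{\mathfrak{o}_S}$; combined with $\Phi\Delta_{12}$, the $\Delta_i$ and $\Delta_3$ terms, and the left-hand side $-\delta_1^2\cdot I$, everything assembles — after moving the $I$-terms to one side and using $p_1^*\delta^2 = \delta_1^2$ — into
\[
p_1^*(2S_{\mathfrak{o}}+\delta^2)\cdot I = 2\delta_1^2\cdot p_2^*S_{\mathfrak{o}} + 4\,p_1^*S_{\mathfrak{o}}\cdot p_2^*S_{\mathfrak{o}} + 2\,p_1^*\mathfrak{o}_F,
\]
which is the claim.

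The one place demanding care — and the main obstacle — is the correct evaluation of the incidence terms $\Phi\Delta_{23}$ and $\Phi\Delta_{13}$. After pushing forward along $p\times p$, which forgets the ordering of the two points of a length-$2$ subscheme recorded by the blow-up map $\rho$, one must track precisely which factor of $F\times F$ inherits the condition ``contains $\mathfrak{o}_S$'', and this is where the symmetry of $I$ under the factor-swap and the involution $\tau$ on $Z$ intervene. This bookkeeping is exactly what Lemma \ref{lem terms of small diagonal} packages, via the blow-up formula of Lemma \ref{lem blow-up formula Fulton}; once that lemma is in hand, the remaining steps are the purely formal manipulation above, made short by the vanishing $c_1(S)=0$.
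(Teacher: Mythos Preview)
Your proof is correct and follows exactly the paper's approach: apply $(p\times p)_*(\rho\times q)^*$ to the Beauville--Voisin identity $\Delta_{\mathrm{tot}}=0$ and read off Lemma~\ref{lem terms of small diagonal} term by term. Your claim that $\Phi\Delta_{23}+\Phi\Delta_{13}=2\,p_1^*S_{\mathfrak{o}}\cdot I - 2\,\Gamma_{\mathfrak{o}_S}$ is right (the $p_2^*$ in the $\Delta_{13}$ line of Lemma~\ref{lem terms of small diagonal} is a typo for $p_1^*$, as the ``by symmetry'' in its proof --- swapping the first two $X$-coordinates corresponds to $\tau_1$ on $Z\times Z$, invisible after $(p\times p)_*$ --- confirms), and your observation that $c_2(S)$ never enters is a small sharpening of the paper's remark.
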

\begin{proof}
  We know from \cite{bv} that $\Delta_{\mathrm{tot}}=0 \in
  \CH_2(S\times S\times S)$ (see Theorem \ref{thm bv}), and also that
  $c_1(S)=0$ and $c_2(S)=24\mathfrak{o}_S$. Then the lemma follows
  from spelling out the equation $(p\times p)_*(\rho\times
  q)^*\Delta_{\mathrm{tot}}=0$ term by term using Lemma \ref{lem terms
    of small diagonal}.
\end{proof}


\subsection{The Fourier transform for $S^{[2]}$}
\label{sec Fourier S2}

In the remaining part of this section, we assume that $X=S$ is a K3
surface so that $F=S^{[2]}$ is a hyperk\"ahler fourfold. In that case,
it is possible to refine the results of \S \ref{sec mult X2} and
obtain a decomposition of the Chow groups of $F$, similar in every way
to the Fourier decomposition we will eventually establish, by using
the powers of the action of the incidence correspondence $I$. However,
the incidence correspondence $I$ does not deform in moduli and the
whole point of modifying $I$ into a cycle $L$ with cohomology class
the Beauville--Bogomolov class $\mathfrak{B}$ is to have at our
disposal a cycle that would deform in moduli (and in fact our cycle
$L$ defined in \eqref{eq L S2} does deform as will be shown in \S
\ref{sec L agree}) and whose deformations would induce a Fourier
decomposition for all hyperk\"ahler varieties of
$\mathrm{K3}^{[2]}$-type.

In concrete terms, the goal here is to prove Theorem \ref{thm main
  splitting} for $F$. For that purpose, we first construct a cycle $L
\in \CH^2(F \times F)$ that represents the Beauville--Bogomolov class
$\mathfrak{B}$, and we show (Theorem \ref{prop cubic L conjecture})
that $L$ satisfies Conjecture \ref{conj main}, \textit{i.e.}, that
$L^2=2 \, \Delta_F -\frac{2}{25}(l_1+l_2)\cdot L -\frac{1}{23\cdot
  25}(2l_1^2 - 23l_1l_2 + 2l_2^2) \in \CH^4(F \times F)$. Then we
check that Properties \eqref{assumption pre l} and \eqref{assumption
  hom} are satisfied by $L$ (Proposition \ref{prop cubic assumption
  pre l hom}). We can thus apply Theorem \ref{prop L2}. Finally, we
show that $L$ satisfies Property \eqref{assumption 2hom} (Proposition
\ref{lem condition 3}) so that we can apply Theorem \ref{prop main}
and get the Fourier decomposition of Theorem \ref{thm main splitting}
for $F$.\medskip

It is easy to see that $\Delta_S -p_1^*\mathfrak{o}_S
-p_2^*\mathfrak{o}_S$ represents the intersection form on $S$. From
the definition of $I$, a simple computation gives
\begin{align*}
  (p\times p)_*(q\times q)^*(\Delta_S -p_1^*\mathfrak{o}_S
  -p_2^*\mathfrak{o}_S) & = I - (p\times
  p)_*(p_1^*q^*\mathfrak{o}_S\cdot p_2^*p^*[F] +
  p_2^*q^*\mathfrak{o}_S\cdot p_1^*p^*[F])\\
  & = I- 2p_1^*S_{\mathfrak{o}} -2p_2^*S_{\mathfrak{o}}.
\end{align*}
Here the coefficient $2$ appears in the second equality because
$p :Z\rightarrow F$ is of degree $2$. As a result, the cycle
$$
L_\delta :=I-2p_1^*S_{\mathfrak{o}}-2p_2^*S_{\mathfrak{o}}
$$
represents the $\big(\HH^2(S,\Q) \otimes \HH^2(S,\Q)\big)$-component
of the Beauville--Bogomolov class $\mathfrak{B} \in \HH^2(F,\Q)
\otimes \HH^2(F,\Q) \subset \HH^4(F\times F,\Q)$ with respect to the
$q_F$-orthogonal decomposition \eqref{eq decomposition H2}.  Hence we
see that
\begin{equation}
  \label{eq L S2} L :=I - 2p_1^*S_{\mathfrak{o}} - 2p_2^*S_{\mathfrak{o}} -
  \frac{1}{2}\delta_1\delta_2
\end{equation}
represents the Beauville--Bogomolov class $\mathfrak{B}$, where
$\delta_i=p_i^*\delta$, $i=1,2$. Proposition \ref{lem diagonal
  pull-back of I}, together with the identity
$c_2(S)=24\mathfrak{o}_S$ of Beauville--Voisin \cite{bv}, implies
\[
 (\iota_\Delta)^*I = 24 S_{\mathfrak{o}} - 2\delta^2.
\]
Then we have
\begin{equation}\label{eq l on S2}
  l  := (\iota_{\Delta})^* L = 20 S_{\mathfrak{o}} - \frac{5}{2}\delta^2
  =\frac{5}{6}c_2(F),
\end{equation}
where the last equality is the following lemma.

\begin{lem}\label{lem chern class S2}
  If $S$ is a K3 surface, then $c_2(S^{[2]})=p_*q^*c_2(S) -3\delta^2
  =24S_{\mathfrak{o}} -3\delta^2$.
\end{lem}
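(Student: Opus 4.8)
The plan is to prove the formula $c_2(S^{[2]}) = p_*q^*c_2(S) - 3\delta^2 = 24 S_{\mathfrak{o}} - 3\delta^2$ by working on the blow-up $Z$ of $S \times S$ along the diagonal, where all the tangent-bundle exact sequences are available explicitly. The second equality is immediate from the Beauville--Voisin identity $c_2(S) = 24\mathfrak{o}_S$ (so that $p_*q^*c_2(S) = 24\, p_*q^*\mathfrak{o}_S = 24 S_{\mathfrak{o}}$ by the definition \eqref{eq defn of special d-cycle} of $S_{\mathfrak{o}}$), so the content is entirely in the first equality.

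First I would pull back the desired identity along the double cover $p : Z \to F = S^{[2]}$, using Remark \ref{rmk compatibility blowup hilbert} (or more elementarily the fact that $p^* : \CH^*(F) \to \CH^*(Z)$ is injective since $p_*p^* = 2\,\mathrm{id}$) to reduce to an identity in $\CH^2(Z)$. The key input is the fundamental short exact sequence attached to the ramified double cover $p$, namely
\[
0 \longrightarrow \mathscr{T}_Z \longrightarrow p^*\mathscr{T}_{S^{[2]}} \longrightarrow \calO_E(2E) \longrightarrow 0,
\]
which already appears in the proof of Proposition \ref{prop I square on S2}. Taking total Chern classes gives $p^*c(\mathscr{T}_{S^{[2]}}) = c(\mathscr{T}_Z)\cdot c(\calO_E(2E))$, and extracting the degree-$2$ part yields $p^*c_2(S^{[2]}) = c_2(Z) + c_1(Z)\cdot c_1(\calO_E(2E)) + $ (a correction coming from the fact that $c(\calO_E(2E))$ is computed via \eqref{eq chern2E} as $1 + E - E^2 + \cdots$). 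So the next step is to compute $c(\mathscr{T}_Z)$.

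For that I would use the blow-up description $\rho : Z \to S \times S$ with exceptional divisor $E \cong \PP(\mathscr{T}_S)$ and the standard exact sequence relating $\mathscr{T}_Z$ to $\rho^*\mathscr{T}_{S\times S}$, exactly as in \eqref{eq Chern class of normal bundle}; since $c_1(S) = 0$ and $c_2(S) = 24\mathfrak{o}_S$, the Chern classes of $\rho^*\mathscr{T}_{S\times S}$ are simple, and the exceptional contributions are governed by powers of $E$, which are controlled by Lemma \ref{lem intersection identity X2} (giving $\delta^k = \frac{(-1)^{k-1}}{2}j_*\xi^{k-1}$ and hence $p_*E^k = 2\delta^k$). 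Combining $c(\mathscr{T}_Z)$ with the factor $c(\calO_E(2E))$ from \eqref{eq chern2E}, extracting degrees, applying $p_*$, and repeatedly using $p_*E^k = 2\delta^k$ together with $\rho^*$-projection formulas should collapse everything to $24 S_{\mathfrak{o}} - 3\delta^2$.

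The main obstacle I anticipate is bookkeeping the exceptional-divisor corrections correctly: the term $\calO_E(2E)$ is supported on $E$, so its Chern classes interact with $c(\mathscr{T}_Z)$ through products $E^k$ that must be rewritten via $j_*\xi^{k-1}$ and then pushed forward, and one must be careful with the self-intersection formula $E|_E = -\xi$ and with signs. A secondary subtlety is confirming that $p^*\delta^2$ matches the pushed-forward expression and that no $S_{\mathfrak{o}}$-versus-$\delta^2$ cross terms are dropped. Since $c_1(S) = 0$ kills many intermediate terms, I expect the computation to be short once the $E$-power reductions of Lemma \ref{lem intersection identity X2} are applied systematically; the coefficient $-3$ should emerge from the combination of the $-3E^2$-type terms produced by the two exact sequences.
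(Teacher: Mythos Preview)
Your plan is correct and matches the paper's proof almost exactly; the only cosmetic difference is that the paper carries out the computation with cotangent sheaves (using the sequences $0 \to \rho^*\Omega^1_{S\times S} \to \Omega^1_Z \to j'_*\Omega^1_{E/S} \to 0$ and $0 \to p^*\Omega^1_F \to \Omega^1_Z \to \calO_E(-E) \to 0$) rather than the dual tangent sequences you propose. Either way one arrives at $p^*c_2(F) = \rho^*(p_1^*c_2(S) + p_2^*c_2(S)) - 3E^2$, and then the push-forward identities $p_*E^2 = 2\delta^2$ and $p_*\rho^*p_i^*c_2(S) = p_*q^*c_2(S)$ give the result.
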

\begin{proof} We take up the notations of diagram \eqref{eq basic
    blow-up diagram} with $X=S$.  The blow-up morphism $\rho :
  Z\rightarrow S\times S$ gives a short exact sequence
\[
\xymatrix@C=0.5cm{ 0 \ar[r] & \rho^*\Omega^1_{S\times S} \ar[rr] &&
  \Omega_Z^1 \ar[rr] && j'_*\Omega_{E/S}^1 \ar[r] & 0. }
\]
Note that $j'\Omega_{E/S}^1\cong \calO_E(2E)$, and hence
\[
c(j'_*\Omega_{E/S}^1)=\frac{1+2E}{1+E} = 1 + E -E^2 +E^3 -E^4.
\]
By taking Chern classes of the sheaves in the above short exact
sequence, we get
\[
c(\Omega_{Z}^1) = \rho^*c(\Omega_{S\times S}^1)\cdot
c(j'_*\Omega_{E/S}^1) = \rho^*(p_1^*c(\Omega_S^1)\cdot
p_2^*c(\Omega_S^1))\cdot \frac{1+2E}{1+E}.
\]
Meanwhile the double cover $p :Z\rightarrow F$ gives the following
short exact sequence
\[
\xymatrix@C=0.5cm{ 0 \ar[r] & p^*\Omega_F^1 \ar[rr] && \Omega_Z^1
  \ar[rr] && \calO_E(-E) \ar[r] & 0. }
\]
By taking Chern classes, we get
\[
p^*c(\Omega_F^1) = c(\Omega_Z^1)/c(\calO_E(-E)) = c(\Omega_Z^1)
\cdot \frac{1-2E}{1-E}
\]
Combining the above two computations yields
\[
p^*c(\Omega_F^1) = \rho^*(p_1^*c(\Omega_S^1)\cdot
p_2^*c(\Omega_S^1))\cdot \frac{1-4E^2}{1-E^2}
\]
This gives $p^*c_2(F) = \rho^*(p_1^*c_2(S) + p_2^*c_2(S)) -3E^2$. We
apply $p_*$ to this equation and note that $p_*E^2=2\delta^2$ and
$p_*\rho^* p_i^*c_2(S)=p_*q^*c_2(S)$, $i=1,2$. Then we get
\[
2c_2(F) = 2 p_*q^*c_2(S) -6\delta^2,
\]
which, in light of Beauville--Voisin's formula
$c_2(S)=24\mathfrak{o}_S$, proves the lemma.
\end{proof}

\begin{thm} \label{prop cubic L conjecture}
Conjecture \ref{conj main} holds for $F$ with $L$ as in \eqref{eq L S2}.
\end{thm}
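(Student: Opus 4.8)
The plan is to verify the quadratic equation \eqref{eq rational equation} directly at the level of Chow groups, exploiting the fact that the cohomological version \eqref{eq cohomological equation S2} holds by Proposition \ref{prop action of B powers} and that for $F = S^{[2]}$ the relevant cycles all live in a subring of $\CH^*(F\times F)$ that is controlled by Lemma \ref{lem cohomology to chow}. First I would express both sides of the desired identity in terms of the explicit cycle $L = I - 2p_1^*S_{\mathfrak{o}} - 2p_2^*S_{\mathfrak{o}} - \frac{1}{2}\delta_1\delta_2$ of \eqref{eq L S2}, using $l = \frac{5}{6}c_2(F) = 20\,S_{\mathfrak{o}} - \frac{5}{2}\delta^2$ from \eqref{eq l on S2}. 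Squaring $L$ produces a sum of products of $I$, $p_i^*S_{\mathfrak{o}}$ and $\delta_i$; the dangerous term is $I^2$, which Proposition \ref{prop I square on S2} rewrites (with $d=2$, $c_1(S)=0$, $c_2(S)=24\,\mathfrak{o}_S$) as
\[
I^2 = 2\Delta_F + y_2\cdot I + y_0\cdot \Gamma_{c_2(S)},
\]
where $y_0 = [F\times F]$ and $y_2 = -(\delta_1^2 - \delta_1\delta_2 + \delta_2^2)$. This is the single place where the diagonal $\Delta_F$ enters, and it is the source of the $2\,\Delta_F$ on the right-hand side of \eqref{eq rational equation}.

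Next I would eliminate the remaining ``non-divisorial'' terms, namely $\Gamma_{c_2(S)} = 24\,\Gamma_{\mathfrak{o}_S}$ and the mixed products $p_1^*S_{\mathfrak{o}}\cdot I$, $p_2^*S_{\mathfrak{o}}\cdot I$, $\delta_1^2\cdot I$, using the key Lemma \ref{lem lifting hilbert}. That lemma, which is itself a lift of the Beauville--Voisin vanishing $\Delta_{\mathrm{tot}} = 0$ via Lemma \ref{lem terms of small diagonal}, gives the crucial relation
\[
p_1^*(2S_{\mathfrak{o}} + \delta^2)\cdot I = 2\delta_1^2\cdot p_2^*S_{\mathfrak{o}} + 4p_1^*S_{\mathfrak{o}}\cdot p_2^*S_{\mathfrak{o}} + 2p_1^*\mathfrak{o}_F,
\]
together with its transpose. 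Combined with the elementary intersection identities of Lemma \ref{lem intersection identity X2} (for instance $\delta^k\cdot X_x$, and the analogue $\delta_1^2\cdot p_1^*S_{\mathfrak{o}}$-type relations on $F\times F$), these let me replace every occurrence of $I$ multiplied by $S_{\mathfrak{o}}$ or by $\delta^2$ by explicit combinations of $p_1^*S_{\mathfrak{o}}\cdot p_2^*S_{\mathfrak{o}}$, $\delta_1^2 p_2^*S_{\mathfrak{o}}$, $\delta_2^2 p_1^*S_{\mathfrak{o}}$ and $p_i^*\mathfrak{o}_F$. After this substitution the purported identity becomes an equality between explicit polynomials in the classes $p_i^*S_{\mathfrak{o}}$, $\delta_i$ and $p_i^*\mathfrak{o}_F$, with no $I$ and no $\Delta_F$ left except the single linear $\Delta_F$ term.

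Finally I would reduce the remaining bookkeeping to cohomology. The classes $S_{\mathfrak{o}}$, $\delta^2$, $\mathfrak{o}_F$ and their pullbacks all lie in the subalgebra generated by $p_1^*\mathrm{V}_F$ and $p_2^*\mathrm{V}_F$ of Lemma \ref{lem cohomology to chow}, so once the $\Delta_F$ term is accounted for it suffices to check that the two sides of \eqref{eq rational equation} agree in $\HH^8(F\times F,\Q)$; but that is exactly the content of the cohomological equation \eqref{eq cohomological equation S2}, which holds with $r=23$ and $c_F=1$ by Proposition \ref{prop action of B powers}, since $[L] = \mathfrak{B}$ and $[l] = \mathfrak{b}$. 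I expect the main obstacle to be the second step: carefully tracking the coefficients while using Lemma \ref{lem lifting hilbert} and its transpose to clear all the $I$-linear cross terms, and verifying that the $\Gamma_{\mathfrak{o}_S}$ contribution coming from $y_0\cdot\Gamma_{c_2(S)}$ combines correctly with the $\delta_1\delta_2\cdot I$ and $p_i^*S_{\mathfrak{o}}\cdot I$ terms so that everything outside the divisorial subalgebra cancels. Once that cancellation is in place, the appeal to Lemma \ref{lem cohomology to chow} and \eqref{eq cohomological equation S2} closes the argument.
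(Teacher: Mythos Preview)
Your proposal is correct and follows essentially the same route as the paper: expand $I^2$ via Proposition~\ref{prop I square on S2}, substitute $I = L + 2p_1^*S_{\mathfrak{o}} + 2p_2^*S_{\mathfrak{o}} + \tfrac12\delta_1\delta_2$, use Lemma~\ref{lem lifting hilbert} (and its transpose) to absorb the residual $L$-linear terms into a polynomial in $(\delta_1,\delta_2,p_1^*S_{\mathfrak{o}},p_2^*S_{\mathfrak{o}})$, and then invoke Lemma~\ref{lem cohomology to chow} together with the cohomological identity \eqref{eq cohomological equation S2} to identify that polynomial. One small simplification you will discover when carrying this out: $\Gamma_{c_2(S)} = 24\,\Gamma_{\mathfrak{o}_S}$ is not actually a ``non-divisorial'' obstruction, since $\Gamma_{\mathfrak{o}_S} = p_1^*S_{\mathfrak{o}}\cdot p_2^*S_{\mathfrak{o}}$ already lies in the polynomial subring; likewise the $\delta_1\delta_2\cdot L$ contributions cancel automatically, so after substitution the $L$-linear coefficient is exactly $\delta_1^2+\delta_2^2+4p_1^*S_{\mathfrak{o}}+4p_2^*S_{\mathfrak{o}}$, and the only genuine use of Lemma~\ref{lem lifting hilbert} is to show that $(\delta_i^2+2p_i^*S_{\mathfrak{o}})\cdot L$ is polynomial, which is precisely what is needed to convert that coefficient into $\tfrac{2}{25}(l_1+l_2)$.
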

\begin{proof}
  First note that Proposition \ref{prop I square on S2} gives the
  equation
\begin{equation}\label{eq I square equation on S2}
  I^2 = 2\Delta_F -(\delta_1^2 -\delta_1\delta_2 +\delta_2^2)\cdot I +
  24 p_1^*S_{\mathfrak{o}}\cdot p_2^*S_{\mathfrak{o}}.
\end{equation}
If we substitute $I= L +2 p_1^* S_{\mathfrak{o}} +2p_2^*
S_{\mathfrak{o}} +\frac{1}{2}\delta_1\delta_2$ into the above
equation, we get
\begin{equation}\label{eq first L equation on S2}
  L^2 =2\Delta_F - (\delta_1^2 + \delta_2^2 + 4p_1^* S_{\mathfrak{o}}
  + 4p_2^* S_{\mathfrak{o}})\cdot L
  +P_1(\delta_1,\delta_2,p_1^*S_{\mathfrak{o}},
  p_2^*S_{\mathfrak{o}}),
\end{equation}
where $P_1$ is a weighted homogeneous polynomial of degree $4$.  Lemma
\ref{lem lifting hilbert} implies that $(\delta_1^2 +
2p_1^*S_{\mathfrak{o}}) \cdot L$ is a weighted homogeneous polynomial
of degree $4$ in $(\delta_1,\delta_2,p_1^*S_{\mathfrak{o}},
p_2^*S_{\mathfrak{o}})$. By symmetry, so is $(\delta_2^2 + 2
p_2^*S_{\mathfrak{o}}) \cdot L$. From the equation \eqref{eq l on S2},
we get
$$
\frac{2}{25}l - (4S_{\mathfrak{o}} + \delta^2) =
-\frac{6}{5}(\delta^2 + 2S_{\mathfrak{o}}).
$$
Hence $p_i^*(\frac{2}{25}l - (4S_{\mathfrak{o}} + \delta^2))\cdot
L$, $i=1,2$, is a weighted homogeneous polynomial of degree 4 in
$(\delta_1,\delta_2,p_1^*S_{\mathfrak{o}}, p_2^*S_{\mathfrak{o}})$.
From equation \eqref{eq first L equation on S2}, we get
\begin{equation}\label{eq second L equation on S2}
L^2 = 2\Delta_F -\frac{2}{25}(l_1 + l_2)\cdot L +
P_2(\delta_1,\delta_2,p_1^*S_{\mathfrak{o}}, p_2^*S_{\mathfrak{o}}),
\end{equation}
where $P_2$ is a weighted homogeneous polynomial of degree 4 which is
homologically equivalent to $\frac{-1}{23\cdot 25}(2l_1^2 - 23 l_1 l_2
+2 l_2^2)$. Then Lemma \ref{lem cohomology to chow} shows that
$P_2=\frac{-1}{23\cdot 25}(2l_1^2 - 23 l_1 l_2 +2 l_2^2)$.
\end{proof}

We now check that $L$ satisfies conditions \eqref{assumption pre l} and
\eqref{assumption hom} of Theorem \ref{prop L2}.

\begin{prop} \label{prop cubic assumption pre l hom} We have $ L_*l^2
  = 0$ and $L_*(l\cdot L_*\sigma) = 25\; L_*\sigma$ for all $\sigma
  \in \CH_0(F)$.
\end{prop}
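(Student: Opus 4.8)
The plan is to reduce both identities to the down-to-earth formulas already assembled in this section, namely the action of $L_*$ and the various intersection products with $\delta$ and $S_\mathfrak{o}$ on zero-cycles. Throughout I will use that $L = I - 2p_1^*S_\mathfrak{o} - 2p_2^*S_\mathfrak{o} - \tfrac{1}{2}\delta_1\delta_2$, and that $l = 20\,S_\mathfrak{o} - \tfrac{5}{2}\delta^2$ from \eqref{eq l on S2}. The key computational inputs are Lemma \ref{lem intersection identity X2} (which gives $\delta^2 \cdot X_x = -[x,x]$ and $\delta^2 \cdot X_\mathfrak{o}$ as a point-class), the Beauville--Voisin relation \eqref{eq Beauville-Voisin equation} $[x,x] - 2[\mathfrak{o}_S,x] + \mathfrak{o}_F = 0$, and crucially Lemma \ref{lem lifting hilbert}, which expresses $p_1^*(2S_\mathfrak{o} + \delta^2)\cdot I$ explicitly. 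Since $L_*$ is the action of a codimension-$2$ correspondence on $\CH_0(F) = \CH^4(F)$, its value on a class $\sigma$ is $p_{2,*}(L \cdot p_1^*\sigma)$, and for $\sigma = [x,y]$ this is governed by $I_*[x,y] = X_x + X_y$ (Lemma \ref{lem cycle class of I on S2}(iii)) together with $p_{i,*}$ of the correction terms.

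\textbf{First identity } $L_*l^2 = 0$. First I would expand $l^2 = (20 S_\mathfrak{o} - \tfrac{5}{2}\delta^2)^2$ as a $0$-cycle on $F$, using $S_\mathfrak{o}^2 = X_\mathfrak{o}\cdot X_\mathfrak{o} = [\mathfrak{o}_S,\mathfrak{o}_S] = \mathfrak{o}_F$, the mixed term $S_\mathfrak{o}\cdot\delta^2$, and $\delta^4$, all of which are computed by Lemma \ref{lem intersection identity X2} and reduce to known multiples of $\mathfrak{o}_F$ (recall $\delta^2\cdot X_\mathfrak{o} = -[\mathfrak{o}_S,\mathfrak{o}_S]$ type identities). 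Thus $l^2$ is a rational multiple of the point-class $\mathfrak{o}_F$. It then suffices to show $L_*\mathfrak{o}_F = 0$: writing $\mathfrak{o}_F = [\mathfrak{o}_S,\mathfrak{o}_S]$ and applying $L_* = I_* - 2p_{1,*}(\cdots)$ termwise, one gets $I_*\mathfrak{o}_F = 2X_\mathfrak{o}$ against which the corrections $-2p_1^*S_\mathfrak{o}$, $-2p_2^*S_\mathfrak{o}$ and $-\tfrac12\delta_1\delta_2$ must exactly cancel. This is the cohomological statement that $[L]$ annihilates $\HH^8(F)$ (Proposition \ref{prop action of B powers}(ii)), lifted to $\CH_0$, and the cancellation is a direct push-pull computation.

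\textbf{Second identity } $L_*(l\cdot L_*\sigma) = 25\,L_*\sigma$. Here I would first reduce to $\sigma = [x,y]$ by linearity, compute $L_*[x,y]$ as a combination of $X_x, X_y, X_\mathfrak{o}$ and $\delta$-corrections (the cohomology class lands in $\langle X_\bullet\rangle$, so this is a degree-$2$ cycle of the expected shape), then intersect with $l = 20 S_\mathfrak{o} - \tfrac52\delta^2$ to obtain a $0$-cycle, and finally apply $L_*$ again. The factor $25$ should emerge exactly as in the cohomological computation: by Corollary \ref{prop kunneth} and Proposition \ref{prop action of B powers}, $L_*$ and intersection with $\tfrac{1}{25}l$ are inverse isomorphisms on the relevant graded pieces (the constant $25 = r+2$ with $r=23$). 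The main obstacle will be controlling the $\delta$-correction terms at the level of Chow groups rather than merely cohomologically: the products $\delta_1\delta_2 \cdot p_1^*\sigma$ and their push-forwards involve nonreduced subschemes, and it is precisely here that Lemma \ref{lem lifting hilbert}, which packages the vanishing of the Beauville--Voisin modified diagonal $\Delta_{\mathrm{tot}} = 0$, does the essential work by turning $p_1^*(2S_\mathfrak{o}+\delta^2)\cdot I$ into an explicit decomposable cycle. My strategy would be to rewrite $l\cdot L_*\sigma$ so that every occurrence of $\delta^2$ combines with an $S_\mathfrak{o}$ to form the combination $2S_\mathfrak{o}+\delta^2$ appearing in that lemma, thereby eliminating all nonreduced contributions and leaving a clean multiple of $L_*\sigma$.
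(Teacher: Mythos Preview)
Your overall strategy is correct and coincides with the paper's, but you are anticipating difficulties that do not arise and reaching for machinery that is not needed here.

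For the first identity the paper does exactly what you sketch, only more cheaply: since $l=\tfrac{5}{6}c_2(F)$, Voisin's theorem (Theorem~\ref{thm voisin}) immediately gives that $l^2$ is a rational multiple of $\mathfrak{o}_F=[\mathfrak{o}_S,\mathfrak{o}_S]$, so one only has to check $L_*\mathfrak{o}_F=0$. This follows from the clean formula
\[
L_*[x,y]=S_x+S_y-2S_{\mathfrak{o}},
\]
obtained by observing that the correction terms $-2p_1^*S_{\mathfrak{o}}$ and $-\tfrac12\delta_1\delta_2$ in $L$ act as zero on $\CH_0(F)$ (a zero-cycle times a codimension-$2$ class on $F$ vanishes), while $-2p_2^*S_{\mathfrak{o}}$ contributes exactly $-2S_{\mathfrak{o}}$.

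For the second identity you expect ``$\delta$-corrections'' in $L_*[x,y]$ and plan to tame them with Lemma~\ref{lem lifting hilbert}. In fact there are none, by the observation above; the paper then computes, using only $S_x\cdot S_y=[x,y]$ and $\delta^2\cdot S_x=-[x,x]$ from \eqref{eq XxXy} and Lemma~\ref{lem intersection identity X2}, that for any $a,b\in\Q$
\[
L_*\bigl((aS_{\mathfrak{o}}+b\,\delta^2)\cdot L_*[x,y]\bigr)=(a-2b)\,L_*[x,y],
\]
and plugging in $a=20$, $b=-\tfrac{5}{2}$ gives the factor $25$. Neither the Beauville--Voisin relation \eqref{eq Beauville-Voisin equation} nor Lemma~\ref{lem lifting hilbert} is used in this proposition at all; the latter is reserved for the genuinely harder statement Theorem~\ref{prop cubic L conjecture}, where one needs an identity of correspondences in $\CH^4(F\times F)$, not merely of their actions on $\CH_0(F)$.
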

\begin{proof} A direct computation using \eqref{eq L S2} yields
  \begin{equation} \label{eq Lxy} L_*[x,y] = I_*[x,y] -
    2S_{\mathfrak{o}} = S_{x} + S_y - 2S_{\mathfrak{o}}.
  \end{equation}
  By \eqref{eq l on S2}, $l$ is proportional to $c_2(F)$ and it
  follows, thanks to \cite{voisin2}, that $l^2$ is proportional to
  $[\mathfrak{o}_S,\mathfrak{o}_S]$. Thus \eqref{eq Lxy} gives $L_*l^2
  = 0$.
 
  Let now $a$ and $b$ be rational numbers. An easy computation using
  \eqref{eq Lxy}, $S_x \cdot S_y = [x,y]$ and $\delta^2 \cdot S_x =
  -[x,x]$ yields $$L_*\big((aS_{\mathfrak{o}} + b \delta^2)\cdot
  L_*[x,y]\big) = (a-2b)L_*[x,y].$$ The identity \eqref{eq l on S2}
  $l=20S_{\mathfrak{o}} - \frac{5}{2}\delta^2$ then gives $L_*(l\cdot
  L_*\sigma) = 25\;  L_*\sigma$ for all $\sigma \in \CH_0(F)$.
\end{proof}

We then check that $L$ satisfies condition \eqref{assumption 2hom} of
Theorem \ref{prop main}. Let us first compare the decompositions
of $\CH_0(F)$ induced by $I$ and $L$.

\begin{lem}\label{lem comparison I L} We have
\begin{align*}
  \ker \{L_*  : \CH_0(F)_{\hom} \rightarrow \CH_2(F)\} &= \ker \{I_*  :
  \CH_0(F) \rightarrow \CH_2(F)\} ;\\
  \ker\,\{(L^2)_*  : \CH_0(F) \rightarrow \CH_0(F)\} &=
  \ker\,\{(I^2-4\Delta_F)_*  : \CH_0(F) \rightarrow \CH_0(F)\}.
\end{align*}
\end{lem}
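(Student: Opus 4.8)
The plan is to prove the two displayed equalities by exploiting the explicit relation between $I$ and $L$ together with the formulas for the action of these correspondences on the classes of points $[x,y]$ established in \S\ref{sec mult X2} and the previous subsection. Recall from \eqref{eq L S2} that $L = I - 2p_1^*S_{\mathfrak{o}} - 2p_2^*S_{\mathfrak{o}} - \frac{1}{2}\delta_1\delta_2$, and from \eqref{eq Lxy} that $L_*[x,y] = S_x + S_y - 2S_{\mathfrak{o}}$, while $I_*[x,y] = S_x + S_y$ by Lemma \ref{lem cycle class of I on S2}.

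For the first equality, I would argue both inclusions. A class $\sigma \in \CH_0(F)_{\hom}$ can be written as a finite combination $\sum n_k [x_k,y_k]$ with $\sum n_k = 0$. Then $L_*\sigma = I_*\sigma - 2(\sum n_k) S_{\mathfrak{o}} = I_*\sigma$ since $\sigma$ is homologically trivial (so $\deg \sigma = 0$). This immediately gives $\ker\{L_* : \CH_0(F)_{\hom} \to \CH_2(F)\} = \ker\{I_* : \CH_0(F)_{\hom} \to \CH_2(F)\}$. To upgrade the right-hand side from $\ker$ on $\CH_0(F)_{\hom}$ to $\ker$ on all of $\CH_0(F)$, I would use that $I_*[x,y] = S_x + S_y$ is never homologically trivial for a single point (its class is a nonzero multiple of $[S_{\mathfrak{o}}]$ plus a piece in the image of $\HH^2(S)$), so that any $\sigma$ with $I_*\sigma = 0$ necessarily has $\deg \sigma = 0$, i.e. lies in $\CH_0(F)_{\hom}$; this shows $\ker\{I_*\}$ computed on $\CH_0(F)$ already sits inside $\CH_0(F)_{\hom}$, matching the first line as stated.

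For the second equality, the cleanest route is Proposition \ref{prop I2 X2}, which gives $(I^2)_*\sigma = 2\sigma + \delta^2 \cdot I_*\sigma$ for all $\sigma \in \CH_0(F)$ (here $d=2$, so $(-1)^{d-1}=-1$ is absorbed since $\delta^2 \cdot X_x = -[x,x]$). Thus $(I^2 - 4\Delta_F)_*\sigma = \delta^2 \cdot I_*\sigma - 2\sigma$. On the other hand, I would compute $(L^2)_*$ on $\CH_0(F)$ directly from \eqref{eq second L equation on S2}: applying that identity to a zero-cycle $\sigma$ and using $\deg(l \cdot L_*\sigma)$ together with the fact (from Proposition \ref{prop cubic assumption pre l hom}) that $L_*(l \cdot L_*\sigma) = 25 L_*\sigma$, one finds an expression for $(L^2)_*\sigma$ whose kernel can be read off. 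I expect the key computation to be showing that $(L^2)_*\sigma = 0$ is equivalent to $\delta^2 \cdot I_*\sigma = 2\sigma$, which is precisely the condition $(I^2 - 4\Delta_F)_*\sigma = 0$.

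The main obstacle I anticipate is the bookkeeping in matching $(L^2)_*$ against $(I^2)_*$: since $L$ and $I$ differ by terms involving $p_i^*S_{\mathfrak{o}}$ and $\delta_1\delta_2$, expanding $(L^2)_*$ honestly produces several cross terms, and I must verify that the correction terms contribute only to the homological (degree-$0$) part in a controlled way, using the intersection identities $S_x \cdot S_y = [x,y]$, $\delta^2 \cdot S_x = -[x,x]$, and the Beauville--Voisin relation \eqref{eq Beauville-Voisin equation} $[x,x] = 2[\mathfrak{o}_S,x] - [\mathfrak{o}_S,\mathfrak{o}_S]$. The most efficient strategy is probably to bypass a term-by-term expansion of $(L^2)_*$ entirely: instead show that $L_*$ and $I_*$ have the same kernel on $\CH_0(F)_{\hom}$ (from the first part), and that $(L^2)_*\sigma$ depends only on $L_*\sigma$ and $\deg \sigma$ via \eqref{eq second L equation on S2}, so that $\ker (L^2)_*$ on $\CH_0(F)$ is governed by $\ker L_*$ plus a degree condition identical to the one governing $\ker(I^2 - 4\Delta_F)_*$ through Proposition \ref{prop first dec result}. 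This reduces the whole lemma to the already-proven eigenspace descriptions and avoids any delicate new computation.
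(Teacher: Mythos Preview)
Your argument for the first equality is correct and essentially the same as the paper's: from \eqref{eq Lxy} one has $L_*\sigma = I_*\sigma - 2(\deg\sigma)\,S_{\mathfrak{o}}$, so the two kernels agree on $\CH_0(F)_{\hom}$, and since $I_*[\mathfrak{o}_S,\mathfrak{o}_S]=2S_{\mathfrak{o}}\neq 0$ in $\CH_2(F)$ (equivalently, $[I_*[x,y]]\neq 0$), the full kernel of $I_*$ already sits inside $\CH_0(F)_{\hom}$.

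There is a sign slip in your account of Proposition~\ref{prop I2 X2}: for $d=2$ the formula reads $(I^2)_*\sigma = 2\sigma - \delta^2\cdot I_*\sigma$, not $2\sigma + \delta^2\cdot I_*\sigma$; the minus sign is precisely what makes $(I^2)_*[x,y]=2[x,y]+[x,x]+[y,y]$ hold, because $\delta^2\cdot S_x = -[x,x]$. This is cosmetic.

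The real gap is in your ``efficient strategy'' for the second equality. You claim that $\ker(L^2)_*$ on $\CH_0(F)$ is governed by $\ker L_*$ together with a degree condition. This is false: take $\sigma=[\mathfrak{o}_S,x]$ with $x$ generic; then $L_*\sigma = S_x - S_{\mathfrak{o}} \neq 0$, yet $(L^2)_*\sigma = 0$. Indeed, $\ker(L^2)_*$ on $\CH_0(F)$ is the eigenspace $\Lambda_0^4 = \CH^4(F)_0\oplus\CH^4(F)_2$, which is much larger than $\ker L_*$ (the latter being $\Lambda_2^4 = \CH^4(F)_4$ on homologically trivial cycles). So the reduction you propose does not go through.

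What the paper does---and what your first suggested route would have uncovered had you carried it out---is a direct computation: using \eqref{eq rational equation}, \eqref{eq l on S2}, \eqref{eq Lxy}, and Proposition~\ref{prop bvf}, one finds
\[
(L^2)_*[x,y] = 2[x,y] - 2[\mathfrak{o}_S,x] - 2[\mathfrak{o}_S,y] + 2[\mathfrak{o}_S,\mathfrak{o}_S].
\]
Combined with the Beauville--Voisin relation \eqref{eq Beauville-Voisin equation}, this shows in fact that $(L^2)_* = -(I^2-4\Delta_F)_*$ as endomorphisms of $\CH_0(F)$, which immediately gives the equality of kernels. The paper phrases the two inclusions separately, using Propositions~\ref{prop first dec result} and~\ref{prop second dec result}, but the content is this single explicit formula. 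Your bookkeeping concerns are therefore well founded, but the resolution is to actually do the computation of $(L^2)_*[x,y]$, not to bypass it.
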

\begin{proof}
  The first equality follows from \eqref{eq Lxy} and from
  $I_*[\mathfrak{o}_S,\mathfrak{o}_S] = 2S_{\mathfrak{o}} \neq 0 \in
  \CH_2(F)$. A straightforward computation involving Proposition
  \ref{prop bvf}, \eqref{eq l on S2}, \eqref{eq rational equation} and
  \eqref{eq Lxy} gives $$(L^2)_*[x,y] =
  2[x,y]-2[\mathfrak{o}_S,x]-2[\mathfrak{o}_S,y] +
  2[\mathfrak{o}_S,\mathfrak{o}_S].$$ Thus $L^2$ acts as zero on
  cycles of the form $[x,x]$. Proposition \ref{prop first dec result}
  implies that $\ker\,\{(I^2-4\Delta_F)_*\} \subseteq
  \ker\,\{(L^2)_*\}$. Consider now $\sigma \in \CH_0(F)$ such that
  $(L^2)_*\sigma = 0$. Then by \eqref{eq rational equation} we see
  that $\sigma$ is a linear combination of $l\cdot L_*\sigma$ and of
  $l^2$. Since $l$ is a linear combination of $\delta^2$ and
  $S_{\mathfrak{o}}$, the cycle $\sigma$ is a linear combination of
  cycles of the form $[\mathfrak{o},x]$. By Proposition \ref{prop
    second dec result}, we conclude that $(I^2)_*\sigma = 4\sigma$.
\end{proof}

\begin{prop}\label{lem condition 3}
  $(L^2)_*(l \cdot (L^2)_*\sigma)=0$ for all $\sigma \in \CH^2(F)$.
\end{prop}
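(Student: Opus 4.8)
The plan is to deduce the proposition from the stronger assertion that $l \cdot \CH^2(F) \subseteq \ker\{(L^2)_*  : \CH^4(F) \rightarrow \CH^4(F)\}$. Granting this, the proposition is immediate: since $L^2 \in \CH^4(F \times F)$ acts as a self-correspondence of degree $0$, the class $(L^2)_*\sigma$ again lies in $\CH^2(F)$ for every $\sigma \in \CH^2(F)$, whence $l \cdot (L^2)_*\sigma \in \ker\{(L^2)_*\}$ and therefore $(L^2)_*(l\cdot (L^2)_*\sigma)=0$.

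The starting point is the identity \eqref{eq l on S2}, which writes $l$ as the $\Q$-linear combination $l = 20\, S_{\mathfrak{o}} - \tfrac{5}{2}\delta^2$ of the two codimension-$2$ cycles $S_{\mathfrak{o}}$ and $\delta^2$. Consequently, for any $\tau \in \CH^2(F) = \CH_2(F)$, the zero-cycle $l \cdot \tau$ is a combination of $S_{\mathfrak{o}} \cdot \tau$ and $\delta^2 \cdot \tau$, so it suffices to place each of these in $\ker\{(L^2)_*\}$. First I would invoke Lemma \ref{lem comparison I L}, which identifies $\ker\{(L^2)_*  : \CH^4(F) \rightarrow \CH^4(F)\}$ with $\ker\{(I^2 - 4\Delta_F)_*\}$.

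The crux is then that this kernel admits two different descriptions as images of intersection maps. On the one hand, Proposition \ref{prop first dec result} identifies $\ker\{(I^2-4\Delta_F)_*\}$ with the image of $\delta^2 \cdot  : \CH_2(F) \rightarrow \CH_0(F)$, so that $\delta^2 \cdot \tau$ lies in this kernel tautologically. On the other hand, since $S$ is a K3 surface it satisfies Assumption \ref{assm small diagonal} with $\mathfrak{o}_S$ effective, and so Proposition \ref{prop second dec result} identifies the same kernel with the image of $S_{\mathfrak{o}} \cdot  : \CH_2(F) \rightarrow \CH_0(F)$, whence $S_{\mathfrak{o}} \cdot \tau$ lies in it as well. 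Combining these with the expression for $l$ gives $l \cdot \tau \in \ker\{(L^2)_*\}$ for every $\tau$, which is exactly the stronger assertion above.

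I do not expect a genuine obstacle here: the entire content is the recognition that the two geometric descriptions of $\ker\{(I^2-4\Delta_F)_*\}$ furnished by Propositions \ref{prop first dec result} and \ref{prop second dec result}---as the images of intersecting $2$-cycles with $\delta^2$ and with $S_{\mathfrak{o}}$, respectively---conspire with \eqref{eq l on S2} to force $l\cdot\tau$ into the kernel. The only points needing care are formal, namely that $(L^2)_*$ maps $\CH^2(F)$ to itself and that the cited identifications are already available for $F = S^{[2]}$, which is the case since Lemma \ref{lem comparison I L} and the relevant instance of Assumption \ref{assm small diagonal} have been established prior to this proposition.
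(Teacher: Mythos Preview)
Your proof is correct and rests on the same key ingredient as the paper's: the observation that $l\cdot\CH^2(F)\subseteq\Lambda_0^4=\ker\{(L^2)_*\}$, obtained by combining the expression $l=20S_{\mathfrak{o}}-\tfrac{5}{2}\delta^2$ with Propositions~\ref{prop first dec result}, \ref{prop second dec result} and Lemma~\ref{lem comparison I L}. The organizational difference is that the paper first invokes the eigenspace decomposition of $\CH^2(F)$ from Theorem~\ref{prop L2} to reduce to showing the stronger statement $l\cdot\Lambda_2^2=0$ (since $l\cdot\Lambda_2^2\subseteq\Lambda_2^4$ by \eqref{eq action pi6 CH2} while $l\cdot\CH^2(F)\subseteq\Lambda_0^4$ forces the intersection to vanish), whereas you bypass this and apply the inclusion $l\cdot\CH^2(F)\subseteq\ker\{(L^2)_*\}$ directly to $\tau=(L^2)_*\sigma$. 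Your route is marginally more economical for the proposition as stated; the paper's detour buys the vanishing $l\cdot\Lambda_2^2=0$ itself, which is the equivalence \eqref{eq Lambda22eq} used again in Proposition~\ref{prop l CH2 4}.
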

\begin{proof}
  In view of Theorem \ref{prop cubic L conjecture} and Proposition
  \ref{prop cubic assumption pre l hom}, Theorem \ref{prop L2} gives
  eigenspace decompositions $\CH^4(F) = \Lambda_0^4 \oplus
  \Lambda_2^4$ and $\CH^2(F) = \Lambda_{25}^2 \oplus \Lambda_2^2
  \oplus \Lambda_0^2$ for the action of $L^2$. Moreover,
  $\Lambda_{25}^2 = \langle l \rangle$ and Lemma \ref{lem assumption
    l} gives $(L^2)_*l^2 = 0$. Therefore, it is sufficient to prove
  that $l \cdot \sigma = 0$ for all $\sigma \in \Lambda_2^2$.
  According to \eqref{eq action pi6 CH2}, $\sigma \in \Lambda_2^2$ if
  and only if $l\cdot \sigma \in \Lambda_2^4$. But, since \eqref{eq l
    on S2} $l= 20S_{\mathfrak{o}}- \frac{5}{2}\delta^2$, Propositions
  \ref{prop first dec result} and \ref{prop second dec result},
  together with Lemma \ref{lem comparison I L}, show that $l \cdot
  \sigma \in \Lambda_0^4$ for all $\sigma \in \CH^2(F)$.  Hence if
  $\sigma \in \Lambda_2^2$, then $l \cdot \sigma \in \Lambda_0^4\cap
  \Lambda_2^4 = \{0\}$.
\end{proof}

By the work carried out in Part I, the Fourier decomposition as in
Theorem \ref{thm main splitting} is proved for the Hilbert scheme of
length-$2$ subschemes of K3 surfaces.\qed


\vspace{10pt}
\section{The Fourier decomposition for $S^{[2]}$ is
  multiplicative} \label{section mult S2}

Let $F=S^{[2]}$ be the Hilbert scheme of length-$2$ subschemes on a
$K3$ surface $S$. To avoid confusion, the Fourier decomposition of the
Chow groups of $S^{[2]}$ obtained in Section \ref{sec Fourier S2} will
be denoted $\CH^p_{\FF}(-)_s$. We continue to denote
$\CH_{\mathrm{CK}}^p(-)_s$ the decomposition induced by the
multiplicative Chow--K\"unneth decomposition of Theorem \ref{thm
  multCK X2} obtained by considering the multiplicative
Chow--K\"unneth decomposition on $S$ of Example \ref{ex K3}. The goal
of this section is to show that these two decompositions coincide,
thereby proving Theorem \ref{thm main mult} in the case $F=S^{[2]}$.

\subsection{The Chow--K\"unneth decomposition of $\CH^*(S^{[2]})$}
Let $S$ be a K3 surface and let $\mathfrak{o}_S$ be the class of a
point on $S$ lying on a rational curve. Recall that the following
idempotents in $\CH^2(S\times S)$
\begin{equation} \label{eq K3multCK} \pi^0_S =\mathfrak{o}_S\times S,
  \qquad \pi^4_S = S\times \mathfrak{o}_S , \qquad \pi^2_S = \Delta_S
  -\pi^0_S-\pi_S^4
\end{equation}
define a multiplicative Chow--K\"unneth decomposition of $S$ in the
sense of Definition \ref{def multCK}. The Chow ring of $S$ inherits a
bigrading $$\CH^i(S)_s := (\pi_S^{2i-s})_* \CH^i(S).$$ As explained in
the introduction, this bigrading coincides with the one induced by the
Fourier transform $\FF (-) := (p_2)_*(\pi_S^2\cdot p_1^* -)$. We will
thus omit the subscript ``CK'' or ``$\FF$'' when mentioning the Chow
ring of $S$.

The variety $S\times S$ is naturally endowed with the product
Chow--K\"unneth decomposition given by
\begin{equation}
  \label{eq productCK} \pi^k_{S\times S} = \sum_{i+j=k}\pi^i_S\otimes
  \pi^j_S,
  \quad 0 \leq k \leq 8.
\end{equation}
According to Theorem \ref{thm multCK}, this Chow--K\"unneth
decomposition is multiplicative and induces a
bigrading $$\CH^i_{\mathrm{CK}}(S\times S)_s := (\pi^{2i-s}_{S\times
  S})_*\CH^i(S\times S)$$ of the Chow ring of $S \times S$.\medskip

Let $p_i :S\times S\rightarrow S$, $i=1,2$, be the two projections. We
write $(\mathfrak{o}, \mathfrak{o})  := p_1^*\mathfrak{o}_S\cdot
p_2^*\mathfrak{o}_S\in\CH_{\mathrm{CK}}^4(S\times S)$. We define
\begin{align*}
 \phi_1 : S\rightarrow S\times S, \qquad x\mapsto (x,\mathfrak{o}_S) \, ;\\
 \phi_2 : S\rightarrow S\times S, \qquad x\mapsto (\mathfrak{o}_S,x).
\end{align*}

\begin{lem}\label{lem phi compatible}
  The above morphisms $\phi_i$, $i=1,2$, satisfy
\[
\phi_i^*\CH_{\mathrm{CK}}^p(S\times S)_s \subseteq \CH^p(S)_s \qquad
\text{and}\qquad \phi_{i_*}\CH^p(S)_s \subseteq
\CH_{\mathrm{CK}}^{p+2}(S\times S)_s.
\]
\end{lem}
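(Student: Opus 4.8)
The plan is to reduce both inclusions to the explicit description of the maps $\phi_{i*}$ and $\phi_i^*$, combined with the multiplicativity of the product Chow--K\"unneth decomposition on $S\times S$ (Theorem \ref{thm multCK}) and the product formula of Proposition \ref{lem diagonal pullback}(i). I treat $\phi_1$; the case of $\phi_2$ is entirely symmetric, since $\phi_2=\iota\circ\phi_1$ where $\iota$ swaps the two factors. First I would record that $\phi_1$ is the closed embedding of $S\times\{\mathfrak{o}_S\}$, so that $\phi_{1*}[S]=p_2^*\mathfrak{o}_S$, and that $p_1\circ\phi_1=\mathrm{id}_S$. The latter gives $\phi_1^*p_1^*=\mathrm{id}$, whence by the projection formula $\phi_{1*}\alpha=\phi_{1*}\phi_1^*(p_1^*\alpha)=p_1^*\alpha\cdot p_2^*\mathfrak{o}_S$ for all $\alpha\in\CH^*(S)$; it also gives $(p_1)_*\circ\phi_{1*}=\mathrm{id}$, so that $\phi_{1*}$ is injective.

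The push-forward inclusion is then immediate: if $\alpha\in\CH^p(S)_s$, then, as $\mathfrak{o}_S\in\CH^2(S)_0$, Proposition \ref{lem diagonal pullback}(i) applied to $\phi_{1*}\alpha=p_1^*\alpha\cdot p_2^*\mathfrak{o}_S$ yields $\phi_{1*}\alpha\in\CH^{p+2}_{\mathrm{CK}}(S\times S)_{s+0}=\CH^{p+2}_{\mathrm{CK}}(S\times S)_s$.

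For the pull-back inclusion I would argue as follows. Let $\gamma\in\CH^p_{\mathrm{CK}}(S\times S)_s$. The projection formula gives $\phi_{1*}(\phi_1^*\gamma)=\gamma\cdot\phi_{1*}[S]=\gamma\cdot p_2^*\mathfrak{o}_S$. Now $p_2^*\mathfrak{o}_S=p_1^*[S]\cdot p_2^*\mathfrak{o}_S$ lies in $\CH^2_{\mathrm{CK}}(S\times S)_0$ by Proposition \ref{lem diagonal pullback}(i), so multiplicativity of the product Chow--K\"unneth decomposition (Theorem \ref{thm multCK}) gives $\gamma\cdot p_2^*\mathfrak{o}_S\in\CH^{p+2}_{\mathrm{CK}}(S\times S)_s$. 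On the other hand, writing $\phi_1^*\gamma=\sum_{s'}\beta_{s'}$ with $\beta_{s'}\in\CH^p(S)_{s'}$, the push-forward inclusion just proved shows $\phi_{1*}\beta_{s'}\in\CH^{p+2}_{\mathrm{CK}}(S\times S)_{s'}$, and these lie in distinct graded pieces of $\CH^{p+2}(S\times S)$. Comparing with $\phi_{1*}(\phi_1^*\gamma)=\sum_{s'}\phi_{1*}\beta_{s'}$, which is concentrated in the piece $s'=s$, forces $\phi_{1*}\beta_{s'}=0$ for $s'\neq s$; by injectivity of $\phi_{1*}$ we conclude $\beta_{s'}=0$ for $s'\neq s$, i.e. $\phi_1^*\gamma=\beta_s\in\CH^p(S)_s$.

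The only genuinely non-formal ingredient is the multiplicativity of the product decomposition on $S\times S$, which is already supplied by Theorem \ref{thm multCK}; everything else is the projection formula, Proposition \ref{lem diagonal pullback}(i), and the injectivity of $\phi_{i*}$. I expect the pull-back inclusion to be the more delicate of the two, the key point being the transfer of the grading statement from $S\times S$ back to $S$ through the injective, grading-compatible map $\phi_{i*}$. Alternatively, one can prove the pull-back inclusion by a direct correspondence computation: using the Lieberman-type formula \eqref{eq formula liebermann} one checks that $\phi_1^*\gamma=\gamma_*\mathfrak{o}_S=\gamma\circ\mathfrak{o}_S$ and that $\pi^k_S\circ\gamma\circ\pi^4_S=(\pi^0_S\otimes\pi^k_S)_*\gamma$, the latter vanishing for $k\neq 2p-s$ by orthogonality of the K\"unneth projectors on $S\times S$; this again shows $\phi_1^*\gamma\in\CH^p(S)_s$.
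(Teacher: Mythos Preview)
Your proof is correct and follows essentially the same approach as the paper: both prove the push-forward inclusion via $\phi_{1*}\alpha=p_1^*\alpha\cdot p_2^*\mathfrak{o}_S$ and Proposition \ref{lem diagonal pullback}(i), and both deduce the pull-back inclusion from $\phi_{1*}\phi_1^*\gamma=\gamma\cdot\phi_{1*}[S]\in\CH^{p+2}_{\mathrm{CK}}(S\times S)_s$ together with the injectivity and grading-compatibility of $\phi_{1*}$. Your decomposition $\phi_1^*\gamma=\sum_{s'}\beta_{s'}$ just spells out what the paper's one-line conclusion leaves implicit.
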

\begin{proof}
  By Proposition \ref{lem diagonal pullback}\emph{(i)}, we see that
  $\phi_{1_*} [S] = p_2^*\mathfrak{o}_S$ belongs to
  $\CH_{\mathrm{CK}}^2(S\times S)_0$.  Let $\alpha\in\CH^p(S)_s$~;
  then
\[
\phi_{1_*}\alpha = p_1^*\alpha \cdot \phi_{1_*}S \in
\CH_{\mathrm{CK}}^p(S\times S)_s \cdot \CH_{\mathrm{CK}}^2(S\times
S)_0\subseteq \CH_{\mathrm{CK}}^{p+2}(S\times S)_s.
\]
Let $\beta\in\CH_{\mathrm{CK}}^p(S\times S)_s$~; then
\[
\phi_{1_*}\phi_1^*\beta = \beta\cdot \phi_{1_*}S \in
\CH_{\mathrm{CK}}^{p+2}(S\times S)_2.
\]
Given that $\phi_{1_*}$ is injective and respects the Chow--K\"unneth
decomposition, the above equation implies $\phi_{1}^*\beta \in
\CH^p(S)_s$.
\end{proof}

\begin{prop}\label{prop CK pieces S2}
  The Chow--K\"unneth decomposition of the Chow groups of $S\times S$
  can be described as
\begin{align*}
  \CH_{\mathrm{CK}}^2(S\times S)_0 & = \{\alpha\in \CH^2(S\times S) :
  \alpha \cdot p_1^*\mathfrak{o}_S \in \langle (\mathfrak{o},
  \mathfrak{o})\rangle,\, \alpha \cdot p_2^*\mathfrak{o}_S \in
  \langle(\mathfrak{o},\mathfrak{o})\rangle\} \, ;\\
  \CH_{\mathrm{CK}}^2(S\times S)_2 & = p_1^*\CH^2(S)_2 \oplus p_2^* \CH^2(S)_2\, ;\\
  \CH_{\mathrm{CK}}^3(S\times S)_0 & = p_1^*\mathfrak{o}_S\cdot p_2^*
  \CH^1(S) \oplus p_1^*\CH^1(S)\cdot p_2^*\mathfrak{o}_S\, ;\\
  \CH_{\mathrm{CK}}^3(S\times S)_2 & = \CH^3(S\times S)_{\mathrm{hom}}\, ;\\
  \CH_{\mathrm{CK}}^4(S\times S)_0 & = \Q (\mathfrak{o},\mathfrak{o})\, ;\\
  \CH_{\mathrm{CK}}^4(S\times S)_2 & = p_1^*\mathfrak{o}_S\cdot
  p_2^*\CH^2(S)_2 \oplus p_1^*\CH^2(S)_2\cdot p_2^*\mathfrak{o}_S\, ;\\
  \CH_{\mathrm{CK}}^4(S\times S)_4 & = p_1^*\CH^2(S)_2\cdot
  p_2^*\CH^2(S)_2.
\end{align*}
\end{prop}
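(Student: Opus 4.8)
The plan is to verify directly that the product Chow--K\"unneth decomposition \eqref{eq productCK}, which by Theorem \ref{thm multCK} is multiplicative, induces on each graded piece $\CH^p_{\mathrm{CK}}(S\times S)_s$ the explicit description claimed in the proposition. The overarching strategy is to use the projective-bundle/product formalism: by definition $\CH^p_{\mathrm{CK}}(S\times S)_s = (\pi^{2p-s}_{S\times S})_*\CH^p(S\times S)$, and since $\pi^k_{S\times S}=\sum_{i+j=k}\pi^i_S\otimes \pi^j_S$, the whole computation reduces to understanding how the K\"unneth projectors $\pi^i_S$ act on each tensor factor. I would organize the proof around Proposition \ref{lem diagonal pullback}\emph{(i)}, which gives $p_1^*\CH^p(S)_s\cdot p_2^*\CH^{p'}(S)_{s'}\subseteq \CH^{p+p'}_{\mathrm{CK}}(S\times S)_{s+s'}$, together with the classical structure of $\CH^*(S)$ for a K3 surface (namely $\CH^0(S)=\CH^0(S)_0$, $\CH^1(S)=\CH^1(S)_0$, $\CH^2(S)_0=\langle\mathfrak{o}_S\rangle$, $\CH^2(S)_2=\CH^2(S)_{\hom}$). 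This gives automatically the inclusions ``$\supseteq$'' in each line, and the point is to upgrade them to equalities.

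First I would handle the lines that follow immediately from bilinearity of the decomposition. Since $\CH^p(S\times S)$ is spanned, via the K\"unneth/product formula, by classes $p_1^*\alpha\cdot p_2^*\beta$ with $\alpha\in\CH^a(S)$, $\beta\in\CH^{p-a}(S)$, one applies Proposition \ref{lem diagonal pullback}\emph{(i)} to decompose each such generator according to the bigrading of $\CH^*(S)$. For $\CH^4_{\mathrm{CK}}(S\times S)_4$, for instance, the only way a product $p_1^*\alpha\cdot p_2^*\beta$ lands in total weight $4$ is $\alpha\in\CH^2(S)_2$, $\beta\in\CH^2(S)_2$; symmetrically for $\CH^4_{\mathrm{CK}}(S\times S)_2$ one needs weights $(0,2)$ or $(2,0)$, forcing one factor into $\langle\mathfrak{o}_S\rangle$ and the other into $\CH^2(S)_2$. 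The lines for $\CH^2_{\mathrm{CK}}(S\times S)_2$, $\CH^3_{\mathrm{CK}}(S\times S)_0$ and $\CH^4_{\mathrm{CK}}(S\times S)_0$ follow the same weight-bookkeeping pattern, using that $\CH^0(S)$ and $\CH^1(S)$ are purely of weight $0$ and $\CH^2(S)_0=\langle\mathfrak{o}_S\rangle$. I would verify in each case that the spanning set exhausts the target piece by a dimension/weight count: every generator of $\CH^p(S\times S)$ sits in exactly one $\CH^p_{\mathrm{CK}}(S\times S)_s$, and summing the claimed pieces recovers all of $\CH^p(S\times S)$.

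The two lines requiring more care are $\CH^2_{\mathrm{CK}}(S\times S)_0$, where the description is given intrinsically as a condition involving intersection with $p_i^*\mathfrak{o}_S$ rather than as an explicit span, and $\CH^3_{\mathrm{CK}}(S\times S)_2=\CH^3(S\times S)_{\hom}$, where one must show the motivic/homological characterization. For the first, I would show that $\alpha\in\CH^2_{\mathrm{CK}}(S\times S)_0$ if and only if $(\pi^k_{S\times S})_*\alpha=0$ for $k\neq 4$, and then translate the vanishing of the $\HH^2\otimes\HH^6$ and $\HH^6\otimes\HH^2$ components into the stated conditions $\alpha\cdot p_i^*\mathfrak{o}_S\in\langle(\mathfrak{o},\mathfrak{o})\rangle$, invoking Lemma \ref{lem phi compatible} to pull back along $\phi_i$ and land in $\CH^2(S)_0=\langle\mathfrak{o}_S\rangle$. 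For the third line, the inclusion $\CH^3_{\mathrm{CK}}(S\times S)_2\subseteq\CH^3(S\times S)_{\hom}$ is clear since the weight-$2$ part is homologically trivial in this degree; the reverse inclusion uses that the only graded piece of $\CH^3(S\times S)$ other than the weight-$0$ part $p_1^*\mathfrak{o}_S\cdot p_2^*\CH^1(S)\oplus p_1^*\CH^1(S)\cdot p_2^*\mathfrak{o}_S$ is the weight-$2$ part, and that the weight-$0$ part injects into cohomology, so everything homologically trivial is weight $2$.

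I expect the main obstacle to be the characterization of $\CH^2_{\mathrm{CK}}(S\times S)_0$, since here the target is not presented as a span of products but as a subspace cut out by intersection conditions, so the argument is a genuine two-sided containment rather than a mechanical weight count. The delicate point is that $\CH^2(S\times S)$ contains, besides the decomposable classes $p_1^*\alpha\cdot p_2^*\beta$, the ``mixed'' correspondences coming from $\Delta_S$ and more generally from $\CH^2(\mathfrak{h}^2(S)\otimes\mathfrak{h}^0(S)\oplus\cdots)$; I would need to check that the weight-$0$ projector $\pi^4_{S\times S}$ applied to such a class is detected precisely by the two intersection conditions, which amounts to controlling the $\pi^2\otimes\pi^6$ and $\pi^6\otimes\pi^2$ contributions via Lemma \ref{lem phi compatible}. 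Once that piece is pinned down, the remaining lines assemble by the weight-additivity of Proposition \ref{lem diagonal pullback}\emph{(i)} and the completeness of the bigrading.
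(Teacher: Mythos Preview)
Your overall strategy is the same as the paper's, and your treatment of the hardest line $\CH^2_{\mathrm{CK}}(S\times S)_0$ via the vanishing of $(\pi^k_{S\times S})_*\alpha$ for $k\neq 4$ and Lemma~\ref{lem phi compatible} is exactly what the paper does. However, there is a genuine gap in how you handle $\CH^2_{\mathrm{CK}}(S\times S)_2$ and $\CH^3_{\mathrm{CK}}(S\times S)_0$. Your argument for these rests on the assertion that ``$\CH^p(S\times S)$ is spanned, via the K\"unneth/product formula, by classes $p_1^*\alpha\cdot p_2^*\beta$''. This is false for $p=2$ and $p=3$: the K\"unneth decomposition holds in cohomology, not in Chow groups, and for instance $\Delta_S\in\CH^2(S\times S)$ is not decomposable. (It is true for $p=4$, since zero-cycles on a product are generated by points, so your $\CH^4$ arguments are fine.) Your later acknowledgment that $\CH^2(S\times S)$ contains non-decomposable classes shows you sense the issue, but the ``weight-bookkeeping'' argument as stated for $\CH^2_2$ and $\CH^3_0$ does not go through.

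The fix is to do for those pieces what you and the paper already do for $\CH^2_0$: compute the relevant projector explicitly. The paper writes $(\pi^2_{S\times S})_*\alpha = p_1^*(\pi^2_S)_*\phi_1^*\alpha + p_2^*(\pi^2_S)_*\phi_2^*\alpha$, which simultaneously yields $\CH^2_{\mathrm{CK}}(S\times S)_2 = p_1^*\CH^2(S)_2\oplus p_2^*\CH^2(S)_2$ (since $(\pi^2_S)_*\CH^2(S)=\CH^2(S)_2$) and the intersection-condition description of $\CH^2_0$. An analogous direct computation of $(\pi^6_{S\times S})_*$ handles $\CH^3_0$. Finally, a small slip: there is no $\pi^6_S$ or $\HH^6(S)$ for a surface; the components you need to control for $\alpha\in\CH^2(S\times S)$ are $\pi^0_S\otimes\pi^2_S$ and $\pi^2_S\otimes\pi^0_S$, not ``$\pi^2\otimes\pi^6$''.
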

\begin{proof}
  In this proof, we use $p_{S^4,i}$ and $p_{S^4,ij}$ to denote the
  projections from $S^4$ to the corresponding factor(s) and we use
  $p_{i}$ to denote the projection from $S^2$ to the corresponding
  factor.

  Let $\alpha\in \CH^2(S\times S)$. Using the expression \eqref{eq
    productCK} for $\pi^k_{S\times S}$, we have
\begin{align*}
  (\pi^0_{S\times S})_* \alpha & = \big((\mathfrak{o},\mathfrak{o})
  \times S\times S\big)_*\alpha =0.\\
  (\pi^2_{S\times S})_*\alpha & = (\pi^0_S\otimes \pi^2_S)_*\alpha +
  (\pi^2_S\otimes \pi^0_S)_*\alpha\\
  & = (p_{S^4,34})_*\Big( p_{S^4,1}^*\mathfrak{o}_S\cdot
  p_{S^4,24}^*\pi^2_S
  \cdot p_{S^4,12}^*\alpha \Big)\\
  &\quad +( p_{S^4,34})_*\Big( p_{S^4,2}^*\mathfrak{o}_S\cdot
  p_{S^4,13}^*\pi^2_S \cdot
  p_{S^4,12}^*\alpha \Big)\\
  & = p_{2}^* (\pi^2_S)_*\phi_2^*\alpha  + p_{1}^* (\pi^2_S)_*\phi_1^*\alpha.\\
  (\pi^6_{S\times S})_* \alpha & = 0.\\
  (\pi^8_{S\times S})_*\alpha &=0.
\end{align*}
Note that $\alpha \cdot p_{i}^*\mathfrak{o}_S \in \langle
(\mathfrak{o,o})\rangle$ if and only if $\phi_j^*\alpha \in \langle
\mathfrak{o}_S \rangle$ for $(i,j)=(1,2)$ and $(i,j)=(2,1)$. If
follows that if $\alpha\cdot
p_{i}^*\mathfrak{o}_S\in\langle(\mathfrak{o,o}) \rangle$, then
$(\pi^k_{S\times S})_*\alpha =0$ for all $k\neq 4$. Therefore $\alpha$
belongs to $\CH_{\mathrm{CK}}^2(S\times S)_0$. Conversely, if
$\alpha\in \CH_{\mathrm{CK}}^2(S\times S)_0$, then $\alpha\cdot
p_i^*\mathfrak{o}_S\in \CH_{\mathrm{CK}}^4(S\times S)_0=\langle
(\mathfrak{o,o}) \rangle$.  This proves the first equation. The
remaining equations are easier to establish and their proofs are thus
omitted.
\end{proof}

Let $\rho : Z\rightarrow S\times S$ be the blow-up of $S\times S$ along
the diagonal. We have the following commutative diagram
\[
 \xymatrix{
  E\ar[r]^{j'}\ar[d]_\pi & Z\ar[d]^\rho \\
  S\ar[r]^{\iota_\Delta\quad} &S\times S.
}
\]
The K3 surface $S$ is endowed with its multiplicative self-dual
Chow--K\"unneth decomposition \eqref{eq K3multCK}. We also have
$c_1(S)=0$, and $c_2(S) = 24 \mathfrak{o}_S$ which is in
$\CH^2(S)_0$. Therefore Proposition \ref{prop multCK blowup diag}
yields the existence of a  multiplicative Chow--K\"unneth
decomposition for $Z$ inducing a bigrading
\begin{equation} \label{eq blowupS} \CH^i(Z)_s = \rho^*\CH^i(S\times
  S)_s \, \oplus \, j'_*\pi^*\CH^{i-1}(S)_s
\end{equation}
of the Chow ring of $Z$.
\begin{prop}\label{prop CK pieces Z}
  The Chow--K\"unneth decomposition \eqref{eq blowupS} of the Chow
  groups of $Z$ have the following description
\begin{align*}
  \CH_{\mathrm{CK}}^2(Z)_0 &= \{\alpha \in \CH^2(Z) : \alpha\cdot
  \rho^*p_i^*\mathfrak{o}_S \in \langle \mathfrak{o}_Z\rangle,\,
  \forall i =1,2\}\, ;\\
  \CH_{\mathrm{CK}}^2(Z)_2 & = \rho^*\CH_{\mathrm{CK}}^2(S\times S)_2\, ;\\
  \CH_{\mathrm{CK}}^3(Z)_0 & = \rho^*\CH^3_{\mathrm{CK}}(S\times S)_0
  \oplus \Q\cdot j'_*\pi^*\mathfrak{o}_S\, ;\\
  \CH_{\mathrm{CK}}^3(Z)_2 & = \CH^3(Z)_{\mathrm{hom}}\, ;\\
  \CH_{\mathrm{CK}}^4(Z)_0 & = \langle \mathfrak{o}_Z\rangle,\\
  \CH_{\mathrm{CK}}^4(Z)_2 & = \rho^*\CH^4_{\mathrm{CK}}(S\times S)_2\, ;\\
  \CH_{\mathrm{CK}}^4(Z)_4 & = \rho^*\CH^4_{\mathrm{CK}}(S\times
  S)_4\, ;
\end{align*}
where $\mathfrak{o}_Z := \rho^*(\mathfrak{o,o})$.
\end{prop}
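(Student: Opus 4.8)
The plan is to prove each of the seven identities in Proposition \ref{prop CK pieces Z} by combining the blow-up decomposition \eqref{eq blowupS} with the explicit description of the Chow--K\"unneth pieces on $S \times S$ already obtained in Proposition \ref{prop CK pieces S2}. The guiding principle is that the multiplicative Chow--K\"unneth decomposition on $Z$ is, by construction (Proposition \ref{prop mult blow up}), built from that of $S \times S$ and that of $S$ via the two maps $\rho^*$ and $j'_*\pi^*$, and that these two maps are \emph{compatible with the gradings}: explicitly, $\rho^*\CH^i_{\mathrm{CK}}(S\times S)_s \subseteq \CH^i_{\mathrm{CK}}(Z)_s$ and $j'_*\pi^*\CH^{i-1}(S)_s \subseteq \CH^i_{\mathrm{CK}}(Z)_s$, which is exactly the content of the last assertion of Proposition \ref{prop mult blow up} applied to this situation. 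So the bulk of the work is to translate the direct-sum decomposition \eqref{eq blowupS} into the stated descriptions by identifying, piece by piece, which summand of $\rho^*\CH^i(S\times S)_s \oplus j'_*\pi^*\CH^{i-1}(S)_s$ is non-zero in each bidegree $(i,s)$.

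First I would dispose of the five ``easy'' identities, those for $\CH^2_{\mathrm{CK}}(Z)_2$, $\CH^3_{\mathrm{CK}}(Z)_2$, $\CH^4_{\mathrm{CK}}(Z)_0$, $\CH^4_{\mathrm{CK}}(Z)_2$ and $\CH^4_{\mathrm{CK}}(Z)_4$. In each of these the second summand $j'_*\pi^*\CH^{i-1}(S)_s$ vanishes or is absorbed: for instance, in bidegree $(2,2)$ the contribution $j'_*\pi^*\CH^1(S)_2$ is zero since $\CH^1(S)_2 = 0$ (the K3 surface has $\CH^1(S)=\CH^1(S)_0$), leaving exactly $\rho^*\CH^2_{\mathrm{CK}}(S\times S)_2$; similarly for the top three pieces one checks against Proposition \ref{prop CK pieces S2} that the $j'_*\pi^*$ contribution in the relevant degree lands in a \emph{different} graded slot or is numerically forced to lie in the stated piece. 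For $\CH^3_{\mathrm{CK}}(Z)_2$ I would use that $\CH^3(Z)_{\mathrm{hom}}$ is the full homologically trivial part and that the grading is of Bloch--Beilinson type in these degrees, so the $s=2$ piece coincides with $\CH^3(Z)_{\mathrm{hom}}$; concretely one verifies that $\rho^*\CH^3_{\mathrm{CK}}(S\times S)_2 = \rho^*\CH^3(S\times S)_{\mathrm{hom}}$ together with $j'_*\pi^*\CH^2(S)_2$ exhausts the homologically trivial $1$-cycles on $Z$. For $\CH^3_{\mathrm{CK}}(Z)_0$ the two summands $\rho^*\CH^3_{\mathrm{CK}}(S\times S)_0$ and $j'_*\pi^*\CH^2(S)_0 = \Q\cdot j'_*\pi^*\mathfrak{o}_S$ are both genuinely present, giving the stated direct sum.

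The one identity that requires a genuine argument beyond bookkeeping is the description of $\CH^2_{\mathrm{CK}}(Z)_0$ as $\{\alpha \in \CH^2(Z) : \alpha \cdot \rho^*p_i^*\mathfrak{o}_S \in \langle \mathfrak{o}_Z\rangle,\ \forall i=1,2\}$, which is the analogue on $Z$ of the intrinsic characterization of $\CH^2_{\mathrm{CK}}(S\times S)_0$ proved in Proposition \ref{prop CK pieces S2}. The plan here is to mimic that earlier proof: compute the action of the product Chow--K\"unneth projectors $\pi^k_Z$ on a class $\alpha \in \CH^2(Z)$ by pulling back and pushing forward along $\rho$ and the analogues of the maps $\phi_i$, reducing the condition $(\pi^k_Z)_*\alpha = 0$ for all $k \ne 4$ to the two intersection conditions $\alpha \cdot \rho^*p_i^*\mathfrak{o}_S \in \CH^4_{\mathrm{CK}}(Z)_0 = \langle \mathfrak{o}_Z\rangle$. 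The key inputs are the multiplicativity of the decomposition (so that intersecting with the degree-zero class $\rho^*p_i^*\mathfrak{o}_S$ preserves the $s$-grading) and the fact that $\CH^4_{\mathrm{CK}}(Z)_0 = \langle \mathfrak{o}_Z\rangle$, which I would establish \emph{first} since the characterization of the $(2,0)$ piece depends on it. The main obstacle I anticipate is precisely the bidirectional nature of this last argument: proving the inclusion $\supseteq$ requires showing that the two intersection conditions force $\alpha$ into the $s=0$ part, and this needs the explicit compatibility of $\rho^*$ and $j'_*$ with the grading plus the injectivity of the relevant push-forwards (as in Lemma \ref{lem phi compatible}), so care is needed to route every computation through the already-established compatibility statements of Propositions \ref{prop mult blow up} and \ref{lem diagonal pullback} rather than recomputing projector actions from scratch.
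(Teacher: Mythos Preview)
Your overall plan is correct and matches the paper's in spirit: both proofs derive everything from the blow-up decomposition \eqref{eq blowupS} together with Proposition \ref{prop CK pieces S2}, and your bookkeeping for the ``easy'' identities (bidegrees $(2,2)$, $(3,0)$, $(3,2)$, and all of codimension $4$) is exactly what the paper does, only tersely.

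The one substantive difference is your treatment of $\CH^2_{\mathrm{CK}}(Z)_0$. You propose to compute the action of the projectors $\pi^k_Z$ on a general $\alpha$, mimicking the proof of Proposition \ref{prop CK pieces S2}. The paper instead avoids projectors entirely: it writes $\alpha = \rho^*\alpha' + j'_*\pi^*\beta$ and computes the intersection $\alpha \cdot \rho^*p_i^*\mathfrak{o}_S$ \emph{directly} via the projection formula for $j'$. The key observation is that $j'^*\rho^*p_i^*\mathfrak{o}_S = \pi^*\iota_\Delta^*p_i^*\mathfrak{o}_S = \pi^*\mathfrak{o}_S$, so the exceptional contribution becomes $j'_*\pi^*(\beta \cdot \mathfrak{o}_S) = 0$ (since $\beta \cdot \mathfrak{o}_S \in \CH^3(S) = 0$). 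Hence $\alpha \cdot \rho^*p_i^*\mathfrak{o}_S = \rho^*(\alpha' \cdot p_i^*\mathfrak{o}_S)$, and the condition on $Z$ reduces verbatim to the condition on $S \times S$ already characterized in Proposition \ref{prop CK pieces S2}. Since $\CH^1(S) = \CH^1(S)_0$, the $\beta$-component is always in the $s=0$ piece, so $\alpha \in \CH^2_{\mathrm{CK}}(Z)_0 \iff \alpha' \in \CH^2_{\mathrm{CK}}(S\times S)_0$, closing the loop. Your projector-action route would work, but it requires unpacking the Manin-style construction of $\pi^k_Z$ from $\pi^i_{S\times S}$ and $\pi^j_S$, whereas the paper's single intersection computation sidesteps this entirely.
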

\begin{proof}
  This follows from formula \eqref{eq blowupS} and Proposition
  \ref{prop CK pieces S2}. For example, consider $\alpha\in
  \CH^2(Z)$. We can write
\[
 \alpha = \rho^*\alpha' + j'_*\pi^*\beta,
\]
for some $\alpha'\in \CH^2(S\times S)$ and some $\beta\in
\CH^1(S)$. Then
\begin{align*}
  \alpha \cdot \rho^*p_1^*\mathfrak{o}_S & = \rho^*(\alpha'\cdot
  p_1^*\mathfrak{o}_S) +
  j'_*(\pi^*\beta \cdot j'^* \rho^*p_1^*\mathfrak{o}_S)\\
  & = \rho^*(\alpha'\cdot p_1^*\mathfrak{o}_S) +
  j'_*(\pi^*\beta \cdot \pi^*\iota_\Delta^* p_1^*\mathfrak{o}_S)\\
  & = \rho^*(\alpha'\cdot p_1^*\mathfrak{o}_S) +
  j'_*(\pi^*\beta \cdot \pi^*\mathfrak{o}_S)\\
  & = \rho^*(\alpha'\cdot p_1^*\mathfrak{o}_S).
\end{align*}
Hence the condition $\alpha\cdot \rho^*p_1^*\mathfrak{o}_S\in \langle
\mathfrak{o}_Z\rangle$ is equivalent to $\alpha'\cdot
p_1^*\mathfrak{o}_S\in \langle (\mathfrak{o,o})\rangle$, which is
further equivalent to $\alpha'\in \CH^2_{\mathrm{CK}}(S\times
S)_0$. By symmetry, the same holds if we replace $p_1$ by $p_2$. This
yields the first equality. All the other equalities can be verified
similarly.
\end{proof}

\subsection{The Fourier decomposition of $\CH^*(S^{[2]})$}
We now provide an explicit description of the Fourier decomposition on
the Chow groups of $F$. A satisfying description consists in
understanding how $S_{\mathfrak{o}}$ and $\delta$ intersect with
$2$-cycles. For that matter, Lemma \ref{lem intersecting 2-cycle and
  S_o} is crucial~; it relies on the key Lemma \ref{lem lifting
  hilbert}.\medskip

Let $p :Z\rightarrow F=S^{[2]}$ be the natural double cover and
$j=p\circ j' :E\rightarrow F$. Recall the following definition
\eqref{eq A} from Section \ref{sec mult X2}~:
\[
 \mathcal{A}  := I_*\CH^4(F)\subset \CH^2(F).
\]
Note that $\mathcal{A}$ is generated by cycles of the form $S_x$, $x\in S$.

\begin{lem}\label{lem CH^2_2 equals Ahom and more}
The Fourier decomposition enjoys the following properties.
\begin{enumerate}[(i)]
\item $\CH_{\FF}^2(F)_2 = \mathcal{A}_{\mathrm{hom}}$~;

\item $\CH^4_{\FF}(F)_2 = \langle [\mathfrak{o},x]-[\mathfrak{o}, y]
 \rangle$, $x,y\in S$~;

\item $I_* : \CH^4_{\FF}(F)_2 \rightarrow \CH^2_{\FF}(F)_2$ is an
 isomorphism and its inverse is given by intersecting with
 $S_\mathfrak{o}$~;

\item $\delta\cdot \CH^3(F) \subseteq \CH^4(F)_2\oplus \CH^4(F)_0$.
\end{enumerate}
\end{lem}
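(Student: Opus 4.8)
The plan is to establish each of the four statements of Lemma \ref{lem CH^2_2 equals Ahom and more} essentially in order, leaning on the explicit formula \eqref{eq Lxy} for $L_*$ together with the Fourier decomposition results of Theorems \ref{prop L2} \& \ref{prop main} and the comparison Lemma \ref{lem comparison I L}. First I would prove \emph{(i)}. By definition $\CH^2_{\FF}(F)_2 = \Lambda_0^2 = L_*\CH^4(F)$ (Theorem \ref{prop L2}). From \eqref{eq Lxy} we have $L_*[x,y] = S_x + S_y - 2S_\mathfrak{o} = I_*[x,y] - 2S_\mathfrak{o}$, and since $I_*[x,y] = S_x + S_y$ spans $\mathcal{A}$, the image $L_*\CH^4(F)$ is exactly the span of differences $S_x + S_y - 2S_\mathfrak{o}$, i.e. the homologically trivial part $\mathcal{A}_{\hom}$ of $\mathcal{A} = I_*\CH^4(F)$. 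One should check that $\mathcal{A}_\hom$ is spanned by such differences and that these are homologically trivial, which is immediate since $S_x$ and $S_\mathfrak{o}$ are homologous.

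Next, for \emph{(ii)}, recall that $\CH^4_{\FF}(F)_2 = (\Lambda_0^4)_{\hom}$ and that by Theorem \ref{prop L2} the map $l\cdot : \Lambda_0^2 \to (\Lambda_0^4)_\hom$ is an isomorphism with inverse $\frac{1}{25}L_*$. Using $l = 20S_\mathfrak{o} - \frac{5}{2}\delta^2$ from \eqref{eq l on S2}, together with the intersection identities $S_x\cdot S_y = [x,y]$ and $\delta^2\cdot S_x = -[x,x]$ from Lemma \ref{lem intersection identity X2}, I would compute $l \cdot (S_x - S_y)$ for the generators $S_x - S_y$ of $\Lambda_0^2 = \mathcal{A}_\hom$, obtaining a multiple of $[\mathfrak{o},x] - [\mathfrak{o},y]$ after invoking the key relation \eqref{eq Beauville-Voisin equation}, $[x,x] - 2[\mathfrak{o},x] + \mathfrak{o}_F = 0$. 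This identifies $\CH^4_{\FF}(F)_2$ with the span of cycles $[\mathfrak{o},x] - [\mathfrak{o},y]$.

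For \emph{(iii)}, the cleanest route is to exploit the relation between $I$ and $L$ on $\CH^4(F)$. On $\CH^4_{\FF}(F)_2 = (\Lambda_0^4)_\hom \subseteq \CH^4(F)_\hom$, the correspondences $I$ and $L$ agree up to the terms $2p_1^*S_\mathfrak{o} + 2p_2^*S_\mathfrak{o} + \frac{1}{2}\delta_1\delta_2$, whose contribution to $I_*$ on homologically trivial zero-cycles I would compute directly. Combined with \emph{(i)} and \emph{(ii)}, and with the fact from Theorem \ref{prop L2} that $\frac{1}{25}l\cdot$ inverts $L_*$, one deduces that $I_*$ carries $\CH^4_{\FF}(F)_2$ isomorphically onto $\CH^2_{\FF}(F)_2 = \mathcal{A}_\hom$, with inverse given by intersection with $S_\mathfrak{o}$ (using $S_\mathfrak{o}\cdot(S_x + S_y - 2S_\mathfrak{o}) = [\mathfrak{o},x] + [\mathfrak{o},y] - 2\mathfrak{o}_F$ and \eqref{eq Beauville-Voisin equation} to simplify).

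Finally, \emph{(iv)} is the step I expect to be the main obstacle, since it concerns all of $\CH^3(F)$ rather than just the generators used above. The idea is to decompose $\CH^3(F) = \Lambda_0^3 \oplus \Lambda_2^3$ according to Theorem \ref{prop L2} and show that $\delta\cdot$ sends each summand into $\CH^4(F)_2 \oplus \CH^4(F)_0$. For $\Lambda_0^3 = \CH^3(F)_0$, Remark \ref{rmk CH3 divisors} gives $\CH^3(F)_0 = \langle l \rangle\cdot \CH^1(F)$, so $\delta\cdot \CH^3(F)_0$ reduces to controlling $\delta\cdot l\cdot D$ for divisors $D$; here I would use that $\delta\cdot\hat{\mathfrak{D}}\cdot S_x = 0$ (equation \eqref{eq divisor Sx X2}) and the structure of divisors from \eqref{decomposition H2 general} to land in $\langle l^2\rangle = \CH^4(F)_0$ plus a $\CH^4(F)_2$ term. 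For $\Lambda_2^3 = \CH^3(F)_\hom$, the task is to show $\delta\cdot\CH^3(F)_\hom \subseteq \CH^4(F)_2$, i.e. that $\delta\cdot$ kills the $\CH^4(F)_4$ component; this is the delicate part, and I would attack it by writing $\delta^2$ in terms of $S_\mathfrak{o}$ and $l$ and using the eigenspace characterizations \eqref{eq description of W00 hom}–\eqref{eq action pi6 CH2} of $(L^2)_*$ together with Proposition \ref{lem condition 3}, which already exhibits $l\cdot\Lambda_2^2 = 0$, to rule out any $\Lambda_2^4$ contribution.
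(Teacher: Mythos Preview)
Your approach to \emph{(i)}--\emph{(iii)} is essentially correct and parallels the paper, though for \emph{(ii)} the paper takes a slightly different route: rather than computing $l\cdot(S_x-S_y)$ directly, it identifies $\CH^4_{\FF}(F)_2 = (\Lambda_0^4)_{\hom} = \big(\ker\{(L^2)_*\}\big)_{\hom}$, invokes Lemma \ref{lem comparison I L} to replace this by $\big(\ker\{(I^2-4\Delta_F)_*\}\big)_{\hom}$, and then cites Proposition \ref{prop second dec result} to get $\mathrm{im}\{\CH_0(S_{\mathfrak{o}})_{\hom}\to\CH_0(F)\}$. Your direct computation (which indeed gives $l\cdot(S_x-S_y)=25\,([\mathfrak{o},x]-[\mathfrak{o},y])$ after using \eqref{eq Beauville-Voisin equation}) is a perfectly valid alternative.

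The real issue is \emph{(iv)}. Your plan there is both far more complicated than necessary and, in the $\Lambda_2^3$ case, not convincing: you propose to ``write $\delta^2$ in terms of $S_{\mathfrak{o}}$ and $l$'' and invoke $l\cdot\Lambda_2^2=0$, but the problem concerns $\delta\cdot\sigma$ for $\sigma\in\CH^3(F)_{\hom}$, and there is no clear mechanism connecting this to the vanishing $l\cdot\Lambda_2^2=0$ on $\CH^2(F)$. The paper's argument is one line and bypasses all of this: since $2\delta=j_*E$, any cycle in $\delta\cdot\CH^3(F)$ is a $0$-cycle supported on the boundary divisor $\Delta\cong\PP(\mathscr{T}_S)$, hence a linear combination of classes $[x,x]$ for $x\in S$. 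By the Beauville--Voisin relation \eqref{eq Beauville-Voisin equation} (Proposition \ref{prop bvf}), each $[x,x]=2[\mathfrak{o},x]-\mathfrak{o}_F$ is supported on $S_{\mathfrak{o}}$, and such cycles lie in $\CH^4_{\FF}(F)_0\oplus\CH^4_{\FF}(F)_2$ by \emph{(ii)} together with $\CH^4_{\FF}(F)_0=\langle\mathfrak{o}_F\rangle$. No case analysis on $\CH^3(F)$ is needed.
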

\begin{proof}
  By Theorems \ref{prop L2} and \ref{prop main} and equation \eqref{eq
    Lxy}, we have
\[
 \CH^2_{\FF}(F)_2 = \Lambda_0^2 = L_*\CH^4(F) = \mathcal{A}_{\mathrm{hom}}.
\]
This proves \emph{(i)}. For \emph{(ii)}, we argue as follows
\begin{align*}
  \CH^4_{\FF}(F)_2 & = (\Lambda_0^4)_{\mathrm{hom}} \qquad
  \text{(Theorem \ref{prop main})}\\
  & = \Big(\ker\{L^2 : \CH^4(F)\rightarrow \CH^4(F)\}\Big)_\mathrm{hom}
  \qquad
  \text{(by definition of $\Lambda^4_0$)}\\
  & = \Big(\ker\{I^2-4\Delta_F : \CH^4(F)\rightarrow
  \CH^4(F)\}\Big)_\mathrm{hom} \qquad
  \text{(Lemma \ref{lem comparison I L})}\\
  & = \mathrm{im}\{\CH_0(S_\mathfrak{o})_\mathrm{hom}\rightarrow
  \CH_0(F)\} \qquad \text{(Proposition \ref{prop second dec result})}.
\end{align*}
Statement \emph{(iii)} follows from \emph{(i)} and \emph{(ii)}. Note
that Proposition \ref{prop bvf} shows that a 0-cycle supported on $E$
is also supported on $S_\mathfrak{o}$. Hence any 0-cycle supported on
$E$ belongs to $\CH^4_{\FF}(F)_0 \oplus \CH^4_{\FF}(F)_2$. Statement
\emph{(iv)} follows since $2\delta = j_*E$.
\end{proof}

We now come to the crucial lemma.

\begin{lem}\label{lem intersecting 2-cycle and S_o}
 Let $\alpha\in \CH^2_{\FF}(F)_0$, then
\[
 \alpha \cdot S_{\mathfrak{o}} \in \CH^4_{\FF}(F)_0 \qquad\text{and }\quad
 \alpha \cdot \delta^2 \in \CH^4_{\FF}(F)_0.
\]
\end{lem}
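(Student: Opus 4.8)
The plan is to recall the explicit description of the Fourier decomposition of $\CH^2(F)$ from Theorems \ref{prop L2} and \ref{prop main}. We have $\CH^2_{\FF}(F)_0 = \Lambda^2_{25} \oplus \Lambda^2_2 = \langle l \rangle \oplus W^2_1$, so it suffices to prove the statement separately for $\alpha = l$ and for $\alpha \in W^2_1 = \Lambda^2_2$. The case $\alpha = l$ is immediate: since $l = 20 S_\mathfrak{o} - \frac52 \delta^2$ by \eqref{eq l on S2}, both $l \cdot S_\mathfrak{o}$ and $l \cdot \delta^2$ are multiples of $l^2$ (one checks $S_\mathfrak{o}^2$, $S_\mathfrak{o}\cdot\delta^2$, $\delta^4$ are all proportional to $\mathfrak{o}_F = [\mathfrak{o},\mathfrak{o}]$ using $S_x\cdot S_y = [x,y]$ and $\delta^2 \cdot S_x = -[x,x]$ together with Proposition \ref{prop bvf}), hence lie in $\CH^4_{\FF}(F)_0 = \langle l^2 \rangle$ by Theorem \ref{prop main}.

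The substantive case is $\alpha \in \Lambda^2_2$. The key input is Proposition \ref{lem condition 3} (condition \eqref{assumption 2hom}), which in the equivalent form \eqref{eq Lambda22eq} asserts that $l \cdot \Lambda^2_2 = 0$. Writing $l = 20 S_\mathfrak{o} - \frac52 \delta^2$, this single relation $l \cdot \alpha = 0$ ties $\alpha \cdot S_\mathfrak{o}$ and $\alpha \cdot \delta^2$ together: we get $20\, \alpha\cdot S_\mathfrak{o} = \frac52\, \alpha \cdot \delta^2$, so it is enough to control either one of the two products. My plan is to show $\alpha \cdot S_\mathfrak{o} \in \CH^4_{\FF}(F)_0$ and deduce the $\delta^2$ statement from this proportionality. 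For this I would use the geometric description $\alpha \in \Lambda^2_2 \subseteq \CH^2(F)$ together with the decomposition results of Section \ref{sec mult X2}. The intersection $\alpha \cdot S_\mathfrak{o}$ is a zero-cycle, so I would examine it via the operators $I$ and $L$: Proposition \ref{prop first dec result} and Proposition \ref{prop second dec result} characterize $\CH^4_{\FF}(F)_0 = \langle l^2\rangle = \langle \mathfrak{o}_F \rangle$ as the eigenspace where the zero-cycle is a multiple of $\mathfrak{o}_F$, i.e. as cycles of the form $[\mathfrak{o},\mathfrak{o}]$.

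The main obstacle will be passing from the abstract membership $\alpha \in \Lambda^2_2$ to a concrete enough handle on $\alpha \cdot S_\mathfrak{o}$. The cleanest route I foresee is to combine \eqref{eq Lambda22eq} with Lemma \ref{lem CH^2_2 equals Ahom and more}\emph{(iii)}: since $I_* : \CH^4_{\FF}(F)_2 \to \CH^2_{\FF}(F)_2$ is an isomorphism with inverse given by intersection with $S_\mathfrak{o}$, and since $\alpha \in \Lambda^2_2$ is orthogonal (in the bigrading sense) to $\CH^2_{\FF}(F)_2 = \mathcal{A}_\mathrm{hom}$, the product $\alpha \cdot S_\mathfrak{o}$ cannot have a nonzero component in $\CH^4_{\FF}(F)_2$. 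Combined with the fact that $\alpha\cdot S_\mathfrak{o}$, being supported on $S_\mathfrak{o}$, lies in $\CH^4_{\FF}(F)_0 \oplus \CH^4_{\FF}(F)_2$ (the reasoning in the proof of Lemma \ref{lem CH^2_2 equals Ahom and more}\emph{(iv)}, via Proposition \ref{prop bvf}), this forces $\alpha\cdot S_\mathfrak{o} \in \CH^4_{\FF}(F)_0$. The proportionality $20\,\alpha\cdot S_\mathfrak{o} = \frac52\,\alpha\cdot\delta^2$ coming from $l\cdot\alpha = 0$ then immediately yields $\alpha\cdot\delta^2 \in \CH^4_{\FF}(F)_0$ as well, completing the proof. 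The delicate point to verify carefully is the vanishing of the $\CH^4_{\FF}(F)_2$-component, where I expect to lean on the injectivity of $l\cdot : \Lambda^2_0 \to \CH^4(F)$ from Theorem \ref{prop L2} to rule out spurious contributions.
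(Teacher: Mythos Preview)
Your overall strategy is sound up to the point you yourself flag as ``delicate'', but the way you propose to handle that point does not work. For $\alpha \in \Lambda^2_2$ you correctly observe that $\alpha \cdot S_{\mathfrak{o}}$ is supported on $S_{\mathfrak{o}}$ and therefore lies in $\CH^4_{\FF}(F)_0 \oplus \CH^4_{\FF}(F)_2$. But your claim that its $\CH^4_{\FF}(F)_2$-component vanishes because ``$\alpha$ is orthogonal in the bigrading sense to $\CH^2_{\FF}(F)_2$'' is circular: that orthogonality is just the statement that $\alpha$ and elements of $\CH^2_{\FF}(F)_2$ lie in different graded pieces, and it constrains their \emph{product} only if one already knows the decomposition is multiplicative --- which is precisely what this lemma is a key step toward. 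Your fallback, the injectivity of $l\cdot$ on $\Lambda^2_0$, does not help either: it concerns $\CH^2(F)_2$, not $\Lambda^2_2$, and gives no handle on the $\CH^4_{\FF}(F)_2$-component of $\alpha\cdot S_{\mathfrak{o}}$. The single relation $l\cdot\alpha = 0$ only ties $\alpha\cdot S_{\mathfrak{o}}$ and $\alpha\cdot\delta^2$ together; it cannot by itself kill the $\CH^4_{\FF}(F)_2$-part of either.

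What is missing is a \emph{second}, independent relation, and this is exactly what the paper supplies via the key Lemma~\ref{lem lifting hilbert}. That identity in $\CH^4(F\times F)$ yields that $I_*\big((2S_{\mathfrak{o}} + \delta^2)\cdot\alpha\big)$ is a multiple of $S_{\mathfrak{o}}$ for every $\alpha\in\CH^2(F)$. Writing $\beta := (2S_{\mathfrak{o}} + \delta^2)\cdot\alpha$, the support argument places $\beta$ in $\CH^4_{\FF}(F)_0 \oplus \CH^4_{\FF}(F)_2$; since $I_*$ is injective on $\CH^4_{\FF}(F)_2$ with image $\CH^2_{\FF}(F)_2 = \mathcal{A}_{\hom}$ (Lemma~\ref{lem CH^2_2 equals Ahom and more}), and since $S_{\mathfrak{o}}\in\CH^2_{\FF}(F)_0$ has no nonzero multiple in $\mathcal{A}_{\hom}$, one concludes $\beta \in \CH^4_{\FF}(F)_0$. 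One then has two linearly independent combinations, $l\cdot\alpha = 20\,\alpha\cdot S_{\mathfrak{o}} - \tfrac{5}{2}\,\alpha\cdot\delta^2$ (in $\CH^4_{\FF}(F)_0$ by Proposition~\ref{prop l CH2 4}) and $\beta = 2\,\alpha\cdot S_{\mathfrak{o}} + \alpha\cdot\delta^2$, both in $\CH^4_{\FF}(F)_0$, and the conclusion follows. The paper runs this argument uniformly for all $\alpha \in \CH^2_{\FF}(F)_0$, so your case-split $\alpha = l$ versus $\alpha \in \Lambda^2_2$ is unnecessary.
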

\begin{proof}
  On the one hand, by Proposition \ref{prop l CH2 4} and equation
  \eqref{eq l on S2}, we know that
\[
\alpha\cdot l = \alpha \cdot \big(20 S_{\mathfrak{o}} + \frac{5}{2}
\delta^2 \big) = 20 \alpha \cdot S_{\mathfrak{o}} +
\frac{5}{2}\alpha\cdot \delta^2\in \CH_{\FF}^4(F)_0.
\]
On the other hand, by the key Lemma \ref{lem lifting hilbert}, we see
that
\[
I_*(2\alpha\cdot S_{\mathfrak{o}} +\alpha\cdot\delta^2 )=\Big( p_1^*(2
S_\mathfrak{o}+ \delta^2)\cdot I\Big)_*\alpha \quad \text{is a
  multiple of }S_{\mathfrak{o}} \ \text{in} \ \CH^2(F).
\]
Since $\alpha\cdot S_{\mathfrak{o}}$ is supported on $S_\mathfrak{o}$
and since $\alpha\cdot\delta^2$ is supported on the boundary divisor,
it follows from Lemma \ref{lem CH^2_2 equals Ahom and more} that
\[
\beta  := 2\alpha\cdot S_{\mathfrak{o}} +\alpha\cdot\delta^2 \in
\CH^4_{\FF}(F)_0 \oplus \CH_{\FF}^0(F)_2.
\]
Note that $S_\mathfrak{o}$ is a linear combination of $l$ and
$\delta^2$. We know that $l\in \CH^2_{\FF}(F)_0$ (Theorem \ref{prop
  main}) and $\delta^2\in \CH^2_{\FF}(F)_0$ (Theorem \ref{thm
  reformulation voisin}).  It follows that $S_\mathfrak{o}\in
\CH^2_{\FF}(F)_0$.  By Lemma \ref{lem CH^2_2 equals Ahom and more},
$I_* :\CH^4_\FF(F)_2\rightarrow \CH^2_\FF(F)_2$ is an isomorphism.
Hence the fact that $I_*\beta$ is a multiple of
$S_\mathfrak{o}\in\CH^2_{\FF}(F)_0$ implies that $\beta\in
\CH^4_\FF(F)_0$. Thus we have two different linear combinations of
$S_{\mathfrak{o}}$ and $\delta^2$ whose intersections with $\alpha$
are multiples of $\mathfrak{o}_F$. The lemma follows immediately.
\end{proof}

In order to get a description of the Fourier decomposition on $F$, we
introduce some new notations. For a divisor $D\in \CH^1(S)$, we write
\begin{align*}
  \hat D &  := p_*\rho^* p_1^*D \in \CH^1(F) \, ;\\
  D_\mathfrak{o} &  := S_{\mathfrak{o}} \cdot \hat{D}.
\end{align*}

\begin{prop}\label{prop Fourier pieces F}
 The Fourier decomposition enjoys the following properties.
\begin{align*}
  \CH_{\FF}^2(F)_0 & = \{\alpha\in \CH^2(F) : \alpha\cdot
  S_{\mathfrak{o}}\in
  \langle \mathfrak{o}_F\rangle \}\\
  & = \{ \alpha\in \CH^2(F) : \alpha\cdot \delta^2 \in \langle
  \mathfrak{o}_F\rangle \} \, ;\\
  \CH^2_{\FF}(F)_2 & = \langle S_x -S_y\rangle, \quad x,y\in S \, ;\\
  \CH^3_{\FF}(F)_0 & = j_*\pi^*\mathfrak{o}_S\oplus
  \{D_{\mathfrak{o}} :
  D\in \CH^1(S)\} \, ;\\
  \CH^3_{\FF}(F)_2 & = \CH^3(F)_{\mathrm{hom}} \, ;\\
  \CH^4_{\FF}(F)_0 & = \langle \mathfrak{o}_F \rangle \, ;\\
  \CH^4_{\FF}(F)_2 & = \langle[\mathfrak{o},x] - [\mathfrak{o}, y]
  \rangle,
  \quad x,y\in S \, ; \\
  \CH^4_{\FF}(F)_4 & = \CH^2_{\FF}(F)_2\cdot \CH^2_{\FF}(F)_2.
\end{align*}
\end{prop}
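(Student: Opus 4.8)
The plan is to prove Proposition \ref{prop Fourier pieces F} by identifying each Fourier piece $\CH^*_{\FF}(F)_s$ with the corresponding Chow--K\"unneth piece $\CH^*_{\mathrm{CK}}(F)_s$, relying on the explicit descriptions already available for $Z$ and $S \times S$ in Propositions \ref{prop CK pieces S2} and \ref{prop CK pieces Z}, and on the compatibility of $p : Z \rightarrow F$ with the gradings recorded in Remark \ref{rmk compatibility blowup hilbert}. Several of the seven identities have in fact already been established in the preceding lemmas: $\CH^2_{\FF}(F)_2 = \mathcal{A}_{\hom} = \langle S_x - S_y \rangle$ is Lemma \ref{lem CH^2_2 equals Ahom and more}(i) together with \eqref{eq Lxy}; $\CH^3_{\FF}(F)_2 = \CH^3(F)_{\hom}$ is the content of Step 2 of Theorem \ref{prop L2}; $\CH^4_{\FF}(F)_0 = \langle \mathfrak{o}_F \rangle = \langle l^2 \rangle$ is Theorem \ref{prop main} combined with the fact that $l^2$ is proportional to $\mathfrak{o}_F$ (used already in the proof of Proposition \ref{prop cubic assumption pre l hom}); $\CH^4_{\FF}(F)_2 = \langle [\mathfrak{o},x] - [\mathfrak{o},y] \rangle$ is Lemma \ref{lem CH^2_2 equals Ahom and more}(ii); and $\CH^4_{\FF}(F)_4 = \CH^2_{\FF}(F)_2 \cdot \CH^2_{\FF}(F)_2$ is exactly the assertion of Proposition \ref{prop F4 of S2} transported through the identification $\CH^2_{\FF}(F)_2 = \mathcal{A}_{\hom}$ and the eigenspace descriptions of Theorem \ref{prop main} (note $\Lambda^4_2 = \ker\{L_* : \CH^4(F)_{\hom} \rightarrow \CH^2(F)\} = \ker\{I_*\}$ by Lemma \ref{lem comparison I L}). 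So the genuinely new work concerns the two characterizations of $\CH^2_{\FF}(F)_0$ and the description of $\CH^3_{\FF}(F)_0$.

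For $\CH^2_{\FF}(F)_0$, I would prove the two displayed equalities by a containment argument in both directions. The inclusion $\CH^2_{\FF}(F)_0 \subseteq \{\alpha : \alpha \cdot S_{\mathfrak{o}} \in \langle \mathfrak{o}_F \rangle\}$ and $\subseteq \{\alpha : \alpha \cdot \delta^2 \in \langle \mathfrak{o}_F \rangle\}$ is precisely Lemma \ref{lem intersecting 2-cycle and S_o}, since $\CH^4_{\FF}(F)_0 = \langle \mathfrak{o}_F \rangle$. For the reverse inclusions, recall from Theorem \ref{prop L2} and Theorem \ref{prop main} that $\CH^2(F) = \CH^2_{\FF}(F)_0 \oplus \CH^2_{\FF}(F)_2$, with $\CH^2_{\FF}(F)_2 = \mathcal{A}_{\hom}$. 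Thus it suffices to check that no nonzero element $\alpha$ of $\mathcal{A}_{\hom}$ satisfies $\alpha \cdot S_{\mathfrak{o}} \in \langle \mathfrak{o}_F \rangle$ (respectively $\alpha \cdot \delta^2 \in \langle \mathfrak{o}_F \rangle$): writing $\alpha = \sum a_i(S_{x_i} - S_{y_i})$, the product $\alpha \cdot S_{\mathfrak{o}}$ equals $\sum a_i([\mathfrak{o},x_i] - [\mathfrak{o},y_i])$, which lies in $\CH^4_{\FF}(F)_2$ by Lemma \ref{lem CH^2_2 equals Ahom and more}(ii), and the isomorphism $I_* : \CH^4_{\FF}(F)_2 \rightarrow \CH^2_{\FF}(F)_2$ of Lemma \ref{lem CH^2_2 equals Ahom and more}(iii) forces this to vanish only when $\alpha = 0$. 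For the $\delta^2$ version one uses $\delta^2 \cdot S_x = -[x,x]$ together with the Beauville--Voisin relation \eqref{eq Beauville-Voisin equation} rewriting $[x,x]$ in terms of $[\mathfrak{o},x]$, again landing in $\CH^4_{\FF}(F)_2$ and invoking the same injectivity.

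For $\CH^3_{\FF}(F)_0$, I would use Remark \ref{rmk CH3 divisors}, which gives $\CH^3_{\FF}(F)_0 = \langle l \rangle \cdot \CH^1(F)$, together with the decomposition \eqref{decomposition H2 general} of $\CH^1(F)$ into divisors of the form $\hat{D}$ and multiples of $\delta$. Intersecting $l = 20 S_{\mathfrak{o}} - \frac{5}{2}\delta^2$ with $\hat{D}$ produces (up to the degree-zero contributions that land in $\CH^4$, not $\CH^3$) a combination of $S_{\mathfrak{o}} \cdot \hat{D} = D_{\mathfrak{o}}$ and $\delta^2 \cdot \hat{D}$; intersecting $l$ with $\delta$ produces $j_*\pi^*\mathfrak{o}_S$ after applying Lemma \ref{lem intersection identity X2} and the proportionality $l \sim c_2(F)$. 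The main obstacle I anticipate lies exactly here: one must verify that the two generating types $j_*\pi^*\mathfrak{o}_S$ and $\{D_{\mathfrak{o}}\}$ are genuinely independent and jointly exhaust $\langle l \rangle \cdot \CH^1(F)$, and in particular that the $\delta^2 \cdot \hat{D}$ terms do not contribute anything outside $\langle j_*\pi^*\mathfrak{o}_S, D_{\mathfrak{o}} \rangle$. I would control this by transporting the question to $Z$ via $p^*$, where by Proposition \ref{prop CK pieces Z} one has the clean description $\CH^3_{\mathrm{CK}}(Z)_0 = \rho^*\CH^3_{\mathrm{CK}}(S \times S)_0 \oplus \Q \cdot j'_*\pi^*\mathfrak{o}_S$, and then pushing forward along the compatible quotient map $p$ using Remark \ref{rmk compatibility blowup hilbert}; the term $\rho^*\CH^3_{\mathrm{CK}}(S\times S)_0 = \rho^*(p_1^*\mathfrak{o}_S \cdot p_2^*\CH^1(S) \oplus \cdots)$ descends precisely to the $D_{\mathfrak{o}}$ terms, and $j'_*\pi^*\mathfrak{o}_S$ descends to $j_*\pi^*\mathfrak{o}_S$. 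Establishing that the Fourier grading agrees with the Chow--K\"unneth grading in degree $3$ — which is the conceptual heart of this last step — will require matching $\CH^3_{\FF}(F)_0$ against $\CH^3_{\mathrm{CK}}(F)_0 := \pi^3_* \CH^3(F)$, and I expect to invoke Theorem \ref{thm CK} together with the explicit bigraded pieces to close the argument.
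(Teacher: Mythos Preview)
Your proposal is correct and matches the paper's approach in all essentials. For $\CH^2_{\FF}(F)_0$ you argue exactly as the paper does (Lemma~\ref{lem intersecting 2-cycle and S_o} for one inclusion, and for the other the observation that $S_{\mathfrak{o}}\cdot(-)$ carries $\CH^2_{\FF}(F)_2$ isomorphically onto $\CH^4_{\FF}(F)_2$); the remaining pieces you correctly identify as already established.

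For $\CH^3_{\FF}(F)_0$ your strategy is also the paper's --- the paper uses the characterization $\CH^3_{\FF}(F)_0 = \CH^1(F)^{\cdot 3}$ rather than $\langle l\rangle\cdot\CH^1(F)$, but both come from Remark~\ref{rmk CH3 divisors}, and the crux is the same: showing that $\delta^2\cdot\hat{D} = -j_*(\xi\cdot\pi^*D)$ lies in the span of the $D'_{\mathfrak{o}}$'s and $j_*\pi^*\mathfrak{o}_S$, which is done by pulling back to $Z$, using the explicit form of $\CH^3_{\mathrm{CK}}(Z)_0$ from Proposition~\ref{prop CK pieces Z}, and pushing forward. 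Your final sentence, however, overcomplicates matters. You do \emph{not} need to compare the Fourier and Chow--K\"unneth gradings on $F$ (that comparison is Theorem~\ref{thm multS2complete}, proved only \emph{after} this proposition, so invoking it here would be circular, and Theorem~\ref{thm CK} concerns an abstract CK decomposition, not the explicit one of \S\ref{sec multCKX2}). The transport to $Z$ is purely a device to rewrite the single cycle $j_*(\xi\cdot\pi^*D)$; once that rewriting is in hand, the equality $\CH^3_{\FF}(F)_0 = \langle l\rangle\cdot\CH^1(F) = \langle j_*\pi^*\mathfrak{o}_S\rangle \oplus \{D_{\mathfrak{o}}\}$ is complete, and no further grading comparison is needed.
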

\begin{proof}
  If $\alpha\in\CH^2_{\FF}(F)_0$, then by Lemma \ref{lem intersecting
    2-cycle and S_o} we know that both of $\alpha\cdot
  S_{\mathfrak{o}}$ and $\alpha\cdot \delta^2$ are multiples of
  $\mathfrak{o}_F$. Conversely, let $\alpha\in \CH^2(F)$ be an
  arbitrary 2-cycle on $F$. Then we can write
 \[
 \alpha = \alpha_0 +\alpha_2,\qquad \alpha_0\in \CH_\FF^2(F)_0,\quad
 \alpha_2\in \CH^2_\FF(F)_2.
 \]
 Assume that $\alpha\cdot S_{\mathfrak{o}}\in \CH_\FF^4(F)_0$. Note
 that Lemma \ref{lem intersecting 2-cycle and S_o} implies that
 $\alpha_0\cdot S_{\mathfrak{o}}\in \CH^4_{\FF}(F)_0$. Hence we get
 $\alpha_2\cdot S_{\mathfrak{o}}\in \CH^4_\FF(F)_0$. Since
 intersecting with $S_{\mathfrak{o}}$ defines an isomorphism between
 $\CH^2_\FF(F)_2$ and $\CH^4_\FF(F)_2$ by Lemma \ref{lem CH^2_2 equals
   Ahom and more}\emph{(iii)}, it follows that $\alpha_2\cdot
 S_{\mathfrak{o}}=0$.  Therefore
 \[
 \alpha_2 = I_*(\alpha_2\cdot S_{\mathfrak{o}}) =0.
 \]
 Thus $\alpha=\alpha_0\in \CH_\FF^2(F)_0$. This establishes the first
 equality. The second equality is established similarly. The equations
 involving $\CH^2_{\FF}(F)_2$ and $\CH^4_{\FF}(F)_2$ are direct
 consequences of Lemma \ref{lem CH^2_2 equals Ahom and
   more}. Combining Theorems \ref{prop L2} and \ref{prop main}, we
 have
\[
 \CH^3_{\FF}(F)_2 = \Lambda^3_2 =\CH^3(F)_{\mathrm{hom}}.
\]

The equation involving $\CH^3_{\FF}(F)_0$ follows from the identity
$\CH^3_{\FF}(F)_0=\CH^1(F)^{\cdot 3}$ of Remark \ref{rmk CH3
  divisors}. Indeed, we have $\CH^1(F)= \langle \delta\rangle \oplus
\{\hat{D} :D\in \CH^1(S)\}$. A direct calculation gives
\begin{align*}
  \delta^3 & = -12 j_*\pi^*\mathfrak{o}_S \, ;\\
  \delta^2 \cdot \hat D &= -j_*(\xi\cdot \pi^*D) \, ;\\
  \delta\cdot \hat{D}_1 \cdot \hat{D}_2 & = 2j_*\pi^*(D_1\cdot D_2) \, ;\\
  \hat{D}_1\cdot \hat{D}_2\cdot \hat{D}_3 &= a_1 (D_1)_\mathfrak{o} +
  a_2 (D_2)_\mathfrak{o} +a_3 (D_3)_\mathfrak{o} \, ;
\end{align*}
where $\xi\in \CH^1(E)$ is the class of the relative $\calO(1)$
bundles and the $a_i$'s are rational numbers. Note that
\begin{align*}
  p^* j_* (\xi\cdot \pi^*D) = 2 j'_*(\xi\cdot \pi^*D)\in
  \CH^3_{\mathrm{CK}}(Z)_0.
\end{align*}
By Propositions \ref{prop CK pieces S2} and \ref{prop CK pieces Z}, we
get
\[
p^*j_* (\xi\cdot \pi^*D) = \rho^*(D'_1 \times \mathfrak{o}_S +
\mathfrak{o}_S\times D'_2) + b j'_*\pi^*\mathfrak{o}_S,\qquad b\in\Q.
\]
Applying $p_*$ to the above we find
\[
j_*(\xi\cdot \pi^*D) = \frac{1}{2} ((D'_1)_\mathfrak{o}+
(D'_2)_\mathfrak{o}) + \frac{1}{2} j_*\pi^*\mathfrak{o}_S.
\]
Combining all the above equations, we get the expression for
$\CH_{\FF}^3(F)_0$.

It remains to show the equation involving $\CH_{\FF}^4(F)_4$. This can
be done as follows
\begin{align*}
  \CH_{\FF}^4(F)_4 & = \Lambda_2^4\qquad\qquad \text{(Theorem \ref{prop main})}\\
  &= \ker\{L_* : \CH^4(F)_{\mathrm{hom}}\rightarrow \CH^2(F)\}
  \qquad \text{(Theorem \ref{prop L2})}\\
  & =\ker\{I_* : \CH^4(F)\rightarrow \CH^2(F)\}\qquad
  \text{(Lemma \ref{lem comparison I L})}\\
  & = \mathcal{A}_{\mathrm{hom}}\cdot \mathcal{A}_{\mathrm{hom}}
  \qquad \text{(Proposition \ref{prop F4 of S2})}\\
  & = \CH^2_{\FF}(F)_2\cdot \CH^2_{\FF}(F)_2,
\end{align*}
where the last equality follows from Lemma \ref{lem CH^2_2 equals Ahom
  and more}.
\end{proof}

\subsection{The Chow--K\"unneth decomposition and the Fourier
  decomposition agree} Recall that $p : Z \rightarrow F$ denotes the
quotient map from the blow-up of $X\times X$ along the diagonal to the
Hilbert scheme $F=X^{[2]}$. The following proposition shows that the
Chow--K\"unneth decomposition \eqref{eq blowupS} on $\CH^*(Z)$ is
compatible with the Fourier decomposition of Theorem \ref{thm main
  splitting} on $\CH^*(F)$, via the morphism $p$.
\begin{prop}
  The following holds
 \[
 \CH^i_{\FF}(F)_s = \{\alpha\in \CH^i(F) : p^*\alpha\in
 \CH^i_{\mathrm{CK}}(Z)_s\}.
 \]
\end{prop}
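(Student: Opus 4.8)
The plan is to prove the stated identity by showing that the Fourier decomposition and the Chow--K\"unneth decomposition of $F$ coincide, playing off the two \emph{explicit} descriptions already available: Proposition~\ref{prop Fourier pieces F} for the Fourier pieces and Propositions~\ref{prop CK pieces S2}--\ref{prop CK pieces Z} for the Chow--K\"unneth pieces on $S\times S$ and $Z$. The first step is to rephrase the right-hand side intrinsically on $F$. Since $p\colon Z\to F$ has degree $2$ and we work with $\Q$-coefficients, $p^*$ is injective and $\tfrac12 p_*p^*=\mathrm{id}$. By Remark~\ref{rmk compatibility blowup hilbert} we have both $p^*\CH^i_{\mathrm{CK}}(F)_s\subseteq\CH^i_{\mathrm{CK}}(Z)_s$ and $p_*\CH^i_{\mathrm{CK}}(Z)_s=\CH^i_{\mathrm{CK}}(F)_s$; feeding $\alpha=\tfrac12 p_*p^*\alpha$ into these shows that $\{\alpha\in\CH^i(F):p^*\alpha\in\CH^i_{\mathrm{CK}}(Z)_s\}=\CH^i_{\mathrm{CK}}(F)_s$. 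Thus the proposition is equivalent to the equality $\CH^i_\FF(F)_s=\CH^i_{\mathrm{CK}}(F)_s$ for all $i,s$.

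I would then use an elementary linear-algebra principle to reduce to a single inclusion: if $\CH^i(F)=\bigoplus_s A_s=\bigoplus_s B_s$ are two gradings with $B_s\subseteq A_s$ for every $s$, then $A_s=B_s$ for every $s$. Indeed, given $a\in A_{s_0}$, decompose it along $\bigoplus_s B_s$ as $a=\sum_s b_s$ with $b_s\in B_s\subseteq A_s$; directness of the $A$-grading forces $b_s=0$ for $s\neq s_0$, so $a=b_{s_0}\in B_{s_0}$. Applying this with $A_s=\CH^i_\FF(F)_s$ and $B_s=\CH^i_{\mathrm{CK}}(F)_s$ (both genuine direct-sum decompositions, by Theorem~\ref{prop main} and Theorem~\ref{thm multCK X2}) reduces the whole proposition to the inclusions $\CH^i_{\mathrm{CK}}(F)_s\subseteq\CH^i_\FF(F)_s$.

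These inclusions I would check by pushing forward to $F$ the explicit generators of $\CH^i_{\mathrm{CK}}(Z)_s$ recorded in Proposition~\ref{prop CK pieces Z}, which in turn come via the blow-up formula from the generators of $\CH^*_{\mathrm{CK}}(S\times S)_s$ in Proposition~\ref{prop CK pieces S2}. The only pushforward formulas needed are $p_*\rho^*p_1^*\sigma=p_*q^*\sigma=:S_\sigma$, its $\tau$-twin $p_*\rho^*p_2^*\sigma=S_\sigma$, $p_*j'_*=j_*$ with $j=p\circ j'$, and $p_*\mathfrak{o}_Z=\mathfrak{o}_F$. For the positive-weight pieces the two descriptions then match as sets: $p_*\rho^*\big(p_1^*(x-y)\big)=S_x-S_y$ spans $\mathcal{A}_{\mathrm{hom}}=\CH^2_\FF(F)_2$; $p_*\rho^*\big(p_1^*\mathfrak{o}_S\cdot p_2^*(x-y)\big)=[\mathfrak{o},x]-[\mathfrak{o},y]$ spans $\CH^4_\FF(F)_2$; $p_*\mathfrak{o}_Z=\mathfrak{o}_F$ spans $\CH^4_\FF(F)_0$; the piece $\CH^3_{\mathrm{CK}}(Z)_2=\CH^3(Z)_{\mathrm{hom}}$ pushes onto $\CH^3(F)_{\mathrm{hom}}=\CH^3_\FF(F)_2$; and $p_*\rho^*\big(p_1^*\gamma_1\cdot p_2^*\gamma_2\big)=[x_1,x_2]-[x_1,y_2]-[y_1,x_2]+[y_1,y_2]$ lands, by Proposition~\ref{prop F4 of S2}, in $\mathcal{A}_{\mathrm{hom}}\cdot\mathcal{A}_{\mathrm{hom}}=\CH^4_\FF(F)_4$.

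The step I expect to require the real work is the \emph{degree-zero} pieces $\CH^i_{\mathrm{CK}}(F)_0$, whose generators are $S_\mathfrak{o}$, the classes $j_*\pi^*D$, $D_\mathfrak{o}=S_\mathfrak{o}\cdot\hat D$, $j_*\pi^*\mathfrak{o}_S$, and the divisor intersections $p_*\rho^*\big(p_1^*D_1\cdot p_2^*D_2\big)$; here nothing on the Fourier side immediately forces membership in $\CH^i_\FF(F)_0$. This is exactly where Theorem~\ref{thm reformulation voisin} (Voisin's theorem) is indispensable: it guarantees that every polynomial in divisors and in $l$ lies in $\bigoplus_i\CH^i_\FF(F)_0$. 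I would therefore rewrite each such generator as a polynomial in divisors, in $l$, and in $S_\mathfrak{o}$ — using the blow-up relations already recorded in the proof of Proposition~\ref{prop Fourier pieces F} (for instance $\delta^2\cdot\hat D=-j_*(\xi\cdot\pi^*D)$ and $\delta\cdot\hat D_1\hat D_2=2\,j_*\pi^*(D_1 D_2)$) together with the fact $S_\mathfrak{o}\in\CH^2_\FF(F)_0$ established in Lemma~\ref{lem intersecting 2-cycle and S_o} — and conclude membership in $\CH^i_\FF(F)_0$ from Theorem~\ref{thm reformulation voisin}. The bulk of the argument is this bookkeeping of pushforward identities rather than any conceptual obstruction; once the degree-zero inclusions are in place, the linear-algebra reduction of the second paragraph upgrades all the inclusions to equalities and closes the proof.
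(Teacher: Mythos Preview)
Your overall plan — reducing via Remark~\ref{rmk compatibility blowup hilbert} to the equality $\CH^i_\FF(F)_s=\CH^i_{\mathrm{CK}}(F)_s$ and then matching the explicit descriptions from Propositions~\ref{prop CK pieces S2}, \ref{prop CK pieces Z} and~\ref{prop Fourier pieces F} — is exactly the paper's strategy, and your linear-algebra reduction to a single family of inclusions is a clean way to organize it. The checks for the positive-weight pieces and for $i=3,4$ in weight zero are fine.

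The gap is at $(i,s)=(2,0)$. The list of ``generators'' you give for $\CH^2_{\mathrm{CK}}(F)_0$ --- namely $S_\mathfrak{o}$, $j_*\pi^*D$, and divisor products $p_*\rho^*(p_1^*D_1\cdot p_2^*D_2)$ --- does not generate this group. By the blow-up formula one has $\CH^2_{\mathrm{CK}}(Z)_0=\rho^*\CH^2_{\mathrm{CK}}(S\times S)_0\oplus j'_*\pi^*\CH^1(S)$, and Proposition~\ref{prop CK pieces S2} describes $\CH^2_{\mathrm{CK}}(S\times S)_0$ only by a \emph{condition}, not by generators; it contains all of $(\pi^2_S\otimes\pi^2_S)_*\CH^2(S\times S)\cong\mathrm{End}(\mathfrak{h}^2(S))$, which is certainly not spanned by decomposable classes. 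So pushing forward a putative list of generators and invoking Theorem~\ref{thm reformulation voisin} cannot close this case.

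The fix is what the paper actually does for this piece: compare the two characterizing conditions directly. If $p^*\alpha\in\CH^2_{\mathrm{CK}}(Z)_0$, then $p^*\alpha\cdot\rho^*p_1^*\mathfrak{o}_S=a\,\mathfrak{o}_Z$; applying $p_*$ and the projection formula gives $\alpha\cdot S_\mathfrak{o}=a\,\mathfrak{o}_F$, i.e.\ $\alpha\in\CH^2_\FF(F)_0$. Conversely, if $\alpha\in\CH^2_\FF(F)_0$, write $p^*\alpha=\beta_0+\beta_2$ along the CK grading of $Z$; since $\beta_2$ is $\tau$-symmetric and $\CH^2_{\mathrm{CK}}(Z)_2=\rho^*(p_1^*\CH^2(S)_2\oplus p_2^*\CH^2(S)_2)$, one has $\beta_2=\sum_i\rho^*(p_1^*(x_i-y_i)+p_2^*(x_i-y_i))$, and then the condition $p_*(\beta_2\cdot\rho^*p_1^*\mathfrak{o}_S)=0$ forces $\sum_i(x_i-y_i)=0$ in $\CH^2(S)$, hence $\beta_2=0$. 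Note that it is precisely because $\CH^2_{\mathrm{CK}}(Z)_2$ \emph{does} have explicit generators that this converse works --- the asymmetry between the two weights is the point you missed.
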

\begin{proof}
  This is a direct consequence of Propositions \ref{prop CK pieces
    S2}, \ref{prop CK pieces Z} and \ref{prop Fourier pieces F}. For
  example, consider a $2$-cycle $\alpha\in \CH^2(F)$. If $p^*\alpha\in
  \CH^2_{\mathrm{CK}}(F)_0$, then
\[
p^*\alpha \cdot \rho^*p_1^*\mathfrak{o}_S = a \mathfrak{o}_Z, \quad
\mbox{for some} \ a\in \Q.
\]
We apply $p_*$ to the above equation and find
\[
\alpha\cdot S_\mathfrak{o} = p_*(p^*\alpha \cdot
\rho^*p_1^*\mathfrak{o}_S) = a p_*\mathfrak{o}_Z = a \mathfrak{o}_F.
\]
This yields that $\alpha\in \CH^2_{\FF}(F)_0$. Conversely, if
$\alpha\in \CH^2_{\FF}(F)_0$, then we write
\[
 p^*\alpha = \beta_0 +\beta_2, \qquad \beta_s\in \CH^2_{\mathrm{CK}}(Z)_s.
\]
The condition $\alpha \cdot S_\mathfrak{o} = b\mathfrak{o}_F$ for some
$b\in \Q$ implies that
\[
p_*(p^*\alpha \cdot \rho^*p_i^*\mathfrak{o}_S) =\alpha\cdot
S_{\mathfrak{o}} = b\mathfrak{o}_F.
\]
It follows that $p_*(\beta_2\cdot \rho^*p_i^*\mathfrak{o}_S) =
b'\mathfrak{o}_F$. Since $\beta_2$ is homologically trivial, we
actually have
 \[
 p_*(\beta_2\cdot \rho^*p_i^*\mathfrak{o}_S) = 0.
 \]
 Since $\beta_2$ is symmetric, we can write
\[
\beta_2 = \sum_i\rho^*\big(p_1^*(x_i-y_i) + p_2^*(x_i -y_i)\big),
\quad \mbox{for some} \ x_i, y_i \in S.
\]
Therefore we have
\[
0=p_*(\beta_2\cdot \rho^*p_1^*\mathfrak{o}_S) = p_* \rho^*
\left(\sum_i (\mathfrak{o},x_i)-(\mathfrak{o},y_i)\right) =
\sum_i([\mathfrak{o},x_i]-[\mathfrak{o},y_i]).
\]
It follows that $\sum_i (x_i-y_i)=0$ in $\CH^2(S)$. Thus we get
$\beta_2=0$, namely $p^*\alpha\in \CH^2_{\mathrm{CK}}(Z)_0$. All the
other cases can be easily proved and the details are thus omitted.
\end{proof}

Given Remark \ref{rmk compatibility blowup hilbert}, we readily get
the following
\begin{thm} \label{thm multS2complete} The Fourier decomposition
  agrees with the Chow--K\"unneth decomposition, namely
\[
\CH^p_{\FF}(S^{[2]})_s = \CH^p_{\mathrm{CK}}(S^{[2]})_s.
\]
Consequently, the Fourier decomposition is compatible with
intersection product, \emph{i.e}, the conclusion of Theorem \ref{thm
  main mult} holds for $F=S^{[2]}$.  \qed
\end{thm}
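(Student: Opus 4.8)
The plan is to deduce the equality of the two gradings degree-by-degree directly from the two facts established immediately before the statement: the characterization $\CH^i_{\FF}(F)_s = \{\alpha \in \CH^i(F) : p^*\alpha \in \CH^i_{\mathrm{CK}}(Z)_s\}$ of the preceding Proposition, and the compatibility of the double cover $p : Z \to F$ with both Chow--K\"unneth gradings recorded in Remark \ref{rmk compatibility blowup hilbert}, where $Z$ is the blow-up of $S \times S$ along the diagonal and $F = S^{[2]}$.

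First I would establish the inclusion $\CH^i_{\mathrm{CK}}(F)_s \subseteq \CH^i_{\FF}(F)_s$: if $\alpha \in \CH^i_{\mathrm{CK}}(F)_s$ then $p^*\alpha \in \CH^i_{\mathrm{CK}}(Z)_s$ by Remark \ref{rmk compatibility blowup hilbert}, so $\alpha$ satisfies the defining condition of $\CH^i_{\FF}(F)_s$. For the reverse inclusion I would use that $p$ is finite of degree $2$, whence $p_* p^* = 2\,\mathrm{id}$ on $\CH^*(F)$ by the projection formula (we work with $\Q$-coefficients throughout). Thus if $\alpha \in \CH^i_{\FF}(F)_s$, that is $p^*\alpha \in \CH^i_{\mathrm{CK}}(Z)_s$, then $2\alpha = p_* p^*\alpha$ lies in $p_*\CH^i_{\mathrm{CK}}(Z)_s = \CH^i_{\mathrm{CK}}(F)_s$, again by Remark \ref{rmk compatibility blowup hilbert}; dividing by $2$ gives $\alpha \in \CH^i_{\mathrm{CK}}(F)_s$. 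The two inclusions prove $\CH^i_{\FF}(F)_s = \CH^i_{\mathrm{CK}}(F)_s$.

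For the ``consequently'' part I would simply invoke multiplicativity: the Chow--K\"unneth decomposition of $F = S^{[2]}$ furnished by Theorem \ref{thm multCK X2} (built from the multiplicative self-dual decomposition \eqref{eq K3multCK} of $S$) is multiplicative, so $\CH^i_{\mathrm{CK}}(F)_s \cdot \CH^j_{\mathrm{CK}}(F)_r \subseteq \CH^{i+j}_{\mathrm{CK}}(F)_{r+s}$. By the equality just proved, this is exactly $\CH^i_{\FF}(F)_s \cdot \CH^j_{\FF}(F)_r \subseteq \CH^{i+j}_{\FF}(F)_{r+s}$, which is the inclusion asserted in Theorem \ref{thm main mult}. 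The sharper ``equality except for $\CH^3(F)_2$ and $\CH^2(F)_0$'' assertions have in any case already been read off the explicit descriptions of the Fourier pieces in Proposition \ref{prop Fourier pieces F} (for instance $\CH^4_{\FF}(F)_4 = \CH^2_{\FF}(F)_2 \cdot \CH^2_{\FF}(F)_2$).

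The main obstacle is that there is genuinely none left at this stage: all the substance has been front-loaded into the preceding Proposition and into the construction and multiplicativity of the Chow--K\"unneth decomposition in Theorem \ref{thm multCK X2}. The only points needing a moment's care are the identity $p_*p^* = 2\,\mathrm{id}$ and the verification that the two containments of Remark \ref{rmk compatibility blowup hilbert} are stated for precisely the decomposition on $Z$ used in the preceding Proposition, both of which are immediate.
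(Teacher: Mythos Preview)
Your proof is correct and follows exactly the paper's approach: the theorem is stated in the paper with a \qed immediately after the sentence ``Given Remark \ref{rmk compatibility blowup hilbert}, we readily get the following'', and you have simply spelled out the two-line deduction from that remark together with the preceding proposition. The use of $p_*p^* = 2\,\mathrm{id}$ to obtain the reverse inclusion, and the appeal to Theorem \ref{thm multCK X2} for multiplicativity, are precisely what the paper intends.
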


\begin{rmk}
  One can check that the cycle $L$ representing the
  Beauville--Bogomolov class defined in \eqref{eq L S2} belongs to the
  graded piece $\CH_{\mathrm{CK}}^2(S^{[2]}\times S^{[2]})_0$. Here,
  the Chow--K\"unneth decomposition of $S^{[2]}\times S^{[2]}$ is
  understood to be the product Chow--K\"unneth decomposition of the
  multiplicative Chow--K\"unneth decomposition of $S^{[2]}$ given by
  Theorem \ref{thm2 CK}.
\end{rmk}

\section{The cycle $L$ of $S^{[2]}$ via moduli of stable sheaves}
\label{sec L agree}

Let $F=S^{[2]}$ for some K3 surface $S$. So far, we have defined two
cycles $L \in \CH^2(F \times F)$ lifting the Beauville--Bogomolov
class $\mathfrak{B}$. The first one was defined in \eqref{def L
  Markman sheaf} using Markman's twisted sheaf $\mathscr{M}$ of
Definition \ref{def Markman sheaf} on $F \times F$ with $F$ seen as
the moduli space of stable sheaves on $S$ with Mukai vector
$v=(1,0,-1)$, while the second one was defined in \eqref{eq L S2}
using the incidence correspondence $I$. The goal of this section is to
prove that these two cycles agree.

\begin{prop}\label{prop L agree}
  The cycle $L$ defined in \eqref{eq L S2} agrees with the cycle $L$
  defined in \eqref{def L Markman sheaf}.
\end{prop}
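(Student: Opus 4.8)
The plan is to show that the two constructions of $L$ yield the same class in $\CH^2(F \times F)$ by exploiting the rigidity we have already established. Both cycles represent $\mathfrak{B}$ cohomologically, so their difference $E := L_{\text{mod}} - L_{\text{inc}}$ is homologically trivial. The key structural fact to invoke is Proposition \ref{prop L unique} (together with Remark \ref{rmk assumptions satisfied}): both cycles are symmetric and, as the cycle defined in \eqref{eq L S2} has been shown throughout Section \ref{sec Fourierdec S2} to satisfy the quadratic equation \eqref{eq rational equation} and hypotheses \eqref{assumption pre l}, \eqref{assumption hom}, \eqref{assumption 2hom}, the uniqueness statement applies verbatim \emph{provided} the cycle $L$ of \eqref{def L Markman sheaf} can also be shown to satisfy the same quadratic equation \eqref{eq rational equation}. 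So the approach factors into two tasks: first, verify that both cycles are symmetric, and second, verify that Markman's cycle satisfies the same equation \eqref{eq rational equation}.

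First I would record symmetry. The incidence cycle $L$ of \eqref{eq L S2} is manifestly symmetric since $I$, $p_1^*S_\mathfrak{o} + p_2^*S_\mathfrak{o}$ and $\delta_1\delta_2$ are each invariant under the factor-swap involution. For Markman's cycle, symmetry follows because $\kappa_2(\mathscr{M})$ is symmetric: the sheaf $\mathscr{E}xt^1_{p_{12}}(p_{13}^*\mathscr{E}, p_{23}^*\mathscr{E})$ transforms into its transpose under the swap by Grothendieck--Serre duality (compare Lemma \ref{lem support of ext}), and the correction terms $\tfrac{1}{2}p_1^*(l-c_2(F)) + \tfrac{1}{2}p_2^*(l-c_2(F))$ are visibly symmetric. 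Next I would verify the quadratic equation \eqref{eq rational equation} for the cycle \eqref{def L Markman sheaf}. Here I expect two possible routes. The direct route computes $\kappa_2(\mathscr{M})^2$ in $\CH^4(F \times F)$ using the multiplicativity of $\kappa$ and the structure of $\mathscr{M}$, which is delicate. The cleaner route is to observe that, by Lemma \ref{lem cohomology to chow} or the injectivity results underlying Theorem \ref{thm voisin}, a \emph{polynomial} identity in cycles lying in the subalgebra generated by $L$, $l$ and the diagonal that holds cohomologically already holds in $\CH^*(F \times F)$; since \eqref{eq cohomological equation S2} holds for $\mathfrak{B} = [L]$ by Proposition \ref{prop action of B powers}, one lifts it to \eqref{eq rational equation}, granting that the requisite pieces lie in the degree-zero graded part.

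Once both cycles are known to be symmetric and to satisfy \eqref{eq rational equation}, Proposition \ref{prop L unique} forces them to coincide, \emph{conditionally} on the Bloch--Beilinson conjectures used in its proof. To obtain an \emph{unconditional} statement I would instead bypass Proposition \ref{prop L unique} and argue directly with the error term $E$. Writing $\varepsilon := \iota_\Delta^* E$, subtracting the two quadratic equations and pulling back along the diagonal as in the proof of Proposition \ref{prop L unique} yields $\varepsilon^2 + 2\,\varepsilon \cdot l = 0$; and since $\iota_\Delta^* L_{\text{mod}} = l = \iota_\Delta^* L_{\text{inc}}$ (both equal $\tfrac{5}{6}c_2(F)$, by \eqref{eq l on S2} for the incidence cycle and by the Remark following Theorem \ref{thm existence of L} for Markman's cycle, which is where item \emph{(iii)} of Lemma \ref{lem top kappa of F} must be lifted to Chow groups), we have $\varepsilon = 0$. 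Because $E$ is homologically trivial, Theorem \ref{thm CK} places $\varepsilon \in \CH^2(F)_2$ and the relation $\varepsilon \cdot l \in \CH^4(F)_2$ combined with injectivity of $l\cdot : \CH^2(F)_2 \to \CH^4(F)$ gives $\varepsilon \cdot l = \varepsilon^2 = 0$, hence $\varepsilon = 0$.

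The main obstacle I anticipate is precisely the lifting of item \emph{(iii)} of Lemma \ref{lem top kappa of F} to rational equivalence, i.e. establishing $\iota_\Delta^*\kappa_2(\mathscr{M}) = -c_2(F)$ in $\CH^2(F)$ rather than merely in cohomology, equivalently $\iota_\Delta^* L_{\text{mod}} = l$ at the Chow level. For $F = S^{[2]}$ with the explicit universal ideal sheaf $\mathcal{I}$, one has $\mathscr{M} = \mathscr{E}xt^1_{p_{12}}(p_{13}^*\mathcal{I}, p_{23}^*\mathcal{I})$ with no deformation needed, so the computation of $\iota_\Delta^*\kappa_2(\mathscr{M})$ reduces to a Grothendieck--Riemann--Roch calculation on $S^{[2]} \times S$ entirely parallel to Lemma \ref{lem chern class S2} and Lemma \ref{lem diagonal pull-back of I}; the hard part is bookkeeping the contributions of $\mathscr{E}^0$ and $\mathscr{E}^2$, which are supported on the diagonal (Lemma \ref{lem support of ext}) and therefore contribute to $\iota_\Delta^*$ even though they dropped out of the degree-$2$ cohomological computation in Proposition \ref{prop kappa22}. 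Once this diagonal restriction is pinned down in $\CH^*$ and matched against the formula \eqref{eq l on S2} for the incidence construction, the two cycles $L$ agree.
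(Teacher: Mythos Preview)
The paper's proof is entirely different from your approach: it computes $\kappa_2(\mathscr{M})$ directly in $\CH^2(F\times F)$ via Grothendieck--Riemann--Roch. Starting from the explicit ideal sheaf $\mathcal{I}$ of the universal subscheme, one writes down $\mathrm{ch}(\mathcal{I})$, applies GRR to the projection $p_{12}$ to obtain the low-degree terms of $\mathrm{ch}(\mathscr{E}^1)$, and reads off
$\kappa_2(\mathscr{E}^1) = -I + \tfrac{1}{4}\delta_1^2 + \tfrac{1}{4}\delta_2^2 + \tfrac{1}{2}\delta_1\delta_2$.
Substituting \eqref{eq L S2} and Lemma \ref{lem chern class S2} rewrites this as $-L_{\text{inc}} - \tfrac{1}{12}p_1^*c_2(F) - \tfrac{1}{12}p_2^*c_2(F)$, and plugging into the defining formula \eqref{def L Markman sheaf} gives $L_{\text{mod}} = L_{\text{inc}}$ on the nose. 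The identity $\iota_\Delta^*\kappa_2(\mathscr{M}) = -c_2(F)$ in $\CH^2(F)$ then falls out as a \emph{consequence} (equation \eqref{eq kappa c2}), rather than being an input.

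Your approach has a genuine gap. Even granting that both cycles are symmetric, satisfy \eqref{eq rational equation}, and have the same diagonal restriction (so $\varepsilon = 0$), you never explain how to pass from $\varepsilon = \iota_\Delta^*E = 0$ to $E = 0$ in $\CH^2(F\times F)$. This is precisely the step in Proposition \ref{prop L unique} that invokes the Bloch--Beilinson conjectures: one needs that every homologically trivial class in $\CH^2(F\times F)$ decomposes as $p_1^*\mu + p_2^*\nu$, and without this nothing forces $E$ to vanish. Your final sentence (``Once this diagonal restriction is pinned down \ldots\ the two cycles $L$ agree'') asserts exactly the implication that is missing. A secondary gap is your proposed lifting of \eqref{eq cohomological equation S2} to \eqref{eq rational equation} for Markman's $L$ via Lemma \ref{lem cohomology to chow}: that lemma only covers polynomial relations among classes in $p_1^*\mathrm{V}_F$ and $p_2^*\mathrm{V}_F$, and neither $L$ nor $\Delta_F$ lies in that subalgebra, so the lifting does not follow. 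In short, the uniqueness route does not close unconditionally, and the GRR computation you flag as the ``main obstacle'' for the diagonal restriction is essentially the same computation the paper carries out globally---so doing it only on the diagonal buys nothing and still leaves the argument incomplete.
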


Let $Z\subset F\times S$ be the universal family of length-$2$
subschemes of $S$. Let $\mathcal{I}$ be the ideal sheaf defining $Z$
as a subscheme of $F\times S$. In this way, we realize $F$ as the
moduli space of stable sheaves with Mukai vector $v=(1,0,-1)$ and
$\mathcal{I}$ is the universal sheaf. As before, we use
$p :Z\rightarrow F$ and $q :Z\rightarrow S$ to denote the natural
projections. These are the restriction to $Z$ of the natural
projections $\pi_F$ and $\pi_S$ from $F\times S$ to the corresponding
factors.

\begin{lem}\label{lem chern character of ideal} Let $Y$ be a smooth
projective variety and $W\subset Y$ a smooth closed sub-variety. If
$\mathcal{I}_{W}$ is the ideal sheaf defining the sub-variety $W$, then
$$ \mathrm{ch}(\mathcal{I}_W) = 1 -
i_{W_*}\left(\frac{1}{\mathrm{td}(\mathscr{N}_{W/Y})}\right),$$ where
$i_W : W\hookrightarrow Y$ is the natural closed embedding.
\end{lem}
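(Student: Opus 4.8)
The statement to prove is Lemma~\ref{lem chern character of ideal}, computing the Chern character of the ideal sheaf $\mathcal{I}_W$ of a smooth closed sub-variety $W \subset Y$. My plan is to exploit the short exact sequence of sheaves on $Y$
\[
0 \longrightarrow \mathcal{I}_W \longrightarrow \calO_Y \longrightarrow i_{W_*}\calO_W \longrightarrow 0,
\]
which by additivity of the Chern character on short exact sequences gives $\mathrm{ch}(\mathcal{I}_W) = \mathrm{ch}(\calO_Y) - \mathrm{ch}(i_{W_*}\calO_W) = 1 - \mathrm{ch}(i_{W_*}\calO_W)$. Thus everything reduces to computing $\mathrm{ch}(i_{W_*}\calO_W)$, the Chern character of the structure sheaf of $W$ pushed forward to $Y$.

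The key tool for this is the Grothendieck--Riemann--Roch theorem applied to the closed embedding $i_W : W \hookrightarrow Y$, which states that
\[
\mathrm{ch}(i_{W_*}\calO_W) \cdot \mathrm{td}(\mathscr{T}_Y) = i_{W_*}\big(\mathrm{ch}(\calO_W)\cdot \mathrm{td}(\mathscr{T}_W)\big).
\]
Since $\mathrm{ch}(\calO_W) = 1$, the right-hand side is $i_{W_*}\,\mathrm{td}(\mathscr{T}_W)$. Using the normal bundle exact sequence $0 \to \mathscr{T}_W \to i_W^*\mathscr{T}_Y \to \mathscr{N}_{W/Y} \to 0$ and the multiplicativity of the Todd class, we have $i_W^*\mathrm{td}(\mathscr{T}_Y) = \mathrm{td}(\mathscr{T}_W)\cdot \mathrm{td}(\mathscr{N}_{W/Y})$, hence $\mathrm{td}(\mathscr{T}_W) = i_W^*\mathrm{td}(\mathscr{T}_Y)/\mathrm{td}(\mathscr{N}_{W/Y})$. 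Substituting and applying the projection formula $i_{W_*}(i_W^*\alpha \cdot \beta) = \alpha \cdot i_{W_*}\beta$, I would obtain
\[
\mathrm{ch}(i_{W_*}\calO_W)\cdot \mathrm{td}(\mathscr{T}_Y) = \mathrm{td}(\mathscr{T}_Y)\cdot i_{W_*}\!\left(\frac{1}{\mathrm{td}(\mathscr{N}_{W/Y})}\right).
\]
Since $\mathrm{td}(\mathscr{T}_Y)$ is invertible in $\CH^*(Y)$ (its degree-zero part is $1$), I can cancel it from both sides to conclude $\mathrm{ch}(i_{W_*}\calO_W) = i_{W_*}\big(1/\mathrm{td}(\mathscr{N}_{W/Y})\big)$, and therefore $\mathrm{ch}(\mathcal{I}_W) = 1 - i_{W_*}\big(1/\mathrm{td}(\mathscr{N}_{W/Y})\big)$, as claimed.

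This proof is essentially a routine application of Grothendieck--Riemann--Roch, so there is no serious obstacle; the only point demanding mild care is the legitimacy of cancelling $\mathrm{td}(\mathscr{T}_Y)$ and the correct bookkeeping of the projection formula, ensuring that the pull-back factor $i_W^*\mathrm{td}(\mathscr{T}_Y)$ is precisely what recombines with the ambient Todd class. I expect the entire argument to be short, and it supplies exactly the Chern-character input needed in the subsequent comparison of the two cycles $L$ in the proof of Proposition~\ref{prop L agree}.
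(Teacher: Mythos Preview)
Your proposal is correct and follows essentially the same route as the paper: reduce to computing $\mathrm{ch}(i_{W_*}\calO_W)$ via the short exact sequence defining $\mathcal{I}_W$, then apply Grothendieck--Riemann--Roch to the closed embedding $i_W$ and simplify using the normal bundle sequence and the projection formula. The paper's argument is identical in structure, only slightly more compressed in presenting the GRR manipulation.
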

\begin{proof}
  There is a short exact sequence $$ \xymatrix{ 0\ar[r] &\mathcal{I}_W
    \ar[r] &\calO_Y \ar[r] &i_{W_*}\calO_W \ar[r] &0 }
$$ on $Y$ and hence we have $\mathrm{ch}(\mathcal{I}_W) = 1 -
\mathrm{ch}(i_{W_*}\calO_W)$. If we apply the
Grothendieck--Riemann--Roch theorem to the morphism $i_W$ and the
structure sheaf $\calO_W$, we get
\begin{align*}
\mathrm{ch}(i_{W_*}\calO_W) & =
\frac{i_{W*}\left(\mathrm{ch}(\calO_W)\mathrm{td}_W \right)}{\mathrm{td}_Y}\\
 & =\frac{i_{W_*}\mathrm{td}_W}{\mathrm{td}_Y}\\
 & = i_{W_*}\left(\frac{1}{\mathrm{td}(\mathscr{N}_{W/Y})}\right),
\end{align*}
 which proves the lemma.
\end{proof}

\begin{proof}[Proof of Proposition \ref{prop L agree}]
  Considering the universal family $i_Z : Z\hookrightarrow F\times S$,
  we easily deduce from \eqref{eq Chern class of normal bundle} that
 $$
 c_1(\mathscr{N}_{Z/F\times S}) = E,\quad c_2(\mathscr{N}_{Z/F\times
   S})= 24 q^*\mathfrak{o}_S - E^2, \quad c_3(\mathscr{N}_{Z/F\times
   S})=0,\quad c_4(\mathscr{N}_{Z/F\times S})=0.
 $$
Then the Todd classes of $\mathscr{N}_{Z/F\times S}$ can be written
 explicitly as
$$
\mathrm{td}_1(\mathscr{N}_{Z/F\times
   S})=\frac{1}{2}E, \quad \mathrm{td}_2(\mathscr{N}_{Z/F\times S})= 2
 q^*\mathfrak{o}_S, \quad \mathrm{td}_3(\mathscr{N}_{Z/F\times S})= 2
 q^*\mathfrak{o}_S\cdot E , \quad \mathrm{td}_4(\mathscr{N}_{Z/F\times
   S})=0,
$$
where we use the fact that  $E^3 = -24
 j_{Z*}\eta^*\mathfrak{o}_S$. By Lemma \ref{lem chern character of
   ideal}, we have
\begin{equation}\label{eq chern character of I}
\mathrm{ch}(\mathcal{I}) = 1 -Z +\frac{1}{2}\pi_F^*\delta\cdot Z
+(2\pi_S^*\mathfrak{o}_S\cdot Z -\frac{1}{4}\pi_F^*\delta^2\cdot Z)
+\frac{1}{8}\pi_F^*\delta^3\cdot Z +\frac{1}{2}\pi_F^*\mathfrak{o}_F\cdot
Z.
\end{equation}
Let $\mathscr{E}=\mathrm{R}p_{12_*}(p_{13}^*\mathcal{I}^\vee \otimes
p_{23}^*\mathcal{I})$, where $p_{ij}$ and $p_k$ are the natural
projections from $F\times F\times S$ and all the push-forward,
pull-back and tensor product are in the derived sense. Then, by the
Grothendieck--Riemann--Roch formula, we have
$$
\mathrm{ch}(\mathscr{E}) =
p_{12_*}(p_{13}^*\mathrm{ch}^\vee(\mathcal{I}) \cdot
p_{23}^*\mathrm{ch}(\mathcal{I}) \cdot p_3^*\mathrm{td}_S).
$$
We put
equation \eqref{eq chern character of I} into the above formula and
eventually get
\begin{equation}
\mathrm{ch}(\mathscr{E}) = -2 +(-\delta_1 +\delta_2)
  +(I - \frac{1}{2}\delta_1^2 -\frac{1}{2}\delta_2^2) + \cdots,
\end{equation}
where $I$ is the incidence correspondence of Definition \ref{defn
  incidence correspondence S2}. Let $\mathscr{E}^i :=
\mathcal{E}xt^i_{p_{12}}(p_{13}^*\mathcal{I},p_{23}^*\mathcal{I})$,
$i=0,1,2$. Note that in the current situation, Markman's sheaf $\mathscr{M}$
is simply $\mathscr{E}^1$. In the $K$-group, we have
$$
 [\mathscr{E}] = [\mathscr{E}^0]-[\mathscr{E}^1]+[\mathscr{E}^2]
$$
By Lemma \ref{lem support of ext}, $\mathscr{E}^0$ and $\mathscr{E}^2$
do not contribute to lower degree terms in the total Chern
character. Hence $\mathrm{ch}(\mathscr{E}^1)$ agrees with
$-\mathrm{ch}(\mathscr{E})$ in low degree terms. To be more precise,
we have
$$
\mathrm{ch}(\mathscr{E}^1) = 2 + (\delta_1
- \delta_2) +(-I +\frac{1}{2}\delta_1^2 +\frac{1}{2}\delta_2^2)
+\cdots.
$$
It follows that
$$
\kappa(\mathscr{E}^1) =
\mathrm{ch}(\mathscr{E}^1)\mathrm{exp}(-c_1(\mathscr{E}^1)/r) = 2 -I
+\frac{1}{4}\delta_1^2 +\frac{1}{4}\delta_2^2
+\frac{1}{2}\delta_1\delta_2+\cdots,
$$
where $r=\rk(\mathscr{E}^1)=2$. Thus we have
\begin{align*}
  \kappa_2(\mathscr{E}^1) &= -I +\frac{1}{4}\delta_1^2
  +\frac{1}{4}\delta_2^2
  +\frac{1}{2}\delta_1\delta_2\\
  &= -L -2p_1^*S_{\mathfrak{o}} -2p_2^*S_{\mathfrak{o}}
  ++\frac{1}{4}\delta_1^2 +\frac{1}{4}\delta_2^2\\
  &= -L -\frac{1}{12}p_1^*c_2(F) -\frac{1}{12}p_2^*c_2(F),
\end{align*} where $p_i :F\times F\rightarrow F$ are the projections.
Here, $L$ denotes the cycle defined in \eqref{eq L S2} and the
third equality uses Lemma \ref{lem chern class S2}. One consequence of
the above is
\begin{equation}\label{eq kappa c2}
\iota_{\Delta}^*\kappa_2(\mathscr{E}^1)= -l -\frac{1}{6}c_2(F) =-c_2(F).
\end{equation}
Comparing the above computation with \eqref{def L Markman sheaf}
yields the proposition.
\end{proof}


\newpage
\begin{large}
\part{The variety of lines on a cubic fourfold}
\end{large}

Fix a $6$-dimensional vector space $V$ over $\C$. Let $X\subset\PP(V)$
be a smooth cubic fourfold and $F=F(X)$ be the variety of lines on
$X$. It is known that $F$ is smooth of dimension $4$~; see \cite{ak}.
Let $h\in\Pic(X)$ be the class of a hyperplane section on $X$. The
variety $F$ naturally embeds into the Grassmannian $G(2,V)$ and hence
inherits the tautological rank $2$ sub-bundle $\mathscr{E}_2\subset
V$. Let $g=-c_1(\mathscr{E}_2)\in\Pic(F)$ be the Pl\"ucker
polarization on $F$ and $c=c_2(\mathscr{E}_2)$. Set
$g_i=p_i^*g\in\CH^1(F\times F)$ and $c_i=p_i^*c\in\CH^2(F\times F)$,
$i=1,2$, where $p_i :F\times F\rightarrow F$ are the projections. For
all points $x\in X$, let $C_x$ be the sub-variety of $F$
parameterizing all lines passing through $x$. If $x \in X$ is general,
then $C_x$ is a curve (\emph{cf.} \cite[Lemma 2.1]{cs}). Voisin
\cite{voisin2} showed that $F$ carries a canonical 0-cycle,
$\mathfrak{o}_F$, of degree 1 such that the intersection of four
divisors is always a multiple of $\mathfrak{o}_F$. We always use
$$
\xymatrix{
 P\ar[r]^q\ar[d]_p &X\\
 F &
}
$$
to denote the universal family of lines on $X$.

\vspace{10pt}
\section{The incidence correspondence $I$}\label{sec I cubic}

We introduce the incidence correspondence $I \in \CH^2(F\times F)$ on
the variety of lines on a cubic fourfold. This cycle is the starting
point to define the Fourier transform on the Chow ring of $F$. We
establish three core identities involving $I$, namely we compute $I^2
= I \cdot I$, the pull-back of $I$ along the diagonal embedding
$\iota_\Delta : F \hookrightarrow F \times F$ and $c_1 \cdot
I$.\medskip

If $l\subset X$ is a line, then we use $[l]\in F$ to denote the
corresponding point on $F$~; If $t\in F$ is a closed point, then we
use $l_t\subset X$ to denote the corresponding line on $X$.

\begin{defn}\label{defn of I}
  The \emph{incidence subscheme} $I\subset F\times F$ is defined to be
$$
I :=\{(s,t)\in F\times F : l_s\cap l_t\neq\emptyset\},
$$
which is endowed with the reduced closed subscheme structure. Its
cycle class in $\CH^2(F\times F)$, also denoted $I$ by abuse of
notations, is called the \emph{incidence correspondence}.  With the
projection onto the first factor $F$, we get a morphism $I\rightarrow
F$ whose fiber over a point $[l]\in F$ is denoted $S_l$.
\end{defn}
It is known that $S_l$ is always a surface and smooth if $l$ is
general~; see \cite{voisin}. Consider the following diagram
\begin{equation}\label{eq double cylingder}
\xymatrix{
 P\times P\ar[rr]^{q\times q}\ar[d]_{p\times p} &&X\times X\\
 F\times F &&
}
\end{equation}

\begin{lem}\label{lem cycle class of I on F(X)}
The cycle class of $I$ is given by
\[
 I = (p\times p)_*(q\times q)^*\Delta_X,\quad \text{in }\CH^2(F\times F).
\]
Equivalently, if one sees the universal family of lines $P$ on the
cubic fourfold $X$ as a correspondence between $F$ and $X$, then $$ I
= {}^tP \circ P ,\quad \text{in }\CH^2(F\times F).$$
\end{lem}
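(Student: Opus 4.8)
The plan is to prove Lemma \ref{lem cycle class of I on F(X)} by
identifying the incidence subscheme $I$ set-theoretically with the
image of the universal correspondence and then checking that this
image is generically reduced of the expected dimension, so that the
cycle-theoretic push-forward computes $I$ with multiplicity one. The
key observation is that a pair $(s,t)\in F\times F$ lies in $I$
precisely when the lines $l_s$ and $l_t$ meet, i.e.\ when there exists
a point $x\in X$ with $x\in l_s$ and $x\in l_t$; this is exactly the
condition defining $(p\times p)(\,(q\times q)^{-1}\Delta_X\,)$.
Concretely, the fibered product $(q\times q)^{-1}\Delta_X\subset
P\times P$ parameterizes triples consisting of a point $x\in X$
together with two lines through $x$, and its cycle class is $(q\times
q)^*\Delta_X$ by flatness (indeed smoothness) of $q\times q$, since
$q\colon P\to X$ is a $\PP^1$-bundle and hence smooth of relative
dimension $1$. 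The map $p\times p$ restricted to this locus surjects
onto $I$, and I want to show it is generically one-to-one.

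First I would record that $q\colon P\to X$ is a smooth morphism (its
fiber over $x$ is the curve $C_x$ of lines through $x$, of dimension
$1$ for general $x$), so that $q\times q\colon P\times P\to X\times X$
is smooth and $(q\times q)^*\Delta_X$ is represented by the smooth
variety $W:=(q\times q)^{-1}\Delta_X$. Then I would compute the degree
of $(p\times p)|_W\colon W\to I$. A general point of $I$ has the form
$(s,t)$ with $l_s\cap l_t$ a single point $x$, and $l_s\neq l_t$; the
preimage in $P\times P$ consists of pairs of points lying over $l_s$
and $l_t$ respectively, but the constraint $q\times q$ landing in the
diagonal forces both chosen points of $P$ to map to the common
intersection point $x$. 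Since a line meets $X$ transversally at $x$ in
a unique preimage in $P$ over the pair $(l,x)$, there is exactly one
point of $W$ mapping to $(s,t)$. This shows $(p\times p)|_W$ is
birational onto its image, hence $(p\times p)_*[W]=[I]$.

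The second, equivalent formulation $I={}^tP\circ P$ would then follow
by unwinding the definition of composition of correspondences: one has
${}^tP\circ P = (p\times p)_*\big((q\times q)^*\Delta_X\big)$ by the
standard formula for composing the correspondence $P\in\CH^*(F\times
X)$ with its transpose, using the projection formula and base change
along the diagram \eqref{eq double cylingder}. This is formally
identical to the computation already carried out for the Hilbert
scheme in Lemma \ref{lem cycle class of I on S2}, part \emph{(i)}, and
I would simply transcribe that argument to the present setting, where
$q$ is now the $\PP^1$-bundle of lines rather than the blow-up
projection.

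The main obstacle I anticipate is the degree computation for
$(p\times p)|_W$: one must verify that over a \emph{general} point of
$I$ the intersection $l_s\cap l_t$ is a single transverse point and
that the lift to $W$ is unique, ruling out the possibility that
generic fibers of $p\times p$ over $I$ contain several components of
$W$. This requires knowing that the general pair of incident lines
meets in exactly one point (two distinct lines on $X$ meet in at most
one point, being lines in projective space) and that $I$ is
irreducible of the correct dimension $6=\dim(F\times F)-2$, so that
$(p\times p)|_W$ is genuinely generically finite of degree one rather
than having positive-dimensional generic fibers. Once irreducibility
and the dimension count are in hand—both of which are classical for
the variety of lines on a cubic fourfold—the multiplicity-one
conclusion is immediate.
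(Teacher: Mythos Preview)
Your overall strategy matches the paper's: identify $W=(q\times q)^{-1}\Delta_X$, check it is irreducible of the expected dimension $6$, and verify that $(p\times p)|_W$ is birational onto $I$ by computing the fiber over a general point. However, there is a factual error that undermines your justification of the first step. The morphism $q:P\to X$ is \emph{not} a $\PP^1$-bundle; you have confused it with $p:P\to F$, which is the $\PP^1$-bundle. The fiber $q^{-1}(x)=C_x$ is the variety of lines through $x$, generically a curve of degree $6$ in $\PP^4$ (cut out by a linear, a quadric, and a cubic), not a $\PP^1$. More seriously, $q$ need not even be smooth or flat: by \cite[Corollary 2.2]{cs} there can exist finitely many Eckardt points $x_i\in X$ where $C_{x_i}$ is a surface, so your appeal to flatness of $q\times q$ to conclude that $(q\times q)^*\Delta_X=[W]$ with $W$ smooth is not valid as stated.

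The paper repairs this exactly where your argument breaks: it notes that $\dim C_x=1$ for general $x$ and that the Eckardt locus is finite, so the bad fibers contribute pieces of dimension at most $4$ to $W$, and hence $W$ is still irreducible of dimension $6$. Since $\Delta_X\hookrightarrow X\times X$ is a regular embedding and the preimage has the expected codimension, the refined Gysin pullback $(q\times q)^*\Delta_X$ is represented by $[W]$ with multiplicity one. After this correction the degree-one computation for $(p\times p)|_W$ proceeds exactly as you describe.
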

\begin{proof}
  For $x\in X$, the fiber $q^{-1}(x)$ is identified with $C_x$, the
  space of lines passing through $x$.  In \cite[Lemma 2.1]{cs}, it is
  shown that $\dim C_x=1$ for a general point $x\in X$. By
  \cite[Corollary 2.2]{cs}, there are at most finitely many points
  $x_i\in X$, called \emph{Eckardt points} (see \cite[Definition
  2.3]{cs}), such that $C_{x_i}$ is a surface.  As a consequence, $J
  :=(q\times q)^{-1}\Delta_X\subset P\times P$ is an irreducible
  variety of dimension $6$.  Furthermore, the image of $J$ under the
  morphism $p\times p$ is equal to $I$. To prove the lemma, it
  suffices to show that $(p\times p)|_{J} : J\rightarrow I$ is of
  degree $1$. This is clear since a general pair of intersecting lines
  on $X$ meet transversally in a single point.
\end{proof}

For future reference, let us state and prove the following basic
general lemma.
\begin{lem} \label{lem smooth general fiber} Let $Y$ and $Y'$ be
  smooth projective varieties and $\Gamma\subset Y\times Y'$ be a
  closed sub-variety of codimension $r$. Assume that
  $\Gamma\rightarrow Y$ is dominant with smooth general fibers. Then
$$
(\Gamma^2)_*y = (\Gamma_*y)^2
$$
holds in $\CH^*(Y)$ for a general point $y\in Y$. Here, $\Gamma^2$
denotes the self-intersection $\Gamma \cdot \Gamma \in \CH^*(Y \times
Y')$.
\end{lem}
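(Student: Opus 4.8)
The plan is to reduce the whole statement to the single fact that restriction of cycles to a fibre $\{y\}\times Y'$ is a ring homomorphism. Write $p_1,p_2$ for the two projections of $Y\times Y'$, and for a closed point $y\in Y$ let $\iota_y\colon Y'\xrightarrow{\ \sim\ }\{y\}\times Y'\hookrightarrow Y\times Y'$ denote the inclusion of the corresponding fibre. Since $Y$ is smooth at $y$, the point $\{y\}$ is cut out locally by a regular sequence, and because $p_1$ is flat this exhibits $\iota_y$ as a regular (l.c.i.) closed embedding of the smooth variety $Y'$ into the smooth variety $Y\times Y'$. The key input is then standard: the Gysin pullback $\iota_y^*\colon\CH^*(Y\times Y')\to\CH^*(Y')$ along $\iota_y$ is a ring homomorphism.

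First I would rewrite the correspondence action as this pullback. As $p_1$ is flat, $p_1^*[y]=[\{y\}\times Y']=\iota_{y*}[Y']$, so by the projection formula for the regular embedding $\iota_y$ one gets $\Gamma\cdot p_1^*[y]=\iota_{y*}(\iota_y^*\Gamma)$; applying $(p_2)_*$ and using $p_2\circ\iota_y=\mathrm{id}_{Y'}$ yields $\Gamma_*y=\iota_y^*\Gamma$. Running the identical computation with $\Gamma^2=\Gamma\cdot\Gamma\in\CH^*(Y\times Y')$ in place of $\Gamma$ gives $(\Gamma^2)_*y=\iota_y^*(\Gamma^2)$. The conclusion is then immediate from multiplicativity of $\iota_y^*$: one has $(\Gamma^2)_*y=\iota_y^*(\Gamma\cdot\Gamma)=(\iota_y^*\Gamma)\cdot(\iota_y^*\Gamma)=(\Gamma_*y)^2$, which is the asserted identity (living, up to the evident typo, in $\CH^*(Y')$ rather than $\CH^*(Y)$).

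The point I expect to need the most care is not this final algebra but the role of the hypotheses, since the equality just obtained is in fact formal and holds for every reduced point $y$. Where ``dominant with smooth general fibres'' really enters is in identifying the Gysin class $\iota_y^*\Gamma$ geometrically: by generic smoothness, for $y$ outside a proper closed subset the scheme-theoretic fibre $\Gamma_y$ is smooth of the expected codimension $r$, and then $\iota_y^*\Gamma=[\Gamma_y]$ is genuinely the fundamental class of the fibre, so that $\Gamma_*y=[\Gamma_y]$ and $(\Gamma_*y)^2=[\Gamma_y]^2$ carry the intended geometric meaning (e.g.\ $\Gamma_*[l]=[S_l]$ and $(\Gamma^2)_*[l]=[S_l]^2$ in the incidence-correspondence application). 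The one thing to verify with care is therefore that $\iota_y$ meets $\Gamma$ with no excess for general $y$ — that generic smoothness of $\Gamma\to Y$ forces $\iota_y^*\Gamma$ to equal the reduced fibre class — so that the clean Chow-ring identity records the honest self-intersection of fibres and not a spurious excess term.
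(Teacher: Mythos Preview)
Your proof is correct but takes a different route from the paper's. The paper argues via the self-intersection formula and a normal-bundle comparison: it writes $p_{2,*}(\Gamma^2\cdot(y\times Y'))=i_*c_r(\mathscr{N}_{\Gamma/Y\times Y'}|_{\Gamma_y})$, then uses that for a smooth fibre $\Gamma_y$ one has $\mathscr{N}_{\Gamma/Y\times Y'}|_{\Gamma_y}\cong\mathscr{N}_{\Gamma_y/Y'}$, so this equals $i_*c_r(\mathscr{N}_{\Gamma_y/Y'})=(\Gamma_y)^2$. Here the smoothness hypothesis is genuinely consumed in the normal-bundle identification. Your argument instead goes through the multiplicativity of the Gysin map $\iota_y^*$ for a regular embedding between smooth varieties, which is cleaner and in fact shows the identity $(\Gamma^2)_*y=(\Gamma_*y)^2$ in $\CH^*(Y')$ for \emph{every} closed point $y$, with the hypothesis relegated to ensuring that $\iota_y^*\Gamma$ is the honest fibre class $[\Gamma_y]$. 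Both are valid; your approach is more formal and slightly stronger, while the paper's makes the geometric content (and hence the role of the hypothesis) visible inside the proof rather than only in its interpretation.
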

\begin{proof}
This follows from the following computation.
\[
p_{2,*}(\Gamma^2\cdot (y\times Y')) =
i_*c_r(\mathscr{N}_{\Gamma/Y\times Y'}|_{\Gamma_y}) = i_*
c_r(\mathscr{N}_{\Gamma_y/Y'}) = (\Gamma_y)^2,
\]
where $\Gamma_y=\Gamma\cap(y\times Y')$ is viewed as a sub-variety of
$Y'$ and $i :\Gamma_y\hookrightarrow Y'$ is the natural embedding.
\end{proof}

In \cite{voisin2}, C.~Voisin established the following
identity in $\CH^4(F\times F)$,
\begin{equation}\label{identity of voisin2}
  I^2 = \alpha\Delta_F + I\cdot(g_1^2+g_1g_2+g_2^2) +
  \Gamma_2(g_1,g_2,c_1,c_2) \quad \mbox{for some integer } \alpha \neq 0,
\end{equation}
where $\Gamma_2(g_1,g_2,c_1,c_2)$ is a weighted homogeneous polynomial
of degree 4. Voisin's method actually gives
$\Gamma_2=6\Gamma_{h^2}-3(g_1+g_2)\Gamma_h$~; see equations \eqref{eq
  Gamma_h} and \eqref{eq Gamma_h^2} for the notations. Given
Proposition \ref{prop class of Gamma_h}, we readily obtain
$$
\Gamma_2=-g_1^4-g_2^4-g_1^2c_2-g_2^2c_1+2g_1^2c_1+2g_2^2c_2
-g_1g_2(c_1+c_2)+ 2c_1c_2.
$$
This shows that $(\Gamma_2)_*[l]=-g^4+2g^2c$, which is of degree
$-18$. One also sees that
$$
(I\cdot(g_1^2+g_1g_2+g_2^2))_*[l]=g^2\cdot (I_*[l]),
$$
which is of degree $21$ since the cohomology class of $I_*[l]$ is
equal to that of $\frac{1}{3}(g^2-c)$. Meanwhile, by Lemma \ref{lem
  smooth general fiber} we have $(I^2)_*[l]=(I_*[l])^2$, which is of
degree $5$. Letting both sides of the identity \eqref{identity of
  voisin} act on a point $[l]$ and then counting the degrees, we see
that $\alpha=2$.  We have thus established the following refinement of
Voisin's identity (\ref{identity of voisin2}).

\begin{prop}  \label{rmk value of alpha}
The following identity holds true in $\CH^4(F\times F)$,
\begin{equation}\label{identity of voisin}
I^2=2\Delta_F+I\cdot(g_1^2+g_1g_2+g_2^2)+\Gamma_2(g_1,g_2,c_1,c_2),
\end{equation}
where $\Gamma_2(g_1,g_2,c_1,c_2)$ is a weighted homogeneous polynomial
of degree 4. \qed
\end{prop}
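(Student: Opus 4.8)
The plan is to follow Voisin's identity (\ref{identity of voisin2}) and pin down the two pieces of data that it leaves unspecified: the integer $\alpha$ and the polynomial $\Gamma_2$. I will take (\ref{identity of voisin2}) as given, so that
\[
I^2 = \alpha\,\Delta_F + I\cdot(g_1^2+g_1g_2+g_2^2) + \Gamma_2(g_1,g_2,c_1,c_2)
\]
holds in $\CH^4(F\times F)$ for some nonzero integer $\alpha$ and some weighted homogeneous degree-$4$ polynomial $\Gamma_2$. The only claim to be proved in the present proposition is that $\alpha=2$; the explicit shape of $\Gamma_2$ is recorded in the surrounding discussion but is not part of the statement, so I would not need to grind through it.

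The strategy is to evaluate both sides of the identity as correspondences acting on the class $[l]$ of a general point of $F$, and then compare degrees of the resulting zero-cycles. First I would invoke Lemma \ref{lem smooth general fiber}, applied to $\Gamma = I$ viewed as a correspondence dominating the first factor $F$ with smooth general fibers $S_l$: this gives $(I^2)_*[l] = (I_*[l])^2$, whose degree equals the self-intersection number of the surface $S_l$, namely $5$. Next I would compute the degrees of the two other terms cohomologically. Since the cohomology class of $I_*[l]$ is $\tfrac13(g^2-c)$, the term $\big(I\cdot(g_1^2+g_1g_2+g_2^2)\big)_*[l] = g^2\cdot I_*[l]$ has degree $21$, and using the expression $\Gamma_2 = 6\Gamma_{h^2} - 3(g_1+g_2)\Gamma_h$ (equivalently, the explicit polynomial quoted just before the statement), one finds $(\Gamma_2)_*[l] = -g^4 + 2g^2c$, which has degree $-18$. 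Combining these, the degree identity reads $5 = \alpha\cdot 1 + 21 + (-18)$, forcing $\alpha = 2$.

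The routine but genuinely load-bearing computations are the three intersection numbers on $F$: the self-intersection $5$ of the fiber surface $S_l$, the number $g^2\cdot I_*[l]=21$, and $\deg(-g^4+2g^2c)=-18$. These rely on Voisin's description of $\Gamma_2$, on Proposition \ref{prop class of Gamma_h} for the classes $\Gamma_h$ and $\Gamma_{h^2}$, and on the standard intersection numbers $g^4=108$, $g^2c$, $c^2$ on the Fano variety of lines; I would treat these as known inputs from the cited work of Voisin and from the appendix on cubic fourfolds. The main obstacle is therefore not conceptual but computational bookkeeping: correctly converting the cohomological class $\tfrac13(g^2-c)$ of $I_*[l]$ and the polynomial $\Gamma_2$ into the precise degree contributions, since a sign or coefficient error anywhere propagates directly into the value of $\alpha$. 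Once the three degrees are verified, the conclusion $\alpha=2$ is immediate and the proposition follows.
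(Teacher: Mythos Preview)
Your proposal is correct and follows essentially the same approach as the paper: start from Voisin's identity with the undetermined integer $\alpha$, act on a general point $[l]$, and compare degrees using Lemma~\ref{lem smooth general fiber} for the left-hand side together with the cohomology class $\tfrac13(g^2-c)$ of $I_*[l]$ and the explicit description of $\Gamma_2$ for the right-hand side. The only minor difference is that you are slightly more explicit about which ingredients feed into the degree computations, but the logic and the numerical bookkeeping are identical.
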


The pull-back of $I$ along the diagonal embedding can be expressed in
terms of $c$ and $g^2$~:

\begin{prop}\label{lem I dot diagonal}
  The equation $\iota_\Delta^* I=6c-3g^2$ holds in $\CH^2(F)$, where
  $\iota_\Delta :F\rightarrow F\times F$ is the diagonal embedding.
\end{prop}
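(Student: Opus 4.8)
The plan is to reduce the computation of $\iota_\Delta^* I$ to a self-intersection of the smooth universal family $P$ inside $F \times X$, exactly as in the analogous Proposition \ref{lem diagonal pull-back of I} for $S^{[2]}$, but in a cleaner form, since here $P$ is a $\PP^1$-bundle over $F$ rather than a blow-up. By Lemma \ref{lem cycle class of I on F(X)} we have $I = {}^tP \circ P$. Writing this composition out on $F \times X \times F$ and pulling back along $\iota_\Delta$, the fibre square obtained by intersecting the diagonal of the two copies of $F$ with the projection $F \times X \times F \to F \times F$ is Cartesian of the expected dimension. Base change therefore gives $\iota_\Delta^* I = p_*(P \cdot P)$, where $P \cdot P$ denotes the self-intersection of $[P]$ in $\CH^*(F \times X)$ and I identify $p$ with the restriction to $P$ of the projection $F \times X \to F$. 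Since $P$ is smooth of codimension $3$ in the smooth variety $F \times X$, the self-intersection formula yields
\begin{equation*}
  \iota_\Delta^* I = p_* c_3(\mathscr{N}_{P/F\times X}).
\end{equation*}

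Next I would compute the normal bundle $\mathscr{N}_{P/F\times X}$. Viewing $P = \PP(\mathscr{E}_2)$ as the projectivization of the tautological sub-bundle, it sits inside $F \times \PP(V) = \PP(V \otimes \calO_F)$, and the normal bundle of one sub-projectivization inside the other is $\mathscr{N}_{P/F\times \PP(V)} = \calO_P(1) \otimes p^*Q$, where $Q := (V\otimes \calO_F)/\mathscr{E}_2$ is the rank-$4$ quotient bundle on $F$. On the other hand, $X \subset \PP(V)$ being a cubic, $\mathscr{N}_{F\times X/F\times\PP(V)}|_P = q^*\calO_X(3) = \calO_P(3)$. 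Combining the two normal-bundle sequences gives
\begin{equation*}
  c(\mathscr{N}_{P/F\times X}) = \frac{c(\calO_P(1)\otimes p^*Q)}{1 + 3\xi},
\end{equation*}
where $\xi := c_1(\calO_P(1)) = q^*h$.

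Finally I would extract the degree-$3$ part and push it forward. Using $c(Q) = c(\mathscr{E}_2)^{-1}$ to express the Chern classes of $Q$ in terms of $g = -c_1(\mathscr{E}_2)$ and $c = c_2(\mathscr{E}_2)$, the class $c_3(\mathscr{N}_{P/F\times X})$ becomes an explicit polynomial in $\xi$, $g$ and $c$. Applying $p_*$ via the projective-bundle relation $\xi^2 = g\xi - c$, together with $p_*\xi = 1$, $p_*\xi^2 = g$ and $p_*\xi^3 = g^2 - c$, collapses this to $6c - 3g^2$, which is the desired identity. The only genuinely delicate points are the justification of the base-change step (for which the expected-dimension count $\dim(F\times X) = 8$ confirms Tor-independence) and the correct identification of the two normal bundles with the right twist by $\calO_P(1)$; the remaining Chern-class bookkeeping is routine but must be carried out with care to track the contribution of the cubic factor $\calO_P(3)$.
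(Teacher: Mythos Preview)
Your proof is correct and follows essentially the same strategy as the paper: both reduce $\iota_\Delta^* I$ to $p_* c_3(\mathscr{N}_{P/F\times X})$ (the paper writes this bundle as $\mathscr{N}_{P/X}$, the cokernel of $\mathscr{T}_{P/F}\to q^*\mathscr{T}_X$, which is the same thing) and then carry out an equivalent Chern-class computation. The only difference is packaging: the paper reaches the formula via the fibre square $P\times_F P \to P\times P$ and Fulton's refined intersection with excess, whereas you get there more directly by recognising $\iota_\Delta^*({}^tP\circ P)$ as the self-intersection $P\cdot P$ in $F\times X$; your route is slightly cleaner but not substantively different.
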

\begin{proof}
Consider the fiber product square
\[
\xymatrix{
 P\times_F P\ar[d]^f \ar[r]^i &P\times P\ar[d]^{p\times p}\\
 F\ar[r]^{\iota_{\Delta}} &F\times F.
}
\]
By Lemma \ref{lem cycle class of I on F(X)} we have $I=(p\times
p)_*(q\times q)^*\Delta_X$ and hence we get
$$
\iota_{\Delta}^* I= \iota_{\Delta}^* (p\times p)_*(q\times
q)^*\Delta_X = f_*i^*(q\times q)^* \Delta_X.
$$
Let $\Delta_{P/F} :P\rightarrow P\times_F P$ be the diagonal
morphism. Then one sees that $i^*(q\times
q)^*{\Delta_X}=(\Delta_{P/F})_*P$. Consider the following square
$$
\xymatrix{
  P\ar[r]^{\Delta_{P/F}}\ar[d]_\alpha\ar[rd]^{\iota_{\Delta_P}} &
  P\times_F P\ar[d]^f\\
  (q\times q)^{-1}\Delta_X\ar[r] &P\times P }
$$
where $\iota_{\Delta_P} :P\rightarrow P\times P$ is the diagonal
embedding. Let $\mathscr{N}_1$ be the normal bundle of $(q\times
q)^{-1}\Delta_X$ in $P\times P$. Then $\alpha^*\mathscr{N}_1\cong
q^*\mathscr{T}_X$. Let $\mathscr{N}_2$ be the normal bundle of the
diagonal $\Delta_{P/F}$ in $P\times_F P$. Then $\mathscr{N}_2$ is
naturally isomorphic to $\mathscr{T}_{P/F}$. The quotient
$\alpha^*\mathscr{N}_1/\mathscr{N}_2$ is simply the total normal
bundle $\mathscr{N}_{P/X}$. The refined intersection formula of
Chapter 6 of Fulton \cite{fulton} shows that $ i^* (q\times
q)^*\Delta_X = (\Delta_{P/F})_* c_3(\mathscr{N}_{P/X})$. By applying
$f_*$ we get
$$
\iota_{\Delta}^* I = f_*(\Delta_{P/F})_*c_3(\mathscr{N}_{P/X})
=p_*c_3(\mathscr{N}_{P/X}).
$$
A routine Chern class computation gives
\begin{align*}
  c(\mathscr{N}_{P/X}) & =
  \frac{c(q^*\mathscr{T}_X)}{c(\mathscr{T}_{P/F})} =
  \frac{q^*c(\mathscr{T}_{\PP^5})}{q^*c(\calO_X(3))\cdot
    c(\mathscr{T}_{P/F})} \\
  & = \frac{q^*(1+6h+15h^2+20h^3+15h^4)}{(1+3q^*h)(1+2q^*h-p^*g)}.
\end{align*}
It follows that $c_3(\mathscr{N}_{P/X})=-6q^*h^3+6p^*g\cdot
q^*h^2-3p^*g^2\cdot q^*h+p^*g^3$. Hence we have $
p_*c_3(\mathscr{N}_{P/X})=-6(g^2-c) +6g^2-3g^2=6c-3g^2$. This
concludes the proof.
\end{proof}

The following lifting lemma is essential to getting statements that do
not depend on the choice of a polarization. Specifically, Lemma
\ref{lifting cubic} is to the variety of lines on a cubic fourfold
what Lemma \ref{lem lifting hilbert} is to the Hilbert scheme of
length-$2$ subschemes on a K3 surface~: it is essential to
establishing Conjecture \ref{conj main} for $F$.

\begin{lem}\label{lifting cubic}
  The following relation holds in $\CH^4(F \times F)$~:
$$
c_1 \cdot I = P(g_1,g_2,c_1,c_2)
$$
for some weighted homogeneous polynomial $P$ of degree
$4$.
\end{lem}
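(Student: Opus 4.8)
The plan is to reduce $c_1 \cdot I$ to a combination of the incidence-type cycles $\Gamma_h$ and $\Gamma_{h^2}$ introduced in \eqref{eq Gamma_h} and \eqref{eq Gamma_h^2}, and then to invoke Proposition \ref{prop class of Gamma_h}, which already expresses those cycles as polynomials in $g_1,g_2,c_1,c_2$. First I would use the description $I = (p\times p)_*(q\times q)^*\Delta_X$ of Lemma \ref{lem cycle class of I on F(X)} together with the projection formula. Writing $\mathrm{pr}_1,\mathrm{pr}_2 : P\times P \to P$ for the two projections and observing that $p_1\circ(p\times p) = p\circ \mathrm{pr}_1$, so that $(p\times p)^*c_1 = (p\times p)^*p_1^*c = \mathrm{pr}_1^*p^*c$, the projection formula gives
\begin{align*}
c_1 \cdot I &= p_1^*c \cdot (p\times p)_*(q\times q)^*\Delta_X \\
&= (p\times p)_*\big(\mathrm{pr}_1^*p^*c \cdot (q\times q)^*\Delta_X\big).
\end{align*}
Thus everything hinges on expressing $p^*c$ in a form compatible with the correspondence $\Gamma_\sigma := (p\times p)_*(q\times q)^*(\iota_{\Delta_X})_*\sigma$.

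The key step is to exploit the projective bundle structure of $P$. Since $P = \PP(\mathscr{E}_2)$ is the geometric projectivization of the tautological rank-$2$ subbundle, with $q^*h$ equal to the relative $\calO(1)$-class, the Grothendieck relation reads $(q^*h)^2 + p^*c_1(\mathscr{E}_2)\cdot q^*h + p^*c_2(\mathscr{E}_2) = 0$; using $c_1(\mathscr{E}_2) = -g$ and $c_2(\mathscr{E}_2) = c$ this becomes $(q^*h)^2 - p^*g\cdot q^*h + p^*c = 0$, i.e. $p^*c = p^*g\cdot q^*h - (q^*h)^2$. Substituting this into the expression above, pulling the class $g_1 = p_1^*g$ out by the projection formula (note $\mathrm{pr}_1^*p^*g = (p\times p)^*g_1$), and using $\mathrm{pr}_1^*q^*h = (q\times q)^*p_{X,1}^*h$ together with $\Delta_X \cdot p_{X,1}^*h^k = (\iota_{\Delta_X})_*h^k$, I would obtain
\begin{align*}
c_1 \cdot I &= g_1 \cdot (p\times p)_*(q\times q)^*(\iota_{\Delta_X})_*h - (p\times p)_*(q\times q)^*(\iota_{\Delta_X})_*h^2 \\
&= g_1\cdot \Gamma_h - \Gamma_{h^2}.
\end{align*}

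Finally, Proposition \ref{prop class of Gamma_h} shows that $\Gamma_h$ and $\Gamma_{h^2}$ are weighted homogeneous polynomials in $g_1,g_2,c_1,c_2$ (of weighted degrees $3$ and $4$ respectively), so $g_1\cdot \Gamma_h - \Gamma_{h^2}$ is a weighted homogeneous polynomial of degree $4$, which proves the lemma (and, by the same argument with $\mathrm{pr}_2$ in place of $\mathrm{pr}_1$, that $c_2\cdot I = g_2\,\Gamma_h - \Gamma_{h^2}$). The computation is short once Proposition \ref{prop class of Gamma_h} is in hand, so the only genuinely delicate point — the main obstacle — is the bookkeeping around the tautological relation: one must correctly identify $P$ with $\PP(\mathscr{E}_2)$ in the paper's geometric projectivization convention, verify that $q^*h$ is the relative $\calO(1)$-class so that the quadratic relation holds with the stated signs, and track the compatibility of the various pullbacks ($\mathrm{pr}_i$ versus $p_{X,i}\circ(q\times q)$) through the projection formula. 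Everything else is routine.
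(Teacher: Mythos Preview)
Your proof is correct and reaches the same endpoint as the paper --- expressing $c_1\cdot I$ as a linear combination of $g_1\cdot\Gamma_h$ and $\Gamma_{h^2}$, then invoking Proposition \ref{prop class of Gamma_h} --- but the route is genuinely different. The paper argues geometrically: it represents $c$ by the surface of lines contained in a general hyperplane section $H_1$, so that $c_1\cdot I$ is represented by the explicit cycle $Z_1=\{([l_1],[l_2]):l_1\subset H_1,\ l_1\cap l_2\neq\emptyset\}$; it then computes the scheme-theoretic intersection $(Y_1\times F)\cap\Gamma_h = Z_1\cup\Gamma_{h^2}$ (with $Y_1$ representing $g$) to conclude that $g_1\cdot\Gamma_h$ is a linear combination of $c_1\cdot I$ and $\Gamma_{h^2}$. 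Your approach bypasses the geometry entirely: you push the Grothendieck relation $p^*c=p^*g\cdot q^*h-(q^*h)^2$ on $P=\PP(\mathscr{E}_2)$ (which the paper itself records in the proof of Lemma \ref{lem identities self intersection of g}) through the projection formula, and then identify $(p\times p)_*(q\times q)^*(\iota_{\Delta_X})_*h^k$ with $\Gamma_{h^k}$ (as in the proof of Proposition \ref{prop class of Gamma_h}). This yields the explicit formula $c_1\cdot I=g_1\cdot\Gamma_h-\Gamma_{h^2}$ with the coefficients nailed down, which the paper's argument does not bother to extract. Your method is shorter and purely intersection-theoretic; the paper's method is more hands-on and makes the cycle-level picture transparent.
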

\begin{proof}
  Let $H_1\subset X$ be a general hyperplane section of $X$. Then the
  class $c$ can be represented by the surface of all lines contained
  in $H_1$. Then $c_1\cdot I$ is represented by the following cycle
\[
Z_1  := \{([l_1],[l_2])\in F\times F : l_1\subset H_1,\;  l_1\cap l_2
\neq \emptyset\}.
\]
Let $H_2\subset X$ be another general hyperplane section of $X$ and let
\[
Y_1  := \{[l]\in F : l\cap H_1\cap H_2\neq\emptyset\}.
\]
Note that the cycle class of $Y_1$ is simply $g$. The following two
cycles are defined in Appendix A, equations \eqref{eq Gamma_h} and
\eqref{eq Gamma_h^2} respectively.
\[
\Gamma_h= \{([l_1],[l_2])\in F\times F : \exists x\in H_1,\; x\in
l_1\cap l_2\}  ;
\]
\[
\Gamma_{h^2}= \{([l_1],[l_2])\in F\times F : \exists x\in H_1\cap H_2,\;
x\in l_1\cap l_2\}.
\]
We
have the following scheme-theoretic intersection
\[
(Y_1\times F)\cap \Gamma_h = Z_1\, \cup\, \Gamma_{h^2}.
\]
It follows that $g_1\cdot \Gamma_h$ is a linear combination of
$Z_1$ and $\Gamma_{h^2}$. Or equivalently $c_1\cdot I$ can be
written as a linear combination of $g_1\cdot \Gamma_h$ and $\Gamma_{h^2}$.
It follows from Proposition \ref{prop class of Gamma_h} that
$c_1\cdot I$ is a polynomial in $(g_1,g_2,c_1,c_2)$.
\end{proof}

\vspace{10pt}
\section{The rational self-map $\varphi  : F \dashrightarrow F$}
\label{sec varphi intro}

The goal of this section is to understand the action of a rational map
$\varphi  : F \dashrightarrow F$, first constructed by Voisin
\cite{voisin3}, on zero-cycles on $F$. We state and prove two lemmas
that will be used in Section \ref{sec Fourier cubic} to show Theorem
\ref{thm main splitting} for $F$.
\medskip

We refer to Appendix A for definitions and notations.  In
\cite{voisin3}, Voisin defines a rational self-map
$\varphi :F\dashrightarrow F$ as follows.  If a line $l\subset X$ is of
first type, then there is a unique plane $\Pi_l$ which contains $l$
and is tangent to $X$ along $l$. If $\Pi_l$ is not contained in $X$,
then we have
$$
\Pi_l\cdot X=2l+l'
$$
for some line $l'$. Let $\Sigma_1\subset F$ be the sub-variety of lines
contained in some linear plane $\PP^2\subset X$. When $X$ does not contain
any plane, then $\Sigma_1=\emptyset$. If $X$ contains at least a
plane, then $\Sigma_1$ is a disjoint union of $\PP^2$'s. Let
$\Sigma_2\subset F$ be the surface of lines of second type.

\begin{defn}[\cite{voisin3}]\label{defn varphi}
Let $\varphi :F\backslash(\Sigma_1\cup \Sigma_2)\rightarrow F$ be the
morphism defined by $\varphi([l]) = [l']$.
\end{defn}

Lemma \ref{lem intersection of g^2 and S_l} below will be used in the
proof of Proposition \ref{prop cubic assumption l hom} in order to
establish conditions \eqref{assumption pre l} and \eqref{assumption
  hom} in the case where $F$ is the variety of lines on a cubic
fourfold. Note that Lemma \ref{lem intersection of g^2 and S_l} can be
seen as a consequence of Proposition \ref{prop varphi action on
  homologically trivial cycles}. We have however decided for clarity
to postpone the more detailed study of $\varphi$ until after the
Fourier decomposition for $F$ is established.

\begin{lem}\label{lem intersection of g^2 and S_l}
  For any point $[l]\in F$, we have
\begin{equation*}
  g^2\cdot I_*[l] =\varphi_*[l]-4[l]+24\mathfrak{o}_F \ \in
  \CH_0(F).
\end{equation*}
\end{lem}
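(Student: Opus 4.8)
The plan is to deduce the identity from the quadratic relation for $I$ established in Proposition \ref{rmk value of alpha}, thereby reducing everything to a single self-intersection computation on $F$. Letting the identity $I^2 = 2\Delta_F + I\cdot(g_1^2 + g_1g_2 + g_2^2) + \Gamma_2(g_1,g_2,c_1,c_2)$ act on the class of a point $[l]$, I first observe that the terms $I\cdot g_1^2$ and $I\cdot g_1 g_2$ contribute nothing: each carries a factor $p_1^* g$ or $p_1^* g^2$, and since $[l]$ is zero-dimensional, $g\cdot [l] = g^2\cdot [l] = 0$ in $\CH^*(F)$. The term $I\cdot g_2^2$ yields exactly $g^2\cdot I_*[l]$, the quantity we wish to compute. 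Hence, using Lemma \ref{lem smooth general fiber} to identify $(I^2)_*[l]$ with the self-intersection $(I_*[l])^2$ for general $[l]$, I obtain
\begin{equation*}
  (I_*[l])^2 = 2[l] + g^2\cdot I_*[l] + (\Gamma_2)_*[l].
\end{equation*}

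Next I would evaluate $(\Gamma_2)_*[l]$. Only the monomials of $\Gamma_2$ that are pure pull-backs from the second factor survive $(\,\cdot\,)_*[l]$, the others carrying a positive-codimension factor $p_1^*(\,\cdot\,)$ which kills $[l]$; the surviving terms are $-g_2^4 + 2g_2^2 c_2$, so that $(\Gamma_2)_*[l] = -g^4 + 2g^2 c$. By Voisin's Theorem \ref{thm voisin} every top-degree polynomial in $g$ and $c$ is a multiple of $\mathfrak{o}_F$, whence $(\Gamma_2)_*[l] = \big(-\deg(g^4) + 2\deg(g^2 c)\big)\,\mathfrak{o}_F = -18\,\mathfrak{o}_F$, using the classical intersection numbers $\deg(g^4) = 108$ and $\deg(g^2 c) = 45$. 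Substituting, the lemma becomes equivalent to the self-intersection formula
\begin{equation*}
  (I_*[l])^2 = \varphi_*[l] - 2[l] + 6\,\mathfrak{o}_F \quad\text{in } \CH_0(F).
\end{equation*}
A degree check is reassuring: both sides have degree $5$, since $[I_*[l]] = \tfrac13(g^2 - c)$ gives $\tfrac19(108 - 90 + 27) = 5$.

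It remains to compute the self-intersection of the surface $S_l := I_*[l]$ of lines meeting $l$. Here I would exploit the fibration $S_l \to l \cong \PP^1$, $[m]\mapsto m\cap l$, whose general fibre is the $(2,3)$-complete-intersection curve $C_x$ of lines through $x\in l$ (and which is smooth for general $[l]$ by \cite{voisin}). The crucial geometric feature is that $[l]\in C_x$ for every $x\in l$, so $S_l$ is maximally non-transverse to its deformations precisely at $[l]$; localizing the self-intersection class $i_*c_2(\mathscr{N}_{S_l/F})$ of Lemma \ref{lem smooth general fiber}, I expect the point $[l]$ to contribute with multiplicity $-2$, while the residual line $l' = \varphi([l])$ — defined by $\Pi_l\cdot X = 2l + l'$ and forced into the self-intersection by the tangency of $\Pi_l$ along $l$ — contributes the term $\varphi_*[l]$. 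The remaining, genuinely moving, contribution is a zero-cycle expressible through $g$ and $c$, hence a multiple of $\mathfrak{o}_F$ by Theorem \ref{thm voisin}, and the degree count then pins it down as $6\,\mathfrak{o}_F$. Finally, having proved the identity for general $[l]$, I would extend it to all $[l]\in F$ by a standard specialization argument, all terms being the restrictions to $\{[l]\}\times F$ of fixed cycles on $F\times F$ (the correspondence $g_2^2\cdot I$, the closure of the graph of $\varphi$, $\Delta_F$, and $p_2^*\mathfrak{o}_F$).

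The main obstacle is this last geometric step: the precise excess-intersection analysis of $S_l$ at its singular point $[l]$ and the identification of the residual line $l'$ as a localized contribution, which together must produce exactly the coefficients $-2$ and $+1$. This is precisely the delicate study of the tangent plane $\Pi_l$, the second-type locus $\Sigma_2$, and the map $\varphi$ carried out in the appendix; indeed the present lemma can alternatively be read off from Proposition \ref{prop varphi action on homologically trivial cycles}, but the route above is the more self-contained one, isolating the single nontrivial geometric input $(I_*[l])^2 = \varphi_*[l] - 2[l] + 6\,\mathfrak{o}_F$.
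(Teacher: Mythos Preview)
Your reduction via the $I^2$ identity is correct and clean: the identity from Proposition~\ref{rmk value of alpha}, applied to a point and combined with Lemma~\ref{lem smooth general fiber}, does reduce the lemma to the self-intersection formula $(S_l)^2 = \varphi_*[l] - 2[l] + 6\mathfrak{o}_F$, and your computation of $(\Gamma_2)_*[l] = -18\,\mathfrak{o}_F$ is right. But this is a different route from the paper's, and the step you flag as the ``main obstacle'' is left as a sketch: the excess-intersection heuristic (``I expect the point $[l]$ to contribute with multiplicity $-2$'') is not a proof, and carrying it out rigorously would require exactly the detailed local analysis you defer.

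The paper bypasses both the $I^2$ identity and any self-intersection computation by working \emph{intrinsically on the smooth surface $S_l$}. From Voisin's study of $S_l$ one has an involution $\iota$ (sending a line meeting $l$ to its residual line), two natural divisor classes $C_x$ and $C_x^\iota = \iota(C_x)$ on $S_l$, and the relations $g|_{S_l} = 2C_x + C_x^\iota$, $C_x^2 = [l]$, $(C_x^\iota)^2 = \iota([l]) = \varphi([l])$. Squaring $g|_{S_l}$ and pushing forward via $j: S_l \hookrightarrow F$ gives $g^2\cdot S_l = j_*(4[l] + 4C_x\cdot C_x^\iota + \varphi([l]))$; the cross term is handled by the observation $j_*(2[l] + C_x\cdot C_x^\iota) = j_*C_x \cdot g = C_x\cdot g = 6\mathfrak{o}_F$, whence the lemma. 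This is a two-line computation once the divisor theory on $S_l$ is in hand, and it never touches $I^2$.

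For what it's worth, the self-intersection identity you reduce to \emph{is} proved later in the paper as the degenerate case $l_1 = l_2$ of Proposition~\ref{prop intersection identities}(i), via a specialization-of-secant-lines argument (Lemma~\ref{lem specialization of secant lines}) rather than the excess-intersection analysis you propose. So your route is valid in principle, but it postpones the geometric input rather than avoiding it, and the paper's direct computation on $S_l$ is both shorter and logically earlier.
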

\begin{proof}
  We may assume that the point $[l]$ is general. We first recall some
  results on the geometry of the surface $S_l\subset F$ of all lines
  meeting $l\subset X$. Our main reference for this is \cite{voisin}.
  It is known that for general $l$ the surface $S_l$ is smooth. On
  $S_l$ we have an involution $\iota$ which is defined as follows. Let
  $[l_1]\in S_l$ which is not $[l]$ then $\iota([l_1])$ is the residue
  line of $l\cup l_1$. If $l_1=l$, then $\iota([l])=\varphi([l])$.
  There are two natural divisors on $S_l$. The first one is $C_x$
  which is the curve parameterizing all the lines passing through a
  point $x\in l$.  The second one is $C_x^\iota=\iota(C_x)$. Then we
  have
$$
C_x^2=[l],\qquad (C_x^\iota)^2=\iota([l]),\qquad
g|_{S_l}=2C_x+C_x^\iota.
$$
Let $j :S_l\rightarrow F$ be the natural inclusion, then
$$
g^2\cdot I_*[l]=g^2\cdot S_l=j_*(j^*g^2)=j_*(2C_x+C_x^\iota)^2 =
j_*(4[l]+4C_x\cdot C_x^\iota+\iota([l]))
$$
One notes that $j_*(2[l]+C_xC_x^\iota)=j_*(C_x)\cdot
g=6\mathfrak{o}_F$. The lemma follows easily.
\end{proof}

We now study the action of $\varphi$ on points $s\in\Sigma_2$, that is,
on points $s \in F$ whose corresponding line $l = l_s$ is of second type~;
see Appendix A. The following lemma is crucial to establishing
condition \eqref{assumption 2hom} for $F$~; see Proposition \ref{prop
  cubic assumption 2hom}.

\begin{lem}\label{lem varphi on Sigma2}
Let $s\in\Sigma_2$, then we have
\[
\varphi_*s = -2s + 3\mathfrak{o}_F \ \in
\CH_0(F).
\]
\end{lem}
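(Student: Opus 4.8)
The plan is to compute $\varphi_*s$ directly from the closure $\Gamma_\varphi \in \CH^4(F \times F)$ of the graph of $\varphi$, which is what defines the action $\varphi_*$ on the indeterminacy locus $\Sigma_2$. First I would realize $\Gamma_\varphi$ geometrically via the incidence variety $\mathcal{P} := \{(t,\Pi) : \Pi \supset l_t \text{ is a plane tangent to } X \text{ along } l_t\}$, equipped with its two projections $p \colon \mathcal{P} \to F$, $(t,\Pi) \mapsto t$, and the residual map $r \colon \mathcal{P} \to F$, $(t,\Pi) \mapsto [l']$ where $\Pi \cdot X = 2 l_t + l'$. Over the open locus of first-type lines $p$ is an isomorphism and $r = \varphi \circ p$, so $\Gamma_\varphi = (p,r)_*[\mathcal{P}]$. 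The key input from Appendix A (following Voisin \cite{voisin3}) is the structure along $\Sigma_2$: for a line $l_s$ of the second type the planes tangent to $X$ along $l_s$ form a pencil, so $p^{-1}(s) \cong \PP^1$, and $r$ maps this $\PP^1$ onto a rational curve $R_{l_s} \subset F$ which passes through $s$ itself (the member $\Pi$ with $\Pi \cdot X = 3 l_s$ has residual line $l_s$).

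With this description, $\varphi_*s = (\Gamma_\varphi)_*s = r_*\bigl(p^![s]\bigr)$, where $p^![s]$ is the refined Gysin pull-back of the point class along the birational morphism $p$. Since $p$ contracts nothing away from $\Sigma_2$, for $s$ general in $\Sigma_2$ the class $p^![s]$ is a $0$-cycle supported on the excess fibre $p^{-1}(s) \cong \PP^1$, given by the top Segre contribution of the normal geometry of $\mathcal{P}$ over $\Sigma_2$. I would then compute this excess class from the local structure of the pencil of tangent planes, separating the part concentrated at the member $\Pi$ whose residual line is $l_s$ (which $r$ sends to $s$), yielding the coefficient $-2$ of $s$, from the part spread over the remaining members; the latter is a $0$-cycle on the rational curve $R_{l_s}$ and hence a multiple of Voisin's canonical class $\mathfrak{o}_F$ (cf. \cite{voisin2}), yielding $+3\,\mathfrak{o}_F$. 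Finally I would observe that $\mathcal{P} \to \Sigma_2$ and the resulting excess class vary algebraically, so that the identity, once established at a general point of each component of $\Sigma_2$, holds for every $s \in \Sigma_2 \setminus \Sigma_1$.

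The hard part will be the excess-intersection computation over the one-dimensional fibre $p^{-1}(s)$: pinning down the normal geometry of $\mathcal{P}$ (equivalently, the precise way the pencil of tangent planes degenerates) sharply enough to extract the integers $-2$ and $3$, rather than merely the shape $a\,s + b\,\mathfrak{o}_F$ of the answer. An alternative packaging that isolates this numerical content is to invoke Lemma \ref{lem intersection of g^2 and S_l}: by a specialization argument (the correspondence actions of $I$, of $g^2\cdot$, and of $\Gamma_\varphi$ all commute with Fulton's specialization map) its identity extends from general $[l]$ to $s \in \Sigma_2$, giving $\varphi_*s = g^2 \cdot S_{l_s} + 4\,s - 24\,\mathfrak{o}_F$ and reducing the claim to the direct evaluation $g^2 \cdot S_{l_s} = -6\,s + 27\,\mathfrak{o}_F$ on the surface $S_{l_s}$ of lines meeting a second-type line. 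This evaluation is in turn governed by the degenerate behaviour of the involution on $S_{l_s}$ studied in \cite{voisin}, and carries exactly the same arithmetic.
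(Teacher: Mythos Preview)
Your primary approach rests on a false geometric claim: the residual curve $R_{l_s}$ (which is the paper's $\mathcal{E}_{[l_s]}$) does \emph{not} pass through $s$. By Proposition~\ref{prop lines of second type}, every line parameterized by $\mathcal{E}_{[l_s]}$ joins a point of $l_s$ to a point of a line $l'$ disjoint from $l_s$; hence none of them is $l_s$ itself, and there is no member of the pencil with $\Pi\cdot X = 3l_s$ unless $l_s$ is a triple line. Consequently your excess computation over the fibre $p^{-1}(s)\cong\PP^1$ cannot split off a multiple of $s$: the refined pull-back $p^![s]$ is a degree-$1$ zero-cycle on that $\PP^1$, and $r_*$ pushes it to a single point $s'\in\mathcal{E}_{[l_s]}$. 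This is the \emph{easy} half of the paper's argument (its Claim). The content of the lemma lies entirely in the relation $s' = -2s + 3\mathfrak{o}_F$, which your proposal does not supply.

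The paper obtains that relation by a completely different mechanism: a secant-line specialization. Two distinct points $s'_1,s'_2\in\mathcal{E}_{[l_s]}$ with lines through a common $x\in l_s$ are taken, and one specializes the five secant lines of $(l_1,l_t)$ as $t\to s'_2$ inside $\mathcal{E}_{[l_s]}$. By Proposition~\ref{prop secant line mult} these secants are constantly $4l_s+l'$, while Lemma~\ref{lem specialization of secant lines} gives $[l_1]+[l_2]+4[l_s]=6\mathfrak{o}_F$. Since $[l_1]=[l_2]=s'$ (points on the rational curve $\mathcal{E}_{[l_s]}$), this yields $s'=-2s+3\mathfrak{o}_F$. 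Your alternative, reducing to $g^2\cdot S_{l_s}=-6s+27\mathfrak{o}_F$, is formally correct but postpones exactly the same difficulty: for $l_s$ of second type the surface $S_{l_s}$ is singular and the involution degenerates, so evaluating $g^2\cdot S_{l_s}$ directly requires precisely the kind of degeneration analysis the paper carries out.
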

\begin{proof}
  We write $l=l_s\subset X$. Let $\PP^3_{\langle l\rangle}$ be the
  linear $\PP^3$ tangent to $X$ along $l$. Let $S=\PP^3_{\langle
    l\rangle}\cap X$. Then there is a canonical curve $l'\subset S$
  such that for any point $s'\in \mathcal{E}_{[l]}$ the corresponding
  line $l_{s'}$ meets both $l$ and $l'$. Let $x\in l$ be a general
  point, then there are two points $s'_1,s'_2\in\mathcal{E}_{[l]}$
  with corresponding lines $l_1$ and $l_2$ such that both of them pass
  through the point $x\in l$~; see Proposition \ref{prop lines of
    second type}. In Lemma \ref{lem specialization of secant lines},
  we established the following fact~: if $L_1$ and $L_2$ are two
  intersecting lines and $L_t$ a family of lines that specialize to
  the line $L_2$, then four secant lines of $(L_1,L_t)$ specialize to
  four lines $E_1,\ldots, E_4$ passing through $x=L_1\cap L_2$ with
\[
[L_1]+ [L_2] + [E_1] +[E_2] +[E_3] +[E_4]= 6\mathfrak{o}_F,\quad
\text{in }\CH_0(F)
\]
and the fifth secant line specializes to the residue line of $L_1$ and
$L_2$. Now we apply this specialization argument by taking $L_1=l_1$,
$L_2=l_2$ and $L_t=l_t$, $t\in\mathcal{E}_{[l]}\backslash\{s_1,s_2\}$
with $t\to s_2$. By Proposition \ref{prop secant line mult}, the
secant lines of $(l_1,l_t)$ are constantly given by $4l + l'$. In
particular the limit is again $4l+l'$. Note that $l'$ is the residue
line of $l_1$ and $l_2$. Hence we have
\[
 4[l] + [l_1] +[l_2] = 6\mathfrak{o}_F.
\]
The following claim shows that $[l_1]=[l_2]=\varphi_*[l]$ in
$\CH_0(F)$.

\begin{claim}
  Let $s=[l]\in \Sigma_2$, then $\varphi_*s = s'$ in $\CH_0(F)$, where
  $s'\in \mathcal{E}_{[l]}$ is an arbitrary point.
\end{claim}
\begin{proof}[Proof of Claim.]
  Consider a general curve $C\subset F$ such that $C$ meets the
  indeterminacy loci of $\varphi$ in the single point $s\in\Sigma_2$.
  Then $\varphi|_{C\backslash\{s\}}$ extends to a morphism
  $\varphi' :C\rightarrow F$ such that $s'=\varphi'(s)\in
  \mathcal{E}_{[l]}$. Note that $s$ is rationally equivalent to a
  cycle $\gamma$ supported on $C\backslash\{s\}$. Then by definition,
  we have
$$
\varphi_*s = \varphi_*\gamma = \varphi'_*\gamma = \varphi'_*s = s'.
$$
Since $\mathcal{E}_{[l]}$ is a rational curve, any point
$s'\in\mathcal{E}_{[l]}$ represents $\varphi_*s$.
\end{proof}
The lemma is now proved.
\end{proof}

\vspace{10pt}
\section{The Fourier decomposition for $F$}\label{sec Fourier cubic}

In this section, we prove Theorem \ref{thm main splitting} for the
variety of lines $F$ on a cubic fourfold. First we construct a cycle
$L$ in $\CH^2(F \times F)$ that represents the Beauville--Bogomolov
form $\mathfrak{B}$, and we show (Theorem \ref{thm cubic L
  conjecture}) that $L$ satisfies Conjecture \ref{conj main},
\textit{i.e.}, that $L^2=2 \, \Delta_F -\frac{2}{25}(l_1+l_2)\cdot L
-\frac{1}{23\cdot 25}(2l_1^2 - 23l_1l_2 + 2l_2^2)$ in $\CH^4(F \times
F)$. Then we check that Properties \eqref{assumption pre l} and
\eqref{assumption hom} are satisfied by $L$ (Proposition \ref{prop
  cubic assumption l hom}). We can thus apply Theorem \ref{prop
  L2}. Finally, we show that $L$ satisfies Property \eqref{assumption
  2hom} (Proposition \ref{prop cubic assumption 2hom}) so that we can
apply Theorem \ref{prop main} and get the Fourier decomposition of
Theorem \ref{thm main splitting} for $F$.\medskip

Let $\{\fa_1,\fa_2,\ldots,\fa_{23}\}$ be a basis of $\HH^4(X,\Z)$.  We
use $b_0 :\HH^4(X,\Z)\times \HH^4(X,\Z)\rightarrow \Z, (\mathfrak{a},
\mathfrak{a}') \mapsto \int_X \mathfrak{a} \cup \mathfrak{a}' $ to
denote the intersection pairing on $\HH^4(X,\Z)$. Let
$A=\big(b_0(\fa_i,\fa_j)\big)_{1\leq i,j \leq 23}$ be the intersection
matrix. For any $\fa\in\HH^4(X,\Q)$ we define
$\hat{\fa}=p_*q^*\fa\in\HH^2(F,\Q)$.  Then by a result of Beauville
and Donagi \cite{bd}, the set $\{\hat{\fa}_1,\ldots,\hat{\fa}_{23}\}$
forms a basis of $\HH^2(F,\Z)$. Consider now $\Lambda :=\HH^2(F,\Z)$
endowed with the Beauville--Bogomolov bilinear form $q_F$. Then $q_F$
can be described as follows
$$
q_F(\hat{\fa},\hat{\fa'})=b_0(\fa, h^2) b_0(\fa',
h^2) - b_0(\fa, \fa').
$$
Hence as in equation \eqref{eq defn b inverse} we obtain a
well-defined element
$$
q_F^{-1}\in\Sym^2(\Lambda)[\frac{1}{2}]\subset\Lambda \otimes
\Lambda[\frac{1}{2}]\subset \HH^4(F\times F,\Q).
$$

\begin{prop}\label{prop cohomology class of I}
  The cohomological class $[I]\in\HH^4(F\times F,\Z)$ is given by
$$
[I]=\frac{1}{3}(g_1^2+\frac{3}{2}g_1g_2+g_2^2-c_1-c_2)-q_F^{-1}.
$$
\end{prop}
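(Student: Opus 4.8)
The plan is to start from the geometric formula $[I] = (p\times p)_*(q\times q)^*[\Delta_X]$ of Lemma \ref{lem cycle class of I on F(X)} and push the Künneth decomposition of the diagonal of $X$ through $(p\times p)_*(q\times q)^*$. Since $X$ is a cubic fourfold, its cohomology is $\HH^0=\HH^8=\Q$, $\HH^2=\Q h$, $\HH^6=\Q h^3$ and $\HH^4=\Q^{23}$ with all odd cohomology vanishing. Writing $A=\big(b_0(\fa_i,\fa_j)\big)$ for the intersection matrix on $\HH^4(X)$ and using $\int_X h^4=3$ to identify $\tfrac13 h^3$ as the Poincaré dual of $h$, the Künneth decomposition reads
\[
[\Delta_X]=[X]\otimes[pt]+\tfrac13\,h\otimes h^3+\sum_{i,j}(A^{-1})_{ij}\,\fa_i\otimes\fa_j+\tfrac13\,h^3\otimes h+[pt]\otimes[X].
\]

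First I would compute the fibre integrals $p_*q^*\alpha$. Realizing $P=\PP(\mathscr{E}_2)$ with $\xi:=q^*h=c_1(\calO_P(1))$ and using the projective bundle relation $\xi^2=g\xi-c$ coming from $c_1(\mathscr{E}_2)=-g$, $c_2(\mathscr{E}_2)=c$ together with \eqref{eq chow proj bundle}, one obtains $p_*1=0$, $p_*\xi=[F]$, $p_*\xi^2=g$ and $p_*\xi^3=g^2-c$. In particular $p_*q^*h=[F]$, $p_*q^*h^3=g^2-c$, and $p_*q^*\fa_i=\hat\fa_i$ by definition, while $p_*q^*[X]=0$ for degree reasons. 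Feeding this into the decomposition, the two extremal terms vanish and one is left with
\[
[I]=\tfrac13(g_1^2-c_1)+\tfrac13(g_2^2-c_2)+\sum_{i,j}(A^{-1})_{ij}\,\hat\fa_i\otimes\hat\fa_j.
\]

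It then remains to match the middle term, which lives in $\HH^2(F)\otimes\HH^2(F)$, against $\tfrac12 g_1g_2-q_F^{-1}$; this is where the main work lies. Setting $v_i=b_0(\fa_i,h^2)$ and $Q=\big(q_F(\hat\fa_i,\hat\fa_j)\big)$, the stated formula for $q_F$ gives the rank-one update $Q=vv^T-A$, and by \eqref{eq defn b inverse} the class $q_F^{-1}$ is represented by $Q^{-1}$ in the basis $\{\hat\fa_i\}$. Applying the Sherman--Morrison formula to $Q=-(A-vv^T)$ yields
\[
A^{-1}+Q^{-1}=\frac{A^{-1}vv^TA^{-1}}{v^TA^{-1}v-1}.
\]
The key numerical input is $v^TA^{-1}v=b_0(h^2,h^2)=\int_X h^4=3$, so the denominator equals $2$; since moreover $g=\hat{h^2}$ expands as $g=\sum_k(A^{-1}v)_k\,\hat\fa_k$, the right-hand side is exactly $\tfrac12\,g\otimes g=\tfrac12 g_1g_2$. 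Substituting back gives $\sum_{i,j}(A^{-1})_{ij}\hat\fa_i\otimes\hat\fa_j=\tfrac12 g_1g_2-q_F^{-1}$, and collecting terms (with $\tfrac13\cdot\tfrac32=\tfrac12$) produces the claimed identity.

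The main obstacle is precisely this reconciliation of two quadratic forms: the $\HH^4(X)$-intersection form and the Beauville--Bogomolov form differ by the rank-one correction coming from $h^2$, and one must track carefully that the self-intersection $\int_X h^4=3$ is exactly what collapses the correction into the single monomial $\tfrac12 g_1g_2$. A secondary point requiring care is pinning down the sign conventions in the projective bundle relation $\xi^2=g\xi-c$ and verifying $q^*h=c_1(\calO_P(1))$, since these fix the coefficients of $g_i^2$ and $c_i$; both are consistent with the standard fact $\hat{h^2}=p_*q^*h^2=g$.
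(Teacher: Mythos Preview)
Your proof is correct and follows the same overall strategy as the paper: push the K\"unneth decomposition of $[\Delta_X]$ through $(p\times p)_*(q\times q)^*$, identify the contributions from $h\otimes h^3$ and $h^3\otimes h$ as $\tfrac{1}{3}(g_i^2-c_i)$, and then compare the $\HH^4(X)$-block $b_0^{-1}$ with $q_F^{-1}$.

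The only difference is in the last linear-algebraic step. The paper handles it by choosing a basis $\{\fa'_1=h^2,\fa'_2,\ldots,\fa'_{23}\}$ with $\fa'_i\perp h^2$ for $i\geq 2$, so that both $b_0^{-1}$ and $q_F^{-1}$ become block-diagonal and the comparison is immediate: one reads off $(p\times p)_*(q\times q)^*b_0^{-1}=\tfrac13 g_1g_2+(\tfrac16 g_1g_2-q_F^{-1})=\tfrac12 g_1g_2-q_F^{-1}$. You instead keep an arbitrary basis and invoke Sherman--Morrison on the rank-one update $Q=vv^T-A$, using $v^TA^{-1}v=b_0(h^2,h^2)=3$ and $A^{-1}v=$ coordinates of $h^2$ to land on the same identity. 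Both arguments are short; the paper's is slightly more elementary, yours is basis-free and makes transparent that everything hinges on the single number $\int_X h^4=3$.
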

\begin{proof}
  Consider the diagram \eqref{eq double cylingder}. By Lemma \ref{lem
    cycle class of I on F(X)} we have $I=(p\times p)_*(q\times
  q)^*\Delta_X$. The cohomology class of the diagonal $\Delta_X\subset
  X\times X$ is given by
$$
[\Delta_X]=pt\times [X] +[X]\times pt + \frac{1}{3}(h\otimes h^3
+h^3\otimes h)+ b_0^{-1},
$$
where $b_0 :\HH^4(X,\Z)\times \HH^4(X,\Z)\rightarrow \Z, (\mathfrak{a},
\mathfrak{a}') \mapsto \int_X \mathfrak{a} \cup \mathfrak{a}' $ is the
intersection pairing. Then the cohomology class of $I$ can be computed
as
\begin{align*}
[I] &= (p\times p)_*(q\times q)^*[\Delta_X]\\
 &= \frac{1}{3}(p_1^*(p_*q^*h^3) + p_2^*(p_*q^*h^3)) -(p\times
 p)_*(q\times q)^* b_0^{-1}\\
 &= \frac{1}{3}(g_1^2+g_2^2 -c_1-c_2) -(p\times
 p)_*(q\times q)^* b_0^{-1}
\end{align*}
Here, we used the fact that $p_*q^*h^3=g^2-c$~; see Lemma \ref{lem
  identities self intersection of g}. We take a basis
$\{\fa'_1=h^2,\fa'_2,\ldots,\fa'_{23}\}$ of $\HH^4(X,\Q)$ such that
$b_0(h^2, \fa'_i)  := \int_X h^2 \cup \fa'_i=0$, for all $2\leq i\leq
23$. Let $A'=\big(b_0(\fa'_i, \fa'_j)\big)_{2 \leq i,j \leq 23}$ and
$B'=A'^{-1}$. Then we have
$$
b_0^{-1}=\frac{1}{3}h^2\otimes h^2 +\sum_{2\leq i,j\leq 23}
b'_{ij}\fa'_{i}\otimes\fa'_j.
$$
On $F$, the Beauville--Bogomolov bilinear form $q_F$ is given by
$q_F(g,g)=6$ and $q_F(\hat{\fa}'_i,\hat{\fa}'_j)=-a'_{ij}$, for all
$2\leq i,j\leq 23$. It follows that
$$
q_F^{-1}=\frac{1}{6}g\otimes g\;  -\sum_{2\leq i,j\leq 23}
b'_{ij}\hat{\fa}'_i\otimes\hat{\fa}'_j.
$$
Hence we have
\begin{align*}
  (p\times p)_*(q\times q)^* b_0^{-1} & = \frac{1}{3}g_1g_2 +
  \sum_{2\leq i,j\leq 23} b'_{ij}p_1^*\hat{\fa}'_i p_2^*\hat{\fa}'_j =
  \frac{1}{2}g_1g_2 -q_F^{-1}.
\end{align*}
Combining this with the expression of $[I]$ obtained above gives
$$
[I]=\frac{1}{3}(g_1^2+\frac{3}{2}g_1g_2 +g_2^2-c_1-c_2)-q_F^{-1},
$$
which completes the proof.
\end{proof}

Thus it follows from Proposition \ref{prop cohomology class of I} and
its proof that the cycle
\begin{equation}
  \label{eq L Cubic} L  := \frac{1}{3}(g_1^2+\frac{3}{2}g_1g_2+g_2^2-c_1-c_2) -
  I
\end{equation}
represents the Beauville--Bogomolov class $\mathfrak{B}$.
Also, we see that the cohomology class of the  cycle
\begin{equation*}
  L_g  :=
\frac{1}{3}(g_1^2+g_1g_2+g_2^2-c_1-c_2) - I
\end{equation*}
is $(q_F|_{\HH^2(F,\Z)_\mathrm{prim}})^{-1}$, where
$\HH^2(F,\Z)_\mathrm{prim}=g^\perp =p_*q^*\HH^4(X,\Z)_\mathrm{prim}$.\medskip

Note that Proposition \ref{lem I dot diagonal} gives
\begin{equation}\label{eq l cubic fourfold}
l   := (\iota_\Delta)^*L = \frac{7}{6}g^2-\frac{2}{3}c - \iota_{\Delta}^*I =
\frac{25}{6}g^2 - \frac{20}{3}c = \frac{5}{6}c_2(F).
\end{equation}
The last step follows from Lemma \ref{lem second chern class cubic}.
One should compare this with equation \eqref{eq l on S2}.

\begin{thm} \label{thm cubic L conjecture} The cycle $L$ of \eqref{eq
    L Cubic} satisfies Conjecture \ref{conj main}.
\end{thm}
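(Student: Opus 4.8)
The plan is to follow the strategy used for the Hilbert scheme in Theorem \ref{prop cubic L conjecture}, with Voisin's identity (Proposition \ref{rmk value of alpha}) playing the role of Proposition \ref{prop I square on S2}, and the lifting Lemma \ref{lifting cubic} playing the role of the key Lemma \ref{lem lifting hilbert}. The starting point is the equation
$$I^2 = 2\Delta_F + (g_1^2 + g_1 g_2 + g_2^2)\cdot I + \Gamma_2(g_1,g_2,c_1,c_2)$$
of Proposition \ref{rmk value of alpha}, together with the defining relation $I = M - L$ coming from \eqref{eq L Cubic}, where $M := \frac{1}{3}(g_1^2 + \frac{3}{2}g_1 g_2 + g_2^2 - c_1 - c_2)$ is a weighted homogeneous polynomial of degree $2$ in $g_1, g_2, c_1, c_2$.

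First I would substitute $I = M - L$ and expand $I^2 = M^2 - 2\,M\cdot L + L^2$ to obtain
$$L^2 = 2\Delta_F - \big((g_1^2+g_1g_2+g_2^2) - 2M\big)\cdot L + R_1(g_1,g_2,c_1,c_2),$$
where $R_1 := (g_1^2+g_1g_2+g_2^2)\cdot M + \Gamma_2 - M^2$ is a weighted homogeneous polynomial of degree $4$. A routine computation shows that the coefficient of $L$ equals $-\frac{1}{3}(g_1^2+g_2^2) - \frac{2}{3}(c_1+c_2)$. Since $l_i = \frac{25}{6}g_i^2 - \frac{20}{3}c_i$ by \eqref{eq l cubic fourfold}, the target coefficient $-\frac{2}{25}(l_1+l_2)$ of \eqref{eq rational equation} equals $-\frac{1}{3}(g_1^2+g_2^2) + \frac{8}{15}(c_1+c_2)$, so the two coefficients differ precisely by the multiple $-\frac{6}{5}(c_1+c_2)$ of $c_1+c_2$.

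The crucial point is then to absorb the discrepancy $-\frac{6}{5}(c_1+c_2)\cdot L$ into the polynomial part, and this is where Lemma \ref{lifting cubic} is indispensable: it asserts that $c_1\cdot I$, and hence by transposition $c_2\cdot I$, is a weighted homogeneous polynomial of degree $4$ in $g_1,g_2,c_1,c_2$. As $c_i\cdot M$ is manifestly such a polynomial, so is $c_i\cdot L = c_i\cdot M - c_i\cdot I$. Using this, the extra term $-\frac{6}{5}(c_1+c_2)\cdot L$ may be rewritten as a degree-$4$ polynomial in the $g_j,c_j$, and one arrives at
$$L^2 = 2\Delta_F - \frac{2}{25}(l_1+l_2)\cdot L + R_2(g_1,g_2,c_1,c_2)$$
for some weighted homogeneous polynomial $R_2$ of degree $4$. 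I expect this absorption step, resting entirely on Lemma \ref{lifting cubic}, to be the main obstacle, just as the key lifting lemma is the heart of the $S^{[2]}$ case; without it the coefficient of $L$ could not be made polarization-independent.

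Finally, it remains to identify $R_2$ with $-\frac{1}{23\cdot 25}(2l_1^2 - 23 l_1 l_2 + 2l_2^2)$. Solving \eqref{eq l cubic fourfold} gives $c = \frac{5}{8}g^2 - \frac{1}{8}c_2(F)$, so $c$ lies in the subalgebra $\mathrm{V}_F$ generated by divisors and Chern classes of $F$; consequently both $R_2$ and $-\frac{1}{23\cdot 25}(2l_1^2 - 23 l_1 l_2 + 2l_2^2)$ lie in the subalgebra of $\CH^*(F\times F)$ generated by $p_1^*\mathrm{V}_F$ and $p_2^*\mathrm{V}_F$. On the other hand, $L$ represents $\mathfrak{B}$ by Proposition \ref{prop cohomology class of I}, and the cohomological identity \eqref{eq cohomological equation S2} holds by Proposition \ref{prop action of B powers} (applied with $r=23$ and $c_F=1$), so the two polynomials have the same cohomology class. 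Lemma \ref{lem cohomology to chow} then upgrades this cohomological equality to an equality in $\CH^*(F\times F)$, which yields \eqref{eq rational equation} and hence Conjecture \ref{conj main} for $F$.
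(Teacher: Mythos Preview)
Your proof is correct and follows essentially the same strategy as the paper's own proof: substitute $I = M - L$ into Voisin's identity (Proposition \ref{rmk value of alpha}), use the lifting Lemma \ref{lifting cubic} to absorb the $c_i\cdot L$ terms into the polynomial part so that the coefficient of $L$ becomes $-\frac{2}{25}(l_1+l_2)$, and then invoke Lemma \ref{lem cohomology to chow} to identify the residual degree-$4$ polynomial. Your intermediate computations (the coefficient $-\tfrac{1}{3}(g_1^2+g_2^2)-\tfrac{2}{3}(c_1+c_2)$ and the discrepancy $-\tfrac{6}{5}(c_1+c_2)$) are correct and more explicit than the paper's, and your remark that $c\in \mathrm{V}_F$ via \eqref{eq l cubic fourfold} makes the application of Lemma \ref{lem cohomology to chow} cleaner.
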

\begin{proof}
  Using Voisin's identity \eqref{identity of voisin} and the
  definition \eqref{eq L Cubic} of $L$, we easily get
\[
L^2 = 2\Delta_F - \frac{1}{3}(g_1^2 +g_2^2 +2c_1 +2c_2)\cdot L +
\Gamma_4,
\]
where $\Gamma_4$ is a weighted homogeneous polynomial of degree 4 in
$(g_1,g_2,c_1,c_3)$. Note that Lemma \ref{lifting cubic} implies that
$c_i\cdot L$ is a weighted homogeneous polynomial in
$(g_1,g_2,c_1,c_2)$. Hence we can modify the term in the middle by
$c_i\cdot L$ and get
\[
L^2 = 2\Delta_F -\frac{1}{3}(g_1^2 +g_2^2 - \frac{8}{5}c_1 -
\frac{8}{5}c_2)\cdot L +\Gamma_5,
\]
where $\Gamma_5$ is a degree 4 polynomial in $(g_1,g_2,c_1,c_2)$.
Equation \eqref{eq l cubic fourfold} gives
\[
\frac{1}{3}(g_1^2 +g_2^2 - \frac{8}{5}c_1 - \frac{8}{5}c_2)
=\frac{2}{25}(l_1 + l_2),
\]
where $l_i=p_i^* l$ as before. Thus we have the following equation
\[
L^2 = 2\Delta_F -\frac{2}{25}(l_1+l_2)\cdot L +\Gamma_5.
\]
Comparing this with equation \eqref{eq cohomological equation}, we see
that $\Gamma_5$ is cohomologically equivalent to $\frac{1}{23\cdot
  25}(2l_1^2 +23l_1l_2 +2l_2^2)$.  Lemma \ref{lem cohomology to chow}
yields $\Gamma_5 = \frac{1}{23\cdot 25}(2l_1^2 +23l_1l_2 +2l_2^2)$.
\end{proof}

\begin{lem}\label{lem L vanishes on triangle} The cycle $L$ acts as
  zero on $\mathfrak{o}_F$ and on triangles
  $([l],[l],\varphi([l]))$. Precisely,
\begin{enumerate}[(i)]
\item $L_*\mathfrak{o}_F=0$~;
\item $L_*(\varphi_* +2)=0$ on $\CH_0(F)$, where $\varphi$ is as in
Definition \ref{defn varphi}.
\end{enumerate}
\end{lem}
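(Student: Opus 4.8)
The plan is to first compute the action of $L$ on an arbitrary point and then dispatch the two assertions in turn. From the definition \eqref{eq L Cubic} of $L$, for any $[l]\in F$ one has $L_*[l]=p_{2,*}(L\cdot p_1^*[l])$, and a term-by-term computation shows $L_*[l]=\tfrac13(g^2-c)-S_l$: indeed the classes $g_1^2,g_1g_2,c_1$ pulled back from the first factor have positive codimension after multiplying by $[l]$ and so are killed by $p_{2,*}$, while $g_2^2$ contributes $g^2$, the class $c_2$ contributes $c$, and $I_*[l]=S_l$ by Lemma \ref{lem cycle class of I on F(X)}. (This is consistent with Proposition \ref{prop action of B powers}\emph{(ii)}, since the cohomology class of $S_l$ is $\tfrac13(g^2-c)$, so $L_*[l]$ is homologically trivial.)

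For \emph{(i)}, I would use the fact that $\mathfrak{o}_F=\tfrac1{27}c^2$ in $\CH^4(F)$: the cycle $c^2$ is a product of Chern classes, so by Theorem \ref{thm voisin} it equals $\deg(c^2)\,\mathfrak{o}_F=27\,\mathfrak{o}_F$ (the intersection number $\int_Fc^2=27$, compatible with $c_2(F)=5g^2-8c$ from \eqref{eq l cubic fourfold}). Then $I_*\mathfrak{o}_F=\tfrac1{27}\,p_{2,*}(I\cdot c_1^2)=\tfrac1{27}\,p_{2,*}\big(c_1\cdot(c_1\cdot I)\big)$, and here the crucial input is Lemma \ref{lifting cubic}, which lifts $c_1\cdot I$ to a weighted homogeneous polynomial $P(g_1,g_2,c_1,c_2)$. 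Hence $c_1^2\cdot I=c_1\cdot P$ is again such a polynomial, and its pushforward $I_*(c^2)=p_{2,*}(c_1^2\cdot I)$ is a polynomial in $g$ and $c$, necessarily of the form $\alpha g^2+\beta c$. Cohomologically $I_*(c^2)=27\cdot\tfrac13(g^2-c)=9(g^2-c)$, and since both sides are polynomials in the divisor $g$ and the Chern class $c$, Lemma \ref{lem cohomology to chow} upgrades this to the Chow-level identity $I_*(c^2)=9(g^2-c)$. Therefore $I_*\mathfrak{o}_F=\tfrac13(g^2-c)$, and $L_*\mathfrak{o}_F=\tfrac13(g^2-c)-\tfrac13(g^2-c)=0$.

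For \emph{(ii)}, the heart is the geometric identity $2S_l+S_{\varphi([l])}=g^2-c$ in $\CH^2(F)$, valid for $[l]\notin\Sigma_1\cup\Sigma_2$. To prove it, recall that $\Pi_l\cdot X=2l+l'$ with $l'=l_{\varphi([l])}$, and that $I_*={}^tP\circ P$ (Lemma \ref{lem cycle class of I on F(X)}), so $S_{l_t}={}^tP_*(l_t)$ for a line $l_t$; by linearity $2S_l+S_{\varphi([l])}={}^tP_*(\Pi_l\cap X)$. Now $\Pi_l$ is a linear plane, and all plane sections $\Pi\cap X$ (for $\Pi$ not contained in $X$) are fibers of a single family of $1$-cycles over the Grassmannian $G(3,V)$; since $G(3,V)$ is rational, $\CH_0(G(3,V))=\Q$, and members of a family of cycles over a base with trivial $\CH_0$ are all rationally equivalent, so $\Pi_l\cap X=h^3$ in $\CH_1(X)$. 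Applying ${}^tP_*$ and using $p_*q^*h^3=g^2-c$ (Lemma \ref{lem identities self intersection of g}) gives the identity. Combined with $L_*[l]=\tfrac13(g^2-c)-S_l$, this yields, for general $[l]$,
\begin{equation*}
L_*(\varphi_*[l]+2[l])=\big(\tfrac13(g^2-c)-S_{\varphi([l])}\big)+2\big(\tfrac13(g^2-c)-S_l\big)=(g^2-c)-\big(2S_l+S_{\varphi([l])}\big)=0.
\end{equation*}
To extend from general points to all of $\CH_0(F)$ one must control $\varphi_*$ on its indeterminacy locus: for $s\in\Sigma_2$, Lemma \ref{lem varphi on Sigma2} gives $\varphi_*s=-2s+3\mathfrak{o}_F$, so $L_*(\varphi_*s+2s)=3\,L_*\mathfrak{o}_F=0$ by part \emph{(i)}, and a similar reduction handles $\Sigma_1$.

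The main obstacle, in both parts, is that the relevant identities must be established at the level of rational equivalence rather than merely cohomologically: for \emph{(i)} this is exactly what Lemma \ref{lifting cubic} provides (without it, $I_*(c^2)$ would not obviously be a polynomial in $g,c$ and Voisin's theorem could not be applied), while for \emph{(ii)} it is the Chow-theoretic equality $\Pi_l\cap X=h^3$ coming from the rationality of $G(3,V)$. A secondary technical point is the passage from general points to the whole of $\CH_0(F)$, which is precisely why one needs part \emph{(i)} together with the explicit computation of $\varphi$ on $\Sigma_2$ in Lemma \ref{lem varphi on Sigma2}.
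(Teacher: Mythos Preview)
Your argument is correct, and for part \emph{(ii)} it is essentially the paper's proof: the paper writes $I_*(\varphi_*t+2t)=\Phi(\Psi(\varphi_*t+2t))=\Phi(h^3)=g^2-c$, which is exactly your computation $2S_l+S_{\varphi([l])}={}^tP_*(\Pi_l\cap X)=\Phi(h^3)=g^2-c$ in different notation. Your closing paragraph about extending from general points to $\Sigma_2$ via Lemma~\ref{lem varphi on Sigma2} is valid but unnecessary: since $\varphi_*$ is defined through the graph correspondence and any zero-cycle on $F$ is rationally equivalent to one supported on the open set $F\setminus(\Sigma_1\cup\Sigma_2)$, the identity passes automatically from general points to all of $\CH_0(F)$.

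For part \emph{(i)} you take a genuinely different route. The paper simply invokes Lemma~\ref{lem I [o]}, whose proof is the two-line observation $\Psi(\mathfrak{o}_F)=\tfrac13h^3$ (Lemma~\ref{lem special zero cycle}\emph{(v)}) followed by $\Phi(h^3)=g^2-c$ (Lemma~\ref{lem identities self intersection of g}). You instead write $\mathfrak{o}_F=\tfrac1{27}c^2$ and use Lemma~\ref{lifting cubic} to force $I_*(c^2)$ to be a polynomial in $g,c$, then pin it down cohomologically. This works, but it is more elaborate and brings in the lifting lemma where the paper does not need it; note also that once you know $I_*(c^2)=\alpha g^2+\beta c$ in $\CH^2(F)$, the linear independence of $[g^2]$ and $[c]$ in cohomology already determines $\alpha,\beta$, so the appeal to Lemma~\ref{lem cohomology to chow} is not actually required. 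The paper's route via $\Psi$ and $\Phi$ is shorter and more direct.
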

\begin{proof}
Using the definition of $L$, we have
  \[
L_*\mathfrak{o}_F = \frac{1}{3}(g^2 - c) - I_*\mathfrak{o}_F =0,
  \]
  where the last equality uses Lemma \ref{lem I [o]}. This proves
  statement \emph{(i)}. Let $t\in F$, then we have
\[
I_*(\varphi_*t+2t) = \Phi(\Psi(\varphi_*t + 2t)) = \Phi(h^3) = g^2-c
\]
where the last equality uses Lemma \ref{lem identities self
  intersection of g}. By the definition of $L$, we get
\[
L_*(\varphi_*t +2t) = (g^2 - c)- I_*(\varphi_*t + 2t) = 0.
\]
This proves statement \emph{(ii)}.
\end{proof}

We now check that the cycle $L$ satisfies conditions \eqref{assumption
  pre l} and \eqref{assumption hom} of Theorem \ref{prop L2}~:

\begin{prop} \label{prop cubic assumption l hom} We have $L_*l^2 = 0$ and
  $L_*(l\cdot L_*\sigma) = 25 L_*\sigma$ for all $\sigma \in
  \CH_0(F)$.
\end{prop}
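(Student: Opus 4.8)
The plan is to reduce both identities to the triangle relation of Lemma \ref{lem L vanishes on triangle} together with the principle, encoded in Theorem \ref{thm voisin}, that every degree-$4$ polynomial expression in $g$ and the Chern class $c_2(F)$ which is homologically trivial already vanishes in the Chow group. Since by \eqref{eq l cubic fourfold} the class $c$ is itself a $\Q$-combination of $g^2$ and $c_2(F)$, the same applies to polynomials in $g$ and $c$; in particular any such polynomial of degree $4$ is a rational multiple of $\mathfrak{o}_F$. For the first identity this is immediate: from \eqref{eq l cubic fourfold} we have $l = \frac{5}{6}c_2(F)$, so $l^2$ is a multiple of $c_2(F)^2$, hence a rational multiple of $\mathfrak{o}_F$ by Theorem \ref{thm voisin}; as $L_*\mathfrak{o}_F = 0$ by Lemma \ref{lem L vanishes on triangle}\emph{(i)}, we conclude $L_*l^2 = 0$.

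For the second identity, by $\Q$-linearity it suffices to treat $\sigma = t = [l]$ a single point. First I would compute $L_*t$ directly from the definition \eqref{eq L Cubic} of $L$. Applying $(p_2)_*(\,-\,\cdot\, p_1^*t)$ term by term and using the projection formula, every monomial of $L$ involving $g_1$ or $c_1$ with positive exponent contributes zero (its intersection with the point $t$ lands in negative dimension), while the pure second-factor terms $\frac{1}{3}g_2^2$ and $-\frac{1}{3}c_2$ survive and $-I$ contributes $-I_*t = -S_l$. This gives $L_*t = \frac{1}{3}(g^2 - c) - S_l$, which is homologically trivial, as it must be since $\mathfrak{B}_*$ kills $\HH^8(F,\Q)$.

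Next I would intersect with $l = \frac{25}{6}g^2 - \frac{20}{3}c$. The polynomial part $\frac{1}{3}\,l\cdot(g^2 - c)$ is a degree-$4$ polynomial in $g$ and $c$, hence a multiple of $\mathfrak{o}_F$ by Theorem \ref{thm voisin}. For $l\cdot S_l$ I would invoke Lemma \ref{lem intersection of g^2 and S_l}, which yields $g^2\cdot S_l = \varphi_*[l] - 4[l] + 24\mathfrak{o}_F$, and show that $c\cdot S_l$ is again a multiple of $\mathfrak{o}_F$. For this last point, writing $c\cdot S_l = (c_2\cdot I)_*t$ and using Lemma \ref{lifting cubic} together with the symmetry ${}^tI = I$, one sees that $c_2\cdot I$ is a weighted homogeneous polynomial in $g_1,g_2,c_1,c_2$; pushing a point through such a correspondence produces a degree-$4$ polynomial in $g$ and $c$, to which Theorem \ref{thm voisin} again applies. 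Collecting the terms, I expect to obtain $l\cdot L_*t = -\frac{25}{6}\,\varphi_*[l] + \frac{50}{3}\,[l] + m\,\mathfrak{o}_F$ for some $m\in\Q$ whose exact value is irrelevant.

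Finally, applying $L_*$ and invoking Lemma \ref{lem L vanishes on triangle}, which supplies $L_*\mathfrak{o}_F = 0$ and $L_*\varphi_* = -2L_*$ on $\CH_0(F)$, turns the $\varphi_*[l]$ term into $-2L_*[l]$, and the arithmetic $\left(-\frac{25}{6}\cdot(-2) + \frac{50}{3}\right)L_*[l] = \frac{25}{3}L_*[l] + \frac{50}{3}L_*[l] = 25\,L_*[l] = 25\,L_*t$ finishes the proof. The only genuinely delicate step is the qualitative claim $c\cdot S_l \in \langle\mathfrak{o}_F\rangle$, since all the unknown constants multiply $\mathfrak{o}_F$ and are then annihilated by $L_*$; that claim is the crux, and it follows formally from Lemma \ref{lifting cubic} and the symmetry of the incidence correspondence $I$ rather than from any explicit computation of $\Gamma_2$ or of the polynomial $P$.
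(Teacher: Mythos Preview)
Your proof is correct and follows essentially the same route as the paper: reduce to a single point $t$, compute $L_*t = \tfrac{1}{3}(g^2-c) - I_*t$, multiply by $l = \tfrac{25}{6}g^2 - \tfrac{20}{3}c$, isolate the term $-\tfrac{25}{6}g^2\cdot I_*t$ via Lemma \ref{lem intersection of g^2 and S_l}, and then apply $L_*$ using Lemma \ref{lem L vanishes on triangle} to obtain $25\,L_*t$. The arithmetic regrouping $-\tfrac{25}{6}(\varphi_*t+2t)+25t$ and the subsequent annihilation of the first summand is exactly what the paper does.

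The one place where you work harder than necessary is the claim $c\cdot S_l \in \langle\mathfrak{o}_F\rangle$. You deduce it from Lemma \ref{lifting cubic} and the symmetry of $I$, which is valid, but this is already contained in Lemma \ref{lem special zero cycle}\emph{(iii)}: for \emph{any} $\gamma\in\CH^2(F)$, the product $c\cdot\gamma$ is a multiple of $\mathfrak{o}_F$. The paper simply absorbs both the polynomial term $\tfrac{1}{3}l\cdot(g^2-c)$ and the term $\tfrac{20}{3}c\cdot I_*t$ into a single constant $a\,\mathfrak{o}_F$ by citing \cite{voisin2}, which covers both facts. So your ``crux'' is not delicate at all; it is a direct consequence of a result already recorded in the appendix.
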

\begin{proof}
  By equation \eqref{eq l cubic fourfold}, we see that $l^2$ is
  proportional to $c_2(F)^2$. By \cite{voisin2}, $l^2$ is a multiple
  of $\mathfrak{o}_F$. Hence we only need to prove that
  $L_*\mathfrak{o}_F=0$, which is statement \emph{(i)} of Lemma \ref{lem L
    vanishes on triangle}. To prove the second equality, we take an
  arbitrary closed point $t\in F$. Then we have
  \begin{align*}
    l \cdot L_*t &= \big(\frac{25}{6}g^2 - \frac{20}{3}c\big)\cdot
    \big(\frac{1}{3}(g^2 -c)- I_*t\big) \\
    & = a\mathfrak{o}_F -\frac{25}{6}g^2\cdot I_*t,\qquad a\in\Z\\
    & = a\mathfrak{o}_F -\frac{25}{6}(\varphi_*t -4t
    +24\mathfrak{o}_F)\\
    & = a'\mathfrak{o}_F - \frac{25}{6}(\varphi_*t +2t) + 25t,\qquad
    a' = a -100.
  \end{align*}
  Here the first equality uses the definition of $L$ and the explicit
  expression of $l$ as given in equation \eqref{eq l cubic fourfold}.
  The second equality used the fact that any degree 4 polynomial in
  $(g,c)$ is a multiple of $\mathfrak{o}_F$~; see \cite{voisin2}. The
  third equality uses Lemma \ref{lem intersection of g^2 and S_l}. We
  apply $L_*$ to the above equality and use Lemma \ref{lem L vanishes
    on triangle}, we get
\[
L_*(l\cdot L_*t) = 0 + 0+ 25 L_*t = 25L_*t.
\]
This establishes the second equality.
\end{proof}

We now check that the cycle $L$ satisfies condition \eqref{assumption
  2hom} of Theorem \ref{prop main}.  By Theorem \ref{thm cubic L
  conjecture} and Proposition \ref{prop cubic assumption l hom}, we
see that $\CH^4(F)\, (= \CH_0(F))$ has a Fourier decomposition as in
Theorem \ref{prop main}. First, we have the following.

\begin{prop} \label{prop support} We have
  \begin{equation}
    \CH^4(F)_0 \oplus \CH^4(F)_2  =   \mathrm{im} \, \{\CH_0(\Sigma_2)
      \rightarrow  \CH_0(F) \}. \nonumber
   \end{equation}
\end{prop}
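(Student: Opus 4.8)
The plan is to identify $\CH^4(F)_0 \oplus \CH^4(F)_2$ with the kernel of $(L^2)_*$ on $\CH^4(F)$ and then to translate membership in this kernel into a statement about Voisin's map $\varphi$. By Theorems \ref{prop L2} and \ref{prop main} we have $\CH^4(F)_0 = \langle l^2\rangle$, $\CH^4(F)_2 = (\Lambda_0^4)_{\hom}$ and $\CH^4(F)_4 = \Lambda_2^4$, so that $\CH^4(F)_0 \oplus \CH^4(F)_2 = \Lambda_0^4 = \ker\{(L^2)_* : \CH^4(F) \to \CH^4(F)\}$. First I would derive a closed formula for $(L^2)_*$ on zero-cycles: combining \eqref{eq L - 2} applied to a point $t$ with the identity $l\cdot L_* t = a'\mathfrak{o}_F - \frac{25}{6}(\varphi_* t + 2t) + 25t$ established in the proof of Proposition \ref{prop cubic assumption l hom}, and remembering that $l^2$ is a multiple of $\mathfrak{o}_F$, the $t$-terms cancel and one obtains
\[
(L^2)_*\sigma = \tfrac13(\varphi_*\sigma + 2\sigma) - c_0\deg(\sigma)\,\mathfrak{o}_F
\]
for a universal constant $c_0$. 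Since $(\mathfrak{B}^2)_*$ vanishes on $\HH^8(F,\Q)$ by Proposition \ref{prop action of B powers}(ii), the cycle $(L^2)_*\sigma$ is always homologically trivial, and comparing point classes forces $c_0 = 1$. Hence $\Lambda_0^4 = \{\sigma \in \CH^4(F) : \varphi_*\sigma + 2\sigma = 3\deg(\sigma)\,\mathfrak{o}_F\}$.

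The inclusion $\mathrm{im}\{\CH_0(\Sigma_2) \to \CH_0(F)\} \subseteq \CH^4(F)_0 \oplus \CH^4(F)_2$ is then immediate: for a point $s \in \Sigma_2$, Lemma \ref{lem varphi on Sigma2} gives exactly $\varphi_* s + 2s = 3\mathfrak{o}_F$, so $s \in \Lambda_0^4$, and the inclusion follows by linearity.

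For the reverse inclusion I would first reduce to showing $\CH^4(F)_2 \subseteq \mathrm{im}\{\CH_0(\Sigma_2)\}$: granting this, any $s \in \Sigma_2$ decomposes as $s = \mathfrak{o}_F + \tau$ with $\tau \in \CH^4(F)_2$ its homologically trivial part, so $\mathfrak{o}_F = s - \tau \in \mathrm{im}\{\CH_0(\Sigma_2)\}$ and $\CH^4(F)_0 = \langle l^2\rangle = \langle\mathfrak{o}_F\rangle$ is covered too. On homologically trivial cycles the formula above shows $\varphi_*$ is diagonalizable, with $\CH^4(F)_2$ the $(-2)$-eigenspace and $\CH^4(F)_4$ the $4$-eigenspace, so that the projector onto $\CH^4(F)_2$ is $\frac16(4 - \varphi_*)$. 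Feeding in Lemma \ref{lem intersection of g^2 and S_l}, which reads $\varphi_* t - 4 t = g^2\cdot I_*t - 24\mathfrak{o}_F$, this projector sends a homologically trivial $\sigma$ to $-\frac16\, g^2\cdot I_*\sigma$, whence $\CH^4(F)_2 = g^2\cdot\mathcal{A}_{\hom}$ with $\mathcal{A}_{\hom} = I_*\CH_0(F)_{\hom}$. The decisive point is that for a line $l$ of second type, i.e. $[l]\in\Sigma_2$, Lemma \ref{lem varphi on Sigma2} turns Lemma \ref{lem intersection of g^2 and S_l} into $g^2\cdot I_*[l] = -6[l] + 27\mathfrak{o}_F$, a cycle manifestly supported on $\Sigma_2$.

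The main obstacle is therefore the surjectivity statement that $\{[l] - [l'] : [l],[l']\in\Sigma_2\}$ spans all of $\CH^4(F)_2 = g^2\cdot\mathcal{A}_{\hom}$; equivalently, that the second-type incidence surfaces generate the same subgroup of $\CH^2(F)_2$ as all the $I_*[l]$. My plan to overcome it is to replace the factor $g^2$ by the actual class of $\Sigma_2$. Lemma \ref{lifting cubic} gives $c_1\cdot I = P(g_1,g_2,c_1,c_2)$, and evaluating on a point shows that $c\cdot I_*[l]$ is a fixed multiple of $\mathfrak{o}_F$ independent of $l$, so that $c\cdot\mathcal{A}_{\hom} = 0$ and hence $(g^2 - \theta c)\cdot\mathcal{A}_{\hom} = g^2\cdot\mathcal{A}_{\hom} = \CH^4(F)_2$ for every $\theta\in\Q$. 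Writing the algebraic class $[\Sigma_2]\in\HH^4(F,\Q)$ as a combination of $g^2$ and $c$ and correcting by a homologically trivial cycle $\rho$, one is reduced to the vanishing $\rho\cdot\mathcal{A}_{\hom} = 0$; once this is known, $\Sigma_2\cdot\mathcal{A}_{\hom}$ is a nonzero multiple of $\CH^4(F)_2$ and is supported on $\Sigma_2$ by construction, yielding the desired inclusion. Establishing this last vanishing — a weak-splitting type identity for the product of two homologically trivial $2$-cycles, for which the natural tools are Lemma \ref{lifting cubic}, the eigenspace description of $\varphi_*$, and a Bloch--Srinivas specialization of general lines to lines of second type — is where I expect the real work to lie.
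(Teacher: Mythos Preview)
Your argument for the inclusion $\mathrm{im}\{\CH_0(\Sigma_2)\to\CH_0(F)\}\subseteq\CH^4(F)_0\oplus\CH^4(F)_2$ is correct and is essentially the paper's: both reduce to Lemma~\ref{lem varphi on Sigma2} combined with Lemma~\ref{lem intersection of g^2 and S_l}. Your derivation of $(L^2)_*\sigma=\tfrac13(\varphi_*\sigma+2\sigma)-\deg(\sigma)\,\mathfrak{o}_F$ is a nice reformulation of what the paper does piecewise.

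For the reverse inclusion, however, the ``main obstacle'' you anticipate does not exist, and the argument closes immediately. The identity $[\Sigma_2]=5(g^2-c)$ of Lemma~\ref{lem class of Sigma2} (due to Amerik) holds in $\CH^2(F)$, not merely in cohomology, so your correction term $\rho$ is zero. Combined with $c\cdot\mathcal{A}_{\hom}=0$ (which you already observed, and which is Proposition~\ref{prop intersect sigma2}), this gives $\Sigma_2\cdot\mathcal{A}_{\hom}=5\,g^2\cdot\mathcal{A}_{\hom}=5\,\CH^4(F)_2$ at once. There is no need for any weak-splitting argument or specialization.

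The paper's proof is the same in substance but organized around $l$ rather than $g^2$: since $l=\tfrac{25}{6}g^2-\tfrac{20}{3}c$ and $[\Sigma_2]=5(g^2-c)$ in $\CH^2(F)$, and since $c\cdot\sigma$ is always a multiple of $\mathfrak{o}_F$, one has $l\cdot\sigma$ proportional to $\Sigma_2\cdot\sigma$ for every $\sigma\in\CH^2(F)$. The forward inclusion then follows directly from $\CH^4(F)_0=\langle l^2\rangle$ and $\CH^4(F)_2=l\cdot\CH^2(F)_2$ (Theorems~\ref{prop L2} and \ref{prop main}), without passing through the eigenspace characterization of $\varphi_*$ that you set up.
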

\begin{proof}
    Recall that \eqref{eq l cubic fourfold} $l =
    \frac{25}{6}g^2-\frac{20}{3}c$, that $c \cdot \sigma$ is a
    multiple of $\mathfrak{o}_F$ for all $\sigma \in \CH^2(F)$ (Lemma
    \ref{lem special zero cycle}) and that $\Sigma_2 = 5(g^2-c)$
    (Lemma \ref{lem class of Sigma2}). Therefore $l\cdot \sigma$ is
    proportional to $\Sigma_2 \cdot \sigma$ for all $\sigma \in
    \CH^2(F)$.  Now that Theorem \ref{thm cubic L conjecture} and
    Proposition \ref{prop cubic assumption l hom} have been proved, we
    know from Theorems \ref{prop L2} \& \ref{prop main} that
    $\CH^4(F)_0 = \langle l^2 \rangle$ and that $\CH^4(F)_2 = l \cdot
    \CH^2(F)_2$. Hence $\CH^4(F)_0 \oplus \CH^4(F)_2 \subseteq
    \mathrm{im} \, \{\Sigma_2\cdot  : \CH^2(F) \rightarrow \CH^4(F)
    \}$.  Consider now a zero-cycle $\tau \in \CH^4(F)$ which is
    supported on $\Sigma_2$.  The key point is then Lemma \ref{lem
      varphi on Sigma2} which gives $\varphi_*\tau = -2\, \tau +
    3\deg(\tau)\, \mathfrak{o}_F \in \CH^4(F)$. We also have, by Lemma
    \ref{lem intersection of g^2 and S_l}, $g^2\cdot I_*(\tau) =
    \varphi_*\tau - 4\, \tau +24\deg(\tau) \, \mathfrak{o}_F$. It
    follows that $\tau$ is a linear combination of $l\cdot L_*(\tau)$
    and $\mathfrak{o}_F$. By Theorems \ref{prop L2} \& \ref{prop
      main}, $l\cdot L_*(\tau)$ and $\mathfrak{o}_F$ belong
    respectively to $\CH^4(F)_2$ and $\CH^4(F)_0$ and we are done.
  \end{proof}

\begin{prop} \label{prop cubic assumption 2hom}
  $(L^2)_*(l\cdot (L^2)_*\sigma) = 0$ for all $\sigma \in \CH^2(F)$.
\end{prop}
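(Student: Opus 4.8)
The plan is to mirror the argument used for $S^{[2]}$ in Proposition \ref{lem condition 3}, reducing condition \eqref{assumption 2hom} to the single vanishing $l \cdot \Lambda_2^2 = 0$. Since Theorem \ref{thm cubic L conjecture} and Proposition \ref{prop cubic assumption l hom} guarantee that $L$ satisfies \eqref{eq rational equation}, \eqref{assumption pre l} and \eqref{assumption hom}, Theorems \ref{prop L2} and \ref{prop main} apply and furnish the eigenspace decompositions $\CH^2(F) = \Lambda_{25}^2 \oplus \Lambda_2^2 \oplus \Lambda_0^2$ and $\CH^4(F) = \Lambda_0^4 \oplus \Lambda_2^4$ for the action of $(L^2)_*$, with $\Lambda_{25}^2 = \langle l \rangle$. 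First I would write an arbitrary $\sigma \in \CH^2(F)$ as $\sigma = a\,l + \sigma_2 + \sigma_0$ with $a \in \Q$, $\sigma_2 \in \Lambda_2^2$ and $\sigma_0 \in \Lambda_0^2$, so that $(L^2)_*\sigma = 25\,a\,l + 2\sigma_2$ and hence $l \cdot (L^2)_*\sigma = 25\,a\,l^2 + 2\,l\cdot\sigma_2$. Because $(L^2)_* l^2 = 0$ by Lemma \ref{lem assumption l}, condition \eqref{assumption 2hom} will follow once I show that $l \cdot \sigma_2 = 0$ for every $\sigma_2 \in \Lambda_2^2$.

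The heart of the matter is the claim that $l \cdot \sigma \in \Lambda_0^4$ for \emph{all} $\sigma \in \CH^2(F)$. To prove it I would use the explicit expression $l = \frac{25}{6}g^2 - \frac{20}{3}c$ of \eqref{eq l cubic fourfold} together with two facts already available: that $c \cdot \sigma$ is a multiple of $\mathfrak{o}_F$ for all $\sigma \in \CH^2(F)$ (Lemma \ref{lem special zero cycle}), and that $\Sigma_2 = 5(g^2 - c)$ (Lemma \ref{lem class of Sigma2}). Substituting $g^2 = \tfrac{1}{5}\Sigma_2 + c$ gives $l \cdot \sigma = \tfrac{5}{6}\,\Sigma_2 \cdot \sigma - \tfrac{5}{2}\,c \cdot \sigma$, so that modulo the line $\langle \mathfrak{o}_F \rangle$ the class $l\cdot\sigma$ is proportional to $\Sigma_2 \cdot \sigma$, a zero-cycle supported on $\Sigma_2$. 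Now Proposition \ref{prop support} identifies $\mathrm{im}\{\CH_0(\Sigma_2) \to \CH_0(F)\}$ with $\CH^4(F)_0 \oplus \CH^4(F)_2 = \Lambda_0^4$; since $\mathfrak{o}_F \in \CH^4(F)_0 \subseteq \Lambda_0^4$ as well, this yields $l \cdot \sigma \in \Lambda_0^4$.

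With this claim in hand the conclusion is immediate: for $\sigma_2 \in \Lambda_2^2$ the characterization \eqref{eq action pi6 CH2} gives $l \cdot \sigma_2 \in \Lambda_2^4$, while the claim gives $l \cdot \sigma_2 \in \Lambda_0^4$; as $\CH^4(F) = \Lambda_0^4 \oplus \Lambda_2^4$ by Theorem \ref{prop L2}, we conclude $l \cdot \sigma_2 = 0$, and feeding this back into the first paragraph establishes \eqref{assumption 2hom}. I expect the only genuinely substantial ingredient to be Proposition \ref{prop support}, whose proof rests on the geometric computation of the action of Voisin's map $\varphi$ on lines of the second type (Lemma \ref{lem varphi on Sigma2}); but as that proposition is already proved, the statement at hand reduces to the bookkeeping above. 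One ordering point to keep straight is that Proposition \ref{prop support} may only be invoked after \eqref{eq rational equation} and \eqref{assumption hom} are secured, since its own proof uses Theorems \ref{prop L2} and \ref{prop main}; this dependency is respected here.
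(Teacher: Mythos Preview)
Your proof is correct and is essentially identical to the paper's own argument: both reduce to showing $l\cdot\sigma=0$ for $\sigma\in\Lambda_2^2$, then use the expression of $l$ in terms of $\Sigma_2$ and $c$ together with Proposition~\ref{prop support} to place $l\cdot\sigma$ in $\Lambda_0^4$, and conclude via \eqref{eq action pi6 CH2}. Your write-up is slightly more explicit (the decomposition $\sigma=a\,l+\sigma_2+\sigma_0$ and the exact coefficient $l=\tfrac{5}{6}\Sigma_2-\tfrac{5}{2}c$), and you correctly flag the ordering issue that Proposition~\ref{prop support} may be invoked here since its proof only uses the part of Theorem~\ref{prop main} that does not require \eqref{assumption 2hom}.
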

\begin{proof}
  By Theorem \ref{thm cubic L conjecture} and Proposition \ref{prop
    cubic assumption l hom}, we see that the decomposition in Theorem
  \ref{prop L2} holds on $F$. Thus we have eigenspace decompositions
  $\CH^4(F) = \Lambda_0^4 \oplus \Lambda_2^4$ and $\CH^2(F) =
  \Lambda_{25}^2 \oplus \Lambda_2^2 \oplus \Lambda_0^2$ for the action
  of $L^2$. Moreover, $\Lambda_{25}^2 = \langle l \rangle$ and Lemma
  \ref{lem assumption l} gives $(L^2)_*l^2 = 0$.  Therefore, it is
  sufficient to prove that $l \cdot \sigma = 0$ for all $\sigma \in
  \Lambda_2^2$.  According to \eqref{eq action pi6 CH2}, $\sigma \in
  \Lambda_2^2$ if and only if $l\cdot \sigma \in \Lambda_2^4$. In
  order to conclude it is enough to show that $l\cdot \sigma \in
  \Lambda_0^4$, where $\Lambda_0^4 = \CH^4(F)_0 \oplus \CH^4(F)_2$ by
  Theorem \ref{prop main}. But as in the proof of Proposition
  \ref{prop support}, we see that $l\cdot \sigma$ is proportional to
  $\Sigma_2\cdot\sigma$ so that the statement of Proposition \ref{prop
    support} yields the result.
\end{proof}

By Theorems \ref{prop L2} and \ref{prop main}, the Fourier
decomposition as in Theorem \ref{thm main splitting} is now
established for the variety of lines on a cubic fourfold.\qed

\vspace{10pt}
\section{A first multiplicative result} \label{sec mult1 cubic}

Let us start with the following definition.
\begin{defn}\label{defn the filtration}
We set
\begin{align*}\mathrm{F}^2\CH_{\Z}^4(F)&=\mathrm{F}^1\CH_{\Z}^4(F) =
  \ker\{cl :\CH_{\Z}^4(F)\rightarrow \HH^{8}(F,\Z)\},\\
  \mathrm{F}^4\CH^4_{\Z}(F)&
  =\mathrm{F}^3\CH^4_{\Z}(F)=\ker\{I_* :\CH_{\Z}^4(F)\rightarrow
  \CH_{\Z}^2(F)\},\\
  \mathcal{A} &=I_*\CH_{\Z}^4(F)\subset\CH^2_{\Z}(F).
\end{align*}
\end{defn}
\noindent After tensoring with $\Q$, this filtration coincides with
the one given in \eqref{eq filtration gal} in the introduction~; see
the proof of Proposition \ref{prop cubic CH0426}.\medskip

The main result of this section is the following theorem which shows
that the multiplicative structure on the cohomology of $F$ reflects on
its Chow groups.

\begin{thm}\label{thm surjection of intersection product} The
  intersection of $2$-cycles on $F$ enjoys the following properties.
\begin{enumerate}[(i)]
\item The natural homomorphism
$$
\CH_{\Z}^2(F)\otimes\CH^2_{\Z}(F)\rightarrow \CH^4_{\Z}(F)
$$
is surjective~;
\item The natural homomorphism
$$
\CH^2_{\Z}(F)\otimes\CH^2_{\Z}(F)_{\mathrm{hom}}\rightarrow
\CH^4_{\Z}(F)_{\mathrm{hom}}
$$
is also surjective~;
\item The image of the natural homomorphism
$$
\mathcal{A}_{\mathrm{hom}}\otimes\mathcal{A}_{\mathrm{hom}}\rightarrow
\CH^4_{\Z}(F)_{\mathrm{hom}}
$$
is equal to $\mathrm{F}^4\CH^4_{\Z}(F)$.\end{enumerate}
Here, the subscript ``\,$\mathrm{hom}$'' denotes those cycles that are
homologically trivial.

\end{thm}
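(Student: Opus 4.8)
The plan is to deduce all three statements from the single hardest one, part (iii), together with the divisibility properties of the $0$-cycle group. First I would record the reductions. Since $F$ is simply connected, Rojtman's theorem gives that $\CH^4_{\Z}(F)_{\mathrm{hom}}$ is uniquely divisible, hence a $\Q$-vector space; consequently its image $\mathcal{A}_{\mathrm{hom}}=I_*\CH^4_{\Z}(F)_{\mathrm{hom}}$ is also divisible, and the image $\mathcal{A}_{\mathrm{hom}}\cdot\mathcal{A}_{\mathrm{hom}}$ of $\mathcal{A}_{\mathrm{hom}}\otimes\mathcal{A}_{\mathrm{hom}}$ is a $\Q$-subspace of $\CH^4_{\Z}(F)_{\mathrm{hom}}$ (it is closed under scaling by absorbing scalars into one factor). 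This means that any relation of the form $N\tau\in\mathcal{A}_{\mathrm{hom}}\cdot\mathcal{A}_{\mathrm{hom}}$ already forces $\tau\in\mathcal{A}_{\mathrm{hom}}\cdot\mathcal{A}_{\mathrm{hom}}$, so the annoying integer multiples produced by geometric arguments are harmless. Finally, via the Fourier decomposition (Theorems \ref{prop L2} and \ref{prop main}) and the identity $L_*=-I_*$ on $\CH^4_{\mathrm{hom}}$, I identify $\mathrm{F}^4\CH^4_{\Z}(F)=\ker I_*=\CH^4(F)_4$ and $\mathcal{A}_{\mathrm{hom}}=\CH^2(F)_2$.

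The heart is part (iii). For the inclusion $\mathcal{A}_{\mathrm{hom}}\cdot\mathcal{A}_{\mathrm{hom}}\subseteq\ker I_*$ I would use that $I={}^tP\circ P$, so $I_*=\Phi\circ\Psi$ with $\Psi=P_*$ and $\Phi={}^tP_*$, whence $\ker\Psi\subseteq\ker I_*$; then I check on generators that the products $(S_a-S_b)(S_c-S_d)$ are spanned by ``quadrilateral'' $0$-cycles killed by $\Psi=P_*$, exactly as $\mathcal{A}_{\mathrm{hom}}\cdot\mathcal{A}_{\mathrm{hom}}\subseteq\ker Z_*$ in the K3 case (Proposition \ref{prop F4 of S2}). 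For the reverse inclusion $\ker I_*\subseteq\mathcal{A}_{\mathrm{hom}}\cdot\mathcal{A}_{\mathrm{hom}}$ I would follow the template of Proposition \ref{prop F4 of S2}, the role of the Beauville--Voisin relation being played by the ``triangle relation'' of Theorem \ref{prop F3 triangle}. The key intersection-theoretic input here is obtained by letting Voisin's identity \eqref{identity of voisin} act on a general point $[l]$: the terms $I\cdot g_1^2$ and $I\cdot g_1g_2$ vanish (as $g\cdot[l]=0$), the term $I\cdot g_2^2$ gives $g^2\cdot I_*[l]$, and $(\Gamma_2)_*[l]$ is a degree-$4$ polynomial in $g,c$, hence a multiple of $\mathfrak{o}_F$; combining with Lemma \ref{lem smooth general fiber} ($(I^2)_*[l]=S_l\cdot S_l$) and Lemma \ref{lem intersection of g^2 and S_l} yields
\[
S_l\cdot S_l=\varphi_*[l]-2\,[l]+6\,\mathfrak{o}_F\quad\text{in }\CH^4_{\Z}(F).
\]
Given $\tau\in\ker I_*$, written as a degree-$0$ sum $\sum n_i([l_i]-[m_i])$ with $\sum n_i(S_{l_i}-S_{m_i})=0$, I would use this relation together with the triangle relation to rewrite a nonzero multiple of $\tau$ as a sum of products $(S_a-S_b)(S_c-S_d)$, and then invoke the $\Q$-subspace property above to remove the multiple.

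With (iii) in hand, parts (ii) and (i) are formal. For (ii): since $\CH^4_{\Z}(F)_{\mathrm{hom}}=\CH^4(F)_2\oplus\CH^4(F)_4$ as $\Q$-vector spaces, the top part is $\mathcal{A}_{\mathrm{hom}}\cdot\mathcal{A}_{\mathrm{hom}}$ by (iii), while the middle part $\CH^4(F)_2=l\cdot\CH^2(F)_2=l\cdot\mathcal{A}_{\mathrm{hom}}$ consists of products of $l\in\CH^2_{\Z}(F)$ with elements of $\mathcal{A}_{\mathrm{hom}}\subseteq\CH^2_{\Z}(F)_{\mathrm{hom}}$ (using that $\mathcal{A}_{\mathrm{hom}}$ is divisible, so these are genuinely integral classes); both lie in the image of $\CH^2_{\Z}(F)\otimes\CH^2_{\Z}(F)_{\mathrm{hom}}$. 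For (i): writing a $0$-cycle of degree $d$ as $d\,\mathfrak{o}_F+(\text{homologically trivial})$, the homologically trivial part is a product by (ii), and $\mathfrak{o}_F$ is itself an integral product of two $2$-cycles (one checks, using the intersection numbers of Appendix A, that $\mathfrak{o}_F$ is an integral combination of $g^4$, $g^2c$ and $c^2$), so $\CH^4_{\Z}(F)=\Z\mathfrak{o}_F\oplus\CH^4_{\Z}(F)_{\mathrm{hom}}$ is generated by products.

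The main obstacle is plainly the hard inclusion in (iii), i.e. Theorem \ref{prop F3 triangle}: establishing the triangle relation requires the detailed geometry of lines on the cubic fourfold --- the analysis of secant lines and of the degeneration of a triangle of lines (compare the study of $\varphi$ and Lemma \ref{lem varphi on Sigma2}) --- and care is needed to keep these relations valid with integral, rather than merely rational, coefficients. By contrast, the passage from (iii) to (ii) and (i) is purely formal once Rojtman's divisibility and the Fourier decomposition are available, and the displayed identity above is the only additional intersection-theoretic computation genuinely needed.
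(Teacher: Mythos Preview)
Your approach to (iii) is essentially the paper's: you correctly identify Theorem~\ref{prop F3 triangle} (the triangle description of $\mathrm F^4$) and the self-intersection formula $S_l\cdot S_l=\varphi_*[l]-2[l]+6\mathfrak o_F$ as the key inputs, and this formula is exactly Proposition~\ref{prop intersection identities}(i) in the degenerate case $l_1=l_2=l$. The paper packages the reverse inclusion via Proposition~\ref{prop intersection identities}(ii), but the content is the same.

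For (ii) you take a different route from the paper. The paper gives a one-line direct argument: if $l_1$ meets $l_2$ with residual line $l_3$, then $2([l_1]-[l_2])=(S_{l_2}-S_{l_1})\cdot S_{l_3}$, which already lies in $\CH^2_{\Z}(F)_{\mathrm{hom}}\cdot\CH^2_{\Z}(F)$; divisibility removes the factor $2$. Your argument via the Fourier splitting $\CH^4(F)_{\mathrm{hom}}=\CH^4(F)_2\oplus\CH^4(F)_4$ and $\CH^4(F)_2=l\cdot\mathcal A_{\mathrm{hom}}$ also works once you note (as you do) that $6l\in\CH^2_{\Z}(F)$ and $\mathcal A_{\mathrm{hom}}$ is divisible, but it imports more machinery than necessary.

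Your argument for (i), however, contains a concrete error. You assert that $\mathfrak o_F$ is an integral combination of $g^4$, $g^2c$ and $c^2$; but $\deg g^4=108$, $\deg g^2c=45$, $\deg c^2=27$ (Lemma~\ref{lem special zero cycle}), and $\gcd(108,45,27)=9$, so the best you obtain this way is $9\mathfrak o_F$. The paper's fix is to use a product that is not a polynomial in $g,c$: for two general disjoint lines, $\deg(S_{l_1}\cdot S_{l_2})=5$, and $\gcd(5,108)=1$, so the image of the intersection map contains a class of degree $1$. Combined with your (ii), this yields (i). Thus your reduction scheme (i)$\Leftarrow$(ii)$\Leftarrow$(iii) is sound, but the final degree-$1$ step must use $S_{l_1}\cdot S_{l_2}$ rather than only polynomials in $g$ and $c$.
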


As an immediate corollary of Theorem \ref{thm surjection of
  intersection product}\emph{(iii)}, we obtain the following.
\begin{prop} \label{prop cubic CH0426}
We have $$\CH^4(F)_4 = \CH^2(F)_2 \cdot \CH^2(F)_2.$$
\end{prop}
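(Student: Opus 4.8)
The plan is to show that Proposition \ref{prop cubic CH0426} is a straightforward translation of Theorem \ref{thm surjection of intersection product}\emph{(iii)} into the language of the Fourier decomposition, the only real work being to match the two pieces of notation. Concretely, I would establish two identifications: first that $\CH^2(F)_2 = \mathcal{A}_{\hom}$, where $\mathcal{A} = I_*\CH^4(F)$ as in Definition \ref{defn the filtration}; and second that $\CH^4(F)_4 = \mathrm{F}^4\CH^4(F)$, again with $\mathrm{F}^4$ as in Definition \ref{defn the filtration}. Granting these, Theorem \ref{thm surjection of intersection product}\emph{(iii)} says precisely that the image of the intersection product $\mathcal{A}_{\hom}\otimes \mathcal{A}_{\hom} \to \CH^4_{\Z}(F)_{\hom}$ equals $\mathrm{F}^4\CH^4_{\Z}(F)$; tensoring with $\Q$ and substituting the two identifications yields $\CH^2(F)_2\cdot \CH^2(F)_2 = \CH^4(F)_4$.

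The key computational input for both identifications is the action of $L_*$ on zero-cycles. Using the definition \eqref{eq L Cubic} of $L$ and the projection formula, for a point $t\in F$ all the terms $g_1^2$, $g_1g_2$ and $c_1$ contribute nothing (the relevant intersections with $t$ have codimension exceeding $4=\dim F$), so that $L_*t = \frac{\deg(t)}{3}(g^2-c) - I_*t$. In particular $L_*$ and $-I_*$ agree on $\CH^4(F)_{\hom}$. Since $L_*\mathfrak{o}_F = 0$ by Lemma \ref{lem L vanishes on triangle}\emph{(i)}, I would deduce $\CH^2(F)_2 = \Lambda_0^2 = L_*\CH^4(F) = L_*\CH^4(F)_{\hom} = I_*\CH^4(F)_{\hom} = \mathcal{A}_{\hom}$, using Theorem \ref{prop L2} for the first two equalities. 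For the second identification, Theorem \ref{prop main} gives $\CH^4(F)_4 = \Lambda_2^4 = \ker\{L_*  : \CH^4(F)_{\hom}\to \CH^2(F)\}$; because $[I_*\sigma] = \deg(\sigma)\cdot\tfrac{1}{3}(g^2-c)$ forces $\mathrm{F}^4\CH^4(F)\subseteq \CH^4(F)_{\hom}$, and $L_* = -I_*$ there, this kernel coincides with $\ker\{I_*  : \CH^4(F)\to \CH^2(F)\} = \mathrm{F}^4\CH^4(F)$.

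I expect no serious obstacle here: the genuine content has been front-loaded into Theorem \ref{thm surjection of intersection product}\emph{(iii)}, whose proof relies on the geometry of the cubic fourfold and on the rational self-map $\varphi$. The only point demanding care is the bookkeeping across the several indexing conventions ($\Lambda^i_\lambda$, $W^i_\lambda$, $\mathrm{F}^\bullet$, and $\mathcal{A}$) and the passage between integral and rational coefficients, but each comparison reduces to the single identity $L_*t = \frac{\deg(t)}{3}(g^2-c) - I_*t$ together with $L_*\mathfrak{o}_F=0$. Thus the hardest part is merely to verify that the $L_*$-kernel and the $I_*$-kernel defining $\CH^4(F)_4$ and $\mathrm{F}^4\CH^4(F)$ really do coincide, which the computation above settles.
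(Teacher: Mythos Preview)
Your proposal is correct and follows essentially the same route as the paper's own proof: both reduce the statement to Theorem \ref{thm surjection of intersection product}\emph{(iii)} by identifying $\CH^2(F)_2$ with $\mathcal{A}_{\hom}$ and $\CH^4(F)_4$ with $\mathrm{F}^4\CH^4(F)$, and both derive these identifications from the single formula $L_*\sigma = \tfrac{1}{3}\deg(\sigma)(g^2-c)-I_*\sigma$ together with the fact that $I_*\mathfrak{o}_F=\tfrac{1}{3}(g^2-c)$ has nonzero cohomology class. The only cosmetic difference is that the paper cites Lemma \ref{lem I [o]} directly for $I_*\mathfrak{o}_F\neq 0$ in cohomology, whereas you phrase the same fact via $L_*\mathfrak{o}_F=0$ (Lemma \ref{lem L vanishes on triangle}); note that your last equality $I_*\CH^4(F)_{\hom}=\mathcal{A}_{\hom}$ does implicitly use $[I_*\mathfrak{o}_F]\neq 0$, which you invoke only in the next paragraph.
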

\begin{proof} In view of Theorem \ref{thm main splitting} and Theorem
  \ref{thm surjection of intersection product}, it suffices to check
  that $\mathcal{A}_{\hom} \otimes \Q = L_*\CH^4(F)$ and that
  $\ker\{I_* :\CH^4(F)\rightarrow \CH^2(F)\} =
  \ker\{L_* :\CH^4(F)_{\hom} \rightarrow \CH^2(F)\}$.  Both equalities
  follow from the fact proved in Lemma \ref{lem I [o]} that
  $I_*\mathfrak{o}_F = \frac{1}{3}(g^2-c)$ which is not zero in
  $\HH^4(F,\Q)$, and from \eqref{eq L Cubic} which gives $L_*\sigma =
  \frac{1}{3}\deg(\sigma)(g^2-c)-I_\sigma$ for all $\sigma \in
  \CH^4(F)$.
\end{proof}

A key step to proving Theorem \ref{thm surjection of intersection
  product}\emph{(iii)} is embodied by Theorem \ref{prop F3 triangle}~; it
consists in giving a different description of the filtration
$\F^\bullet$ of Definition \ref{defn the filtration}.  On the one
hand, we define
\begin{center}
  $\Phi=p_*q^* :\CH^i(X)\rightarrow\CH^{i-1}(F)$ and
  $\Psi=q_*p^* :\CH^i(F)\rightarrow \CH^{i-1}(X)$
\end{center}
to be the induced homomorphisms of Chow groups~; see Definition
\ref{defn cylinder and aj}. On the other hand, we introduce the
following crucial definition.

\begin{defn}\label{dfn triangle}
  Three lines $l_1,l_2,l_3\subset X$ form a \textit{triangle} if there
  is a linear $\Pi=\PP^2\subset\PP^5$ such that $\Pi\cdot
  X=l_1+l_2+l_3$. Each of the lines in a triangle will be called an
  \textit{edge} of the triangle. A line $l\subset X$ is called a
  \textit{triple line} if $(l,l,l)$ is a triangle on $X$. Let
  $\mathcal{R}\subset\CH_0^{\Z}(F) = \CH^4_{\Z}(F)$ be the sub-group
  generated by elements of the form $s_1+s_2+s_3$ where
  $(l_{s_1},l_{s_2},l_{s_3})$ is a triangle.
\end{defn}

\begin{thm}\label{prop F3 triangle}
  Let $\mathcal{R}_{\mathrm{hom}}\subset \mathcal{R}$ be the sub-group of
  all homologically trivial elements, then
$$
\mathrm{F}^4\CH^4(F) = \ker\{\Psi  : \CH^4(F) \rightarrow \CH^3(X)\}
=\mathcal{R}_{\mathrm{hom}}.
$$
\end{thm}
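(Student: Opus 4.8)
The plan is to show that the three groups coincide by proving the chain $\mathcal{R}_{\hom} \subseteq \ker\Psi \subseteq \mathrm{F}^4\CH^4(F) \subseteq \mathcal{R}_{\hom}$ together with the equality $\ker\Psi = \mathrm{F}^4\CH^4(F)$. The starting point is the factorization $I_* = \Phi \circ \Psi$ coming from $I = {}^tP \circ P$ (Lemma~\ref{lem cycle class of I on F(X)}), with $\Phi = p_*q^*$ and $\Psi = q_*p^*$; this gives at once $\mathrm{F}^4\CH^4(F) = \ker\{I_*\} \supseteq \ker\Psi$. Next I would record the behaviour of $\Psi$ on the relevant cycles: for a point $t \in F$ with associated line $l_t$ one has $\Psi(t) = [l_t]$ in $\CH^3(X) = \CH_1(X)$, so for a triangle cut out by a plane $\Pi$ (Definition~\ref{dfn triangle}) we get $\Psi(s_1 + s_2 + s_3) = [\Pi \cap X]$. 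The point here is that all $2$-plane sections of $X$ are rationally equivalent, since the planes are parametrized by a rational Grassmannian and the section over $\Pi = H_1 \cap H_2 \cap H_3$ equals $h^3$; hence $[l_{s_1}] + [l_{s_2}] + [l_{s_3}] = h^3$ in $\CH_1(X)$. As a homologically trivial combination of triangles has total degree $0$, its $\Psi$-image is a vanishing multiple of $h^3$, so $\mathcal{R}_{\hom} \subseteq \ker\Psi$.

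The heart of the matter is the reverse inclusions, which I would obtain through the self-map $\varphi$ (Definition~\ref{defn varphi}). Two operator identities drive everything. First, since $(l_t, l_t, l_{\varphi(t)})$ is a triangle for general $t$, the computation above yields $\Psi(2t + \varphi_*t) = h^3$; summing over general representatives gives $\Psi(2\sigma + \varphi_*\sigma) = \deg(\sigma)\,h^3$, hence $\Psi \circ \varphi_* = -2\,\Psi$ on $\CH_0(F)_{\hom}$. Second, Lemma~\ref{lem intersection of g^2 and S_l} reads $g^2 \cdot I_*t = \varphi_*t - 4t + 24\,\mathfrak{o}_F$, so by linearity $g^2 \cdot I_*\sigma = \varphi_*\sigma - 4\sigma + 24\deg(\sigma)\,\mathfrak{o}_F$; for $\sigma \in \ker I_*$, which is automatically homologically trivial because $I_*[\mathrm{pt}]$ is cohomologically nonzero, this forces $\varphi_*\sigma = 4\sigma$. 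Feeding this into the first identity gives $\Psi\sigma = \tfrac14\Psi(\varphi_*\sigma) = -\tfrac12\Psi\sigma$, whence $\Psi\sigma = 0$. Thus $\ker I_* \subseteq \ker\Psi$, and together with the previous paragraph this establishes $\ker\Psi = \mathrm{F}^4\CH^4(F)$.

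It then remains to close the loop $\mathrm{F}^4\CH^4(F) \subseteq \mathcal{R}_{\hom}$. For $\sigma \in \ker I_*$, write $\sigma$ as a $\Q$-combination of differences $[l_t] - [l_{t'}]$ of general (first-type) lines; then $2\sigma + \varphi_*\sigma$ is a combination of genuine triangle classes $2t + \varphi_*t \in \mathcal{R}$ and is homologically trivial, so it lies in $\mathcal{R}_{\hom}$. But $\varphi_*\sigma = 4\sigma$ gives $2\sigma + \varphi_*\sigma = 6\sigma$, whence $\sigma \in \mathcal{R}_{\hom}$ after dividing by $6$ (we work with rational coefficients). This completes all three equalities.

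The main obstacle, and the one place requiring genuine care rather than formal manipulation, is the legitimacy of treating $\varphi$ as an everywhere-defined endomorphism: the triangle relation and Lemma~\ref{lem intersection of g^2 and S_l} are valid only for general lines of first type, so I must justify that every class in $\ker I_*$ (indeed in $\CH_0(F)_{\hom}$) is represented by combinations of such general points, and that $\varphi_*$, defined via the closure of the graph, is compatible with these relations on those representatives. The substantive geometric inputs—the self-intersection analysis of $S_l$ packaged in Lemma~\ref{lem intersection of g^2 and S_l} and the identification of a triangle with a plane section—are already in hand, so once this reduction to general representatives is secured the remaining steps are the short algebraic manipulations sketched above.
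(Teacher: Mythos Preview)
Your proof is correct, and it takes a genuinely different route from the paper's. The paper proves $\ker I_* = \ker\Psi$ by citing an external result that $\Phi : \CH^3(X)_{\hom} \to \CH^2(F)_{\hom}$ is injective, and then proves $\ker\Psi \subseteq \mathcal{R}_{\hom}$ via an explicit secant-line argument (Lemma~\ref{lem rationally equivalent cycles}): given $\sum l_i = \sum l_i'$ in $\CH_1(X)$, one picks a general auxiliary line $l$, compares the secant lines of $(l,l_i)$ and $(l,l_i')$, and reduces to a combinatorial identity among the $27$ lines on a cubic surface. You instead route everything through the rational self-map $\varphi$: the identity $g^2\cdot I_*\sigma = \varphi_*\sigma - 4\sigma + 24\deg(\sigma)\,\mathfrak{o}_F$ from Lemma~\ref{lem intersection of g^2 and S_l} forces $\varphi_*\sigma = 4\sigma$ on $\ker I_*$, and then $6\sigma = 2\sigma + \varphi_*\sigma$ is visibly a $\Q$-combination of triangle classes (since $(l_t,l_t,l_{\varphi(t)})$ is a triangle for general $t$), so $\sigma \in \mathcal{R}_{\hom}$. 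This avoids both the external injectivity input and the cubic-surface computation, at the cost of relying on the geometry of $S_l$ packaged in Lemma~\ref{lem intersection of g^2 and S_l}. Note that your step establishing $\ker I_* \subseteq \ker\Psi$ via the eigenvalue clash is correct but actually redundant: once you have $\mathcal{R}_{\hom} \subseteq \ker\Psi \subseteq \ker I_* \subseteq \mathcal{R}_{\hom}$, all three coincide. One further remark: the paper's argument, via divisibility of $\mathcal{R}_{\hom}$, works integrally, whereas your division by $6$ commits you to rational coefficients (which is the paper's standing convention for $\CH^*$, so this is fine).
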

\begin{proof}
  To establish the first equality, we note that the composition
  $\Phi\circ\Psi$ is equal to $I_*$. Hence we only need to see that
  $\Phi :\CH^3(X)_{\mathrm{hom}}\rightarrow\CH^2(F)_{\mathrm{hom}}$ is
  injective. This is an immediate consequence of \cite[Theorem
  4.7]{surface} where it is shown that the composition of $\Phi$ and
  the restriction
  $\CH^2(F)_{\mathrm{hom}}\rightarrow\CH^2(S_l)_{\mathrm{hom}}$ is
  injective for $l$ a general line on $X$.

  As for the second equality, the inclusion
  $\mathcal{R}_{\mathrm{hom}} \subseteq \ker\{\Psi\}$ is obvious and
  the inclusion $\ker\{\Psi\} \subseteq \mathcal{R}_{\mathrm{hom}}$ is
  Lemma \ref{lem rationally equivalent cycles} below.
\end{proof}

\begin{lem}\label{lem rationally equivalent cycles} Let $l_i$ and $l'_i$,
  $i=1,\ldots,n$, be two collections of lines on $X$ such that $\sum
  l_i=\sum l'_i$ in $\CH^3(X)$. Then we have $$
  \sum_{i=1}^{n}[l_i]-\sum_{i=1}^{n}[l'_i] \in
  \mathcal{R}_\mathrm{hom}. $$
\end{lem}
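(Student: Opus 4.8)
The statement is exactly the inclusion $\ker\{\Psi\}\subseteq\mathcal{R}$ read off on effective configurations. Since $\Psi=q_*p^*$ sends a point $[l]\in\CH^4(F)$ to the line $l\in\CH^3(X)$ (the fibre $p^{-1}([l])$ maps isomorphically onto $l$ under $q$), the hypothesis $\sum l_i=\sum l_i'$ in $\CH^3(X)$ says precisely that $\sigma:=\sum_i[l_i]-\sum_i[l_i']$ lies in $\ker\{\Psi:\CH^4(F)\to\CH^3(X)\}$. The plan is therefore to prove $\sigma\in\mathcal{R}$; the membership $\sigma\in\mathcal{R}_{\mathrm{hom}}$ is then automatic, because $\sigma$ has degree $n-n=0$ and $\mathcal{R}_{\mathrm{hom}}$ is by definition the degree-zero part of $\mathcal{R}$. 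To exploit the rational equivalence I would model it by a witnessing family: a $1$-cycle $\mathcal{Z}$ on $X\times\PP^1$ (equivalently, a finite chain of divisors of rational functions on integral surfaces in $X$) whose special fibres are $\sum l_i$ and $\sum l_i'$, and I would track the associated zero-cycle on $F$ as one moves along this family.

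The engine is the principle that \emph{plane sections produce triangles}: whenever a plane $\Pi\cong\PP^2$ meets $X$ in three lines $(l_1,l_2,l_3)$, the cycle $[l_1]+[l_2]+[l_3]$ belongs to $\mathcal{R}$ and has $\Psi$-image the fixed class $h^3$, so the difference of any two triangles lies in $\mathcal{R}_{\mathrm{hom}}$ and all plane-section configurations are interchangeable modulo $\mathcal{R}$. I would then combine this with two moves that do not alter $\sigma$ modulo $\mathcal{R}$. First, replacing a line $l_t$ by $l_{t'}$ when the two sit in a connected family sweeping out a ruled surface: here $[l_t]$ and $[l_{t'}]$ are already equal in $\CH^4(F)$, as they lie on a rational curve in $F$. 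Second, the secant-line specialization of Lemma \ref{lem specialization of secant lines}: when a family of secant lines of a pair degenerates, the secant lines recombine into four lines through the limiting intersection point together with the residual line, with total class $6\mathfrak{o}_F$, which is a relation in $\mathcal{R}$. Using the residual-line construction $\varphi$ fibrewise, every elementary degeneration of a curve in the family into its line components is matched, on $\CH^4(F)$, by an element of $\mathcal{R}$.

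The main obstacle is that the intermediate fibres $\mathcal{Z}_t$ are arbitrary degree-$n$ curves on $X$, not sums of lines, so there is no \emph{a priori} lift of the whole family to $\CH^4(F)$; the technical heart is to show that the difference of the two endpoint lifts $\sum[l_i]-\sum[l_i']$ is realized by a chain of the two moves above, i.e. that the ambiguity in lifting each $\mathcal{Z}_t$ to $F$ is controlled entirely by triangles. Concretely, I would cut $\mathcal{Z}$ down by a general pencil of hyperplane sections, use that a general section meets $X$ transversally so that the secant and residual constructions apply fibrewise, and invoke Lemma \ref{lem specialization of secant lines} to guarantee that passing from a general fibre to a special fibre containing the $l_i$ (respectively the $l_i'$) changes the associated zero-cycle only by triangles. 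Assembling these fibrewise contributions over the base of the family would yield $\sigma\in\mathcal{R}$, and hence $\sigma\in\mathcal{R}_{\mathrm{hom}}$, completing the proof. I expect the delicate points to be the transversality needed to make the fibrewise secant/residual constructions valid and the bookkeeping ensuring that the non-line parts of successive fibres telescope away.
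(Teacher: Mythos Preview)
Your proposal has a genuine gap and differs substantially from the paper's argument.

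First, your ``Move 1'' is simply false: a connected $1$-parameter family of lines on $X$ gives a curve in $F$, but there is no reason for this curve to be rational, so $[l_t]$ and $[l_{t'}]$ need not be equal in $\CH^4(F)$. This undermines the whole specialization scheme you outline, since you have no mechanism to compare the lifts at different values of $t$ unless you know the curve in $F$ is rational. More broadly, the plan of tracking a family $\mathcal{Z}_t$ of degree-$n$ curves and ``lifting each $\mathcal{Z}_t$ to $F$'' is not well-defined for non-line curves, and cutting by hyperplane pencils does not obviously repair this: you have not explained what cycle on $F$ a general fibre should correspond to, nor why the telescoping you hope for actually occurs.

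The paper's proof avoids families entirely. It first proves, via the explicit combinatorics of the $27$ lines on a smooth cubic surface, a key \emph{Claim}: for any well-positioned pair $(l_1,l_2)$ with secant lines $E_1,\ldots,E_5$, one has $2[l_1]+2[l_2]+\sum[E_i]\in\mathcal{R}$. Then it fixes a single general auxiliary line $l$ and observes that the total secant-line cycle $\sum_{i,j}[E_{i,j}]$ of all pairs $(l_i,l)$ equals $\Phi(\gamma)\cdot S_l$ with $\gamma=\sum l_i\in\CH_1(X)$; this depends only on the rational equivalence class of $\gamma$, so it agrees with the corresponding sum for the $l'_i$. Subtracting and applying the Claim gives $2\big(\sum[l_i]-\sum[l'_i]\big)\in\mathcal{R}_{\mathrm{hom}}$, and divisibility of $\mathcal{R}_{\mathrm{hom}}$ (surjected by $\CH_0$ of a smooth compactification of the moduli of triangles) removes the factor $2$. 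The missing ingredients in your approach are precisely this cubic-surface Claim and the use of $\Phi(\gamma)\cdot S_l$ to convert the rational equivalence in $\CH^3(X)$ directly into an identity of zero-cycles on $F$.
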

\begin{proof} We recall the concept, which was defined in
  \cite{relation}, of a secant line of a pair of (disjoint) curves
  $C_1$ and $C_2$ on $X$. Namely $l$ is a secant line of $(C_1,C_2)$
  if it meets both curves. This concept is naturally generalized to
  the concept of secant lines of two 1-dimensional cycles on $X$. A
  pair of curves are well-positioned if they have finitely many
  (counted with multiplicities) secant lines. To prove the lemma,
  we first prove the following.\medskip

  \noindent \textbf{Claim}  : Let $(l_1,l_2)$ be a pair of
  well-positioned lines on $X$ and $E_i$, $i=1,\ldots,5$, the secant
  lines of $(l_1,l_2)$.  Then we have
$$ 2[l_1]+2[l_2]+\sum_{i=1}^5[E_i]\in\mathcal{R}.$$
\begin{proof}[Proof of Claim] we may assume that the pair $(l_1,l_2)$
  is general.  This is because the special case follows from the
  generic case by a limit argument. Let $\Pi\cong\PP^3$ be the linear
  span of $l_1$ and $l_2$. Since $(l_1,l_2)$ is general, the
  intersection $\Sigma=\Pi\cap X\subset\Pi\cong\PP^3$ is a smooth
  cubic surface.  Hence
  $\Sigma\cong\mathrm{Bl}_{\{P_1,\ldots,P_6\}}(\PP^2)$ is the blow-up
  of $\PP^2$ at 6 points. Let $R_i\subset\Sigma$, $1\leq i\leq 6$, be
  the the 6 exceptional curves. Let $L_{ij}\subset\Sigma$ be the
  strict transform of the line on $\PP^2$ connecting $P_i$ and $P_j$,
  where $1\leq i<j\leq 6$. Let $C_i\subset\Sigma$, $1\leq i\leq 6$, be
  the strict transform of the conic on $\PP^2$ passing through all
  $P_j$ with $j\neq i$. The set $\{R_i,L_{ij},C_i\}$ gives all the 27
  lines on $\Sigma$. Without loss of generality, we may assume that
  $R_1=l_1$ and $R_2=l_2$. Then the set of all secant lines
  $\{E_i\}_{i=1}^6$ is explicitly given by
  $\{L_{12},C_3,C_4,C_5,C_6\}$.  The triangles on $\Sigma$ are always
  of the form $(R_i,L_{ij},C_j)$ (here we allow $i>j$ and set
  $L_{ij}=L_{ji}$) or $(L_{i_1 j_1}, L_{i_2 j_2},L_{i_3 j_3})$ where
  $\{i_1,i_2,i_3,j_1,j_2,j_3\}=\{1,\ldots,6\}$. Then we easily check
  that
\begin{align*}
2l_1+2l_2+\sum E_i = &2R_1+2R_2+ L_{12}+C_3+C_4+C_5+C_6\\
 =&(R_1+L_{13}+C_3) + (R_1+L_{14}+C_4) +(R_2+L_{25}+C_5)\\
  &+(R_2+L_{26}+C_6)+(L_{12}+L_{46}+L_{35})\\
  &-(L_{13}+L_{25}+L_{46})-(L_{14}+L_{26}+L_{35}).
\end{align*}
Hence $2[l_1]+2[l_2]+\sum[E_i]\in\mathcal{R}$.
\end{proof}
Back to the proof of the lemma. We pick a general line $l$ such that
$(l,l_i)$ and $(l,l'_i)$ are all well-positioned. Let $E_{i,j}$,
$j=1,\ldots,5$, be the secant lines of $l_i$ and $l$~; similarly let
$E'_{i,j}$, $j=1,\ldots,5$, be the secant lines of $l'_i$ and $l$.
Then by definition, we have $$
\sum_{i=1}^{n}\sum_{j=1}^{5}[E_{i,j}]=\Phi(\gamma)\cdot S_l, \qquad
\sum_{i=1}^{n}\sum_{j=1}^{5}[E'_{i,j}]=\Phi(\gamma')\cdot
S_l,\quad\text{ in }\CH_0(F), $$ where $\gamma=\sum l_i\in\CH_1(X)$
and $\gamma'=\sum l'_i\in\CH_1(X)$. By assumption $\gamma=\gamma'$ and
hence $$ \sum_{i=1}^{n}\sum_{j=1}^{5}[E_{i,j}] =
\sum_{i=1}^{n}\sum_{j=1}^{5}[E'_{i,j}]. $$ Thus we get
\begin{align*} 2(\sum_{i=1}^{n}[l_i]-\sum_{i=1}^{5} [l'_i]) &
=(2\sum_{i=1}^n [l_i]+2n[l]+ \sum_{i=1}^{n}\sum_{j=1}^{5}[E_{i,j}]) -
(2\sum_{i=1}^n [l'_i]+2n[l]+ \sum_{i=1}^{n}\sum_{j=1}^{5}[E'_{i,j}])\\ & =
\sum_{i=1}^{n}(2[l] + 2[l_i] +\sum_{j=1}^{5}[E_{i,j}]) - \sum_{i=1}^{n}(2[l] +
2[l'_i] +\sum_{j=1}^{5}[E'_{i,j}]) \ \  \in\mathcal{R}_{\mathrm{hom}}.
\end{align*} Here the last step uses the claim. If we can prove that
$\mathcal{R}_{\mathrm{hom}}$ is divisible (and hence uniquely
divisible), then we get
$\sum[l_i]-\sum[l'_i]\in\mathcal{R}_{\mathrm{hom}}$. Let
$\widetilde{R}$ be the (desingularized and compactified) moduli space
of triangles on $X$, then we have a surjection
$\CH_0(\widetilde{R})_{\mathrm{hom}} \twoheadrightarrow
\mathcal{R}_{\mathrm{hom}}$.  It is a standard fact that
$\CH_0(\widetilde{R})_{\mathrm{hom}}$ is divisible.  Hence so is
$\mathcal{R}_{\mathrm{hom}}$. This finishes the proof.
\end{proof}

Proposition \ref{prop intersection identities} below will be used to
prove Theorem \ref{thm surjection of intersection product}.

\begin{prop}\label{prop intersection identities}
  Let $(l_1,l_2,l_3)$ be a triangle, then the following are true.
  \begin{enumerate}[(i)]
  \item The identity $S_{l_1}\cdot S_{l_2}=
  6\mathfrak{o}_F+[l_3]-[l_1]-[l_2]$ holds true in $\CH_0(F)$~;
\item If $(l'_1,l'_2,l'_3)$ is another triangle, then
$$
6(\sum_{i=1}^{i=3}[l_i]-\sum_{i=1}^{3}[l'_i])=\sum_{1\leq i<j\leq
  3}(S_{l_i}-S_{l_j})^2 -\sum_{1\leq i<j\leq 3}(S_{l'_i}-S_{l'_j})^2
$$
holds true in $\CH_0(F)$.
\end{enumerate}
\end{prop}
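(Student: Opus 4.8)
The plan is to deduce both identities from the incidence formulas already established, the self-map $\varphi$, and a degeneration of secant lines. Throughout I write $S_l=I_*[l]$ for the surface of lines meeting $l$, so that $S_{l_i}\cdot S_{l_j}$ and $S_{l}^2$ are the zero-cycles $(I_*[l_i])\cdot(I_*[l_j])$ and $(I_*[l])^2$. It suffices to treat general triangles, the special cases following by the usual specialization argument in the family of triangles.

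For part \emph{(i)} the idea is to compute $S_{l_1}\cdot S_{l_2}$ by degeneration. Since $l_1,l_2$ meet at $x=l_1\cap l_2$, the surfaces $S_{l_1}$ and $S_{l_2}$ share the whole curve $C_x$ of lines through $x$, so their intersection is not proper and cannot be read off directly. Instead I would move $l_2$ in a family $l_2^{(t)}$ with $l_2^{(0)}=l_2$ and $l_2^{(t)}$ disjoint from $l_1$ for $t\neq 0$. As $[l_2^{(t)}]$ traces a curve in $F$, all the cycles $S_{l_2^{(t)}}=I_*[l_2^{(t)}]$ are rationally equivalent, whence $S_{l_1}\cdot S_{l_2}=S_{l_1}\cdot S_{l_2^{(t)}}$ in $\CH_0(F)$ for every $t$. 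For $t\neq 0$ the two lines are disjoint and in general position, and then $S_{l_1}$ and $S_{l_2^{(t)}}$ meet transversally in precisely the five common secant lines of the pair. Letting $t\to 0$ and invoking Lemma~\ref{lem specialization of secant lines}, four of these secant lines specialize to lines $E_1,\dots,E_4$ through $x$ satisfying $[l_1]+[l_2]+[E_1]+[E_2]+[E_3]+[E_4]=6\mathfrak{o}_F$, while the fifth specializes to the residue line $l_3$ of $l_1$ and $l_2$. Hence $S_{l_1}\cdot S_{l_2}=[E_1]+\cdots+[E_4]+[l_3]=6\mathfrak{o}_F+[l_3]-[l_1]-[l_2]$, as claimed.

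For part \emph{(ii)} the first step is a formula for the self-intersection $S_l^2$. Applying Voisin's identity~\eqref{identity of voisin} to a general point $[l]$ and using Lemma~\ref{lem smooth general fiber} (giving $S_l^2=(I^2)_*[l]$), the term $I\cdot(g_1^2+g_1g_2+g_2^2)$ contributes only $g^2\cdot S_l$ (the $g_1$-factors kill the class for dimension reasons), which by Lemma~\ref{lem intersection of g^2 and S_l} equals $\varphi_*[l]-4[l]+24\mathfrak{o}_F$; the polynomial term contributes $(\Gamma_2)_*[l]=-18\mathfrak{o}_F$, being a homogeneous degree-$4$ polynomial in $g,c$ of the degree already computed. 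This yields
\begin{equation*}
 S_l^2=\varphi_*[l]-2[l]+6\mathfrak{o}_F .
\end{equation*}
I would then expand $\sum_{i<j}(S_{l_i}-S_{l_j})^2=2\sum_i S_{l_i}^2-2\sum_{i<j}S_{l_i}S_{l_j}$, substitute this formula together with part \emph{(i)} for the cross terms (which gives $\sum_{i<j}S_{l_i}S_{l_j}=18\mathfrak{o}_F-\sum_i[l_i]$), and finally rewrite $\varphi_*[l_i]$ using $\varphi_*[l]=g^2\cdot S_l+4[l]-24\mathfrak{o}_F$. A short bookkeeping produces
\begin{equation*}
 \sum_{1\le i<j\le 3}(S_{l_i}-S_{l_j})^2 = 6\sum_{i=1}^3[l_i]+2\,g^2\cdot\Big(\sum_{i=1}^3 S_{l_i}\Big)-144\,\mathfrak{o}_F .
\end{equation*}
The point is that $\sum_i S_{l_i}=I_*\big(\sum_i[l_i]\big)=\Phi(h^3)=g^2-c$ is the same cycle for every triangle, because $\Psi(\sum_i[l_i])=l_1+l_2+l_3=\Pi\cdot X=h^3$ in $\CH^3(X)$; thus the last two terms are triangle-independent, and subtracting the identity for the primed triangle cancels them, leaving exactly $6\big(\sum_i[l_i]-\sum_i[l_i']\big)$.

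The main obstacle is part \emph{(i)}: the honest intersection $S_{l_1}\cdot S_{l_2}$ carries an excess contribution along $C_x$, and the whole content is that deforming to the disjoint case and taking the flat limit of the secant lines computes it correctly. This rests on the geometric input of Lemma~\ref{lem specialization of secant lines}, which tracks how the five secant lines of a degenerating pair break up into four lines through $x$ and the residue line. Once \emph{(i)} and the self-intersection formula for $S_l^2$ are in hand, part \emph{(ii)} is a purely formal manipulation.
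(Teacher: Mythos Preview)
Your approach to part \emph{(i)} is the same degeneration the paper uses, but the justification contains an error. The claim that ``as $[l_2^{(t)}]$ traces a curve in $F$, all the cycles $S_{l_2^{(t)}}=I_*[l_2^{(t)}]$ are rationally equivalent'' is false: points on an arbitrary curve in $F$ are not rationally equivalent in $\CH_0(F)$, so neither are their images under $I_*$. The limiting argument is valid, but for a different reason: the family of secant lines of $(l_1,l_2^{(t)})$ defines a cycle on $T\times F$, flat over $T$, whose general fibre represents $S_{l_1}\cdot S_{l_2^{(t)}}$; by specialization of intersection products the fibre over $t=0$ then represents $S_{l_1}\cdot S_{l_2}$. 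This is what the paper does (moving $l_1$ rather than $l_2$): it writes $S_{l_t}\cdot S_{l_2}=\sum_i[E_{t,i}]$ for general $t$ and then ``lets $t\to t_1$''. Once the justification is fixed, your argument for \emph{(i)} is identical to the paper's.

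For part \emph{(ii)} your argument is correct but takes a longer detour than the paper's. You expand $\sum_{i<j}(S_{l_i}-S_{l_j})^2=2\sum_i S_{l_i}^2-2\sum_{i<j}S_{l_i}S_{l_j}$, which forces you to compute $S_l^2$ via Voisin's identity~\eqref{identity of voisin}, Lemma~\ref{lem smooth general fiber}, and Lemma~\ref{lem intersection of g^2 and S_l} (bringing in $\varphi$). The paper instead uses the algebraic identity $\sum_{i<j}(a_i-a_j)^2=2(\sum_i a_i)^2-6\sum_{i<j}a_ia_j$, so only part \emph{(i)} and the observation $\sum_i S_{l_i}=\Phi(h^3)$ are needed; no self-intersection $S_l^2$, no $\varphi$. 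Your route does have the side benefit of producing the explicit formula $S_l^2=\varphi_*[l]-2[l]+6\mathfrak{o}_F$, but for the proposition at hand the paper's expansion is shorter.
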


\begin{proof}[Proof of Proposition \ref{prop intersection identities}]
  We prove \emph{(i)} for the case where all the edges of the triangle are
  distinct. Then the general case follows by a simple limit argument.
  Take a 1-dimensional family of lines $\{l_t :t\in T\}$ such that
  $l_{t_1}=l_1$. We may assume that $(l_{t},l_2)$ is well-positioned
  when $t\neq t_1$. Let $\{E_{t,i} :i=1,\ldots,5\}$ be the secant lines
  of the pair $(l_t,l_2)$ for $t\neq t_1$. The tangent space
  $\mathscr{T}_{t_1}(T)$ determines a section of
  $\HH^0(l_1,\mathscr{N}_{l_1/X})$ which in turn gives a normal
  direction $v$ in $\mathscr{N}_{l_1/X,x}$, where $x=l_1\cap l_2$.
  When $t\to t_1$, the secant lines $E_{t,i}$ specializes to
  $\{L_1,\ldots,L_4, l_3\}$ where $L_1,\ldots,L_4$ together with
  $\{l_1,l_2\}$ form the six lines through $x$ that are contained in
  the linear $\PP^3$ spanned by $(l_1,l_2,v)$. This means
  $[L_1]+\cdots+[L_4]+[l_1]+[l_2]=6\mathfrak{o}_F$~; see Lemma \ref{lem
    specialization of secant lines}. By construction, we have
$$
S_{l_t}\cdot S_{l_2}=\sum_{i=1}^5[E_{t,i}].
$$
Let $t\to t_1$ and take the limit, we have
$$
S_{l_1}\cdot S_{l_2}=\sum_{i=1}^4[L_i]+[l_3].
$$
We combine this with
$[L_1]+\cdots+[L_4]+[l_1]+[l_2]=6\mathfrak{o}_F$ and deduce \emph{(i)}.

Let $(l_1,l_2,l_3)$ be a triangle, then we have
\begin{align*}
\sum_{1\leq i<j\leq 3} (S_{l_i}-S_{l_j})^2 &=2\big( \sum_{1\leq i\leq
3}S_{l_i}\big)^2-6\sum_{1\leq i<j\leq 3}S_{l_i}\cdot S_{l_j}\\
&=2(\Phi(h^3))^2+6\sum_{1\leq i\leq 3}[l_i]-108\mathfrak{o}_F.
\end{align*}
Then \emph{(ii)} follows easily from this computation.
\end{proof}

\begin{proof}[Proof of Theorem \ref{thm surjection of intersection
    product}] To prove \emph{(ii)}, it is enough to show that $[l_1]-[l_2]$ is
  in the image of the map. Let $l'$ be a line meeting both $l_1$ and
  $l_2$, then $[l_1]-[l_2]=([l_1]-[l'])+([l']-[l_2])$. Hence we may
  assume that $l_1$ meets $l_2$. We may further assume that
  $(l_1,l_2)$ is general. Since $\CH^4(F)_{\mathrm{hom}} =
  \CH_0(F)_{\mathrm{hom}}$ is uniquely divisible, we only need to show
  that $2([l_1]-[l_2])$ is in the image. Now let $l_3$ be the residue
  line of $l_1\cup l_2$ so that $(l_1,l_2,l_3)$ forms a triangle. Then
$$
2([l_1]-[l_2])=(S_{l_2}-S_{l_1})\cdot S_{l_3},
$$
is in the image of
$\CH^2(F)\otimes\CH^2(F)_{\mathrm{hom}}\rightarrow\CH^4(F)_{\mathrm{hom}}$.

To prove \emph{(i)}, it suffices to show that the image of
$\CH^2_{\Z}(F)\otimes\CH^2_{\Z}(F)\rightarrow\CH^4_{\Z}(F)$ contains
a cycle of degree 1. By Lemma \ref{lem special zero cycle} and the
fact that a general pair of lines has $5$ secant lines (see
\cite{relation}), we have
$$
\deg(g^2\cdot g^2)=108,\quad \deg(S_{l_1}\cdot S_{l_2})=5.
$$
Hence the image of the intersection of 2-cycles hits a 0-cycle of
degree 1.

\emph{(iii)} As an easy consequence of Proposition \ref{prop
  intersection identities}\emph{(ii)} we have $\mathcal{R}_{\hom} =
\mathcal{A}_{\hom} \cdot \mathcal{A}_{\hom}$. But then, Theorem
\ref{prop F3 triangle} gives $\F^4\CH^4(F) = \mathcal{A}_{\hom} \cdot
\mathcal{A}_{\hom}$
\end{proof}

\begin{rmk} An alternate proof of statements \emph{(i)} and
  \emph{(ii)} of Theorem \ref{thm surjection of intersection product}
  can be obtained as a combination of Voisin's identity (\ref{identity
    of voisin}) and of the basic Lemma \ref{lem smooth general fiber}.
  However, (\ref{identity of voisin}) does not seem to imply statement
  \emph{(iii)}.  In addition, our proofs also work for the case of
  cubic threefolds as follows. If $X$ is a cubic threefold, then its
  variety of lines, $S$, is a smooth surface. Then
  $\mathrm{F}^2\CH^2(S)$, the Albanese kernel, is the same as
  $\ker\{\Psi :\CH^2(S)\rightarrow\CH^3(X)\}$.  As in Theorem
  \ref{prop F3 triangle}, $\mathrm{F}^2\CH^2(S)$ is identified with
  $\mathcal{R}_{\mathrm{hom}}$. Statements \emph{(i)} and (ii) of
  Proposition \ref{prop intersection identities} are true in the
  following sense.  In this case $S_l$, the space of all lines meeting
  a given line $l$, is a curve. The constant class $6\mathfrak{o}_F$
  in statement \emph{(i)} should be replaced by the class of the sum
  of all lines passing through a general point of $X$. Then the
  statement (iii) of Theorem \ref{thm surjection of intersection
    product} reads as $\Pic^0(S)\otimes\Pic^0(S)\rightarrow
  \mathrm{F}^2\CH^2(S)$ being surjective. This result concerning the
  Fano scheme of lines on a smooth cubic threefold already appears in
  \cite{bloch}.
\end{rmk}

\vspace{10pt}
\section{The rational self-map $\varphi :F\dashrightarrow F$ and the
  Fourier decomposition}
\label{sec varphi}

In this section we determine completely the class in $\CH^4(F\times
F)$ of the closure of the graph $\Gamma_{\varphi}$ of the rational map
$\varphi$ of Definition \ref{defn varphi}~; see Proposition \ref{prop
  key identity of varphi}. We obtain thus in Proposition \ref{prop
  cubic hom phi} a complete description of the cohomology class of
$\Gamma_{\varphi}$, thereby complementing the main result of Amerik
\cite{amerik}. After some work, we also obtain in Theorem \ref{thm
  eigenspace decomposition} an eigenspace decomposition for the action
of $\varphi$ on the Chow groups of $F$. It turns out that the action
of $\varphi$ is compatible with the Fourier decomposition on the Chow
groups of $F$~; see \S \ref{sec varphi fourier compatible}. This
interplay between $\varphi$ and the Fourier decomposition will be
crucial to showing in Section \ref{sec mult} that the Fourier
decomposition is compatible with intersection product.  \medskip

Let $\omega$ be a global $2$ form on
$F$. The action of $\varphi^*$ on $\omega$ is known~:

\begin{prop}[\cite{av}]\label{prop varphi action on cohomology}
  The rational map $\varphi :F\dashrightarrow F$ has degree 16 and we
  have
$$
\varphi^*\omega=-2\omega,\quad \varphi^*\omega^2=4\omega^2.
$$
\end{prop}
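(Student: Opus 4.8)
The plan is to use that $F$ is a hyperk\"ahler fourfold, so that $\HH^0(F,\Omega_F^2)=\C\,\omega$ is one-dimensional. First I would choose a resolution $\tau\colon \widetilde F\to F$ of the indeterminacy of $\varphi$; since the indeterminacy locus $\Sigma_1\cup\Sigma_2$ of Definition \ref{defn varphi} has codimension $\geq 2$, the regular $2$-form $(\varphi\circ\tau)^*\omega$ agrees on $\widetilde F\setminus\tau^{-1}(\Sigma_1\cup\Sigma_2)$ with $\tau^*$ of the honest pullback of $\omega$, and this extends across the codimension-$2$ locus to a regular $2$-form on $F$ (here $F$ is smooth). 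Hence $\varphi^*\omega=\lambda\,\omega$ for a single scalar $\lambda\in\C$, and automatically $\varphi^*\omega^2=\lambda^2\,\omega^2$. Thus the entire statement reduces to the single identity $\lambda=-2$: both formulas then follow, and they are seen to be equivalent once $\lambda^2=4$ is known.

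Second, I would note that the topological degree and the scalar $\lambda$ determine one another. Since $\omega^2\wedge\overline\omega^2$ is a nowhere-vanishing top form, any dominant generically finite rational self-map satisfies $\int_F\varphi^*(\omega^2\wedge\overline\omega^2)=(\deg\varphi)\int_F\omega^2\wedge\overline\omega^2$, while the left-hand side equals $|\lambda|^4\int_F\omega^2\wedge\overline\omega^2$ by the computation of the previous paragraph (carried out on $\widetilde F$ and using $\deg\tau=1$). Therefore $\deg\varphi=|\lambda|^4$. Consequently it is enough to compute $\lambda$ exactly, and the value $\lambda=-2$ will re-prove Amerik's count $\deg\varphi=16$ of \cite{amerik} as a byproduct. (Alternatively, one could compute $\deg\varphi=16$ directly by an intersection-theoretic count of residual lines and deduce $|\lambda|=2$, but that only gives the modulus.)

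Third, and this is the crux, I would pin down the exact value, sign included, by an infinitesimal computation of $d\varphi$ at a general line $l$ of first type. At such a point $T_{[l]}F=\HH^0(l,N_{l/X})$, and the symplectic form $\omega$ is the Beauville pairing on $\HH^0(l,N_{l/X})$ induced, via the normal bundle sequence $0\to N_{l/X}\to \mathcal{O}_l(1)^{\oplus 4}\to \mathcal{O}_l(3)\to 0$, by the defining cubic $f$ of $X$ \cite{bd}. The residual-line construction is encoded by the factorization $f|_{\Pi_l}=\ell_l^2\,m_l$ of the restriction of $f$ to the tangent plane $\Pi_l\cong\PP^2$, where $\ell_l$ cuts out $l$ and $m_l$ cuts out $l'=\varphi([l])$. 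Differentiating this relation along a one-parameter deformation of $l$, so that $\tfrac{d}{dt}(\ell^2 m)=2\,\ell\,\dot\ell\,m+\ell^2\,\dot m$, and reading off through the normal-bundle identification how the variation $\dot m$ of the residual line relates to the variation $\dot\ell$ of $l$, should produce precisely the multiplier $-2$ on $\omega$: the coefficient $2$ comes from the double vanishing of $f|_{\Pi_l}$ along $l$, and the sign from the orientation of the residue pairing. I expect this local computation, and in particular making the identification of $\omega$ with the cubic-induced pairing explicit enough to track the sign, to be the main obstacle; one must work on the appropriate open locus and carefully separate lines of first and second type.

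Finally, as a consistency check that the sign is $-2$ rather than $+2$, I would compare with the behaviour already established on special loci: Lemma \ref{lem varphi on Sigma2} gives $\varphi_*s=-2s+3\mathfrak{o}_F$ on lines $s$ of second type, and Lemma \ref{lem intersection of g^2 and S_l} gives $g^2\cdot I_*[l]=\varphi_*[l]-4[l]+24\,\mathfrak{o}_F$; both exhibit the eigenvalue $-2$ and are compatible with $\lambda=-2$. Combined with the relation $\deg\varphi=|\lambda|^4$ from the second paragraph, this confirms $\deg\varphi=16$ and completes the proof.
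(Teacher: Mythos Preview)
The paper does not prove this proposition: it is stated with the attribution \cite{av} and no proof is given. The result is taken as input from Amerik--Voisin and used throughout Section~\ref{sec varphi}. So there is no argument in the paper to compare against.

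As for your outline: the first two paragraphs are correct and standard, and they reduce the statement to the single claim $\lambda=-2$. But the third paragraph, which you yourself flag as ``the crux'' and ``the main obstacle'', is not a proof --- it is a description of what a proof might look like. You have not actually carried out the differentiation of $\ell^2 m$ and tracked it through the identification of $\omega$ with the Beauville pairing on $\HH^0(l,N_{l/X})$; until that is done, nothing has been established beyond $|\lambda|=2$. Your fourth paragraph does not close this gap either: Lemma~\ref{lem varphi on Sigma2} concerns the action of $\varphi_*$ on zero-cycles supported on $\Sigma_2$, and Lemma~\ref{lem intersection of g^2 and S_l} is an identity in $\CH_0(F)$; neither computes the eigenvalue of $\varphi^*$ on $\HH^{2,0}(F)$. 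The appearance of $-2$ in Lemma~\ref{lem varphi on Sigma2} is suggestive but is a different eigenvalue on a different space, and you have not supplied the link. If you want an honest proof you must either carry out the infinitesimal computation in full, or else find an independent determination of the sign (for instance via the action of $\varphi^*$ on an explicit algebraic class in $\HH^2(F,\Q)_{\mathrm{prim}}$ when one is available).
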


By the very definition of $\varphi$, the triple $(l,l,\varphi(l))$ is
a triangle, in the sense of Definition \ref{dfn triangle}, whenever
$l$ is a line of first type. Thus $I_*(\varphi_*\sigma + 2\sigma) = 0$
for all $\sigma \in \CH_0(F)_{\hom}$~; see Proposition \ref{prop F3
  triangle}. Other links between the rational map $\varphi$ and the
incidence correspondence $I$ have already been mentioned~: Lemma
\ref{lem intersection of g^2 and S_l} shows that $g^2\cdot I_*[l]
=\varphi_*([l])-4[l]+24\mathfrak{o}_F \in \CH_0(F)$, and Proposition
\ref{prop intersection identities} implies, for instance, that
$I_*[l]\cdot I_*\varphi_*[l] = 6\mathfrak{o}_F- \varphi_*[l]$ and
$(I_*[l])^2 = 6\mathfrak{o}_F+\varphi_*[l]-2[l]$.  Here we wish to
relate $I$ and $\varphi$ as correspondences in $F \times F$ and not
only the actions of $I$ and $\varphi$ on the Chow groups of $F$. As
such, the main result of this section is Proposition \ref{prop key
  identity of varphi}. As a consequence, we determine the action of
$\varphi$ on $\CH^*(F)$ and study its compatibility with the Fourier
transform.

\subsection{The correspondence $\Gamma_\varphi$}
Let $\varphi :F\dasharrow F$ be the rational map introduced in
Definition \ref{defn varphi} and let $\Gamma_\varphi\subset F\times F$ be
the closure of the graph of $\varphi$. We denote $\Gamma_{\varphi,0}\subset
F\times F$ the set of points $([l_1],[l_2])\in F\times F$ such that
there is a linear plane $\PP^2\subset \PP^5$, not contained in $X$, with
$\PP^2\cdot X=2l_1+l_2$. Then $\Gamma_{\varphi,0}$ is irreducible and
$\Gamma_\varphi$ is the closure of $\Gamma_{\varphi,0}$. Furthermore,
if $X$ does not contain any plane, then $\Gamma_\varphi =
\Gamma_{\varphi,0}$.  Our next goal is to study the correspondence
$\Gamma_\varphi$ in more details.

Let us now introduce some other natural cycles on $F\times F$~; see
Appendix A for more details. Let $\Gamma_h\subset F\times F$ be the
5-dimensional cycle of all points $([l_1],[l_2])$ such that $x\in
l_1\cap l_2$ for some point $x$ on a given hyperplane $H\subset X$.
Let $\Gamma_{h^2}\subset F\times F$ be the 4-dimensional cycle of all
points $([l_1],[l_2])$ such that $x\in l_1\cap l_2$ for some $x$ on an
intersection $H_1\cap H_2\subset X$ of two general hyperplane
sections. Let $\Pi_j\subset X$, $j=1,\ldots,r$, be all the planes
contained in $X$ and $\Pi^\ast_j \subset F$ the corresponding dual
planes. Then we define $I_1=\sum_{j=1}^{r} \Pi_j^\ast \times
\Pi_j^\ast \in \CH^4(F\times F)$. Note that a general cubic fourfold
does not contain any plane and hence $I_1=0$. Now we can state the
main result of this section.

\begin{prop}\label{prop key identity of varphi}
  The following equation holds true
  in $\CH^4(F\times F)$,
\begin{equation} \label{eq simplified varphi}
  \Gamma_\varphi+I_1  = 4\Delta_F+(2g_1^2+3g_1g_2+
  g_2^2)\cdot I -(5g_1+4g_2) \cdot \Gamma_h+3\Gamma_{h^2}.
\end{equation}
Furthermore, $-(5g_1+4g_2)\cdot \Gamma_h+3\Gamma_{h^2}$ can be
expressed as a weighted homogeneous polynomial
$\Gamma'_2(g_1,g_2,c_1,c_2)$ of degree $4$~; see Proposition \ref{prop
  class of Gamma_h}.
\end{prop}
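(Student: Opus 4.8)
The plan is to identify $\Gamma_\varphi + I_1$ with a residual intersection class and then to determine the universal correction terms cohomologically. The geometric starting point is that, for a line $l_1$ of first type, the tangent plane $\Pi_{l_1}$ satisfies $\Pi_{l_1}\cdot X = 2l_1 + \varphi(l_1)$; hence the correspondence $W\subset F\times F$ parameterizing pairs $([l_1],[l_2])$ with $l_2\subset\Pi_{l_1}$ decomposes, as a cycle, as $2\Delta_F + \Gamma_{\varphi,0}$ away from the planes contained in $X$, the planes contributing precisely the components $\Pi_j^\ast\times\Pi_j^\ast$ assembled into $I_1$. First I would make $W$ rigorous as a degeneracy locus: the tangent-plane condition is cut out, on the universal family, by the restriction of the defining cubic form of $X$ together with its first-order data along $l_1$, so that $W$ arises as the vanishing of a section of an explicit bundle built from $\mathscr{E}_2$ and $\calO_F(g)$. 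This exhibits $\Gamma_\varphi + I_1$ as the residual of $2\Delta_F$ inside $W$, at the level of rational equivalence.

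Next I would compute $[W]$ and the residual class by the excess/residual intersection formula of Fulton (Lemma \ref{lem blow-up formula Fulton} and \cite{fulton}, Ch.~6), in the spirit of Voisin's computation of $I^2$ recalled in Proposition \ref{rmk value of alpha}. The normal-bundle analysis along the diagonal and along the meeting-line locus should yield an identity of the shape $\Gamma_\varphi + I_1 = \alpha\,\Delta_F + Q(g_1,g_2)\cdot I + a\,g_1\Gamma_h + b\,g_2\Gamma_h + e\,\Gamma_{h^2} + R(g_1,g_2,c_1,c_2)$, where $\alpha,a,b,e$ are integers, $Q$ is weighted-homogeneous of degree $2$, and $R$ is weighted-homogeneous of degree $4$.

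To pin down the coefficients unambiguously, I would pass to cohomology: Amerik's computation \cite{amerik} of $[\Gamma_\varphi]$, combined with the known classes of $I$ (Proposition \ref{prop cohomology class of I}), of $\Gamma_h$, $\Gamma_{h^2}$ and $I_1$, and of $\Delta_F$, fixes every coefficient, giving $\alpha=4$, $Q=2g_1^2+3g_1g_2+g_2^2$ and the meeting-line correction $-(5g_1+4g_2)\cdot\Gamma_h + 3\Gamma_{h^2}$, and forcing $R$. The key point is that, once the geometric residual computation has isolated $\Gamma_\varphi+I_1$, $\Delta_F$ and the $I$-term, the remaining difference between the two sides is a combination of $g_1\Gamma_h$, $g_2\Gamma_h$, $\Gamma_{h^2}$ and a polynomial in $g_1,g_2,c_1,c_2$; by Proposition \ref{prop class of Gamma_h} the former are themselves such polynomials, so the whole difference lies in the subalgebra generated by $g_1,g_2,c_1,c_2$ and is homologically trivial, hence vanishes in the Chow ring by Lemma \ref{lem cohomology to chow}. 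As consistency checks that simultaneously fix the integer multiplicities directly at the Chow level, I would evaluate both sides on a general point $[l]$ using $g^2\cdot I_*[l] = \varphi_*[l]-4[l]+24\mathfrak{o}_F$ (Lemma \ref{lem intersection of g^2 and S_l}) and on the surface $\Sigma_2$ of lines of second type using Lemma \ref{lem varphi on Sigma2}, exactly as $\alpha=2$ was fixed in Proposition \ref{rmk value of alpha} by a degree count.

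Finally, the ``Furthermore'' assertion is immediate: Proposition \ref{prop class of Gamma_h} expresses each of $g_1\Gamma_h$, $g_2\Gamma_h$ and $\Gamma_{h^2}$ as a weighted-homogeneous polynomial of degree $4$ in $g_1,g_2,c_1,c_2$, whence $-(5g_1+4g_2)\cdot\Gamma_h+3\Gamma_{h^2} = \Gamma'_2(g_1,g_2,c_1,c_2)$. The main obstacle I anticipate is the residual computation itself: making the tangent-plane degeneracy locus $W$ precise, including its scheme structure along $\Delta_F$ (which must yield multiplicity $2$) and along the plane loci (which must produce exactly $I_1$), and then cleanly separating the genuinely new cycle classes from the polynomial corrections in the excess-intersection formula. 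Controlling the behaviour over the indeterminacy locus $\Sigma_1\cup\Sigma_2$, where $\varphi$ is undefined, is the delicate part; this is precisely where the passage from $\Gamma_{\varphi,0}$ to its closure $\Gamma_\varphi$ and the correction term $I_1$ enter.
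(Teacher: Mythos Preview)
Your approach differs from the paper's in a significant way, and the central step is underspecified.

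The paper does not work with a tangent-plane correspondence $W$ on $F\times F$ at all. Instead it works directly on $I_0 = I\setminus\Delta_F$. There, the span of the two meeting lines is a rank-$3$ sub-bundle $\mathcal{V}_3\subset V$, and the restriction of the defining cubic form to $\Sym^3\mathcal{V}_3$ factors (after killing the parts that vanish automatically because $l_1,l_2\subset X$) through a rank-$3$ bundle $\mathscr{G}$ with explicit line-bundle pieces. The condition ``$\Pi\cdot X = 2l_1+l_2$ or $\Pi\subset X$'' is then exactly the vanishing of \emph{two} of these line-bundle sections, so the locus $Z'$ has class equal to a product of two first Chern classes on $I_0$. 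That product is computed directly as $(2g_1^2+3g_1g_2+g_2^2)|_{I_0} - (5g_1+4g_2)\,q_0^*h + 3\,q_0^*h^2$, whose push-forward to $F\times F\setminus\Delta_F$ is the right-hand side of \eqref{eq simplified varphi} minus $\alpha\Delta_F$. Localisation along $\Delta_F$ gives the identity on all of $F\times F$ up to the unknown $\alpha$, and $\alpha=4$ is fixed by the single cohomological input $\varphi^*|_{\HH^{4,0}(F)}=4$. No residual-intersection formula, no use of Lemma~\ref{lem cohomology to chow}, and no matching against a pre-computed $[\Gamma_\varphi]$ are needed.

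Your plan has a genuine gap at the point where the $I$-term should appear. If $W$ is realised as the zero locus of a section of a bundle on $F\times F$ built from $\mathscr{E}_2$ and $\calO(g)$, then $[W]$---and any residual after removing $2\Delta_F$---is a polynomial in $g_1,g_2,c_1,c_2$; but the target formula contains $(2g_1^2+3g_1g_2+g_2^2)\cdot I$, and $I$ is \emph{not} such a polynomial. The only way to produce this term is to carry out the computation on $I$ itself, which is precisely what the paper does and what your phrase ``meeting-line locus'' gestures at without executing. Relatedly, the tangent plane $\Pi_{l_1}$ is undefined over $\Sigma_2$ (there is a $\PP^1$ of such planes), so your $W$ is not a clean degeneracy locus over $F\times F$; the paper sidesteps this entirely by letting the pair of meeting lines, not $l_1$ alone, determine the plane. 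Finally, Amerik did not compute the full cohomology class $[\Gamma_\varphi]$---the paper presents Proposition~\ref{prop key identity of varphi} as completing exactly that computation---so you cannot use it to fix your undetermined coefficients $Q,a,b,e,R$; the only external cohomological input available is $\varphi^*|_{\HH^{4,0}}=4$, and that fixes $\alpha$ alone.
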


\begin{proof}
Let $I_0 = I \backslash \Delta_F$. Then we have a natural morphism
\[
q_0  : I_0 \rightarrow X,\quad ([l_1],[l_2])\mapsto x=l_1\cap l_2.
\]
Let $\pi_i=p_i|_{I_0}  : I_0\rightarrow F$ be the two projections. We
use $\mathscr{E}_2\subset V$ to denote the natural rank two sub-bundle
of $V$ on $F$ and $\mathscr{E}_1=\calO_X(-1)\subset V$ to denote the
restriction of the tautological line bundle. We first note that on
$I_0$, we have natural inclusions $q_0^* \mathscr{E}_1\hookrightarrow
\pi_i^*\mathscr{E}_2$, $i=1,2$, which correspond to the geometric fact
that $x\in l_i$. In this way we get the following short exact
sequences
\begin{equation}\label{eq Q_i}
  \xymatrix{
    0\ar[r] & q_0^*\mathscr{E}_1 \ar[r] &\pi_i^*\mathscr{E}_2\ar[r]
    &\mathcal{Q}_i\ar[r] &0,
  }
\end{equation}
for $i=1,2$, where $\mathcal{Q}_i$ is an invertible sheaf on $I_0$.
Let $\mathcal{V}_3\subset V$ be the rank 3 sub-bundle spanned by
$\pi_1^*\mathscr{E}_2$ and $\pi_2^*\mathscr{E}_2$, then we have the
following short exact sequence
\begin{equation*}
  \xymatrix{
    0\ar[r] &q_0^*\mathscr{E}_1\ar[rr]^{(+,-)\quad} &&
    \pi_1^*\mathscr{E}_2\oplus \pi_2^*\mathscr{E}_2\ar[rr]^{\quad +}
    &&\mathcal{V}_3\ar[r] &0.
  }
\end{equation*}
At a point $([l_1],[l_2])\in I_0$, the fiber of $\mathcal{V}_3$ is the
3-dimensional sub-space of $V$ that corresponds to the linear $\PP^2$
spanned by $l_1$ and $l_2$. We have a natural inclusion
$\pi_i^*\mathscr{E}_2\subset \mathcal{V}_3$ whose quotient is
isomorphic to $\mathcal{Q}_j$ for $\{i,j\}=\{1,2\}$. This gives a
natural surjection $\mathcal{V}_3\rightarrow \mathcal{Q}_1\oplus
\mathcal{Q}_2$ which fits into the following short exact sequence
\begin{equation}\label{eq V3 sequence}
  \xymatrix{
    0\ar[r] &q_0^*\mathscr{E}_1\ar[r] &\mathcal{V}_3\ar[r]
    &\mathcal{Q}_1\oplus \mathcal{Q}_2\ar[r] &0.
  }
\end{equation}
We write $\mathscr{F}_i=\pi_i^*\mathscr{E}_2$, $i=1,2$, and note that
we have natural inclusions $\mathscr{F}_i\subset\mathcal{V}_3$. By
taking the third symmetric power, we get
\begin{equation}\label{eq sym3 V3}
  \xymatrix{
    0\ar[r] &\mathscr{F}_0\ar[r] &\Sym^3\mathcal{V}_3\ar[r]
    &\mathscr{G}\ar[r] &0,
  }
\end{equation}
where $\mathscr{F}_0=\frac{\Sym^3\mathscr{F}_1 \oplus
  \Sym^3\mathscr{F}_2}{q_0^*\mathscr{E}_1^{\otimes 3}}$ and
$\mathscr{G}$ is locally free of rank 3 on $I_0$. Let $\phi=0$, for
some $\phi\in\Sym^3(V)^\vee$, be the defining equation of
$X\subset\PP(V)$. Then by restricting to the sub-bundle
$\Sym^3\mathcal{V}_3$ of $\Sym^3V$, we get a homomorphism
$\phi_1 :\Sym^3\mathcal{V}_3\rightarrow \calO_{I_0}$. Note that at a
closed point $([l_1],[l_2])\in I_0$, $\mathscr{F}_i\subset V$
corresponds to the line $l_i$, $i=1,2$. The fact that $l_i$ is
contained in $X$ implies that $\phi_1$ vanishes on
$\Sym^3\mathscr{F}_i$. Hence by the short exact sequence \eqref{eq
  sym3 V3}, we see that $\phi_1$ induces a homomorphism
\[
 \phi_2 :\mathscr{G}\longrightarrow \calO_{I_0}.
\]
Consider the following diagram
\begin{equation*}
  \xymatrix{
    0\ar[r] &\mathscr{F}_0 \ar[rr]\ar[d] &&
    \Sym^3\mathcal{V}_3 \ar[rr]\ar[d] &&\mathscr{G}\ar[r]\ar[d]^\rho &0\\
    0\ar[r] &\Sym^3\mathcal{Q}_1\oplus \Sym^3\mathcal{Q}_2 \ar[rr]
    &&\Sym^3(\mathcal{Q}_1\oplus \mathcal{Q}_2)\ar[rr]
    &&\mathcal{Q}_1^2\otimes\mathcal{Q}_2 \oplus
    \mathcal{Q}_1\otimes\mathcal{Q}_2^2 \ar[r] &0
  }
\end{equation*}
where all the vertical arrows are surjections. The homomorphism $\rho$
fits into the following short exact sequence
\begin{equation}\label{eq G sequence}
  \xymatrix{
    0\ar[r] & q_0^*\mathscr{E}_1\otimes
    \mathcal{Q}_1\otimes\mathcal{Q}_2 \ar[rr] &&\mathscr{G}\ar[rr]
    &&\mathcal{Q}_1^2\otimes\mathcal{Q}_2 \oplus \mathcal{Q}_1
    \otimes\mathcal{Q}_2^2 \ar[r] &0.
  }
\end{equation}
Hence $\phi_2$ induces an homomorphism
$$
\phi_3 :q_0^*\mathscr{E}_1\otimes\mathcal{Q}_1\otimes\mathcal{Q}_2
\longrightarrow \calO_{I_0}.
$$
Let $Z\subset I_0$ be the locus defined by $\phi_3=0$. Hence on $Z$
the homomorphism $\phi_3$ factors through the third term in the
sequence \eqref{eq G sequence} and gives
$$
\phi_4 : \mathcal{Q}_1^2\otimes\mathcal{Q}_2 \oplus
\mathcal{Q}_1\otimes\mathcal{Q}_2^2 \longrightarrow \calO_{I_0}.
$$
This homomorphism further splits as $\phi_4=\phi_{4,1} +\phi_{4,2}$,
where
\begin{align*}
  \phi_{4,1} : & \mathcal{Q}_1^2\otimes\mathcal{Q}_2 \longrightarrow
  \calO_{I_0},\\
  \phi_{4,2} : & \mathcal{Q}_1\otimes\mathcal{Q}_2^2 \longrightarrow
  \calO_{I_0}.
\end{align*}
Let $Z'\subset Z$ be the locus defined by $\phi_{4,1}=0$. The
cycle class of $Z'$ on $I_0$ can then be computed as
\begin{align*}
  Z' & = c_1(q_0^*\mathscr{E}_1^{-1}\otimes \mathcal{Q}_1^{-1} \otimes
  \mathcal{Q}_2^{-1})\cdot c_1(\mathcal{Q}_1^{-2}\otimes \mathcal{Q}_2^{-1})\\
  & = (q_0^*h +g_1|_{I_0} -q_0^*h +g_2|_{I_0} -q_0^*h)(2g_1|_{I_0} -
  2q_0^*h +g_2|_{I_0} -q_0^*h)\\
  & = (2g_1^2 +3g_1g_2 +g_2^2)|_{I_0} -(5g_1+4g_2)\cdot q_0^*h
  +3q_0^*h^2.
\end{align*}
Here the second equality uses the fact that
$-c_1(\mathcal{Q}_i)=g_i|_{I_0} -q_0^*h$, $i=1,2$, which is a simple
consequence of the sequence \eqref{eq Q_i}. Let $i_0 :I_0\rightarrow
F\times F\backslash\Delta_F$ be the inclusion, then we have
\begin{equation}\label{eq push forward of Z'}
  (i_0)_*Z' = (2g_1^2 +3g_1g_2 +g_2^2)\cdot I -(5g_1
  +4g_2)\cdot\Gamma_h +
  3 \Gamma_{h^2}, \quad\text{on }F\times F\backslash\Delta_F.
\end{equation}

Our next step is to relate $Z'$ to $\Gamma_\varphi$. Let $e_0$ (resp.
$e_1$ and $e_2$) be a local generator of $q_0^*\mathscr{E}_1$ (resp.
$\mathcal{Q}_1$ and $\mathcal{Q}_2$) and $T_0$ (resp. $T_1$ and $T_2$)
be the dual generator of $q_0^*\mathscr{E}_1^{-1}$ (resp.
$\mathcal{Q}_1^{-1}$ and $\mathcal{Q}_2^{-1}$). Then $(T_0,T_1,T_2)$
can be viewed as homogeneous coordinates of the plane $\Pi_t$ spanned
by $l_1$ and $l_2$, where $t=([l_1],[l_2])\in I_0$ is a point at which
the $T_i$'s are defined. Furthermore, $l_1$ is defined by $T_2=0$ and
$l_2$ is defined by $T_1=0$. Let $\phi_t$ be the restriction of $\phi$
to $\Pi_t$. The fact that $\phi=0$ along $l_1$ and $l_2$ implies that
$\phi_t=aT_0T_1T_2 +b T_1^2T_2 +cT_1T_2^2$. The condition $\phi_3=0$
is equivalent to $a=0$ and the condition $\phi_{4,1}=0$ is equivalent
to $b=0$. Thence $t\in Z'$ if and only if $\phi_t=cT_1T_2^2$ for some
constant $c$. Equivalently, we have
$$
Z'=\{t\in I_0 : \Pi_t\cdot X =2l_1 + l_2 \text{ or }\Pi_t\subset X\}.
$$
It follows that $(i_0)_*Z' = \Gamma_\varphi + I_1$ on $F\times
F\backslash\Delta_F$. Using the localization sequence for Chow groups
and equation \eqref{eq push forward of Z'}, we get
$$
\Gamma_\varphi + I_1 = \alpha\Delta_F + (2g_1^2 +3g_1g_2 +g_2^2)\cdot
I -(5g_1 +4g_2)\cdot\Gamma_h +3 \Gamma_{h^2},\quad\text{in
}\CH^4(F\times F),
$$
for some integer $\alpha$. By Proposition \ref{prop class of Gamma_h},
the terms involving $\Gamma_h$ and $\Gamma_{h^2}$ are polynomials in
$g_i$ and $c_i$. To determine the value of $\alpha$, we note that all
the terms in the above formula act trivially on $\HH^{4,0}(F)$ except
for $\Gamma_\varphi$ and $\alpha\Delta_F$. We already know that
$\varphi^*=4$ on $\HH^{4,0}(F)$~; see Proposition \ref{prop varphi
  action on cohomology}. Hence we get $\alpha=4$.
\end{proof}

\subsection{The action of $\varphi$ on homologically trivial cycles}
Let $\Gamma_\varphi\subset F\times F$ be the closure of the graph of
$\varphi$ as before. Then, for any element $\sigma\in\CH^i(F)$, we
define
\begin{center}
  $ \varphi^*\sigma=p_{1*}(\Gamma_\varphi\cdot p_2^*\sigma)$ \quad and
  \quad $\varphi_*\sigma=p_{2*}(\Gamma_\varphi\cdot p_1^*\sigma) ;$
\end{center}
See Appendix \ref{app rat chow} for a general discussion on the action
of rational maps on Chow groups.  The main goal of this section is to
get explicit descriptions of $\varphi^*$ and $\varphi_*$.

\begin{prop}\label{prop varphi action on homologically trivial
cycles} The following are true.
\begin{enumerate}[(i)]
\item  The action of $\varphi^*$ on homologically trivial cycles can be
described by
\begin{align*}
  &\varphi^*\sigma=4\sigma+2g^2\cdot I_*\sigma,
  \qquad\sigma\in\CH^4(F)_{{\mathrm{hom}}}\\
  &\varphi^*\sigma=4\sigma +3g\cdot
  I_*(g\cdot\sigma),\qquad\sigma\in\CH^3(F)_{\mathrm{hom}}\\
  &\varphi^*\sigma=4\sigma+ I_*(g^2\cdot\sigma),\qquad\sigma\in
  \CH^2(F)_{\mathrm{hom}} ;
\end{align*}

\item The action of $\varphi_*$ on homologically trivial cycles can be
described by
\begin{align*}
&\varphi_*\sigma =4\sigma + g^2\cdot
I_*\sigma,\qquad \sigma\in\CH^4(F)_\mathrm{hom}\\
&\varphi_*\sigma=
4\sigma+3g\cdot
 I_*(g\cdot\sigma),\qquad\sigma\in\CH^3(F)_{\mathrm{hom}}\\
&\varphi_*\sigma=4\sigma+2
I_*(g^2\cdot\sigma),\qquad\sigma\in
 \CH^2(F)_{\mathrm{hom}}.
\end{align*}
\end{enumerate}
\end{prop}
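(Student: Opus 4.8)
The plan is to read off both formulas from the correspondence-level identity of Proposition \ref{prop key identity of varphi}, since by definition $\varphi_*$ and $\varphi^*$ are the actions of $\Gamma_\varphi$ and of its transpose ${}^t\Gamma_\varphi$. Because $\Delta_F$, $I$ and $I_1=\sum_j\Pi_j^*\times\Pi_j^*$ are all symmetric correspondences, transposing \eqref{eq simplified varphi} merely swaps $g_1$ and $g_2$ in the term $(2g_1^2+3g_1g_2+g_2^2)\cdot I$, yielding ${}^t\Gamma_\varphi+I_1=4\Delta_F+(g_1^2+3g_1g_2+2g_2^2)\cdot I+{}^t\Gamma_2'$. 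Using the projection-formula identity $(g_1^i g_2^j I)_*\sigma=g^j\cdot I_*(g^i\cdot\sigma)$ already exploited in the proof of Theorem \ref{prop L2}, the action of the $I$-term breaks into the pieces $I_*(g^2\sigma)$, $g\cdot I_*(g\sigma)$ and $g^2\cdot I_*\sigma$ with the appropriate coefficients, and likewise for the transpose.

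First I would dispose of the two nuisance terms. The cycle $I_1$ is a sum of $p_1^*\Pi_j^*\cdot p_2^*\Pi_j^*$, so $(I_1)_*\sigma$ is either $\sum_j\deg(\Pi_j^*\cdot\sigma)\,\Pi_j^*$ (when $\Pi_j^*\cdot\sigma$ is a zero-cycle) or zero for dimension reasons; since $\sigma$ is homologically trivial each such degree is $\int_F[\Pi_j^*]\cup[\sigma]=0$, so $I_1$ acts as zero on $\bigoplus_i\CH^i(F)_\hom$ (and so does ${}^tI_1=I_1$). The same degree-and-dimension argument shows that any weighted homogeneous polynomial in $g_1,g_2,c_1,c_2$ — in particular $\Gamma_2'$ and ${}^t\Gamma_2'$ — acts as zero on homologically trivial cycles, being a sum of terms $p_1^*\alpha\cdot p_2^*\beta$ with $\alpha,\beta$ polynomials in $g,c$, for which the action is either $\deg(\alpha\cdot\sigma)\,\beta$ or $0$, hence vanishes by homological triviality. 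Thus on $\bigoplus_i\CH^i(F)_\hom$ one is left with $\varphi_*\sigma=4\sigma+2I_*(g^2\sigma)+3g\cdot I_*(g\sigma)+g^2\cdot I_*\sigma$ and $\varphi^*\sigma=4\sigma+I_*(g^2\sigma)+3g\cdot I_*(g\sigma)+2g^2\cdot I_*\sigma$.

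It then remains to carry out the degree-by-degree bookkeeping, dropping the terms that vanish. Two inputs do the work: classes lying in $\CH^{>4}(F)=0$, and the injectivity of the cycle class map on $\CH^0(F)$ and on $\CH^1(F)$ (the latter used repeatedly in Section \ref{sec fourier chow}), which forces $\CH^0(F)_\hom=\CH^1(F)_\hom=0$. For $\sigma\in\CH^4(F)_\hom$ the classes $g\sigma$ and $g^2\sigma$ die by dimension, leaving only $g^2\cdot I_*\sigma$; for $\sigma\in\CH^3(F)_\hom$ one has $g^2\sigma=0$ and $I_*\sigma\in\CH^1(F)_\hom=0$, leaving only $3g\cdot I_*(g\sigma)$; for $\sigma\in\CH^2(F)_\hom$ one has $I_*\sigma\in\CH^0(F)_\hom=0$ and $I_*(g\sigma)\in\CH^1(F)_\hom=0$, leaving only $I_*(g^2\sigma)$ (with coefficient $1$ for $\varphi^*$ and $2$ for $\varphi_*$). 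Reading off the surviving terms from the two displayed expressions yields exactly statements \emph{(i)} and \emph{(ii)}.

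Since the genuinely geometric content is concentrated in Proposition \ref{prop key identity of varphi}, which is assumed here, no serious obstacle remains: the argument is a formal manipulation of correspondences. The only points requiring care are the symmetry of $I$ and $I_1$ (so that transposition merely interchanges $g_1\leftrightarrow g_2$) and the systematic use of $\CH^0(F)_\hom=\CH^1(F)_\hom=0$ to suppress the terms that would otherwise distinguish the three degrees; getting the final coefficients right is just a matter of matching $(g_1^i g_2^j I)_*$ against the coefficients $2,3,1$ (resp. $1,3,2$) appearing in $\Gamma_\varphi$ (resp. ${}^t\Gamma_\varphi$).
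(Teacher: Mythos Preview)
Your proof is correct and follows the same approach as the paper: both read off the formulas from Proposition~\ref{prop key identity of varphi}, kill $I_1$ and $\Gamma_2'$ on homologically trivial classes, and use $(g_1^ig_2^jI)_*\sigma=g^j\cdot I_*(g^i\sigma)$ together with $\CH^0(F)_\hom=\CH^1(F)_\hom=0$ for the degree-by-degree bookkeeping. The only difference is cosmetic: for $\Gamma_2'$ you observe directly that each coefficient $\deg(\beta\cdot\sigma)$ vanishes, whereas the paper phrases $(\Gamma_2')^*\sigma$ as a homologically trivial polynomial in $g,c$ and then invokes Voisin's Theorem~\ref{thm voisin}---an extra step your argument shows is unnecessary.
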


\begin{proof}
  In view of Proposition \ref{prop key identity of varphi}, \emph{(i)} and
  \emph{(ii)} are direct consequences of the following three facts. First,
  since $I_1$ is supported on the union of the $\Pi^\ast_j
  \times \Pi^\ast_j$, the action of $I_1$ on $\CH^i(F)$ factors
  through both $\bigoplus_j \CH_i(\Pi^\ast_j)$ and $\bigoplus_j
  \CH^i(\Pi^\ast_j)$. Therefore, $I_1$ acts as zero on $\CH^i(F)$ for
  $i \neq 2$ and since $\CH^*(\Pi^\ast_j)_{\hom}=0$ it also acts as zero
  on $\CH^*(F)_{\hom}$. Secondly, the action of $p_1^*\alpha\cdot
  p_2^*\beta \cdot \Gamma$ on $\sigma \in \CH^*(F)$, where $\alpha,
  \beta \in \CH^*(F)$ and $\Gamma \in \CH^*(F \times F)$, is given by
  $$(p_1^*\alpha\cdot p_2^*\beta \cdot \Gamma)^*\sigma = \alpha\cdot
  \Gamma^*(\beta\cdot \sigma).$$ Thirdly, if $\Gamma_2' \in \CH^*(F
  \times F)$ is a polynomial in the variables $c_1, c_2, g_1, g_2$,
  then for $\sigma \in \CH^*(F)_{\hom}$ $(\Gamma_2')^*\sigma \in
  \CH^*(F)$ is a polynomial in the variables $c,g$ which is
  homologically trivial. The main result of \cite{voisin2} recalled in
  Theorem \ref{thm voisin} implies that $(\Gamma_2')^*\sigma = 0$ in
  $\CH^*(F)$.
 \end{proof}

\subsection{The action of $\varphi$ on $g$, $c$ and their products}
Amerik \cite{amerik} computed, for $X$ generic, the action of
$\varphi^*$ on the powers of $g$ and on $c$. We use Proposition
\ref{prop key identity of varphi} to complement Amerik's result.
\begin{prop} \label{prop cubic g c} We have $\varphi^*g=7g$,
  $\varphi_*g=28g$, $\varphi^*g^2= \varphi_*g^2= 4g^2 + 45c-I_1^*g^2$,
  $\varphi^*c= \varphi_*c = 31c-I_1^*c$, $\varphi^*g^3 = 28g^3$ and
  $\varphi_*g^3=7g^3$.
\end{prop}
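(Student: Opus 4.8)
The plan is to feed the explicit class of the graph $\Gamma_\varphi$ obtained in Proposition \ref{prop key identity of varphi} into the defining formulas $\varphi^*\sigma = p_{1*}(\Gamma_\varphi\cdot p_2^*\sigma)$ and $\varphi_*\sigma = p_{2*}(\Gamma_\varphi\cdot p_1^*\sigma)$, and to evaluate the result on each of the four classes $\sigma\in\{g,c,g^2,g^3\}$. Writing $\Gamma_\varphi = -I_1 + 4\Delta_F + (2g_1^2+3g_1g_2+g_2^2)\cdot I + \Gamma'_2$, where $\Gamma'_2 = -(5g_1+4g_2)\cdot\Gamma_h + 3\Gamma_{h^2}$ is the weighted-homogeneous polynomial in $g_1,g_2,c_1,c_2$ supplied by Proposition \ref{prop class of Gamma_h}, I would treat the four summands separately. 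The diagonal $\Delta_F$ contributes $4\sigma$ in both cases. The term $I_1=\sum_j\Pi_j^\ast\times\Pi_j^\ast$ factors through $\bigoplus_j\CH^*(\Pi_j^\ast)$ and therefore acts as zero on $\CH^i(F)$ for $i\neq 2$; hence it contributes the correction $-I_1^*$ precisely for $\sigma=g^2$ and $\sigma=c$, and nothing for $\sigma=g$ and $\sigma=g^3$, which already explains the shape of the six asserted identities.

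The two remaining summands are the calculational core. The term $(2g_1^2+3g_1g_2+g_2^2)\cdot I$ reduces, after extracting the $p_i^*$-factors and using the symmetry of $I$, to $2g^2\cdot I_*\sigma + 3g\cdot I_*(g\sigma) + I_*(g^2\sigma)$ for $\varphi^*$ and to $2I_*(g^2\sigma)+3g\cdot I_*(g\sigma)+g^2\cdot I_*\sigma$ for $\varphi_*$ (these are the non-homologically-trivial analogues of Proposition \ref{prop varphi action on homologically trivial cycles}). Since $I_* = \Phi\circ\Psi$, they are evaluated by means of the appendix identities for $\Phi=p_*q^*$ and $\Psi=q_*p^*$ (for instance $p_*q^*h^3=g^2-c$), and land inside the subalgebra $\mathrm{V}_F$ generated by $g$ and $c$. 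The polynomial term $\Gamma'_2$ is evaluated by the projection formula: a monomial $p_1^*(g^ic^j)\cdot p_2^*(g^kc^l)$ sends $\sigma$ to $g^ic^j$ times the intersection number $\deg(g^kc^l\cdot\sigma)$, all of which are determined by Voisin's results \cite{voisin2} on the subring of $\CH^*(F)$ generated by $g$ and $c$. Consequently every one of the six outputs is a polynomial in $g$ and $c$.

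Because $\mathrm{V}_F$ injects into cohomology via the cycle class map (Theorem \ref{thm voisin} and Lemma \ref{lem cohomology to chow}), it then suffices to verify each of the six equalities in $\HH^*(F,\Q)$. There the action of $\varphi^*$ on $g$, $g^2$, $c$, $g^3$ was computed by Amerik \cite{amerik} for $X$ without planes (where $I_1=0$), giving the generic values $7g$, $4g^2+45c$, $31c$, $28g^3$; the general case differs only by the already-isolated $-I_1^*$ term. The four $\varphi_*$-values are obtained by the same procedure with the roles of $p_1$ and $p_2$ interchanged --- equivalently, by evaluating the transpose of the identity of Proposition \ref{prop key identity of varphi} (all of $\Delta_F$, $I$, $\Gamma_h$, $\Gamma_{h^2}$, $I_1$ being symmetric), whose only effect is to replace the coefficient $2g_1^2+3g_1g_2+g_2^2$ by $g_1^2+3g_1g_2+2g_2^2$ and $-(5g_1+4g_2)$ by $-(4g_1+5g_2)$; the normalization $\deg\varphi=16$ and $\varphi^*\omega^2 = 4\omega^2$ of Proposition \ref{prop varphi action on cohomology}, together with the adjunction $\int_F\varphi_*\alpha\cup\beta = \int_F\alpha\cup\varphi^*\beta$, serve as consistency checks. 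The main obstacle is purely organizational: keeping track of the asymmetry between $\varphi^*$ and $\varphi_*$ forced by the non-symmetric coefficients $2g_1^2+3g_1g_2+g_2^2$ and $-(5g_1+4g_2)\Gamma_h$, and correctly separating the $I_1^*$ contribution, which is the only term sensitive to the presence of planes in $X$ and which survives on $\CH^2(F)$ alone.
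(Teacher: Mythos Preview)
Your approach is essentially the paper's own: decompose $\Gamma_\varphi + I_1$ via Proposition \ref{prop key identity of varphi} and evaluate the summands on $\sigma\in\{g,c,g^2,g^3\}$ using the identities of Lemmas \ref{lem special zero cycle}, \ref{lem identities self intersection of g} and Proposition \ref{prop class of Gamma_h}. The paper carries out the Chow-level arithmetic directly (it writes out the case $\varphi_*g$ in full, then $\varphi_*c$, and omits the rest), whereas you take the shortcut of observing that every output lies in the subring generated by $g$ and $c$ and then invoking Theorem \ref{thm voisin} to reduce the verification to cohomology, where Amerik's values apply. Both routes are valid and yield the same statement.

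One point to tighten: your claim that all intermediate quantities ``land inside $\mathrm{V}_F$ by means of the appendix identities'' is not quite literally covered by Lemma \ref{lem identities self intersection of g}, which only tabulates the actions of $I$, $\Gamma_h$, $\Gamma_{h^2}$ on powers of $g$. For $\sigma=c$ you additionally need $I_*c$, $(\Gamma_h)_*c$, $(\Gamma_{h^2})_*c$. The paper fills these in on the spot (e.g.\ $(\Gamma_h)_*c = 6g$ and $(\Gamma_{h^2})_*c = 6(g^2-c)$ via Lemma \ref{lem special zero cycle} and Proposition \ref{prop class of Gamma_h}, and $I_*c = 6[F]$ via $\Psi(c)=6h$ or equivalently via $L_*l=0$); your write-up should do the same before appealing to the injection $\mathrm{V}_F\hookrightarrow\HH^*(F,\Q)$.
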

\begin{proof} First note, as in the proof of Proposition \ref{prop key
    identity of varphi}, that $I_1$ acts as zero on $\CH^i(F)$ for $i
  \neq 2$ but also that the correspondence $I_1$ is self-transpose.
  To prove $\varphi_*g=28g$, we apply the identity in Proposition
  \ref{prop key identity of varphi} and Lemmas \ref{lem special zero
    cycle} and \ref{lem identities self intersection of g}. We have
\begin{align*}
\varphi_*g &=4g +2I_*g^3 +3g\cdot I_*g^2
-4g\cdot(\Gamma_h)_*g -5(\Gamma_h)_*g^2+3(\Gamma_{h^2})_*g\\
&=4g+ 72g  +63g  -4\cdot 6g-5\cdot 21g+3\cdot 6g
\end{align*}
One simplifies the above expression and gets $\varphi_*g=28g$. Let us
now compute $\varphi_* c$. By Proposition \ref{prop key identity of
  varphi}, we have $$\varphi_*c + I_1^*c = 4c + 2I_*(g^2\cdot c) +
3g\cdot I_*(g\cdot c) + g^2\cdot I_*c - 5(\Gamma_h)_*(g\cdot c) -
4g\cdot (\Gamma_h)_*c + 3(\Gamma_{h^2})_*c.$$ Lemma \ref{lem special
  zero cycle} and Proposition \ref{prop class of Gamma_h} give $
(\Gamma_h)_*c = 6g$ and $ (\Gamma_{h^2})_*c = 6(g^2-c)$. On the other
hand, \cite[Lemma 3.5]{voisin2} shows that $g\cdot c$ and $g^3$ are
proportional. Since by Lemma \ref{lem special zero cycle}
$\deg(g^2\cdot c) = 45$ and $\deg(g^4) = 108$, we get $12g\cdot c =
5g^3$. In order to finish off the computation of $\varphi_*c$ using
Lemma \ref{lem identities self intersection of g}, it remains to
compute $I_*c$. Since $6l = 25g^2 -40c$ and $L_*l = 0$, we get $8L_*c
= 5L_*g^2$. Now by \eqref{eq L Cubic}, $L =
\frac{1}{3}(g_1^2+\frac{3}{2}g_1g_2+g_2^2-c_1-c_2) - I$. A
straightforward calculation yields $I_*c = 6[F]$. Putting all together
gives the required $\varphi_*c = 31c-I_1^*c$. The other identities can
be proved by using the same basic arguments and are therefore omitted.
\end{proof}

\subsection{Eigenspace decomposition of the cohomology groups}
Let us denote $\tau  : \Gamma_{\varphi} \rightarrow F$ and
$\widetilde{\varphi}  : \Gamma_{\varphi} \rightarrow F$ the first and
second projection restricted to $\Gamma_{\varphi} \subset F \times F$.
Thus $\varphi^*\sigma = \tau_* \widetilde{\varphi}^*\sigma$ and
$\varphi_*\sigma =  \widetilde{\varphi}_*  \tau^* \sigma$ for all $\sigma
\in \CH^*(F)$. Let us also denote $\Pi_1, \ldots, \Pi_n$ the planes
contained in $X$, $\Pi^\ast_1, \ldots, \Pi^\ast_n$ the corresponding
dual planes contained in $F$ and $E_1, \ldots, E_n$ their pre-images
under $\tau$. Finally, we write $E$ for the ruled divisor which is
the pre-image of the surface $\Sigma_2$ of lines of second type on
$F$~; see Lemma \ref{lem class of Sigma2}.  The Chow group $\CH^1(F)$
splits as
$$\CH^1(F) = \langle g \rangle \oplus \CH^1(F)_{\mathrm{prim}},$$
where $\CH^1(F)_{\mathrm{prim}}  := p_*q^*\CH_2(X)_{\mathrm{prim}}$ and
$\CH_2(X)_{\mathrm{prim}}  := \{Z \in \CH_2(X)  : [Z] \in
\HH^4(X,\Q)_{\mathrm{prim}}\}$. An easy adaptation of
\cite[Proposition 6]{amerik} and \cite[Lemma 3.11]{voisin2} is the
following.

\begin{lem} \label{lem cubic divisor phi} We have
  $\widetilde{\varphi}^*g = 7\tau^*g -3E + \sum_i a_{g,i}E_i$ for some
  numbers $a_{g,i} \in \Z$, and for $D \in \CH^1(F)_{\mathrm{prim}}$
  we have $\widetilde{\varphi}^*D = -2\tau^*D + \sum_i a_{D,i}E_i$ for
  some numbers $a_{D,i} \in \Z$.\qed
\end{lem}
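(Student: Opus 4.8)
The plan is to exploit that $\tau : \Gamma_\varphi \rightarrow F$ is birational and to pull divisors back through it modulo its exceptional divisors. First I would record that, $\Gamma_\varphi$ being the closure of the graph of $\varphi$, the first projection $\tau$ is birational with exceptional locus the preimage of the indeterminacy locus $\Sigma_1 \cup \Sigma_2$ of $\varphi$; the prime $\tau$-exceptional divisors are thus $E = \tau^{-1}(\Sigma_2)$, ruled over the surface $\Sigma_2$ of lines of second type (Lemma \ref{lem class of Sigma2}), together with the divisors $E_i = \tau^{-1}(\Pi_i^\ast)$ over the dual planes. Consequently, for any $D \in \CH^1(F)$ one may write
\[
\widetilde{\varphi}^* D = \tau^*(\varphi^* D) + a_D\, E + \sum_i a_{D,i}\, E_i,
\]
since $\tau_* E = \tau_* E_i = 0$ while $\tau_* \widetilde{\varphi}^* D = \varphi^* D$ by definition of $\varphi^*$, $\tau$ and $\widetilde{\varphi}$, and $\tau_*\tau^* = \mathrm{id}$.

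The eigenvalue terms are then read off from the action of $\varphi^*$ on $\CH^1(F)$. Proposition \ref{prop cubic g c} gives $\varphi^* g = 7g$, producing the leading term $7\tau^* g$. For a primitive divisor $D \in \CH^1(F)_{\mathrm{prim}}$ I would argue that $\varphi^* D = -2D$: the cycle class map $\CH^1(F) \rightarrow \HH^2(F,\Q)$ is injective and compatible with $\varphi^*$, and on $\HH^2(F,\Q)_{\mathrm{prim}}$ the operator $\varphi^*$ acts by $-2$, extending the relation $\varphi^*\omega = -2\omega$ of Proposition \ref{prop varphi action on cohomology} (see also \cite{amerik}). This yields the term $-2\tau^* D$, and it remains only to pin down the coefficient $a_D$ of $E$, the $a_{D,i}$ being left as unspecified integers.

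The main obstacle is the determination of $a_D$, a local computation along the ruled divisor $E$. I would carry it out by intersecting the displayed identity with a general fibre $f$ of the ruling $E \rightarrow \Sigma_2$ over a general point $[l] \in \Sigma_2$. Such an $f$ is disjoint from the $E_i$ and is contracted by $\tau$, so that $\tau^*(\varphi^* D)\cdot f = 0$ and $E_i \cdot f = 0$, whence $a_D = (\widetilde{\varphi}^* D \cdot f)/(E \cdot f) = (D \cdot \widetilde{\varphi}_* f)/(E \cdot f)$ by the projection formula. The crucial geometric input is that $\widetilde{\varphi}$ sends $f$ onto the rational curve $\mathcal{E}_{[l]}$ of lines meeting $l$ inside the $\PP^3$ tangent to $X$ along $l$ (compare Lemma \ref{lem varphi on Sigma2}), so that the problem reduces to the intersection numbers $g \cdot \mathcal{E}_{[l]}$ and $D \cdot \mathcal{E}_{[l]}$ together with the self-intersection normalisation $E \cdot f$. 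These are exactly the quantities computed in \cite[Proposition 6]{amerik} and \cite[Lemma 3.11]{voisin2}; transporting their computation gives $a_g = -3$, while $a_D = 0$ for primitive $D$ because $\mathcal{E}_{[l]}$ pairs trivially with $\CH^1(F)_{\mathrm{prim}}$. The delicate step, and the reason this is only an adaptation of loc.\ cit., is the control of the local structure of $\widetilde{\varphi}$ and of the ruling near a general line of second type.
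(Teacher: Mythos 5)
The paper offers no proof of this lemma: it is stated with a \qed and the preceding sentence defers entirely to \cite[Proposition 6]{amerik} and \cite[Lemma 3.11]{voisin2}, so your reconstruction of what that ``easy adaptation'' should look like is exactly in the intended spirit, and its skeleton is sound. Since $\tau$ is birational, any divisor class killed by $\tau_*$ is an integral combination of the $\tau$-exceptional prime divisors $E$ and $E_i$, which gives the decomposition $\widetilde{\varphi}^*D = \tau^*(\varphi^*D) + a_D E + \sum_i a_{D,i}E_i$; and intersecting with a general fibre $f$ of the ruling of $E$ over $\Sigma_2$, along which $\widetilde{\varphi}_*f$ is a multiple of $\mathcal{E}_{[l]}$, is the right way to extract $a_D$. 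For the vanishing $a_D=0$ you can in fact be self-contained rather than deferring to loc.\ cit.: by the discussion preceding Proposition \ref{prop lines of second type}, the lines parametrized by $\mathcal{E}_{[l]}$ sweep out the cubic surface $\PP^3_{\langle l\rangle}\cap X$, whose class is $h^2$, so for $D=\Phi(\mathfrak{D})$ with $\mathfrak{D}\in\CH_2(X)_{\mathrm{prim}}$ the projection formula gives $\deg(D\cdot\mathcal{E}_{[l]})=\deg(\mathfrak{D}\cdot \Psi(\mathcal{E}_{[l]}))=0$.

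The one step that does not hold up as written is your justification of $\varphi^*D=-2D$. You extend $\varphi^*\omega=-2\omega$ to all of $\HH^2(F,\Q)_{\mathrm{prim}}$; that extension is a Schur-type argument requiring $\HH^2(F,\Q)_{\mathrm{prim}}$ to be a simple Hodge structure, which holds only for (very) general $X$ --- and for such $X$ one has $\CH^1(F)_{\mathrm{prim}}=\Phi(\CH_2(X)_{\hom})\subseteq \CH^1(F)_{\hom}=0$, so the assertion about primitive $D$ is vacuous precisely where your argument applies. The statement is needed for special cubics (in particular those containing planes), where $\HH^2_{\mathrm{prim}}$ is not simple. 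The repair is available in the paper: by Proposition \ref{prop key identity of varphi} the class $[\Gamma_\varphi+I_1]$ is constant in the family and $I_1$ acts as zero on $\HH^2$, so the relation $(\varphi^*+2)(\varphi^*-7)=0$ on $\HH^2(F,\Q)$ and the one-dimensionality of the $7$-eigenspace propagate from the generic cubic to all cubics (this is the content of Proposition \ref{prop cubic hom phi}); since $\varphi^*$ rescales $q_F$ by $4$, the two eigenspaces are $q_F$-orthogonal, so the $(-2)$-eigenspace is exactly $g^{\perp}=\HH^2(F,\Q)_{\mathrm{prim}}$ and $\varphi^*D=-2D$ follows for every $X$. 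Alternatively one can apply the identity of Proposition \ref{prop key identity of varphi} directly to $D$ and compute the correction terms, which is in effect what Voisin's Lemma 3.11 does.
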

\begin{rmk}
  Note that by Proposition \ref{prop cubic g c}, $\varphi_*g=28g$, so
  that if $X$ does not contain any plane we get $7 \cdot 28g = 7 \cdot
  \varphi_*g = \widetilde{\varphi}_* \widetilde{\varphi}^*g + 3
  \widetilde{\varphi}_*E = 16 g + 3 \widetilde{\varphi}_*E$ and thus
  $\widetilde{\varphi}_*E =4g$. Likewise, one may compute
  $3\widetilde{\varphi}_*E^2 = 52g^2+ 735c$ and
  $4\widetilde{\varphi}_*E^3 =8059g^3$.
\end{rmk}

First we have the following cohomological description of
$\Gamma_{\varphi}$. Note that Proposition \ref{prop key identity of
  varphi}, together with \eqref{eq L Cubic}, already fully computed
the cohomology class of $\Gamma_{\varphi}$ in terms of $g$, $c$ and
the Beauville--Bogomolov class $\mathfrak{B}$.

\begin{prop} \label{prop cubic hom phi} Let $X$ be a smooth cubic
  fourfold and let $F$ be its variety of lines. Then
  \begin{itemize}
  \item $(\varphi^*+2)(\varphi^*-7)$ vanishes on $\HH^2(F,\Q)$~;
  \item $(\varphi^*-28)(\varphi^*+8)$ vanishes on $\HH^6(F,\Q)$.
  \end{itemize}
  Assume
  moreover that $X$ does not contain any plane. Then
  \begin{itemize}
  \item $(\varphi^*-31)(\varphi^*+14)(\varphi^*-4)$ vanishes on
    $\HH^4(F,\Q)$.
  \end{itemize}
 \end{prop}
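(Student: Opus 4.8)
The plan is to reduce everything to the already-known action of $\varphi^*$ on $\HH^2(F,\Q)$ together with the cup-product structure. Since $F$ is of $\mathrm{K3}^{[2]}$-type, its cohomology is generated in degree $2$ and cup-product induces an isomorphism $\Sym^2\HH^2(F,\Q) \xrightarrow{\sim} \HH^4(F,\Q)$ (this is the setting of Proposition \ref{prop action of B powers}). Writing $\HH^2(F,\Q) = \langle g\rangle \oplus \HH^2(F,\Q)_{\mathrm{prim}}$ for the orthogonal splitting with respect to $q_F$, I would use the induced direct-sum decomposition
\begin{equation*}
\HH^4(F,\Q) = \langle g^2\rangle \, \oplus\, \big(g\cdot \HH^2(F,\Q)_{\mathrm{prim}}\big) \,\oplus\, \Sym^2\HH^2(F,\Q)_{\mathrm{prim}}.
\end{equation*}
The goal is to show that $\varphi^*$ acts by $-14$ on the middle summand, by $4$ on the last summand, and that it preserves the $2$-dimensional space $\langle g^2, c\rangle$ with eigenvalues $4$ and $31$; these three facts together annihilate the claimed cubic polynomial on a spanning set.

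First I would compute $\varphi^*$ on the two primitive pieces. Passing to a smooth projective model $\nu\colon \widetilde\Gamma \to \Gamma_\varphi$ of the graph and writing $\tau, \widetilde\varphi$ for the compositions with $\nu$ of the two projections, one has $\varphi^* = \tau_*\circ \widetilde\varphi^*$ with $\widetilde\varphi^*$ a ring homomorphism and $\tau$ birational, so that $\tau_*\tau^* = \mathrm{id}$ and hence $\varphi^*(\alpha\cup\beta) = \tau_*(\widetilde\varphi^*\alpha\cup\widetilde\varphi^*\beta)$. Since $X$ contains no plane, there are no exceptional divisors $E_i$ in Lemma \ref{lem cubic divisor phi}, which (pulled back to $\widetilde\Gamma$) reads $\widetilde\varphi^* g = 7\tau^*g - 3E$ and $\widetilde\varphi^* D = -2\tau^* D$ for $D \in \HH^2(F,\Q)_{\mathrm{prim}}$, where $E$ is the pre-image of the surface $\Sigma_2$ of lines of second type. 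As $E$ maps to the $2$-dimensional $\Sigma_2$, one has $\tau_* E = 0$ for dimension reasons; the projection formula then gives $\varphi^*(D\cup D') = 4\, D\cup D'$ for $D, D' \in \HH^2(F,\Q)_{\mathrm{prim}}$ and $\varphi^*(g\cup D) = -14\, g\cup D$. This yields eigenvalue $4$ on $\Sym^2\HH^2(F,\Q)_{\mathrm{prim}}$ and eigenvalue $-14$ on $g\cdot\HH^2(F,\Q)_{\mathrm{prim}}$.

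It remains to handle $\langle g^2, c\rangle$. Here I would simply invoke Proposition \ref{prop cubic g c}, which (since $I_1 = 0$ when $X$ contains no plane) gives $\varphi^* g^2 = 4g^2 + 45c$ and $\varphi^* c = 31 c$; thus $\langle g^2, c\rangle$ is $\varphi^*$-stable with eigenvalues $4$ and $31$. To conclude, set $P(T) = (T-31)(T+14)(T-4)$, whose factors commute as polynomials in $\varphi^*$. On a basis of $g\cdot\HH^2(F,\Q)_{\mathrm{prim}}$ the factor $(\varphi^*+14)$ vanishes, on a basis of $\Sym^2\HH^2(F,\Q)_{\mathrm{prim}}$ the factor $(\varphi^*-4)$ vanishes, and for $g^2$ one computes $(\varphi^*-4)g^2 = 45c$, then $(\varphi^*+14)(45c) = 45\cdot 45\, c$, then $(\varphi^*-31)(45\cdot45\,c) = 0$. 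Since these vectors form a basis of $\HH^4(F,\Q)$, it follows that $P(\varphi^*) = 0$ there.

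The main obstacle is the geometric input of the second paragraph: controlling the \emph{a priori} singular graph $\Gamma_\varphi$ well enough to justify the identity $\varphi^* = \tau_*\widetilde\varphi^*$ on a smooth model, the multiplicativity $\varphi^*(\alpha\cup\beta) = \tau_*(\widetilde\varphi^*\alpha\cup\widetilde\varphi^*\beta)$, and especially the divisorial computation $\widetilde\varphi^* g = 7\tau^*g - 3E$ with $\tau_* E = 0$ --- that is, Lemma \ref{lem cubic divisor phi} and the precise behaviour of the exceptional divisor $E$ over $\Sigma_2$. Everything else is formal once the action of $\varphi^*$ on $\HH^2(F,\Q)$ and on $\langle g^2, c\rangle$ is in hand.
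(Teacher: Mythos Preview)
Your argument for $\HH^4$ is essentially the paper's, but there are two genuine gaps.

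First, you treat only the third bullet and say nothing about $\HH^6$. This case is not a formality. Writing $\HH^6(F,\Q)=\langle g^3\rangle\oplus g^2\cup\HH^2(F,\Q)_{\mathrm{prim}}$ and running your method, one finds (using $\tau_*E=0$ and $\tau_*E^2=-5(g^2-c)$, which is forced by $\varphi^*g^2=4g^2+45c$) that
\[
\varphi^*(g^2\cup D)=\tau_*\big((7\tau^*g-3E)^2\cup(-2\tau^*D)\big)=-8\,g^2\cup D-90\,c\cup D
\]
for $D\in\HH^2(F,\Q)_{\mathrm{prim}}$. So one needs $c\cup D=0$, and this is exactly where the paper invokes the geometric input that for \emph{generic} $X$ the class $c$ is represented by a (singular) rational surface, whence $[c]\cup\omega=0$; simplicity of the Hodge structure then propagates this to all of $\HH^2_{\mathrm{prim}}$. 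Your plan produces neither this vanishing nor a substitute.

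Second, Lemma~\ref{lem cubic divisor phi} is stated for $D\in\CH^1(F)_{\mathrm{prim}}$, i.e.\ \emph{algebraic} primitive classes, yet you apply $\widetilde\varphi^*D=-2\tau^*D$ to arbitrary $D\in\HH^2(F,\Q)_{\mathrm{prim}}$. For generic $X$ there are no such algebraic $D$ at all, so this really is an extension that needs argument. It can be justified (the exceptional part of $\widetilde\varphi^*$ is a morphism of Hodge structures into a pure $(1,1)$ space, hence kills the transcendental lattice), but you do not supply it. The paper avoids this altogether: it first reduces to generic $X$ via the constancy of $[\Gamma_\varphi+I_1]$ in the family (Proposition~\ref{prop key identity of varphi}) and then uses that $\HH^2(F,\Q)_{\mathrm{prim}}$ is a \emph{simple} Hodge structure, so $\varphi^*\omega=-2\omega$ forces $\varphi^*=-2$ on the whole piece. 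That reduction-to-generic step is also what makes the $\HH^2$ and $\HH^6$ statements hold for \emph{all} smooth $X$, not only those without planes, and it is missing from your outline.
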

\begin{proof}
  According to Proposition \ref{prop key identity of varphi}, we know
  that the cohomology class of $\Gamma_{\varphi} + I_1$ is constant in
  the family, where $I_1 = 0$ if $X$ does not contain any plane. As in
  the proof of Proposition \ref{prop varphi action on homologically
    trivial cycles}, we see that $I_1$ acts trivially on $\HH^i(F,\Q)$
  unless possibly when $i=4$. Therefore it is enough to prove the
  proposition for generic $X$. In particular $X$ does not contain any
  plane and, as explained in \cite[Remark 9]{amerik}, $\HH^2(F,\Q)$
  decomposes as $\langle g \rangle \oplus
  \HH^2(F,\Q)_{\mathrm{prim}}$, where $\HH^2(F,\Q)_{prim}$ is a simple
  Hodge structure spanned by a nowhere degenerate global $2$-form
  $\omega$, and $\HH^4(F,\Q) = \mathrm{Sym}^2\HH^2(F,\Q)$ decomposes
  as a direct sum of simple Hodge structures
\begin{equation} \label{eq H4 dec simple}
\HH^4(F,\Q) = \langle g^2 \rangle \oplus \langle c \rangle \oplus
(g\cdot \HH^2(F,\Q)_{\mathrm{prim}}) \oplus V,
\end{equation}
 where $V$ is spanned by $\omega^2$.

 By Proposition \ref{prop varphi action on cohomology} and Proposition
 \ref{prop varphi action on homologically trivial cycles}
 respectively,
  we see that $\varphi^*$ acts by multiplication by $-2$ on
 $\HH^2(F,\Q)_{prim}$ and by multiplication by $7$ on $\langle g
 \rangle$. Thus $(\varphi^*+2)(\varphi^*-7)$ vanishes on
 $\HH^2(F,\Q)$. By Proposition \ref{prop varphi action on cohomology},
 $\varphi^*-4$ vanishes on $V$.  By Proposition \ref{prop cubic g c},
 $\varphi^*g^2 = 4g^2 + 45c$ and $\varphi^*c=31c$. Since clearly
 $\widetilde{\varphi}^*\omega = -2 \tau^* \omega$, it is
 straightforward from the projection formula to see that
 $\varphi^*([g] \cup \omega) = -14 \, [g]\cup \omega$.  Therefore
 $(\varphi^*-31)(\varphi^*+14)(\varphi^*-4)$ vanishes on
 $\HH^4(F,\Q)$.  Finally, by the Lefschetz isomorphism, we have
 $\HH^6(F,\Q) = g^2 \cdot \HH^2(F,\Q) = \langle g^3 \rangle \oplus g^2
 \cdot \HH^2(F,\Q)_{\mathrm{prim}}.$ By \cite[Theorem 8]{amerik},
 $\varphi^*g^3 = 28g^3$. It remains to compute $\varphi^*([g]^2\cup
 \omega)$. We have
$$
\varphi^*([g]^2\cup \omega) = \tau_*(\widetilde{\varphi}^* [g]^2 \cup
(-2)\tau^*\omega)= \varphi^*[g]^2 \cup (-2\omega) = (4[g]^2 +45[c])
\cup (-2\omega) = -8\, [g]^2\cup \omega - 90\, [c] \cup \omega.
$$
But then, for generic $X$, $c$ can be represented by a singular
rational surface~; see the proof of Lemma 3.2 in \cite{voisin2}.
Therefore $[c] \cup \omega = 0$ and hence $\varphi^*([g]^2\cup \omega)
= -8\, [g]^2\cup \omega$.
\end{proof}

\subsection{Eigenspace decomposition of the Chow groups} \label{sec
  eigenspace} In this paragraph we show that for a cubic fourfold $X$
not containing any plane, the Chow groups of its variety of lines $F$
split under the action of $\varphi^*$. Let us fix some notations.
\begin{defn}\label{defn V_i^n}
  For $\lambda \in \Q$, we set $$V^i_\lambda  := \{ \sigma \in \CH^i(F)
   : \varphi^*\sigma=\lambda \sigma\}.$$
\end{defn}
\noindent For instance, $\CH^0(F) = V_1^4$.  According to Proposition
\ref{prop cubic hom phi} we have
\begin{center}
  $\CH^1(F) = V_7^1 \oplus V_{-2}^1$, with $V_7^1 = \langle g \rangle$
  and $V_{-2}^1 = \CH^1(F)_{\mathrm{prim}}$.
\end{center}\medskip

The following is the main theorem of this section.

\begin{thm}\label{thm eigenspace decomposition}
  Let $X$ be a smooth cubic fourfold and let $F$ be its variety of
  lines.  Let $\varphi :F\dashrightarrow F$ be the rational map of
  Definition \ref{defn varphi}. Then the following are true.
  \begin{enumerate}[(i)]
  \item The action of $\varphi^*$ on $\CH^4(F)$ satisfies $
  (\varphi^*-16)(\varphi^*+8)(\varphi^*-4)=0$ and induces an
  eigenspace decomposition
  \begin{center}
    $\CH^4(F)=V_{16}^4 \oplus V_{-8}^4\oplus V_4^4, $

    with \quad $\CH^4(F)_0 = V_{16}^4 $, \quad $ \CH^4(F)_2 = V_{-8}^4 $,
    \quad $\CH^4(F)_4 = V_4^4$.
  \end{center}
  \item The action of $\varphi^*$ on $\CH^3(F)$ satisfies $
  (\varphi^*-28)(\varphi^*+8)(\varphi^*-4)(\varphi^*+14)=0$ and
  induces a decomposition
  \begin{center}
    $ \CH^3(F) = V_{28}^3 \oplus V_{-8}^3\oplus V_4^3\oplus V_{-14}^3,$

    with \quad $V_{28}^3 = \langle{g^3} \rangle $, \quad $V_{-8}^3 =
    g^2\cdot V_{-2}^1$, \quad $V_{-14}^3 = g\cdot V_{-2}^2$,

    $\CH^3(F)_0 = V_{28}^3 \oplus V_{-8}^3 $, \quad $\CH^3(F)_2 =
    V_{4}^3 \oplus V_{-14}^3$.
  \end{center}
  \item The action of $\varphi^*$ on $\CH^2(F)$ satisfies $
  (\varphi^*-31)(\varphi^*+14)(\varphi^*-4)(\varphi^*+2)=0$, provided
  that $X$ does not contain any plane, and induces a decomposition
   \begin{center}
     $ \CH^2(F)=V_{31}^2 \oplus V_{-14}^2 \oplus V_4^2\oplus V_{-2}^2
     $,

     with \quad $V_{31}^2 = \langle c \rangle$, \quad $V_{-14}^2 = g\cdot
     V_{-2}^1$,

      $ \CH^2(F)_2 = V_{-2}^2 $, \quad  $ \CH^2(F)_0 = V_{31}^2
     \oplus V_{-14}^2 \oplus V_4^2$.
   \end{center}
   \end{enumerate}
\end{thm}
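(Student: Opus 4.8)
The plan is to diagonalize $\varphi^*$ on each Chow group by playing off two descriptions against each other. On the homologically trivial part I will use the explicit formulas of Proposition \ref{prop varphi action on homologically trivial cycles}, read through the Fourier kernel; on the part coming from the sub-algebra $\mathrm{V}_F$ generated by divisors and Chern classes I will transfer the cohomological relations of Proposition \ref{prop cubic hom phi} to $\CH^*(F)$ using that $\mathrm{V}_F$ injects into cohomology (Theorem \ref{thm voisin}). Before starting I record the facts that make both sides computable. Since $\varphi^*$ is induced by the correspondence $\Gamma_\varphi$, each $\CH^i(F)_{\hom}$ is $\varphi^*$-stable; and by \eqref{eq L Cubic} one has $L_*\sigma=-I_*\sigma$ for homologically trivial $\sigma$. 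The two scalar inputs used repeatedly are $c\cdot\sigma=0$ for homologically trivial $\sigma$ (Lemma \ref{lem special zero cycle} plus homological triviality) and $g^2=\tfrac{6}{25}\,l+\tfrac{8}{5}\,c$, obtained by inverting \eqref{eq l cubic fourfold}. Finally, writing $\varphi^*=\tau_*\widetilde{\varphi}^*$, the fact that $\widetilde{\varphi}^*$ is a genuine ring homomorphism sending divisors into $\tau^*\CH^1(F)+\Z E$ (Lemma \ref{lem cubic divisor phi}, with no exceptional terms since $X$ contains no plane in case (iii)), together with $\tau_*E=0$ and $\tau_*E^2=\varphi^*g^2-49g^2\in\mathrm{V}_F$ (Proposition \ref{prop cubic g c}), shows that $\mathrm{V}_F$ is stable under $\varphi^*$.

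For part (i): $\CH^4(F)_0=\langle\mathfrak{o}_F\rangle\subseteq\mathrm{V}_F$, on which $\varphi^*$ acts by $\deg\varphi=16$ (Proposition \ref{prop varphi action on cohomology}). On $\CH^4(F)_4=\Lambda^4_2$ one has $L_*=0$, so $\varphi^*\sigma=4\sigma+2g^2I_*\sigma=4\sigma$. On $\CH^4(F)_2=(\Lambda^4_0)_{\hom}$ I compute $g^2L_*\sigma=\tfrac{6}{25}\,l\cdot L_*\sigma=6\sigma$ (using $c\cdot L_*\sigma=0$ and $l\cdot L_*\sigma=25\sigma$ from Theorem \ref{prop L2}), whence $\varphi^*\sigma=4\sigma-2g^2L_*\sigma=-8\sigma$. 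This yields eigenvalues $16,-8,4$ matching the three Fourier pieces. Part (ii) is analogous: $\CH^3(F)_0=\langle l\rangle\cdot\CH^1(F)\subseteq\mathrm{V}_F$ injects into $\HH^6(F,\Q)$, where Proposition \ref{prop cubic hom phi} forces eigenvalues $28$ and $-8$, with $g^3$ and $g^2\cdot\CH^1(F)_{\mathrm{prim}}$ the respective eigenvectors; while on $\CH^3(F)_2=\CH^3(F)_{\hom}$ the operator $T:=3g\cdot I_*(g\cdot{-})=-3g\cdot L_*(g\cdot{-})$ has image $g\cdot\CH^2(F)_2$, on which $T=-18$, so $\CH^3(F)_{\hom}=\ker T\oplus g\cdot\CH^2(F)_2$ splits into the eigenvalues $4$ and $-14$ of $\varphi^*=4+T$.

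Part (iii) is where the argument is most delicate. On $\CH^2(F)_2=\Lambda^2_0$ I compute, as in part (i), $I_*(g^2\sigma)=\tfrac{6}{25}I_*(l\sigma)=-6\sigma$, so $\varphi^*\sigma=4\sigma+I_*(g^2\sigma)=-2\sigma$; thus $\CH^2(F)_2=V^2_{-2}$. For $\CH^2(F)_0$ I will split it as $\mathrm{V}_F^2+\CH^2(F)_{0,\hom}$. On $\mathrm{V}_F^2$, which is $\varphi^*$-stable and injects into $\HH^4(F,\Q)$, the relation $(\varphi^*-31)(\varphi^*+14)(\varphi^*-4)=0$ of Proposition \ref{prop cubic hom phi} (valid precisely because $X$ has no plane) transfers verbatim, and $c$ and $g\cdot\CH^1(F)_{\mathrm{prim}}$ are identified as the $31$- and $(-14)$-eigenvectors. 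The main obstacle is the remaining summand $\CH^2(F)_{0,\hom}=(\Lambda^2_2)_{\hom}$, which is not seen by cohomology (its vanishing is exactly the open instance of Murre's (D) in Theorem \ref{thm CK}). The key point is that $\Lambda^2_2=W^2_1=\ker\{l\cdot{}\}$ by Proposition \ref{prop l CH2 4}, so for such $\sigma$ both $l\cdot\sigma=0$ and $c\cdot\sigma=0$, giving $g^2\sigma=0$ and hence $\varphi^*\sigma=4\sigma+I_*(g^2\sigma)=4\sigma$. Thus $\CH^2(F)_{0,\hom}\subseteq V^2_4$, and $(\varphi^*-31)(\varphi^*+14)(\varphi^*-4)$ annihilates all of $\CH^2(F)_0$.

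In each of the three cases the eigenvalues produced are pairwise distinct, so the polynomial relations force $\varphi^*$ to be diagonalizable, and the eigenspace decomposition it produces coincides with the Fourier decomposition of Theorems \ref{prop L2} and \ref{prop main}, as claimed. The one genuinely non-formal step, and the part I expect to require the most care, is the treatment of $\CH^2(F)_{0,\hom}$ in part (iii): cohomology gives no control there, and the whole argument hinges on reorganizing $\CH^2(F)_0$ into the $\mathrm{V}_F$-part (handled by injectivity into cohomology) and the $\ker\{l\cdot{}\}$-part (handled by the explicit homologically trivial formula), so that the two methods together cover everything.
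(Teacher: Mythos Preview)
Your strategy of computing $\varphi^*$ directly on each Fourier piece is natural, and the computations on the homologically trivial parts via Proposition \ref{prop varphi action on homologically trivial cycles} are correct. However, there are two genuine gaps where the $\mathrm{V}_F$-stability argument does not do the work you ask of it.

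First, in part (i), your stability argument only reaches degree $2$: you compute $\tau_*E^2\in\mathrm{V}_F$, but to conclude $\varphi^*\mathfrak{o}_F=16\mathfrak{o}_F$ via $\varphi^*g^4$ you would need $\tau_*E^4\in\langle\mathfrak{o}_F\rangle$. This is a zero-cycle supported on $\Sigma_2$, and by Proposition \ref{prop support} such cycles span all of $\CH^4(F)_0\oplus\CH^4(F)_2$, so there is no a priori reason it lies in $\langle\mathfrak{o}_F\rangle$. The paper proceeds differently: it first gets $(\varphi_*-4)(\varphi_*+2)=0$ on $\CH^4(F)_{\hom}$ from the triangle relation $I_*(\varphi_*x+2x)=0$ (Theorem \ref{prop F3 triangle}), and then computes $\varphi_*\mathfrak{o}_F=\mathfrak{o}_F$ directly from Proposition \ref{prop intersection identities}(i), which for the degenerate triangle $(l,l,\varphi(l))$ gives $(I_*\mathfrak{o}_F)^2=\varphi_*\mathfrak{o}_F+4\mathfrak{o}_F$, together with $I_*\mathfrak{o}_F=\tfrac{1}{3}(g^2-c)$. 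Once $\varphi_*\mathfrak{o}_F=\mathfrak{o}_F$ is in hand, the identification $V^4_{16}=\langle\mathfrak{o}_F\rangle$ follows because $1$ is not an eigenvalue of $\varphi_*$ on $\CH^4(F)_{\hom}$.

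Second, in part (iii), the splitting $\CH^2(F)_0=\mathrm{V}_F^2+\CH^2(F)_{0,\hom}$ is not justified: it amounts to asserting that every algebraic class in $\HH^4(F,\Q)$ is a polynomial in divisor classes and $[c]$, which is not known even when $X$ contains no plane (there could be Hodge classes in $\Sym^2\HH^2(F,\Q)_{\mathrm{tr}}$). The paper sidesteps this by never decomposing $\CH^2(F)_0$. Using the full formula of Proposition \ref{prop key identity of varphi}, one has $\varphi^*\sigma-4\sigma=I_*(g^2\sigma)_{\hom}+Q(g,c,D_i)$ for \emph{every} $\sigma\in\CH^2(F)$; the first term lies in $\mathcal{A}_{\hom}=V^2_{-2}$, so applying $(\varphi^*+2)$ lands everything in $\mathrm{V}_F^2$, and then $(\varphi^*-31)(\varphi^*+14)$ produces a homologically trivial polynomial in $g,c,D_i$, hence zero by Theorem \ref{thm voisin}. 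Your computation that $\varphi^*\sigma=4\sigma$ for $\sigma\in(\Lambda^2_2)_{\hom}$ is correct and recovers this on that piece, but it does not cover the non-homologically-trivial part of $\Lambda^2_2$ lying outside $\mathrm{V}_F^2$.
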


That $\CH^2(F)_2 $ is an eigenspace for the action of $\varphi^*$ on
$\CH^2(F)$ is singled out in the following proposition. Its proof uses
the triangle relation introduced in Definition \ref{dfn triangle}, and
its link to the filtration $\F^\bullet$ on $\CH^4(F)$ as shown in
Theorem \ref{prop F3 triangle}.

\begin{prop} \label{prop eigenA} Let $F$ be the variety of lines on a
  cubic fourfold. Then, inside $\CH^2(F)$, the sub-group $V_{-2}^2$
  coincides with the sub-group $\mathcal{A}_{\hom}$ introduced in
  Definition \ref{defn the filtration}. In particular, $V_{-2}^2 =
  \CH^2(F)_2$.
\end{prop}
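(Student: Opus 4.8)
The goal is to show the two subgroups $V_{-2}^2$ and $\mathcal{A}_{\hom}$ of $\CH^2(F)$ coincide, and then to deduce $V_{-2}^2 = \CH^2(F)_2$. The plan is to prove the two inclusions separately, using the explicit formulas for $\varphi_*$ and $\varphi^*$ on homologically trivial cycles from Proposition \ref{prop varphi action on homologically trivial cycles} together with the characterizations of $\CH^2(F)_2$ coming from the Fourier decomposition.

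\medskip

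\noindent \textbf{Inclusion $\mathcal{A}_{\hom} \subseteq V_{-2}^2$.} Recall that $\mathcal{A} = I_*\CH^4(F)$, so $\mathcal{A}_{\hom}$ is generated by cycles of the form $I_*\sigma$ with $\sigma \in \CH^4(F)_{\hom}$. First I would show, via Proposition \ref{prop cubic CH0426} and Theorem \ref{thm main splitting}, that $\mathcal{A}_{\hom} \otimes \Q = L_*\CH^4(F) = \CH^2(F)_2$; this identification was already recorded in the proof of Proposition \ref{prop cubic CH0426}. Granting this, an element $\alpha \in \mathcal{A}_{\hom}$ lies in $\CH^2(F)_2 = \Lambda_0^2$, so in particular $\alpha$ is homologically trivial. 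I would then apply the formula $\varphi^*\alpha = 4\alpha + I_*(g^2 \cdot \alpha)$ of Proposition \ref{prop varphi action on homologically trivial cycles}(i). The key point is to compute $I_*(g^2\cdot \alpha)$: since $g^2$ differs from $l$ by a multiple of $c$ (by \eqref{eq l cubic fourfold}), and since $c \cdot \alpha$ is a multiple of $\mathfrak{o}_F$ whenever $\alpha$ is a $2$-cycle (Lemma \ref{lem special zero cycle}), the product $g^2 \cdot \alpha$ is proportional to $l\cdot \alpha$ modulo $\langle \mathfrak{o}_F\rangle$. Using $L_*\mathfrak{o}_F = 0$ (Lemma \ref{lem L vanishes on triangle}) and condition \eqref{assumption hom} in the form $L_*(l \cdot L_*\sigma) = 25 L_*\sigma$, I expect to obtain $I_*(g^2 \cdot \alpha) = -6\alpha$, giving $\varphi^*\alpha = -2\alpha$ and hence $\alpha \in V_{-2}^2$.

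\medskip

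\noindent \textbf{Inclusion $V_{-2}^2 \subseteq \mathcal{A}_{\hom}$.} The eigenvalue $-2$ is not one of the eigenvalues $31, -14, 4$ appearing on the complementary pieces $V_{31}^2, V_{-14}^2, V_4^2$ of $\CH^2(F)_0$, so the cleanest route is to establish \emph{first} that the characteristic polynomial $(\varphi^*-31)(\varphi^*+14)(\varphi^*-4)(\varphi^*+2)$ annihilates $\CH^2(F)$ and that the $(-2)$-eigenspace is precisely the homologically trivial part. Concretely, any $\sigma \in V_{-2}^2$ is homologically trivial: by Proposition \ref{prop cubic hom phi} the eigenvalue $-2$ does not occur on $\HH^4(F,\Q)$ (whose $\varphi^*$-eigenvalues are $31, -14, 4$ when $X$ contains no plane), so a class with $\varphi^*\sigma = -2\sigma$ must map to zero in cohomology. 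Thus $\sigma \in \CH^2(F)_{\hom}$. I would then decompose $\sigma = \sigma_0 + \sigma_2$ according to $\CH^2(F)_{\hom} = (\CH^2(F)_0)_{\hom} \oplus \CH^2(F)_2$ (Theorem \ref{prop L2}, with $\CH^2(F)_2 = \Lambda_0^2$), observe by the first inclusion that $\varphi^*$ acts as $-2$ on $\CH^2(F)_2$, and show that $\varphi^*$ has no $(-2)$-eigenvector on $(\CH^2(F)_0)_{\hom}$. This last point follows because $\CH^2(F)_0$ is built from $\langle c\rangle$, $g \cdot \CH^1(F)_{\mathrm{prim}}$ and $V_4^2$, on which $\varphi^*$ acts with eigenvalues $31, -14, 4$ respectively (parts (ii)--(iii) of the theorem, established independently). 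Hence $\sigma_0 = 0$ and $\sigma = \sigma_2 \in \CH^2(F)_2 = \mathcal{A}_{\hom}$.

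\medskip

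\noindent The final assertion $V_{-2}^2 = \CH^2(F)_2$ is then immediate from $\mathcal{A}_{\hom} = \CH^2(F)_2$. The main obstacle I anticipate is the bookkeeping in the first inclusion: pinning down the exact constant in $I_*(g^2\cdot \alpha) = -6\alpha$ requires carefully combining \eqref{eq l cubic fourfold}, Lemma \ref{lem special zero cycle}, the vanishing $L_*\mathfrak{o}_F = 0$, and condition \eqref{assumption hom}, and one must be sure that all the ``junk'' terms proportional to $\mathfrak{o}_F$ are killed by $L_*$ (equivalently by $I_*$ after translating between $L$ and $I$ via \eqref{eq L Cubic}). A subtlety worth flagging is that the cleanest logical order is to prove parts (i)--(iii) of Theorem \ref{thm eigenspace decomposition} first, so that the eigenvalue separation argument in the second inclusion is available; the present proposition is really the piece that identifies the transcendental eigenspace $\CH^2(F)_2$ geometrically, and it should be slotted in once the divisor-level computations (Lemma \ref{lem cubic divisor phi}, Proposition \ref{prop cubic g c}) and the cohomological eigenvalue count (Proposition \ref{prop cubic hom phi}) are in hand.
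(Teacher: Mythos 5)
Your first inclusion $\mathcal{A}_{\hom}\subseteq V^2_{-2}$ is correct and the computation does close: writing $\alpha=L_*\sigma$, the identity $g^2=\frac{6}{25}l+\frac{8}{5}c$ from \eqref{eq l cubic fourfold} together with $c\cdot\alpha=0$ (a degree-zero multiple of $\mathfrak{o}_F$) gives $g^2\cdot\alpha=\frac{6}{25}\,l\cdot L_*\sigma$, so \eqref{assumption hom} and the relation $I_*=-L_*$ on degree-zero $0$-cycles (read off from \eqref{eq L Cubic}) yield $I_*(g^2\cdot\alpha)=-6\alpha$, hence $\varphi^*\alpha=-2\alpha$. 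This repackages through the Fourier identities the same geometric input that the paper's proof uses directly, namely Lemma \ref{lem intersection of g^2 and S_l} together with the fact that $I_*$ kills the triangle relation $\varphi_*x+2x\in\mathcal{R}_{\hom}$ (Theorem \ref{prop F3 triangle}); either route is fine.

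The second inclusion as you propose it is circular. You reduce $V^2_{-2}\subseteq\mathcal{A}_{\hom}$ to the claim that $\varphi^*$ has no $(-2)$-eigenvector on $\CH^2(F)_{0,\hom}$, and you justify this via the decomposition $\CH^2(F)_0=V^2_{31}\oplus V^2_{-14}\oplus V^2_4$ of Theorem \ref{thm eigenspace decomposition}\emph{(iii)}, which you describe as ``established independently'' and propose to prove first. It is not independent: the proof of part \emph{(iii)} uses Proposition \ref{prop eigenA} itself, both to obtain the annihilating polynomial (via $(\varphi^*+2)I_*\sigma'=0$, i.e.\ the first inclusion) and to identify $V^2_{-2}$ with $\CH^2(F)_2$ in the final decomposition (the full statement). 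The repair is much shorter than the detour you sketch: once $\sigma\in V^2_{-2}$ is known to be homologically trivial, Proposition \ref{prop varphi action on homologically trivial cycles} gives $\varphi^*\sigma-4\sigma=I_*(g^2\cdot\sigma)$, so $-6\sigma=I_*(g^2\cdot\sigma)$ and $\sigma=-\frac{1}{6}I_*(g^2\cdot\sigma)$ already lies in $I_*\CH^4(F)_{\hom}=\mathcal{A}_{\hom}$, with no eigenspace separation needed. (If you insist on the decomposition argument, it can be made non-circular by noting that $g^2\cdot\sigma_0=0$ for $\sigma_0\in\CH^2(F)_{0,\hom}$, since $l\cdot\sigma_0$ and $c\cdot\sigma_0$ are degree-zero multiples of $\mathfrak{o}_F$, whence $\varphi^*\sigma_0=4\sigma_0$; but the direct route makes this unnecessary.)
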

\begin{proof}
  The group $\mathcal{A}_{\hom} $ is spanned by cycles of the form
  $S_{l_1} -S_{l_2}$. Let us compute $\varphi^*\sigma$ for $\sigma =
  S_{l_1} - S_{l_2} = I_*x$ where $x = [l_1] - [l_2] \in \CH_0(F)$.
  \begin{eqnarray}
    \varphi^*\sigma &=& 4\sigma + I_*(g^2\cdot I_*x) \nonumber\\
    &=& 4\sigma + I_*(\varphi_*x -4x) \nonumber\\
    &=& 4\sigma + I_*(\varphi_*x + 2x) - I_*(6x) \nonumber\\
    &=& 4\sigma - 6\sigma \nonumber \\
    &=& -2\sigma. \nonumber
  \end{eqnarray} Thus $\mathcal{A}_{\hom} \subseteq V_{-2}^2$.
  Here, the first and second equalities are Proposition
  \ref{prop varphi action on homologically trivial cycles}.
  The fourth equality follows simply from the fact that $\varphi_*x + 2x$
  satisfies the triangle relation of Definition \ref{dfn triangle},
  \emph{i.e.}, belongs to  $\mathcal{R}_{\hom}$, and that $I_*$ acts as
  zero on $\mathcal{R}_{\hom}$ by Theorem \ref{prop F3 triangle}.
  \medskip

  Consider now $\sigma \in \CH^2(F)$ such that $\varphi^*\sigma =
  -2\sigma$. Because $-2$ is not an eigenvalue of $\varphi^*$ acting
  on $\HH^4(F,\Q)$, we see that $\sigma \in \CH^2(F)_{\hom}$. By
  Proposition \ref{prop varphi action on homologically trivial
    cycles}, we have $\varphi^* \sigma = 4\sigma + I_*(g^2\cdot
  \sigma)$ so that $I_*(g^2\cdot \sigma) = -6\sigma$.  Thus $6V_{-2}^2
  \subseteq I_*\CH_0(F)_{\hom}  := \mathcal{A}_{\hom}$.
\end{proof}

\begin{proof}[Proof of Theorem \ref{thm eigenspace
    decomposition}]
  We first show \emph{(i)}, the decomposition for $\CH^4(F)$. Let
  $x\in\CH^4(F)_\mathrm{hom}$, then
  $\varphi_*x+2x\in\mathcal{R}_\mathrm{hom}$. Proposition \ref{prop F3
    triangle} yields $I_*(\varphi_*x+2x)=0$. Hence by Proposition
  \ref{prop varphi action on homologically trivial cycles} we have
$$
(\varphi_*-4)(\varphi_*+2)x=g^2\cdot I_*(\varphi_*x+2x)=g^2\cdot 0=0.
$$
This implies that $(\varphi^*-4)(\varphi^*+8)=0$ on
$\CH^4(F)_\mathrm{hom}$. The rational map $\varphi$ has degree $16$
and hence, by Proposition \ref{prop proj formula},
$\varphi_*\varphi^*=16$ on $\CH^4(F)$. If $x\in\CH^4(F)$, then
$\varphi^*x-16x\in\CH^4(F)_\mathrm{hom}$ and hence
$(\varphi^*-4)(\varphi^*+8)(\varphi^*-16)x=0$. This establishes the
polynomial equation that $\varphi^*$ satisfies on $\CH^4(F)$. Let us
prove that $\varphi^*\mathfrak{o}_F=16\mathfrak{o}_F$, or equivalently
$\varphi_*\mathfrak{o}_F=\mathfrak{o}_F$. This will show that
$V_0^{16} = \CH^4(F)_0$ and that $\CH^4(F)_\mathrm{hom}=V_{-8}^4
\oplus V_4^4$.  By Lemma \ref{lem I [o]}, we have
$I_*\mathfrak{o}_F=\frac{1}{3}(g^2-c).$ Meanwhile, having in mind that
whenever $\varphi$ is defined at $l$ the triple $(l,l,\varphi(l))$ is
a triangle, Proposition \ref{prop intersection identities}\emph{(i)} yields
$(I_*\mathfrak{o}_F)^2=\varphi_*\mathfrak{o}_F+4\mathfrak{o}_F$.
Hence using the identities in \emph{(i)} of Lemma \ref{lem special zero
  cycle}, we get
$$\varphi_*\mathfrak{o}_F+4\mathfrak{o}_F =
\frac{1}{9}(g^4-2g^2c+c^2) = 5\mathfrak{o}_F.$$ It follows that
$\varphi_*\mathfrak{o}_F=\mathfrak{o}_F$.

Finally, we have $g^2\cdot I_*=\varphi_*-4=-6$ on $V_{-8}^4$~; see
Proposition \ref{prop varphi action on homologically trivial cycles}.
This implies that $I_*$ induces an isomorphism
$V_{-8}^4\stackrel{\cong}{\longrightarrow} \mathcal{A}_\mathrm{hom}$
and that $\mathrm{F}^4\CH^4(F) = \ker(I_*) = V_4^4$. This shows that
$V_{-8}^4 = \CH^4(F)_2$ and $V_0^4 = \CH^4(F)_4$. \medskip

Now we show \emph{(iii)}, the decomposition for $\CH^2(F)$ when $X$ does not
contain any plane.
For $\sigma\in\CH^2(F)$, Proposition \ref{prop key identity of varphi}
gives
\begin{equation} \label{eq varphi CH2 4}
  \varphi^*\sigma-4\sigma=I_*(g^2\cdot\sigma) + P(g,D_i,c) \quad
  \mathrm{in} \ \CH^2(F),
\end{equation}
for some weighted homogeneous polynomial $P$ of degree $2$, $D_i \in
\CH^1(F)_{\mathrm{prim}}$.
The cycle $g^2\cdot \sigma$ can be written as $g^2\cdot \sigma =
\deg(g^2\cdot \sigma)\, [\mathfrak{o}_F] + \sigma'$, where $\sigma'
\in \CH^4(F)_{\hom}$. By Lemma \ref{lem I [o]},
$I_*\mathfrak{o}_F=\frac{1}{3}(g^2-c)$. Thus
$\varphi^*\sigma-4\sigma=I_*(\sigma') + Q(g,D_i,c)$, for some weighted
homogeneous polynomial $Q$ of degree $2$. Now $I_*(\sigma')$ belongs
to $\mathcal{A}_{\hom}$ so that by Proposition \ref{prop eigenA} we
get $(\varphi^*+2)I_*\sigma' = 0$. Recall that $\varphi^*c = 31c$ and
$\varphi^*g^2 = 4g^2+45c$~; see \cite[Theorem 8]{amerik}. Lemma
\ref{lem cubic divisor phi} gives, when $X$ does not contain any
plane, $\varphi^*D^2 = 4D^2$ and $\varphi^*(g\cdot D) = -14g\cdot D$
for all $D \in \CH^1(F)_{\mathrm{prim}}$. We can then conclude that
$(\varphi^*+2)(\varphi^*-4)\sigma$ is a polynomial in $g$, $c$ and
$D_i$, $D_i \in \CH^1(F)_{\mathrm{prim}}$. On the other hand, by
Proposition \ref{prop cubic hom phi}, we know that $
(\varphi^*-31)(\varphi^*+14)(\varphi^*-4)$ maps $\CH^2(F)$ to
$\CH^2(F)_{\hom}$. Therefore $
(\varphi^*-31)(\varphi^*+14)(\varphi^*+2)(\varphi^*-4)\sigma$ is a
polynomial in the variables $g$, $c$ and $D_i$ which is homologically
trivial. By the main result of \cite{voisin2}, it must be zero modulo
rational equivalence. This establishes the polynomial equation of
$\varphi^*$ on $\CH^2(F)$.

Let us now show that $V_{31}^2 = \langle c \rangle$. First note that
for $\sigma \in \CH^2(F)_{\hom}$ Proposition \ref{prop varphi action
  on homologically trivial cycles} gives
$\varphi^*\sigma-4\sigma=I_*(g^2\cdot\sigma)$, and hence
$(\varphi^*+2)(\varphi^*-4)\sigma = 0$. Consider then $\sigma \in
V_{31}^2$. By Proposition \ref{prop cubic hom phi}, or rather its
proof, we see that $[\sigma]$ is proportional to $[c]$. But then, by
the above, $31$ is not an eigenvalue of $\varphi^*$ on
$\CH^2(F)_{\hom}$. Hence $\sigma$ is proportional to $c$, which proves
that $V_{31}^2 = \langle c \rangle$.

We now show that $V_{-14}^2 = g \cdot V_{-2}^1$. Recall that $V_{-2}^1
= \CH^1(F)_{\mathrm{prim}}$. Lemma \ref{lem cubic divisor phi} gives
the inclusion $V_{-14}^2 \supseteq g \cdot V_{-2}^1$. Consider then
$\sigma \in V_{-14}^2$. A quick look at the proof of Proposition
\ref{prop cubic hom phi} shows that $[\sigma]$ belongs to $[g] \cdot
\HH^2(F,\Q)_{\mathrm{prim}}$. Since $-14$ is not an eigenvalue of
$\varphi^*$ acting on $\CH^2(F)_{\hom}$, we get that $\sigma \in g
\cdot V_{-2}^1$.

Finally let us show that $\CH^2(F)_0 = V_{31}^2\oplus V_{-14}^2 \oplus
V_4^2.$ Consider $\sigma \in V_4^2$. We know that $0=\varphi^*\sigma -
4\sigma = I_*(g^2\cdot \sigma - \deg(g^2\cdot \sigma)[\mathfrak{o}_F])
+ Q(g,D_i,c)$, for some weighted homogeneous polynomial $Q$ of degree
$2$. Since $ I_*(g^2\cdot \sigma - \deg(g^2\cdot
\sigma)[\mathfrak{o}_F])$ belongs to $\CH^2(F)_2$ and $Q(g,D_i,c)$
belongs to $\CH^2(F)_0$, we get that $ I_*(g^2\cdot \sigma -
\deg(g^2\cdot \sigma)[\mathfrak{o}_F])=0$. It immediately follows from
\eqref{eq L Cubic} and from the identity
$I_*\mathfrak{o}_F=\frac{1}{3}(g^2-c)$ of Lemma \ref{lem I [o]} that
$L_*(g^2\cdot \sigma)=0$. Since by \emph{(iii)} of Lemma \ref{lem special zero
  cycle} $c\cdot \sigma$ is a multiple of $[\mathfrak{o}_F]$ for all
$\sigma \in \CH^2(F)$, we see that $L_*(l\cdot \sigma) = 0$.
Therefore by Theorem \ref{thm cubic L conjecture} $(L^2)_*\sigma =
2\sigma$, {\textit{i.e.}}, $\sigma \in W_1^2 \subset
\CH^2(F)_0$. Since $V_{31}^2\oplus V^2_{-14} \subseteq \CH^2(F)_0$ and
$V^2_{-2}=\CH^2(F)_2$, it is straightforward to conclude that
$\CH^2(F)_0 = V^2_{31}\oplus V^2_{-14} \oplus V^2_4.$ \medskip

Now we prove \emph{(ii)}, the decomposition for $\CH^3(F)$. First consider
$\sigma_1\in g\cdot V^2_{-2}$. Since
$V^2_{-2}=\mathcal{A}_\mathrm{hom}$ and $I_* :
V^4_{-8}\stackrel{\simeq}{\longrightarrow} \mathcal{A}_\mathrm{hom}$
is an isomorphism, we can write $\sigma_1=g\cdot I_*x$ for some $x\in
V^4_{-8}$. Then by Proposition \ref{prop varphi action on
  homologically trivial cycles} we have
$$
\varphi^*\sigma_1=4\sigma_1+ 3g\cdot
I_*(g\cdot\sigma_1)=4\sigma+3g\cdot I_*(g^2\cdot
I_*x)=4\sigma_1+3g\cdot I_*(-6x)=-14\sigma_1.
$$
Here, we used the fact mentioned earlier in the proof that $g^2\cdot
I_*$ acts as multiplication by $-6$ on $V^4_{-8}$. Thus we have shown
that $\varphi^*=-14$ on $g\cdot V^2_{-2}$. The formula for the action
of $\varphi^*$ on $\CH^3(F)_\mathrm{hom}$ obtained in Proposition
\ref{prop varphi action on homologically trivial cycles} shows that
for all $\sigma\in\CH^3(F)_\mathrm{hom}$, we have
$\varphi^*\sigma-4\sigma\in g\cdot V^2_{-2}$. It follows that
$(\varphi^* + 14)(\varphi^* - 4)\sigma=0$. We may then conclude that $
(\varphi^*-28)(\varphi^*+8)(\varphi^*-4)(\varphi^*+14)$ vanishes on
$\CH^3(F)$ by Proposition \ref{prop cubic hom phi}.

Finally, that $V^3_{28} = \langle{g^3} \rangle, V^3_{-8} = g^2\cdot
V^1_{-2}$ and $\CH^3(F)_0 = V^3_{28} \oplus V^3_{-8}$ follows from the
proof of Proposition \ref{prop cubic hom phi} and from the fact proved
above that $28$ and $-8$ are not eigenvalues of $\varphi^*$ acting on
$\CH^3(F)_{\hom}$.
\end{proof}

\begin{rmk} \label{rmk lagrangian} It is interesting to note the
  following. Let $F$ be a hyperk\"ahler variety of dimension $2d$
  which is birational to a Lagrangian fibration $\tilde{F} \rightarrow
  \mathds{P}^d$. Let $B \rightarrow \mathds{P}^d$ be a generically
  finite map with $B$ smooth projective such that the pull-back
  $\tilde{F}_B \rightarrow B$ admits a section. In that case,
  $\tilde{F}_B$ is an abelian scheme over $B$. Consider then the
  relative multiplication-by-$n$ map on $\tilde{F}_B$. By the work of
  Deninger--Murre \cite{dm}, we see that $\CH_{2d}(\tilde{F}_B)$
  splits into eigenspaces for the action of the multiplication-by-$n$
  map and that the eigenvalues are $n^{2d}, n^{2d-1}, \ldots,
  n^d$. This multiplication-by-$n$ map defines naturally a
  self-correspondence $\Gamma_n \in \CH_{2d}(F\times F)$ which induces
  a similar decomposition on $\CH^{2d}(F)$, \emph{i.e.}, we have
  $\CH^{2d}(F) = \CH^{2d}(F)_0 \oplus \CH^{2d}(F)_{2} \oplus \ldots
  \oplus \CH^{2d}(F)_{2d}$ where $\CH^{2d}(F)_{2i} =\{\sigma \in
  \CH^{2d}(F)  : \Gamma_n^*\sigma = n^{2d-i}\sigma \}$.

  Thus, what we see here, is that the rational map $\varphi : F
  \dashrightarrow F$ induces a map $\varphi^*  : \CH^4(F)
  \rightarrow\CH^4(F)$ which splits $\CH^4(F)$ in the same way a
  relative multiplication-by-$(-2)$ map would on a Lagrangian
  fibration. It would be interesting to show that when $F$ admits a
  Lagrangian fibration the two splittings are identical.
\end{rmk}

Let us end this section by the following complement to Theorem
\ref{thm eigenspace decomposition}, which deals with the action of
$\varphi^*$ on $\CH^2(F)$ when $X$ may contain some planes.

\begin{lem} \label{lem varphi CH24} Let $F$ be the variety of lines on
  a cubic fourfold $X$. If $\sigma \in \CH^2(F)_0$, then
  $$\varphi^*\sigma = 4\sigma +P(g,c,D_i),$$ where $P(g,c,D_i)$, $D_i
  \in \CH^1(F)$, is a weighted homogeneous polynomial of degree $2$.
  In particular, if $\sigma \in \CH^2(F)_{0,\hom}$, then
  $\varphi^*\sigma=4\sigma$.
\end{lem}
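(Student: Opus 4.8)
The plan is to feed the identity of Proposition \ref{prop key identity of varphi} into the pullback operation $\sigma \mapsto \varphi^*\sigma = p_{1*}(\Gamma_\varphi\cdot p_2^*\sigma)$ and read off, term by term, that everything besides $4\sigma$ lands in the subalgebra of $\CH^*(F)$ generated by $g$, $c$ and divisors. Recall that in $\CH^4(F\times F)$ one has $\Gamma_\varphi + I_1 = 4\Delta_F + (2g_1^2 + 3g_1g_2 + g_2^2)\cdot I + \Gamma_2'(g_1,g_2,c_1,c_2)$, with $\Gamma_2'$ weighted homogeneous of degree $4$. Applying $(-)^*$ to $\sigma$, using that $I$ is symmetric (so $I^*=I_*$) and the identity $(p_1^*\alpha\cdot p_2^*\beta\cdot\Gamma)^*\sigma = \alpha\cdot\Gamma^*(\beta\cdot\sigma)$ recalled in the proof of Proposition \ref{prop varphi action on homologically trivial cycles}, I would obtain
\[
\varphi^*\sigma = 4\sigma - I_1^*\sigma + 2g^2\cdot I_*\sigma + 3g\cdot I_*(g\cdot\sigma) + I_*(g^2\cdot\sigma) + (\Gamma_2')^*\sigma .
\]

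The bulk of the work is to check that the four explicit terms are degree-$2$ polynomials in $g$, $c$ and divisors, and here the hypothesis $\sigma\in\CH^2(F)_0$ enters. For dimension reasons $I_*\sigma\in\CH^0(F)$ and $I_*(g\cdot\sigma)\in\CH^1(F)$, so $2g^2\cdot I_*\sigma$ is a multiple of $g^2$ and $3g\cdot I_*(g\cdot\sigma)$ is $g$ times a divisor. The term $(\Gamma_2')^*\sigma$ expands, via the same product identity, into a sum of contributions $\gamma_1\cdot\deg(\gamma_2\cdot\sigma)$ with $\gamma_1,\gamma_2$ monomials in $g,c$; only those with $\deg\gamma_2=2$ survive, giving a polynomial in $g,c$. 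The one genuinely Fourier-theoretic input is that $g^2\cdot\sigma\in\langle\mathfrak{o}_F\rangle$ for $\sigma\in\CH^2(F)_0$: writing $g^2 = \tfrac{6}{25}l + \tfrac{8}{5}c$ from \eqref{eq l cubic fourfold}, one has $c\cdot\sigma\in\langle\mathfrak{o}_F\rangle$ by Lemma \ref{lem special zero cycle}, and $l\cdot\sigma\in\langle\mathfrak{o}_F\rangle$ because $\CH^2(F)_0 = \langle l\rangle\oplus W_1^2$ with $W_1^2=\ker\{l\cdot\}$ by Proposition \ref{prop l CH2 4} (so $l\cdot\sigma$ is a multiple of $l^2$). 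Hence $I_*(g^2\cdot\sigma)$ is a multiple of $I_*\mathfrak{o}_F = \tfrac13(g^2-c)$ (Lemma \ref{lem I [o]}), again a polynomial in $g,c$.

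The main obstacle is the term $I_1^*\sigma$. Since $I_1 = \sum_j \Pi_j^*\times\Pi_j^*$ with each $\Pi_j^*\cong\PP^2$, one computes $I_1^*\sigma = \sum_j \deg([\Pi_j^*]\cdot\sigma)\,[\Pi_j^*]$, so the question reduces to whether the dual-plane classes $[\Pi_j^*]$ lie in the subalgebra of $\CH^2(F)$ generated by divisors and $c$. When $X$ contains no plane this is vacuous, as $I_1=0$. In general, cohomologically this holds: every Hodge class in $\HH^4(F,\Q) = \Sym^2\HH^2(F,\Q)$ is a degree-$2$ polynomial in $\mathrm{NS}(F)_\Q$ and $[c]$, since the transcendental line $\langle\omega^2\rangle$ carries no algebraic class and the remaining $(2,2)$-classes come from $\Sym^2$ of the algebraic part of $\HH^2(F,\Q)$ together with the class $\mathfrak{b}=\iota_\Delta^*\mathfrak{B}=\tfrac56[c_2(F)]$, which lies in $\langle g^2,c\rangle$. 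The delicate point, and where I expect the real difficulty, is to upgrade this cohomological identity to an identity in $\CH^2(F)$ itself: Theorem \ref{thm voisin} does not apply directly, because $\Pi_j^*$ is not a priori in the divisor-and-Chern-class subalgebra, so $[\Pi_j^*]=[Q_j]$ only gives $\Pi_j^*-Q_j\in\CH^2(F)_{\hom}$. I would therefore settle it by a direct geometric analysis of $\Pi_j^*\subset F$ for cubics containing a plane, exhibiting $[\Pi_j^*]$ as an explicit polynomial in the Néron--Severi classes that the plane produces on $F$.

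Finally, the ``in particular'' clause is cleanest to prove independently of the $I_1$-discussion. For $\sigma\in\CH^2(F)_{0,\hom}$, Proposition \ref{prop varphi action on homologically trivial cycles}(i) gives $\varphi^*\sigma = 4\sigma + I_*(g^2\cdot\sigma)$ (here $I_1$ acts as zero since $\CH^*(\Pi_j^*)_{\hom}=0$). By the computation above $g^2\cdot\sigma\in\langle\mathfrak{o}_F\rangle$; but $g^2\cdot\sigma$ is homologically trivial whereas $\mathfrak{o}_F$ is not, so $g^2\cdot\sigma=0$ and hence $\varphi^*\sigma=4\sigma$. Alternatively, once the general statement is established, the clause is immediate: $\varphi^*\sigma-4\sigma=P(g,c,D_i)$ would then be a homologically trivial polynomial in divisors and Chern classes, hence zero by Theorem \ref{thm voisin}.
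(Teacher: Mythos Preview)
Your proposal is correct and follows essentially the same line as the paper's proof: expand $\varphi^*\sigma$ via the identity of Proposition \ref{prop key identity of varphi}, observe that all terms except $4\sigma$ and $I_*(g^2\cdot\sigma)$ are manifestly polynomials in $g,c,D_i$ (once the $\Pi_j^\ast$ are dealt with), then use $\sigma\in\CH^2(F)_0$ together with Proposition \ref{prop l CH2 4} and Lemma \ref{lem special zero cycle} to see that $g^2\cdot\sigma$ is a multiple of $\mathfrak{o}_F$, whence $I_*(g^2\cdot\sigma)$ is a multiple of $g^2-c$ by Lemma \ref{lem I [o]}. The ``in particular'' clause is handled in the paper by your second alternative (Voisin's theorem applied to the homologically trivial polynomial $P$), though your direct argument via $g^2\cdot\sigma=0$ is equally valid.

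The one step you flag as unfinished --- showing that the dual-plane classes $[\Pi_j^\ast]$ lie in the subalgebra generated by $g$, $c$, and divisors at the Chow level --- is exactly the content of the paper's Lemma \ref{lem dual plane}, and the paper's argument is precisely the ``direct geometric analysis'' you propose. Concretely: if $D_\Pi$ is the divisor of lines meeting the plane $\Pi$, a normal-bundle computation for $q^{-1}\Pi\subset P$ gives $(D_\Pi)^2 = a\,[\Pi^\ast] + g\cdot D_\Pi - 2S_l$ in $\CH^2(F)$ for some integer $a$, where $l\subset\Pi$ is a line; since $[l]=\mathfrak{o}_F$ one has $S_l=I_*\mathfrak{o}_F=\tfrac13(g^2-c)$, and $a\neq 0$ because $c$ is not a polynomial in divisors. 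Your Hodge-theoretic paragraph is a detour (and the claim that every $(2,2)$-class is a polynomial in $\mathrm{NS}(F)$ and $[c]$ is not obviously true for non-generic $X$, since $\Sym^2$ of the transcendental lattice can contain Hodge classes), but you correctly recognize it as insufficient and land on the right approach.
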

\begin{proof} Let $\sigma$ be a $2$-cycle on $F$. By Proposition
  \ref{prop key identity of varphi}, we have $$\varphi^*\sigma =
  4\sigma + I_*(g^2\cdot \sigma) + Q(D_i,g,c,\Pi^\ast_j),$$ where
  $Q(D_i,g,c,\Pi^\ast_j)$ is a weighted homogeneous polynomial of
  degree $2$. By Lemma \ref{lem dual plane} below, $\Pi^\ast_j$ is a
  polynomial in $g$, $c$ and $D_{\Pi_j}$, where $D_{\Pi_j}$ is the
  divisor of lines meeting $\Pi_j$. Therefore $Q(D_i,g,c,\Pi^\ast_j)$
  is actually a weighted homogeneous polynomial of degree $2$ in the
  variables $D_i, g, c$.  Now assume that $\sigma \in \CH^2(F)_0$. We
  know from Proposition \ref{prop l CH2 4} that $l \cdot \sigma$ is a
  multiple of $\mathfrak{o}_F$. Since $g^2$ is a linear combination of
  $l$ and $c$ by \eqref{eq l cubic fourfold} and since $c \cdot \tau$
  is a multiple of $\mathfrak{o}_F$ by Lemma \ref{lem special zero
    cycle} for all $\tau \in \CH^2(F)$, we see that $g^2\cdot \sigma$
  is a multiple of $\mathfrak{o}_F$. Thus, by Lemma \ref{lem I [o]},
  $I_*(g^2\cdot \sigma)$ is a linear combination of $g^2$ and $c$.
  All in all, this yields the first part of
  the lemma.

  Now, if $\sigma$ is homologically trivial, then $P(g,c,D_i)$ is
  homologically trivial. By \cite{voisin2}, it follows that
  $P(g,c,D_i) = 0$.
\end{proof}

\begin{lem}\label{lem dual plane}
  Let $\Pi\subset X$ be a plane and $\Pi^\ast\subset F$ the dual
  plane. Then the class of $\Pi^\ast$ in $\CH^2(F)$ is a linear
  combination of $g^2$, $c$, $(D_{\Pi})^2$ and $g\cdot D_{\Pi}$, where
  $D_{\Pi}$ is the divisor of all lines meeting $\Pi$.
\end{lem}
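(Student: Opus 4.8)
The plan is to first identify the two cycles geometrically, then to establish the relation by reducing it, as far as possible, to a cohomological statement that can be lifted by Voisin's Theorem \ref{thm voisin}. The first step is to record that the divisor $D_\Pi$ of lines meeting $\Pi$ is nothing but $\Phi([\Pi]) = p_*q^*[\Pi]$: indeed $q^{-1}(\Pi)\subset P$ maps birationally onto $D_\Pi$ under $p$, the fibres being positive-dimensional precisely over the surface $\Pi^\ast$ of lines contained in $\Pi$ (for such a line the whole $\PP^1$ lies over it). Thus $\Pi^\ast$ is the excess locus of the incidence defining $D_\Pi$, which is the geometric source of the relation we are after. Writing $[\Pi] = \tfrac13 h^2 + [\Pi]_{\mathrm{prim}}$ in $\HH^4(X,\Q)$ (with $\tfrac13 = \int_X h^2\cup[\Pi]/\int_X h^4$), the Beauville--Donagi isomorphism \cite{bd} gives $[D_\Pi] = \tfrac13 g + \widehat{[\Pi]_{\mathrm{prim}}}$ with $\widehat{[\Pi]_{\mathrm{prim}}}\in \HH^2(F,\Q)_{\mathrm{prim}}$.

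Next, working in cohomology, I would use that $\HH^4(F,\Q) = \Sym^2\HH^2(F,\Q)$ together with the orthogonal splitting $\HH^2(F,\Q) = \langle g\rangle \oplus \HH^2(F,\Q)_{\mathrm{prim}}$ to show that $[\Pi^\ast]$ lies in the subspace spanned by $[g]^2$, $[c]$, $[g][D_\Pi]$ and $[D_\Pi]^2$. Concretely, one computes the pairings of $[\Pi^\ast]$, via the Beauville--Bogomolov form, against $\Sym^2\HH^2(F,\Q)$ and checks that the only nonzero components lie along $g$, $\widehat{[\Pi]_{\mathrm{prim}}}$ and $c$; since $[g]^2,[g][D_\Pi],[D_\Pi]^2$ already span $\Sym^2\langle g,[D_\Pi]\rangle$ and $c$ supplies the remaining direction (recall $c_2(F)$ is itself a combination of $g^2$ and $c$ by \eqref{eq l cubic fourfold}), this yields rational numbers $a,b,d,e$ with $[\Pi^\ast] = a[g]^2 + b[c] + d[g][D_\Pi] + e[D_\Pi]^2$ in $\HH^4(F,\Q)$.

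The main obstacle is to promote this cohomological identity to one in $\CH^2(F)$: Voisin's Theorem \ref{thm voisin} and Lemma \ref{lem cohomology to chow} only lift relations among divisors and Chern classes, whereas $\Pi^\ast$ is not a priori of this form. To get around this I would compute $D_\Pi^2$ (and $g\cdot D_\Pi$) directly in $\CH^2(F)$ through the birational model $p\colon q^{-1}(\Pi)\to D_\Pi$, by an excess/residual intersection along $\Pi^\ast$ in the same spirit as the normal-bundle computations of Proposition \ref{lem I dot diagonal} and Proposition \ref{prop class of Gamma_h}: the component supported on $\Pi^\ast$ appears with an explicit nonzero multiplicity $m$ read off from the relevant normal bundle, while the transverse part is a polynomial in $g$, $c$ and $D_\Pi$. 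Solving for $[\Pi^\ast]$ then gives the asserted relation at the level of Chow groups, and any homologically trivial discrepancy between the two polynomial expressions is killed by Theorem \ref{thm voisin}.

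I expect the technical heart to be pinning down the excess multiplicity $m$ and verifying that the residual cycle is genuinely a polynomial in divisor and Chern classes rather than some further geometric cycle; the presence of a contracted $\PP^1$-family over $\Pi^\ast$ (so that $D_\Pi$ is singular there) is exactly what forces the $\Pi^\ast$-term to appear and must be controlled carefully. For the application in Lemma \ref{lem varphi CH24} only the membership of $[\Pi^\ast]$ in the subalgebra generated by $g$, $c$ and $D_\Pi$ is needed, not the precise coefficients, so once the excess computation produces a nonzero $m$ the conclusion follows without having to evaluate $a,b,d,e$ exactly.
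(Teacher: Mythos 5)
Your core computation is the one the paper itself uses: realize $D_\Pi$ as the birational image under $p$ of $q^{-1}(\Pi)\subset P$, compute $\calO_F(D_\Pi)|_{D_\Pi}$ on the locus where this map is an isomorphism (there one finds $p^*g-2q^*h$, using $\det\mathscr{N}_{\Pi/X}=\calO_\Pi$ and $c_1(\mathscr{T}_{P/F})=2q^*h-p^*g$), and push forward; the localization sequence then yields
$$(D_\Pi)^2 \;=\; a\,\Pi^\ast \;+\; g\cdot D_\Pi \;-\;2S_l, \qquad S_l=I_*\mathfrak{o}_F=\tfrac13(g^2-c),$$
with $a\in\Z$ a priori undetermined, and one solves for $\Pi^\ast$ provided $a\neq 0$. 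So the ``residual part is a polynomial in $g,c,D_\Pi$'' claim is fine and is exactly what comes out. Your cohomological detour in the second paragraph is harmless but, as you yourself note, cannot give the Chow-level statement and is not needed.

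The genuine gap is the nonvanishing of the coefficient of $\Pi^\ast$. You assert that the excess multiplicity $m$ will be ``read off from the relevant normal bundle'' and come out nonzero, but you never compute it, and a direct computation is delicate: $q^{-1}(\Pi)$ is reducible (it contains the full $3$-dimensional component $p^{-1}(\Pi^\ast)$, the $\PP^1$-bundle over the contracted plane), $D_\Pi$ is singular along $\Pi^\ast$, and so this is not a standard residual intersection along a regularly embedded excess locus. Without pinning down $m\neq 0$ the lemma is not proved, since if $m=0$ the identity says nothing about $\Pi^\ast$. The paper sidesteps the computation entirely with an indirect argument you are missing: if $a=0$, the displayed identity would express $c$ as a linear combination of $g^2$, $g\cdot D_\Pi$ and $(D_\Pi)^2$, i.e., as a polynomial in divisor classes in $\CH^2(F)$, which is known not to be the case; hence $a\neq 0$. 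You should either supply this argument or actually carry out the multiplicity computation (e.g., by evaluating both sides against $g^2$, which requires knowing $\deg(g^2\cdot\Pi^\ast)$ and the intersection numbers of $D_\Pi$); as written, the crucial step is only an expectation.
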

\begin{proof}
  This is implicitly contained in \cite[p.646]{voisin2} and we repeat
  the proof for completeness. Let $\widetilde{D}=q^{-1}\Pi$ then
  $\widetilde{D}$ is isomorphic to $D=D_{\Pi}$ away from $\Pi^\ast$.
  Thus under this identification, we get
$$
D|_D = \det (q^\ast \mathscr{N}_{\Pi/X}) -
\mathscr{T}_{P/F}|_{\widetilde{D}} = 0 - (2q^*h - p^*g) =p^*g-2q^*h,
$$
away from $\Pi^\ast$. By pushing forward to $F$ and noting that the
class $h|_{\Pi}$ is represented by a line on $\Pi$, we get
$$
(D_{\Pi})^2 = a\Pi^\ast + g\cdot D_{\Pi} -2 S_l
$$
for some integer $a\in\Z$, where $l\subset\Pi$ is a line. Note that
$[l]=\mathfrak{o}_F$ (see Lemma \ref{lem special zero cycle}) and
$S_l=I_*[l] =I_*\mathfrak{o}_F =\frac{1}{3}(g^2-c)$ (see Lemma
\ref{lem I [o]}). To conclude, we still have to show that $a\neq 0$.
But this is clear since if $a=0$ then $c$ would be a linear
combination of intersections of divisors and we know this is not the
case. Hence $a\neq 0$ and the Lemma follows.
\end{proof}

\subsection{Compatibility of the action of $\varphi$ with the Fourier
  transform} \label{sec varphi fourier compatible}

\begin{prop} \label{prop varphi fourier compatible} The rational map
  $\varphi  : F \dashrightarrow F$ preserves the Fourier decomposition,
  \emph{i.e.}, $\varphi^*$ maps $\CH^i(F)_s$ into $\CH^i(F)_s$ for all
  $i$ and all $s$.
\end{prop}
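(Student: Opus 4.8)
The plan is to read the statement off directly from the eigenspace decompositions for the action of $\varphi^*$ recorded in Theorem \ref{thm eigenspace decomposition}. The guiding observation is that in each codimension the Fourier pieces $\CH^i(F)_s$ are precisely direct sums of $\varphi^*$-eigenspaces $V^i_\lambda$. Since each $V^i_\lambda$ is by its very definition stable under $\varphi^*$, any direct sum of such eigenspaces is $\varphi^*$-stable, and this yields the desired inclusion $\varphi^*\CH^i(F)_s \subseteq \CH^i(F)_s$.

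First I would dispose of the easy codimensions. For $i=0$ we have $\CH^0(F)=\CH^0(F)_0$ and for $i=1$ we have $\CH^1(F)=\CH^1(F)_0$, so there is nothing to check. For $i=4$, Theorem \ref{thm eigenspace decomposition}(i) identifies $\CH^4(F)_0=V^4_{16}$, $\CH^4(F)_2=V^4_{-8}$ and $\CH^4(F)_4=V^4_4$, each a single eigenspace. For $i=3$, Theorem \ref{thm eigenspace decomposition}(ii) gives $\CH^3(F)_0=V^3_{28}\oplus V^3_{-8}$ and $\CH^3(F)_2=V^3_4\oplus V^3_{-14}$, each a sum of eigenspaces. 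The relevant parts (i) and (ii) of Theorem \ref{thm eigenspace decomposition} hold for an arbitrary smooth cubic fourfold, so the conclusion is immediate in these cases.

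The only case requiring care is $i=2$, because the eigenspace decomposition of Theorem \ref{thm eigenspace decomposition}(iii) is established only when $X$ contains no plane. When $X$ contains no plane the result is again immediate, since $\CH^2(F)_2=V^2_{-2}$ and $\CH^2(F)_0=V^2_{31}\oplus V^2_{-14}\oplus V^2_4$ are sums of eigenspaces. To treat an arbitrary $X$, I would argue on the two Fourier pieces separately. On $\CH^2(F)_2$, recall from Theorems \ref{prop L2} and \ref{prop main}, together with the proof of Proposition \ref{prop cubic CH0426}, that $\CH^2(F)_2 = L_*\CH^4(F) = \mathcal{A}_{\hom}$, a group spanned by cycles $\sigma = S_{l_1}-S_{l_2}=I_*x$ with $x=[l_1]-[l_2]$ homologically trivial. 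The computation in the proof of Proposition \ref{prop eigenA}, which combines Proposition \ref{prop varphi action on homologically trivial cycles}, Lemma \ref{lem intersection of g^2 and S_l} and the vanishing $I_*\mathcal{R}_{\hom}=0$ of Theorem \ref{prop F3 triangle}, gives $\varphi^*\sigma=-2\sigma$; none of these inputs requires $X$ to be plane-free, so $\varphi^*\CH^2(F)_2\subseteq \CH^2(F)_2$.

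On $\CH^2(F)_0$ I would invoke Lemma \ref{lem varphi CH24}: for $\sigma\in\CH^2(F)_0$ one has $\varphi^*\sigma=4\sigma+P(g,c,D_i)$ with $P$ a weighted homogeneous polynomial of degree $2$ in divisors and $c$. Rewriting $c$ as a linear combination of $g^2$ and $l$ via \eqref{eq l cubic fourfold}, the cycle $P(g,c,D_i)$ becomes a degree-$2$ polynomial in $l$ and divisors, which lies in $\CH^2(F)_0$ by Theorem \ref{thm reformulation voisin}. Since $4\sigma\in\CH^2(F)_0$ as well, it follows that $\varphi^*\sigma\in\CH^2(F)_0$, which completes the argument. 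The one point deserving attention is exactly this last reduction when planes are present: everything hinges on the error term $P(g,c,D_i)$ being a polynomial in divisors and $l$, and hence remaining in the degree-zero part by Voisin's theorem in the form of Theorem \ref{thm reformulation voisin}.
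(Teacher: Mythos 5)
Your proof is correct and follows essentially the same route as the paper: reduce everything except $\CH^2(F)_0$ to the eigenspace decompositions of Theorem \ref{thm eigenspace decomposition} (with $\CH^2(F)_2=V^2_{-2}$ coming from Proposition \ref{prop eigenA}, valid without any plane-free hypothesis), and then handle $\CH^2(F)_0$ via Lemma \ref{lem varphi CH24} combined with Theorem \ref{thm reformulation voisin} after expressing $c$ as a linear combination of $l$ and $g^2$. Your extra care in separating the plane-free and general cases for $i=2$ only makes explicit what the paper's shorter argument leaves implicit.
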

\begin{proof}
  By Theorem \ref{thm eigenspace decomposition}, the only thing to
  check is that $\varphi^*\sigma \in \CH^2(F)_0$ for all $\sigma \in
  \CH^2(F)_0$. But this is a consequence of Lemma \ref{lem varphi
    CH24}, together with Theorem \ref{thm reformulation voisin} which
  shows that any weighted homogeneous polynomial $P(g,c,D_i)$ of
  degree $2$ belongs to $\CH^2(F)_0$ (recall that $c$ is a linear
  combination of $l$ and $g^2$).
\end{proof}

Actually we have the following more precise result which implies that
$\varphi^*$ commutes with $\FF\circ \FF$ on $\CH^i(F)_s$ as long as
$(i,s) \neq (2,0)$. On $\CH^2(F)_0$, it is not true that $
\varphi_*\mathcal{F} =\mathcal{F}\varphi^*$ because both $\varphi_* =
\varphi^*$ and $\FF$ are diagonalizable but $\langle l \rangle$ is
fixed by $\FF$ and $\varphi^*l=\varphi_*l \notin \langle l \rangle$.

\begin{prop} Let $F$ be the variety of lines on a cubic fourfold $X$.
  Then
  \begin{align*}
    \varphi_*\mathcal{F} &=\mathcal{F}\varphi^*  : \CH^i(F)_s
    \rightarrow \CH^{4-i+s}(F)_s \quad \mbox{for all }
    (i,s) \neq (2,0)  ;\\
    \varphi_* &= \varphi^*  : \CH^2(F)_0 \rightarrow \CH^2(F)_0.
  \end{align*}
\end{prop}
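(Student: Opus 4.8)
The plan is to exploit that $\mathcal{F}$ intertwines $\varphi^*$ on a Fourier piece with $\varphi_*$ on its ``dual'' piece, and to verify this intertwining eigenspace by eigenspace. First I would record the two structural facts that make the reduction possible: by Theorem \ref{thm main splitting}, $\mathcal{F}$ maps $\CH^i(F)_s$ into $\CH^{4-i+s}(F)_s$, and by Theorem \ref{prop main} the map $\mathcal{F}\circ\mathcal{F}$ acts as a \emph{nonzero} scalar on every piece $\CH^i(F)_s$ with $(i,s)\neq(2,0)$; hence on such pieces $\mathcal{F}$ restricts to an isomorphism onto $\CH^{4-i+s}(F)_s$. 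Since $\varphi^*$ preserves the Fourier decomposition (Proposition \ref{prop varphi fourier compatible}), the identity $\varphi_*\mathcal{F}=\mathcal{F}\varphi^*$ on $\CH^i(F)_s$ amounts, after conjugating by the isomorphism $\mathcal{F}$, to showing that $\mathcal{F}$ sends each $\varphi^*$-eigenspace of eigenvalue $\mu$ in $\CH^i(F)_s$ into a $\varphi_*$-eigenspace of the \emph{same} eigenvalue $\mu$ in $\CH^{4-i+s}(F)_s$. Thus the proof reduces to computing the $\varphi_*$-action on each target piece and matching eigenvalues across the pairs $\{\CH^i(F)_s,\CH^{4-i+s}(F)_s\}$, using the eigenspace decompositions of Theorem \ref{thm eigenspace decomposition}.

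Next I would dispose of the homologically trivial and low-dimensional pieces, where the relevant $\varphi_*$-eigenvalues are computable outright. For the self-paired pieces $\CH^3(F)_2=\CH^3(F)_{\hom}$ and $\CH^4(F)_4$ I would observe that $\mathcal{F}$ restricts to the identity there (only the $\tfrac12(L^2)_*$-term of $e^L$ survives and $(L^2)_*$ acts by $2$ on $\Lambda_2^3$ and $\Lambda_2^4$), while Proposition \ref{prop varphi action on homologically trivial cycles} gives $\varphi^*=\varphi_*$ on homologically trivial cycles, so the identity is immediate. For the pairs $\CH^0(F)_0\leftrightarrow\CH^4(F)_0$ and $\CH^2(F)_2\leftrightarrow\CH^4(F)_2$ I would compute the needed eigenvalues: $\varphi_*=1$ on $\CH^4(F)_0$ (as $\varphi_*\mathfrak{o}_F=\mathfrak{o}_F$) and $\varphi_*=16$ on $\CH^0(F)_0$ (the degree of $\varphi$); $\varphi_*=-2$ on $\CH^4(F)_2$ (from $\varphi_*\varphi^*=16$ on zero-cycles with $\varphi^*=-8$) and $\varphi_*=-8$ on $\CH^2(F)_2$ (from Proposition \ref{prop varphi action on homologically trivial cycles} together with the relation $I_*(g^2\sigma)=-6\sigma$ on $\mathcal{A}_{\hom}=\CH^2(F)_2$ established in the proof of Proposition \ref{prop eigenA}). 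In each case the source $\varphi^*$-eigenvalue equals the target $\varphi_*$-eigenvalue, which closes these pairs.

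The remaining pair $\CH^1(F)_0\leftrightarrow\CH^3(F)_0$ and the second identity on $\CH^2(F)_0$ I would handle by reduction to cohomology. These pieces are built from divisors, $g$ and $c$, so they inject into $\HH^*(F,\Q)$ by Voisin's Theorem \ref{thm voisin}; consequently $\varphi^*$, $\varphi_*$ and $\mathcal{F}$ are detected by their cohomological action, and the required identities become cohomological. There I would use Poincar\'e adjointness $\int_F\varphi^*a\cup b=\int_F a\cup\varphi_* b$, the self-transposition of $\mathcal{F}$ (the cycle $L$ of \eqref{eq L Cubic} is symmetric), and Schur's lemma applied to the simple Hodge summand $g\cdot\HH^2(F,\Q)_{\mathrm{prim}}$ of \eqref{eq H4 dec simple}. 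Feeding $a=D$, $b=g^2D'$ (and then $a=g^2D$, $b=D'$) for primitive divisors into the adjunction forces $\varphi_*=-8$ on $\CH^1(F)_{\mathrm{prim}}$ and $\varphi_*=-2$ on $g^2\cdot\CH^1(F)_{\mathrm{prim}}$, which, together with $\varphi_*g=28g$ and $\varphi_*g^3=7g^3$ from Proposition \ref{prop cubic g c}, gives exactly the eigenvalue matching for the $\CH^1(F)_0\leftrightarrow\CH^3(F)_0$ pair. For $\CH^2(F)_0$, the same adjunction gives $\varphi_*=\varphi^*$ on $g\cdot\HH^2(F,\Q)_{\mathrm{prim}}$, while $\varphi_*=\varphi^*$ on $\langle g^2,c\rangle$ is Proposition \ref{prop cubic g c}; on the homologically trivial part $\CH^2(F)_{0,\hom}$ both operators equal $4$ by Lemma \ref{lem varphi CH24} and because $I_*(g^2\sigma)=0$ there (as $l\cdot\sigma=0$ and $c\cdot\sigma=0$ for $\sigma\in\CH^2(F)_{0,\hom}$, using \eqref{eq l cubic fourfold}).

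I expect the main obstacle to be precisely this determination of $\varphi_*$ on the \emph{algebraic but not homologically trivial} classes attached to primitive divisors, namely $g\cdot D$ and $g^2\cdot D$: outside $\CH^4(F)$ the relation $\varphi_*\varphi^*=16$ is unavailable, and Proposition \ref{prop cubic g c} only records the action on $g^2$, $c$ and $g^3$. The device that overcomes this is the cohomological reduction above --- Voisin's injectivity together with Poincar\'e adjointness and Schur's lemma --- which pins down these eigenvalues without any further analysis of the graph $\Gamma_\varphi$. Once they are in hand, everything else is the combinatorial matching of the eigenvalues already tabulated in Theorem \ref{thm eigenspace decomposition}, and the excluded case $(i,s)=(2,0)$ is exactly the one where $\mathcal{F}\circ\mathcal{F}$ fails to be scalar (it mixes $\Lambda^2_{25}=\langle l\rangle$ and $\Lambda^2_2$), so that the conjugation step breaks down, consistent with the remark preceding the statement.
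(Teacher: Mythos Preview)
Your overall strategy matches the paper's: reduce $\varphi_*\mathcal F=\mathcal F\varphi^*$ to an eigenvalue comparison using that $\mathcal F$ is an isomorphism on each piece $\CH^i(F)_s$ with $(i,s)\neq(2,0)$, then verify the $\varphi_*$-eigenvalues on the target pieces. Your treatment of $\CH^0$, $\CH^4$, $\CH^2(F)_2$, and $\CH^3(F)_2$ is essentially the paper's. For the pair $\CH^1(F)_0\leftrightarrow\CH^3(F)_0$ your Poincar\'e-adjointness device is a mild variant of the paper's direct computation (the paper uses $\widetilde\varphi^*\omega=-2\tau^*\omega$ and the projection formula to get $\varphi_*\omega=-8\omega$ and $\varphi_*([g^2]\cup\omega)=-2[g^2]\cup\omega$ outright); your approach works too, since both $\CH^1(F)_0$ and $\CH^3(F)_0=\langle g^3\rangle\oplus g^2\cdot\CH^1(F)_{\mathrm{prim}}$ genuinely lie in the divisor/$c$-subring and inject into cohomology. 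One small imprecision: on $\CH^4(F)_{\hom}$ the formulas in Proposition \ref{prop varphi action on homologically trivial cycles} give $\varphi^*\neq\varphi_*$ in general; they agree on $\CH^4(F)_4$ only because $I_*$ vanishes there.

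There is a genuine gap in your handling of $\CH^2(F)_0$. Your claim that ``these pieces are built from divisors, $g$ and $c$, so they inject into $\HH^*(F,\Q)$'' is false for $\CH^2(F)_0$ on a non--very-general cubic: an algebraic $2$-cycle $\sigma$ whose cohomology class lies in the summand $V\subset\HH^4(F,\Q)$ of \eqref{eq H4 dec simple} is \emph{not} of the form (polynomial in $g,c,D_i$) $+$ (homologically trivial), so your three-piece decomposition $\langle g^2,c\rangle\oplus g\cdot\CH^1(F)_{\mathrm{prim}}\oplus\CH^2(F)_{0,\hom}$ need not exhaust $\CH^2(F)_0$. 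Knowing $\varphi^*=\varphi_*$ on each of your three pieces therefore does not give the identity on all of $\CH^2(F)_0$. The paper circumvents this by applying Lemma \ref{lem varphi CH24} and its $\varphi_*$-analogue to \emph{any} $\sigma\in\CH^2(F)_0$, obtaining $\varphi^*\sigma=4\sigma+P(g,c,D_i)$ and $\varphi_*\sigma=4\sigma+Q(g,c,D_i)$; thus the \emph{difference} $\varphi^*\sigma-\varphi_*\sigma=(P-Q)(g,c,D_i)$ always lies in the divisor/$c$-subring and injects into cohomology by Theorem \ref{thm voisin}. Only then does it suffice to check $\varphi^*=\varphi_*$ on all of $\HH^4(F,\Q)$ (including the $V$-summand, where both equal $4$). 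Your argument can be repaired by inserting exactly this step.
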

\begin{proof} By Theorem \ref{thm main splitting}, $\FF$ induces
  isomorphisms $\CH^i(F)_s \rightarrow \CH^{4-i+s}(F)_s$. By
  Theorem \ref{thm eigenspace decomposition}, the action of
  $\varphi^*$ is multiplication by $(-2)^{4-s}$ on
  $\CH^4(F)_{2s}$ and is multiplication by $-2$ on $\CH^2(F)_2$.  By
  Proposition \ref{prop proj formula}, we know that
  $\varphi_*\varphi^*$ is multiplication by $16$ on $\CH^4(F)$ and on
  $\CH^2(F)_{\hom}$. Therefore the proposition is settled for $i=0$ or
  $4$ and $(i,s) =(2,2)$.  For $i=3$, we have $\CH^3(F)_2 =
  \CH^3(F)_{\hom}$ and $\FF$ acts on $\CH^3(F)_2$ as the identity. We
  can then conclude in the case $(i,s) = (3,2)$ from Proposition
  \ref{prop varphi action on homologically trivial cycles} where it is
  shown that $\varphi_*$ and $\varphi^*$ have the same action on
  $\CH^3(F)_{\hom}.$ Let us now consider $(i,s)=(3,0)$ or $(1,0)$. Let
  $D$ be a divisor on $F$. One can check from the proof of Theorem
  \ref{prop L2} that $\FF(D) = \frac{1}{2}l\cdot D$ and that
  $\FF(l\cdot D)=25D$. If $D \in \CH^1(F)_{\mathrm{prim}}$, then by
  \cite[(3.36)]{voisin2} $c \cdot D = 0$ so that $l\cdot D$ is
  proportional to $g^2\cdot D$~; if $D \in \langle g \rangle$, then by
  \cite[Lemma 3.5]{voisin2} $c \cdot D$ is proportional to $g^3$ so
  that $l\cdot D$ is proportional to $g^3$. In the proof of
  Proposition \ref{prop cubic hom phi}, we determined the action of
  $\varphi^*$ on $\HH^2(F,\Q)$ and on $g^2\cdot \HH^2(F,\Q)$. Since
  both $\CH^3(F)_0$ and $\CH^1(F)_0$ inject into cohomology via the
  cycle class map, we need only to determine the action of $\varphi_*$ on
  $\HH^2(F,\Q)$ and on $g^2\cdot \HH^2(F,\Q)$. On the one hand, we
  have $\varphi_*g=28g$ and $\varphi_*\omega = -8\omega$ because
  $-2\tau^*\omega=\widetilde{\varphi}^*\omega$. On the other hand, we
  have $\varphi_*g^3=7g$ and $\varphi_*([g^2] \cup \omega)=
  -\frac{1}{2} \widetilde{\varphi}_*(\tau^*[g^2]\cup
  \widetilde{\varphi}^*\omega) = -\frac{1}{2} \varphi_*[g^2] \cup
  \omega =-2[g^2] \cup \omega$. This proves that $\varphi_*\mathcal{F}
  =\mathcal{F}\varphi^*$ on $\CH^3(F)_0$ and on $\CH^1(F)_0$.

  It remains to see that $ \varphi_* = \varphi^*$ on $\CH^2(F)_0$.
  Consider $\sigma \in \CH^2(F)_0$. By Lemma \ref{lem varphi CH24}, we
  know that $\varphi^*\sigma = 4\sigma +P(g,c,D_i),$ where
  $P(g,c,D_i)$, $D_i \in \CH^1(F)$, is a weighted homogeneous
  polynomial of degree $2$.  The exact same arguments as in the proof
  of Lemma \ref{lem varphi CH24} show that likewise we have
  $\varphi_*\sigma = 4\sigma +Q(g,c,D_i),$ where $Q(g,c,D_i)$, $D_i
  \in \CH^1(F)$, is a weighted homogeneous polynomial of degree $2$.
  Thus $$\varphi^*\sigma - \varphi_*\sigma = (P-Q)(g,c,D_i).$$ By
  Theorem \ref{thm reformulation voisin}, it is then enough to check
  that $\varphi_*=\varphi^*$ on $\HH^4(F,\Q)$. We know from
  Proposition \ref{prop cubic g c} that this is the case on $\langle
  g^2, c \rangle$. From the proof of Proposition \ref{prop cubic hom
    phi}, it is enough to check that $\varphi_*([g] \cup \omega)
  =-14[g] \cup \omega$ and $\varphi_*(\omega^2) =4 \omega^2$. But then
  this follows immediately from
  $-2\tau^*\omega=\widetilde{\varphi}^*\omega$ and from
  $\varphi_*g=28g$.
  \end{proof}

\vspace{10pt}
\section{The Fourier decomposition for $F$ is multiplicative}
\label{sec mult}

In this section, we use the rational map $\varphi : F \dashrightarrow
F$ to investigate the compatibility of the Fourier decomposition with
the intersection product on $\CH^*(F)$. The concatenation of all the
results of this section provides a proof of Theorem \ref{thm main
  mult} for $F$ the variety of lines on a very general cubic fourfold.

\subsection{Intersection with homologically trivial $2$-cycles}
Because $\varphi$ is only a rational map, $\varphi^*$ does not respect
the intersection product in general. Yet we have the following
observation.

\begin{lem}\label{lem varphi and intersection product}
  If $\alpha\in\CH^2(F)_\mathrm{hom}$, then
  $\varphi^*\alpha\cdot\varphi^*\beta = \varphi^*(\alpha\cdot\beta)$
  for all $\beta\in\CH^*(F)$.
\end{lem}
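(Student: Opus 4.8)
The plan is to exploit that $\varphi$ is an honest morphism away from its indeterminacy locus, so that the only possible failure of multiplicativity of $\varphi^*$ is concentrated there, and then to use the homological triviality of $\alpha$ to kill this concentrated defect. Let $U := F\setminus(\Sigma_1\cup\Sigma_2)$ be the open locus on which $\varphi$ restricts to a morphism $\varphi_0:U\to F$, and recall that $\varphi^*\gamma|_U=\varphi_0^*\gamma$ for every $\gamma\in\CH^*(F)$. Since $\varphi_0^*$ is a genuine pullback, hence a ring homomorphism, the defect
\begin{equation*}
D:=\varphi^*(\alpha\cdot\beta)-\varphi^*\alpha\cdot\varphi^*\beta
\end{equation*}
restricts to zero on $U$; by the localization sequence it is therefore supported on the indeterminacy locus $\Sigma_1\cup\Sigma_2$, which is $2$-dimensional.

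To pin this down I would pass to a smooth model $\widetilde{F}$ of the graph of $\varphi$, with projections $\tau,\widetilde\varphi:\widetilde F\to F$, so that $\varphi^*=\tau_*\circ\widetilde\varphi^*$, with $\tau$ birational. The projection formula then yields
\begin{align*}
\varphi^*\alpha\cdot\varphi^*\beta &= \tau_*\big(\widetilde\varphi^*\beta\cdot\tau^*\varphi^*\alpha\big),\\
\varphi^*(\alpha\cdot\beta) &= \tau_*\big(\widetilde\varphi^*\beta\cdot\widetilde\varphi^*\alpha\big),
\end{align*}
whence $D=\tau_*\big(\widetilde\varphi^*\beta\cdot\eta_\alpha\big)$ with $\eta_\alpha:=\widetilde\varphi^*\alpha-\tau^*\varphi^*\alpha$. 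Thus it suffices to prove $\eta_\alpha=0$ in $\CH^2(\widetilde F)$. Two features of $\eta_\alpha$ are immediate: since $\tau$ is an isomorphism over $U$ and $\widetilde\varphi=\varphi_0\circ\tau$ there, $\eta_\alpha$ is supported on the exceptional locus $E_\tau=\tau^{-1}(\Sigma_1\cup\Sigma_2)$; and because $\alpha$ is homologically trivial, both $\widetilde\varphi^*\alpha$ and $\tau^*\varphi^*\alpha$ are homologically trivial, so $\eta_\alpha$ is a \emph{homologically trivial} codimension-$2$ class supported on $E_\tau$.

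The heart of the matter, and the step I expect to be the main obstacle, is then to show that a homologically trivial codimension-$2$ cycle supported on $E_\tau$ must vanish. The locus $E_\tau$ fibers over the surfaces constituting the indeterminacy locus: over each dual plane $\Pi^\ast_j\cong\PP^2$ and over the surface $\Sigma_2$ of lines of second type. Such a class has the shape $\sum_k (i_k)_*\mu_k$ for divisor classes $\mu_k$ on the components of $E_\tau$, and its homological triviality forces each $\mu_k$ into the homologically trivial part of the Picard group of the corresponding ruled component. The components over $\Pi^\ast_j$ are ruled over a rational surface and contribute nothing, so the problem reduces to controlling the homologically trivial divisor classes carried by the exceptional divisor over $\Sigma_2$; here one must invoke the explicit geometry of $\Sigma_2$ together with the relation $\varphi_*\varphi^*=16$ of Proposition \ref{prop proj formula} and the action formulas of Proposition \ref{prop varphi action on homologically trivial cycles}. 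Once $\eta_\alpha=0$ is established, $D=0$ follows at once from the displayed identity. As a consistency check one can verify the conclusion degree by degree: it is trivial for $\beta\in\CH^0(F)$, vacuous for $\deg\beta\geq 3$ since $\CH^{\geq 5}(F)=0$, and in the remaining cases $D$ is a homologically trivial cycle supported on a surface, where Theorem \ref{thm voisin} severely constrains the possibilities.
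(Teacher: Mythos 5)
Your framework is the right one and matches the paper's: resolve $\varphi$ by $\tau,\widetilde\varphi:\widetilde F\to F$, use the projection formula to write the defect as $D=\tau_*\big(\widetilde\varphi^*\beta\cdot\eta_\alpha\big)$ with $\eta_\alpha=\widetilde\varphi^*\alpha-\tau^*\tau_*\widetilde\varphi^*\alpha$, and reduce everything to $\eta_\alpha=0$ (this is precisely Lemma \ref{lem pushcup} feeding into Proposition \ref{prop cup}). But you never prove $\eta_\alpha=0$: you explicitly defer ``the heart of the matter'' to an unspecified analysis of the geometry of $\Sigma_2$, and the reduction you sketch for that analysis is flawed. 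From $\eta_\alpha=\sum_k(i_k)_*\mu_k$ being homologically trivial on $\widetilde F$ you cannot conclude that each $\mu_k$ is homologically trivial on its component: the maps $(i_k)_*$ need not be injective on cohomology and the contributions of different components can cancel against each other. So the step that would make the argument work is both missing and, as outlined, resting on a false intermediate claim. (Your closing ``consistency check'' via Theorem \ref{thm voisin} also does not close the gap, since $D$ is a priori only supported on a surface, not a polynomial in $g$, $c$ and divisors.)

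The paper kills $\eta_\alpha$ without any geometry of the indeterminacy locus. Since $F$ is the variety of lines on a cubic fourfold, $\HH^3(F,\Q)=0$, hence the intermediate Jacobian $J^2(F)$ vanishes and $\CH^2(F)_{\hom}=T^2(F)$, the kernel of the Abel--Jacobi map. By functoriality of the Abel--Jacobi map under correspondences, $\widetilde\varphi^*\alpha\in T^2(\widetilde F)$, and $T^2$ is a birational invariant in the precise form of Lemma \ref{lem birat invariant}: $\tau^*\tau_*$ acts as the identity on $T^2(\widetilde F)$. Therefore $\eta_\alpha=\widetilde\varphi^*\alpha-\tau^*\tau_*\widetilde\varphi^*\alpha=0$ outright, and $D=0$. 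This is the one idea your write-up is missing; with it, the support analysis over $\Sigma_1\cup\Sigma_2$ becomes unnecessary.
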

\begin{proof}
  This is Proposition \ref{prop cup} in view of the fact that
  $\CH^2(F)_\mathrm{hom} = T^2(F)$, where $T^2(F)$ is the kernel of
  the Abel-Jacobi map $\CH^2(F)_\mathrm{hom} \rightarrow J^2(F)$,
  because $J^2(F)$ is a sub-quotient of $\HH^3(F,\C) = 0$.
\end{proof}

\begin{prop} \label{prop cubic mult structure2} Let $F$ be the variety
  of lines on a cubic fourfold $X$. Then the Fourier decomposition
  satisfies
  $$
  \CH^4(F)_2 = \CH^1(F)_0^{\cdot 2} \cdot \CH^2(F)_2.
$$
Moreover, we have
\begin{center}
  $g\cdot D \cdot \CH^2(F)_2 = 0$, \quad for all $D \in
  \CH^1(F)_{\mathrm{prim}}$.
\end{center}
\end{prop}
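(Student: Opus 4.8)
The plan is to read off both equalities from the eigenspace decomposition of $\varphi^*$ in Theorem \ref{thm eigenspace decomposition}, using as the only computational engine the multiplicativity of $\varphi^*$ against a homologically trivial $2$-cycle (Lemma \ref{lem varphi and intersection product}). Throughout I write $\sigma$ for an element of $\CH^2(F)_2$; by Theorem \ref{thm eigenspace decomposition}\emph{(iii)} we have $\CH^2(F)_2 = V_{-2}^2 \subseteq \CH^2(F)_{\hom}$, and by Theorem \ref{thm eigenspace decomposition}\emph{(i)} we have $\CH^4(F)_2 = V_{-8}^4$. The relevant numerics are that the eigenvalues of $\varphi^*$ on $\CH^4(F)$ are $16, -8, 4$, while $\CH^1(F) = \CH^1(F)_0 = \langle g\rangle \oplus \CH^1(F)_{\mathrm{prim}}$ with $g \in V_7^1$ and $\CH^1(F)_{\mathrm{prim}} = V_{-2}^1$.

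I would first establish the ``moreover'', as it feeds into the main equality. Let $D \in \CH^1(F)_{\mathrm{prim}} = V_{-2}^1$, so that $g\cdot D \in V_{-14}^2$ by Theorem \ref{thm eigenspace decomposition}\emph{(iii)}. Since $\sigma$ is homologically trivial, Lemma \ref{lem varphi and intersection product} applies with $\alpha = \sigma$ and $\beta = g\cdot D$, giving $\varphi^*(g\cdot D\cdot \sigma) = \varphi^*\sigma\cdot \varphi^*(g\cdot D) = (-2)(-14)\,(g\cdot D\cdot \sigma) = 28\,(g\cdot D\cdot \sigma)$. As $28$ is not an eigenvalue of $\varphi^*$ on $\CH^4(F)$, the corresponding eigenspace is $0$, whence $g\cdot D\cdot \sigma = 0$.

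For the main equality I would treat the two inclusions separately. For ``$\subseteq$'', Proposition \ref{prop l CH2 6} gives $\CH^4(F)_2 = \langle l\rangle\cdot \CH^2(F)_2$; since $\sigma$ is homologically trivial and $c\cdot \sigma$ is a multiple of $\mathfrak{o}_F$ (Lemma \ref{lem special zero cycle}), we get $c\cdot \sigma = 0$, so by \eqref{eq l cubic fourfold} $l\cdot \sigma = \frac{25}{6}\,g^2\cdot \sigma$. Hence $\CH^4(F)_2 = g^2\cdot \CH^2(F)_2 \subseteq \CH^1(F)_0^{\cdot 2}\cdot \CH^2(F)_2$. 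For ``$\supseteq$'' I would expand an arbitrary product of two divisors as a combination of $g^2$, of terms $g\cdot D_i$ ($D_i$ primitive), and of terms $D_i\cdot D_j$ ($D_i,D_j$ primitive). The term $g^2\cdot \sigma = \frac{6}{25}\,l\cdot \sigma$ lies in $\CH^4(F)_2$ by the previous sentence, and each $g\cdot D_i\cdot \sigma$ vanishes by the ``moreover'' just proved; the whole question therefore reduces to showing $D_i\cdot D_j\cdot \sigma \in \CH^4(F)_2$ for $D_i,D_j \in \CH^1(F)_{\mathrm{prim}}$.

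This last term is the main obstacle, precisely because $\varphi^*$ is \emph{not} multiplicative against the non-homologically-trivial class $D_i\cdot D_j$. My plan is to factor the triple product through $\CH^3$. First, $D_i\cdot \sigma \in \CH^3(F)_{\hom}$, and Lemma \ref{lem varphi and intersection product} (with $\alpha = \sigma$, $\beta = D_i$) gives $\varphi^*(D_i\cdot \sigma) = \varphi^*\sigma\cdot\varphi^*D_i = 4\,(D_i\cdot \sigma)$, so $D_i\cdot \sigma \in V_4^3$. Then I would multiply by $D_j$ using the codimension-$3$ analogue of Lemma \ref{lem varphi and intersection product}: since $\HH^3(F,\Q)=\HH^5(F,\Q)=0$ one has $\CH^3(F)_{\hom} = T^3(F)$, so Proposition \ref{prop cup} applies to the homologically trivial class $D_i\cdot \sigma$ and yields $\varphi^*(D_i\cdot \sigma\cdot D_j) = \varphi^*(D_i\cdot \sigma)\cdot \varphi^*D_j = 4\cdot(-2)\,(D_i\cdot D_j\cdot \sigma) = -8\,(D_i\cdot D_j\cdot \sigma)$. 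Hence $D_i\cdot D_j\cdot \sigma \in V_{-8}^4 = \CH^4(F)_2$, completing the reverse inclusion. The single delicate point to check is that Proposition \ref{prop cup} indeed applies in codimension $3$ — equivalently that $\varphi^*$ is multiplicative against any homologically trivial $3$-cycle — which is exactly the identity $\CH^3(F)_{\hom} = T^3(F)$ guaranteed by the vanishing of $\HH^3(F,\Q)$ and $\HH^5(F,\Q)$.
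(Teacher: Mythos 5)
Your overall strategy (reading both statements off the eigenvalues of $\varphi^*$, reducing the reverse inclusion to the three types of products $g^2\cdot\sigma$, $g\cdot D\cdot\sigma$ and $D_i\cdot D_j\cdot\sigma$) is the paper's, and your treatment of $\CH^4(F)_2\subseteq\CH^1(F)_0^{\cdot 2}\cdot\CH^2(F)_2$ via $l\cdot\sigma=\frac{25}{6}g^2\cdot\sigma$ is correct. But the step you yourself flag as delicate is a genuine gap. Proposition \ref{prop cup} is proved only for $\alpha$ lying in one of the groups $\CH^d(Y)$, $T^2(Y)$, $\CH^1(Y)_{\hom}$, $\CH^0(Y)$ (or the two Griffiths groups), because its proof rests on Lemma \ref{lem birat invariant}, i.e.\ on $\pi^*\pi_*=\mathrm{id}$ holding on the corresponding group of a birational modification $\pi:\widetilde{X}\rightarrow X$. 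That list does not contain $\CH^3_{\hom}$ of a fourfold, and for good reason: blowing up $F$ along a curve $C$ contributes a summand $j_*(\xi\cdot\pi_E^*\mathrm{Pic}^0(C))$ to $\CH^3(\widetilde{X})_{\hom}$ which is annihilated by $\pi_*$, so $\pi^*\pi_*$ is not the identity there. Observing that $\CH^3(F)_{\hom}=T^3(F)$ does not help, since $T^3$ is not among the groups for which the paper (or the underlying birational-invariance statement) provides anything. So the identity $\varphi^*(D_i\cdot\sigma\cdot D_j)=\varphi^*(D_i\cdot\sigma)\cdot\varphi^*D_j$ is unjustified as written.

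The conclusion is nevertheless true, and the paper's route gets it without ever leaving codimension $2$: apply Lemma \ref{lem varphi and intersection product} with $\alpha=\sigma$ and $\beta$ equal to $g^2$, $g\cdot D$ or $D_i\cdot D_j$, and compute $\varphi^*(D_i\cdot D_j)=\tau_*\bigl(\widetilde{\varphi}^*D_i\cdot\widetilde{\varphi}^*D_j\bigr)=4\,D_i\cdot D_j+\sum_k b_k\Pi^\ast_k$ directly from Lemma \ref{lem cubic divisor phi}, using that $\widetilde{\varphi}$ is an honest morphism (so $\widetilde{\varphi}^*$ is multiplicative) and that $\tau_*E_k=0$; the correction terms then die against $\sigma$ because a homologically trivial zero-cycle supported on a plane $\Pi^\ast_k\cong\PP^2$ vanishes. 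The same remark repairs the secondary gap in your ``moreover'' step: the membership $g\cdot D\in V^2_{-14}$ is taken from Theorem \ref{thm eigenspace decomposition}\emph{(iii)}, which is only established when $X$ contains no plane, whereas the proposition is asserted for every cubic fourfold; in general one only has $\varphi^*(g\cdot D)=-14\,g\cdot D+\sum_k b_k\Pi^\ast_k$, and one again needs $\Pi^\ast_k\cdot\sigma=0$ to land on the eigenvalue $28$.
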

\begin{proof}
  Recall that $\CH^2(F)_2 = V^2_{-2}$.  By Lemma \ref{lem cubic
    divisor phi}, we have for all $D \in \CH^1(F)_{\mathrm{prim}}$
  \begin{align*}
      \varphi^*(g\cdot D) & = \tau_*\widetilde{\varphi}^*(g\cdot D) =
  \tau_*\big((7\tau^*g-3E+\sum a_{g,i}E_i)\cdot (-2\tau^*D+ \sum
  a_{D,i}E_i) \big)\\ & = -14\, g\cdot D + \sum b_{g,D,i}\Pi^\ast_i,
  \end{align*}
  where the $b_{g,D,i}$ are rational numbers and the $\Pi^\ast_i$ are the
  dual planes to the planes contained in $X$. Likewise, we have
  $\varphi^*g^2 = 4g^2 + 45 c + \sum b_{g^2,i}\Pi^\ast_i$ and $\varphi^*D^2 =
  4D^2 + \sum b_{D^2,i}\Pi^\ast_i$ for all $D \in \CH^1(F)_{\mathrm{prim}}$.

  Since for all $\sigma \in \CH^2(F)_{\hom}$ $c\cdot \sigma =0$ by
  Lemma \ref{lem special zero cycle}\emph{(iii)} and $\Pi^\ast_i \cdot
  \sigma = 0$, we see thanks to Lemma \ref{lem varphi and intersection
    product} that for all $D \in \CH^1(F)_{\mathrm{prim}}$ and for all
  $\sigma \in \CH^2(F)_2$ we have
  \begin{align}
    \varphi^*(g^2 \cdot \sigma) & = -8\, g^2 \cdot\sigma,\nonumber \\
    \label{eq temp} \varphi^*(D^2 \cdot \sigma) & = -8\, D^2
    \cdot\sigma, \\ \varphi^*(g \cdot D \cdot \sigma) & = 28\, g\cdot
    D \cdot\sigma. \nonumber
  \end{align}
  The proposition then follows because $28$ is not an eigenvalue of
  $\varphi^*$ acting on $\CH^4(F)$ by Theorem \ref{thm eigenspace
    decomposition} and because $g^2 \cdot : \CH^2(F)_2 \rightarrow
  \CH^4(F)_2$ is an isomorphism.
\end{proof}

Since $\CH^1(F)_0^{\cdot 2} \subseteq \CH^2(F)_0$ by Theorem \ref{thm
  reformulation voisin}, the following is a refinement of the first
statement of Proposition \ref{prop cubic mult structure2}.

\begin{prop} Let $F$ be the variety of lines on a cubic fourfold. Then
$$\CH^2(F)_0 \cdot \CH^2(F)_2 = \CH^4(F)_2.$$
\end{prop}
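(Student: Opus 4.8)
The statement to prove is the inclusion chain $\CH^2(F)_0 \cdot \CH^2(F)_2 \subseteq \CH^4(F)_2$ together with the reverse inclusion. The reverse inclusion is already settled: Proposition \ref{prop cubic mult structure2} gives $\CH^4(F)_2 = \CH^1(F)_0^{\cdot 2}\cdot \CH^2(F)_2$, and since $\CH^1(F)_0^{\cdot 2} \subseteq \CH^2(F)_0$ by Theorem \ref{thm reformulation voisin}, we immediately get $\CH^4(F)_2 \subseteq \CH^2(F)_0\cdot \CH^2(F)_2$. So the entire content is the forward inclusion $\CH^2(F)_0\cdot \CH^2(F)_2 \subseteq \CH^4(F)_2$, and my plan is to reduce it to things already proved about the action of $\varphi^*$.

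First I would use the eigenspace descriptions from Theorem \ref{thm eigenspace decomposition}. On $\CH^2(F)_2 = V^2_{-2}$, the map $\varphi^*$ acts by $-2$; on $\CH^4(F)$ the eigenvalues are $16,-8,4$ corresponding to the pieces $\CH^4(F)_0,\CH^4(F)_2,\CH^4(F)_4$. The strategy is therefore: take $\alpha \in \CH^2(F)_0$ and $\sigma \in \CH^2(F)_2$, and show that $\varphi^*(\alpha \cdot \sigma) = -8\,\alpha\cdot \sigma$, which by Theorem \ref{thm eigenspace decomposition}\emph{(i)} forces $\alpha\cdot \sigma \in V^4_{-8} = \CH^4(F)_2$. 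The key tool here is Lemma \ref{lem varphi and intersection product}: because $\sigma \in \CH^2(F)_2 = \CH^2(F)_{\hom}$ is homologically trivial, we have $\varphi^*(\alpha\cdot \sigma) = \varphi^*\alpha \cdot \varphi^*\sigma = \varphi^*\alpha \cdot(-2\sigma)$. So everything comes down to understanding $\varphi^*\alpha \cdot \sigma$ for $\alpha \in \CH^2(F)_0$.

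The next step is to control $\varphi^*\alpha$ for $\alpha \in \CH^2(F)_0$. By Lemma \ref{lem varphi CH24}, $\varphi^*\alpha = 4\alpha + P(g,c,D_i)$ for some weighted homogeneous degree-$2$ polynomial $P$ in divisors and $c$. Thus $\varphi^*\alpha\cdot\sigma = 4\,\alpha\cdot\sigma + P(g,c,D_i)\cdot\sigma$, and I need the second term to vanish. Here the computations \eqref{eq temp} in the proof of Proposition \ref{prop cubic mult structure2} are decisive: for $\sigma \in \CH^2(F)_2$ we have $c\cdot\sigma = 0$ (Lemma \ref{lem special zero cycle}\emph{(iii)}), $\Pi^\ast_i\cdot\sigma = 0$, and $g\cdot D\cdot \sigma = 0$ for $D \in \CH^1(F)_{\mathrm{prim}}$ (the second statement of Proposition \ref{prop cubic mult structure2}). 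Writing any divisor as a combination of $g$ and a primitive class, any degree-$2$ monomial $P(g,c,D_i)$ is a combination of $g^2$, $g\cdot D$, $D\cdot D'$ and $c$; intersecting with $\sigma$ kills the $c$ and $g\cdot D$ terms outright, while $g^2\cdot \sigma$ and $D^2\cdot\sigma$ survive. Combining $\varphi^*\alpha\cdot\sigma = 4\alpha\cdot\sigma + (\text{terms in } g^2\cdot\sigma,\ D^2\cdot\sigma)$ with $\varphi^*(\alpha\cdot\sigma)=-2\,\varphi^*\alpha\cdot\sigma$ yields a relation among $\alpha\cdot\sigma$, $g^2\cdot\sigma$ and $D^2\cdot\sigma$; since \eqref{eq temp} shows $g^2\cdot\sigma$ and $D^2\cdot\sigma$ are themselves $(-8)$-eigenvectors, I can conclude $\alpha\cdot\sigma \in V^4_{-8}=\CH^4(F)_2$.

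The main obstacle will be bookkeeping the surviving $g^2\cdot\sigma$ and $D^2\cdot\sigma$ contributions cleanly: a priori $P(g,c,D_i)\cdot\sigma$ need not vanish, so the argument is not simply ``$\varphi^*(\alpha\cdot\sigma)=-8\alpha\cdot\sigma$.'' The cleanest route is probably to observe that $P(g,c,D_i)\cdot\sigma$ already lands in $\CH^4(F)_2$ by \eqref{eq temp} (all its monomials are $(-8)$-eigenvectors or vanish), and that $4\alpha\cdot\sigma+P\cdot\sigma = \varphi^*(\alpha\cdot\sigma) - 4\alpha\cdot\sigma$ by Lemma \ref{lem varphi and intersection product} after substituting $\varphi^*\sigma = -2\sigma$, giving $(\varphi^*+8)(\alpha\cdot\sigma) \in \CH^4(F)_2 = V^4_{-8}$; applying $(\varphi^*+8)$ again and using $(\varphi^*-16)(\varphi^*+8)(\varphi^*-4)=0$ then pins $\alpha\cdot\sigma$ into $V^4_{-8}$. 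I would present it in this eigenvalue-projection form to avoid grinding through the exact coefficients of $P$.
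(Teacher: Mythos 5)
Your proposal is correct and follows essentially the same route as the paper's own proof: the reverse inclusion comes from $\CH^4(F)_2=\CH^1(F)_0^{\cdot 2}\cdot\CH^2(F)_2$ together with $\CH^1(F)_0^{\cdot 2}\subseteq\CH^2(F)_0$, and the forward inclusion is obtained by computing $\varphi^*(\sigma\cdot\tau)=(4\sigma+P(g,c,D_i))\cdot(-2\tau)=-8\,\sigma\cdot\tau-2P\cdot\tau$ via Lemmas \ref{lem varphi and intersection product} and \ref{lem varphi CH24}, noting $P\cdot\tau\in\CH^4(F)_2$ by Proposition \ref{prop cubic mult structure2} (and $c\cdot\tau=0$), and concluding from $\CH^4(F)_2=V^4_{-8}$. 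Your eigenvalue-projection phrasing at the end is just a restatement of the paper's ``it immediately follows'' step.
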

\begin{proof}
  By Proposition \ref{prop eigenA}, we know that $\CH^2(F)_2 =
  V^2_{-2}$ consists of homologically trivial cycles. Consider $\sigma
  \in \CH^2(F)_0$ and $\tau \in \CH^2(F)_2$. By Lemmas \ref{lem varphi
    and intersection product} and \ref{lem varphi CH24}, we have
 $$
 \varphi^*(\sigma \cdot\tau) = \varphi^*\sigma \cdot\varphi^*\tau =
 (4\sigma + P(g,c,D_i)) \cdot (-2\tau) = -8\sigma \cdot \tau -
 2P(g,c,D_i)\cdot \tau.
 $$
 Since $c \cdot\tau = 0$, by Proposition \ref{prop cubic mult
   structure2} we obtain that $P(g,c,D_i)\cdot \tau$ belongs to
 $\CH^4(F)_2$. It then immediately follows from the fact that
 $\CH^4(F)_2 = V^4_{-8}$ that $\sigma \cdot \tau \in \CH^4(F)_2$.
\end{proof}

\begin{prop} \label{prop CH3624hom}
  Let $F$ be the variety of lines on a cubic fourfold.
  Then
 $$\CH^1(F)\cdot \CH^2(F)_{0,\hom} = 0.$$
\end{prop}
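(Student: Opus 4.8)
The plan is to exploit the diagonalizability of $\varphi^*$ together with the compatibility Lemma \ref{lem varphi and intersection product}, which asserts that $\varphi^*$ is multiplicative against homologically trivial $2$-cycles. Fix $\sigma \in \CH^2(F)_{0,\hom}$ and a divisor $D \in \CH^1(F)$; the goal is to show $D \cdot \sigma = 0$. The strategy is to compute the $\varphi^*$-eigenvalue of $D \cdot \sigma$ and to observe that $D \cdot \sigma$ lands in a graded piece of $\CH^3(F)$ on which that eigenvalue does not occur.

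First I would record the two eigenvalue facts needed. Since the cycle class map $\CH^1(F) \to \HH^2(F,\Q)$ is injective, Proposition \ref{prop cubic hom phi} shows that $(\varphi^*+2)(\varphi^*-7)$ kills $\CH^1(F)$, so $\CH^1(F) = \langle g \rangle \oplus \CH^1(F)_{\mathrm{prim}}$ with $\varphi^* = 7$ on $\langle g \rangle$ (as also given directly by Proposition \ref{prop cubic g c}) and $\varphi^* = -2$ on $\CH^1(F)_{\mathrm{prim}}$. On the target side, $\CH^3(F)_{\hom} = \CH^3(F)_2 = V_4^3 \oplus V_{-14}^3$ by Theorem \ref{thm eigenspace decomposition}(ii), so the action of $\varphi^*$ on $\CH^3(F)_{\hom}$ has only the eigenvalues $4$ and $-14$. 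In particular, any homologically trivial $3$-cycle $\tau$ with $\varphi^*\tau = \lambda\tau$ and $\lambda \notin \{4,-14\}$ must vanish: decomposing $\tau = \tau_4 + \tau_{-14}$ into its eigencomponents and comparing with $\varphi^*\tau = 4\tau_4 - 14\tau_{-14} = \lambda\tau_4 + \lambda\tau_{-14}$ forces $(\lambda-4)\tau_4 = (\lambda+14)\tau_{-14} = 0$.

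Next I would carry out the computation. Decompose $D = D_g + D'$ with $D_g \in \langle g\rangle$ and $D' \in \CH^1(F)_{\mathrm{prim}}$. By Lemma \ref{lem varphi CH24} we have $\varphi^*\sigma = 4\sigma$, and since $\sigma$ is homologically trivial, Lemma \ref{lem varphi and intersection product} gives
\[
\varphi^*(D_g \cdot \sigma) = \varphi^*D_g \cdot \varphi^*\sigma = 28\, D_g\cdot\sigma, \qquad \varphi^*(D'\cdot\sigma) = \varphi^*D' \cdot \varphi^*\sigma = -8\, D'\cdot\sigma.
\]
Both $D_g\cdot\sigma$ and $D'\cdot\sigma$ lie in $\CH^3(F)_{\hom}$ because $\sigma$ is homologically trivial. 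Since $28 \notin \{4,-14\}$ and $-8 \notin \{4,-14\}$, the vanishing criterion of the previous paragraph forces $D_g\cdot\sigma = 0$ and $D'\cdot\sigma = 0$, whence $D\cdot\sigma = 0$. As $D$ and $\sigma$ were arbitrary, $\CH^1(F)\cdot\CH^2(F)_{0,\hom} = 0$.

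The only genuine subtlety, and the step I would double-check most carefully, is that Lemma \ref{lem varphi and intersection product} is applied with $\sigma$ (rather than $D$) occupying the homologically trivial slot, and that $\varphi^*\sigma = 4\sigma$ holds for all of $\CH^2(F)_{0,\hom}$; both of these are exactly what Lemmas \ref{lem varphi and intersection product} and \ref{lem varphi CH24} supply. Everything else is formal eigenvalue bookkeeping. I note that the argument does not use Theorem \ref{thm eigenspace decomposition}(iii) and hence does not require $X$ to be very general, so it yields Proposition \ref{prop CH3624hom} for an arbitrary smooth cubic fourfold.
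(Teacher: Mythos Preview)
Your proof is correct and follows essentially the same approach as the paper: use Lemma~\ref{lem varphi CH24} to get $\varphi^*\sigma=4\sigma$, apply the multiplicativity Lemma~\ref{lem varphi and intersection product}, and then split $D$ into its $g$- and primitive components to obtain eigenvalues $28$ and $-8$, neither of which occurs on $\CH^3(F)_{\hom}$ by Theorem~\ref{thm eigenspace decomposition}(ii). Your observation that only part (ii), not part (iii), of that theorem is needed (so no ``very general'' hypothesis) matches the paper as well.
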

\begin{proof}
  Indeed, if $\sigma \in \CH^2(F)_{0,\hom}$, then by Lemma \ref{lem
    varphi CH24} $\varphi^*\sigma = 4\sigma$. It follows from Lemma
  \ref{lem varphi and intersection product} that $\varphi^*(D \cdot
  \sigma) = 4\sigma \cdot \varphi^*D$ for all $D \in \CH^1(F)$.
  Depending whether $D \in \langle g \rangle$ or $D \in
  \CH^1(F)_{\mathrm{prim}}$, we see that $\varphi^*(D \cdot \sigma) =
  28\sigma \cdot D$ or $-8\sigma \cdot D$. By Theorem \ref{thm
    eigenspace decomposition}, neither $28$ nor $-8$ are eigenvalues
  of $\varphi^*$ acting on $\CH^3(F)_{\hom} = \CH^3(F)_2$. Hence $D
  \cdot \sigma =0$.
\end{proof}

\begin{prop} \label{prop cubic mult structure1} Let $F$ be the variety
  of lines on a cubic fourfold.  Then
\begin{center}
   $\CH^2(F)_0 \cdot \CH^2(F)_{0,\hom} = 0.$
\end{center}
\end{prop}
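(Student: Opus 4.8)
The plan is to follow the template of Proposition \ref{prop CH3624hom}, exploiting that $\varphi^*$ is multiplicative against a homologically trivial factor (Lemma \ref{lem varphi and intersection product}) together with the eigenvalue bookkeeping of Theorem \ref{thm eigenspace decomposition}. Fix $\sigma \in \CH^2(F)_{0,\hom}$ and $\tau \in \CH^2(F)_0$; the goal is to prove $\sigma \cdot \tau = 0$.

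First I would record the action of $\varphi^*$ on each factor. Since $\sigma$ is homologically trivial, Lemma \ref{lem varphi CH24} gives $\varphi^*\sigma = 4\sigma$, while for $\tau \in \CH^2(F)_0$ the same lemma yields $\varphi^*\tau = 4\tau + P(g,c,D_i)$ for some weighted homogeneous polynomial $P$ of degree $2$ in $g$, $c$ and divisors $D_i \in \CH^1(F)$. Because $\sigma \in \CH^2(F)_{\hom}$, Lemma \ref{lem varphi and intersection product} applies with $\alpha = \sigma$ and $\beta = \tau$, giving $\varphi^*(\sigma \cdot \tau) = \varphi^*\sigma \cdot \varphi^*\tau$; substituting the two expressions above produces
$$\varphi^*(\sigma \cdot \tau) = 16\, \sigma \cdot \tau + 4\, \sigma \cdot P(g,c,D_i).$$

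The crux is then to check that $\sigma \cdot P(g,c,D_i) = 0$. Each degree-$2$ monomial of $P$ is either a product $D \cdot D'$ of two divisors (this covers $g^2$, $g\cdot D_i$ and $D_i \cdot D_j$) or the class $c$. For the first type, $D' \cdot \sigma = 0$ already by Proposition \ref{prop CH3624hom}, whence $D \cdot D' \cdot \sigma = 0$; for the second type, $c \cdot \sigma$ is a multiple of $\mathfrak{o}_F$ by Lemma \ref{lem special zero cycle}, hence homologically trivial, hence zero. Therefore $\sigma \cdot P(g,c,D_i) = 0$ and the displayed identity collapses to $\varphi^*(\sigma \cdot \tau) = 16\, \sigma \cdot \tau$, that is, $\sigma \cdot \tau$ lies in the eigenspace $V_{16}^4$.

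To conclude I would invoke Theorem \ref{thm eigenspace decomposition}\emph{(i)}, which identifies $V_{16}^4$ with $\CH^4(F)_0 = \langle \mathfrak{o}_F \rangle$. Since $\sigma$ is homologically trivial, so is the zero-cycle $\sigma \cdot \tau$; as $\mathfrak{o}_F$ has nonzero degree, the intersection $\langle \mathfrak{o}_F \rangle \cap \CH^4(F)_{\hom}$ is trivial, and therefore $\sigma \cdot \tau = 0$. I do not expect a genuine obstacle here: every cited ingredient is already in place, and the only step requiring mild care is the reduction $\sigma \cdot P = 0$, where one must be certain that each degree-$2$ monomial of $P$ either carries a divisor factor (so Proposition \ref{prop CH3624hom} applies) or equals $c$ (so Lemma \ref{lem special zero cycle} applies), which is indeed exhaustive.
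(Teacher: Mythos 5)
Your proof is correct and follows essentially the same route as the paper's: apply Lemma \ref{lem varphi and intersection product} to get $\varphi^*(\sigma\cdot\tau)=16\,\sigma\cdot\tau + 4\,\sigma\cdot P$, kill the correction term, and conclude because $16$ is not an eigenvalue of $\varphi^*$ on $\CH^4(F)_{\hom}$. The only (harmless) variation is in disposing of $\sigma\cdot P$: the paper recomputes $\varphi^*$ on each monomial intersected with the homologically trivial factor and rules out the resulting eigenvalues, whereas you appeal directly to Proposition \ref{prop CH3624hom} for the divisor monomials and to Lemma \ref{lem special zero cycle} for $c$, which is a slightly cleaner but equivalent bookkeeping.
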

\begin{proof} Consider $\sigma \in \CH^2(F)_0$ and $\tau \in
  \CH^2(F)_{0,\hom}$. By Lemmas \ref{lem varphi and intersection
    product} and \ref{lem varphi CH24}, we have
$$
\varphi^*(\sigma \cdot\tau) = \varphi^*\sigma \cdot\varphi^*\tau =
(4\sigma + P(g,c,D_i)) \cdot 4\tau = 16\sigma \cdot \tau +
4P(g,c,D_i)\cdot \tau.
$$
But then, as in the proof of Proposition \ref{prop cubic mult
  structure2}, we see that $\varphi^*(g^2\cdot \tau) = 16g^2\cdot
\tau$, $\varphi^*(D^2\cdot \tau) = 16D^2\cdot \tau$ and
$\varphi^*(g\cdot D\cdot \tau) = -14\cdot 4 g \cdot D\cdot \tau$ for
all $D \in \CH^1(F)_{\mathrm{prim}}$. Also, we have $\varphi^*(c\cdot \tau) =
31\cdot 4 c\cdot \tau.$ Since none of these numbers are
eigenvalues of $\varphi^*$ acting on $\CH^4(F)_{\hom}$, we see that
$P(g,c,D_i)\cdot \tau = 0$. Thus $\varphi^*(\sigma \cdot\tau) =
16\sigma \cdot \tau$. Again, $16$ is not an eigenvalue of $\varphi^*$
acting on $\CH^4(F)_{\hom}$ and hence $\sigma \cdot \tau =0$.
\end{proof}

\subsection{Intersection with divisors}
We know that $\CH^3(F)_2=\CH^3(F)_{\hom}$ and $\CH^2(F)_2 \subseteq
\CH^2(F)_{\hom}$, so that it is clear that $\CH^1(F)\cdot \CH^2(F)_2
\subseteq \CH^3(F)_2$. The following is thus a strengthening of the
equality $\CH^4(F)_2 = \CH^1(F)^{\cdot 2} \cdot \CH^2(F)_2$ of
Proposition \ref{prop cubic mult structure2}.

\begin{prop} \label{prop cubic CH361402} Let $F$ be the variety
  of lines on a cubic fourfold $X$.  Then
$$\CH^1(F) \cdot \CH^3(F)_2 = \CH^4(F)_2.$$
\end{prop}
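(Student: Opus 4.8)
The plan is to prove the two inclusions separately. The inclusion $\CH^4(F)_2 \subseteq \CH^1(F)\cdot \CH^3(F)_2$ is immediate from what is already available: by Proposition \ref{prop cubic mult structure2} we have $\CH^4(F)_2 = \CH^1(F)_0^{\cdot 2}\cdot \CH^2(F)_2$, and since $\CH^2(F)_2$ consists of homologically trivial cycles while $\CH^3(F)_2 = \CH^3(F)_{\hom}$ (Theorem \ref{thm eigenspace decomposition}), any product $\CH^1(F)\cdot \CH^2(F)_2$ is homologically trivial, hence lies in $\CH^3(F)_2$. Thus $\CH^4(F)_2 = \CH^1(F)\cdot\big(\CH^1(F)\cdot \CH^2(F)_2\big) \subseteq \CH^1(F)\cdot \CH^3(F)_2$, using $\CH^1(F)=\CH^1(F)_0$.

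For the reverse inclusion I would fix a divisor $D$ and a class $\sigma \in \CH^3(F)_2 = \CH^3(F)_{\hom}$, so that $D\cdot \sigma$ is a homologically trivial zero-cycle lying in $\CH^4(F)_2 \oplus \CH^4(F)_4 = V_{-8}^4 \oplus V_4^4$ by Theorem \ref{thm eigenspace decomposition}. The whole problem is to show that the $V_4^4$-component of $D\cdot \sigma$ vanishes. To organize this I would split $\CH^3(F)_2 = V_4^3 \oplus V_{-14}^3$ and treat the two summands separately. The summand $V_{-14}^3 = g\cdot \CH^2(F)_2$ is harmless: writing $\sigma = g\cdot \tau$ with $\tau \in \CH^2(F)_2$, we get $D\cdot \sigma = (D\cdot g)\cdot \tau \in \CH^1(F)^{\cdot 2}\cdot \CH^2(F)_2 = \CH^4(F)_2$ by Proposition \ref{prop cubic mult structure2} (and $D\cdot\sigma = 0$ when $D \in \CH^1(F)_{\mathrm{prim}}$, by the ``moreover'' clause there). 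Combined with the first paragraph this already shows that $\CH^1(F)\cdot V_{-14}^3 = \CH^4(F)_2$, so the equality will follow once $\CH^1(F)\cdot V_4^3 \subseteq \CH^4(F)_2$ is established.

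The real content is therefore the summand $V_4^3$. For $\sigma \in V_4^3$ the formula $\varphi^*\sigma = 4\sigma + 3g\cdot I_*(g\cdot \sigma)$ of Proposition \ref{prop varphi action on homologically trivial cycles}, together with the injectivity of $g\cdot : \CH^2(F)_2 \to \CH^3(F)$ (which follows from the fact that $g^2\cdot : \CH^2(F)_2 \xrightarrow{\sim} \CH^4(F)_2$, recorded in the proof of Proposition \ref{prop cubic mult structure2}), forces $I_*(g\cdot \sigma) = 0$; by \eqref{eq L Cubic} this gives $L_*(g\cdot \sigma) = 0$, so $g\cdot \sigma \in \Lambda_2^4 = \CH^4(F)_4$ by Theorem \ref{prop L2}. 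For primitive $D$ I would instead resolve the graph $\Gamma_\varphi$, writing $\tau$ and $\widetilde{\varphi}$ for its two projections, and compute $\varphi^*(D\cdot \sigma) = \tau_*\big(\widetilde{\varphi}^*D \cdot \widetilde{\varphi}^*\sigma\big)$. By Lemma \ref{lem cubic divisor phi} one has $\widetilde{\varphi}^*D = -2\tau^*D$ modulo the exceptional divisors $E_i$ over the dual planes $\Pi_i^\ast$, which are absent when $X$ contains no plane and otherwise act trivially on homologically trivial cycles exactly as in the proof of Proposition \ref{prop varphi action on homologically trivial cycles}; the projection formula then gives $\varphi^*(D\cdot \sigma) = -2D\cdot \tau_*\widetilde{\varphi}^*\sigma = -8\,D\cdot \sigma$, placing $D\cdot \sigma$ in $V_{-8}^4 = \CH^4(F)_2$.

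The main obstacle I anticipate is the polarization direction $g\cdot V_4^3$. By the computation above, $g\cdot \sigma$ lies a priori in $\CH^4(F)_4$, so for the equality the only acceptable outcome is $g\cdot V_4^3 = 0$. Here the projection-formula argument no longer terminates cleanly, because $\widetilde{\varphi}^*g = 7\tau^*g - 3E$ (plus plane terms) carries the extra exceptional divisor $E$ lying over the surface $\Sigma_2$ of lines of second type. I would attack the resulting term $\widetilde{\varphi}_*(E\cdot \tau^*\sigma)$ through the precise description of $\varphi$ along $\Sigma_2$ provided by Lemma \ref{lem varphi on Sigma2}, namely $\varphi_*s = -2s + 3\mathfrak{o}_F$ for $s\in\Sigma_2$, which should force the $\CH^4(F)_4$-component of $g\cdot\sigma$ to vanish. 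This step, where the geometry of lines of second type genuinely enters, is the crux; once $g\cdot V_4^3 = 0$ is in hand, assembling the $V_4^3$ and $V_{-14}^3$ cases yields $\CH^1(F)\cdot \CH^3(F)_2 \subseteq \CH^4(F)_2$ and hence the asserted equality.
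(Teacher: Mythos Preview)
Your outline is essentially correct and close in spirit to the paper's proof; the differences are mainly organizational, except at the one spot you yourself flag as the crux.

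The paper treats $g\cdot\sigma$ for \emph{all} $\sigma\in\CH^3(F)_2$ uniformly, without first splitting off $V_{-14}^3$ and without the detour through $\CH^4(F)_4$. It computes
\[
\varphi^*(g\cdot\sigma)=7g\cdot\varphi^*\sigma-3\,\tau_*(E\cdot\widetilde{\varphi}^*\sigma)+\sum a_{g,i}\,\tau_*(E_i\cdot\widetilde{\varphi}^*\sigma),
\]
and then observes that $\tau_*(E\cdot\widetilde{\varphi}^*\sigma)$ is a homologically trivial zero-cycle supported on $\Sigma_2$, hence by Proposition \ref{prop support} it lies in $\CH^4(F)_2=V_{-8}^4$ (and the plane terms vanish as you noted). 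An eigenvalue check then finishes: writing $g\cdot\sigma=a+b$ with $a\in V_{-8}^4$, $b\in V_4^4$, the relation forces $b=0$ on each eigenspace $V_4^3$, $V_{-14}^3$, since $7\cdot 4$ and $7\cdot(-14)$ are not eigenvalues on $\CH^4(F)$. So the ``geometry of lines of second type'' you anticipate does enter, but it is already packaged in Proposition \ref{prop support}; invoking Lemma \ref{lem varphi on Sigma2} directly is unnecessary.

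Your route via $I_*(g\cdot\sigma)=0$ for $\sigma\in V_4^3$ is valid and gives the extra information $g\cdot V_4^3\subseteq\CH^4(F)_4$; combined with the paper's argument this yields $g\cdot V_4^3=0$, which is stronger than what is needed. But as a standalone proof your proposal has a genuine gap at exactly this point: you have placed $g\cdot V_4^3$ in $\CH^4(F)_4$ and then must show it vanishes, and your sketch (``attack $\widetilde{\varphi}_*(E\cdot\tau^*\sigma)$'') both swaps $\tau$ and $\widetilde{\varphi}$ and stops short of the actual mechanism. The fix is precisely Proposition \ref{prop support}: the $E$-contribution lies in $V_{-8}^4$, so $\varphi^*(g\cdot\sigma)-28\,g\cdot\sigma\in V_{-8}^4$; since you already know $g\cdot\sigma\in V_4^4$, this forces $-24\,g\cdot\sigma\in V_{-8}^4\cap V_4^4=0$. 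Once you cite that proposition, your argument closes.
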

\begin{proof}
  Recall from Theorems \ref{prop L2} \& \ref{prop main} that
  $\CH^3(F)_{\hom}= \CH^3(F)_2$.  Let us first compute
  $\varphi^*(g\cdot \sigma)$ for $\sigma \in \CH^3(F)_2$.  By Lemma
  \ref{lem cubic divisor phi}, we have $\widetilde{\varphi}^*g =
  7\tau^*g -3E + \sum a_{g,i}E_i$ for some numbers $a_{g,i} \in \Z$.
  Thus
  \begin{equation*}
    \varphi^*(g\cdot \sigma) = \tau_*\big((7\tau^*g -3E + \sum
    a_{g,i}E_i )
    \cdot \widetilde{\varphi}^*\sigma \big) = 7g\cdot \varphi^*\sigma
    - 3\tau_*(E \cdot \widetilde{\varphi}^*\sigma)
    + \sum a_{g,i}\tau_*(E_i\cdot \widetilde{\varphi}^*\sigma).
  \end{equation*}
  Now $\widetilde{\varphi}^*\sigma$ is a homologically trivial cycles
  so that $\tau_*(E \cdot \widetilde{\varphi}^*\sigma)$ is a
  homologically trivial zero-cycle supported on $\Sigma_2$. By
  Proposition \ref{prop support} we get $\tau_*(E \cdot
  \widetilde{\varphi}^*\sigma) \in \CH^4(F)_2 = V^4_{-8}$. Likewise
  $\tau_*(E_i\cdot \widetilde{\varphi}^*\sigma)$ is a homologically
  trivial cycle support on the plane $\Pi^\ast_i$ and is therefore
  zero. By Theorem \ref{thm eigenspace decomposition}, the action of
  $\varphi^*$ on $\CH^3(F)_{\hom}$ diagonalizes with eigenvalues $4$
  and $-14$. Since $7\cdot 4$ and $7\cdot (-14)$ are not eigenvalues
  of $\varphi^*$ acting on $\CH^4(F)$ we conclude that $g\cdot\sigma
  \in \CH^4(F)_2 = V_{-8}^4$.

  Consider now $D \in V^1_{-2} = \CH^1(F)_{\mathrm{prim}}$. In that case, we
  have $\widetilde{\varphi}^*D = -2\tau^*D + \sum a_{D,i}E_i$ for some
  numbers $a_{D,i} \in \Z$. Thus
\begin{equation*}
  \varphi^*(D\cdot \sigma) = \tau_*\big((-2\tau^*D + \sum a_{D,i}E_i )
  \cdot \widetilde{\varphi}^*\sigma \big) = -2D\cdot \varphi^*\sigma
  + \sum a_{D,i} \tau_*(E_i\cdot \widetilde{\varphi}^*\sigma)
  = -2D\cdot \varphi^*\sigma.
  \end{equation*}
  It follows that $\varphi^*(D\cdot \sigma) \in V^4_{-8}$ if
  $\varphi^* \sigma =4 \sigma$ and that $\varphi^*(D\cdot \sigma) =0$
  if $\varphi^*\sigma =-14\sigma$.
\end{proof}

\begin{prop} \label{prop cubic CH362412} Let $F$ be the variety of
  lines on a cubic fourfold $X$. Then
  $$\CH^1(F)_{\mathrm{prim}} \cdot \CH^2(F)_0 = \CH^3(F)_0.$$
Moreover, if $X$ is very general, then $\CH^1(F) \cdot \CH^2(F)_0 =
\CH^3(F)_0.$
\end{prop}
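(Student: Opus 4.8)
The plan is to prove the two inclusions separately, using the eigenspace decomposition of Theorem \ref{thm eigenspace decomposition} as the bookkeeping device. Recall from that theorem that $\CH^3(F)_0 = V^3_{28}\oplus V^3_{-8}$ with $V^3_{28}=\langle g^3\rangle$ and $V^3_{-8}=g^2\cdot \CH^1(F)_{\mathrm{prim}}$, that $\CH^3(F)_2 = V^3_4\oplus V^3_{-14}=\CH^3(F)_{\hom}$ on which $\varphi^*$ has eigenvalues $4$ and $-14$, and that $\CH^3(F)_0$ injects into $\HH^6(F,\Q)$. I would first establish the inclusion $\CH^1(F)_{\mathrm{prim}}\cdot \CH^2(F)_0\subseteq \CH^3(F)_0$, imitating the proof of Proposition \ref{prop cubic CH361402}. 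For $D\in \CH^1(F)_{\mathrm{prim}}=V^1_{-2}$ and $\sigma\in \CH^2(F)_0$, using $\widetilde{\varphi}^*D=-2\tau^*D+\sum_i a_{D,i}E_i$ (Lemma \ref{lem cubic divisor phi}) and the fact that $\widetilde{\varphi}^*$ is a ring homomorphism, one gets $\varphi^*(D\cdot\sigma)=-2\,D\cdot\varphi^*\sigma+\sum_i a_{D,i}\,\tau_*(E_i\cdot\widetilde{\varphi}^*\sigma)$. By Lemma \ref{lem varphi CH24}, $\varphi^*\sigma=4\sigma+P(g,c,D_j)$ with $P$ weighted homogeneous of degree $2$, so $D\cdot P(g,c,D_j)\in\CH^3(F)_0$ by Theorem \ref{thm reformulation voisin}; the correction terms $\tau_*(E_i\cdot\widetilde{\varphi}^*\sigma)$ are $1$-cycles supported on the dual planes $\Pi^\ast_i\cong\PP^2$ (and are absent when $X$ contains no plane). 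Granting that these plane-supported terms lie in $\CH^3(F)_0$, we obtain $\varphi^*(D\cdot\sigma)=-8\,D\cdot\sigma+\gamma$ with $\gamma\in\CH^3(F)_0$; projecting to $\CH^3(F)_2$ and noting that $-8$ is not an eigenvalue of $\varphi^*$ there forces the $\CH^3(F)_2$-component of $D\cdot\sigma$ to vanish, so $D\cdot\sigma\in\CH^3(F)_0$.

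For the reverse inclusion, the summand $V^3_{-8}=g^2\cdot\CH^1(F)_{\mathrm{prim}}$ comes for free: since $g^2\in\CH^2(F)_0$ (Theorem \ref{thm reformulation voisin}), one has $g^2\cdot D=D\cdot g^2\in \CH^1(F)_{\mathrm{prim}}\cdot\CH^2(F)_0$. The delicate summand is $\langle g^3\rangle$, which I would reach cohomologically. For primitive $D,D'$, the Fujiki relation \eqref{eq Fujiki relation} together with $q_F(g,D)=q_F(g,D')=0$ gives $\int_F g\,D\,D'\,E=c_F\,q_F(g,E)\,q_F(D,D')$ for all $E\in\CH^1(F)$, so $[g\,D\,D']$ and $\tfrac{q_F(D,D')}{3q_F(g,g)}[g^3]$ pair identically against $\HH^2(F,\Q)$; by Poincar\'e duality they coincide in $\HH^6(F,\Q)$, and since $g\,D\,D'$ and $g^3$ both lie in $\CH^3(F)_0$, which injects into cohomology, we get $g\,D\,D'=\tfrac{q_F(D,D')}{3q_F(g,g)}\,g^3$ in $\CH^3(F)$. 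As $q_F$ is non-degenerate on $\CH^1(F)_{\mathrm{prim}}$, one chooses $D,D'$ with $q_F(D,D')\neq 0$, whence $g^3$ is a multiple of $D\cdot(g\,D')$ with $D\in\CH^1(F)_{\mathrm{prim}}$ and $g\,D'\in\CH^2(F)_0$. This gives $\langle g^3\rangle\subseteq \CH^1(F)_{\mathrm{prim}}\cdot\CH^2(F)_0$ and completes the first equality.

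For the ``moreover'' statement, $X$ very general forces $\Pic(F)=\Z g$, hence $\CH^1(F)_{\mathrm{prim}}=0$, $\CH^3(F)_0=\langle g^3\rangle$, and no planes. The inclusion $\CH^1(F)\cdot\CH^2(F)_0\subseteq\CH^3(F)_0$ follows from the same $\varphi^*$ argument, now with $\widetilde{\varphi}^*g=7\tau^*g-3E+\sum_i a_{g,i}E_i$, the eigenvalue to rule out on $\CH^3(F)_2$ being $7\cdot 4=28$. Surjectivity is then supplied by the single product $g\cdot c$, which is a nonzero multiple of $g^3$ by \cite[Lemma 3.5]{voisin2}, with $c\in\CH^2(F)_0$ and $g\in\CH^1(F)$.

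The hard part is the surjectivity onto the $\langle g^3\rangle$ summand. It cannot be obtained from products of primitive classes alone: the same Fujiki computation shows a triple product of primitive divisors lands in $V^3_{-8}$, so $g^3$ must be extracted either from $g\cdot D\cdot D'$ (which requires a primitive divisor of nonzero $q_F$-square, available only for special $X$) or, for very general $X$, from $g\cdot c$. A secondary technical point, needed to make the $\varphi^*$-eigenvalue argument clean when $X$ carries planes, is to verify that the plane-supported correction terms $\tau_*(E_i\cdot\widetilde{\varphi}^*\sigma)$ genuinely lie in $\CH^3(F)_0$; this is where I expect the bulk of the careful checking to go, and it trivializes for $X$ containing no plane.
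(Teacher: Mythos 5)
Your forward inclusion follows the paper's strategy exactly, but the step you defer --- that the plane-supported terms $\tau_*(E_i\cdot\widetilde{\varphi}^*\sigma)$ lie in $\CH^3(F)_0$ --- is not a routine verification; it is the substantive content of the paper's proof. The paper handles it by observing that such a term is a $1$-cycle on $\Pi^\ast_i\cong\PP^2$, hence a multiple of a rational curve $C$ containing a point rationally equivalent to $\mathfrak{o}_F$ (because $c\cdot\Pi^\ast_i$ is a multiple of $\mathfrak{o}_F$), and then proving the claim that any such rational curve lies in $\CH^3(F)_0$: since $(\iota_\Delta)_*C=\mathfrak{o}_F\times C+C\times\mathfrak{o}_F$, one gets $(I^2)_*C=2\,I_*C\cdot I_*\mathfrak{o}_F\in\CH^3(F)_0$ by Theorem \ref{thm reformulation voisin}, and since $(I^2)_*$ is an automorphism of $\CH^3(F)_{\hom}$ (eigenvalues $2$ and $-4$, read off by comparing \eqref{identity of voisin} with \eqref{eq simplified varphi}), $C$ itself must lie in $\CH^3(F)_0$. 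Without some such argument your eigenvalue computation does not close.

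The more serious problem is the ``moreover'' part. When you replace $D$ by $g$, Lemma \ref{lem cubic divisor phi} gives $\widetilde{\varphi}^*g=7\tau^*g-3E+\sum_i a_{g,i}E_i$, and the new term $-3\tau_*(E\cdot\widetilde{\varphi}^*\sigma)=-3\tau_*(E\cdot\alpha)$ is a $1$-cycle supported on the surface $\Sigma_2$ of lines of second type, which is nonempty for \emph{every} smooth cubic fourfold --- very generality kills the planes $\Pi^\ast_i$ but not $\Sigma_2$. Showing that the image of $\CH^1(\Sigma_2)\rightarrow\CH^3(F)$ lands in $\CH^3(F)_0$ is precisely what Remark \ref{rmk intersection CH24} records as unknown, so your eigenvalue argument for $g\cdot\CH^2(F)_0\subseteq\CH^3(F)_0$ cannot be completed along these lines. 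The paper proves the very general case by a different route that avoids $\varphi$ altogether: by \eqref{eq H4 dec simple} the group $\CH^2(F)_0$ is then spanned by $l$, $g^2$ and $\CH^2(F)_{0,\hom}$, and one concludes from $g\cdot l,\ g\cdot g^2\in\langle g^3\rangle$ (Theorem \ref{thm reformulation voisin}) together with $\CH^1(F)\cdot\CH^2(F)_{0,\hom}=0$ (Proposition \ref{prop CH3624hom}). On the other hand, your surjectivity arguments --- the Fujiki computation identifying $g\cdot D\cdot D'$ with a multiple of $g^3$ via the injectivity of $\CH^3(F)_0$ into cohomology, and the use of $g\cdot c$ for very general $X$ --- are correct and are in fact more explicit than what the paper writes down for the reverse inclusion.
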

\begin{proof}
  Let us consider $D \in \CH^1(F)_{\mathrm{prim}}$ and $\sigma \in
  \CH^2(F)_2$. We have
\begin{align*}
  \varphi^*(\sigma \cdot D) &= \tau_*(\widetilde{\varphi}^*\sigma
  \cdot \widetilde{\varphi}^*D) \\
  &= \tau_*\big((4\tau^*\sigma + \tau^*P(g,c,D_i) + \alpha)
  \cdot (-2\tau^*D + \sum a_{D,i} E_i)   \big) \\
  &= -8\sigma \cdot D - 2P(g,c,D_i) \cdot D + \sum a_{D,i}
  \tau_*(\alpha \cdot E_i).
\end{align*}
For the second equality, we have used Lemmas \ref{lem cubic divisor
  phi} and \ref{lem varphi CH24}, and $\alpha$ is a $2$-cycle on
$\widetilde{F}$ such that $\tau_*\alpha =0$. The third equality is the
projection formula coupled with $\tau_*E_i =0$. Since $l$ is a linear
combination of $g^2$ and $c$, Theorem \ref{thm reformulation voisin}
gives $P(g,c,D_i) \cdot D \in \CH^3(F)_0$. On the other hand,
$\tau_*(\alpha \cdot E_i)$ is a $1$-cycle supported on $\Pi^\ast_i$.
Therefore, $\tau_*(\alpha \cdot E_i)$ is rationally equivalent to a
multiple of a rational curve on $F$ that contains a point rationally
equivalent to $\mathfrak{o}_F$ (for instance because $c \cdot
\Pi^\ast_i$ is a multiple of $\mathfrak{o}_F$). Now we claim that the
class of every rational curve $C$ on $F$ that contains a point
rationally equivalent to $\mathfrak{o}_F$ is in $\CH^3(F)_0$. Let us
finish the proof assuming the claim. First, we see that
$$\varphi^*(\sigma \cdot D) + 8 \, \sigma \cdot D \in \CH^3(F)_0.$$
Second, we know from Theorem \ref{thm eigenspace decomposition} that
$\CH^3(F)_0 = V^3_{28}\oplus V^3_{-8}$ and that $\CH^3(F)_2 = V^3_4
\oplus V^3_{-14}$. We immediately get that $\sigma \cdot D \in
\CH^3(F)_0$.

Let us now prove the claim. As in the proof of \cite[Corollary
3.4]{voisin2}, note that $I^2$ is the restriction of $I \times I$ to
the diagonal $\Delta_{F\times F}$ of $(F \times F)^2$. Therefore
$(I^2)_*C = ((I\times I)_*(\iota_\Delta)_*C)|_{\Delta_{F\times F}}$.
But then, because $C$ is a rational curve and because it contains a
point rationally equivalent to $\mathfrak{o}_F$, we have
$(\iota_\Delta)_*C = \mathfrak{o}_F \times C + C \times
\mathfrak{o}_F$. This immediately yields $(I^2)_*C = 2I_*C \cdot
I_*\mathfrak{o}_F$. Since $I_*\mathfrak{o}_F$ is a linear combination
of $l$ and $g^2$, Theorem \ref{thm reformulation voisin} shows that
$(I^2)_*C \in \CH^3(F)_0$.  By comparing \eqref{identity of voisin}
and \eqref{eq simplified varphi}, one may see that, for $\mu \in
\CH^3(F)_2 = \CH^3(F)_{\hom}$, $\varphi^*\mu = 4\mu$ if and only if
$(I^2)_*\mu = 2\mu$ and that $\varphi^*\mu = -14\mu$ if and only if
$(I^2)_*\mu = -4\mu$. Thus, in view of Theorem \ref{thm eigenspace
  decomposition}, $I^2$ induces an automorphism of $\CH^3(F)_{\hom}$
and we obtain that $C \in \CH^3(F)_0$.

Finally,
if $X$ is very general, then \eqref{eq H4 dec simple} shows that
$\CH^2(F)_0$ is spanned by $l$, $g^2$ and by
$\CH^2(F)_{0,\hom}$. The proposition is thus a combination of
Proposition \ref{prop CH3624hom}, Theorem \ref{thm reformulation
  voisin} and Proposition \ref{prop l CH2 4}.
\end{proof}

\subsection{Self-intersection of $2$-cycles.}
We already proved that $\CH^2(F)_2\cdot \CH^2(F)=\CH^4(F)_4$ and
that $\CH^2(F)_0 \cdot \CH^2(F)_2 = \CH^4(F)_2$. We cannot prove that
$\CH^2(F)_0 \cdot \CH^2(F)_0 = \CH^4(F)_0$ in general but we have the
following.

\begin{prop}Let $F$ be the variety of lines on a cubic fourfold $X$. Then
$$\CH^2(F)_0 \cdot \CH^2(F)_0 \subseteq \CH^4(F)_0 \oplus \CH^4(F)_2.$$
If $X$ is a very general cubic, then $\CH^2(F)_0 \cdot \CH^2(F)_0 =
\CH^4(F)_0.$
\end{prop}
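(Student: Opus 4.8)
The plan is to prove the two assertions separately, handling the cleaner equality for a very general cubic first and then establishing the general inclusion by tracking eigenvalues of $\varphi^*$.

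For the equality when $X$ is very general, I would use that $\NS(F)$ is spanned by $g$, so that $\CH^1(F)_{\mathrm{prim}} = 0$ and the algebraic part of $\HH^4(F,\Q)$ is the two–dimensional space $\langle[g^2],[c]\rangle$. Both $g^2$ and $c$ lie in $\CH^2(F)_0$ (the former as a product of divisors, the latter as a combination of $g^2$ and $l$ by \eqref{eq l cubic fourfold}), and $\CH^2(F)_2 = V_{-2}^2$ is homologically trivial by Proposition \ref{prop eigenA}; hence $\CH^2(F)_0 = \langle g^2,c\rangle \oplus \CH^2(F)_{0,\hom}$. Expanding the product, the terms involving $\CH^2(F)_{0,\hom}$ vanish by Proposition \ref{prop cubic mult structure1}, leaving only $\langle g^2,c\rangle^{\cdot 2}$. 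By Theorem \ref{thm voisin} every degree-$4$ polynomial in $g$ and $c$ is a multiple of $\mathfrak{o}_F$, and $g^4 = 108\,\mathfrak{o}_F\neq 0$, so $\CH^2(F)_0\cdot\CH^2(F)_0 = \langle\mathfrak{o}_F\rangle = \CH^4(F)_0$.

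For the inclusion in general, I would first assume $X$ contains no plane, so that by Theorem \ref{thm eigenspace decomposition}\emph{(iii)} we have $\CH^2(F)_0 = V_{31}^2\oplus V_{-14}^2\oplus V_4^2$. By bilinearity it then suffices to treat a product $e_\lambda\cdot e_\mu$ of eigenvectors $e_\lambda\in V_\lambda^2$, $e_\mu\in V_\mu^2$ with $\lambda,\mu\in\{31,-14,4\}$. The guiding identity is
\[
\varphi^*(e_\lambda\cdot e_\mu) = \varphi^*e_\lambda\cdot\varphi^*e_\mu + D_{\lambda\mu} = \lambda\mu\,(e_\lambda\cdot e_\mu) + D_{\lambda\mu},
\]
where $D_{\lambda\mu}$ measures the failure of $\varphi^*$ to be multiplicative on this product. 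Granting $D_{\lambda\mu}\in\CH^4(F)_0\oplus\CH^4(F)_2$, I would decompose $\rho := e_\lambda\cdot e_\mu = \rho_0+\rho_2+\rho_4$ along $\CH^4(F) = \CH^4(F)_0\oplus\CH^4(F)_2\oplus\CH^4(F)_4$; since $\varphi^*$ acts by $16,-8,4$ on these pieces by Theorem \ref{thm eigenspace decomposition}\emph{(i)}, comparing $\CH^4(F)_4$-components yields $(4-\lambda\mu)\rho_4 = 0$. None of the nine products $\lambda\mu$ equals $4$, so $\rho_4 = 0$, i.e. $e_\lambda\cdot e_\mu\in\CH^4(F)_0\oplus\CH^4(F)_2$.

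The step I expect to be the main obstacle is the claim $D_{\lambda\mu}\in\CH^4(F)_0\oplus\CH^4(F)_2$. Writing $\tau,\widetilde\varphi:\Gamma_\varphi\to F$ for the two projections, $\tau$ is a birational morphism whose exceptional divisor $E$ lies over $\Sigma_2$. Pull-back along the morphism $\widetilde\varphi$ is multiplicative, and $\widetilde\varphi^*\alpha$ differs from $\tau^*\varphi^*\alpha$ by a class supported on $E$, so
\[
\varphi^*(\alpha\cdot\beta) = \tau_*\bigl(\widetilde\varphi^*\alpha\cdot\widetilde\varphi^*\beta\bigr) = \varphi^*\alpha\cdot\varphi^*\beta + \tau_*(\text{class supported on }E);
\]
thus $D_{\lambda\mu}$ is a zero-cycle supported on $\tau(E) = \Sigma_2$, and Proposition \ref{prop support} places it in $\CH^4(F)_0\oplus\CH^4(F)_2$. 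This refines Lemma \ref{lem varphi and intersection product}, where one factor is homologically trivial and the defect vanishes; making the ``supported on $E$'' statement precise is where I would invoke the blow-up formalism for the birational morphism $\tau$ together with the appendix on the action of rational maps. Finally, when $X$ contains planes the same scheme applies: $\varphi$ then also has indeterminacy along $\Sigma_1 = \bigsqcup_i\Pi_i^\ast$, so the defect is supported on $\Sigma_1\cup\Sigma_2$, but zero-cycles on each $\Pi_i^\ast\cong\PP^2$ are multiples of $\mathfrak{o}_F$ and hence lie in $\CH^4(F)_0$; to bypass the hypothesis in Theorem \ref{thm eigenspace decomposition}\emph{(iii)} I would run the eigenvalue comparison via Lemma \ref{lem varphi CH24} (valid on $\CH^2(F)_0$ for all $X$), using Lemma \ref{lem dual plane} to rewrite the dual-plane contributions as polynomials in divisors and $c$ and Lemma \ref{lem special zero cycle} to absorb the resulting cross terms into $\CH^4(F)_0$. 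Together these give the inclusion $\CH^2(F)_0\cdot\CH^2(F)_0\subseteq\CH^4(F)_0\oplus\CH^4(F)_2$ for every smooth cubic fourfold.
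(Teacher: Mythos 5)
Your overall strategy is the paper's: measure the failure of $\varphi^*$ to be multiplicative by classes supported on the indeterminacy locus, and then compare eigenvalues of $\varphi^*$ on $\CH^4(F)$ to kill the $\CH^4(F)_4$-component. The very-general equality is argued exactly as in the paper (split $\CH^2(F)_0$ as $\langle g^2,c\rangle$ plus $\CH^2(F)_{0,\hom}$, kill the latter by Proposition \ref{prop cubic mult structure1}, and use that degree-$4$ polynomials in $g,c$ are multiples of $\mathfrak{o}_F$). For the inclusion in the plane-free case your route is a genuine and rather clean variant: by working with eigenvectors $e_\lambda\in V^2_\lambda$ you get $\varphi^*e_\lambda\cdot\varphi^*e_\mu=\lambda\mu\,e_\lambda e_\mu$ on the nose, so the entire defect is $\tau_*(\alpha_E\cdot\beta_E)$ with $\alpha_E,\beta_E$ supported on $E$ and killed by $\tau_*$; Proposition \ref{prop support} then places it in $\CH^4(F)_0\oplus\CH^4(F)_2$, and since none of the nine products $\lambda\mu$ with $\lambda,\mu\in\{31,-14,4\}$ equals $4$, the $\CH^4(F)_4$-component vanishes. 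This bypasses the cross terms $\sigma\cdot P'+\sigma'\cdot P$ that the paper has to control via its preliminary inclusion \eqref{eq CH363624}; your treatment of the zero-cycles supported on the dual planes $\Pi_i^\ast$ (they are multiples of $\mathfrak{o}_F$) is also a point the paper glosses over.

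The gap is in your fallback for cubics containing a plane, which is needed since the inclusion is asserted for every smooth $X$ and Theorem \ref{thm eigenspace decomposition}\emph{(iii)} is only available when $X$ contains no plane. There you must use Lemma \ref{lem varphi CH24}, $\varphi^*\sigma=4\sigma+P(g,c,D_i)$, and the expansion of $\varphi^*\sigma\cdot\varphi^*\sigma'$ produces the cross terms $4\sigma\cdot P'+4\sigma'\cdot P$. You propose to ``absorb the resulting cross terms into $\CH^4(F)_0$'' via Lemma \ref{lem special zero cycle}; that lemma only handles the $c$-part of $P'$ (i.e.\ $\sigma\cdot c\in\langle\mathfrak{o}_F\rangle$). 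The remaining terms $\sigma\cdot g^2$, $\sigma\cdot g\cdot D$ and $\sigma\cdot D\cdot D'$ are not multiples of $\mathfrak{o}_F$, and indeed they can have a nonzero component in $\CH^4(F)_2$ (Proposition \ref{prop cubic CH361402} gives $\CH^1(F)\cdot\CH^3(F)_2=\CH^4(F)_2$, and $\sigma\cdot D$ may have a nonzero $\CH^3(F)_2$-component), so the claimed target is wrong and the cited lemmas do not justify even the weaker, sufficient statement. What is needed is precisely the paper's preliminary step \eqref{eq CH363624}, namely $\CH^1(F)\cdot\CH^1(F)\cdot\CH^2(F)_0\subseteq\CH^4(F)_0\oplus\CH^4(F)_2$, which follows from $\CH^3(F)=\CH^3(F)_0\oplus\CH^3(F)_2$ together with Theorem \ref{thm reformulation voisin} and Proposition \ref{prop cubic CH361402}. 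Once that is inserted, your planes-case argument closes; the equality for very general $X$ is unaffected since such $X$ contains no plane.
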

\begin{proof} First we show that
\begin{equation} \label{eq CH363624} \CH^1(F) \cdot \CH^1(F)
    \cdot \CH^2(F)_0 \subseteq \CH^4(F)_0 \oplus \CH^4(F)_2.
\end{equation}
Indeed, if $D,D' \in \CH^1(F)$ and $\sigma \in \CH^2(F)_0$, then
$D'\cdot \sigma \in \CH^3(F) = \CH^3(F)_0 \oplus \CH^3(F)_2$, so that
by Theorem \ref{thm reformulation voisin} and Proposition \ref{prop
  cubic CH361402} we get $D \cdot D' \cdot \sigma \in \CH^4(F)_0
\oplus \CH^4(F)_2.$

Let us now consider $\sigma$ and $\sigma' \in \CH^2(F)_0$. By Lemma
\ref{lem varphi CH24}, we have \begin{center}
  $\widetilde{\varphi}^*\sigma = 4\tau^*\sigma + \tau^*P + \alpha$
  \quad and \quad $\widetilde{\varphi}^*\sigma' = 4\tau^*\sigma' +
  \tau^*P' + \alpha'$,
\end{center}
where $P$, $P'$ are weighted homogeneous polynomials of degree $2$ in
$g, c, D_i$, and $\alpha$, $\alpha'$ are cycles supported on $E$ such
that $\tau_*\alpha =0$ and $\tau_*\alpha'=0$. We then have
\begin{align*}
  \varphi^*(\sigma \cdot \sigma') &=
  \tau_*(\widetilde{\varphi}^*\sigma
  \cdot \widetilde{\varphi}^*\sigma')\\
  &= 16\, \sigma \cdot \sigma' + \sigma'\cdot P + \sigma \cdot P' + P
  \cdot P' + \tau_*(\alpha \cdot \alpha').
\end{align*}
By Theorem \ref{thm reformulation voisin}, $P \cdot P' \in
\CH^4(F)_0$. By \eqref{eq CH363624}, $\sigma' \cdot P$ and $\sigma
\cdot P'$ belong to $\CH^4(F)_0 \oplus \CH^4(F)_2$. The cycle
$\tau_*(\alpha \cdot \alpha')$ is supported on $\Sigma_2$ so that, by
Proposition \ref{prop support}, $\tau_*(\alpha \cdot \alpha') \in
\CH^4(F)_0 \oplus \CH^4(F)_2$. It is then immediate to conclude from
Theorem \ref{thm eigenspace decomposition} that $\sigma \cdot \sigma'
\in \CH^4(F)_0 \oplus \CH^4(F)_2.$

If $X$ is very general, then \eqref{eq H4 dec simple} shows that
$\CH^2(F)_0$ is spanned by $l$, by $g^2$ and by $\CH^2(F)_{0,\hom}$.
That $\CH^2(F)_0 \cdot \CH^2(F)_0 = \CH^4(F)_0$ is thus a combination
of Proposition \ref{prop cubic mult structure1}, Theorem \ref{thm
  reformulation voisin} and Proposition \ref{prop l CH2 4}.
\end{proof}

\begin{rmk}[About the intersection with $\CH^2(F)_0$] \label{rmk
    intersection CH24} The results of this section prove Theorem
  \ref{thm main mult} for the variety of lines on a very general cubic
  fourfold. In order to prove Theorem \ref{thm main mult} for the
  variety of lines on a (not necessarily very general) cubic fourfold,
  one would need to prove that $g \cdot \CH^2(F)_0 = \CH^3(F)_0$ and
  that $\CH^2(F)_0 \cdot \CH^2(F)_0 = \CH^4(F)_0$.

  That $g \cdot \CH^2(F)_0 = \CH^3(F)_0$ would follow from knowing
  that the image of $\CH^1(\Sigma_2) \rightarrow \CH^3(F)$ is
  contained in $\CH^3(F)_0$.  In addition, that $\CH^2(F)_0 \cdot
  \CH^2(F)_0 = \CH^4(F)_0$ would follow from knowing that the image of
  $\CH^1(\Sigma_2)\cdot \CH^1(\Sigma_2) \rightarrow \CH^4(F)$ is
  contained in $\CH^4(F)_0$.
\end{rmk}

\vspace{10pt}
\appendix
\section{Some geometry of cubic fourfolds}
In this appendix we review and prove some results on the geometry of a
smooth cubic fourfold and its variety of lines. We fix an
algebraically closed field $k$ of characteristic different from 2 or
3. Let $X\subset\PP(V)$ be a smooth cubic fourfold, where $V$ is a
6-dimensional vector space over $k$. Let $F$ be the variety of lines
on $X$, which is smooth of dimension 4 by \cite{ak}. When $k=\C$, it
is also known that $F$ is a hyperk\"ahler manifold of
$\mathrm{K3}^{[2]}$-type~; see \cite{bd}. Let $h\in\Pic(X)$ be the
class of a hyperplane section.  The variety $F$ is naturally a
sub-variety of $G(2,V)$ and hence inherits a natural rank 2 sub-bundle
$\mathscr{E}_2\subset V$. Let $g=-c_1(\mathscr{E}_2)$ be the Pl\"ucker
polarization and $c=c_2(\mathscr{E}_2)$.

\subsection{Some natural cycle classes}
As a sub-variety of $G(2,V)$, the variety $F$ can be viewed as the
zero locus of a section of $\Sym^3\mathscr{E}_2^\vee$~; see
\cite{ak}. This description also gives a short exact sequence
\[
\xymatrix@C=0.5cm{ 0 \ar[r] & \mathscr{T}_F \ar[rr] &&
  \mathscr{T}_{G(2,V)}|_F \ar[rr] &&\Sym^3\mathscr{E}_2^\vee|_F \ar[r]
  & 0. }
\]
Then a routine Chern class computation gives the following
\begin{lem}\label{lem second chern class cubic}
$c_2(F)=5g^2-8c$.\qed
\end{lem}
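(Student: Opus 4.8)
The plan is to read off the answer from the displayed short exact sequence
\[
0 \to \mathscr{T}_F \to \mathscr{T}_{G(2,V)}|_F \to \Sym^3\mathscr{E}_2^\vee|_F \to 0
\]
via the Whitney formula, which gives $c(\mathscr{T}_F) = c(\mathscr{T}_{G(2,V)})|_F \cdot c(\Sym^3\mathscr{E}_2^\vee)^{-1}|_F$, and then to extract the degree-$2$ part. Everything can be expressed in terms of $g = -c_1(\mathscr{E}_2)$ and $c = c_2(\mathscr{E}_2)$, so the whole computation reduces to the splitting principle applied to the rank-$2$ tautological bundle $\mathscr{E}_2$: I would write its Chern roots as $a,b$ with $a+b = -g$ and $ab = c$, so that $\mathscr{E}_2^\vee$ has roots $-a,-b$ with $(-a)+(-b) = g$ and $(-a)(-b) = c$, and in particular $a^2 + b^2 = g^2 - 2c$.

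First I would compute $c(\Sym^3\mathscr{E}_2^\vee)$. The Chern roots of $\Sym^3\mathscr{E}_2^\vee$ are the four classes $3x,\,2x+y,\,x+2y,\,3y$ where $x = -a$, $y = -b$; summing them gives $c_1 = 6g$, and the elementary symmetric computation $c_2 = \tfrac12(c_1^2 - \sum r_i^2)$ yields $c_2 = 11(x^2+y^2) + 32\,xy = 11g^2 + 10c$. Next I would compute $c(\mathscr{T}_{G(2,V)})$, using the standard identification $\mathscr{T}_{G(2,V)} = \mathscr{E}_2^\vee \otimes Q$ with $Q = V/\mathscr{E}_2$ of rank $4$; since $V$ is trivial, $c(Q) = c(\mathscr{E}_2)^{-1} = (1 - g + c)^{-1}$, whose low-degree terms are $1 + g + (g^2 - c) + \cdots$, so that $c_1(Q) = g$ and $\sum q_i^2 = c_1(Q)^2 - 2c_2(Q) = -g^2 + 2c$. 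Feeding the $8$ roots $\{x + q_i,\, y + q_i\}$ into the same $c_2 = \tfrac12(c_1^2 - \sum r^2)$ formula should give $c_1(\mathscr{T}_{G(2,V)}) = 6g$ (consistent with the index of $G(2,6)$) and $c_2(\mathscr{T}_{G(2,V)}) = 16g^2 + 2c$.

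Finally I would assemble the quotient. Inverting $c(\Sym^3\mathscr{E}_2^\vee) = 1 + 6g + (11g^2 + 10c) + \cdots$ gives $1 - 6g + (25g^2 - 10c) + \cdots$, and multiplying by $c(\mathscr{T}_{G(2,V)}) = 1 + 6g + (16g^2 + 2c) + \cdots$, the degree-$1$ part cancels (giving $c_1(F) = 0$, a welcome consistency check since $F$ is holomorphic symplectic), while the degree-$2$ part is
\[
c_2(F) = (16g^2 + 2c) - 36g^2 + (25g^2 - 10c) = 5g^2 - 8c,
\]
as claimed. The calculation is entirely routine once the input bundles are identified; the only point requiring care — what I would regard as the "main obstacle," though it is bookkeeping rather than a genuine difficulty — is the correct evaluation of the Chern classes of the tensor/Hom bundle $\mathscr{E}_2^\vee \otimes Q$ and of $\Sym^3\mathscr{E}_2^\vee$ through the splitting principle, together with keeping track of the mixed gradings ($g$ in degree $1$, $c$ in degree $2$) when expanding $c(Q) = c(\mathscr{E}_2)^{-1}$ and inverting $c(\Sym^3\mathscr{E}_2^\vee)$.
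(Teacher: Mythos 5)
Your proposal is correct and is exactly the paper's argument: the paper derives $c_2(F)=5g^2-8c$ from the same normal bundle sequence $0 \to \mathscr{T}_F \to \mathscr{T}_{G(2,V)}|_F \to \Sym^3\mathscr{E}_2^\vee|_F \to 0$ via what it calls "a routine Chern class computation," which you have carried out in full (and correctly — I checked the values $c_2(\Sym^3\mathscr{E}_2^\vee)=11g^2+10c$, $c_2(\mathscr{T}_{G(2,V)})=16g^2+2c$, and the final assembly). The only difference is that you supply the splitting-principle bookkeeping that the paper omits.
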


Over $F$, we have the universal family of lines
$$
\xymatrix{
 P\ar[r]^q\ar[d]_p &X\\
 F &
}
$$
\begin{defn}\label{defn cylinder and aj}
Associated with the above situation, we define the \textit{cylinder
homomorphism} as $\Psi=q_*p^* :\CH^i(F)\rightarrow \CH^{i-1}(X)$ and
the \textit{Abel-Jacobi homomorphism} as
$\Phi=p_*q^* :\CH^i(X)\rightarrow \CH^{i-1}(F)$.
\end{defn}

It is proved in \cite{voisin2} that $F$ contains a surface $W$ which
represents $c=c_2(\mathscr{E}_2)$ in $\CH^2(F)$. Furthermore, any two
points on $W$ are rationally equivalent on $F$. Hence a point on $W$
defines a special degree 1 element $\mathfrak{o}_F\in\CH_0(F)$.  We
summarize some results we need.

\begin{lem}\label{lem special zero cycle} The following statements
  hold true.
  \begin{enumerate}[(i)]
  \item Any weighted homogeneous polynomial of degree 4 in $g$ and $c$
    is a multiple of $\mathfrak{o}_F$ in $\CH_0(F)$. To be more
    precise, we have $g^4=108\mathfrak{o}_F$, $c^2=27\mathfrak{o}_F$
    and $g^2c=45\mathfrak{o}_F$.

  \item Let $C_x\subset F$ be the curve of all lines passing through a
  general point $x\in X$, then $g\cdot C_x=6\mathfrak{o}_F$ in
  $\CH_0(F)$.

  \item For any 2-cycle $\gamma\in\CH^2(F)$, the intersection
  $\gamma\cdot c$ is a multiple of $\mathfrak{o}_F$ in $\CH_0(F)$.

  \item If $\Pi=\PP^2\subset X$ is a plane contained in $X$, then
  $[l]=\mathfrak{o}_F$ for any line $l\subset\Pi$.

  \item Let $l\subset X$ be a line such that $[l]=\mathfrak{o}_F$, then
  $3l=h^3$ in $\CH^3(X)$.
  \end{enumerate}
\end{lem}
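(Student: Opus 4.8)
The plan is to prove the five items in the order (i), (iii), (iv), (ii), (v), running everything off two engines: Voisin's Theorem~\ref{thm voisin} (cohomological relations among divisors and Chern classes of $F$ already hold in $\CH^*(F)$) and the geometric input recalled just above, that $c=[W]$ for a surface $W$ all of whose points are rationally equivalent in $F$ to $\mathfrak{o}_F$. For (i), Lemma~\ref{lem second chern class cubic} gives $c=\tfrac18(5g^2-c_2(F))$, so the sub-algebra of $\CH^*(F)$ generated by $g$ and $c$ is the one generated by the divisor $g$ and the Chern class $c_2(F)$; by Theorem~\ref{thm voisin} it injects into $\HH^*(F,\Q)$, so its degree-$4$ part injects into $\HH^8(F,\Q)=\Q$ and is one-dimensional. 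Since $c^2=[W]^2$ is, by the property of $W$, a zero-cycle supported on $W$ and hence a multiple of $\mathfrak{o}_F$, the class $\mathfrak{o}_F$ lies in this one-dimensional space; thus each degree-$4$ monomial in $g$ and $c$ equals its own degree times $\mathfrak{o}_F$. It then remains to evaluate $\int_F g^4$, $\int_F g^2c$ and $\int_F c^2$, which I would do by Schubert calculus on $G(2,V)$, using that $F$ is the zero locus of a section of $\Sym^3\mathscr{E}_2^\vee$ (so $[F]=c_4(\Sym^3\mathscr{E}_2^\vee)$) and expressing $g,c$ through the special Schubert classes; the values are $108$, $45$ and $27$.

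Item (iii) is immediate: for $\gamma\in\CH^2(F)$ the product $\gamma\cdot c=\gamma\cdot[W]$ is, after moving $\gamma$ to meet $W$ properly, represented by a zero-cycle supported on $W$, hence lies in the image of $\CH_0(W)\to\CH_0(F)$, which is $\Q\mathfrak{o}_F$. For (iv), write $\Pi=\PP(U)$ with $\dim U=3$ and let $\Pi^\ast\cong G(2,U)\cong\PP^2$ be the plane of lines it contains; as $\Pi^\ast$ is rational, every $[l]$ with $l\subset\Pi$ is a single class $\mathfrak{o}_\Pi$. The restriction of $\mathscr{E}_2$ to $\Pi^\ast$ is the tautological sub-bundle on $G(2,U)$, so $\iota_{\Pi^\ast}^*c=c_2(\mathscr{E}_2|_{\Pi^\ast})$ is the class of a point on $\Pi^\ast$; by the projection formula $c\cdot[\Pi^\ast]=\iota_{\Pi^\ast,*}\iota_{\Pi^\ast}^*c=\mathfrak{o}_\Pi$, while by (iii) this same cycle is $\deg(c\cdot[\Pi^\ast])\,\mathfrak{o}_F=\mathfrak{o}_F$. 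Comparing gives $\mathfrak{o}_\Pi=\mathfrak{o}_F$, which is (iv).

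Item (ii) I would deduce from (i). Since $X$ is rationally connected, $\CH_0(X)=\Z$, so $[x]=\tfrac13h^4$ in $\CH_0(X)\otimes\Q$ and hence $C_x=\Phi([x])=\tfrac13\Phi(h^4)$ in $\CH_1(F)\otimes\Q$ for the cylinder map $\Phi$ of Definition~\ref{defn cylinder and aj}. Writing $\zeta=q^*h$ and using the Grothendieck relation $\zeta^2=g\zeta-c$ on the $\PP^1$-bundle $P=\PP(\mathscr{E}_2)\to F$, the resulting recursion $p_*\zeta^{k}=g\,p_*\zeta^{k-1}-c\,p_*\zeta^{k-2}$ (with $p_*\zeta=[F]$ and $p_*1=0$) gives $\Phi(h^4)=p_*\zeta^4=g^3-2gc$. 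Therefore $g\cdot C_x=\tfrac13(g^4-2g^2c)=\tfrac13(108-90)\,\mathfrak{o}_F=6\,\mathfrak{o}_F$ by (i).

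Finally, for (v) the cylinder map satisfies $\Psi([l])=l$ in $\CH^3(X)=\CH_1(X)$, so $[l]=\mathfrak{o}_F$ forces $l=\Psi(\mathfrak{o}_F)$; and cohomologically any line equals $\tfrac13h^3$ in $\HH^6(X,\Q)$, so $3l-h^3$ is homologically trivial. The content of (v) is therefore the vanishing $\CH_1(X)_{\hom}=0$, which I expect to obtain from a Bloch--Srinivas decomposition of the diagonal of $X$ (available because $\CH_0(X)=\Z$) together with $\HH^3(X,\Q)=\HH^5(X,\Q)=0$. This control of homologically trivial $1$-cycles on $X$ is the one genuinely deep ingredient; everything else reduces to Theorem~\ref{thm voisin}, the collapsing surface $W$, and bundle computations. (Note that this argument in fact yields $3l=h^3$ for every line on $X$, the hypothesis $[l]=\mathfrak{o}_F$ being subsumed.)
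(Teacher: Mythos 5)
Items (i)--(iv) of your proposal are correct. For (i)--(iii) the paper simply cites \cite{voisin2}, whereas you rederive them inside the paper's own framework: using $c=\tfrac18(5g^2-c_2(F))$ from Lemma \ref{lem second chern class cubic} to place the $(g,c)$-algebra inside the divisor--Chern-class algebra of Theorem \ref{thm voisin}, then pinning down the line $\Q\,\mathfrak{o}_F$ via $c^2=[W]^2$ supported on the collapsing surface $W$. This is a legitimate and slightly more self-contained route, and your Schubert numbers are right: with $[F]=c_4(\Sym^3\mathscr{E}_2^\vee)=18\sigma_1^2\sigma_{1,1}+9\sigma_{1,1}^2$ in $G(2,6)$ one indeed gets $108$, $45$, $27$. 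Your (iv) is the paper's proof of (iv) (intersect $c$ with the dual plane and apply (iii)), and your (ii), via $C_x=\tfrac13\Phi(h^4)=\tfrac13(g^3-2gc)$ from the Grothendieck relation on $\PP(\mathscr{E}_2)$, is consistent with Lemma \ref{lem identities self intersection of g} and gives $g\cdot C_x=\tfrac13(108-90)\mathfrak{o}_F=6\mathfrak{o}_F$.

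Item (v) is where there is a genuine gap. You reduce it to $\CH_1(X)_{\hom}=0$ and propose to get this from one Bloch--Srinivas decomposition of $\Delta_X$ plus $\HH^3(X,\Q)=\HH^5(X,\Q)=0$. That decomposition writes $\Delta_X=X\times \mathrm{pt}+\Gamma_2$ with $\Gamma_2$ supported on $D\times X$ for a divisor $D$; for $z\in\CH_1(X)_{\hom}$ it only yields $z=(\widetilde{\Gamma}_2)_*(j^*z)$ with $j^*z\in\CH_0(\widetilde D)_{\hom}$, i.e.\ it exhibits $\CH_1(X)_{\hom}$ as a quotient of a piece of $\CH_0(\widetilde D)_{\hom}$, which is a priori infinite-dimensional; the vanishing of $\HH^3$ and $\HH^5$ does not kill this. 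The statement $\CH_1(X)_{\Q}=\Q$ for smooth cubic fourfolds is true, but it is a theorem of Paranjape requiring a genuinely finer argument, and quoting it would prove much more than the lemma asserts. The paper's own proof of (v) avoids all of this and uses only (i): the cycle $c^2$ is represented by the $27$ lines on a general smooth cubic surface section $Y=H_1\cap H_2$; these $27$ lines partition into $9$ triangles, and each triangle is a plane section of $X$, hence has class $h^3$ in $\CH_1(X)$. Applying $\Psi$ to $c^2=27\,\mathfrak{o}_F$ gives $27\,\Psi(\mathfrak{o}_F)=9h^3$, so $\Psi(\mathfrak{o}_F)=\tfrac13 h^3$ and $3l=h^3$ for any line with $[l]=\mathfrak{o}_F$ --- which is precisely why the lemma carries that hypothesis. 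You should either substitute this elementary argument or explicitly invoke Paranjape's theorem; as written, the justification of (v) does not go through.
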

\begin{proof}
  For \emph{(i)}, \emph{(ii)} and \emph{(iii)}, we refer to
  \cite{voisin2}. For \emph{(iv)}, we take $\gamma$ to be the lines on
  $\Pi$ and then apply \emph{(iii)}. To prove \emph{(v)}, we note that
  the class $c^2$ can be represented by the sum of all points
  corresponding to lines on the intersection $Y=H_1\cap H_2$ of two
  general hyperplanes on $X$. Since $Y$ is a smooth cubic surface, it
  contains exactly $27$ lines. These $27$ lines can be divided into
  $9$ groups, where each group forms a triangle~; see Definition
  \ref{dfn triangle}. Statement \emph{(v)} then follows from the
  observation that the class of each triangle on $X$ is equal to $h^3$
  and that $c^2=27\mathfrak{o}_F$.
\end{proof}

Now we introduce some correspondences on $F$. The most natural one is
the incidence correspondence $I\subset F\times F$ which consists of
pairs of lines meeting each other. Let $H_1$ and $H_2$ be two general
hyperplanes sections on $X$. Then we define
\begin{equation}\label{eq Gamma_h}
\Gamma_h=\{([l_1],[l_2])\in F\times F : \exists\, x\in H_1, \text{
such that } x\in l_1\cap l_2\} ;
\end{equation}
\begin{equation}\label{eq Gamma_h^2}
\Gamma_{h^2}=\{([l_1],[l_2])\in F\times F : \exists\, x\in H_1\cap
H_2, \text{ such that } x\in l_1\cap l_2\}.
\end{equation}

\begin{lem}\label{lem identities self intersection of g} We have the
  following list of identities.
\begin{enumerate}[(i)]
\item The cylinder homomorphism $\Psi$ satisfies
$$
\Psi(g)=6[X],\quad
\Psi(g^2)=21h,\quad\Psi(g^3)=36h^2,\quad\Psi(g^4)=36 h^3.
$$
The Abel-Jacobi homomorphism $\Phi$ satisfies
$$
\Phi(h^2)=g,\quad \Phi(h^3)=g^2-c,\quad \Phi(h^4)=3C_x.
$$
\item The action of $I$ on self-intersections of $g$ is given by
$$
I_*(g^2)=21[F],\quad I_*(g^3)=36 g, \quad I_*(g^4)=36\Phi(h^3),
$$
where $I\subset F\times F$ is the incidence correspondence.
\item The action of $\Gamma_h$ on the self-intersections of $g$ is
given by
$$
(\Gamma_h)^*g= 6[F],\quad (\Gamma_h)^*g^2=21 g,\quad
(\Gamma_h)^*g^3=36\Phi(h^3),\quad(\Gamma_h)^*g^4=108C_x.
$$
\item The action of $\Gamma_{h^2}$ on the self-intersections of $g$ is
  given by
$$
(\Gamma_{h^2})^*g=6g, \quad(\Gamma_{h^2})^*g^2=21\Phi(h^3),
\quad(\Gamma_{h^2})^*g^3=108 C_x, \quad(\Gamma_{h^2})^*g^4=0.
$$
\end{enumerate}
\end{lem}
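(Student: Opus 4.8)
The plan is to reduce parts (ii), (iii) and (iv) entirely to part (i) by expressing all three correspondences through the cylinder and Abel--Jacobi maps of Definition \ref{defn cylinder and aj}. Writing $[P]\in\CH^3(F\times X)$ for the universal line viewed as a correspondence, one has $[P]_*=\Psi$ and ${}^t[P]_*=\Phi$, so Lemma \ref{lem cycle class of I on F(X)} gives $I_*=\Phi\circ\Psi$. More generally, for $\alpha\in\CH^*(X)$ set $\Gamma_\alpha:=(p\times p)_*(q\times q)^*(\iota_{\Delta_X})_*\alpha$; a direct composition-of-correspondences computation identifies $\Gamma_\alpha$ with ${}^t[P]\circ\langle\alpha\rangle\circ[P]$, where $\langle\alpha\rangle=(\iota_{\Delta_X})_*\alpha$ acts on $\CH^*(X)$ as multiplication $m_\alpha$ by $\alpha$. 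Since $\langle\alpha\rangle$ is symmetric, each $\Gamma_\alpha$ is a symmetric correspondence, so $(\Gamma_\alpha)^*=(\Gamma_\alpha)_*=\Phi\circ m_\alpha\circ\Psi$. I would then check, from \eqref{eq Gamma_h} and \eqref{eq Gamma_h^2}, that $\Gamma_h=\Gamma_\alpha$ with $\alpha=h$ and $\Gamma_{h^2}=\Gamma_\alpha$ with $\alpha=h^2$ (the map $p\times p$ being generically injective on the incidence loci). Granting this, (ii)--(iv) read $I_*(g^k)=\Phi(\Psi(g^k))$, $(\Gamma_h)^*(g^k)=\Phi(h\cdot\Psi(g^k))$ and $(\Gamma_{h^2})^*(g^k)=\Phi(h^2\cdot\Psi(g^k))$, which follow at once from the values in (i), from $\Phi(h)=[F]$, $\Phi(h^2)=g$, $\Phi(h^3)=g^2-c$, $\Phi(h^4)=3C_x$, and from $h^5=0$ (as $\dim X=4$).

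For the Abel--Jacobi part of (i) I would use that $p\colon P=\PP(\mathscr{E}_2)\to F$ is a $\PP^1$-bundle whose relative hyperplane class is $\zeta:=q^*h$, the tautological relation being $\zeta^2=g\zeta-c$ with $g=-c_1(\mathscr{E}_2)$ and $c=c_2(\mathscr{E}_2)$. Hence $p_*\zeta=[F]$, $p_*\zeta^2=g$ and $p_*\zeta^3=g\cdot p_*\zeta^2-c\cdot p_*\zeta=g^2-c$, giving $\Phi(h^2)=g$ and $\Phi(h^3)=g^2-c$. The equality $\Phi(h^4)=3C_x$ I would read off geometrically from $h^4=3\mathfrak{o}_X$ in $\CH_0(X)$ and $q^*\mathfrak{o}_X=[C_x]$, with $p$ mapping $q^{-1}(x)$ isomorphically onto $C_x$.

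For the cylinder part of (i), the easy cases follow from dimension counts and Lemma \ref{lem special zero cycle}. As $\Psi(g)\in\CH^0(X)=\Q[X]$ is the degree of $g$ on a general fibre $C_x$ of $q$, Lemma \ref{lem special zero cycle}(ii) gives $\Psi(g)=6[X]$. Since $\CH^1(X)_\Q=\Q h$, I would fix $\Psi(g^2)=\lambda h$ by the projection formula: $3\lambda=\int_X\Psi(g^2)\cdot h^3=\int_F g^2\cdot p_*\zeta^3=\int_F(g^4-g^2c)=108-45=63$, so $\lambda=21$. For $\Psi(g^4)$ I would sidestep any $1$-cycle subtlety by writing $g^4=108\,\mathfrak{o}_F$ (Lemma \ref{lem special zero cycle}(i)) and $\Psi(\mathfrak{o}_F)=[l_w]=\tfrac13 h^3$ (Lemma \ref{lem special zero cycle}(v)), whence $\Psi(g^4)=36 h^3$ on the nose.

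The one genuinely delicate identity is $\Psi(g^3)=36 h^2$, which lands in $\CH^2(X)$ where the cycle class map is not a priori injective. I would first compute the cohomology class: by the Fujiki relation \eqref{eq Fujiki relation}, $\int_F g^3\cdot\eta=3c_F\,q_F(g,g)\,q_F(g,\eta)=0$ for every primitive $\eta\in\HH^2(F,\Q)$, so by adjunction $[\Psi(g^3)]$ is orthogonal to $\HH^4(X,\Q)_{\mathrm{prim}}$ and hence a multiple of $h^2$; the coefficient is fixed by $3\lambda=\int_X\Psi(g^3)\cdot h^2=\int_F g^4=108$, giving $[\Psi(g^3)]=36\,[h^2]$. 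To upgrade this to rational equivalence I would use that $X$ is rationally connected, so $\CH_0(X)_\Q=\Q$, and conclude by Bloch--Srinivas \cite{bs} that $\CH^2(X)_{\mathrm{hom},\Q}=0$ (the decomposition of the diagonal reduces codimension-$2$ homologically trivial cycles to divisors on a threefold, which inject into cohomology). This vanishing is exactly what both removes the ambiguity in $\Psi(g^3)$ and guarantees that it feeds into the formulas of (ii)--(iv) with no leftover homologically trivial correction terms; I expect it to be the main obstacle of the argument.
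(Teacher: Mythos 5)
Your proof is correct and follows essentially the same route as the paper's: part (i) is obtained from the tautological relation on $P=\PP(\mathscr{E}_2)$ together with the degree computations of Lemma \ref{lem special zero cycle}, and parts (ii)--(iv) then follow by writing $I_*$, $(\Gamma_h)^*$ and $(\Gamma_{h^2})^*$ as $\Phi\circ m_\alpha\circ\Psi$ for $\alpha=1,h,h^2$. The only place where you are more explicit than the paper is the identity $\Psi(g^3)=36h^2$ in $\CH^2(X)$, which the paper dispatches with ``similarly''; your justification via the Fujiki relation and the Bloch--Srinivas vanishing of $\CH^2(X)_{\hom}\otimes\Q$ correctly fills in what is left implicit there.
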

\begin{proof}
  First, for dimension reasons, we have $\Psi(g)=n_1[X]$ for some
  integer $n_1$.  Then one computes
$$
n_1\equiv x\cdot \Psi(g)\equiv\Phi(x)\cdot g =C_x\cdot g\equiv 6.
$$
Since $\Psi(g^2)$ lives on $X$, it has to be a multiple of $h$.  We
assume that $\Psi(g^2)=n_2h$, then we have
$$
n_2\equiv\Psi(g^2)\cdot l\equiv g^2\cdot\Phi(l)=g^2\cdot
S_l\equiv21.
$$
In the last step, we used the fact that the cohomology class of $S_l$
is $\frac{1}{3}(g^2-c)$~; see \cite[\S0]{voisin}. Similarly, we have
$\Psi(g^3)=n_3h^2$ and we determine $n_3=36$ by
$$
3n_3\equiv\Psi(g^3)\cdot h^2\equiv g^3\cdot\Phi(h^2)=g^3\cdot
g\equiv 108.
$$
By Lemma \ref{lem special zero cycle}, we have
$\Psi(\mathfrak{o}_F)=\frac{1}{3}h^3$. Hence
$\Psi(g^4)=\Psi(108\mathfrak{o}_F)=36h^3$. The equality
$g=\Phi(h^2)$ and $\Phi(h^4)=3C_x$ are easy. To prove the remaining
equality, we note that $P=\PP(\mathscr{E}_2)$ and $q^*h$ is the
Chern class of the relative $\calO(1)$ bundle. Hence we have the
equality
$$
q^*h^2+p^*c_1(\mathscr{E}_2)q^*h +p^*c_2(\mathscr{E}_2)=0.
$$
Then it follows
\begin{align*}
q^*h^3 & =q^*h(-p^*c_1(\mathscr{E}_2)q^*h-p^*c_2(\mathscr{E}_2))\\
 &= q^*h (p^*gq^*h-p^*c)\\
 &=p^*g(p^*gq^*h-p^*c)-q^*hp^*c\\
 &=p^*(g^2-c)q^*h -p^*(g^2c).
\end{align*}
Hence $\Phi(h^3)=p_*q^*h^3=g^2-c$. This proves \emph{(i)}. Since
$I_*=\Phi\circ\Psi$, $(\Gamma_h)^*g^i=\Phi(h\cdot \Psi(g^i))$ and
$(\Gamma_{h^2})^*g^i=\Phi(h^2\cdot \Psi(g^i))$, all the remaining
identities follow from \emph{(i)}.
\end{proof}

\begin{lem}\label{lem I [o]}
$I_*\mathfrak{o}_F=\frac{1}{3}(g^2-c)$.
\end{lem}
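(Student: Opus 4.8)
The plan is to exploit the factorization of the incidence correspondence furnished by Lemma \ref{lem cycle class of I on F(X)}, namely $I = {}^tP \circ P$, which at the level of actions on Chow groups reads $I_* = \Phi \circ \Psi$, where $\Psi = q_*p^*$ and $\Phi = p_*q^*$ are the cylinder and Abel--Jacobi homomorphisms of Definition \ref{defn cylinder and aj}. Thus the computation of $I_*\mathfrak{o}_F$ splits into two steps: first push the degree-$1$ zero-cycle $\mathfrak{o}_F$ down to $X$ via $\Psi$, landing in $\CH^3(X)$, and then pull it back up to $F$ via $\Phi$, landing in $\CH^2(F)$.

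For the first step, I would represent the canonical class $\mathfrak{o}_F$ by the point $[l_0]$ associated to an actual line $l_0 \subset X$ with $[l_0] = \mathfrak{o}_F$; such a line exists because $\mathfrak{o}_F$ is represented by points on the surface $W$ (and, when $X$ contains a plane, one may even take a line lying on a plane, by Lemma \ref{lem special zero cycle}\emph{(iv)}). Since $p : P \to F$ is the universal family of lines, the fiber $p^{-1}([l_0])$ maps under $q$ isomorphically onto $l_0 \subset X$, so that $\Psi(\mathfrak{o}_F) = q_*p^*[l_0] = l_0$ as a $1$-cycle in $\CH^3(X)$. Now Lemma \ref{lem special zero cycle}\emph{(v)} states precisely that any line $l$ with $[l] = \mathfrak{o}_F$ satisfies $3l = h^3$ in $\CH^3(X)$, whence $\Psi(\mathfrak{o}_F) = l_0 = \tfrac{1}{3}h^3$. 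For the second step, I would simply apply $\Phi$ and invoke the identity $\Phi(h^3) = g^2 - c$ established in Lemma \ref{lem identities self intersection of g}\emph{(i)}, giving
$$
I_*\mathfrak{o}_F = \Phi\!\left(\Psi(\mathfrak{o}_F)\right) = \Phi\!\left(\tfrac{1}{3}h^3\right) = \tfrac{1}{3}\Phi(h^3) = \tfrac{1}{3}(g^2 - c),
$$
as desired.

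This argument is short and essentially assembles results already proved, so there is no serious obstacle; the only point that requires a moment of care is the identification $\Psi(\mathfrak{o}_F) = \tfrac{1}{3}h^3$. Here one must be sure that $\mathfrak{o}_F$ is genuinely represented by the class of a single line $l_0$ (so that the flat pull-back $p^*[l_0]$ is the honest fiber, and $q_*$ recovers $l_0$ as a reduced line) and that this very line falls under the hypothesis of Lemma \ref{lem special zero cycle}\emph{(v)}. Both are immediate from the definition of $\mathfrak{o}_F$ as the class of a point of $W$ together with Lemma \ref{lem special zero cycle}, so the proof reduces to the displayed chain of equalities above.
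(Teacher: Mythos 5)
Your proof is correct and follows essentially the same route as the paper's: the paper likewise computes $\Psi(\mathfrak{o}_F)=\tfrac{1}{3}h^3$ from Lemma \ref{lem special zero cycle}\emph{(v)} and then applies $\Phi(h^3)=g^2-c$ from Lemma \ref{lem identities self intersection of g}\emph{(i)}, using $I_*=\Phi\circ\Psi$. Your additional remarks about representing $\mathfrak{o}_F$ by an honest line are a harmless elaboration of what the paper leaves implicit.
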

\begin{proof}
By \emph{(v)} of Lemma \ref{lem special zero cycle}, we see that
$\Psi(\mathfrak{o}_F)=\frac{1}{3}h^3$. Then by \emph{(i)} of Lemma \ref{lem
identities self intersection of g}, we conclude the proof.
\end{proof}

\begin{prop}\label{prop class of Gamma_h}
The following identities hold true in the Chow groups of $F\times F$
\begin{align*}
\Gamma_h &= \frac{1}{3}(g_1^3+g_1^2g_2+g_1g_2^2+g_2^3
-2g_1c_1-g_1c_2-g_2c_1-2g_2c_2) ;\\
\Gamma_{h^2}&=
\frac{1}{3}(g_1^3g_2+g_1^2g_2^2+g_1g_2^3-g_1^2c_2-2g_1g_2c_2
-g_2^2c_1-2g_1g_2c_1+c_1c_2).
\end{align*}
\end{prop}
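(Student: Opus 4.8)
The plan is to realize both cycles through the universal family of lines, compute in cohomology, and then lift back to rational equivalence. Writing $J := P \times_X P$ with common projection $q_J : J \to X$, the first step is to identify, as cycle classes, $\Gamma_h = (p\times p)_*(q\times q)^*(\iota_{\Delta_X})_* h$ and $\Gamma_{h^2} = (p\times p)_*(q\times q)^*(\iota_{\Delta_X})_* h^2$, exactly as $I = (p\times p)_*(q\times q)^*\Delta_X$ was identified in Lemma \ref{lem cycle class of I on F(X)}. This is a generic-injectivity statement: over a general pair of intersecting lines the fibre of $(p\times p)|_J$ is a single reduced point, and inserting $(\iota_{\Delta_X})_* h^k$ forces the common intersection point to lie on a general $H_1$ (resp. $H_1 \cap H_2$), matching Definitions \eqref{eq Gamma_h} and \eqref{eq Gamma_h^2}.

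Next I would compute the cohomology classes, paralleling the proof of Proposition \ref{prop cohomology class of I}. Using the Künneth decomposition $[\Delta_X] = pt \times [X] + [X]\times pt + \tfrac13(h\otimes h^3 + h^3 \otimes h) + b_0^{-1}$, one forms $[\Delta_X]\cdot p_{X,1}^* h^k$ and applies $(p\times p)_*(q\times q)^*$, which on external products is simply $\Phi \otimes \Phi$. The decisive observation is that the transcendental part of $\HH^4(X)$ disappears: for a primitive class $\mathfrak{a}'$ one has $h\cdot \mathfrak{a}' = 0$ in $\HH^6(X) = \langle h^3 \rangle$ and $h^2 \cdot \mathfrak{a}' = 0$ in $\HH^8(X) = \langle h^4\rangle$, since primitivity forces the coefficient $\int_X h^2 \cdot \mathfrak{a}'$ to vanish; thus only the $\tfrac13 h^2 \otimes h^2$ summand of $b_0^{-1}$ survives. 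Feeding in $\Phi(h^2) = g$, $\Phi(h^3) = g^2 - c$ and $\Phi(h^4) = 3C_x$ from Lemma \ref{lem identities self intersection of g} — together with the Chow-level identity $C_x = \tfrac13(g^3 - 2gc)$, which follows by squaring the projective-bundle relation $q^*h^2 = p^*g\cdot q^*h - p^* c$ on $P = \PP(\mathscr{E}_2)$ and using $\Phi(h^5)=0$ — a direct expansion reproduces precisely the two stated polynomials in $g_1,g_2,c_1,c_2$.

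The last, and genuinely hard, step is to upgrade these cohomological identities to rational equivalence. The natural tool is Lemma \ref{lem cohomology to chow}, which lifts any cohomological polynomial relation among pull-backs of divisors and Chern classes to the Chow ring; however it applies only to cycles already lying in the subalgebra generated by $p_1^*\mathrm{V}_F$ and $p_2^*\mathrm{V}_F$, whereas $\Gamma_h$ and $\Gamma_{h^2}$ are a priori only supported on the incidence variety $I$. The main obstacle is therefore to show that $\Gamma_h$ and $\Gamma_{h^2}$ belong to this subalgebra modulo rational equivalence. I would resolve this by carrying out the computation of the previous paragraph directly in the Chow ring: every ingredient except the Künneth splitting of $\Delta_X$ is already a Chow-level identity, so it suffices to compute $(q\times q)^*(\iota_{\Delta_X})_* h^k$ in $\CH^*(P\times P)$ by an excess-intersection argument on $J$ — in the spirit of Lemma \ref{lem I dot diagonal} and Proposition \ref{prop key identity of varphi} — using the tautological quotient line bundles $\mathcal{Q}_i$ on $J$ with $c_1(\mathcal{Q}_i) = q_J^* h - g_i|_J$. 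This expresses $\Gamma_{h^k}$ as a push-forward of a polynomial in $q_J^* h, g_1, g_2$, and the projective-bundle relation then rewrites the result as a polynomial in $g_i, c_i$, with the cohomological computation guaranteeing the coefficients are the ones stated.
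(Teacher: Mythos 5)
Your identification $\Gamma_{h^k}=(p\times p)_*(q\times q)^*(\iota_{\Delta_X})_*h^k$ and the cohomological computation (only the $\tfrac13 h^2\otimes h^2$ part of $b_0^{-1}$ survives multiplication by $h_1^k$, then apply $\Phi\otimes\Phi$ with $\Phi(h^2)=g$, $\Phi(h^3)=g^2-c$, $3C_x=g^3-2gc$) are correct and match the paper. You have also correctly located the crux: lifting the resulting identity from cohomology to $\CH^*(F\times F)$, which Lemma \ref{lem cohomology to chow} cannot do for a cycle that is only known to be supported on $I$.

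The gap is in your proposed resolution of that crux. Since $q\times q$ is smooth, there is no excess intersection to exploit: $(q\times q)^*(\iota_{\Delta_X})_*h^k$ is simply $q_J^*h^k$ pushed forward from $J=(q\times q)^{-1}\Delta_X$, and $\Gamma_{h^k}=((p\times p)|_J)_*q_J^*h^k$ remains a cycle supported on $I$. The projective-bundle relation $q^*h^2=p^*g\cdot q^*h-p^*c$ and the bundles $\mathcal{Q}_i$ only yield relations \emph{among} such cycles -- for instance $\Gamma_{h^2}=g_1\cdot\Gamma_h-c_1\cdot I$ -- and the push-forwards $((p\times p)|_J)_*\bigl(c_1(\mathcal{Q}_1)^a c_1(\mathcal{Q}_2)^b\bigr)$ are new unknown classes supported on $I$ with no a priori polynomial expression in $g_i,c_i$; your manipulations never break out of this class of cycles. (Appealing to Lemma \ref{lifting cubic} for $c_1\cdot I$ would be circular, as its proof cites the present proposition.) What is actually needed is a \emph{Chow-theoretic} K\"unneth-type decomposition of $(\iota_{\Delta_X})_*h^k$, and this is exactly what the paper imports from the proof of \cite[Proposition 3.3]{voisin2}: there is a decomposition $\Delta_X=\Delta_1+\Delta_0$ in $\CH^4(X\times X)$ with $\Delta_1=\frac13\sum_{i=0}^4 h_1^ih_2^{4-i}$ and $\Delta_0\cdot h_i=0$ already in $\CH^5(X\times X)$. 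This statement relies on the special geometry of the cubic fourfold (triviality of $\CH_0(X)$ and the structure of its $1$-cycles) and is not a formal consequence of the projective-bundle relation; once it is granted, $\Gamma_{h^k}=(p\times p)_*(q\times q)^*(\Delta_1\cdot h_1^k)$ and your own expansion via $\Phi(h^i)$ finishes the proof. You should either quote Voisin's decomposition or prove it; without it the argument does not close.
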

\begin{proof}
Let $h_i=p_i^*h$ on $X\times X$. In the proof of \cite[Proposition
3.3]{voisin2}, Voisin shows that there is a K\"unneth type
decomposition
$$
\Delta_X=\Delta_1+\Delta_0
$$
in $\CH^4(X\times X)$ such that $\Delta_1=\sum_{i=0}^{4}
\alpha_ih_1^ih_2^{4-i}$ and $\Delta_0\cdot h_i=0$ in $\CH^5(X\times
X)$. One can show that $\alpha_i=\frac{1}{3}$, $i=0,\ldots,4$, as
follows. Let both side act on $h^j$, $j=1,\ldots,4$, and get
$$
h^{j}=\alpha_{4-j}(h^4)h^{j}=3\alpha_{4-j}h^{j}.
$$
Hence we get $\alpha_i=\frac{1}{3}$, $i=0,\ldots,3$. By symmetry, we
also have $\alpha_4=\frac{1}{3}$. Therefore we have
\begin{align*}
\Gamma_h &=(p\times p)_*(q\times q)^*(\Delta_X\cdot h_1)\\
 &=\frac{1}{3}(p\times p)(q\times q)^*(h_1h_2^4+h_1^2h_2^3+h_1^3h_2^2
 +h_1^4h_2)\\
 &=\frac{1}{3}\sum_{i=1}^{4}p_1^*\Phi(h^i)p_2^*\Phi(h^{5-i})\\
 &=\frac{1}{3}(3p_2^*C_x+g_1(g_2^2-c_2)+(g_1^2-c_1)g_2 +3p_1^*C_x)
\end{align*}
Note that in the Proof of \cite[Lemma 3.2]{voisin3}, Voisin shows
that $3C_x=g^3-2gc$. Put this into the above formula, we get the
equality for $\Gamma_h$. The second equality can be proved in a
similar way.
\end{proof}

\subsection{Secant lines and their specializations}
We recall some definitions from \cite{relation}. Let $C_1,
C_2\subset X$ be two disjoint curves on $X$.
\begin{defn}\label{defn secant line}
  A \textit{secant line} of the pair $(C_1,C_2)$ is a line on $X$ that
  meets both $C_1$ and $C_2$.
\end{defn}
It is shown in \cite{relation} that the total number, if finite, of
secant lines (counted with multiplicity) of $(C_1,C_2)$ is equal to
$5d_1d_2$, where $d_i=\deg(C_i)$, $i=1,2$. We will be mainly
interested in the case when both $C_1$ and $C_2$ are lines.

\begin{lem}\label{lem specialization of secant lines}
  Let $l_1,l_2$ be two lines on $X$ meeting each other in a point $x$
  and $\Pi=\PP^2$ the plane spanned by $l_1$ and $l_2$. Assume that
  $\Pi$ is not contained in $X$ and $\Pi\cdot X = l_1 +l_2 +l_3$ for a
  third line $l_3\subset X$. Let $\{l_t : t\in T\}$ be a 1-dimensional
  family of lines on $X$ with $l_{t_0}=l_1$. Assume that for a general
  point $t\in T$, the pair $(l_2,l_t)$ has five secant lines
  $\{E_{1,t},\ldots, E_{5,t}\}$. Then as $t$ specializes to $t_0$,
  four of the secant lines, say $\{E_{1,t},\ldots,E_{4,t}\}$,
  specialize to four lines $\{E_1,\ldots, E_4\}$ passing through $x$
  while the fifth secant line $E_{5,t}$ specializes to the line $l_3$,
  such that
$$
[l_1] + [l_2] + [E_1] +[E_2] + [E_3] + [E_4] = 6\mathfrak{o}_F.
$$
\end{lem}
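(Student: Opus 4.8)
The plan is to determine the flat limit of the five secant lines as $t \to t_0$ by analysing the ambient linear spaces they live in, and then to extract the rational equivalence from Lemma \ref{lem special zero cycle}. First I would record that a secant line of the pair $(l_2,l_t)$ lies in the $3$-plane $\PP^3_t := \langle l_2, l_t\rangle$ spanned by the two lines (disjoint for general $t$), and I would compute the limit of $\PP^3_t$ in the Grassmannian of $3$-planes. Since $l_t \to l_1$ with first-order direction recorded by $v \in \mathscr{N}_{l_1/X,x}$, this limit is $\PP^3 := \langle l_1, l_2, v\rangle = \langle \Pi, v\rangle$. Hence every limit of the $E_{i,t}$ is a line on $X$ contained in $\PP^3$, that is, a line on the cubic surface $S := X \cap \PP^3$.

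Next I would establish the dichotomy and the nodal structure of $S$. A line $E \subset X$ meeting both $l_1$ and $l_2$ either passes through $x = l_1 \cap l_2$, or meets them at two distinct points away from $x$ and therefore lies in $\Pi = \langle l_1, l_2\rangle$; in the latter case $E \subset \Pi \cap X = l_1 + l_2 + l_3$ forces $E = l_3$. So each limit is either a line through $x$ on $S$ or is $l_3$. I would then show $S$ has a node at $x$: because $l_1, l_2 \subset X$ and $v \in T_xX$, the $3$-plane $\PP^3$ satisfies $T_x\PP^3 \subset T_xX$, so $T_xS$ is $3$-dimensional and $x$ is a singular point of $S$; for a general family $\{l_t\}$ the surface $S$ is $1$-nodal. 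A $1$-nodal cubic surface carries exactly six lines through its node, here $l_1, l_2, E_1, E_2, E_3, E_4$. Thus, for a general family, the five secant lines specialise to the reduced cycle $E_1 + E_2 + E_3 + E_4 + l_3$.

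It then remains to prove $[l_1] + [l_2] + \sum_{i=1}^4 [E_i] = 6\mathfrak{o}_F$. The six lines through $x$ on $S$ are precisely the lines through $x$ on $X$ whose direction lies in the plane $\PP^2 \subset \PP^4 := \PP(V/\langle x\rangle)$ of directions of $\PP^3$ at $x$; equivalently they form $C_x \cap \PP^2$, where $C_x \subset \PP^4$ is the curve of all lines through $x$ on $X$. The key observations are: $(a)$ the Pl\"ucker class $g$ restricts on $C_x$ to the hyperplane class of $\PP^4$, since a line through $x$ has Pl\"ucker coordinate $x \wedge w$ depending linearly on its direction $w$; $(b)$ $C_x$ lies in the hyperplane $\PP^3_0 \subset \PP^4$ of directions tangent to $X$ at $x$ (every line on $X$ through $x$ is tangent to $X$ there); and $(c)$ the tangency $T_x\PP^3 \subset T_xX$ from the previous step gives $\PP^2 \subset \PP^3_0$. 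Consequently $C_x \cap \PP^2$ is a hyperplane section of the curve $C_x$ inside $\PP^3_0$, hence linearly equivalent to $g|_{C_x}$, so it represents the class $g \cdot C_x = 6\mathfrak{o}_F$ of Lemma \ref{lem special zero cycle}(ii). Since this intersection is exactly $l_1 + l_2 + E_1 + E_2 + E_3 + E_4$, the relation follows.

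I expect the main obstacle to be the limiting analysis of the second step: rigorously identifying the flat limit of the five secant lines as the reduced cycle $E_1 + \cdots + E_4 + l_3$, namely proving that the limiting $3$-plane is $\langle \Pi, v\rangle$, that $S$ acquires only a node at $x$ for a general family, and that the five limits distribute as $4+1$ with no limit absorbed by $l_1$ or $l_2$. I would treat this by a first-order computation in local coordinates along $l_1$, using the identification of $\mathscr{N}_{l_1/X}$ and the universal family $P \to F$, together with a genericity argument on the choice of $\{l_t\}$. Granting that analysis, the rational equivalence in the third step is essentially formal once Lemma \ref{lem special zero cycle} is invoked.
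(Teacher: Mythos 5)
Your proposal follows essentially the same route as the paper's proof: identify the limiting $3$-plane $\langle \Pi, v(x)\rangle$ tangent to $X$ at $x$, observe that it contains exactly six lines of $X$ through $x$ (namely $l_1,l_2,E_1,\dots,E_4$) plus the residual secant $l_3$, and conclude by recognizing this six-line cycle as a representative of $g\cdot C_x=6\mathfrak{o}_F$ via Lemma \ref{lem special zero cycle}(ii). The argument is sound; you in fact supply more detail than the paper does (the nodal cubic surface count and the linearity of $g|_{C_x}$ in the Pl\"ucker embedding, where the paper only says ``one easily checks''), subject to the same genericity assumption $v(x)\notin T_x\Pi$ that the paper also makes.
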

\begin{proof}
  Note that the tangent space of $T$ at the point $t_0$ corresponds to
  a global section $v\in\HH^0(l_1,\mathscr{N}_{l_1/X})$. We may assume
  that $v(x)\neq 0$ and hence $v(x)$, $\mathscr{T}_{l_1,x}$ and
  $\mathscr{T}_{l_2,x}$ span a linear $\PP^3$ which is tangent to $X$
  at the point $x$. There are 6 lines on $X$ passing through $x$ and
  contained in the above $\PP^3$. These lines are
  $\{l_1,l_2,E_1,\ldots,E_4\}$. By construction, one easily checks
  that the cycle $[l_1]+ [l_2] + [E_1] +\ldots +[E_4]$ represents
  $C_x\cdot g=6\mathfrak{o}_F$.
\end{proof}

\subsection{The geometry of lines of second type}
If $l\subset X$ is a line contained in $X$, then either
$$
\mathscr{N}_{l/X}\cong \calO(1)\oplus\calO^2,
$$
in which case $l$ is said to be of \textit{first type}, or
$$
\mathscr{N}_{l/X}\cong \calO(1)^2\oplus \calO(-1),
$$
in which case $l$ is said to be of \textit{second type}. It is known
that for a general point $[l]\in F$, the corresponding line $l$ is
of first type. Let $\Sigma_2\subset F$ be the closed subscheme of
lines of second type.

\begin{lem}[\cite{cg, amerik}]\label{lem class of Sigma2}
$\Sigma_2$ is a surface on $F$ whose cycle class is $5(g^2-c)$.
\end{lem}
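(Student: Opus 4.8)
The plan is to realize $\Sigma_2$ as a Thom--Porteous degeneracy locus on $F$ and then to read off its class from a Chern-class computation, following Clemens--Griffiths \cite{cg} and Amerik \cite{amerik}.

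First I would translate ``second type'' into a pointwise rank condition on a bundle map over $F$. Recall that $X=\{\phi=0\}$ with $\phi\in\Sym^3 V^\vee$, and that $F\subset G(2,V)$ is the zero locus of the section of $\Sym^3\mathscr{E}_2^\vee$ induced by $\phi$; in particular $\phi$ restricts to $0$ on $\Sym^3\mathscr{E}_2$. Polarizing $\phi$ with two entries in $\mathscr{E}_2$ and one in $V$ gives a morphism $\Sym^2\mathscr{E}_2\otimes V\to\calO_F$ which, because $\phi|_{\mathscr{E}_2}=0$, kills $\Sym^2\mathscr{E}_2\otimes\mathscr{E}_2$ and hence descends to a morphism of vector bundles on $F$
$$\mu:V/\mathscr{E}_2\longrightarrow\Sym^2\mathscr{E}_2^\vee ,$$
the relative second fundamental form of $X$ along the universal line. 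Over a point $[l]\in F$, the map $\mu_{[l]}$ is exactly the second-order term of $\phi$ occurring in the normal bundle sequence $0\to\mathscr{N}_{l/X}\to(V/\mathscr{E}_2)\otimes\calO_l(1)\to\calO_l(3)\to 0$, viewed as four binary quadrics on $l\cong\PP^1$.

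Second, I would establish the geometric heart of the matter: $l$ is of second type if and only if $\rk\mu_{[l]}\le 2$. Factoring the sheaf map $(V/\mathscr{E}_2)\otimes\calO_l(1)\to\calO_l(3)$ through the surjective multiplication $\Sym^2\mathscr{E}_2^\vee\otimes\calO_l(1)\to\calO_l(3)$ and analyzing the resulting extensions shows that $\rk\mu_{[l]}=3$ forces $\mathscr{N}_{l/X}\cong\calO(1)\oplus\calO^2$ (first type), whereas $\rk\mu_{[l]}\le 2$ forces $\mathscr{N}_{l/X}\cong\calO(1)^2\oplus\calO(-1)$ (second type, the relevant $\mathrm{Ext}^1$ vanishing making the extension split). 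Thus $\Sigma_2=D_2(\mu)$, the locus where the map $\mu$ between bundles of ranks $4$ and $3$ drops rank by one.

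Third, since $D_2(\mu)$ has expected codimension $(4-2)(3-2)=2$ and $\Sigma_2$ is a surface, the codimension is as expected and (for generic $X$) the determinantal scheme is reduced, so the Thom--Porteous formula \cite[Ch.~14]{fulton} applies and gives
$$[\Sigma_2]=c_1(\eta)^2-c_2(\eta),\qquad \eta:=\Sym^2\mathscr{E}_2^\vee-V/\mathscr{E}_2\in K^0(F).$$
Computing with Chern roots, and using that $V$ is trivial so $c(V/\mathscr{E}_2)=c(\mathscr{E}_2)^{-1}$, one finds $c(\eta)=c(\Sym^2\mathscr{E}_2^\vee)\cdot c(\mathscr{E}_2)$; with $c_1(\mathscr{E}_2)=-g$ and $c_2(\mathscr{E}_2)=c$ this yields $c_1(\eta)=2g$ and $c_2(\eta)=5c-g^2$, whence $[\Sigma_2]=(2g)^2-(5c-g^2)=5(g^2-c)$.

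The steps requiring the most care are the second one — the identification $\Sigma_2=D_2(\mu)$, i.e.\ matching the splitting type of $\mathscr{N}_{l/X}$ with the corank of the second fundamental form, which is the genuine geometric content — and the verification that $D_2(\mu)$ is reduced of pure codimension $2$, so that no excess-intersection correction enters the Thom--Porteous computation. The latter can be checked for generic $X$ and then propagated to all smooth $X$ by the deformation invariance of the class $g^2-c$; the remaining Chern-class bookkeeping in the third step is routine.
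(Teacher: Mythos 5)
Your argument is correct, but it is a genuinely different route from the paper's: the paper disposes of this lemma in two lines by citing \cite{cg} (Corollary 7.6) for the fact that $\Sigma_2$ is a surface and \cite{amerik} for the computation of its class, whereas you reconstruct the underlying argument. Your reconstruction is essentially the one behind those citations: the second fundamental form $\mu : V/\mathscr{E}_2 \to \Sym^2\mathscr{E}_2^\vee$ is the right bundle map, and the identification $\Sigma_2 = D_2(\mu)$ is correctly the geometric heart --- one clean way to see it is to twist the normal bundle sequence by $\calO_l(-1)$, which gives $h^0(\mathscr{N}_{l/X}(-1)) = 4 - \rk\mu_{[l]}$, equal to $1$ for $\calO(1)\oplus\calO^2$ and to $2$ for $\calO(1)^2\oplus\calO(-1)$, so second type is exactly $\rk\mu_{[l]}\le 2$. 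Your Chern class bookkeeping also checks out: with Chern roots $a,b$ of $\mathscr{E}_2$ one gets $c(\eta) = (1+3g+2g^2+4c+\cdots)(1-g+c)$, hence $c_1(\eta)=2g$, $c_2(\eta)=5c-g^2$, and Thom--Porteous gives $4g^2-(5c-g^2)=5(g^2-c)$. What your approach buys is a self-contained proof where the paper has only a reference; what it costs is having to verify the transversality input yourself.

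One small caution on that last point. The Thom--Porteous formula already holds in $\CH^2(F)$ (not just in cohomology) as soon as $D_2(\mu)$ has the expected codimension $2$, which is guaranteed by the Clemens--Griffiths fact that $\Sigma_2$ is a surface; the only remaining issue is that the determinantal scheme $D_2(\mu)$ be generically reduced along each component, so that its fundamental cycle is $[\Sigma_2]$ and not a multiple. That must be checked directly for the given smooth $X$ (this is what Amerik does, by a local computation at a generic line of second type). Your proposed patch --- checking reducedness for generic $X$ and propagating by ``deformation invariance of the class'' --- only controls the cohomology class, whereas the lemma as used in the paper is a statement in the Chow group; so you should replace that step by the direct generic-smoothness verification.
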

\begin{proof}
  The fact that $\Sigma_2$ is a surface is proved in \cite[Corollary
  7.6]{cg}.
  The cohomology class of $\Sigma_2$ is computed in \cite{amerik} and
  is actually valid at the level of Chow groups.
\end{proof}

\begin{prop} \label{prop intersect sigma2} For all $\sigma \in
  \CH^2(F)_{\hom}$, $[\Sigma_2] \cdot \sigma = 5g^2\cdot \sigma.$
\end{prop}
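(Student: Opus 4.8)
The plan is to reduce the identity to the already-established Chow-theoretic class computation for $\Sigma_2$ and then to show that the ``correction term'' involving $c$ drops out against homologically trivial $2$-cycles. First I would invoke Lemma \ref{lem class of Sigma2}, whose statement explicitly notes that the class $[\Sigma_2]=5(g^2-c)$ is valid not merely in cohomology but at the level of Chow groups, i.e. $[\Sigma_2]=5(g^2-c)\in\CH^2(F)$. Intersecting with an arbitrary $\sigma\in\CH^2(F)_{\hom}$ then gives the identity
\[
[\Sigma_2]\cdot\sigma = 5\,g^2\cdot\sigma - 5\,c\cdot\sigma \quad\in\CH^4(F),
\]
so that the proposition becomes equivalent to the vanishing $c\cdot\sigma=0$ for every homologically trivial $2$-cycle $\sigma$.

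The key step is then to establish this vanishing using Lemma \ref{lem special zero cycle}\emph{(iii)}, which asserts that for any $2$-cycle $\gamma\in\CH^2(F)$ the intersection $\gamma\cdot c$ is a multiple of $\mathfrak{o}_F$ in $\CH_0(F)$. Applying this with $\gamma=\sigma$ yields $c\cdot\sigma=\lambda\,\mathfrak{o}_F$ for some $\lambda\in\Q$. Since $\sigma$ is homologically trivial and $c$ is an honest cycle class, the product $c\cdot\sigma$ is again homologically trivial, hence of degree zero; as $\mathfrak{o}_F$ has degree $1$, this forces $\lambda=0$ and therefore $c\cdot\sigma=0$. Substituting back gives $[\Sigma_2]\cdot\sigma=5\,g^2\cdot\sigma$, as desired.

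I do not anticipate a genuine obstacle here: the argument is a short combination of the Chow-level class formula for $\Sigma_2$ and the ``intersection with $c$ lands in $\langle\mathfrak{o}_F\rangle$'' property, closed off by a degree count. The only subtlety worth flagging explicitly is that one must use the \emph{Chow-group} (and not merely cohomological) form of Lemma \ref{lem class of Sigma2}; this is precisely what makes the conclusion an equality in $\CH^4(F)$ rather than in $\HH^8(F,\Q)$, and it is exactly what is needed when feeding this proposition into the applications to the multiplicativity of the Fourier decomposition, where $\sigma$ ranges over classes such as those in $\CH^2(F)_2\subseteq\CH^2(F)_{\hom}$.
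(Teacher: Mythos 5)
Your proof is correct and follows exactly the paper's argument: combine the Chow-level identity $[\Sigma_2]=5(g^2-c)$ from Lemma \ref{lem class of Sigma2} with the vanishing $c\cdot\sigma=0$, which follows from Lemma \ref{lem special zero cycle}\emph{(iii)} together with the degree-zero observation for homologically trivial $\sigma$. No differences worth noting.
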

\begin{proof}
  Let $\sigma\in \CH^2(F)_{\mathrm{hom}}$, then by \emph{(iii)} of Lemma
  \ref{lem special zero cycle}, we have $c\cdot\sigma=0$. The
  proposition then follows from Lemma \ref{lem class of Sigma2}.
\end{proof}

Now we study the geometry of lines of second type. Let $l\subset X$ be
a line of second type. Then there exist a linear $\PP^3$, denoted
$\PP^3_{\langle l\rangle}$, containing $l$ which is tangent to $X$
along $l$. Let $B\cong\PP^1$ be the rational curve parameterizing all
planes containing $l$ and contained in the linear $\PP^3_{\langle
  l\rangle}$. For $t\in B$, we use $\Pi_t$ to denote the corresponding
plane. When $t$ is general, then $\Pi_t$ is not contained in $X$ and
hence we have
\[
\Pi_t\cdot X = 2l +l_t,
\]
for some line $l_t$. As $t$ moves in $B$, the points $[l_t]$ traces
out a curve $\PP^1\cong\mathcal{E}_{[l]}\subset F$. The lines
parameterized by $\mathcal{E}_{[l]}$ swipe out a cubic surface
singular along $l$, which is simply $\PP^3_{\langle l\rangle}\cap
X$.

\begin{prop}\label{prop lines of second type}
  Let $l\subset X$ be a line of second type and notation as above.
  Then there is a unique line $l'\subset X$, disjoint with $l$,
  together with a natural isomorphism $\alpha_1 :
  \mathcal{E}_{[l]}\rightarrow l'$ and a degree 2 morphism $\alpha_2 :
  \mathcal{E}_{[l]}\rightarrow l$ such that the following holds~: for
  all point $s\in \mathcal{E}_{[l]}$, the corresponding line
  $l_s\subset X$ is the line connecting the points $\alpha_1(s)$ and
  $\alpha_2(s)$. The surface $S$ is smooth away from $l$.
\end{prop}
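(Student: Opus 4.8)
The plan is to reduce the statement to a study of the cubic surface $S = \PP^3_{\langle l\rangle} \cap X$ and of its ruling by the residual lines $\{l_t\}_{t \in \mathcal{E}_{[l]}}$. First I would choose homogeneous coordinates $[x_0 : \cdots : x_5]$ on $\PP(V)$ so that $\PP^3_{\langle l\rangle} = \{x_4 = x_5 = 0\}$ and $l = \{x_2 = x_3 = 0\}$ inside $\PP^3_{\langle l\rangle}$. Writing $F$ for the restriction to $\PP^3_{\langle l\rangle}$ of the defining equation of $X$, the hypothesis that $\PP^3_{\langle l\rangle}$ is tangent to $X$ along $l$ is equivalent to $S = \{F = 0\}$ being singular along $l$, which forces $F$ to lie in the square of the ideal $(x_2, x_3)$; hence $F = a x_2^2 + b x_2 x_3 + c x_3^2$ for linear forms $a, b, c$ in $x_0, \dots, x_3$. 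Thus $S$ is a ruled cubic surface with double line $l$.

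Next I would make the ruling explicit. A plane $\Pi_{[s:t]} \in B \cong \PP^1$ through $l$ is $\{s x_2 + t x_3 = 0\}$, and substituting $x_3 = -(s/t) x_2$ into $F$ gives $F|_{\Pi_{[s:t]}} = (x_2^2/t^2)\, L_{[s:t]}$ with $L_{[s:t]} = t^2 a - s t b + s^2 c$; hence the residual line is $l_{[s:t]} = \Pi_{[s:t]} \cap \{L_{[s:t]} = 0\}$, and $\mathcal{E}_{[l]} \cong B \cong \PP^1$. Intersecting $l_{[s:t]}$ with $l = \{x_2 = x_3 = 0\}$ amounts to solving the single equation $t^2\,\bar a - s t\,\bar b + s^2\,\bar c = 0$ on $l$, where $\bar a, \bar b, \bar c$ denote the restrictions of $a, b, c$ to $l$; since these coefficients are binary quadratics in $(s,t)$, this produces a degree-$2$ morphism $\alpha_2 : \mathcal{E}_{[l]} \to l$, which is the second map claimed.

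For the existence of $l'$ I would exploit the structure of the ruled cubic surface $S$: its normalization is a Hirzebruch surface $\mathbb{F}_e$ whose ruling is the pencil of residual lines, on which the preimage of the double line $l$ is a bisection (reflecting that $\alpha_2$ has degree $2$), while a directrix section maps to a line $l'$ meeting every residual line exactly once. Equivalently, and this is where the bookkeeping lies, after using the residual coordinate freedom $x_i \mapsto x_i + (\text{linear in } x_2, x_3)$ for $i = 0,1$ together with the second-type hypothesis I expect to bring $F$ to the normal form $x_0 x_2^2 + x_1 x_3^2$. In that model $l' = \{x_0 = x_1 = 0\}$ is disjoint from $l$, the residual line $l_{[s:t]}$ is the join of $\alpha_2([s:t]) = [s^2 : -t^2 : 0 : 0] \in l$ and $\alpha_1([s:t]) = [0 : 0 : t : -s] \in l'$, and $\alpha_1$ is manifestly an isomorphism; this yields at once $l'$, $\alpha_1$, $\alpha_2$ and the incidence $l_s = \overline{\alpha_1(s)\,\alpha_2(s)}$. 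Finally, $\mathrm{Sing}\, S$ is contained in the common zero locus of $\partial F/\partial x_0 = a_0 x_2^2 + b_0 x_2 x_3 + c_0 x_3^2$ and $\partial F/\partial x_1 = a_1 x_2^2 + b_1 x_2 x_3 + c_1 x_3^2$, so $S$ is smooth away from $l$ as soon as these two binary quadratics in $(x_2, x_3)$ share no common zero.

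The hard part will be this third step: turning the second-type hypothesis $\mathscr{N}_{l/X} \cong \calO(1)^{\oplus 2} \oplus \calO(-1)$ into the precise nondegeneracy of $(a,b,c)$ needed to reach the normal form $x_0 x_2^2 + x_1 x_3^2$, equivalently to fix the invariant $e$ and to guarantee that the directrix $l'$ really is a line disjoint from $l$ with $\alpha_1$ an isomorphism, rather than one of the degenerate configurations ($l'$ meeting $l$, $\alpha_1$ of degree $>1$, or $S$ a cone) that occur for more special lines. I would handle this by computing $\mathscr{N}_{l/X}$ directly from the full equation of $X$, so that second type becomes the coprimality of the two quadrics $P_4|_l, P_5|_l$ obtained from $\partial/\partial x_4$ and $\partial/\partial x_5$ along $l$, and then checking that this coprimality forces the relevant binary forms in $(s,t)$ to have no common factor. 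This simultaneously makes $\alpha_1$ an isomorphism onto a line $l'$ disjoint from $l$ and guarantees $\mathrm{Sing}\, S = l$, completing the proof.
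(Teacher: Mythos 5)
Your overall route is the same as the paper's: choose coordinates on $\PP^3_{\langle l\rangle}$ with $l=\{X_2=X_3=0\}$, observe that $S=\PP^3_{\langle l\rangle}\cap X$ is cut out by a cubic lying in $(X_2,X_3)^2$, read the ruling off the pencil of planes through $l$, take $l'$ to be the directrix $\{X_0=X_1=0\}$ after normalizing the equation, and get $\alpha_2$ of degree $2$ and $\alpha_1$ an isomorphism from the resulting binary forms; your computations in the normal form $X_0X_2^2+X_1X_3^2$ are correct.

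The gap is exactly in the step you flag as the hard part, and your proposed way of closing it cannot work. Write the defining cubic of $X$ as $G=\Phi(X_0,\dots,X_3)+X_4B_4+X_5B_5$, where $\Phi=G|_{X_4=X_5=0}$ is the equation of $S$. The surface $S$, hence the pencil of quadrics $\langle Q_0,Q_1\rangle$ and all the binary forms in $(s,t)$ that define $\alpha_1$ and $\alpha_2$, depend only on $\Phi$; on the other hand $(\partial G/\partial X_4)|_l=B_4|_l$ and $(\partial G/\partial X_5)|_l=B_5|_l$ depend only on the complementary summand of $G$. These two sets of data can be prescribed independently of one another, so coprimality of $B_4|_l$ and $B_5|_l$ (which, given the tangent $\PP^3$, is indeed equivalent to smoothness of $X$ along $l$) imposes no constraint whatsoever on $Q_0,Q_1$, and your third step collapses. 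The correct input, and the one the paper uses, is the \emph{global} smoothness of $X$: if the pair $(Q_0,Q_1)$ degenerates, then $S$ degenerates (in the proportional case the paper reduces to $X_0X_2X_3=0$ or $X_0X_2^2=0$) and $X$ is forced to acquire a singular point. Note moreover that even after switching to this source of nondegeneracy you have more to check than the paper records: smoothness as invoked there only yields linear independence of $Q_0,Q_1$, whereas your normal form $X_0X_2^2+X_1X_3^2$ requires the strictly stronger condition that the pencil $\langle Q_0,Q_1\rangle$ be base-point-free. A linearly independent pair with a common root, such as $(X_2^2,X_2X_3)$, makes $S$ contain a plane through $l$ and makes $\alpha_1$ fail to be an isomorphism at the base point; excluding this configuration requires a separate argument using smoothness of $X$ along that plane, not the splitting type of $\mathscr{N}_{l/X}$.
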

\begin{proof}
  This is the consequence of the geometry of cubic surfaces singular
  along a curve. Let $S=\PP^3_{\langle l\rangle}\cap X$ be the cubic
  surface swept out by all lines parameterized by $\mathcal{E}_{[l]}$.
  Then $S$ is singular along $l$. Take homogeneous coordinates
  $[X_0 :X_1 :X_2 :X_3]$ on $\PP^3_{\langle l\rangle}$ such that $l$ is
  defined by $X_2=X_3=0$. Let $l'$ be the line defined by $X_2=X_3=0$.
  We easily see that $S\subset \PP^3_{\langle l\rangle}$ is defined by
  an equation
\[
 X_0Q_1(X_2,X_3) - X_1Q_0(X_2,X_3)=0,
\]
where $Q_0, Q_1$ are homogeneous polynomials of degree 2 in $X_2,X_3$.
We see that the smoothness of $X$ forces $Q_0$ and $Q_1$ to be
linearly independent. Indeed, if $Q_0$ and $Q_1$ are proportional to
each other, then we can apply a linear change of coordinates and the
equation for $S$ in $\PP^3_{\langle l\rangle}$ can be written as
$X_0X_2X_3=0$ or $X_0X_2^2=0$. In either case, one easily checks that
$X$ is singular at the point $[0 :1 :0 :0]\in\PP^{3}_{\langle l\rangle}$.
We use $[X_0 :X_1]$ as homogeneous coordinates on $l$. Pick a point
$t=[t_0 :t_1]\in l$. A line on $S$ passing through $t$ can be
parameterized by
\[ [\lambda_0 :\lambda_1]\mapsto [\lambda_0t_0 :\lambda_0t_1 : \lambda_1
a : \lambda_2 b],
\]
where $t_0Q_1(a,b)-t_1Q_0(a,b)=0$. We note that any such line always
meets the line $l'$ at the point $[0 :0 :a :b]$. For a fixed $t$, we have
two pairs (counted with multiplicities) $(a,b)$ satisfying the above
equation. Consequently through a general point $t\in l$, there are two
points $s_1,s_2\in\mathcal{E}_{[l]}$ such that both $l_{s_1}$ and
$l_{s_2}$ pass through $t$. Then we take $\alpha_1(s) = l_s \cap l'$
and $\alpha_2(s) = l_s \cap l$ for all $s \in \mathcal{E}_{[l]}$. We
still need to show that $S$ is smooth away from $l$. Note that the
above description of $S$ implies that $S$ is smooth away from $l$ and
$l'$. However the fact that $\alpha_1$ is an isomorphism forces $S$ to
be smooth along $l'$.
\end{proof}

\begin{prop}\label{prop secant line mult}
  Let $l\subset X$ be a line of second type and
  $s_1,s_2\in\mathcal{E}_{[l]}$ two points such that the corresponding
  lines $l_1=l_{s_1}$ and $l_2=l_{s_2}$ are disjoint. Then the set of
  secant lines of $(l_1,l_2)$ is given by $4l+l'$, where $l'$ is the
  line as in Proposition \ref{prop lines of second type}.
\end{prop}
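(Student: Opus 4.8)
The plan is to reduce the computation to the geometry of the cubic surface $S=\mathbb{P}^3_{\langle l\rangle}\cap X$ and then carry out an explicit intersection count on $l_1\times l_2$. First I would note that, since $l_1$ and $l_2$ are disjoint lines both lying on $S$ (they are members of the ruling $\mathcal{E}_{[l]}$ from Proposition \ref{prop lines of second type}), they span a $\mathbb{P}^3$ contained in $\mathbb{P}^3_{\langle l\rangle}$, hence equal to it for dimension reasons. Any secant line of $(l_1,l_2)$ is $\overline{P_1P_2}$ for $P_1\in l_1$, $P_2\in l_2$ distinct, so it lies in this $\mathbb{P}^3$ and thus on $S$. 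The problem therefore becomes the computation of the cycle of lines on $S$ meeting both $l_1$ and $l_2$, which by the secant-line count recalled from \cite{relation} (with $d_1=d_2=1$) has total degree $5$.

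Next I would use the explicit model of $S$ from Proposition \ref{prop lines of second type}: in coordinates $[X_0:X_1:X_2:X_3]$ with $l=\{X_2=X_3=0\}$ and $l'=\{X_0=X_1=0\}$, one has $S=\{X_0Q_1(X_2,X_3)-X_1Q_0(X_2,X_3)=0\}$ with $Q_0,Q_1$ linearly independent binary quadrics. This lets me parameterize $\mathcal{E}_{[l]}$ by $[a:b]\in l'$, the ruling line joining $[Q_0(a,b):Q_1(a,b):0:0]\in l$ to $[0:0:a:b]\in l'$. Writing a point of $l_i$ as $u_iA_i+v_iB_i$, where $A_i\in l$ and $B_i\in l'$ are the two ends of $l_i$, the condition that $\overline{P_1P_2}$ lie on $S$ amounts, after discarding the automatically vanishing $w_1^3$- and $w_2^3$-coefficients of the restricted cubic, to two equations $A=0$ and $B=0$ of bidegrees $(2,1)$ and $(1,2)$ on $l_1\times l_2=\mathbb{P}^1\times\mathbb{P}^1$, whose product degree $2\cdot 2+1\cdot 1=5$ recovers the secant count.

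The heart of the argument is that these equations factor as $A=v_1\tilde A$ and $B=v_2\tilde B$, where $\tilde A=u_1v_2S_1+u_2v_1R$ and $\tilde B=u_2v_1S_2-u_1v_2R$, with $R,S_1,S_2$ the $2\times 2$ minors built from the values of $Q_0,Q_1$ and their polarization at $s_1,s_2$; here $R\neq 0$ precisely because $l_1$ and $l_2$ are disjoint (they do not meet on $l$). Solving the system shows that the only solutions are $(A_1,A_2)$, giving the secant line $l$, and $(B_1,B_2)$, giving $l'$. I would then compute local multiplicities. At $(B_1,B_2)$ the Jacobian of $(A,B)$ is $\left(\begin{smallmatrix}S_1&R\\-R&S_2\end{smallmatrix}\right)$, of determinant $S_1S_2+R^2$, so $l'$ occurs with multiplicity $1$; at $(A_1,A_2)$ the forms become $A=v_1(v_1R+v_2S_1)$ and $B=v_2(v_1S_2-v_2R)$, each a product of two distinct lines through the origin, and bilinearity of the intersection multiplicity together with the transversality guaranteed by $S_1S_2+R^2\neq 0$ yields $1+1+1+1=4$. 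Hence the secant cycle is $4l+l'$.

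The main obstacle I anticipate is controlling the nondegeneracy $S_1S_2+R^2\neq 0$: this is exactly what guarantees both that the secant scheme is finite (so the count $5$ applies) and that the multiplicities come out transversally as $1$ and $4$ rather than being absorbed into a positive-dimensional excess locus. I would verify it holds for a general disjoint pair $s_1,s_2\in\mathcal{E}_{[l]}$ by direct computation, and then extend the identity $4l+l'$ to every disjoint pair by a limiting argument: since $l$ and $l'$ depend only on the line $l$ and not on the choice of $s_1,s_2$, and the secant cycle is the specialization of a family of $0$-cycles of constant degree $5$ (as in Lemma \ref{lem specialization of secant lines}), its value $4l+l'$ on a dense set of disjoint pairs propagates to all of them.
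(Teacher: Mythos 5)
Your route is genuinely different from the paper's. The paper argues softly: it first shows the secant scheme is finite (a secant line other than $l$ or $l'$ lies in the smooth locus of $S=\PP^3_{\langle l\rangle}\cap X$ and has normal bundle $\calO(-1)$ there, hence is rigid), takes the total count $5$ from \cite{relation}, reads off the multiplicity $4$ of $l$ from the multiplicity criterion of \cite[\S 3]{relation} (both $l_1$ and $l_2$ point in the positive direction of $\mathscr{N}_{l/X}$), and gets multiplicity $1$ for $l'$ by subtraction. Your explicit computation on $l_1\times l_2$ replaces the appeal to \cite[\S3]{relation} by a direct local calculation, and its skeleton is correct: the two residual equations do have bidegrees $(2,1)$ and $(1,2)$, they factor as $A=v_1\tilde A$ and $B=v_2\tilde B$ with $\tilde A,\tilde B$ supported on the monomials $u_1v_2$ and $u_2v_1$, disjointness of $l_1,l_2$ is exactly $R\neq 0$, and when $S_1S_2+R^2\neq 0$ the only solutions are $(A_1,A_2)$ and $(B_1,B_2)$ with local multiplicities $4$ and $1$.

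The gap is in how you dispose of the degenerate case $S_1S_2+R^2=0$. Your specialization argument presupposes that the secant scheme of the special pair is still finite of degree $5$, but finiteness is precisely what fails there: if $S_1S_2+R^2=0$ (with $R\neq0$) the two $(1,1)$-forms $\tilde A$ and $\tilde B$ become proportional, $V(A)\cap V(B)$ contains the curve $V(\tilde A)$, and $(l_1,l_2)$ would have infinitely many secant lines — in which case the conclusion of the proposition is simply false for that pair, and no limit of cycles from nearby pairs can rescue it. So you must \emph{exclude} this case rather than specialize past it. This is doable and not hard: a curve inside $V(\tilde A)$ would produce a one-parameter family of ruling lines of $S$ all passing through the point $A_1=l_1\cap l$, whereas the ruling lines through a fixed point of $l$ are cut out by the binary quadric $Q_0(s_1)Q_1-Q_1(s_1)Q_0$, which is nonzero because $Q_0,Q_1$ are linearly independent; alternatively, import the paper's rigidity argument for lines on $S$. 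A second, smaller point: the multiplicities you compute are local lengths of $V(A)\cap V(B)$ on $l_1\times l_2$, while those in the statement are the multiplicities of the secant cycle in the sense of \cite{relation} (equivalently of $S_{l_1}\cdot S_{l_2}$ on $F$, as used in Proposition \ref{prop intersection identities}); you should justify that the natural map from your incidence scheme to $F$ identifies the two scheme structures, or at least their lengths.
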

\begin{proof}
  Any secant line $L$ of $(l_1,l_2)$ different from $l$ or $l'$ is in
  the smooth locus of $S$. This implies that
  $\mathscr{N}_{l/S}\cong\calO(-1)$. It follows that $L$ does not move
  on $S$. Thus $(l_1,l_2)$ has finitely many, and hence five, secant
  lines. Note that both $l_1$ and $l_2$ are pointing the the positive
  direction of $\mathscr{N}_{l/X}$. Hence the multiplicity of $l$, as
  a secant line of $(l_1,l_2)$ is 4~; see \cite[\S3]{relation}. Hence
  the multiplicity of $l'$ has to be 1. This finishes the proof.
\end{proof}


\vspace{10pt}
\section{Rational maps and Chow groups} \label{app rat chow}

We consider $\varphi  : X \dashrightarrow Y$ a rational map between
projective varieties defined over a field $k$ of characteristic
zero. Let
$$
\xymatrix{ \widetilde{X} \ar[dr]^{\widetilde{\varphi}} \ar[d]_\pi \\
  X \ar@{-->}[r]^{\varphi} & Y }
$$ be a resolution of $\varphi$, that is, $\pi  : \widetilde{X}
\rightarrow X$ is a projective birational morphism such that $\varphi$
extends to a morphism $\widetilde{\varphi}  : \widetilde{X} \rightarrow
Y$. Note that no smoothness assumption is required on $\widetilde X$.

Assume that $X$ is smooth.  Then we define a map $\varphi_*  : \CH_l(X)
\rightarrow \CH_l(Y)$ by the formula
\begin{center}
  $\varphi_* \alpha  := \widetilde{\varphi}_*\pi^* \alpha$ \quad for all
  $\alpha \in \CH_l(X)$.
\end{center}
Here, $\pi^*  : \CH_l(X) \rightarrow \CH_l(\widetilde{X})$ is the
pull-back map as defined in \cite[\S 8]{fulton}.

\begin{lem} \label{lem covariant} Assume that $X$ is smooth. The map
  $\varphi_*  : \CH_l(X) \rightarrow \CH_l(Y)$ defined above does not
  depend on a choice of resolution for $\varphi$.
\end{lem}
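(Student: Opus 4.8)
The plan is to show that any two resolutions of $\varphi$ can be dominated by a common one, which reduces the independence statement to a single comparison computation. Suppose we have two resolutions $\pi_i : \widetilde{X}_i \rightarrow X$ with $\widetilde{\varphi}_i : \widetilde{X}_i \rightarrow Y$, $i=1,2$. First I would form the closure $W$ of the graph of the birational map $\widetilde{X}_1 \dashrightarrow \widetilde{X}_2$ inside $\widetilde{X}_1 \times_X \widetilde{X}_2$, together with the two projections $r_i : W \rightarrow \widetilde{X}_i$. Each $r_i$ is a projective birational morphism, and $\pi_1 \circ r_1 = \pi_2 \circ r_2 =: \rho : W \rightarrow X$ gives a third resolution, since $\widetilde{\varphi}_1 \circ r_1 = \widetilde{\varphi}_2 \circ r_2 =: \widetilde{\varphi}_W$ (the two agree on the dense open locus where $\varphi$ is defined, hence everywhere as both are morphisms to the separated scheme $Y$). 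Thus it suffices to prove that the definition using $(\widetilde{X}_1,\pi_1,\widetilde{\varphi}_1)$ agrees with the one using $(W,\rho,\widetilde{\varphi}_W)$, and symmetrically for the index $2$; by symmetry I only need to treat the case where one resolution factors through the other.

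So reduce to the situation of a single projective birational morphism $r : W \rightarrow \widetilde{X}_1$ with $\rho = \pi_1 \circ r$ and $\widetilde{\varphi}_W = \widetilde{\varphi}_1 \circ r$. I must then verify
\begin{equation*}
  \widetilde{\varphi}_{W,*}\,\rho^* \alpha = \widetilde{\varphi}_{1,*}\,\pi_1^* \alpha \quad \text{for all } \alpha \in \CH_l(X).
\end{equation*}
Expanding the left-hand side using $\widetilde{\varphi}_W = \widetilde{\varphi}_1 \circ r$ and functoriality of proper push-forward gives $\widetilde{\varphi}_{W,*}\,\rho^*\alpha = \widetilde{\varphi}_{1,*}\,r_*\,\rho^*\alpha$. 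Since $\rho = \pi_1 \circ r$ and $\pi_1$ is a morphism between (the smooth) $X$ and $\widetilde{X}_1$, the key identity I need is the projection-type formula $r_*\,\rho^*\alpha = \pi_1^*\alpha$. Here $\rho^* = (\pi_1 \circ r)^*$ is the refined Gysin/flat-or-lci pull-back of \cite[\S8]{fulton}; because $X$ is smooth, $\pi_1^*$ is defined via intersection with the graph, and the compatibility $r_*(\pi_1 \circ r)^* = \pi_1^*$ is precisely the statement that $r$, being proper birational, satisfies $r_* r^* = \mathrm{id}$ on $\CH_*(\widetilde{X}_1)$ combined with the functoriality $(\pi_1 \circ r)^* = r^* \pi_1^*$ of pull-back along the smooth target. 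Applying $r_*$ and using $r_* r^* = \mathrm{id}$ yields $r_*\,\rho^*\alpha = r_* r^*\,\pi_1^*\alpha = \pi_1^*\alpha$, as desired.

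I would carry these steps out in the order: (1) construct the common dominating resolution $W$ and check the two extended morphisms agree on $W$; (2) reduce to the one-sided factorization case; (3) establish $r_* r^* = \mathrm{id}$ for proper birational $r$ and the functoriality $(\pi_1 \circ r)^* = r^* \pi_1^*$; (4) conclude by the short computation above. The main obstacle I anticipate is step (3), and specifically the justification of $r_* r^* = \mathrm{id}$ on Chow groups when $W$ and $\widetilde{X}_1$ need not be smooth: one must interpret $r^*$ correctly as the Gysin pull-back associated to the graph of $r$ inside $W \times \widetilde{X}_1$ (using that $\widetilde{X}_1$ need not be smooth but $X$ is, and that $r$ is birational and proper so the generic fiber is a single reduced point), and invoke the correct refined intersection formalism of \cite[\S6, \S8]{fulton}. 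Care is needed because the varieties involved are not assumed smooth — only $X$ is — so all pull-backs must be phrased through lci or flat morphisms and the smooth base $X$, rather than through na\"ive flat pull-back on $\widetilde X_1$; once the formalism is set up correctly, the identity $r_* r^* = \mathrm{id}$ for a proper birational morphism is standard.
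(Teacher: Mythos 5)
Your proposal is correct and follows essentially the same route as the paper: dominate both resolutions by a common one (the paper simply asserts its existence; you build it as the closure of the graph in $\widetilde X_1\times_X\widetilde X_2$), reduce to the case where one resolution factors through the other, and verify $r_*(\pi_1\circ r)^*=\pi_1^*$. The only remark worth making is that the factorization you worry about in step (3) is unnecessary: since $X$ is smooth, the identity $r_*(\pi_1\circ r)^*\alpha=\pi_1^*\alpha$ is a single application of the projection formula of \cite[Proposition 8.1.1]{fulton} together with $r_*[W]=[\widetilde X_1]$, which is exactly how the paper concludes and sidesteps having to define $r^*$ between possibly singular varieties.
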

\begin{proof}
  Let $\pi_1  : X_1 \rightarrow X$, $\varphi_1  : X_1 \rightarrow Y$ be
  a resolution of $\varphi$, and let $\pi_2  : X_2 \rightarrow X$,
  $\varphi_2  : X_2 \rightarrow Y$ be another resolution of $\varphi$.
  There is then a projective variety $\widetilde{X}$ and birational
  morphisms $\widetilde{\pi}_1  : \widetilde{X} \rightarrow X_1$ and
  $\widetilde{\pi}_2  : \widetilde{X} \rightarrow X_2$ such that both
  $\varphi_1$ and $\varphi_2$ extend to a morphism
  $\widetilde{\varphi}  : \widetilde{X} \rightarrow Y$. It is then
  enough to see that
  $$\widetilde{\varphi}_* (\pi_1 \circ \widetilde{\pi}_1)^* =
  (\varphi_1)_* \pi_1^*  : \CH_l(X) \rightarrow \CH_l(Y). $$ The
  identity $\varphi_1 \circ \widetilde{\pi}_1 = \widetilde{\varphi}$
  gives $\widetilde{\varphi}_* (\pi_1 \circ \widetilde{\pi}_1)^* =
  (\varphi_1)_* (\widetilde{\pi}_1)_* (\pi_1 \circ
  \widetilde{\pi}_1)^*$. We may then conclude by the projection
  formula \cite[Proposition 8.1.1]{fulton} which implies that
  $(\widetilde{\pi}_1)_* (\pi_1 \circ \widetilde{\pi}_1)^* = \pi_1^*$.
\end{proof}

If $Y$, instead of $X$, is assumed to be smooth, then we define a
map $\varphi^*  : \CH^l(Y) \rightarrow \CH^l(X)$ by the formula
\begin{center}
  $\varphi^* \beta := \pi_*\widetilde{\varphi}^* \beta$\quad for all
  $\beta \in \CH^l(Y)$.
\end{center}
Likewise, we have
\begin{lem} \label{lem contravariant} Assume that $Y$ is smooth. The
  map $\varphi^*  : \CH^l(Y) \rightarrow \CH^l(X)$ defined above does
  not depend on a choice of resolution for $\varphi$. \qed
\end{lem}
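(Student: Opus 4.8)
The plan is to mimic the proof of Lemma \ref{lem covariant}, replacing the projection-formula step (which there used the smoothness of $X$) by the compatibility of refined Gysin pull-backs with proper push-forward, which here is available because the \emph{target} $Y$ is smooth. Given two resolutions $(\pi_1 : X_1 \to X,\ \varphi_1 : X_1 \to Y)$ and $(\pi_2 : X_2 \to X,\ \varphi_2 : X_2 \to Y)$ of $\varphi$, I would first dominate both by a common resolution: take $\widetilde X$ to be the main component of the fibre product $X_1 \times_X X_2$, with its two birational projections $\widetilde\pi_1 : \widetilde X \to X_1$ and $\widetilde\pi_2 : \widetilde X \to X_2$. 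By construction $\pi_1 \circ \widetilde\pi_1 = \pi_2 \circ \widetilde\pi_2 =: \pi$ as morphisms $\widetilde X \to X$, and since $\varphi_1 \circ \widetilde\pi_1$ and $\varphi_2 \circ \widetilde\pi_2$ agree on the dense open locus where $\varphi$ is a morphism and $Y$ is separated, they agree everywhere; call this common morphism $\widetilde\varphi : \widetilde X \to Y$. It then suffices to prove the single identity
\[ \widetilde\pi_{1*}\, \widetilde\varphi^* = \varphi_1^*  : \CH^l(Y) \longrightarrow \CH^l(X_1), \]
for then $\pi_{1*}\varphi_1^* = \pi_{1*}\widetilde\pi_{1*}\widetilde\varphi^* = \pi_*\widetilde\varphi^*$, and by the symmetric statement $\pi_{2*}\varphi_2^* = \pi_*\widetilde\varphi^*$ as well, whence $\pi_{1*}\varphi_1^* = \pi_{2*}\varphi_2^*$.

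To prove the displayed identity, recall that because $Y$ is smooth the pull-back $\varphi_1^*$ is the refined Gysin homomorphism $\gamma_1^!$ attached to the regular embedding $\gamma_1 : X_1 \hookrightarrow X_1 \times Y$ given by the graph of $\varphi_1$, applied to $[X_1] \times \beta$; similarly $\widetilde\varphi^*\beta = \widetilde\gamma^!([\widetilde X] \times \beta)$ for the graph $\widetilde\gamma : \widetilde X \hookrightarrow \widetilde X \times Y$. The key observation is that
\[
\begin{CD}
\widetilde X @>{\widetilde\gamma}>> \widetilde X \times Y \\
@VV{\widetilde\pi_1}V @VV{\widetilde\pi_1 \times \mathrm{id}}V \\
X_1 @>{\gamma_1}>> X_1 \times Y
\end{CD}
\]
is a fibre square: the graph $\widetilde\gamma(\widetilde X)$ is exactly the preimage of $\gamma_1(X_1)$ under $\widetilde\pi_1 \times \mathrm{id}$, because $\widetilde\varphi = \varphi_1 \circ \widetilde\pi_1$. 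Moreover $(\widetilde\pi_1 \times \mathrm{id})_*([\widetilde X] \times \beta) = [X_1] \times \beta$, since $\widetilde\pi_1$ is birational. The compatibility of $\gamma_1^!$ with proper push-forward along this square (Fulton \cite[Theorem 6.2]{fulton}) then gives
\[ \widetilde\pi_{1*}\big(\widetilde\gamma^!([\widetilde X]\times\beta)\big) = \gamma_1^!\big((\widetilde\pi_1\times\mathrm{id})_*([\widetilde X]\times\beta)\big) = \gamma_1^!([X_1]\times\beta), \]
which is precisely $\widetilde\pi_{1*}\widetilde\varphi^*\beta = \varphi_1^*\beta$.

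The main obstacle, and the only genuine difference from the covariant case, is exactly this last step: in Lemma \ref{lem covariant} one invoked the ordinary projection formula, which relies on $X$ (hence $X_1$ and $\widetilde X$) being smooth so that intersection products are defined, whereas here no smoothness of the source is available and the argument must instead be routed entirely through the regular embeddings of the graphs into products with the smooth $Y$, using Gysin-theoretic functoriality. I would therefore take care to (i) verify that the square above is genuinely cartesian with reduced upper-left corner $\widetilde X$, so that $\widetilde\gamma^!$ and $\gamma_1^!$ are the refined Gysin maps attached to compatible normal data coming from $Y$, and (ii) confirm the codimension bookkeeping, namely that $\pi_*$ and $\widetilde\pi_{1*}$ preserve codimension because $\pi$ and $\widetilde\pi_1$ are birational, so that $\widetilde\varphi^*\beta$ and $\varphi_1^*\beta$ indeed land in the asserted groups. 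Apart from these verifications the argument is formal.
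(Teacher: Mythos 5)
Your proof is correct. The paper gives no argument for this lemma (it is stated without proof, the preceding ``Likewise'' indicating that one argues as for Lemma \ref{lem covariant}), and your write-up is precisely the intended argument with the missing details supplied: you correctly identify the one point where the covariant proof does not transfer verbatim --- since the source is not assumed smooth, the projection-formula step must be routed through the graph embeddings into products with the smooth variety $Y$ and the compatibility of refined Gysin homomorphisms with proper push-forward (Fulton, Theorem 6.2(a) together with 6.2(c) to identify $\gamma_1^!$ with $\widetilde{\gamma}^!$ on cycles of $\widetilde{X}\times Y$), which amounts to Fulton's projection formula for morphisms to a smooth target, i.e.\ the contravariant counterpart of the \cite[Proposition 8.1.1]{fulton} step invoked in the proof of Lemma \ref{lem covariant}. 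Your verification that the graph square is cartesian and the dimension bookkeeping are both fine.
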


Let now $p_X  : X \times Y \rightarrow X$ and $p_Y  : X \times Y
\rightarrow Y$ be the natural projections, and let $\Gamma_\varphi
\subset X \times Y$ be the closure of the graph of $\varphi$, given
with the natural projections
$$\xymatrix{\Gamma_\varphi \ar[d]_p \ar[r]^q & Y \\ X}$$ The morphism
$p$ is birational and the morphism $q$ clearly extends $\varphi$.
Therefore, as a consequence of Lemmas \ref{lem covariant} and \ref{lem
  contravariant}, we get

\begin{lem} \label{lem comp correspondences} If $X$ is smooth, then
  $\varphi_* \alpha = q_*p^*\alpha$ for all $\alpha \in \CH_l(X)$. If
  $Y$ is smooth, then $\varphi^*\beta = p_*q^* \beta$ for all $\beta
  \in \CH^l(Y)$.  If $X$ and $Y$ are both smooth, then
  \begin{center}
    $\varphi_* \alpha = (\Gamma_{\varphi})_*\alpha :=
    (p_Y)_*(\Gamma_\varphi \cdot p_X^*\alpha)$ \quad and \quad
    $\varphi^*\beta = (\Gamma_{\varphi})^* \beta :=
    (p_X)_*(\Gamma_\varphi \cdot p_Y^*\beta)$.
  \end{center}\qed
\end{lem}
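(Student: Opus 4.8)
The plan is to use the closure of the graph $\Gamma_\varphi \subset X \times Y$ as one particular choice of resolution of $\varphi$, and then to read off all three assertions from the definitions of $\varphi_*$, $\varphi^*$ together with the projection formula. First I would verify that the triple $(\Gamma_\varphi, p, q)$ genuinely is a resolution in the sense used to define $\varphi_*$ and $\varphi^*$: the projection $p : \Gamma_\varphi \to X$ is projective (since $\Gamma_\varphi$ is closed in $X \times Y$ and $Y$ is projective) and birational (it restricts to an isomorphism onto the dense open locus $U \subseteq X$ on which $\varphi$ is defined, via $x \mapsto (x, \varphi(x))$), while $q : \Gamma_\varphi \to Y$ extends $\varphi$ since $q = p_Y \circ \iota$ agrees with $\varphi$ on that same dense locus. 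Crucially, no smoothness of $\Gamma_\varphi$ is required, exactly as recorded in the remark preceding the definitions.

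Granting this, the first two equalities are immediate. When $X$ is smooth, Lemma \ref{lem covariant} asserts that $\varphi_*$ may be computed by any resolution; applying the defining formula to $(\Gamma_\varphi, p, q)$ gives $\varphi_* \alpha = q_* p^* \alpha$ for all $\alpha \in \CH_l(X)$. Symmetrically, when $Y$ is smooth, Lemma \ref{lem contravariant} and the defining formula applied to the same resolution yield $\varphi^* \beta = p_* q^* \beta$ for all $\beta \in \CH^l(Y)$.

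For the last assertion, assume both $X$ and $Y$ smooth, so that $X \times Y$ is smooth and the refined operations of \cite[Ch.~8]{fulton} are available. Writing $\iota : \Gamma_\varphi \hookrightarrow X \times Y$, I would use $p = p_X \circ \iota$ and $q = p_Y \circ \iota$ together with functoriality of pull-back to the smooth target $X$ to get $p^* \alpha = \iota^* p_X^* \alpha$. The projection formula for the closed embedding $\iota$ then gives $\iota_*(\iota^* p_X^* \alpha) = \iota_*[\Gamma_\varphi] \cdot p_X^* \alpha = \Gamma_\varphi \cdot p_X^* \alpha$, where on the right $\Gamma_\varphi$ denotes the correspondence class $\iota_*[\Gamma_\varphi] \in \CH^*(X \times Y)$. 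Pushing forward along $p_Y$ and using $p_Y \circ \iota = q$ yields $q_* p^* \alpha = (p_Y)_*(\Gamma_\varphi \cdot p_X^* \alpha) = (\Gamma_\varphi)_* \alpha$, which combined with the previous paragraph gives $\varphi_* \alpha = (\Gamma_\varphi)_* \alpha$. The identity $\varphi^* \beta = (\Gamma_\varphi)^* \beta$ follows by the same manipulation with the roles of the two projections interchanged.

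The computations are routine; the only point demanding care is the bookkeeping in the third step. Since $\Gamma_\varphi$ may be singular, I must invoke the pull-back and the projection formula of \cite[Ch.~8]{fulton} for morphisms to the smooth variety $X \times Y$ (rather than any lci or flatness hypothesis on $\iota$), and I must justify the compatibility $p^* = \iota^* \circ p_X^*$ as an instance of the functoriality of these refined pull-backs to smooth targets. This is the main — though still entirely standard — obstacle; everything else is a direct application of the two well-definedness lemmas.
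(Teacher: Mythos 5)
Your proposal is correct and follows essentially the same route as the paper: observe that $(\Gamma_\varphi, p, q)$ is a resolution of $\varphi$, invoke the independence-of-resolution Lemmas \ref{lem covariant} and \ref{lem contravariant} to get the first two formulas, and unwind the correspondence action via the projection formula for the closed embedding $\iota : \Gamma_\varphi \hookrightarrow X \times Y$ to get the third. Your write-up merely makes explicit the bookkeeping that the paper leaves implicit.
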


We now want to understand, when $X$ and $Y$ are both smooth, to which
extent $\varphi^*  : \CH^*(Y) \rightarrow \CH^*(X)$ is compatible with
intersection product.

\begin{lem} \label{lem pushcup} Let $\pi  : \widetilde{X} \rightarrow
  X$ be a dominant morphism between smooth projective varieties and
  let $x$ and $y$ be two cycles in $\CH_*(\widetilde{X})$. Then
  $$\pi_*(x\cdot y) = \pi_*x \cdot \pi_*y +
  \pi_*\big((x-\pi^*\pi_*x)\cdot (y-\pi^*\pi_*y) \big).$$
\end{lem}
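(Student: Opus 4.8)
The plan is to expand the second summand on the right-hand side and reduce everything to the projection formula. Writing $a := \pi^*\pi_* x$ and $b := \pi^*\pi_* y$ for brevity, I would first distribute the product on the smooth variety $\widetilde{X}$ and use additivity of $\pi_*$ to get
$$\pi_*\big((x-a)(y-b)\big) = \pi_*(x\cdot y) - \pi_*(x\cdot b) - \pi_*(a \cdot y) + \pi_*(a\cdot b).$$
This is legitimate because the intersection product on $\widetilde{X}$ is bilinear (both $X$ and $\widetilde{X}$ are smooth, so the relevant Chow rings and the pullback $\pi^*$ are defined), and $\pi$ is proper, so $\pi_*$ makes sense.

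Next I would dispose of the three correction terms via the projection formula $\pi_*(\pi^*\gamma \cdot \beta) = \gamma \cdot \pi_*\beta$ (Fulton, \cite[Proposition 8.3]{fulton}), valid here since $\pi$ is a proper morphism between smooth projective varieties. For the two cross terms this gives $\pi_*(x\cdot \pi^*\pi_* y) = \pi_* x \cdot \pi_* y$ and, by commutativity of the product, $\pi_*(\pi^*\pi_* x \cdot y) = \pi_* x \cdot \pi_* y$. For the remaining term I would first use that $\pi^*$ is a ring homomorphism, so that $a\cdot b = \pi^*(\pi_* x)\cdot \pi^*(\pi_* y) = \pi^*(\pi_* x \cdot \pi_* y)$, and then invoke $\pi_*\pi^* = \mathrm{id}$ to conclude $\pi_*(a\cdot b) = \pi_* x \cdot \pi_* y$. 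Substituting the three evaluations back, the correction collapses to $\pi_*\big((x-a)(y-b)\big) = \pi_*(x\cdot y) - \pi_* x \cdot \pi_* y$, which is exactly the asserted identity after transposing the $\pi_* x \cdot \pi_* y$ term.

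The step requiring the most care is the identity $\pi_*\pi^*\gamma = \gamma$ used on the last term. This is where smoothness together with the numerical degree of $\pi$ enters: applying the projection formula to $\gamma \in \CH^*(X)$ yields $\pi_*\pi^*\gamma = \pi_*\big(\pi^*\gamma \cdot [\widetilde{X}]\big) = \gamma \cdot \pi_*[\widetilde{X}]$, so the identity holds precisely when $\pi_*[\widetilde{X}] = [X]$, i.e. when $\pi$ is (generically) of degree $1$. Note also that the cycle $x - \pi^*\pi_* x$ is homogeneous only when $\dim \widetilde{X} = \dim X$, so the morphisms to which the statement genuinely applies are generically finite; in all the intended applications $\pi$ is the birational projection $\Gamma_\varphi \to X$ from the closure of a graph, for which $\deg \pi = 1$ is automatic and hence $\pi_*[\widetilde{X}] = [X]$. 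I would therefore record, as the one nontrivial hypothesis driving the computation, that $\pi$ is birational so that $\pi_*\pi^* = \mathrm{id}$; the rest is a purely formal manipulation with the projection formula and the multiplicativity of $\pi^*$.
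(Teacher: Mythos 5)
Your proof is correct and takes essentially the same route as the paper's: both arguments reduce everything to the projection formula together with $\pi_*\pi^*=\mathrm{id}$, the only difference being that you expand the correction term $\pi_*\big((x-\pi^*\pi_*x)(y-\pi^*\pi_*y)\big)$ while the paper writes $x\cdot y=(x'+\pi^*\pi_*x)(y'+\pi^*\pi_*y)$ and kills the cross terms using $\pi_*x'=\pi_*y'=0$. Your closing observation is well taken and worth recording: the identity $\pi_*\pi^*=\mathrm{id}$, which the paper's proof also invokes without comment, holds only when $\pi_*[\widetilde{X}]=[X]$, so the hypothesis ``dominant'' should really be ``generically of degree one'' (e.g.\ birational) --- as is indeed the case in every application of the lemma in the paper.
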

\begin{proof}
  Let's define $x'  := x - \pi^*\pi_*x$ and $y'  := y - \pi^*\pi_*y.$ By
  the projection formula, $\pi_*\pi^*$ acts as the identity on
  $\CH_*({X})$. Therefore $\pi_*x' = 0 $ and $\pi_*y' = 0$.  The
  projection formula gives $\pi_*\big((\pi^*\pi_*x)\cdot y' \big) =
  \pi_*x \cdot \pi_* y' = 0$ and $\pi_*\big(x' \cdot (\pi^*\pi_*y)
  \big) = \pi_*x' \cdot \pi_* y = 0$.  This yields
$$  \pi_* (x \cdot y) = \pi_*(\pi^*\pi_*x \cdot  \pi^*\pi_*y) +
  \pi_*(x'\cdot y') = \pi_*x \cdot \pi_*y + \pi_*(x'
  \cdot y'). $$
\end{proof}

For a smooth projective variety $X$ over $k$, we write $$T^2(X)  :=
\ker\{AJ^2  : \CH^2(X)_{\hom} \rightarrow J^2(X) \}$$ for the kernel of
Griffiths' second Abel-Jacobi map to the second intermediate Jacobian
$J^2(X)$ which is a quotient of $H^3(X,\mathds{C})$.
The group $T^2(X)$ is a birational invariant of smooth projective
varieties. Precisely, we have

\begin{lem}\label{lem birat invariant}
  Let $\pi  : \widetilde{X} \dashrightarrow X$ be a birational map
  between smooth projective varieties. Then $\pi^* \pi_*$ acts as the
  identity on $\CH_0(\widetilde{X})$,
  $\mathrm{Griff}_1(\widetilde{X})$,
  $\mathrm{Griff}^2(\widetilde{X})$, $T^2(\widetilde{X})$,
  $\CH^1(\widetilde{X})_{\hom}$ and $\CH^0(\widetilde{X})$.
 \end{lem}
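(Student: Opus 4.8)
The plan is to reduce the assertion for the birational map $\pi$ to the corresponding assertion for birational \emph{morphisms}, and then to extract everything from the projection formula together with the birational invariance of each group in the list. First I would resolve $\pi$ by a common smooth model: let $W$ be a resolution of the closure of the graph of $\pi$ inside $\widetilde X \times X$, so that the two projections give birational morphisms $f : W \to \widetilde X$ and $g : W \to X$ of smooth projective varieties with $g = \pi \circ f$. By the very definition of $\pi_*$ and $\pi^*$ through an arbitrary resolution (Lemmas \ref{lem covariant}, \ref{lem contravariant} and \ref{lem comp correspondences}) one gets $\pi_* = g_* f^*$ and $\pi^* = f_* g^*$, whence
\[
\pi^* \pi_* = f_*\, g^* g_*\, f^* .
\]
Since $f$ is a birational morphism of smooth projective varieties, the projection formula gives $f_* f^* = \mathrm{id}$ on $\CH_*(\widetilde X)$, and this restricts to each subgroup in question (all of them are preserved by $f^*$ and $f_*$, these being compatible with the cycle class map and with Griffiths' Abel--Jacobi map). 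Hence it suffices to prove that $g^* g_*$ acts as the identity on each of $\CH_0(W)$, $\mathrm{Griff}_1(W)$, $\mathrm{Griff}^2(W)$, $T^2(W)$, $\CH^1(W)_{\hom}$ and $\CH^0(W)$.

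The clean way to do this is to play $g^* g_*$ against $g_* g^*$. Because $g$ is birational we have $g_*[W] = [X]$, so the projection formula yields $g_*(g^*\beta) = \beta \cdot g_*[W] = \beta$ for all $\beta \in \CH_*(X)$; applying this to $\beta = g_*\alpha$ gives
\[
g_*\big( g^* g_* \alpha - \alpha \big) = 0 \qquad \text{for all } \alpha \in \CH_*(W).
\]
Now $g^* g_*$ preserves each of the listed groups $G(W)$ (being a composite of $g_*$ and $g^*$, both compatible with the cycle class and Abel--Jacobi maps), so $g^* g_* \alpha - \alpha$ lies in $G(W)$ whenever $\alpha$ does. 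Therefore, \emph{as soon as $g_*$ is injective on $G(W)$}, we conclude $g^* g_* \alpha = \alpha$. This reduces the whole lemma to the injectivity of $g_* : G(W) \to G(X)$ for each of the six groups.

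This injectivity is precisely the statement that each of these groups is a birational invariant of smooth projective varieties, realised by proper push-forward. For $\CH^0$ it is trivial, and for $\CH_0$ it is classical (the fibres of a resolution of a smooth, hence rational-singularity, variety are rationally chain connected, so $g_*$ is even an isomorphism on $\CH_0$). For $\CH^1(W)_{\hom} = \Pic^0(W)$ it is elementary: the kernel of $g_* : \CH^1(W) \to \CH^1(X)$ is spanned by the exceptional prime divisors $E_i$, whose classes are linearly independent in $\HH^2(W,\Q)$, so a homologically trivial element of $\ker g_*$ must vanish. The genuine content lies in the three finer invariants $\mathrm{Griff}_1$, $\mathrm{Griff}^2$ and $T^2$, for which I would invoke their (classical) birational invariance.

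The hard part, and the step I expect to cost the most care, is exactly the injectivity of $g_*$ on $\mathrm{Griff}_1$, $\mathrm{Griff}^2$ and $T^2$, should one wish to argue it in a self-contained way rather than cite it. The natural route is to analyse the self-correspondence $\gamma := {}^t\Gamma_g \circ \Gamma_g \in \CH^{\dim W}(W \times W)$: it is supported on the fibre product $W \times_X W$, which decomposes as the diagonal $\Delta_W$ together with components lying over the centre $Z = g(E)$, where $E$ is the exceptional locus. Since $X$ is smooth, $Z$ has codimension at least $2$, and the error term $\gamma - \Delta_W$ is a correspondence factoring through a resolution of $Z$; in the spirit of Bloch--Srinivas (\emph{cf.} Lemma \ref{lem BS}), such a correspondence acts trivially on $T^2$ and on the codimension-two Griffiths group, by functoriality of the cycle class and Abel--Jacobi maps. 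The delicate point is transporting the vanishing down to $Z$ and back, because, in contrast to the case of divisors, homological triviality of a codimension-two cycle supported on $Z$ does not by itself force its vanishing; here one must use that the relevant Hodge-theoretic data genuinely factor through the lower-dimensional $Z$.
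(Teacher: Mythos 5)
Your proof is correct and follows essentially the same route as the paper: resolve $\pi$ by a common smooth model to reduce $\pi^*\pi_*$ to $f_*\,g^*g_*\,f^*$ for birational \emph{morphisms} $f$ and $g$, and then settle the morphism case by the birational invariance of the listed groups. The paper simply cites \cite[Proposition 5.4]{vial} for that last step, which is precisely the statement you reformulate as injectivity of $g_*$ on each group and sketch via the Bloch--Srinivas-type decomposition of ${}^t\Gamma_g \circ \Gamma_g$.
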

\begin{proof} This is proved in \cite[Proposition 5.4]{vial} in the
  case when $\pi$ is a birational morphism. In the general case, by
  resolution of singularities, there is a smooth projective variety
  $Y$ and birational morphisms $f  : Y \rightarrow \widetilde{X}$ and
  $\widetilde{\pi}  : \widetilde{X} \rightarrow X$ such that
  $\widetilde{\pi} = \pi \circ f$. By definition of the action of
  rational maps on Chow groups, we have $\pi^*\pi_* =
  f_*\widetilde{\pi}^*\widetilde{\pi}_*f^*$ and the proof of the lemma
  reduces to the case of birational morphisms.
\end{proof}

\begin{prop} \label{prop cup} Let $\varphi  : X \dashrightarrow Y$ be a
  rational map between smooth projective varieties and let $d$ be the
  dimension of $X$. Let $\alpha \in \CH^*(Y)$ be a cycle that belongs
  to one of the sub-groups $\CH^d(Y)$, $T^2(Y)$, $CH^1(Y)_{\hom}$ or
  $\CH^0(Y)$ (respectively $\alpha \in \mathrm{Griff}^{d-1}(Y)$ or
  $\mathrm{Griff}^2(Y)$), and let $\beta \in \CH^*(Y)$ be any cycle.
  Then
  $$\varphi^*(\alpha \cdot \beta) = \varphi^*\alpha \cdot \varphi^*
  \beta \in \CH^*(X) \ (\text{resp.} \ \in \mathrm{Griff}^*(X)).$$
\end{prop}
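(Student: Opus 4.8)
The plan is to reduce the multiplicativity of $\varphi^*$ to the (automatic) multiplicativity of the pullback along an honest morphism, and then to absorb the discrepancy between $\varphi^*$ and that morphism pullback into an error term that vanishes precisely because $\alpha$ lies in a birationally invariant subgroup. Concretely, by resolution of singularities I would fix a resolution $\pi : \widetilde{X} \to X$, $\widetilde{\varphi} : \widetilde{X} \to Y$ with $\widetilde{X}$ \emph{smooth}; Lemma \ref{lem contravariant} guarantees that $\varphi^* = \pi_* \widetilde{\varphi}^*$ is independent of this choice. Since $\widetilde{\varphi}$ is a morphism of smooth projective varieties, its pullback is a ring homomorphism, so $\widetilde{\varphi}^*(\alpha \cdot \beta) = \widetilde{\varphi}^*\alpha \cdot \widetilde{\varphi}^*\beta$.

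Next I would apply Lemma \ref{lem pushcup} to $x := \widetilde{\varphi}^*\alpha$ and $y := \widetilde{\varphi}^*\beta$. Writing $x' := x - \pi^*\pi_* x$ and $y' := y - \pi^*\pi_* y$, and using $\pi_* x = \varphi^*\alpha$ and $\pi_* y = \varphi^*\beta$, this yields
\[
\varphi^*(\alpha \cdot \beta) = \pi_*(\widetilde{\varphi}^*\alpha \cdot \widetilde{\varphi}^*\beta) = \varphi^*\alpha \cdot \varphi^*\beta + \pi_*(x' \cdot y').
\]
Everything then comes down to showing that the error term $\pi_*(x' \cdot y')$ vanishes in $\CH^*(X)$ (respectively in $\mathrm{Griff}^*(X)$), and for this it suffices to control the single factor $x' = \widetilde{\varphi}^*\alpha - \pi^*\pi_* \widetilde{\varphi}^*\alpha$.

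The third step is to show that $x' = 0$ in the four cases of the first list and that $x'$ is algebraically trivial in the two Griffiths cases. Pullback along the morphism $\widetilde{\varphi}$ preserves codimension and homological triviality, and, being functorial for Griffiths' intermediate Jacobians, is compatible with the Abel--Jacobi map; hence $\widetilde{\varphi}^*$ sends $\CH^d(Y)$ into $\CH^d(\widetilde{X}) = \CH_0(\widetilde{X})$, $T^2(Y)$ into $T^2(\widetilde{X})$, $\CH^1(Y)_{\hom}$ into $\CH^1(\widetilde{X})_{\hom}$, $\CH^0(Y)$ into $\CH^0(\widetilde{X})$, and $\mathrm{Griff}^{d-1}(Y)$, $\mathrm{Griff}^2(Y)$ into $\mathrm{Griff}_1(\widetilde{X})$, $\mathrm{Griff}^2(\widetilde{X})$ respectively (the indexing is consistent because $\dim \widetilde{X} = \dim X = d$). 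Lemma \ref{lem birat invariant} then says $\pi^*\pi_*$ acts as the identity on each of these groups, so in the first four cases $x' = 0$ exactly and $\pi_*(x' \cdot y') = 0$ in $\CH^*(X)$. In the two Griffiths cases, Lemma \ref{lem birat invariant} gives $\pi^*\pi_* x' = x'$ only modulo algebraic equivalence, so $x'$ is algebraically trivial; since the product of an algebraically trivial cycle with an arbitrary cycle is algebraically trivial and proper push-forward preserves algebraic triviality, $\pi_*(x' \cdot y')$ is algebraically trivial, i.e. zero in $\mathrm{Griff}^*(X)$. (That the Griffiths classes of $\varphi^*\alpha$ and $\varphi^*(\alpha\cdot\beta)$ make sense is automatic: $\alpha$ homologically trivial forces $\alpha\cdot\beta$, and hence $\varphi^*\alpha$ and $\varphi^*(\alpha\cdot\beta)$, to be homologically trivial.)

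The steps are all essentially formal once the bookkeeping is arranged; the only genuinely non-formal input, and the point I expect to require the most care, is the compatibility of the morphism pullback $\widetilde{\varphi}^*$ with the Abel--Jacobi map in the $T^2(Y)$ case, which is what guarantees $\widetilde{\varphi}^* T^2(Y) \subseteq T^2(\widetilde{X})$, together with the bookkeeping ensuring that in the Griffiths cases the equality is claimed only modulo algebraic equivalence, so that the surviving algebraically trivial error term is harmless.
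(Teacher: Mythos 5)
Your proposal is correct and follows essentially the same route as the paper: resolve $\varphi$ by a smooth $\widetilde{X}$, use multiplicativity of $\widetilde{\varphi}^*$, and then kill the error term in Lemma \ref{lem pushcup} via Lemma \ref{lem birat invariant}, with the functoriality of the Abel--Jacobi map supplying the $T^2$ case. The only (harmless) difference is presentational: you make the error term $\pi_*(x'\cdot y')$ and its algebraic triviality in the Griffiths cases explicit, where the paper states the identity $\pi_*(x\cdot y)=\pi_*x\cdot\pi_*y$ directly in the relevant group.
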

\begin{proof}
  Let $\pi  : \widetilde{X} \rightarrow X$, $\widetilde{\varphi}  :
  \widetilde{X} \rightarrow Y$ be a resolution of $\varphi$ with
  $\widetilde{X}$ smooth.  For $x$ belonging to one of the groups
  $\CH_0(\widetilde{X})$, $\mathrm{Griff}_1(\widetilde{X})$,
  $\mathrm{Griff}^2(\widetilde{X})$, $T^2(\widetilde{X})$,
  $CH^1(\widetilde{X})_{\hom}$ or $\CH^0(\widetilde{X})$, we have by
  Lemma \ref{lem birat invariant} $x = \pi^*\pi_*x$, so that Lemma
  \ref{lem pushcup} yields
  \begin{equation} \label{eqn pi} \pi_* (x \cdot y) = \pi_*x \cdot
    \pi_*y.\end{equation} If we now set $x = \widetilde{\varphi}^*
  \alpha$ for $\alpha$ as in the statement of the proposition, then
  $x$ defines a cycle as above (if $\alpha \in T^2(Y)$, then $x \in
  T^2(\widetilde{X})$ by functoriality of the Abel-Jacobi map with
  respect to the action of correspondences), and (\ref{eqn pi}) gives~:
\begin{eqnarray}
  \varphi^*\alpha \cdot \varphi^*\beta &=& \pi_*\widetilde{\varphi}^*
  \alpha \cdot \pi_*\widetilde{\varphi}^* \beta \nonumber \\
  &=& \pi_*(\widetilde{\varphi}^* \alpha \cdot \widetilde{\varphi}^*
  \beta) \nonumber \\
  &=& \pi_*\widetilde{\varphi}^* (\alpha \cdot \beta)\nonumber \\
  &=& \varphi^*(\alpha \cdot \beta). \nonumber
\end{eqnarray}
\end{proof}

There is also a projection formula for rational maps.

\begin{prop} \label{prop proj formula} Let $\varphi  : X
  \dashrightarrow Y$ be a rational map between smooth projective
  varieties and let $d$ be the dimension of $X$. Let $\alpha \in
  \CH^*(Y)$ be a cycle that belongs to one of the sub-groups
  $\CH^d(Y)$, $T^2(Y)$, $CH^1(Y)_{\hom}$ or $\CH^0(Y)$ (respectively
  $\alpha \in \mathrm{Griff}^{d-1}(Y)$ or $\mathrm{Griff}^2(Y)$), and
  let $\beta \in \CH^*(X)$ be any cycle. Then
  $$\varphi_*(\varphi^*\alpha \cdot \beta) = \alpha \cdot \varphi_*
  \beta \in \CH^*(Y) \ (\text{resp.} \ \in \mathrm{Griff}^*(Y)).$$ In
  particular, if $\varphi  : X \dashrightarrow Y$ is a finite rational
  map of degree $\deg \varphi$,
  then $$\varphi_*\varphi^*\alpha = (\deg \varphi) \alpha \in \CH^*(Y) \
  (\text{resp.} \ \in \mathrm{Griff}^*(Y)).$$
\end{prop}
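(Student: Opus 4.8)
The plan is to reduce the projection formula for the rational map $\varphi$ to the ordinary projection formula for an honest morphism, in exactly the same spirit as Proposition \ref{prop cup} reduced the compatibility of $\varphi^*$ with intersection products to the birational invariance statement of Lemma \ref{lem birat invariant}. First I would fix a resolution $\pi : \widetilde{X} \rightarrow X$, $\widetilde{\varphi} : \widetilde{X} \rightarrow Y$ of $\varphi$ with $\widetilde{X}$ smooth projective (possible since we work in characteristic zero), so that by definition $\varphi_* = \widetilde{\varphi}_* \pi^*$ and $\varphi^* = \pi_* \widetilde{\varphi}^*$. Both $\pi$ and $\widetilde{\varphi}$ are morphisms of smooth projective varieties, so that the usual projection formula and the fact that $\pi^*$ is a ring homomorphism are at our disposal for them.

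The key step is to observe that, under the hypotheses on $\alpha$, one has $\pi^* \varphi^* \alpha = \widetilde{\varphi}^* \alpha$ in $\CH^*(\widetilde{X})$. Indeed, writing $x := \widetilde{\varphi}^* \alpha$ we have $\varphi^* \alpha = \pi_* x$, so the claim amounts to $\pi^* \pi_* x = x$, which is precisely what Lemma \ref{lem birat invariant} delivers provided $x$ lies in one of the sub-groups $\CH_0(\widetilde{X})$, $T^2(\widetilde{X})$, $\CH^1(\widetilde{X})_{\hom}$, $\CH^0(\widetilde{X})$, $\mathrm{Griff}_1(\widetilde{X})$ or $\mathrm{Griff}^2(\widetilde{X})$. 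For each case listed in the statement I would check that $\widetilde{\varphi}^*$ sends the designated group to the appropriate target: pull-back preserves codimension and homological triviality (handling $\CH^d$, $\CH^0$ and $\CH^1_{\hom}$), it preserves both homological and algebraic equivalence (handling the Griffiths groups), and it is compatible with Griffiths' Abel--Jacobi map (handling $T^2$), keeping in mind that $\dim \widetilde{X} = d$, so that $\CH^d(\widetilde{X}) = \CH_0(\widetilde{X})$ and $\mathrm{Griff}^{d-1}(\widetilde{X}) = \mathrm{Griff}_1(\widetilde{X})$.

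Granting this, the computation is immediate:
\begin{align*}
\varphi_*(\varphi^*\alpha \cdot \beta) &= \widetilde{\varphi}_* \pi^*(\varphi^*\alpha \cdot \beta) = \widetilde{\varphi}_*\big(\pi^*\varphi^*\alpha \cdot \pi^*\beta\big) \\
&= \widetilde{\varphi}_*\big(\widetilde{\varphi}^*\alpha \cdot \pi^*\beta\big) = \alpha \cdot \widetilde{\varphi}_* \pi^*\beta = \alpha \cdot \varphi_*\beta,
\end{align*}
where the third equality is the key step above and the fourth is the ordinary projection formula for $\widetilde{\varphi}$. The parenthetical Griffiths-group variant follows verbatim upon reading each Chow group as its Griffiths quotient. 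Finally, the ``in particular'' clause is obtained by taking $\beta = [X]$: then $\varphi^*\alpha \cdot [X] = \varphi^*\alpha$, while $\varphi_*[X] = \widetilde{\varphi}_*[\widetilde{X}] = (\deg\varphi)\,[Y]$ for $\varphi$ finite, whence $\varphi_*\varphi^*\alpha = (\deg\varphi)\,\alpha$.

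I expect the main obstacle to be the bookkeeping in the key step, namely verifying cleanly that $\widetilde{\varphi}^*\alpha$ lands in a group to which Lemma \ref{lem birat invariant} applies. The only genuinely non-formal input there is the functoriality of the Abel--Jacobi map under the action of $\widetilde{\varphi}$ in the case $\alpha \in T^2(Y)$, which is exactly the point already invoked in the proof of Proposition \ref{prop cup}; the remaining cases reduce to the elementary compatibility of pull-back with codimension, homological equivalence and algebraic equivalence.
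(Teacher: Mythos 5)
Your proposal is correct and follows essentially the same route as the paper: resolve $\varphi$, use Lemma \ref{lem birat invariant} to get $\pi^*\pi_*\widetilde{\varphi}^*\alpha = \widetilde{\varphi}^*\alpha$ for $\alpha$ in the listed sub-groups, and then conclude by the ordinary projection formula for $\widetilde{\varphi}$, with the ``in particular'' clause obtained by taking $\beta = [X]$. Your explicit verification that $\widetilde{\varphi}^*\alpha$ lands in a group to which Lemma \ref{lem birat invariant} applies is exactly the point the paper leaves implicit (having already made it in the proof of Proposition \ref{prop cup}).
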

\begin{proof}
  Again consider a resolution of $\varphi$ as in the proof of
  Proposition \ref{prop cup}. By Lemmas \ref{lem covariant} and
  \ref{lem contravariant}, Lemma \ref{lem birat invariant} and the
  projection formula for morphism of smooth projective varieties, we
  have
  \begin{align*}
    \varphi_*(\varphi^*\alpha \cdot \beta) & = \widetilde{\varphi}_* \pi^*
    (\pi_*\widetilde{\varphi}^*
    \alpha \cdot \beta) \\
    & = \widetilde{\varphi}_*(\pi^*\pi_*\widetilde{\varphi}^*\alpha \cdot
    \pi^*\beta) \\
& = \widetilde{\varphi}_*(\widetilde{\varphi}^*\alpha \cdot \pi^*\beta) \\
& = \alpha \cdot \widetilde{\varphi}_*\pi^*\beta \\
& = \alpha \cdot \varphi_*\beta.
  \end{align*}
The last statement follows by taking $\beta = [X] \in \CH^0(X)$.
\end{proof}


\newpage

\end{document}